\documentclass[12pt,oneside,a4paper]{book}

\usepackage{perpage}
\usepackage{lipsum}

\newcommand\blfootnote[1]{%
  \begingroup
  \renewcommand\thefootnote{}\footnote{#1}%
  \addtocounter{footnote}{-1}%
  \endgroup
}
\usepackage[utf8]{inputenc}
\usepackage{makeidx}
\makeindex
\usepackage[totoc]{idxlayout}
\usepackage{calligra}
%\makeindex[columns=2, title=Index, intoc]
\usepackage{etex}
\usepackage{mathtools}
\usepackage{srcltx}
\usepackage{bussproofs}
\usepackage{mathrsfs}
%\usepackage[notref,notcite]{showkeys}
%\usepackage{srcltx}
%\parskip 10pt
%\usepackage[notcite,notref,draft]{showkeys}
% Packages 
\usepackage{tikz}
\usetikzlibrary{shapes.geometric}

\usepackage{graphicx}

\usepackage{fancyhdr} % This should be set AFTER setting up the page geometry

\setlength{\headheight}{15pt}
\setlength{\textwidth}{400pt}
\addtolength{\hoffset}{-0.2cm}
\pagestyle{fancyplain}

\lhead{\fancyplain{}{\thepage}}
\chead{}
\rhead{\fancyplain{}{\textit{\leftmark}}}
\lfoot{}
\cfoot{}
\rfoot{}

\usepackage{amsthm}
\usepackage{amsmath}%
\usepackage{amsfonts}%
\usepackage{amssymb}%
\usepackage{esint}
\usepackage{graphicx}
\usepackage{subfigure}
\usepackage{pstricks}
\usepackage{subfigure}
\usepackage{pst-plot}
\usepackage{array}
\usepackage{tikz}
\usetikzlibrary{matrix,arrows}
\usetikzlibrary{arrows,%
                petri,%
                topaths}%

%----------------------------------------------------------
\theoremstyle{plain}
\newtheorem{claim}{Claim}
\newtheorem{observation}[section]{Observation}
%\theoremstyle{plain}
%\newtheorem{theorem}[definition]{Theorem}
%\newtheorem{lemma}[definition]{Lemma}
%\newtheorem{proposition}[definition]{Proposition}
%\newtheorem{corollary}[definition]{Corollary}
%\numberwithin{equation}{section}
%\newtheorem{condition}{Condition}
%\newtheorem{conjecture}{Conjecture}

%\newtheorem{criterion}{Criterion}
\newtheorem{definition}{Definition}[section]
\newtheorem{example}{Example}[section]

\newtheorem{lemma}{Lemma}[section]

\newtheorem{proposition}{Proposition}[section]

\newtheorem{theorem}{Theorem}[section]
\numberwithin{equation}{section}
\newtheorem*{maintheorem*}{Main Theorem}

%;;;;;;;;;;;;;;;;;;;;;;;;;;;;;;;;;;

\begin{document}

\frontmatter 
\frontmatter

\vspace*{1cm}

\thispagestyle{empty}
\begin{center}
 {\Large\bf A STUDY OF THE INTERRELATION BETWEEN FUZZY TOPOLOGICAL SYSTEMS AND LOGICS}

        \vspace{5cm}

         \small{\bf{Thesis submitted for the degree of\\
        Doctor of Philosophy (Sc.)\\ In Pure Mathematics}}

        \vfill

        \vspace{0.8cm}

        \small{\textit{by}}\\
        \small{\bf{Purbita Jana}}\\
        \vspace{0.8cm}
        \small{\bf{Department of Pure Mathematics\\
        University of Calcutta\\
        2015}}

\end{center}

\tableofcontents
\cleardoublepage
\addcontentsline{toc}{chapter}{Abstract}
\chapter*{\centering Abstract}
The major part of this thesis deals with fuzzy geometric logic and fuzzy geometric logic with graded consequence. The first chapter mainly contains the concept of topological system introduced by S. Vickers in 1989. A topological system is a triple $(X,\models,A)$, where $X$ is a non empty set, $A$ is a frame, and $\models$ is a binary relation between $X$ and $A$. A frame is a lattice which is closed under arbitrary join and finite meet together with the property that binary meet distributes over arbitrary join. This chapter includes almost all possible ground notions which are essential to make this thesis self contained.

In Chapter 2 the notion of fuzzy topological system is introduced and categorical relationship with fuzzy topology and frame is discussed in detail. Also this chapter contains some methodology to make new fuzzy topological systems from the old one.

Chapter 3 provides a generalization of fuzzy topological system which shall be called $\mathscr{L}$-topological system and categorical relationships with appropriate topological space and frame. Furthermore, two ways of constructing subspaces and subsystems of an $\mathscr{L}$-topological space and an $\mathscr{L}$- topological system are respectively provided.  

Chapter 4 deals with the concept of variable basis fuzzy topological space on fuzzy sets and contains a new notion of variable basis fuzzy topological systems whose underlying sets are fuzzy sets. In this chapter categorical relationship between space and system is established.

Chapter 5 contains a different proof of one kind of generalized stone duality, which was done directly by Maruyama \cite{YM}, introducing a notion of $\bar{n}$-fuzzy Boolean system.

The last two chapters, Chapter 6 and Chapter 7 deal the ultimate objective. Chapter 6 deals with the question ``From which logic fuzzy topology can be studied?". To answer this, the notion of fuzzy geometric logic is invented. On top of that a further generalized notions such as fuzzy geometric logic with graded consequence, fuzzy topological spaces with graded inclusion, graded frame and graded fuzzy topological systems came into the picture. Chapter 7 provides categorical relationships among fuzzy topological space with graded inclusion, graded frame and graded fuzzy topological system.

\newpage
\thispagestyle{plain}
\par\vspace*{.35\textheight}{\centering I am dedicating this thesis to my all time superstars, Baba and Ma, for encouraging me to play with Mathematics \par}

\cleardoublepage
\addcontentsline{toc}{chapter}{Acknowledgements}
\chapter*{\centering Acknowledgements}
Pursuing a Ph.D. degree is a beautiful and enjoyable experience. It is just like climbing a high peak, step by step, accompanied with hardship, frustration, encouragement, trust and with so many people's kind help. Though it will not be enough to express my gratitude in words, I would still like to give my thanks to all of them who helped me. 

First and foremost I would like to express my sincere gratitude to my advisor Prof. Mihir K. Chakraborty for the continuous support to my Ph.D. study and related research, for imparting me motivation, and immense knowledge of the subject. His patience and guidance helped me in all the time of research and writing of this thesis. I could not have imagined having a better advisor and mentor for my Ph.D. study.

I am grateful to all my teachers of The Institute for Logic, Language and Computation (ILLC) of University of Amsterdam where I had been a Logic Year student, 2009-2010. I am thankful to all the teachers of Department of Pure Mathematics, University of Calcutta for their encouragement and helpful suggestions during the period of Ph.D. programme. I would like to convey my heartfelt thanks to all of my fellow friends for offering me an ideal environment in which I felt free and could concentrate on my research. I would also express thanks to all the staff of Department of Pure Mathematics, University of Calcutta for their generous support. 

My sincere thanks also go to Prof. N. Raja of TIFR, Mumbai and Dr. Sujata Ghosh of I.S.I., Chennai, who provided me an opportunity to visit their respective institutes and a perfect working place during my stay. I am also very much thankful to Prof. Nitin Nitsure of TIFR, Mumbai for giving me his precious time from his busy schedule to discuss about various fields of mathematics and encouraging me to do mathematics during my stay at TIFR. 

I sincerely show my gratitude to all the members of Calcutta Logic Circle. In this regard I would like to mention that my supervisor, one of the founder of this group, introduced me to this group first. The weekly seminar of this circle helped me to build my knowledge in logic.

My sincere thanks go to all the organizations, Erasmus Mundus, University Grant Commission, National Board of Higher Mathematics, Department of Science and Technology, Indian Academy of Science, Association for Symbolic Logic, which gave me their indispensable generous sponsoring.

Also I am thankful to Prof. Mohua Banerjee of IIT Kanpur, Prof. Anand Pillay of University of Notredam and Prof. Mai Gherke of Paris Diderot University for their helpful comments and suggestions during various conferences.

Last but not the least, I would like to thank my parents (Prabir Kr. Jana and Tanusri Jana), sisters (Pubali and Purna) and brother-in-law (Samit Biswas). Their understanding and love encouraged me to work hard and continue pursuing my Ph.D. degree. My mother's firm and kind hearted personality has affected me to be steadfast and never bend on difficulty. She always lets me know that she is proud of me, which motivates me to work harder and do my best. I thank my friend, Dr. Prosenjit Roy, for all his help and valuable comments in writing this thesis.

\chapter*{List of Papers/Works included in the Thesis}
\addcontentsline{toc}{chapter}{List of Papers/Works included in the Thesis}
\begin{enumerate}
\item P. Jana and M.K. Chakraborty, \emph{Categorical relationships of fuzzy topological systems with fuzzy topological spaces and underlying algebras},  Ann. of Fuzzy Math. and Inform., \textbf{8}, 2014, no. 5, pp. 705--727.

\item P. Jana and M.K. Chakraborty, \emph{On Categorical Relationship among various Fuzzy Topological Systems, Fuzzy Topological Spaces and related Algebraic Structures}, Proceedings of the 13th Asian Logic Conference, World Scientific, 2015, pp. 124--135.

\item P. Jana and M.K. Chakraborty, \emph{Categorical relationships of fuzzy topological systems with fuzzy topological spaces and underlying algebras-II},  Ann. of Fuzzy Math. and Inform., \textbf{10}, 2015, no. 1, pp. 123--137.

\item M. K. Chakraborty and P. Jana, \emph{Fuzzy topology via fuzzy geometric logic with graded consequence}, International Journal of Approximate Reasoning, Elsevier (accepted).
\end{enumerate}

\mainmatter
\chapter{Introduction}
The ultimate objective of this thesis is to develop fuzzy geometric logic and fuzzy geometric logic with graded consequence. The motivation mostly came from the main topic of Vickers book ``Topology via Logic" \cite{SV}, where he introduced a notion of topological system and indicated it's connection with geometric logic. The relationships among topological space, topological system, frame and geometric logic play an important role to study topology through logic (geometric logic). A generalization of topological space to fuzzy topological space was done in \cite{CC, RL} and this concept has been studied extensively and intensively. Naturally the question ``from which logic fuzzy topology can be studied?" comes in mind. If such a logic is obtained what could be its significance?

To answer these questions, as basic steps, we first introduced some notions of fuzzy topological systems and established the interrelation with appropriate topological spaces and algebraic structures. These relationships are studied in categorical framework. As a matter of fact, study of duality takes place as one of the important part of this thesis.

Geometric logic has been discussed in various works such as \cite{GR, PT, SI, SV1, SVI, PJT, SV}. However for our purpose the reference point shall be Vickers' books and papers namely \cite{SV1, SVI, PJT, SV}. The formulae of geometric logic are based on two propositional connectives viz. $\wedge$, the binary conjunction and $\bigvee$, the arbitrary disjunction over arbitrary set of formulae including null set. As a special case the binary disjunction $\vee$ is obtained. Besides, the logic has an existential quantifier $\exists$. It is noteworthy that geometric logic does not have a negation, implication or universal quantifier. Also in this logic sequents of the form $\alpha\vdash\beta$ are derived from a set (may be the null set) of sequents. These special sequents have exactly one formula on either side of the symbol $\vdash$ (turnstile), the intention of the symbol being, as usual, that $\beta$ follows from $\alpha$. The related notion of topological system is a triple $(X,\models,A)$ where $X$ is a non-empty set, $A$ is a frame (c.f. next section) and $\models$ is a binary relation from $X$ to $A$ called `satisfiability'.

Fuzzy topological space was introduced in \cite{CC} and has since been studied extensively \cite{MA, MB, UH, TK, RL, MM}. Fuzzy topological systems have also been defined and studied earlier \cite{MP, AV} with some variation in the definition from the one given in this work. This notion with other names has been studied in categorical framework also \cite{ DM1, SO1, SO2}. Categorical relationship among the categories of fuzzy topological spaces, fuzzy topological systems and frames has been the topic of great interest to which area we have contributed too \cite{MP, MP1, MP2} (vide Chapter 2, 3 and 4). In Chapter 6 another level of generalization (that is, introduction of many-valuedness) shall take place giving rise to graded fuzzy topological systems and fuzzy topological space with graded inclusion. It will also be required to generalize the notion of frame to graded frame.

We now present some motivation behind generalizing the classical notions and geometric logic to multi-valuedness. 

Firstly in \cite{AV} we get an example where the satisfiability relation ($\models$) of a topological system needs a generalization. Taking a physical interpretation of topological systems by Vickers \cite{SV} in which $X$ is a set of computer programs generating bits of 0's and 1's, $A$ consists of the assertions about the sequence of bits generated by computer programs. As an example Vickers took an assertion ``starts 01010", which is true if the sequence of bits generated by a program starts with the bits 01010. For a program, say $x$, which generates an infinite sequence of bits 01010101$\dots$, clearly for this example $x\models \text{starts}\ 01010$. Now if a program, say $x_1$, generates an infinite sequence of bits whose first five bits are not identical with 01010 but similar to 01010 then $x_1$ satisfy the assertion ``starts 01010" to some degree. Hence the concept of fuzzy topological system plays a crucial role to handle this kind of situation. 

Secondly, geometric logic is endowed with an informal observational semantics \cite{SVI}: whether what has been observed does satisfy (match) an assertion or not. In fact, from the stand point of observation, negative and implicational propositions and universal quantification face ontological difficulties. On the other hand arbitrary disjunction needs to be included (cf. \cite{SVI} for an elegant discussion on this issue). Now, observations of facts and assertions about them may corroborate with each other partially. It is a fact of reality and in such cases it is natural to invoke the concept of `satisfiability to some extent or to some degree. As a result the question whether some assertion follows from some other assertion might not have a crisp answer `yes'/`no'. It is likely that in general the `relation of following' or more technically speaking, the consequence relation turnstile ($\vdash$) may be itself many-valued or graded (vide Definition \ref{validg}). For an introduction to the general theory of graded consequence relation we refer to \cite{MK, MSD}. This theory falls within the broad category of fuzzy logic but not exactly the same as that developed in \cite{PH, DM,  VN1, JP}. Thus, we have adopted graded satisfiability as well as graded consequence (c.f. Chapter 6) in this thesis.

Thirdly it has been imperative to link with fuzzy topological systems (and fuzzy topological spaces as a result), a many-valued logic similar to classical topological systems and geometric logic. This goal has been achieved here with the introduction of a general fuzzy geometric logic. In our case, of course, a further generalization has been made by taking the consequence relation as many-valued also and this in turn gives rise to a generalization of the algebraic structure frame to graded frame.

Exploration of relationship between many valued logics ({\L}ukasiewicz n-valued logic {\L}$_n^c $) and fuzzy topology has already been in the agenda \cite{MP, YM}. We, in \cite{MP} (vide Chapter 5), have investigated the relationship between {\L}$_n^c $ and fuzzy topological systems but following \cite{YM}, by {\L}$_n^c $ we have understood {\L}$_n^c $-algebra. On top of that duality between a special kind of fuzzy topological space whose value set is $\{0,\dfrac{1}{n-1},\dfrac{2}{n-1},\dfrac{3}{n-1},......,\dfrac{n-2}{n-1},1\}$ and {\L}$_n^c $ is established. As a matter of fact another proof of duality provided in \cite{YM} comes into picture.

We give below the basic definitions concerning, frame, Boolean algebra, topological space, topological system and category theory.

\subsection*{\centering Frame and Boolean Algebra}
A \textbf{poset} (partially ordered set)\index{poset}\index{partially ordered set} is a set in which a binary relation is reflexive, antisymmetric and transitive. Let us denote it by $P\equiv(P,\leq)$, where $P$ is the set and $\leq\ \subseteq P\times P$ is the above mentioned binary relation, which is known as partial order relation.

An element $l\in P$ is said to be a least upper bound (l.u.b) \index{l.u.b} of $p_1,p_2\in P$ if and only if $p_1\leq l$, $p_2\leq l$ and for any $p\in P$, such that $p_1\leq p$, $p_2\leq p$ implies $l\leq p$. It shall be noted that for any two elements of a poset l.u.b may or may not exist and if exist then it should  be the unique one. Similarly an element $g\in P$ is said to be a greatest lower bound (g.l.b) \index{g.l.b} of $p_1,p_2\in P$ if and only if $g\leq p_1$, $g\leq p_2$ and for any $p\in P$, such that $p\leq p_1$, $p\leq p_2$ implies $p\leq g$. In this case also for any two elements g.l.b may or may not exist and if exist then it should  be the unique one. It shall be noted that l.u.b and g.l.b (if exist) of $p_1,p_2\in P$ are denoted by $p_1\vee p_2$ (join) and $p_1\wedge p_2$ (meet) respectively. If $S$ be any subset of $P$ then arbitrary join (if exist) is represented as $\bigvee S=\bigvee\{s\mid s\in S\}$ and similarly arbitrary meet (if exist) is represented as $\bigwedge S=\bigwedge\{s\mid s\in S\}$. Arbitrary join and arbitrary meet of any set $S\subseteq P$ are also unique if they exist.

A poset $(L,\leq)$ is said to be a \textbf{lattice}\index{lattice} if for each pair of elements of $L$ has least upper bound (l.u.b) and greatest lower bound (g.l.b).
\begin{proposition}\label{lemj}
For any lattice $(L,\leq)$ and $l_1,\ l_2\in L$, $$l_1\leq l_2\Leftrightarrow l_1\wedge l_2=l_1\Leftrightarrow l_1\vee l_2=l_2.$$
\end{proposition}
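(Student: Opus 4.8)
The plan is to prove the two biconditionals by establishing a short cycle of implications among the three statements $l_1 \leq l_2$, $l_1 \wedge l_2 = l_1$, and $l_1 \vee l_2 = l_2$. Concretely, I would show $l_1 \leq l_2 \Rightarrow l_1 \wedge l_2 = l_1 \Rightarrow l_1 \leq l_2$ for the first equivalence and $l_1 \leq l_2 \Rightarrow l_1 \vee l_2 = l_2 \Rightarrow l_1 \leq l_2$ for the second. The only ingredients needed are reflexivity and antisymmetry of $\leq$, together with the defining properties of the greatest lower bound (g.l.b.) and least upper bound (l.u.b.) recalled just above the statement; since $(L,\leq)$ is assumed to be a lattice, the elements $l_1 \wedge l_2$ and $l_1 \vee l_2$ are guaranteed to exist, so every expression in the claim is well defined.

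For $l_1 \leq l_2 \Rightarrow l_1 \wedge l_2 = l_1$, I would first observe that $l_1$ is itself a lower bound of the pair $\{l_1, l_2\}$: indeed $l_1 \leq l_1$ by reflexivity and $l_1 \leq l_2$ by hypothesis. The \emph{greatest} clause in the definition of the g.l.b. then forces $l_1 \leq l_1 \wedge l_2$, while the \emph{lower bound} clause gives $l_1 \wedge l_2 \leq l_1$; antisymmetry yields the equality. Conversely, if $l_1 \wedge l_2 = l_1$, then substituting into the standing inequality $l_1 \wedge l_2 \leq l_2$ (again part of the g.l.b. definition) immediately gives $l_1 \leq l_2$.

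The second equivalence is entirely dual. For $l_1 \leq l_2 \Rightarrow l_1 \vee l_2 = l_2$, I note that $l_2$ is an upper bound of $\{l_1, l_2\}$, so the \emph{least} clause of the l.u.b. definition gives $l_1 \vee l_2 \leq l_2$, and the \emph{upper bound} clause gives $l_2 \leq l_1 \vee l_2$; antisymmetry again closes the gap to an equality. For the reverse, the hypothesis $l_1 \vee l_2 = l_2$ combined with $l_1 \leq l_1 \vee l_2$ yields $l_1 \leq l_2$ directly.

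I do not expect any genuine obstacle, as the result is routine. The only point requiring care is to invoke \emph{both} halves of each extremal definition — the bound property and the extremality (greatest/least) property — in the correct places, and to appeal to antisymmetry precisely when converting two opposing inequalities into a single equality.
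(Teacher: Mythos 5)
Your proof is correct and complete: each direction invokes exactly the right clause of the g.l.b.\ or l.u.b.\ definition (the bound property for one inequality, the extremality property for the other) and closes each equality with antisymmetry. The paper itself states this proposition without any proof, treating it as a standard lattice-theoretic fact, so there is nothing to compare against; your argument is precisely the routine one the author implicitly relies on.
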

A lattice $(L,\leq)$ is said to be \textbf{distributive lattice} \index{distributive lattice} if and only if for any $x,y,z\in L$, $x\wedge(y\vee z)=(x\wedge y)\vee (x\wedge z)$ (or $x\vee(y\wedge z)=(x\vee y)\wedge (x\vee z)$) holds.

If a lattice $(L,\leq)$ has a greatest and a least element which are known as top (denoted by $\top$) \index{top element} and bottom \index{bottom element} (denoted by $\bot$) element respectively then it is known as \textbf{bounded lattice} \index{bounded lattice}. It should be noted that $\top=\bigwedge\emptyset$ and $\bot=\bigvee\emptyset$.
A \textbf{complemented lattice} \index{complemented lattice} is a bounded lattice in which every element has a complement i.e. for any $l\in L$, there exist an element $l^c\in L$ such that $l\wedge l^c=\bot$ and $l\vee l^c=\top$.

A \textbf{frame} \index{frame} is a partially ordered set such that every subset has a join, every finite subset has a meet, and a binary meet distributes over an arbitrary join i.e., $$x\wedge\bigvee Y=\bigvee\{x\wedge y :y\in Y\}.$$
A function between two frames is a \textbf{frame homomorphism} \index{frame homomorphism} if and only if it preserves arbitrary join and finite meet.

We observe that any topology of a topological space forms a frame (vide Proposition \ref{tau}).
\begin{proposition}\label{cart}
Cartesian product of two frames is a frame.
\end{proposition}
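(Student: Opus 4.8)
The plan is to verify directly that the Cartesian product $L_1 \times L_2$ of two frames $(L_1, \leq_1)$ and $(L_2, \leq_2)$, equipped with the componentwise partial order $(a_1, a_2) \leq (b_1, b_2) \Leftrightarrow a_1 \leq_1 b_1 \text{ and } a_2 \leq_2 b_2$, satisfies the three defining clauses of a frame from the definition given above: every subset has a join, every finite subset has a meet, and binary meet distributes over arbitrary join. First I would confirm that componentwise order is indeed a partial order on $L_1 \times L_2$ (reflexivity, antisymmetry, transitivity all transfer coordinatewise from the factors), so that we genuinely have a poset to work with.

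Next I would establish that joins and meets in the product are computed coordinatewise. Concretely, for a subset $S \subseteq L_1 \times L_2$, writing $\pi_1(S)$ and $\pi_2(S)$ for its projections into the two factors, I would show that $\bigvee S = \left( \bigvee \pi_1(S), \bigvee \pi_2(S) \right)$, where the right-hand joins exist because each $L_i$ is a frame and hence admits arbitrary joins. The verification is the standard two-part argument: first check that the proposed element is an upper bound of $S$ (immediate from the coordinatewise joins being upper bounds in each factor), and then check it is the least such, by taking any upper bound $(u_1, u_2)$ of $S$ and observing that $u_i$ dominates $\pi_i(S)$ in $L_i$, so $\bigvee \pi_i(S) \leq_i u_i$ for each $i$, whence the proposed join lies below $(u_1, u_2)$. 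The analogous computation shows that for a finite subset $F$, the meet is $\bigwedge F = \left( \bigwedge \pi_1(F), \bigwedge \pi_2(F) \right)$, the factor meets existing because each $L_i$, being a frame, has all finite meets.

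With the coordinatewise formulas for join and meet in hand, the distributivity clause reduces to distributivity in each factor. Given $(x_1, x_2) \in L_1 \times L_2$ and a family $Y = \{(y_1^j, y_2^j) : j \in J\}$, I would compute both $(x_1, x_2) \wedge \bigvee Y$ and $\bigvee \{ (x_1, x_2) \wedge (y_1^j, y_2^j) : j \in J \}$ using the coordinatewise formulas, and observe that in each coordinate the resulting identity is precisely the frame distributive law $x_i \wedge \bigvee_j y_i^j = \bigvee_j (x_i \wedge y_i^j)$ holding in $L_i$. Since equality of pairs is equality in each coordinate, the two sides agree, establishing the distributive law in the product.

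I do not anticipate a genuine obstacle here; the result is essentially a bookkeeping exercise in transporting the frame structure coordinatewise, and the only point requiring minor care is the bidirectional nature of the join/meet verifications (confirming both the bound property and its extremality in each coordinate) and ensuring the empty and finite cases for meets are handled by the hypothesis that each factor is a frame. If anything is mildly delicate, it is simply being careful that the projections $\pi_i(S)$ are the correct sets over which to take factor-wise joins and meets, but this follows directly from the definition of the product order.
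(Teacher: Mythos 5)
Your proposal is correct and follows essentially the same route as the paper: componentwise order, coordinatewise joins and meets, and distributivity reduced to the distributive law in each factor. The only difference is that you spell out the upper-bound/least-upper-bound verifications that the paper dispatches with ``it can be verified,'' which is a matter of detail rather than approach.
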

\begin{proof}
Let $(A,\leq_1)$ and $(B,\leq_2)$ be two frames. To show that $(A\times B,\leq)$ is a frame, where $(a,b)\leq (a_1,b_1)$ if and only if $a\leq_1 a_1$ and $b\leq_2 b_1$. It can be verified that $(A\times B,\leq)$ is a lattice from the definition where $(a,b)\vee (a_1,b_1)=(a\vee a_1,b\vee b_1)$, $(a,b)\wedge (a_1,b_1)=(a\wedge a_1,b\wedge b_1)$ and $\bigvee_i (a_i,b_i)=(\bigvee_i a_i,\bigvee_i b_i)$.\\
It is left to show that $(a,b)\wedge\bigvee_i(a_i,b_i)=\bigvee_i\{(a,b)\wedge (a_i,b_i)\}$. Let us proceed in the following way:
\begin{align*}
(a,b)\wedge\bigvee_i (a_i,b_i) & = (a,b)\wedge (\bigvee_i a_i,\bigvee_i b_i)\\
& = (a\wedge\bigvee_i a_i,b\wedge\bigvee_i b_i)\\
& = (\bigvee_i\{a\wedge a_i\},\bigvee_i\{b\wedge b_i\})\\
& = \bigvee_i(a\wedge a_i,b\wedge b_i)\\
& = \bigvee_i\{(a,b)\wedge (a_i,b_i)\}.
\end{align*}
This completes the proof.
\end{proof}
A \textbf{Boolean algebra} or Boolean lattice \index{Boolean algebra} is a complemented distributive lattice.

A function between two Boolean algebras is a \textbf{Boolean homomorphism} \index{Boolean homomorphism} if and only if it preserves join, meet and complementation.
\subsection*{\centering Topological Space}
Let $X$ be a set. Then a collection of subsets of $X$, say $\tau$, is said to be a \textbf{topology} \index{topology} on $X$ if and only if $\emptyset,\ X\in \tau$, for any $\tau_1,\ \tau_2\in \tau$ implies $\tau_1\cap\tau_2\in \tau$ and for any $\tau_i\in \tau$ implies $\bigcup_i\tau_i\in\tau$. The pair $(X,\tau)$ is known as \textbf{topological space} \index{topological!space} and the elements of $\tau$ are known as \textbf{open sets} \index{open set}.
\begin{proposition}\label{tau}
For any topological space $(X,\tau)$, $(\tau,\subseteq)$ forms a frame.
\end{proposition}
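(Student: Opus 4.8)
The plan is to verify directly that $(\tau,\subseteq)$ satisfies each clause in the definition of a frame, taking the candidate join of a subfamily of $\tau$ to be its set-theoretic union and the candidate binary meet of two members to be their set-theoretic intersection. First I would note that $(\tau,\subseteq)$ is a poset, since inclusion is reflexive, antisymmetric and transitive on any collection of subsets of $X$; this is immediate and needs no topological input.

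Next I would handle joins. For an arbitrary family $\{U_i\}_{i\in I}\subseteq\tau$ I claim $\bigvee_i U_i=\bigcup_i U_i$. The axiom closing $\tau$ under arbitrary unions guarantees $\bigcup_i U_i\in\tau$, so the candidate lies in the poset; it is an upper bound because $U_j\subseteq\bigcup_i U_i$ for each $j$, and it is the least one because any $W\in\tau$ with $U_i\subseteq W$ for all $i$ satisfies $\bigcup_i U_i\subseteq W$. Taking $I=\emptyset$ yields $\bigvee\emptyset=\emptyset\in\tau$, the bottom element, so every subset has a join. Dually, for $U_1,U_2\in\tau$ I would show $U_1\wedge U_2=U_1\cap U_2$, which lies in $\tau$ by closure under binary intersection, and a routine check confirms that intersection is the greatest lower bound for $\subseteq$; iterating covers every nonempty finite subset, while the empty meet is $X=\top\in\tau$. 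Hence every finite subset has a meet.

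The remaining frame axiom, distributivity of binary meet over arbitrary join, unwinds under these identifications to the set identity $U\cap\bigcup_i V_i=\bigcup_i(U\cap V_i)$, which is the standard distributive law of intersection over union and holds for arbitrary families. The main point requiring care here is not genuine difficulty but bookkeeping: confirming that the concrete operations $\bigcup$ and $\cap$ actually realize the abstract lattice operations $\bigvee$ and $\wedge$ in the inclusion order, including the boundary cases of the empty join and empty meet. Once these identifications are established, the substantive distributivity condition collapses to a familiar elementary set identity, completing the verification that $(\tau,\subseteq)$ is a frame.
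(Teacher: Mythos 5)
Your proof is correct and is exactly the standard verification: the paper states Proposition \ref{tau} without giving a proof, treating it as a routine observation, and your argument (realizing joins as unions and binary meets as intersections, supplying the empty join $\emptyset$ and empty meet $X$ from the topology axioms, and reducing frame distributivity to the set identity $U\cap\bigcup_i V_i=\bigcup_i(U\cap V_i)$) is precisely the verification the paper implicitly relies on. Nothing is missing, and your care in checking that $\bigcup$ and $\cap$ genuinely realize $\bigvee$ and $\wedge$ inside the subposet $(\tau,\subseteq)$, including the empty cases, is the right bookkeeping.
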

Let $(X,\tau)$, $(Y,\tau')$ be two topological spaces then a function $f:X\longrightarrow Y$ is said to be \textbf{continuous} \index{continuous} if and only if for any $\tau_1'\in\tau'$, $f^{-1}(\tau_1')\in \tau$.

Given a lattice $L$, an $L$-fuzzy subset $\tilde A$ of $X$ is given by the membership function $\tilde A:X\longrightarrow L$.
For each $x\in X$, the value of $\tilde A(x)$ is called the grade of membership of $x$ in the $L$-fuzzy subset $\tilde A$. It will also be denoted by $gr(x \in \tilde A)$.

Let $f$ be a mapping from $X$ to $Y$, and let $\tilde B$ be a $L$-fuzzy subset of $Y$. Then $f^{-1}(\tilde B)$ is a $L$-fuzzy subset \index{$L$-fuzzy!subset} of $X$ given by $f^{-1}(\tilde B)(x)=\tilde B(f(x))$.
Henceforth a $[0,1]$-fuzzy subset will be called simply a fuzzy subset \index{fuzzy!subset}.

Let $X$ be a set, and $\tau$ be a collection of fuzzy subsets of $X$ s.t.
\begin{enumerate}
\item $\tilde\emptyset$ , $\tilde X\in \tau$, where $\tilde\emptyset (x)=0$, for all $x\in X$ and $\tilde X (x)=1$, for all $x\in X$;
\item $\tilde A_i\in\tau$ for $i\in I\Rightarrow\bigcup_{i\in I}\tilde A_i\in \tau$, where $\bigcup_{i\in I}\tilde{A_i}(x)=sup_{i\in I}(\tilde{A_i}(x))$;
\item $\tilde A_1$, $\tilde A_2\in\tau\Rightarrow \tilde A_1\cap\tilde A_2\in\tau$, where $\tilde{A_1}\cap \tilde{A_2}=min\{\tilde{A_1}(x), \tilde{A_2}(x)\}$ (or $\tilde{A_1}(x)\wedge \tilde{A_2}(x)$).
\end{enumerate}
Then $(X,\tau)$ is called a \textbf{fuzzy topological space}\index{fuzzy topological!space}. $\tau$ is called a fuzzy topology over $X$. 
Elements of $\tau$ are called \textbf{fuzzy open sets} \index{fuzzy!open set} of fuzzy topological space $(X,\tau)$.

Let $X$ be a set, and $\tau \subset [0,1]^X$ be a collection of fuzzy subsets of $X$ s.t.
\begin{enumerate}
\item For all $r \in [0,1]$, $\tilde r \in \tau$, where $\tilde r$ is the constant map with value $r$;
\item $\tilde A_i\in\tau$ for $i\in I\Rightarrow\bigcup_{i\in I}\tilde A_i\in \tau$;
\item $\tilde A_1$, $\tilde A_2\in\tau\Rightarrow \tilde A_1\cap\tilde A_2\in\tau$.
\end{enumerate}
This pair $(X,\tau)$ is called a \textbf{fuzzy topological space in the sense of L{\"o}wen} \index{fuzzy topological!space!L{\"o}wen sense} which is of late called ``\textbf{stratified}"\index{stratified}. It is to be noted that when $\tau \subset \bar{n}^X$, where $\bar{n}=\{0,\dfrac{1}{n-1},\dfrac{2}{n-1},\dfrac{3}{n-1},......,\dfrac{n-2}{n-1},1\}$, then $(X,\tau)$ is known to be $\bar{n}$-fuzzy space \index{$\bar{n}$-fuzzy space}.

%\begin{definition}\label{2.10}
%Let $f$ be a mapping from $X$ to $Y$, $\tilde B$ be a fuzzy subset of $Y$. Then $f^{-1}(\tilde B)$ is a fuzzy subset of $X$ given by $f^{-1}(\tilde B)(x)=\tilde B(f(x))$.
%\end{definition}
Let $(X,\tau_1)$ and $(Y,\tau_2)$ be two fuzzy topological spaces. A function $f:X\longrightarrow Y$ is said to be \textbf{fuzzy continuous} \index{fuzzy!continuous} if and only if for every fuzzy open set $\tilde B$ of $Y$, $f^{-1}(\tilde B)$ is an open set of $X$.
\subsection*{\centering Topological System}
A \textbf{topological system}\index{topological!system} \cite{SV} is a triple, $(X,\models,A)$, consisting of a non empty set $X$, a frame $A$ and a binary relation $\models\subseteq  X\times A$ from $X$ to $A$ such that:
\begin{enumerate}
\item for \textbf{any finite subset} $S$ of $A$, $x\models\bigwedge S$ if and only if $x\models a$ for all $a\in S$;
\item for \textbf{any subset} $S$ of $A$, $x\models \bigvee S$ if and only if $x\models a$ for some $a\in S$.
\end{enumerate}
We write $x\models a$ for $(x,a)\in\ \models$ and call $x$ satisfies $a$. We can think the set $X$ as set of objects, $A$ as the set of properties and then $\models$ indicates which object have what properties.

It is to be noted that $\bigwedge S$ is either $a_1\wedge a_2\wedge \dots \wedge a_n$ if $S=\{a_1,a_2,\dots ,a_n\}$ or is $\top$ if $S=\emptyset$.
\begin{proposition}\label{tb}
1. $x\models\top$, for any $x\in X$.\\
2. $x\models\bot$, for no $x\in X$.\\
3. if $x\models a$ and $a\leq b$ then $x\models b$.
\end{proposition}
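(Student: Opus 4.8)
The plan is to prove each of the three assertions directly from the two defining axioms of a topological system, exploiting the special cases of finite and arbitrary subsets $S$.

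For part 1, I would take $S=\emptyset$ in the first axiom. Since $\bigwedge\emptyset=\top$ (as noted explicitly in the remark preceding the proposition), the biconditional ``$x\models\bigwedge S$ if and only if $x\models a$ for all $a\in S$'' becomes ``$x\models\top$ if and only if $x\models a$ for all $a\in\emptyset$''. The right-hand side is vacuously true, since there is no $a$ in the empty set to satisfy, so the left-hand side $x\models\top$ must hold for every $x\in X$. Symmetrically, for part 2 I would take $S=\emptyset$ in the second axiom: here $\bigvee\emptyset=\bot$, and the biconditional reads ``$x\models\bot$ if and only if $x\models a$ for some $a\in\emptyset$''. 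The right-hand side is vacuously false, as there is no element in the empty set to witness it, so $x\models\bot$ fails for every $x\in X$.

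For part 3, suppose $x\models a$ and $a\leq b$. The key is to translate the order relation $a\leq b$ into a lattice identity via \rprop{lemj}, which gives $a\vee b=b$. Now apply the second axiom to the finite (hence arbitrary) subset $S=\{a,b\}$: we have $x\models\bigvee S=a\vee b=b$ if and only if $x\models a$ or $x\models b$. Since $x\models a$ holds by hypothesis, the right-hand side is satisfied, and therefore $x\models a\vee b$, that is, $x\models b$.

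None of these steps presents a genuine obstacle; the proof is essentially a matter of correctly instantiating the axioms. The one point requiring a little care is the handling of vacuous quantification over the empty set in parts 1 and 2 — one must be clear that ``for all $a\in\emptyset$'' is true and ``for some $a\in\emptyset$'' is false — and in part 3 the only real idea is to recognize that monotonicity follows from the join axiom once $a\leq b$ is rewritten as $a\vee b=b$ using the earlier proposition.
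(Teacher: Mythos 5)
Your proof is correct. Parts 1 and 2 are identical to the paper's argument: instantiate the meet axiom and the join axiom at $S=\emptyset$ and note that universal quantification over $\emptyset$ is vacuously true while existential quantification over $\emptyset$ is vacuously false. Part 3 takes the dual route to the paper's: you rewrite $a\leq b$ as $a\vee b=b$ via Proposition~\ref{lemj} and invoke the arbitrary-join axiom on $S=\{a,b\}$, so $x\models a$ gives $x\models a\vee b=b$ directly; the paper instead rewrites $a\leq b$ as $a\wedge b=a$ and uses the finite-meet axiom, obtaining $x\models a$ iff ($x\models a$ and $x\models b$). The two arguments are exact mirror images and equally short — yours uses only the ``if'' direction of the join axiom, the paper's uses the biconditional of the meet axiom — and since a topological system postulates both axioms, neither variant is more economical or more general here; each would be the one available if only one of the two axioms were assumed.
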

\begin{proof}
1. $x\models\top$ if and only if $x\models\bigwedge\emptyset$. Hence $x\models a$ for all $a\in\emptyset$, which holds for any $x\in X$.\\
2. $x\models\bot$ if and only if $x\models a$ for some $a\in\emptyset$, clearly which is not possible for any $x\in X$.\\
3. As $A$ is a frame so by Proposition \ref{lemj}, $a\leq b$ if and only if $a\wedge b=a$ holds. So, $x\models a$ if and only if $x\models a\wedge b$. Hence $x\models a$ if and only if $x\models a$ and $x\models b$. Therefore for the given situation if $x\models a$ then $x\models b$. 
\end{proof}
\begin{proposition}
For any topological space $(X,\tau)$, $(X,\in,\tau)$ forms a topological system.
\end{proposition}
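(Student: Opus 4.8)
The final statement is: For any topological space $(X,\tau)$, $(X,\in,\tau)$ forms a topological system.

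Let me think about what needs to be verified here.The plan is to verify directly that the triple $(X,\in,\tau)$ meets the two defining conditions of a topological system, using the fact that $(\tau,\subseteq)$ is already a frame (Proposition~\ref{tau}). Here the satisfiability relation is ordinary set membership: for a point $x\in X$ and an open set $U\in\tau$, we read $x\models U$ as $x\in U$. So the entire proof reduces to translating the abstract frame operations on $\tau$ (finite meet $=$ finite intersection, arbitrary join $=$ arbitrary union) into statements about membership, and checking that membership interacts with them exactly as the two axioms demand.

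First I would record the identifications we need. By Proposition~\ref{tau}, $\tau$ is a frame under $\subseteq$, and in this frame the finite meet of $S=\{U_1,\dots,U_n\}\subseteq\tau$ is the intersection $U_1\cap\cdots\cap U_n$, while the arbitrary join of any $S\subseteq\tau$ is the union $\bigcup S$. (The empty meet is $\top=X$ and the empty join is $\bot=\emptyset$, matching the convention in the topological-system definition.) With these identifications in hand, both axioms become standard set-theoretic tautologies.

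For axiom~(1), given a finite $S\subseteq\tau$, I would check that $x\models\bigwedge S$ iff $x\in\bigcap S$ iff $x\in U$ for every $U\in S$ iff $x\models U$ for all $U\in S$, where the middle equivalence is just the definition of intersection; the empty case gives $x\in X$, which holds for every $x$. For axiom~(2), given an arbitrary $S\subseteq\tau$, I would check that $x\models\bigvee S$ iff $x\in\bigcup S$ iff $x\in U$ for some $U\in S$ iff $x\models U$ for some $U\in S$, the middle equivalence being the definition of union; the empty case gives $x\in\emptyset$, which holds for no $x$, as required.

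There is really no hard step here: the only substantive point is the observation that the lattice operations of the frame $(\tau,\subseteq)$ coincide with the set operations $\cap$ and $\bigcup$, and this is exactly what Proposition~\ref{tau} (together with the definition of the topology $\tau$) already guarantees. So the main thing to be careful about is bookkeeping the two boundary cases ($S=\emptyset$ on each side), which align precisely with parts~1 and~2 of Proposition~\ref{tb}. Once the operation-identifications are stated, the two equivalences are immediate and the proof is complete.
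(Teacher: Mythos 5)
Your proof is correct. The paper states this proposition without any proof, treating it as immediate, and your verification is exactly the routine argument being elided: identify the frame operations of $(\tau,\subseteq)$ with finite intersection and arbitrary union (as guaranteed by the definition of a topology together with Proposition~\ref{tau}), then observe that membership satisfies the two axioms by the definitions of $\bigcap$ and $\bigcup$, with the empty-meet case $\bigwedge\emptyset=X$ and empty-join case $\bigvee\emptyset=\emptyset$ handled as in Proposition~\ref{tb}. Your explicit attention to the two boundary cases $S=\emptyset$ is the only place where care is needed, and you handle both correctly.
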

\begin{proposition}
For any frame $A$, $(Hom(A,\{\bot,\top\}),\models,A)$, where \\$Hom(A,\{\bot,\top\})$ is the set of frame homomorphisms from $A$ to $\{\bot,\top\}$ and $v\models a$ if and only if $v(a)=\top$, where $v\in Hom(A,\{\bot,\top\})$, forms a topological system.
\end{proposition}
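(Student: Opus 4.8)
The plan is to verify directly the two closure conditions in the definition of a topological system for the relation $v\models a \Leftrightarrow v(a)=\top$. The entire argument rests on a single observation: since each $v\in Hom(A,\{\bot,\top\})$ is by definition a frame homomorphism, it preserves arbitrary joins and finite meets, so any assertion about $v$ applied to a join or meet formed in $A$ can be pushed inside and evaluated in the two-element frame $\{\bot,\top\}$, where joins and meets behave transparently as the classical ``or''/``and'' on truth values.

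First I would treat condition (1). Fixing a finite subset $S\subseteq A$, preservation of finite meets gives $v(\bigwedge S)=\bigwedge_{a\in S}v(a)$, a finite meet computed in $\{\bot,\top\}$. In that frame a finite meet equals $\top$ precisely when each of its arguments equals $\top$; hence $v(\bigwedge S)=\top$ if and only if $v(a)=\top$ for all $a\in S$, which is exactly $v\models\bigwedge S \Leftrightarrow (v\models a \text{ for all } a\in S)$. The empty case is subsumed automatically: $\bigwedge\emptyset=\top$ and preservation of the empty meet forces $v(\top)=\top$, so $v\models\top$ for every $v$, matching the vacuously true right-hand side and also recovering part~1 of Proposition~\ref{tb}.

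Next I would treat condition (2) in the same spirit, now using that $v$ preserves \emph{arbitrary} joins: for any subset $S\subseteq A$ we have $v(\bigvee S)=\bigvee_{a\in S}v(a)$ in $\{\bot,\top\}$, and an arbitrary join in the two-element frame equals $\top$ if and only if at least one argument is $\top$. This yields $v\models\bigvee S \Leftrightarrow (v\models a \text{ for some } a\in S)$. Once more the empty case falls out cleanly: $\bigvee\emptyset=\bot$ and $v(\bot)=\bot$, so $v\not\models\bot$, consistent with the vacuously false disjunction over the empty set.

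The computations are short, so the only points genuinely needing care are the empty-set boundary cases and the elementary fact that $\{\bot,\top\}$ is itself a frame in which finite meets and arbitrary joins act as logical conjunction and disjunction; I would state these explicitly rather than leave them implicit, as this is where the whole reduction is justified. A secondary remark worth making is that the definition of a topological system requires $X$ to be non-empty, so strictly one should note that $Hom(A,\{\bot,\top\})$ is understood to be non-empty (equivalently, that the construction is intended for frames admitting at least one such homomorphism); this is the standard ``points of a frame'' situation and does not affect the closure verifications above.
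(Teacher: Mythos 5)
Your proof is correct and is essentially the argument the paper intends: the proposition is stated there without an explicit proof, but the paper's proof of the analogous fuzzy result (Lemma~\ref{3.17_1}, that $(Hom(A,[0,1]),\models_*,A)$ is a fuzzy topological system) uses exactly your reduction, pushing $v$ through finite meets and arbitrary joins and evaluating in the value frame. Your explicit handling of the empty-set boundary cases ($\bigwedge\emptyset=\top$, $\bigvee\emptyset=\bot$) and your remark that $Hom(A,\{\bot,\top\})$ must be non-empty for the definition of a topological system to apply are careful points the paper leaves implicit.
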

Let us introduce a notion of extent. Let $(X,\models,A)$ be a topological system. Then \textbf{extent} \index{extent} of $a\in A$, denoted by $ext(a)$, is defined by $ext(a)=\{x\in X\mid x\models a\}$. Loosely speaking, extent of a property $a$ is the collection of all those objects in $X$, which have the property $a\in A$. 
\begin{proposition}
Let $(X,\models,A)$ be a topological system. Then $ext(A)=\{ext(a)\}_{a\in A}$ forms a topology on $X$ i.e., $(X,ext(A))$ is a topological space.
\end{proposition}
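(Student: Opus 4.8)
The plan is to verify directly the three defining axioms of a topology for the collection $ext(A)=\{ext(a)\}_{a\in A}$. The driving observation is that the assignment $a\mapsto ext(a)$ transports the frame operations of $A$ to the set-theoretic operations on subsets of $X$: it turns finite meets into finite intersections and arbitrary joins into arbitrary unions. Each of these correspondences is simply a restatement of one of the two topological-system axioms, so the whole proof reduces to reading those axioms through the definition $ext(a)=\{x\in X\mid x\models a\}$.

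First I would dispose of the requirement $\emptyset,\,X\in ext(A)$. By \rprop{tb} we have $x\models\top$ for every $x\in X$ and $x\models\bot$ for no $x\in X$, whence $ext(\top)=X$ and $ext(\bot)=\emptyset$; since $\top,\bot\in A$, both sets lie in $ext(A)$. Next, for closure under finite intersection I would prove the identity $ext(a_1)\cap ext(a_2)=ext(a_1\wedge a_2)$: unwinding the definition, $x\in ext(a_1)\cap ext(a_2)$ means $x\models a_1$ and $x\models a_2$, which by axiom (1) applied to the finite set $S=\{a_1,a_2\}$ is equivalent to $x\models a_1\wedge a_2$, i.e. $x\in ext(a_1\wedge a_2)$. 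As $a_1\wedge a_2\in A$, the intersection again belongs to $ext(A)$, and an easy induction extends this to any finite intersection.

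For arbitrary unions I would establish $\bigcup_{i\in I}ext(a_i)=ext\bigl(\bigvee_{i\in I}a_i\bigr)$ in the same spirit: $x$ lies in the left-hand side iff $x\models a_i$ for some $i$, which by axiom (2) applied to $S=\{a_i\mid i\in I\}$ is equivalent to $x\models\bigvee_{i\in I}a_i$. Since $A$ is a frame the join $\bigvee_{i\in I}a_i$ exists in $A$, so the union is an extent and hence a member of $ext(A)$. Together these three facts show $(X,ext(A))$ is a topological space.

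The argument involves no genuine obstacle; each step is a direct translation between the satisfiability axioms and the Boolean structure of subsets of $X$. If any point demands care, it is the bookkeeping at the extremes — ensuring the empty finite meet and the empty join are handled correctly. These are exactly $\top$ and $\bot$, already covered in the first step, so the empty intersection $X$ and the empty union $\emptyset$ emerge consistently from the finite-meet and arbitrary-join clauses rather than needing to be treated as separate cases.
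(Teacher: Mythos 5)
Your proposal is correct and follows essentially the same route as the paper's own proof: extents of $\bot$ and $\top$ give $\emptyset$ and $X$ via Proposition \ref{tb}, binary intersections reduce to the finite-meet axiom through $ext(a_1)\cap ext(a_2)=ext(a_1\wedge a_2)$, and arbitrary unions reduce to the join axiom through $\bigcup_i ext(a_i)=ext(\bigvee_i a_i)$. Your explicit remark on the empty meet and empty join is a small clarification the paper leaves implicit, but the argument is otherwise identical.
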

\begin{proof}
As $\bot\in A$, $ext(\bot)=\{x\in X\mid x\models \bot\}=\emptyset\in ext(A)$, using Proposition \ref{tb}.
Also for $\top\in A$, $$ext(\top)=\{x\in X\mid x\models \top\}=X\in ext(A),$$ using Proposition \ref{tb}.
Let $ext(a_1)$, $ext(a_2)\in ext(A)$. Then 
\begin{align*}
ext(a_1)\cap ext(a_2) & = \{x\in X\mid x\models a_1\}\cap\{x\in X\mid x\models a_2\}\\
& = \{x\in X\mid x\models a_1\ and\ x\models a_2\}\\
& = \{x\in X\mid x\models a_1\wedge a_2\}\\
& = ext(a_1\wedge a_2).
\end{align*}
As $a_1,\ a_2\in A$, $a_1\wedge a_2\in A$ and so, $ext(a_1\wedge a_2)\in ext(A)$. Therefore $ext(a_1)\cap ext(a_2)\in ext(A)$.
Similarly for $ext(a_i)\in ext(A)$, $\bigcup_iext(a_i)\in ext(A)$.
\end{proof}
A \textbf{continuous map} \index{continuous map!between!topological systems} from the topological system $(X,\models,A)$ to the topological system $(Y,\models',B)$ is a pair of maps $(f,g)$ where $f$ is a set map from $X$ to $Y$, $g$ is a frame homomorphism from $B$ to $A$, satisfying the condition viz. $x\models g(b)$ if and only if $f(x)\models' b$ for any $x\in X$ and $b\in B$. 
\subsection*{\centering Category Theory}
\begin{definition}[Categories] \cite{AJ} A \textbf{category} \index{category} is a quadruple \\$\mathbb{A}=(O,hom,id,\circ)$ consisting of 
\begin{enumerate}
\item A class $O$, whose members are called $\mathbb{A}-objects$,
\item For each pair $(A,B) $ of $\mathbb{A}-objects$, a set $hom(A,B)$, whose members are called $\mathbb{A}-morphisms$ from $A$ to $B$ [The statement $f\in hom(A,B)$ is expressed most graphically by using arrows],
\item For each $\mathbb{A}-object$ $A$, a morphism $id_A:A\longrightarrow A$ called $\mathbb{A}-identity$ on $A$,
\item A composition law associating with each $\mathbb{A}-morphism$ $f:A\longrightarrow B$ and each $\mathbb{A}-morphism$ $g:B\longrightarrow C$ an $\mathsf{A}-morphism$ $g\circ f:A\longrightarrow C$, called the composite of $f$ and $g$, subject to the following conditions:
\subitem (a) composition is associative i.e. for morphisms $f:A\longrightarrow B,$ $g:B\longrightarrow C$ and $h:C\longrightarrow D$ the equation $h\circ (g\circ f)=(h\circ g)\circ f$ holds,
\subitem (b) $\mathbb{A}-identities$ act as identities with respect to composition i.e. for $\mathbb{A}-morphisms$ $f:A\longrightarrow B,$ we have $id_B\circ f=f$ and $f\circ id_A=f$,
\subitem (c) the sets $hom(A,B)$ are pairwise disjoint. 
\end{enumerate}
\end{definition}
\begin{definition} For any category $\mathbb{A}=(O,hom_\mathbb{A},id,\circ)$ the \textbf{dual (or opposite) category} \index{dual category}\index{opposite category} of $\mathbb{A}$ is the category $\mathbb{A}^{op}=(O,hom_{\mathbb{A}^{op}},id,\circ^{op})$ where \\$hom_{\mathbb{A}^{op}}(A,B)=hom_\mathbb{A}(B,A)$ and $f\circ^{op} g=g\circ f.$ (Thus $\mathbb{A}$ and $\mathbb{A}^{op}$ have the same objects and, except for their direction, the same morphisms). 
\end{definition}
\begin{example}
If $\mathbb{A}=(X,\leq)$ is a pre-ordered class, then $\mathbb{A}^{op}=(X,\geq)$.
\end{example}
\begin{definition}[Functors] If $\mathbb{A}$ and $\mathbb{B}$  are categories, then the \textbf{functor} \index{functor} $ F$ from $\mathbb{A}$ to $\mathbb{B}$ is a function that assigns to each $\mathbb{A}-object$ $A$, a $\mathbb{B}-object$ $F(A)$, and to each $\mathbb{A}-morphism$ $f:A\longrightarrow A'$, a $\mathbb{B}-morphism$ $F(f):F(A)\longrightarrow F(A')$, in such a way that
\begin{enumerate}
\item $F-$ preserves compositions, i.e. $F(f\circ g)=F(f)\circ F(g)$ whenever $f\circ g$ is defined and
\item $F-$ preserve identity morphisms, i.e. $F(id_A)=id_{F(A)}$ for each $\mathbb{A}-object$ A.
\end{enumerate}
\end{definition}
\begin{example}
For any category $\mathbb{A}$, there is the identity functor $id_A: \mathbb{A}\longrightarrow \mathbb{A}$ defined by $id_A(f:A\longrightarrow B)=f:A\longrightarrow B$.
\end{example}
\begin{definition}[Natural Transformation] Let $F,\ G:\mathbb{A}\longrightarrow \mathbb{B}$ be two functors. A \textbf{natural transformation} \index{natural transformation} $\tau$ from $F$ to $G$ (denoted by $\tau:F\longrightarrow G$) is a function that assigns to each $\mathbb{A}-object$ $A$, a $\mathbb{B}-morphism$ $\tau_A:FA\longrightarrow GA$ in such a way that the following naturality condition holds: for each $\mathbb{A}-morphism$ $f:A\longrightarrow A',$ the square
\begin{center}
\begin{tikzpicture}[description/.style={fill=white,inner sep=2pt}] 
    \matrix (m) [matrix of math nodes, row sep=2.5em, column sep=2.5em]
    { FA&&GA  \\
         FA'&&GA'\\ }; 
    \path[->,font=\scriptsize] 
        (m-1-1) edge node[auto] {$\tau_A$} (m-1-3)
        (m-1-1) edge node[auto] {$Ff$} (m-2-1)
        (m-2-1) edge node[auto,swap] {$\tau_{A'}$} (m-2-3)
        (m-1-3) edge node[auto,swap] {$Gf$} (m-2-3)
       % (m-1-5) edge node[auto,swap] {$j$} (m-1-3)
       % (m-1-5) edge node[auto] {$\psi$} (m-2-3)
        %(m-1-3) edge node[auto] {$G(\hat{f})$} (m-2-3)
         ;
\end{tikzpicture}
\end{center}
commutes.
\end{definition}
Let $G:\mathbb{A}\longrightarrow \mathbb{B}$ be a functor, and let $B$ be a $\mathbb{B}$-object.
\begin{definition}[$G$-structured arrow and $G$-costructured arrow]\label{2.1} The concept of $G$-structured arrow and $G$-costructured arrow are defined as follows.
\begin{enumerate}
\item  A $G$-$\mathbf{structured\ arrow\ with\ domain}$ \index{$G$-structured arrow} $B$ is a pair $(f,A)$ consisting of an $\mathbb{A}$-object $A$ and a $\mathbb{B}$-morphism $ f:B\longrightarrow GA$.
\item  A $G$-$\mathbf{costructured\ arrow\ with\ codomain}$ \index{$G$-costructured arrow} $B$ is a pair $(A,f)$ consisting of an $\mathbb{A}$-object $ A$ and a $\mathbb{B}$-morphism $ f:GA\longrightarrow B$.
\end{enumerate}
\end{definition}
\begin{definition}[$G$-universal arrow and $G$-couniversal arrow]\label{2.2}
$G$-universal arrow and $G$-couniversal arrow are defined as follows:
\begin{enumerate}
\item A $G$-structured arrow $(g,A)$ with domain $B$ is called $G$-$\mathbf{universal}$ \index{$G$-universal arrow} for $B$ provided that for each $G$-structured arrow $(g',A')$ with domain $B$, there exists a unique $\mathbb{A}$-morphism $\hat f:A \longrightarrow A'$ with $g'=G(\hat f)\circ g$. i.e., s.t. the triangle
\begin{center}
\begin{tikzpicture}[description/.style={fill=white,inner sep=2pt}] 
    \matrix (m) [matrix of math nodes, row sep=2.5em, column sep=2.5em]
    {B & &GA  \\
        & & GA' \\ }; 
    \path[->,font=\scriptsize] 
        (m-1-1) edge node[auto] {$g$} (m-1-3)
        (m-1-1) edge node[auto,swap] {$g'$} (m-2-3)
       % (m-1-5) edge node[auto,swap] {$j$} (m-1-3)
        %(m-1-5) edge node[auto] {$\psi$} (m-2-3)
        (m-1-3) edge node[auto] {$G\hat f$} (m-2-3);
\end{tikzpicture}
\end{center}
commutes.
\\ We can also represent the above statement by the following diagram.
\begin{center}    
\begin{tabular}{ l | r  } 
$\mathbb{B}$ & $\mathbb{A}$\\
\hline
{\begin{tikzpicture}[description/.style={fill=white,inner sep=2pt}] 
    \matrix (m) [matrix of math nodes, row sep=2.5em, column sep=2.5em]
    {B & &GA  \\
        & & GA' \\ }; 
    \path[->,font=\scriptsize] 
        (m-1-1) edge node[auto] {$g$} (m-1-3)
        (m-1-1) edge node[auto,swap] {$g'$} (m-2-3)
       % (m-1-5) edge node[auto,swap] {$j$} (m-1-3)
        %(m-1-5) edge node[auto] {$\psi$} (m-2-3)$%
        (m-1-3) edge node[auto] {$G\hat f$} (m-2-3);
\end{tikzpicture}}  &  {\begin{tikzpicture}[description/.style={fill=white,inner sep=2pt}] 
    \matrix (m) [matrix of math nodes, row sep=2.5em, column sep=2.5em]
    {& &A  \\
        & &A' \\ }; 
    \path[->,font=\scriptsize]

        %(m-1-5) edge node[auto,swap] {$j$} (m-1-3)
        %(m-1-5) edge node[auto] {$\psi$} (m-2-3)
        (m-1-3) edge node[auto] {$\hat f$} (m-2-3);
\end{tikzpicture}}\\
\end{tabular}
\end{center}
The diagram above indicates the fact that $g:B\longrightarrow GA$ is the $G$-universal arrow provided that for given $g':B\longrightarrow GA'$ there exist a unique $\mathbb{A}-morphisim$ $\hat f:A\longrightarrow A'$ s.t. the triangle commutes.
\item A $G$-costructured arrow $(A,g)$ with codomain $B$ is called $G$-$\mathbf{couniversal}$ \index{$G$-couniversal arrow} for $B$ provided that for each $G$-costructured arrow $(A',g')$ with codomain $B$, there exists a unique $\mathbb{A}$-morphism $\hat f:A' \longrightarrow A$ with $g'=g\circ G(\hat f)$. i.e., s.t. the triangle
\begin{center}
\begin{tikzpicture}[description/.style={fill=white,inner sep=2pt}] 
    \matrix (m) [matrix of math nodes, row sep=2.5em, column sep=2.5em]
    { GA&&B  \\
         GA'\\ }; 
    \path[->,font=\scriptsize] 
        (m-1-1) edge node[auto] {$g$} (m-1-3)
        (m-2-1) edge node[auto] {$G\hat f$} (m-1-1)
        (m-2-1) edge node[auto,swap] {$g'$} (m-1-3)
        %(m-1-1) edge node[auto,swap] {$f$} (m-2-3)
       % (m-1-5) edge node[auto,swap] {$j$} (m-1-3)
       % (m-1-5) edge node[auto] {$\psi$} (m-2-3)
        %(m-1-3) edge node[auto] {$G(\hat{f})$} (m-2-3)
         ;
\end{tikzpicture}
\end{center}
commutes.
\\ We can also represent the above statement by the following diagram.
\begin{center}    
\begin{tabular}{ l | r  } 
$\mathbb{B}$ & $\mathbb{A}$\\
\hline
{\begin{tikzpicture}[description/.style={fill=white,inner sep=2pt}] 
    \matrix (m) [matrix of math nodes, row sep=2.5em, column sep=2.5em]
    { GA&&B  \\
         GA'\\ }; 
    \path[->,font=\scriptsize] 
        (m-1-1) edge node[auto] {$g$} (m-1-3)
        (m-2-1) edge node[auto] {$G\hat f$} (m-1-1)
        (m-2-1) edge node[auto,swap] {$g'$} (m-1-3)
        %(m-1-1) edge node[auto,swap] {$f$} (m-2-3)
       % (m-1-5) edge node[auto,swap] {$j$} (m-1-3)
       % (m-1-5) edge node[auto] {$\psi$} (m-2-3)
        %(m-1-3) edge node[auto] {$G(\hat{f})$} (m-2-3)
         ;
\end{tikzpicture}}  &  {\begin{tikzpicture}[description/.style={fill=white,inner sep=2pt}] 
    \matrix (m) [matrix of math nodes, row sep=2.5em, column sep=2.5em]
    {& &A  \\
        & &A' \\ }; 
    \path[->,font=\scriptsize]

        %(m-1-5) edge node[auto,swap] {$j$} (m-1-3)
        %(m-1-5) edge node[auto] {$\psi$} (m-2-3)
        (m-2-3) edge node[auto] {$\hat f$} (m-1-3);
\end{tikzpicture}}\\
\end{tabular}
\end{center}
The diagram above indicates the fact that $g:GA\longrightarrow B$ is the $G$-couniversal arrow provided that for given $g':GA'\longrightarrow B'$ there exist a unique $\mathbb{A}-morphisim$ $\hat f:A'\longrightarrow A$ s.t. the triangle commutes.
\end{enumerate}
\end{definition}
\begin{definition}[Left Adjoint and Right Adjoint]\label{2.3}
Left Adjoint and Right Adjoint are defined as follows.
\begin{enumerate}
\item A functor $G:\mathbb{A}\longrightarrow \mathbb{B}$ is said to be $\mathbf{left\ adjoint}$ \index{left adjoint} provided that  for every $\mathbb{B}$-object $B$, there exists a $G$-couniversal arrow with codomain $B$.
\\As a consequence, there exists a natural transformation $\eta:id_A\longrightarrow FG$ ($id_A$ is the identity morphism from $A$ to $A$), where  $F:\mathbb{B}\longrightarrow \mathbb{A}$ is a functor s.t. for given $f:A\longrightarrow FB$ there exists a unique $\mathbb{B}$-morphism $\hat f:GA\longrightarrow B$ s.t. the triangle
\begin{center}
\begin{tikzpicture}[description/.style={fill=white,inner sep=2pt}] 
    \matrix (m) [matrix of math nodes, row sep=2.5em, column sep=2.5em]
    { A&&FGA  \\
       &&FB\\ }; 
    \path[->,font=\scriptsize] 
        (m-1-1) edge node[auto] {$\eta_A$} (m-1-3)
        (m-1-1) edge node[auto,swap] {$f$} (m-2-3)
        (m-1-3) edge node[auto] {$F\hat f$} (m-2-3)
        %(m-1-1) edge node[auto,swap] {$f$} (m-2-3)
       % (m-1-5) edge node[auto,swap] {$j$} (m-1-3)
       % (m-1-5) edge node[auto] {$\psi$} (m-2-3)
        %(m-1-3) edge node[auto] {$G(\hat{f})$} (m-2-3)
         ;
\end{tikzpicture}
\end{center}
commutes.
\\ This $\eta$ is called the unit \index{unit} of the adjunction.
\\ Hence, we have the diagram of unit \index{diagram of!unit} as follows:
\begin{center}    
\begin{tabular}{ l | r  } 
$\mathbb{A}$ & $\mathbb{B}$\\
\hline
{\begin{tikzpicture}[description/.style={fill=white,inner sep=2pt}] 
    \matrix (m) [matrix of math nodes, row sep=2.5em, column sep=2.5em]
    {A & &FGA  \\
        & &FB \\ }; 
    \path[->,font=\scriptsize] 
        (m-1-1) edge node[auto] {$\eta$} (m-1-3)
        (m-1-1) edge node[auto,swap] {$f$} (m-2-3)
       % (m-1-5) edge node[auto,swap] {$j$} (m-1-3)
        %(m-1-5) edge node[auto] {$\psi$} (m-2-3)
        (m-1-3) edge node[auto] {$F\hat f$} (m-2-3);
\end{tikzpicture}}  &  {\begin{tikzpicture}[description/.style={fill=white,inner sep=2pt}] 
    \matrix (m) [matrix of math nodes, row sep=2.5em, column sep=2.5em]
    {& &GA  \\
        & &B \\ }; 
    \path[->,font=\scriptsize]

        %(m-1-5) edge node[auto,swap] {$j$} (m-1-3)
        %(m-1-5) edge node[auto] {$\psi$} (m-2-3)
        (m-1-3) edge node[auto] {$\hat f$} (m-2-3);
\end{tikzpicture}}\\
\end{tabular}
\end{center}
\item A functor $G:\mathbb{A}\longrightarrow \mathbb{B}$ is said to be $\mathbf{right\ adjoint}$ \index{right adjoint} provided that  for every $\mathbb{B}$-object $B$, there exists a $G$-universal arrow with domain $B$.
\\ From the definition above, it follows that there exists a natural transformation $\xi:FG\longrightarrow id_A$ ($id_A$ is the identity morphism from $A$ to $A$), where $F:\mathbb{B}\longrightarrow\mathbb{A}$ is a functor s.t. for given $f':FB\longrightarrow A$, there exists a unique $\mathbb{B}$-morphism $\hat f:B\longrightarrow GA$ s.t the triangle
\begin{center}
\begin{tikzpicture}[description/.style={fill=white,inner sep=2pt}] 
    \matrix (m) [matrix of math nodes, row sep=2.5em, column sep=2.5em]
    { FGA&&A  \\
         FB\\ }; 
    \path[->,font=\scriptsize] 
        (m-1-1) edge node[auto] {$\xi_A$} (m-1-3)
        (m-2-1) edge node[auto] {$F\hat f$} (m-1-1)
        (m-2-1) edge node[auto,swap] {$f'$} (m-1-3)
        %(m-1-1) edge node[auto,swap] {$f$} (m-2-3)
       % (m-1-5) edge node[auto,swap] {$j$} (m-1-3)
       % (m-1-5) edge node[auto] {$\psi$} (m-2-3)
        %(m-1-3) edge node[auto] {$G(\hat{f})$} (m-2-3)
         ;
\end{tikzpicture}
\end{center}
commutes.
\\ This $\xi$ is called the co-unit \index{co-unit} of the adjunction.
\\ Hence, we have the diagram of co-unit \index{diagram of!co-unit} as follows:
\begin{center}    
\begin{tabular}{ l | r  } 
$\mathbb{A}$ & $\mathbb{B}$\\
\hline
{\begin{tikzpicture}[description/.style={fill=white,inner sep=2pt}] 
    \matrix (m) [matrix of math nodes, row sep=2.5em, column sep=2.5em]
    { FGA&&A  \\
         FB\\ }; 
    \path[->,font=\scriptsize] 
        (m-1-1) edge node[auto] {$\xi$} (m-1-3)
        (m-2-1) edge node[auto] {$F\hat f$} (m-1-1)
        (m-2-1) edge node[auto,swap] {$f'$} (m-1-3)
        %(m-1-1) edge node[auto,swap] {$f$} (m-2-3)
       % (m-1-5) edge node[auto,swap] {$j$} (m-1-3)
       % (m-1-5) edge node[auto] {$\psi$} (m-2-3)
        %(m-1-3) edge node[auto] {$G(\hat{f})$} (m-2-3)
         ;
\end{tikzpicture}}  &  {\begin{tikzpicture}[description/.style={fill=white,inner sep=2pt}] 
    \matrix (m) [matrix of math nodes, row sep=2.5em, column sep=2.5em]
    {& &GA  \\
        & &B \\ }; 
    \path[->,font=\scriptsize]

        %(m-1-5) edge node[auto,swap] {$j$} (m-1-3)
        %(m-1-5) edge node[auto] {$\psi$} (m-2-3)
        (m-2-3) edge node[auto] {$\hat f$} (m-1-3);
\end{tikzpicture}}\\
\end{tabular}
\end{center}
\end{enumerate}
\end{definition} 
\chapter{Category of Fuzzy Topological Systems}
\section{Introduction}\blfootnote{The results of this chapter appear in {\bf \cite{MP} P. Jana and M.K. Chakraborty: \textit{Categorical relationships of fuzzy topological systems with fuzzy topological spaces and underlying algebras}, Ann. of Fuzzy Math. and Inform., \textbf{8}, 2014, no. 5, pp. 705--727.}}
 A topological system\index{topological!system} is a mathematical concept introduced in \cite{SV} in the year 1989. It is a triple $(X,\models , A)$, where $X$ is a non-empty set, $A$ is a frame and $\models $ is a binary relation from $X$ to $A$. $x\models a$ where $x\in X$ and $a\in A$ is read as `$x$ satisfies $a$'. Vickers introduced the notion of topological system in the context of the so-called geometric logic, which was further studied in \cite{PJT}. Topological systems and frames form the categories $\mathbf{TopSys}$ and $\mathbf{Frm}$ respectively. These are linked by two adjoint functors \cite{SV}.

Now, the notion of satisfaction may be graded. From various standpoints it is reasonable to assume that in some situations $x$ satisfies $a$ to some extent or to a degree. In other words, the binary relation $\models $ may be a fuzzy relation \cite{LZ}. The extent to which $x$ satisfies $a$ shall be denoted by $grade(x\models a)$ or simply $gr(x\models a)$, which is an element of some suitable value set. The value set in fuzzy literature has been generalized from the unit interval [0,1] in \cite{LZ} to a lattice  \cite{JG}. In this chapter, the value set shall be chosen as the unit interval [0,1] with the natural ordering. Thus, we arrive at the notion of a {\it fuzzy topological system}. In fact, by a fuzzy topological system, we shall understand a triple $(X,\models , A)$, where $X$ is a non-empty set, $A$ is an algebra, which is at least a  lattice with arbitrary join and finite meet, and $\models $ is a fuzzy relation from $X$ to $A$, grades of relatedness being assigned from a suitable value set. In this chapter we shall consider the fuzzy topological system which has a frame $A$ and [0,1] as the value set. An attempt in this direction was taken in \cite{AV}, but there are differences with our approach which will be shown later. 

There exists a huge literature on fuzzy topological spaces, \cite{MA,MB,CC,UH,RL,SR} to mention only a few. This area of study appeared immediately after fuzzy set theory was introduced by Lotfi Zadeh in 1965. One is naturally inclined to study the relationships between fuzzy topological systems and existing fuzzy topological spaces, particularly with respect to the viewpoint of categorical duality and equivalence. As has been observed by Vickers \cite{SV}, topological spaces make a special kind of topological system, and we shall see here that fuzzy topological spaces make a special kind of fuzzy topological system.

Generalization of topological systems to fuzzy topological systems is important from another angle too. In first-order logic, (semantic) consequence relation is defined in terms of satisfaction. When the satisfaction relation is fuzzy, the corresponding consequence relation may be either crisp or fuzzy. In the first case, we get fuzzy logic and in the second case, logic of graded consequence \cite{MK,MSD}. The logic of graded consequence falls within the broad category of fuzzy logic, but is marked by its distinction from the variety of fuzzy logics in \cite{PH,VN,JP} with respect to the nature of logical consequence \cite{MSD}. Thus, fuzzy topological systems may be viewed as abstract generalizations of fuzzy logics with graded consequence. However, we shall not delve into this issue in this chapter -- this is included in Chapter 6 in details. We shall study some categorical relationships between the categories of fuzzy topological systems and of fuzzy topological spaces. Duality between a subcategory  of fuzzy topological systems and category of frames shall be established. Another subcategory of fuzzy topological systems will be shown to be equivalent to the category of fuzzy topological spaces.

We have also defined sum and product of fuzzy topological systems. These definitions are different from those given in \cite{AV}, and we consider them to be more appropriate in the fuzzy context where the satisfaction relation is graded.
\section{Categories: [0,1]-Top, [0,1]-TopSys, Frm and their interrelationships}
\subsection{Categories}
\subsection*{$\mathbf{[0,1]}$-$\mathbf{Top}$}
\begin{definition}[Fuzzy topological space]\label{3.1_1t}
 Let $X$ be a set and $\tau$ be a collection of fuzzy subsets of $X$ s.t.
\begin{enumerate}
\item $\tilde\emptyset$, $\tilde X\in \tau$, where $\tilde\emptyset (x)=0$, for all $x\in X$ and $\tilde X (x)=1$, for all $x\in X$;
\item $\tilde A_i\in\tau$ for $i\in I\ \text{implies}\ \bigcup_{i\in I}\tilde A_i\in \tau$, where $\bigcup_{i\in I}\tilde{A_i}(x)=sup_{i\in I}(\tilde{A_i}(x))$;
\item $\tilde A_1$, $\tilde A_2\in\tau\ \text{implies}\ \tilde A_1\cap\tilde A_2\in\tau$, where $\tilde{A_1}\cap \tilde{A_2}=min\{\tilde{A_1}(x), \tilde{A_2}(x)\}$.
\end{enumerate}
Then $(X,\tau)$ is called a \textbf{fuzzy topological space}\index{fuzzy topological!space}. $\tau$ is called a \textbf{fuzzy topology} over $X$. 
\end{definition}
Elements of $\tau$ are called \textbf{fuzzy open sets}\index{fuzzy!open sets} of fuzzy topological space $(X,\tau)$.
\begin{definition}[Fuzzy continuous map]\label{3.2_1t} Let $(X,\tau_1)$ and $(Y,\tau_2)$ be two fuzzy topological spaces. A function $f:X\longrightarrow Y$ is said to be \textbf{fuzzy continuous}\index{fuzzy!continuous} if and only if for every fuzzy open set $\tilde B$ of $Y$, $f^{-1}(\tilde B)$ is an open set of $X$.
\end{definition}
The identity map\index{identity!map} $id_X:X\longrightarrow X$ is fuzzy continuous map.
\begin{lemma}\label{3.4_1t}
Let $(X,\tau_1)$, $(Y,\tau_2)$, $(Z,\tau_3)$ be fuzzy topological spaces and $f:X\longrightarrow Y$, $g:Y\longrightarrow Z$ be fuzzy continuous. Then $g\circ f:X\longrightarrow Z$ is fuzzy continuous. 
\end{lemma}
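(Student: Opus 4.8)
The plan is to reduce everything to the definition of fuzzy continuity (Definition \ref{3.2_1t}) together with the contravariant behaviour of inverse images of fuzzy subsets under composition. The only genuine content is the identity $(g\circ f)^{-1}(\tilde C)=f^{-1}(g^{-1}(\tilde C))$ for every fuzzy subset $\tilde C$ of $Z$; once this is in hand, fuzzy continuity of $g\circ f$ follows immediately from two successive applications of the hypotheses. So first I would fix an arbitrary fuzzy open set $\tilde C\in\tau_3$ of $Z$ and aim to show that $(g\circ f)^{-1}(\tilde C)\in\tau_1$.

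The key step is to verify the pointwise equality of the two fuzzy subsets $(g\circ f)^{-1}(\tilde C)$ and $f^{-1}(g^{-1}(\tilde C))$ of $X$. Using the defining rule $h^{-1}(\tilde B)(x)=\tilde B(h(x))$ for the inverse image of a $[0,1]$-fuzzy subset, the computation I would carry out is, for each $x\in X$,
\begin{align*}
(g\circ f)^{-1}(\tilde C)(x) &= \tilde C\big((g\circ f)(x)\big)=\tilde C\big(g(f(x))\big)\\
&= g^{-1}(\tilde C)\big(f(x)\big)=f^{-1}\big(g^{-1}(\tilde C)\big)(x).
\end{align*}
Since this holds for all $x\in X$, the two fuzzy subsets coincide as membership functions, hence as fuzzy subsets.

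Next I would apply the hypotheses in order. Because $g$ is fuzzy continuous and $\tilde C\in\tau_3$, Definition \ref{3.2_1t} gives $g^{-1}(\tilde C)\in\tau_2$, i.e. $g^{-1}(\tilde C)$ is a fuzzy open set of $Y$. Because $f$ is fuzzy continuous and $g^{-1}(\tilde C)\in\tau_2$, the same definition yields $f^{-1}(g^{-1}(\tilde C))\in\tau_1$. Combining this with the identity established above gives $(g\circ f)^{-1}(\tilde C)\in\tau_1$, and since $\tilde C$ was an arbitrary fuzzy open set of $Z$, we conclude that $g\circ f$ is fuzzy continuous. I do not expect any real obstacle here: the argument is purely formal, and the single thing to check with care is that the inverse-image operation composes contravariantly at the level of membership functions, which is exactly the displayed computation. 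The statement is precisely the routine verification needed to guarantee that composition is well defined, so that fuzzy topological spaces and fuzzy continuous maps assemble into a category, complementing the already-noted fact that each $id_X$ is fuzzy continuous.
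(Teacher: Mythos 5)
Your proof is correct and follows essentially the same route as the paper: fix $\tilde C\in\tau_3$, apply continuity of $g$ then of $f$, and conclude via $(g\circ f)^{-1}(\tilde C)=f^{-1}(g^{-1}(\tilde C))$. The only difference is that you verify this contravariant composition identity pointwise from $h^{-1}(\tilde B)(x)=\tilde B(h(x))$, whereas the paper uses it without comment — a harmless (indeed welcome) extra level of explicitness.
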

\begin{proof}
Let $\tilde{T}\in \tau_3$. As $g:Y\longrightarrow Z$ is fuzzy continuous, $g^{-1}(\tilde{T})\in \tau_2$. Now $f:X\longrightarrow Y$ is fuzzy continuous and $g^{-1}(\tilde{T})\in \tau_2$. So, $f^{-1}(g^{-1}(\tilde{T}))\in \tau_1$ and consequently $(g\circ f)^{-1}(\tilde{T})\in\tau_1$. Thereby the composition map\index{composition!map} $g\circ f:X\longrightarrow Z$ is fuzzy continuous.
\end{proof}
\begin{lemma}\label{3.5_1t}
Let $(X,\tau_1)$, $(Y,\tau_2)$, $(Z,\tau_3)$, $(U,\tau_4)$ be fuzzy topological spaces, and $f:X\longrightarrow Y$, $g:Y\longrightarrow Z$, $h:Z\longrightarrow U$ be fuzzy continuous. Then $h\circ (g\circ f)=(h\circ g)\circ f$ holds.
\end{lemma}
\begin{proof}
$h\circ (g\circ f)(x)=h((g\circ f)(x))=h(g(f(x)))=(h\circ g)(f(x))=((h\circ g)\circ f)(x)$.
\end{proof}
\begin{lemma}\label{3.6_1t}
For the fuzzy continuous map $f:X\longrightarrow Y$, it holds that $ id_Y\circ f=f$ and $f\circ id_X=f$.
\end{lemma}
\begin{proof}
Proof is straight forward.
\end{proof}
From Lemmas \ref{3.4_1t}, \ref{3.5_1t} and \ref{3.6_1t} we get the following theorem.
\begin{theorem}\label{3.7_1t}
Fuzzy topological spaces together with continuous maps form the category $\mathbf{[0,1]}$-$\mathbf{Top}$\index{category!$\mathbf{[0,1]}$-$\mathbf{Top}$} \cite{SR}.
\end{theorem}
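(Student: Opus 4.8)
The plan is to verify the four defining axioms of a category (Definition on Categories) for the structure whose objects are fuzzy topological spaces and whose morphisms are fuzzy continuous maps. Most of the work has already been distributed across the three preceding lemmas, so the proof will be a bookkeeping argument assembling them into the definition. First I would specify the data: take the class of objects $O$ to be all fuzzy topological spaces $(X,\tau)$; for each pair of objects let $hom\bigl((X,\tau_1),(Y,\tau_2)\bigr)$ be the set of fuzzy continuous maps $f:X\longrightarrow Y$ in the sense of Definition~\ref{3.2_1t}; take the identity on $(X,\tau)$ to be the set-theoretic identity $id_X$; and take composition $\circ$ to be the usual composition of functions.

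Next I would check that this data is well defined, i.e.\ that composition and identities actually land inside the hom-sets. That identities are fuzzy continuous is exactly the remark preceding Lemma~\ref{3.4_1t} (the identity map $id_X$ is fuzzy continuous, since $id_X^{-1}(\tilde B)=\tilde B$), so each $id_X$ is a legitimate morphism. That $g\circ f$ is again fuzzy continuous whenever $f$ and $g$ are is precisely Lemma~\ref{3.4_1t}, so composition is a well-defined map $hom(B,C)\times hom(A,B)\longrightarrow hom(A,C)$. With the data shown to be closed under the operations, the three axioms follow directly: associativity of $\circ$ is Lemma~\ref{3.5_1t}, and the identity laws $id_Y\circ f=f$ and $f\circ id_X=f$ are Lemma~\ref{3.6_1t}.

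The only axiom not covered by the cited lemmas is condition (c) of the definition, the requirement that the hom-sets be pairwise disjoint. I would handle this in the standard way: if two fuzzy continuous maps are to be regarded as distinct morphisms between distinct pairs of spaces even when they coincide as set functions, one tags each morphism with its domain and codomain, i.e.\ replaces a bare function $f$ by the triple $\bigl((X,\tau_1),f,(Y,\tau_2)\bigr)$. This tagging makes $hom(A,B)$ and $hom(A',B')$ disjoint whenever $(A,B)\neq(A',B')$ while leaving all the computations in Lemmas~\ref{3.4_1t}--\ref{3.6_1t} untouched, since those arguments only use the underlying function. Concluding, I would remark that the four conditions are now verified and therefore the quadruple $\bigl(O,hom,id,\circ\bigr)$ is a category, which we denote $\mathbf{[0,1]}$-$\mathbf{Top}$.

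I do not expect any genuine obstacle here, as the theorem is essentially a packaging statement; the substantive content lives in the preceding lemmas. If there is a subtle point to watch, it is making sure the identity and composition really are morphisms (the well-definedness step), together with the minor formal nuisance of disjointness of hom-sets, which is a convention rather than a computation. I would therefore keep the proof short, citing each lemma against the corresponding axiom and noting the disjointness convention, rather than re-deriving the continuity facts.
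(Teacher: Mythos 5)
Your proposal is correct and takes essentially the same route as the paper, which proves the theorem simply by citing Lemmas~\ref{3.4_1t}, \ref{3.5_1t} and \ref{3.6_1t} (together with the preceding remark that $id_X$ is fuzzy continuous) against the category axioms. Your additional treatment of hom-set disjointness via tagging morphisms with domain and codomain is a standard convention the paper leaves implicit, not a divergence in approach.
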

\subsection*{$\mathbf{[0,1]}$-$\mathbf{TopSys}$}
\begin{definition}[Fuzzy topological system]\label{3.1_1}
 A \textbf{fuzzy topological system}\index{fuzzy topological!system} is a triple $(X,\models ,A)$, where $X$ is a non-empty set, $A$ is a frame and $\models$ is a fuzzy relation from $X$ to $A$ such that
 \begin{enumerate}%[$\bullet$]
\item if $S$ is a \textbf{finite subset} of $A$, then
$gr(x\models \bigwedge S) = inf\{ gr(x\models s)\mid s\in S\}$;
\item if $S$ is   \textbf{any subset} of $A$, then
$gr(x\models \bigvee S)=sup\{ gr(x\models s)\mid s\in S\}$.
\end{enumerate}
\end{definition}
\begin{proposition}
For any $x\in X$, $gr(x\models \top)=1$ and $gr(x\models \bot)=0$, where $\top$ and $\bot$ denotes the top and the bottom elements of the frame $A$, respectively.
\end{proposition}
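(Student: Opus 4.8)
The plan is to derive both equalities directly from the two defining conditions of a fuzzy topological system by choosing $S$ to be the empty set, exactly as in the crisp analogue (Proposition~\ref{tb}). The key observation is that $\top = \bigwedge \emptyset$ and $\bot = \bigvee \emptyset$, so the grades $gr(x\models\top)$ and $gr(x\models\bot)$ are governed by conditions (1) and (2) of Definition~\ref{3.1_1} applied to the empty subset $S=\emptyset$ (which is of course a finite subset of $A$).

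First I would treat the top element. Since $\top = \bigwedge\emptyset$ and $\emptyset$ is a finite subset of $A$, condition (1) gives
\begin{equation*}
gr(x\models\top) = gr\bigl(x\models\textstyle\bigwedge\emptyset\bigr) = \inf\{gr(x\models s)\mid s\in\emptyset\}.
\end{equation*}
The infimum over the empty set, taken in the value set $[0,1]$, is the top of $[0,1]$, namely $1$. Hence $gr(x\models\top)=1$ for every $x\in X$.

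Next I would treat the bottom element symmetrically. Since $\bot=\bigvee\emptyset$ and $\emptyset$ is any subset of $A$, condition (2) gives
\begin{equation*}
gr(x\models\bot) = gr\bigl(x\models\textstyle\bigvee\emptyset\bigr) = \sup\{gr(x\models s)\mid s\in\emptyset\}.
\end{equation*}
The supremum over the empty set in $[0,1]$ is the bottom of $[0,1]$, namely $0$. Hence $gr(x\models\bot)=0$ for every $x\in X$, which completes the proof.

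The only subtle point — and the one I would be careful to state explicitly rather than gloss over — is the convention that an empty infimum in $[0,1]$ equals $1$ and an empty supremum equals $0$. This is the direct numerical counterpart of the lattice-theoretic facts $\top=\bigwedge\emptyset$ and $\bot=\bigvee\emptyset$ recorded earlier in the preliminaries, so there is no genuine difficulty; the whole argument is a one-line specialization of the defining axioms to $S=\emptyset$. I do not anticipate any real obstacle, only the need to invoke the empty-meet and empty-join conventions consistently in the value lattice $[0,1]$.
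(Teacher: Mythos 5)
Your proposal is correct and follows exactly the paper's own proof: both apply conditions (1) and (2) of Definition~\ref{3.1_1} with $S=\emptyset$, using $\top=\bigwedge\emptyset$, $\bot=\bigvee\emptyset$, and the conventions $\inf\emptyset=1$, $\sup\emptyset=0$ in $[0,1]$. Your explicit remark about the empty-meet and empty-join conventions is a slightly more careful statement of the same one-line argument.
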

\begin{proof}It will be enough to check the following:
(1) $gr(x\models\top)=gr(x\models\bigwedge\emptyset)=inf\{gr(x\models s)\mid s\in\emptyset\}=inf \ \emptyset=1$.
(2) $gr(x\models\bot)=gr(x\models\bigvee\emptyset)=sup\{gr(x\models s)\mid s\in\emptyset\}=sup \ \emptyset=0$.
\end{proof}
In \cite{AV} a fuzzy topological system\index{fuzzy topological!system} is defined by the following conditions viz.
\begin{enumerate}
\item if $S$ is a  \textbf{finite subset} of $A$, then $gr(x\models \bigwedge S)\leq gr(x\models s)$ for all $s\in S$;
\item if $S$ is \textbf{any subset} of $A$, then $gr(x\models \bigvee S)\leq gr(x\models s)$ for some $s\in S$;
\item $gr(x\models \top)=1$ and $gr(x\models \bot)=0$ for all $x\in X$.
\end{enumerate}  
The above conditions allow one to get a fuzzy topological space from a fuzzy topological system, but not conversely. However, using Definition \ref{3.1_1} of fuzzy topological system, we can produce a functor($Ext$) from the category of fuzzy topological systems to the category of fuzzy topological spaces and also a functor($J$) from the category of fuzzy topological spaces to fuzzy topological systems. Not only that, we can show that these two functors are adjoint. 
Definition \ref{3.1_1} is a natural one and its advantages over the other definition would be clear in  the sequel.
\begin{definition}[Spatial]\label{spatial}
A fuzzy topological system $(X,\models,A)$ is said to be \textbf{spatial}\index{spatial} if and only if (for any $x\in X$, $gr(x\models a)=gr(x\models b)$) imply ($a=b$), for any $a,\ b\in A$.
\end{definition}
\begin{definition}[Localic]\label{localic}
A fuzzy topological system is \textbf{localic}\index{localic} if and only if for any $x, \ y\in X$, $x\neq y$ imply $gr(x\models a)\neq gr(y\models a)$, for some $a\in A$.
\end{definition}
\begin{definition}\label{3.2_1} Let $D=(X,\models ,A)$ and $E=(Y,\models ',B)$ be fuzzy topological systems. A \textbf{continuous map}\index{continuous map!between!fuzzy topological systems} $f:D\longrightarrow E$ is a pair $(f_1,f_2)$ where,
\begin{enumerate}
\item $f_1:X\longrightarrow Y$ is a function;
\item $f_2:B\longrightarrow A$ is a frame homomorphism;
\item $gr(x\models f_2(b))=gr(f_1(x)\models' b)$, for all $x\in X$ and $b\in B$.
\end{enumerate}
\end{definition}
\begin{definition}\label{3.3_1}
Let $D=(X,\models,A)$ be a fuzzy topological system. The \textbf{identity map}\index{identity!map} $I_D:D\longrightarrow D$ is a pair $(I_1,I_2)$ where
$I_1: X  \longrightarrow X$ and $I_2:A  \longrightarrow A$.

\end{definition}
Let $D=(X,\models',A)$, $E=(Y,\models'',B)$, $F=(Z,\models''',C)$. Let $(f_1,f_2):D\longrightarrow E$ and $(g_1,g_2):E\longrightarrow F$ be continuous maps\index{continuous!map}. The \textbf{composition}\index{composition} $(g_1,g_2)\circ (f_1,f_2):D\longrightarrow F$ is a pair $(g_1\circ f_1,f_2\circ g_2)$, where $g_1\circ f_1:X\longrightarrow Z$ and
$f_2\circ g_2:C\longrightarrow A.$
\begin{lemma}\label{3.4_1}
$(g_1,g_2)\circ (f_1,f_2):D\longrightarrow F$ is continuous, where $(f_1,f_2):D\longrightarrow E$, $(g_1,g_2):E\longrightarrow F$ are continuous. 
\end{lemma}
\begin{proof}
We only show that $gr(x\models' f_2\circ g_2(c))=gr(g_1\circ f_1(x)\models''' c)$.
\begin{align*}
gr(x\models' f_2\circ g_2(c))& =gr(x\models' f_2(g_2(c)))\\
& =gr(f_1(x)\models'' g_2(c))\tag{as $(f_1,f_2)$ is continuous}\\%\label{eq:as $(f_1,f_2)$ is continuous}\\
& =gr(g_1(f_1(x))\models''' c) \tag {as $(g_1,g_2)$ is continuous}\\%\label{eq:as $(g_1,g_2)$ is continuous}\\
& =gr(g_1\circ f_1(x)\models''' c).
\end{align*}
This finishes the proof.\end{proof}
\begin{lemma}\label{3.5_1}
Let $D,\ E,\ F,\ G$ be fuzzy topological systems, and $f:D\longrightarrow E$, $g:E\longrightarrow F$, $h:F\longrightarrow G$ are continuous maps. Then $h\circ (g\circ f)=(h\circ g)\circ f$ holds.
\end{lemma}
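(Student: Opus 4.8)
The plan is to reduce the equality of the two composite continuous maps to the associativity of ordinary function composition, exactly as in the proof of \rlemma{3.5_1t}, but now carried out on both coordinates of the pair defining a continuous map between systems.

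First I would fix notation, writing $D=(X,\models,A)$, $E=(Y,\models',B)$, $F=(Z,\models'',C)$, $G=(U,\models''',K)$, and splitting each map into its two components: $f=(f_1,f_2)$, $g=(g_1,g_2)$, $h=(h_1,h_2)$, where $f_1:X\longrightarrow Y$, $g_1:Y\longrightarrow Z$, $h_1:Z\longrightarrow U$ are set maps while $f_2:B\longrightarrow A$, $g_2:C\longrightarrow B$, $h_2:K\longrightarrow C$ are frame homomorphisms pointing in the opposite direction. By \rlemma{3.4_1} all the composites appearing below are again continuous maps, so both sides of the asserted identity are genuine morphisms, and it suffices to check that they agree coordinatewise.

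Next I would simply expand both composites using the composition rule $(g_1,g_2)\circ(f_1,f_2)=(g_1\circ f_1,\,f_2\circ g_2)$. This yields $h\circ(g\circ f)=\bigl(h_1\circ(g_1\circ f_1),\,(f_2\circ g_2)\circ h_2\bigr)$ on the left and $(h\circ g)\circ f=\bigl((h_1\circ g_1)\circ f_1,\,f_2\circ(g_2\circ h_2)\bigr)$ on the right. Comparing first coordinates, the equality $h_1\circ(g_1\circ f_1)=(h_1\circ g_1)\circ f_1$ is precisely associativity of set-function composition (the content of \rlemma{3.5_1t}). Comparing second coordinates, $(f_2\circ g_2)\circ h_2=f_2\circ(g_2\circ h_2)$ is the same associativity applied to the frame homomorphisms. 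Since both coordinates match, the two composite maps are equal.

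I do not expect any genuine obstacle here: the only point demanding attention is the contravariant (order-reversing) behaviour of the second coordinate, so that one keeps the domains and codomains of $f_2$, $g_2$, $h_2$ correctly aligned; once the composites are written out explicitly, the conclusion is immediate from associativity of ordinary composition, and no use of the grade conditions in Definition~\ref{3.2_1} is required.
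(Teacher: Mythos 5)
Your proposal is correct and follows essentially the same route as the paper's own proof: expand both sides via the composition rule $(g_1,g_2)\circ(f_1,f_2)=(g_1\circ f_1,\,f_2\circ g_2)$ and invoke associativity of ordinary function composition coordinatewise, on the set maps and on the frame homomorphisms. Your added remarks (continuity of the composites via \rlemma{3.4_1} and the caution about the contravariant second coordinate) are harmless refinements of the same argument.
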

\begin{proof}
$h\circ (g\circ f)=(h_1,h_2)\circ((g_1,g_2)\circ (f_1,f_2))=(h_1,h_2)\circ (g_1\circ f_1,f_2\circ g_2)=(h_1\circ(g_1\circ f_1),(f_2\circ g_2)\circ h_2)=((h_1\circ g_1)\circ f_1, f_2\circ (g_2\circ h_2))=(h_1\circ g_1,g_2\circ h_2)\circ (f_1,f_2)=((h_1,h_2)\circ (g_1,g_2))\circ (f_1,f_2)=(h\circ g)\circ f$.
\end{proof}
\begin{lemma}\label{3.6_1}
For the continuous map $f:D\longrightarrow E$, it holds that $ id_E\circ f=f$ and $f\circ id_D=f$.
\end{lemma}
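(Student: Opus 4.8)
The plan is to unfold the two relevant definitions and observe that each identity law reduces to the familiar identity law for ordinary composition, applied separately in each coordinate. Writing $D=(X,\models,A)$ and $E=(Y,\models',B)$, a continuous map $f:D\longrightarrow E$ is, by Definition~\ref{3.2_1}, a pair $f=(f_1,f_2)$ with $f_1:X\longrightarrow Y$ a set map and $f_2:B\longrightarrow A$ a frame homomorphism. By Definition~\ref{3.3_1} the identity maps are $id_D=(id_X,id_A)$ and $id_E=(id_Y,id_B)$, where $id_X,id_Y$ are the identity set maps and $id_A,id_B$ the identity frame homomorphisms.

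First I would compute $id_E\circ f$. Here $f$ is the first factor and $id_E$ the second, so by the composition rule $(g_1,g_2)\circ(f_1,f_2)=(g_1\circ f_1,\,f_2\circ g_2)$ we obtain $id_E\circ f=(id_Y\circ f_1,\,f_2\circ id_B)$. Since $id_Y\circ f_1=f_1$ as set maps and $f_2\circ id_B=f_2$ as frame homomorphisms, this equals $(f_1,f_2)=f$.

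Symmetrically, for $f\circ id_D$ the map $id_D$ is the first factor and $f$ the second, so the same composition rule yields $f\circ id_D=(f_1\circ id_X,\,id_A\circ f_2)=(f_1,f_2)=f$, again using only that the identity set map and the identity frame homomorphism act as units for ordinary composition.

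There is essentially no obstacle here; the one point deserving attention is the direction-reversing behaviour of the second coordinate in the composition rule, which is why the identity $id_B$ appears on the \emph{right} of $f_2$ in the first computation while $id_A$ appears on the \emph{left} of $f_2$ in the second. Once the two coordinates are handled independently, both identities are immediate. This lemma, together with Lemmas~\ref{3.4_1} and~\ref{3.5_1}, supplies the remaining category axioms, so that fuzzy topological systems with continuous maps indeed form a category.
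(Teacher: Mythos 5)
Your proof is correct and matches the paper's intent exactly: the paper simply states ``Proof is straight forward,'' and what you have written is precisely that straightforward verification, unfolding Definitions~\ref{3.2_1} and~\ref{3.3_1} and applying the unit laws for ordinary composition in each coordinate, with the correct handling of the contravariant second component. Nothing to add.
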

\begin{proof}
Proof is straight forward.
\end{proof}
From Lemmas \ref{3.4_1}, \ref{3.5_1} and \ref{3.6_1} we get the following theorem.
\begin{theorem}\label{3.7_1}
Fuzzy topological systems together with continuous maps form the category $\mathbf{[0,1]}$-$\mathbf{TopSys}$\index{category!$\mathbf{[0,1]}$-$\mathbf{TopSys}$}.
\end{theorem}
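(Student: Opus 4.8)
The plan is to verify directly that the quadruple $(O,hom,id,\circ)$ assembled from the preceding definitions satisfies the four clauses in the Definition of Categories. First I would declare the data: let $O$ be the class of all fuzzy topological systems in the sense of \rdef{3.1_1}; for fuzzy topological systems $D$ and $E$ let $hom(D,E)$ be the collection of continuous maps $f=(f_1,f_2)\colon D\longrightarrow E$ as in \rdef{3.2_1}; for each $D$ let $id_D$ be the identity map of \rdef{3.3_1}; and let $\circ$ be the composition defined in the paragraph just before \rlemma{3.4_1}. The remaining work is to check that each of these pieces is legitimate and that the three subconditions (a)--(c) hold.

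The three lemmas already do the substantive checking, so the proof is essentially bookkeeping that pairs each axiom with its lemma. Specifically: (i) I must confirm that $\circ$ is a genuine composition law, i.e. that the composite of two continuous maps is again continuous, so that $g\circ f\in hom(D,F)$ whenever $f\in hom(D,E)$ and $g\in hom(E,F)$; this is exactly \rlemma{3.4_1}, where the frame-homomorphism component $f_2\circ g_2$ and the set-map component $g_1\circ f_1$ are shown to satisfy the compatibility condition $gr(x\models' f_2\circ g_2(c))=gr(g_1\circ f_1(x)\models''' c)$. (ii) Associativity of $\circ$ (subcondition (a)) is \rlemma{3.5_1}, which unwinds both triple composites into the same pair of component maps. (iii) The identity laws $id_E\circ f=f$ and $f\circ id_D=f$ (subcondition (b)) are \rlemma{3.6_1}. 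I would also record that each $hom(D,E)$ is a set rather than a proper class, since a continuous map is determined by a pair consisting of a function $X\longrightarrow Y$ and a frame homomorphism $B\longrightarrow A$.

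The one point that does not follow verbatim from the lemmas is subcondition (c), the pairwise disjointness of the hom-sets, and this is where I expect the only real friction. As literally written, a continuous map is a bare pair $(f_1,f_2)$, so in principle the same pair could represent a morphism in two different hom-sets; the standard remedy, which I would invoke here, is to regard a morphism as tagged by its domain and codomain, so that a member of $hom(D,E)$ records $D$ and $E$ and hence cannot lie in any $hom(D',E')$ with $(D',E')\neq(D,E)$. With that convention (c) holds automatically. Assembling (i)--(iii) together with the set-hood and disjointness remarks shows that $(O,hom,id,\circ)$ meets every clause of the definition, so fuzzy topological systems with continuous maps form the category $\mathbf{[0,1]}$-$\mathbf{TopSys}$, completing the proof.
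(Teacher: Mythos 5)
Your proposal is correct and follows exactly the route the paper takes: the paper's entire proof consists of citing Lemmas \ref{3.4_1}, \ref{3.5_1} and \ref{3.6_1} for closure under composition, associativity, and the identity laws, precisely as you do. Your additional remarks on the smallness of the hom-sets and on tagging morphisms by domain and codomain to secure pairwise disjointness are standard bookkeeping that the paper leaves implicit, not a divergence in method.
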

It is known that frames together with frame homomorphisms form the category $\mathbf{Frm}$\index{category!$\mathbf{Frm}$} \cite{AJ}.
We shall now investigate the interrelations among the categories $\mathbf{[0,1]}$-$\mathbf{TopSys}$, $\mathbf{[0,1]}$-$\mathbf{Top}$ and $\mathbf{Frm}$.
\subsection{Functors}
In this subsection, we define various functors required to prove our desired results.
\subsection*{Functor $Ext$ from $\mathbf{[0,1]}$-$\mathbf{TopSys}$ to $\mathbf{[0,1]}$-$\mathbf{Top}$}\index{functor!Ext}
\begin{definition}\label{3.8_1}
Let $(X,\models,A)$ be a fuzzy topological system. For each $a \in  A$, its \textbf{extent}\index{extent} in $(X,\models, A)$ is a mapping $ext(a)$ from $X$ to $[0,1]$ given by $ext(a)(x)=gr(x\models a)$.
%i.e. $ext(a):X\longrightarrow [0,1]$ such that $ext(a)(x)=gr(x\models a)$\\
Also $ext(A)=\{ext(a)\}_{a\in A}$.
\end{definition}
\begin{lemma}\label{3.9_1}
$ext(A)$ forms a fuzzy topology \cite{CC} on $X$.
\end{lemma}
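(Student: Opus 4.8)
The plan is to verify the three defining conditions of a fuzzy topology (Definition~\ref{3.1_1t}) for the family $ext(A)$, in each case exhibiting the required fuzzy open set as the extent $ext(a)$ of a suitable element $a\in A$ and using the two satisfiability axioms of a fuzzy topological system (Definition~\ref{3.1_1}) to translate the frame operations on $A$ into the pointwise operations on $[0,1]$.

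First I would dispose of the two constant maps. Since $A$ is a frame it contains $\bot$ and $\top$, and by the Proposition immediately following Definition~\ref{3.1_1} we have $ext(\bot)(x)=gr(x\models\bot)=0$ and $ext(\top)(x)=gr(x\models\top)=1$ for every $x\in X$. Hence $ext(\bot)=\tilde\emptyset$ and $ext(\top)=\tilde X$, so both the empty and full fuzzy sets lie in $ext(A)$, giving condition~(1).

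Next, for closure under arbitrary union, given a family $\{ext(a_i)\}_{i\in I}\subseteq ext(A)$ I would take the join $\bigvee_{i\in I}a_i$, which exists in $A$ because $A$ is a frame. Condition~(2) of Definition~\ref{3.1_1} yields, for each $x\in X$, $ext(\bigvee_{i}a_i)(x)=gr(x\models\bigvee_{i}a_i)=\sup_{i}gr(x\models a_i)=\sup_{i}ext(a_i)(x)$, which is exactly the pointwise supremum defining $\bigcup_{i}ext(a_i)$. Thus $\bigcup_{i}ext(a_i)=ext(\bigvee_{i}a_i)\in ext(A)$. Closure under finite intersection is handled analogously: for $ext(a_1),ext(a_2)\in ext(A)$ I would use the meet $a_1\wedge a_2\in A$ and condition~(1) with $S=\{a_1,a_2\}$ (where the infimum over a two-element set is the binary minimum) to obtain $ext(a_1\wedge a_2)(x)=\min\{ext(a_1)(x),ext(a_2)(x)\}$, i.e. $ext(a_1)\cap ext(a_2)=ext(a_1\wedge a_2)\in ext(A)$, giving conditions~(3) and~(2).

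There is no genuine obstacle here; the argument is a direct dictionary between the frame structure of $A$ and the lattice operations on $[0,1]$, supplied precisely by the two satisfiability axioms. The only points deserving a moment's care are that these axioms must be invoked in the correct direction — the grade of satisfaction of a meet or join equals the infimum or supremum of the grades — and that in the finite case the relevant infimum over the two-element set coincides with the binary minimum used in the definition of $\cap$.
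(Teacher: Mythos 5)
Your proposal is correct and follows essentially the same route as the paper's own proof: both verify the three conditions of Definition~\ref{3.1_1t} by identifying $\tilde\emptyset=ext(\bot)$, $\tilde X=ext(\top)$, $ext(a_1)\cap ext(a_2)=ext(a_1\wedge a_2)$ and $\bigcup_i ext(a_i)=ext(\bigvee_i a_i)$ via the two satisfiability axioms of Definition~\ref{3.1_1}. Your write-up is in fact slightly more explicit than the paper's, which dispatches the union case with a ``similarly''.
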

\begin{proof}
Let $ext(a_1),\ ext(a_2)\in ext(A)$. Then,
\begin{align*}
(ext(a_1)\cap ext(a_2))(x) & =min \{ext(a_1)(x),ext(a_2)(x)\}\\
& =min \{gr(x\models a_1),gr(x\models a_2)\}\\
& =gr(x\models a_1\wedge a_2)\\
& =ext(a_1\wedge a_2)(x).
\end{align*}
So, $ ext(a_1)\cap ext(a_2)\in ext(A)$. 

Similarly, $\bigcup_i ext(a_i)=ext(\bigvee_i a_i)\in ext(A)$, for $i\in I $ (an index set), i.e., $\bigcup_i ext(a_i)\in ext(A).$

Lastly we have $\tilde X (x)=1 =gr(x\models \top) =ext(\top)(x). \ \text{So},\ \tilde X =ext(\top)\in ext(A)$ and, $\tilde \emptyset (x)=0 =gr(x\models \bot) =ext(\bot)(x)$. So, $\tilde \emptyset =ext(\bot)\in ext(A)$.
\end{proof}
As a consequence $(X,ext(A))$ forms a fuzzy topological space.
\begin{lemma}\label{3.10_1}
If $(f_1,f_2):(X,\models ',A)\longrightarrow (Y,\models '',B)$ is continuous then \\$f_1:(X,ext(A))\longrightarrow (Y,ext(B))$ is fuzzy continuous.
\end{lemma}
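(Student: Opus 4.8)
The plan is to unwind the definition of fuzzy continuity and reduce everything to the defining compatibility condition (3) of a continuous map between fuzzy topological systems. By Lemma~\ref{3.9_1}, the fuzzy topology $ext(B)$ on $Y$ consists precisely of the maps $ext(b)$ for $b\in B$, where $ext(b)(y)=gr(y\models'' b)$. Hence, by Definition~\ref{3.2_1t}, to verify that $f_1$ is fuzzy continuous it suffices to show that for every $b\in B$ the preimage $f_1^{-1}(ext(b))$ is a fuzzy open set of $(X,ext(A))$, i.e.\ that it lies in $ext(A)$.

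First I would compute the preimage pointwise. Using the convention $f^{-1}(\tilde B)(x)=\tilde B(f(x))$ from the preliminaries, for each $x\in X$ we have
\begin{align*}
f_1^{-1}(ext(b))(x) &= ext(b)(f_1(x))\\
&= gr(f_1(x)\models'' b).
\end{align*}
At this point the key step is to invoke condition (3) of Definition~\ref{3.2_1}, namely $gr(x\models' f_2(b))=gr(f_1(x)\models'' b)$, which transports the satisfaction across $f_1$ at the cost of replacing $b$ by $f_2(b)$.

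Applying this identity and then the definition of extent (Definition~\ref{3.8_1}) in the system $(X,\models',A)$ gives
\begin{align*}
f_1^{-1}(ext(b))(x) &= gr(x\models' f_2(b))\\
&= ext(f_2(b))(x),
\end{align*}
so that $f_1^{-1}(ext(b))=ext(f_2(b))$ as fuzzy subsets of $X$. Since $f_2$ is a frame homomorphism from $B$ to $A$, we have $f_2(b)\in A$, whence $ext(f_2(b))\in ext(A)$ directly from the definition of $ext(A)$. This shows the preimage of an arbitrary fuzzy open set of $Y$ is fuzzy open in $X$, completing the argument.

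I do not expect any serious obstacle here; the only point requiring care is bookkeeping, since the symbol $ext$ denotes two different maps (one built from $\models''$ on $Y$, the other from $\models'$ on $X$), and one must keep track of which satisfaction relation each extent refers to. The real content is a single application of the compatibility condition (3), and the fact that $f_2$ maps into $A$ is exactly what guarantees the resulting extent is among the $ext(a)$ generating the fuzzy topology on $X$.
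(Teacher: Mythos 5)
Your proposal is correct and coincides with the paper's own proof: both compute $f_1^{-1}(ext(b))(x)=ext(b)(f_1(x))=gr(f_1(x)\models'' b)=gr(x\models' f_2(b))=ext(f_2(b))(x)$ via condition (3) of Definition~\ref{3.2_1}, and conclude $f_1^{-1}(ext(b))=ext(f_2(b))\in ext(A)$ since $f_2(b)\in A$. No differences worth noting.
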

\begin{proof}
$(f_1,f_2):(X,\models ',A)\longrightarrow (Y,\models '',B)$ is continuous.\\
So we have,
\begin{equation}\label{o}
gr(x\models 'f_2(b))=gr(f_1(x)\models ''b), \hspace{3mm} \textrm{for all}  \ x\in X,  \ b\in B.
\end{equation}
Now,
\begin{align*} 
(f_1^{-1}(ext(b)))(x) & = ext(b)(f_1(x))\\
& = gr(f_1(x)\models ''b)\\
& = gr(x\models 'f_2(b)),  \hspace{3mm} by\  \eqref{o}\\ 
& = ext(f_2(b))(x).
\end{align*}
So, $f_1^{-1}(ext(b))=ext(f_2(b))\in ext(A)$.
Therefore, $f_1$ is a fuzzy continuous map from $(X,ext(A))$ to $(Y,ext(B))$.
\end{proof}
\begin{definition}\label{3.11_1}
$\mathbf{Ext}$ is a functor from $\mathbf{[0,1]}$-$\mathbf{TopSys}$ to $\mathbf{[0,1]}$-$\mathbf{Top}$ defined as follows.\\
$Ext$ acts on an object $(X,\models ',A)$ as $Ext(X,\models ',A)=(X,ext(A))$ and on a morphism $(f_1,f_2)$ as $Ext(f_1,f_2)=f_1$.
\end{definition}
The above two Lemmas \ref{3.9_1} and \ref{3.10_1} show that $Ext$ is a functor.
%The diagram below express the above fact-
%\begin{center}
%\begin{tabular}{ M{4cm} |M{4cm}} 

%\tikzmark{a}{$\mathbf{[0,1]}$-$\mathbf{TopSys}$} & \tikzmark{1}{$\mathbf{[0,1]}$-$\mathbf{Top}$} \\ \hline 
%{
%\begin{center}\begin{tikzpicture}[description/.style={fill=white,inner sep=2pt}] 
%    \matrix (m) [matrix of math nodes, row sep=2.5em, column sep=2.5em]
%    {& &(X,\models ',A)  \\
%        & &(Y, \models '',B) \\ }; 
%    \path[->,font=\scriptsize] 

        %(m-1-5) edge node[auto,swap] {$j$} (m-1-3)
        %(m-1-5) edge node[auto] {$\psi$} (m-2-3)
%        (m-1-3) edge node[auto] {$(f_1,f_2)$} (m-2-3);
%\end{tikzpicture}\end{center}}  & {\begin{center}\begin{tikzpicture}[description/.style={fill=white,inner sep=2pt}] 
%    \matrix (m) [matrix of math nodes, row sep=2.5em, column sep=2.5em]
%    {& &(X,ext(A))  \\
%        & &(Y, ext(B)) \\ }; 
%    \path[->,font=\scriptsize] 

        %(m-1-5) edge node[auto,swap] {$j$} (m-1-3)
        %(m-1-5) edge node[auto] {$\psi$} (m-2-3)
%        (m-1-3) edge node[auto] {$f_1$} (m-2-3);
%\end{tikzpicture}\end{center}}\\
%\end{tabular}

%\end{center}
%\link{a}{1}
\subsection*{Functor $J$ from $\mathbf{[0,1]}$-$\mathbf{Top}$ to $\mathbf{[0,1]}$-$\mathbf{TopSys}$}
\begin{definition}\label{3.12_1}\index{functor!J}
$\mathbf{J}$ is a functor from $\mathbf{[0,1]}$-$\mathbf{Top}$ to $\mathbf{[0,1]}$-$\mathbf{TopSys}$ defined as follows.\\
$J$ acts on an object $(X,\tau)$ as $J(X,\tau)=(X, \in ,\tau)$ where $gr(x\in \tilde{T})=\tilde{T}(x)$ for $\tilde{T}$ in $\tau$ and on a morphism $f$ as $J(f)=(f,f^{-1})$.
\end{definition}
%The diagram below express the above facts-
%\begin{center}     
%\begin{tabular}{ M{4cm} |M{4cm}} 
%\tikzmark{q}{$\mathbf{[0,1]}$-$\mathbf{Top}$} & \tikzmark{2}{$\mathbf{[0,1]}$ $\mathbf{TopSys}$}\\
%\hline
%{\begin{tikzpicture}[description/.style={fill=white,inner sep=2pt}] 
%    \matrix (m) [matrix of math nodes, row sep=2.5em, column sep=2.5em]
%    {& &(X,\tau_1)  \\
%        & &(Y, \tau_2) \\ }; 
%    \path[->,font=\scriptsize] 

        %(m-1-5) edge node[auto,swap] {$j$} (m-1-3)
        %(m-1-5) edge node[auto] {$\psi$} (m-2-3)
%        (m-1-3) edge node[auto] {$f$} (m-2-3);
%\end{tikzpicture}}  &  {\begin{tikzpicture}[description/.style={fill=white,inner sep=2pt}] 
%    \matrix (m) [matrix of math nodes, row sep=2.5em, column sep=2.5em]
%    {& &(X,\in ,\tau_1)  \\
%        & &(Y,\in ,\tau_2) \\ }; 
%    \path[->,font=\scriptsize] 

        %(m-1-5) edge node[auto,swap] {$j$} (m-1-3)
        %(m-1-5) edge node[auto] {$\psi$} (m-2-3)
%        (m-1-3) edge node[auto] {$(f,f^{-1})$} (m-2-3);
%\end{tikzpicture}}\\
%\end{tabular}
%\link{q}{2}
%\end{center}
\begin{lemma}\label{3.13_1}
$(X,\in , \tau)$ is a fuzzy topological system.
\end{lemma}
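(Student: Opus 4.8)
The plan is to verify the three requirements of \rdef{3.1_1} in turn: that $\tau$ is a frame, that $\in$ is a genuine fuzzy relation from $X$ to $\tau$, and that the two satisfiability conditions (1) and (2) hold under the assignment $gr(x\in\tilde T)=\tilde T(x)$.

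First I would establish that $(\tau,\subseteq)$, ordered by the pointwise order $\tilde A\subseteq\tilde B\Leftrightarrow\tilde A(x)\le\tilde B(x)$ for all $x\in X$, is a frame. By \rdef{3.1_1t} the fuzzy topology $\tau$ contains $\tilde X$ and $\tilde\emptyset$, which serve as the top and bottom elements, is closed under arbitrary union $\bigcup$, and is closed under finite intersection $\cap$. These supply arbitrary joins and finite meets in $(\tau,\subseteq)$, computed pointwise by $\sup$ and $\min$ respectively. The only substantive point is the frame distributive law $\tilde A\cap\bigcup_i\tilde B_i=\bigcup_i(\tilde A\cap\tilde B_i)$, which I would reduce, by evaluating at each $x\in X$, to the identity $\min(a,\sup_i b_i)=\sup_i\min(a,b_i)$ valid in the value lattice $[0,1]$. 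Together with the observation that $gr(x\in\tilde T)=\tilde T(x)\in[0,1]$ defines a map $X\times\tau\longrightarrow[0,1]$, hence a fuzzy relation, this sets up all the data of a candidate fuzzy topological system.

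Next I would check conditions (1) and (2). For a finite subset $S=\{\tilde A_1,\dots,\tilde A_n\}$ of $\tau$, its meet $\bigwedge S$ is the intersection, so $gr(x\in\bigwedge S)=(\tilde A_1\cap\dots\cap\tilde A_n)(x)=\min_i\tilde A_i(x)=\inf\{gr(x\in s)\mid s\in S\}$, the finiteness making $\inf$ a $\min$; the empty case uses $\bigwedge\emptyset=\tilde X$ together with $gr(x\in\tilde X)=1=\inf\emptyset$. For an arbitrary subset $S$ of $\tau$, its join $\bigvee S$ is the union, so $gr(x\in\bigvee S)=(\bigcup S)(x)=\sup_{s\in S}s(x)=\sup\{gr(x\in s)\mid s\in S\}$; the empty case uses $\bigvee\emptyset=\tilde\emptyset$ together with $gr(x\in\tilde\emptyset)=0=\sup\emptyset$.

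The main obstacle is precisely the first step, namely confirming that $\tau$ is genuinely a frame, since conditions (1) and (2) then drop out immediately from the pointwise definitions of $\cup$ and $\cap$. In particular the infinite distributive law should be justified from the corresponding completely distributive property of $[0,1]$ rather than assumed, as it is this property of the value set that makes the construction work.
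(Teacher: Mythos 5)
Your proof is correct and follows essentially the same route as the paper: the paper's own proof consists exactly of your pointwise computations $gr(x\in\bigcup_i\tilde T_i)=\sup_i\{gr(x\in\tilde T_i)\}$ and $gr(x\in\tilde T_1\cap\tilde T_2)=\min\{gr(x\in\tilde T_1),gr(x\in\tilde T_2)\}$. The only difference is that you additionally spell out the frame structure of $(\tau,\subseteq)$ (via $\min$--$\sup$ distributivity in $[0,1]$) and the empty cases $\bigwedge\emptyset=\tilde X$, $\bigvee\emptyset=\tilde\emptyset$, which the paper leaves implicit; this is a welcome tightening, not a different argument.
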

\begin{proof}
It will be enough to show that $gr(x\in \bigcup_i \tilde{T_i})=sup_i\{ gr(x\in \tilde{T_i})\}$ and $gr(x\in \tilde{T_1}\cap \tilde{T_2})=inf\{ gr(x\in \tilde{T_1}),gr(x\in \tilde{T_2})\}$.
$$(1)\ gr(x\in \bigcup_i \tilde{T_i})  =(\bigcup_i \tilde{T_i})(x)
=sup_i\{\tilde{T_i}(x)\}
 =sup_i\{ gr(x\in \tilde{T_i})\}.$$
\begin{align*}
(2)\ gr(x\in \tilde{T_1}\cap \tilde{T_2}) & =(\tilde{T_1}\cap \tilde{T_2})(x)\\
& =min\{ \tilde{T_1}(x), \tilde{T_2}(x)\}\\
& =min\{ gr(x\in \tilde{T_1}), gr(x\in \tilde{T_2})\}\\
& =inf\{ gr(x\in \tilde{T_1}), gr(x\in \tilde{T_2})\}.
\end{align*}
This finishes the proof.
\end{proof}
\begin{lemma}\label{3.14_1}
$J(f)=(f,f^{-1})$ is continuous provided $f$ is fuzzy continuous.
\end{lemma}
\begin{proof}
We have $f:(X,\tau_1)\longrightarrow (Y,\tau_2)$ and $(f,f^{-1}):(X,\in ,\tau_1)\longrightarrow (Y,\in ,\tau_2)$.
We have to show  $$ gr(x\in f^{-1}(\tilde{T_2}))=gr(f(x)\in \tilde{T_2}), \  \textrm{for}\  \tilde{T_2}\in \tau_2.$$
Now, $$gr(x\in f^{-1}(\tilde{T_2}))=(f^{-1}(\tilde{T_2}))(x)=\tilde{T_2}(f(x))=gr(f(x)\in \tilde{T_2}).$$
Hence $J(f)=(f,f^{-1})$ is continuous.
\end{proof}
So $ J$ is a functor from $\mathbf{[0,1]}$-$\mathbf{Top}$ to $\mathbf{[0,1]}$-$\mathbf{TopSys}$.
\subsection*{Functor $fm$ from $\mathbf{[0,1]}$-$\mathbf{TopSys}$ to $\mathbf{Frm^{op}}$}
\begin{definition}\label{3.15_1}\index{functor!fm}
$\mathbf{fm}$ is a functor from $\mathbf{[0,1]}$-$\mathbf{TopSys}$ to $\mathbf{Frm^{op}}$ defined as follows.\\
$fm$ acts on an object $(X,\models ,A)$ as $fm(X,\models ,A)=A$ and on a morphism $(f_1,f_2)$ as $fm(f_1,f_2)=f_2$.
\end{definition}
 It is easy to see that $fm$ is a functor indeed.
%  \\ The diagram below express the above facts-
%  \begin{center}     
%\begin{tabular}{ l | r  } 
%$\mathsf{Fuzzy}$ $\mathsf{Top}$ $\mathsf{System}$ --------&---$\xrightarrow{fm}$---------- $\mathsf{Frm^{op}}$\\
%\hline
%{\begin{tikzpicture}[description/.style={fill=white,inner sep=2pt}] 
%    \matrix (m) [matrix of math nodes, row sep=2.5em, column sep=2.5em]
%    {& &(X,\models ',A)  \\
%        & &(Y, \models '',B) \\ }; 
%    \path[->,font=\scriptsize] 

        %(m-1-5) edge node[auto,swap] {$j$} (m-1-3)
        %(m-1-5) edge node[auto] {$\psi$} (m-2-3)
%        (m-1-3) edge node[auto] {$(f_1,f_2)$} (m-2-3);
%\end{tikzpicture}}  &  {\begin{tikzpicture}[description/.style={fill=white,inner sep=2pt}] 
%    \matrix (m) [matrix of math nodes, row sep=2.5em, column sep=2.5em]
%    {& &A  \\
%        & &B \\ }; 
%    \path[->,font=\scriptsize] 

        %(m-1-5) edge node[auto,swap] {$j$} (m-1-3)
        %(m-1-5) edge node[auto] {$\psi$} (m-2-3)
%        (m-1-3) edge node[auto] {$f_2$} (m-2-3);
%\end{tikzpicture}}\\
%\end{tabular}
%\end{center}
\subsection*{Functor $S$ from $\mathbf{Frm^{op}}$ to $\mathbf{[0,1]}$-$\mathbf{TopSys}$}
\begin{definition}\label{3.16_1}\index{functor!S}
Let $A$ be a frame, $Hom(A,[0,1])=\{ frame$ $ hom$ $v:A\longrightarrow [0,1]\}$.
\end{definition}
\begin{lemma}\label{3.17_1}
$(Hom(A,[0,1]),\models_*,A)$, where $A$ is a frame and $gr(v\models_* a)=v(a)$, is a fuzzy topological system.
\end{lemma}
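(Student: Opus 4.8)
The plan is to verify directly that the triple $(Hom(A,[0,1]),\models_*,A)$ satisfies the two defining conditions of a fuzzy topological system in \rdef{3.1_1}, exploiting the single fact that every $v\in Hom(A,[0,1])$ is by definition a frame homomorphism, hence preserves finite meets and arbitrary joins.

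First I would recall that $[0,1]$ under its natural ordering is itself a frame (indeed a complete lattice) in which the finite meet of a set coincides with its infimum and the arbitrary join coincides with its supremum. Through this translation of lattice operations into $\inf$ and $\sup$, the identities $v(\bigwedge S)=\inf\{v(s):s\in S\}$ and $v(\bigvee S)=\sup\{v(s):s\in S\}$ become exactly the assertions that $v$ preserves finite meets and arbitrary joins respectively.

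Second, for a finite subset $S\subseteq A$ I would compute
\begin{align*}
gr(v\models_* \textstyle\bigwedge S) &= v(\textstyle\bigwedge S)=\inf\{v(s):s\in S\}=\inf\{gr(v\models_* s):s\in S\},
\end{align*}
which is condition (1); and for an arbitrary subset $S\subseteq A$,
\begin{align*}
gr(v\models_* \textstyle\bigvee S) &= v(\textstyle\bigvee S)=\sup\{v(s):s\in S\}=\sup\{gr(v\models_* s):s\in S\},
\end{align*}
which is condition (2). In each chain the middle equality is precisely the frame-homomorphism property of $v$, and the outer equalities merely unfold the definition $gr(v\models_* a)=v(a)$.

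There is no genuine obstacle here: the argument is a one-line application of the definition of frame homomorphism. The only points deserving minor care are the empty-set instances $S=\emptyset$, which must reproduce $gr(v\models_* \top)=v(\top)=1$ and $gr(v\models_* \bot)=v(\bot)=0$; these hold because a frame homomorphism sends $\top$ to $\top=1$ and $\bot$ to $\bot=0$ in $[0,1]$, so the construction stays consistent with the proposition preceding this lemma.
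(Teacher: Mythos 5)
Your proposal is correct and is essentially the same argument as the paper's proof of Lemma~\ref{3.17_1}: the paper likewise just unfolds $gr(v\models_* a)=v(a)$ and invokes the frame-homomorphism property of $v$ to get $v(a\wedge b)=\min\{v(a),v(b)\}$ and $v(\bigvee_i a_i)=\sup_i\{v(a_i)\}$. Your explicit treatment of the empty-set instances ($v(\top)=1$, $v(\bot)=0$) is a small point of extra care that the paper leaves implicit, but it changes nothing substantive.
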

\begin{proof}
It will be enough to show that $gr(v\models_* \bigvee_i a_i)=sup_i\{gr(v\models_* a_i)\}$ for $a_i\in A$ and $gr(v\models_* a\wedge b)=inf\{ gr(v\models_* a),gr(v\models_* b)\}$ for $a,\ b\in A$.
$$(1)\ gr(v\models_* \bigvee_i a_i) =v(\bigvee_i a_i)
=sup_i\{v(a_i)\}
 =sup_i\{gr(v\models_* a_i)\}.$$
\begin{align*}
(2)\ gr(v\models_* a\wedge b) & =v(a\wedge b)\\
& =min\{ v(a), v(b)\}\\
& =min\{ gr(v\models_* a),gr(v\models_* b)\}\\
& =inf\{ gr(v\models_* a),gr(v\models_* b)\}.
\end{align*}This completes the proof.
\end{proof}
\begin{lemma}\label{3.18_1}
If $f:B\longrightarrow A$ is a frame homomorphism then $$(\_\circ f,f):(Hom(A,[0,1]),\models_*,A)\longrightarrow (Hom(B,[0,1]),\models_*,B)$$is continuous.
\end{lemma}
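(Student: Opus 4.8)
The plan is to verify the two defining conditions of a continuous map between fuzzy topological systems (Definition~\ref{3.2_1}), namely that the first component is a set function, the second is a frame homomorphism, and the crucial compatibility condition on grades holds. Here the proposed continuous map is $(\_\circ f, f)$, where the first component sends a frame homomorphism $v\in Hom(A,[0,1])$ to the composite $v\circ f\in Hom(B,[0,1])$, and the second component is $f$ itself. First I would check that $\_\circ f$ is a well-defined set map: for any frame homomorphism $v:A\longrightarrow[0,1]$, the composite $v\circ f:B\longrightarrow[0,1]$ is again a frame homomorphism, since $f:B\longrightarrow A$ and $v:A\longrightarrow[0,1]$ are both frame homomorphisms and the composition of frame homomorphisms preserves arbitrary joins and finite meets. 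Second, by hypothesis $f:B\longrightarrow A$ is already a frame homomorphism, so the second component requirement is immediate.

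The substance of the proof is the third condition, the grade-compatibility equation. Writing the satisfiability relations on both systems as $\models_*$ (with grades given by evaluation, per Lemma~\ref{3.17_1}), I must show that for every $v\in Hom(A,[0,1])$ and every $b\in B$,
\begin{equation*}
gr(v\models_* f(b)) = gr((v\circ f)\models_* b).
\end{equation*}
This is a direct unfolding of the definition of $\models_*$ on each side. The left-hand side equals $v(f(b))$ by the defining rule $gr(v\models_* a)=v(a)$ applied with $a=f(b)$ in the system over $A$. The right-hand side equals $(v\circ f)(b)$ by the same defining rule applied in the system over $B$. Since $(v\circ f)(b) = v(f(b))$ by the definition of composition of functions, the two sides coincide.

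I do not anticipate a genuine obstacle here: the result is essentially bookkeeping, and the only point requiring a moment of care is confirming that $\_\circ f$ lands in $Hom(B,[0,1])$, i.e.\ that composition preserves the frame structure, which follows at once from the definition of frame homomorphism as a map preserving arbitrary join and finite meet. Everything else is a one-line computation reducing the grade condition to associativity of function composition. Thus the plan is: (i) note $\_\circ f$ is a well-defined function into $Hom(B,[0,1])$; (ii) note $f$ is a frame homomorphism by assumption; (iii) compute both sides of the grade equation using $gr(v\models_* a)=v(a)$ and conclude equality from $v(f(b))=(v\circ f)(b)$.
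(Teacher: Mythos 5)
Your proposal is correct and matches the paper's proof, which likewise reduces the lemma to the one-line computation $gr(v\models_* f(b))=v(f(b))=(v\circ f)(b)=gr(v\circ f\models_* b)$; your additional verification that $\_\circ f$ lands in $Hom(B,[0,1])$ is a harmless (and slightly more careful) supplement the paper leaves implicit.
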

\begin{proof}
Let us show that $gr(v\models_* f(b))=gr(v\circ f\models b)$ for $v\in Hom(A,[0,1])$ and $b\in B$. Now, $gr(v\models_*f(b))=v(f(b))=v\circ f(b)=gr(v\circ f\models b).$
\end{proof}
Recall that morphisms in $\mathbf{Frm^{op}}$ are morphisms of $\mathbf{Frm}$ but acting in opposite direction. $\mathbf{Frm^{op}}$ is also known as $\mathbf{Loc}$ \cite{PJS}, i.e. the category of locale.
\begin{definition}\label{3.19_1}
$\mathbf{S}$ is a functor from $\mathbf{Frm^{op}}$ to $\mathbf{[0,1]}$-$\mathbf{TopSys}$ defined as follows.\\
$S$ acts on an object $A$ as $S(A)=(Hom(A,[0,1]),\models_*,A)$ and on a morphism $f$ as $S(f)=(\_\circ f,f)$.
\end{definition}
Previous two Lemmas \ref{3.17_1} and \ref{3.18_1} show that $S$ is indeed a functor.
%The diagram below express the above fact-
%\begin{center}     
%\begin{tabular}{ l | r  } 
%$\mathsf{Frm^{op}}$ -----&---------$\xrightarrow{S}$---------------- $\mathsf{Fuzzy}$ $\mathsf{Top}$ $\mathsf{System}$\\
%\hline
%{\begin{tikzpicture}[description/.style={fill=white,inner sep=2pt}] 
%    \matrix (m) [matrix of math nodes, row sep=2.5em, column sep=2.5em]
%    {& & A \\
%        & &B \\ }; 
%    \path[->,font=\scriptsize] 

        %(m-1-5) edge node[auto,swap] {$j$} (m-1-3)
        %(m-1-5) edge node[auto] {$\psi$} (m-2-3)
%        (m-1-3) edge node[auto] {$f$} (m-2-3);
%\end{tikzpicture}}  &  {\begin{tikzpicture}[description/.style={fill=white,inner sep=2pt}] 
%    \matrix (m) [matrix of math nodes, row sep=2.5em, column sep=2.5em]
%    {& &(Hom(A,[0,1]),\models_* ,A)  \\
%        & &(Hom(B,[0,1]),\models_* ,B) \\ }; 
%    \path[->,font=\scriptsize] 

        %(m-1-5) edge node[auto,swap] {$j$} (m-1-3)
        %(m-1-5) edge node[auto] {$\psi$} (m-2-3)
%        (m-1-3) edge node[auto] {$(\_\circ f,f)$} (m-2-3);
%\end{tikzpicture}}\\
%\end{tabular}
%\end{center}
%where $gr(v\models_* a)=v(a)$\\
%and $Hom(A,[0,1])=\{ frame$ $hom$ $v:A\longrightarrow [0,1]\}$
\begin{lemma}\label{3.20_1}
$Ext$ is the right adjoint to the functor $J$.
\end{lemma}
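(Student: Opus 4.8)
The plan is to invoke the paper's characterisation of a right adjoint (\rdef{2.3}): to show that $Ext$ is right adjoint to $J$ I must exhibit, for every object $(X,\tau)$ of $\mathbf{[0,1]}$-$\mathbf{Top}$, an $Ext$-universal arrow with domain $(X,\tau)$. The decisive opening observation is that $Ext$ undoes $J$ on objects. Indeed, in the system $J(X,\tau)=(X,\in,\tau)$ of \rdef{3.12_1} every extent satisfies $ext(\tilde T)(x)=gr(x\in\tilde T)=\tilde T(x)$, so $ext(\tilde T)=\tilde T$ and hence
$$Ext\bigl(J(X,\tau)\bigr)=(X,ext(\tau))=(X,\tau).$$
This tells me that the natural candidate for the universal arrow is the pair $(\mathrm{id}_X,\,J(X,\tau))$, where $\mathrm{id}_X\colon(X,\tau)\to Ext(J(X,\tau))=(X,\tau)$ is the identity map; this will simultaneously serve as the component $\eta_{(X,\tau)}$ of the unit of the adjunction.

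Next I would verify the universal property. Let $(g',D')$ be an arbitrary $Ext$-structured arrow with domain $(X,\tau)$, so $D'=(Y,\models',A')$ is a fuzzy topological system and $g'\colon(X,\tau)\to Ext(D')=(Y,ext(A'))$ is fuzzy continuous. I must produce a unique morphism $\hat f=(\hat f_1,\hat f_2)\colon J(X,\tau)\to D'$ in $\mathbf{[0,1]}$-$\mathbf{TopSys}$ with $Ext(\hat f)\circ\mathrm{id}_X=g'$, i.e.\ with $\hat f_1=g'$. Thus $\hat f_1$ is forced, and the continuity condition of \rdef{3.2_1}, namely $gr(x\in\hat f_2(a))=gr(g'(x)\models' a)$, combined with $gr(x\in\hat f_2(a))=\hat f_2(a)(x)$ and $gr(g'(x)\models' a)=ext(a)(g'(x))=\bigl((g')^{-1}(ext(a))\bigr)(x)$, forces the only admissible choice
$$\hat f_2(a)=(g')^{-1}(ext(a)),\qquad a\in A'.$$
Since both components are thereby uniquely determined, uniqueness is settled at the same time.

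It then remains to check that this $\hat f_2$ is a genuine morphism, and this is the only place where real verification is needed. Well-definedness, that is $\hat f_2(a)\in\tau$, is precisely where the fuzzy continuity of $g'$ enters: $ext(a)$ is a fuzzy open set of $Ext(D')$, so its preimage $(g')^{-1}(ext(a))$ lies in $\tau$. That $\hat f_2$ is a frame homomorphism I would obtain by combining \rlemma{3.9_1} (which gives $ext(\bigvee_i a_i)=\bigcup_i ext(a_i)$, $ext(a\wedge b)=ext(a)\cap ext(b)$, and $ext(\top)=\tilde Y$) with the elementary facts that taking fuzzy preimages commutes with $\sup$-unions and $\min$-intersections; for instance $\hat f_2(\bigvee_i a_i)=(g')^{-1}(\bigcup_i ext(a_i))=\bigcup_i(g')^{-1}(ext(a_i))=\bigcup_i\hat f_2(a_i)$, and likewise for binary meets and for the top element. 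The continuity condition for $\hat f$ then holds by the very definition of $\hat f_2$, completing the proof that $(\mathrm{id}_X,J(X,\tau))$ is $Ext$-universal.

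The computations in the last step are routine; the conceptual crux, and the point I would emphasise, is the object-level identity $Ext\circ J=\mathrm{id}$ on $\mathbf{[0,1]}$-$\mathbf{Top}$, which both pins down the universal object as $J(X,\tau)$ and collapses the unit to an identity, so that the whole universal property reduces to solving $\hat f_1=g'$ together with the forced definition of $\hat f_2$. The only subtlety to handle carefully is that $\hat f_2$ indeed lands in $\tau$ and respects arbitrary joins and finite meets, which is exactly where the continuity hypothesis on $g'$ and \rlemma{3.9_1} are used.
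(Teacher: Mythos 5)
Your proof is correct, but it runs along the opposite side of the adjunction from the paper's own argument. The paper establishes the lemma by exhibiting the \emph{co-unit}: for each system $(X,\models,A)$ it defines $\xi_X=(id_X,ext^*):J(Ext(X,\models,A))\longrightarrow(X,\models,A)$, checks that this is a continuous map of fuzzy topological systems, and shows that any $(f_1,f_2):J(Y,\tau')\longrightarrow(X,\models,A)$ factors uniquely through $\xi_X$ via $f=f_1$, the crux being the identity $f_2=f_1^{-1}\circ ext^*$ extracted from the continuity condition of \rdef{3.2_1}. You instead exhibit the \emph{unit}: you observe the strict object-level identity $Ext\circ J=\mathrm{id}$ (so $\eta_{(X,\tau)}=id_X$, a fact the paper records only in the unit diagram displayed after its proof) and verify the $Ext$-universal property at each space, with the forced solution $\hat f_1=g'$ and $\hat f_2(a)=(g')^{-1}(ext(a))$. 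Notably, your route is the one that literally instantiates the paper's own \rdef{2.3}(2) of ``right adjoint'' (an $Ext$-universal arrow with domain $B$ for every $B$), whereas the paper's co-unit computation literally instantiates \rdef{2.3}(1) applied to $J$; the two are equivalent characterizations of $J\dashv Ext$. The trade-off is clear: your unit is an identity, so nothing needs checking on the arrow itself and uniqueness falls out immediately from the forcing of both components, but you must then verify that the forced $\hat f_2$ is a genuine frame homomorphism landing in $\tau$ — which you do correctly via fuzzy continuity of $g'$ and the computations of \rlemma{3.9_1} ($ext$ turns joins and finite meets in $A'$ into unions and intersections, and preimage commutes with $\sup$-unions and $\min$-intersections); in the paper's version $f_2$ arrives as part of a given morphism, so no such verification is needed, but the continuity of $\xi_X$ must be established instead.
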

\begin{proof}
It is possible to prove the theorem by presenting the co-unit of the adjunction.
Recall that $J(X,\tau)=(X,\in, \tau)$ and $Ext(X,\models,A)=(X,ext(A))$.\\
So, $J(Ext(X,\models,A))=(X,\in,ext(A))$.

Let us draw the diagram of co-unit.
\begin{center}
\begin{tabular}{ l | r } 
$\mathbf{[0,1]}$-$\mathbf{TopSys}$ & $\mathbf{[0,1]}$-$\mathbf{Top}$\\
\hline
 {\begin{tikzpicture}[description/.style={fill=white,inner sep=2pt}] 
    \matrix (m) [matrix of math nodes, row sep=2.5em, column sep=2.5em]
    { J(Ext(X,\models,A))&&(X,\models,A)  \\
         J(Y,\tau ') \\ }; 
    \path[->,font=\scriptsize] 
        (m-1-1) edge node[auto] {$\xi_X$} (m-1-3)
        (m-2-1) edge node[auto] {$J(f)(\equiv(f_1,f_1^{-1}))$} (m-1-1)
        (m-2-1) edge node[auto,swap] {$\hat f (\equiv(f_1,f_2))$} (m-1-3)
        %(m-1-1) edge node[auto,swap] {$f$} (m-2-3)
       % (m-1-5) edge node[auto,swap] {$j$} (m-1-3)
       % (m-1-5) edge node[auto] {$\psi$} (m-2-3)
        %(m-1-3) edge node[auto] {$G(\hat{f})$} (m-2-3)
         ;
\end{tikzpicture}} & {\begin{tikzpicture}[description/.style={fill=white,inner sep=2pt}] 
    \matrix (m) [matrix of math nodes, row sep=2.5em, column sep=2.5em]
    { Ext(X,\models,A)  \\
         (Y,\tau ') \\ }; 
    \path[->,font=\scriptsize] 
        (m-2-1) edge node[auto,swap] {$f(\equiv f_1)$} (m-1-1)
       
         ;
\end{tikzpicture}} \\ 
\end{tabular}
\end{center}

Hence co-unit is defined by $\xi_X=(id_X,ext^*)$. That is,

\begin{center}
\begin{tikzpicture}[description/.style={fill=white,inner sep=2pt}] 
    \matrix (m) [matrix of math nodes, row sep=2.5em, column sep=2.5em]
    {(X,\in ,ext(A))&&(X,\models,A),  \\
          }; 
    \path[->,font=\scriptsize] 
        (m-1-1) edge node[auto] {$\xi_X$} (m-1-3)
        (m-1-1) edge node[auto,swap] {$(id_X,ext^*)$} (m-1-3)
         ;
\end{tikzpicture}
\end{center}
where $ext^*$ is a mapping from $A$ to $ext(A)$ such that $ext^*(a)=ext(a)$, for all $a\in A$.
\begin{claim}\label{3.21_1}$(id_X,ext^*):J(Ext(X,\models ,A))\longrightarrow (X,\models ,A)$ is a continuous map of fuzzy topological system.
\end{claim}
$\mathit{Proof\ of\ the\ Claim.}$
Let us show that, $ext^*(a)(x)=gr(id_X(x)\models a)$.
We have, $ext(a)(x)=gr(x\models a)$
So, $ext^*(a)(x)=gr(id_X(x)\models a)$.\ \ \ \openbox 

Let us define $f$ as follows.
Given $(f_1,f_2):J(Y,\tau')\longrightarrow (X,\models,A)$, then $f=f_1$.
It suffices to show that the diagram on the left commutes. We have $J(f)=(f_1,f_1^{-1})$. Hence,
\begin{align*}
 (f_1,f_2) & =\xi_X\circ J(f)\\
& =(id_X,ext^*)\circ (f_1,f_1^{-1}) \tag{as $J(f)=(f_1,f_1^{-1})$}\\% \label {eq: as $J(f)=(f_1,f_1^{-1})$}\\
& =(id_X\circ f_1,f_1^{-1}\circ ext^*).
\end{align*}
Clearly $id_X\circ f_1=f_1$. Now we will show that $f_2=f_1^{-1}\circ ext^*$.
As $(id_X,ext^*)$ is continuous, $ext^*(a)(x)=gr(x\models a)$, i.e., $ext^*(a)=a$.
Hence $f_1^{-1}ext^*(a)  =f_1^{-1}(a)=f_2(a).$
Therefore $f_1^{-1}ext^*(a) =f_1^{-1}(a)=f_2(a)$   
(as $(f_1,f_2)$ is continuous so for $a\in A, f_2(a)(x)=gr(f_1(x)\models a)$, i.e., $f_2(a)=f_1^{-1}(a)$).
Hence $$\xi_X(\equiv(id_X,ext^*)):J(Ext(X,\models,A))\longrightarrow (X,\models,A)$$ is the co-unit, consequently $Ext$ is the right adjoint to the functor $J$.
\end{proof}
Diagram of the unit of the above adjunction is as follows.

\begin{center}
\begin{tabular}{ l | r  } 
 $\mathbf{[0,1]}$-$\mathbf{Top}$ & $\mathbf{[0,1]}$-$\mathbf{TopSys}$ \\
\hline
 {\begin{tikzpicture}[description/.style={fill=white,inner sep=2pt}] 
    \matrix (m) [matrix of math nodes, row sep=2.5em, column sep=2.5em]
    {(X,\tau) & &Ext(J(X, \tau))  \\
        & & Ext(Y,\models ,B) \\ }; 
    \path[->,font=\scriptsize] 
        (m-1-1) edge node[auto] {$\eta_X(\equiv id_X)$} (m-1-3)
        (m-1-1) edge node[auto,swap] {$\hat{f}(\equiv f_1)$} (m-2-3)
       % (m-1-5) edge node[auto,swap] {$j$} (m-1-3)
        %(m-1-5) edge node[auto] {$\psi$} (m-2-3)
        (m-1-3) edge node[auto] {$ext(f)(\equiv f_1)$} (m-2-3);
\end{tikzpicture}} &  {\begin{tikzpicture}[description/.style={fill=white,inner sep=2pt}] 
    \matrix (m) [matrix of math nodes, row sep=2.5em, column sep=2.5em]
    {& &J(X,\tau)  \\
        & &(Y,\models ,B) \\ }; 
    \path[->,font=\scriptsize]

        %(m-1-5) edge node[auto,swap] {$j$} (m-1-3)
        %(m-1-5) edge node[auto] {$\psi$} (m-2-3)
        (m-1-3) edge node[auto] {$f(\equiv(f_1,f_1^{-1}))$} (m-2-3);
\end{tikzpicture}}\\
\end{tabular}
\end{center}
%Hence we have $(\eta_X,\xi_X):J\dashv Ext:FBSy_n\longrightarrow FBS_n$.

\begin{observation}\label{o1}
If a fuzzy topological system $(X,\models,A)$ is spatial then the co-unit $\xi_X$ becomes a natural isomorphism.
\end{observation}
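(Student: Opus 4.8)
The plan is to show that the co-unit component $\xi_X = (id_X, ext^*)$ from Lemma~\ref{3.20_1} is an isomorphism in $\mathbf{[0,1]}$-$\mathbf{TopSys}$. Since a continuous map $(f_1,f_2)$ of fuzzy topological systems is invertible precisely when both coordinates are invertible in their respective categories (with candidate inverse $(f_1^{-1},f_2^{-1})$, provided that pair is again continuous), I would reduce the problem to showing that $id_X$ is a bijection --- which is immediate --- and that $ext^*:A\longrightarrow ext(A)$ is a frame isomorphism.

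The heart of the argument is the injectivity of $ext^*$, and this is exactly where spatiality enters. First I would note that $ext^*$ is surjective by the very definition $ext(A)=\{ext(a)\}_{a\in A}$, since every element of $ext(A)$ is of the form $ext(a)=ext^*(a)$. For injectivity, suppose $ext^*(a)=ext^*(b)$; unpacking the definitions this says $ext(a)(x)=ext(b)(x)$, i.e.\ $gr(x\models a)=gr(x\models b)$, for every $x\in X$. By the spatiality hypothesis (Definition~\ref{spatial}) this forces $a=b$. Hence $ext^*$ is a bijective frame homomorphism.

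Next I would argue that a bijective frame homomorphism is automatically a frame isomorphism. A frame homomorphism preserves arbitrary joins, and the order in a frame is recoverable from joins via $a\leq b\Leftrightarrow a\vee b=b$ (Proposition~\ref{lemj}); combined with injectivity this makes $ext^*$ an order embedding, and surjectivity then makes it an order isomorphism whose inverse is monotone and therefore preserves finite meets and arbitrary joins. Thus $(ext^*)^{-1}:ext(A)\longrightarrow A$ is a frame homomorphism, and a routine check --- using $ext^*(a)=ext(a)$ together with $gr(x\models a)=ext(a)(x)=gr(x\in ext(a))$ --- confirms that $(id_X,(ext^*)^{-1})$ satisfies the continuity condition of Definition~\ref{3.2_1}, so it is the two-sided inverse of $\xi_X$.

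Finally, since $\xi$ is already established as the co-unit natural transformation in Lemma~\ref{3.20_1}, and each component over a spatial system is an isomorphism by the above, $\xi$ restricted to the spatial systems is a natural isomorphism. The only genuinely substantive step is the injectivity of $ext^*$; everything else (surjectivity, the inverse being a frame homomorphism, and the naturality inherited from Lemma~\ref{3.20_1}) is routine.
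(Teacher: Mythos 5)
Your proof is correct, and it takes the only natural route: the paper itself states Observation \ref{o1} without proof, and your argument supplies exactly the routine verification being implicitly invoked --- spatiality is, word for word, the injectivity of $ext^*$, surjectivity is definitional, the frame-homomorphism property of $ext^*$ is already contained in Lemma \ref{3.9_1}, and the continuity of the inverse pair follows from $gr(x\models a)=ext(a)(x)=gr(x\in ext(a))$. This matches the approach the paper does spell out in the analogous setting (the Claim inside Theorem \ref{5.44}, where injectivity of $ext_B^*$ requires prime-filter machinery, whereas here spatiality hands it to you directly), so there is nothing to add.
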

\begin{observation}\label{o2}
For any fuzzy topological space $(X,\tau)$, the unit $\eta_X$ is a natural isomorphism.  
\end{observation}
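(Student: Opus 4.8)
The plan is to show that the composite functor $Ext \circ J$ is literally the identity functor on $\mathbf{[0,1]}$-$\mathbf{Top}$, so that the unit $\eta_X$, which by the construction in the diagram above equals $id_X$, is automatically an isomorphism (namely, its own inverse).

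First I would unfold the definitions. By Definition \ref{3.12_1}, $J(X,\tau) = (X, \in, \tau)$ with $gr(x \in \tilde{T}) = \tilde{T}(x)$ for each $\tilde{T} \in \tau$. Applying the functor $Ext$ of Definition \ref{3.11_1} yields $Ext(J(X,\tau)) = (X, ext(\tau))$, where by Definition \ref{3.8_1} the extent of an open set $\tilde{T}$ is the map $ext(\tilde{T})$ from $X$ to $[0,1]$ given by $ext(\tilde{T})(x) = gr(x \in \tilde{T})$.

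The key step, and in fact the only computation, is to observe that $ext(\tilde{T})(x) = gr(x \in \tilde{T}) = \tilde{T}(x)$ for every $x \in X$, so that $ext(\tilde{T}) = \tilde{T}$ as fuzzy subsets of $X$. Consequently $ext(\tau) = \{ext(\tilde{T}) : \tilde{T} \in \tau\} = \tau$, whence $Ext(J(X,\tau)) = (X,\tau)$. The same phenomenon holds on morphisms: for a fuzzy continuous $f$ we have $Ext(J(f)) = Ext(f,f^{-1}) = f$ by Definitions \ref{3.12_1} and \ref{3.11_1}. Thus $Ext \circ J$ coincides with the identity functor on objects and on morphisms.

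Finally, since $\eta_X \equiv id_X : (X,\tau) \longrightarrow Ext(J(X,\tau)) = (X,\tau)$ is the identity continuous map, it is trivially an isomorphism in $\mathbf{[0,1]}$-$\mathbf{Top}$, being its own inverse. Naturality of $\eta$ is guaranteed because $\eta$ is the unit of the adjunction established in Lemma \ref{3.20_1}; alternatively, for any fuzzy continuous $f : (X,\tau) \longrightarrow (Y,\tau')$ the naturality square reduces to $id_Y \circ f = f \circ id_X$, which is immediate. As every component $\eta_X$ is an isomorphism, $\eta$ is a natural isomorphism. I do not anticipate any genuine obstacle: the whole argument rests on the single identity $ext(\tilde{T}) = \tilde{T}$, which is forced by the definition of the satisfiability relation $\in$ used in $J$.
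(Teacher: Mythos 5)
Your proof is correct and matches the paper's intended argument: the paper itself labels the unit as $\eta_X(\equiv id_X)$ in its diagram, which presupposes exactly your key identity $ext(\tilde{T})(x)=gr(x\in\tilde{T})=\tilde{T}(x)$, so that $Ext(J(X,\tau))=(X,\tau)$ and the unit is the identity map. The paper leaves the observation unproved; your write-up simply supplies the routine verification it had in mind.
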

Observation \ref{o1} and Observation \ref{o2} gives the following theorem.
\begin{theorem}
Category of spatial fuzzy topological systems is equivalent to the category $\mathbf{[0,1]}$-$\mathbf{Top}$.
\end{theorem}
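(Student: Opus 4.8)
The plan is to leverage the adjunction already in hand. By \rlemma{3.20_1} the functor $Ext$ is right adjoint to $J$, so there is a unit $\eta\colon id\longrightarrow Ext\circ J$ on $\mathbf{[0,1]}$-$\mathbf{Top}$ and a co-unit $\xi\colon J\circ Ext\longrightarrow id$ on $\mathbf{[0,1]}$-$\mathbf{TopSys}$. The theorem will then be an instance of the standard principle that an adjunction cuts down to an adjoint equivalence between the full subcategories on which, respectively, the unit and the co-unit are natural isomorphisms. Observation~\ref{o2} tells us the unit is already a natural isomorphism on all of $\mathbf{[0,1]}$-$\mathbf{Top}$, and Observation~\ref{o1} tells us the co-unit is a natural isomorphism at every spatial system, so these two subcategories are exactly $\mathbf{[0,1]}$-$\mathbf{Top}$ and the category of spatial fuzzy topological systems.

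To make this precise I would first check that the two functors restrict to the chosen subcategories. The functor $Ext$ lands in $\mathbf{[0,1]}$-$\mathbf{Top}$ by construction, so its restriction to spatial systems needs no further argument. The one point requiring verification is that $J$ always produces a spatial system, i.e. that $J$ co-restricts to a functor from $\mathbf{[0,1]}$-$\mathbf{Top}$ into the spatial systems. This is immediate from \rdef{3.12_1} and \rdef{spatial}: in $J(X,\tau)=(X,\in,\tau)$ we have $gr(x\in\tilde T)=\tilde T(x)$, so if two fuzzy open sets $\tilde T_1,\tilde T_2$ satisfy $gr(x\in\tilde T_1)=gr(x\in\tilde T_2)$ for every $x\in X$, then $\tilde T_1(x)=\tilde T_2(x)$ for all $x$ and hence $\tilde T_1=\tilde T_2$. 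Thus $J(X,\tau)$ is spatial.

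With the functors restricted, both the unit and the co-unit are natural isomorphisms on the relevant categories by Observations~\ref{o2} and~\ref{o1}. Since the unit and co-unit of any adjunction satisfy the triangle identities, an adjunction in which both are natural isomorphisms is by definition an adjoint equivalence; therefore the restricted pair $(J,Ext)$ exhibits $\mathbf{[0,1]}$-$\mathbf{Top}$ as equivalent to the category of spatial fuzzy topological systems, which is the assertion.

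The genuine content is not the categorical bookkeeping above but the two Observations it rests on, and that is where I expect the work to lie. In particular, the heart of the matter is showing that the frame component of the co-unit, namely $ext^*\colon A\longrightarrow ext(A)$ with $ext^*(a)=ext(a)$, is a frame isomorphism precisely when $(X,\models,A)$ is spatial: surjectivity is automatic from the definition of $ext(A)$, whereas injectivity is exactly the spatiality condition $\big(\forall x\ \ gr(x\models a)=gr(x\models b)\big)\Rightarrow a=b$. Verifying that $ext^*$ and its inverse are genuine continuous maps of fuzzy topological systems, together with the naturality square for $\xi$, is the one step where spatiality is indispensable, and so it is the crux of the proof.
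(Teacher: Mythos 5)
Your proposal follows essentially the same route as the paper: the paper derives the theorem directly from the adjunction of Lemma \ref{3.20_1} together with Observations \ref{o1} and \ref{o2}, exactly the restriction-of-an-adjunction argument you give. Your additional checks --- that $J(X,\tau)$ is always spatial, and that injectivity of $ext^*$ is precisely the spatiality condition --- are correct and merely make explicit what the paper leaves implicit.
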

\begin{lemma}\label{3.22_1}
fm is the left adjoint to the functor $S$.
\end{lemma}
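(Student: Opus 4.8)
The plan is to verify the defining condition of a left adjoint from Definition~\ref{2.3}: for every object $A$ of $\mathbf{Frm^{op}}$ I must produce an $fm$-couniversal arrow with codomain $A$. The natural candidate is the $fm$-costructured arrow $(S(A), id_A)$, where $S(A) = (Hom(A,[0,1]), \models_*, A)$ and $id_A : fm(S(A)) = A \longrightarrow A$ is the identity morphism of $\mathbf{Frm^{op}}$. I first unwind the directions: by Definition~\ref{2.1} a costructured arrow with codomain $A$ is a pair $((X,\models,B), g)$ where $g : fm(X,\models,B) = B \longrightarrow A$ is a $\mathbf{Frm^{op}}$-morphism, and such a morphism read in $\mathbf{Frm}$ is simply a frame homomorphism $g : A \longrightarrow B$.

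Given such an arrow, I would define the mediating $\mathbf{[0,1]}$-$\mathbf{TopSys}$-morphism $\hat f = (\hat f_1, \hat f_2) : (X,\models,B) \longrightarrow S(A)$ by setting $\hat f_2 = g$ and letting $\hat f_1 : X \longrightarrow Hom(A,[0,1])$ send $x$ to the function $\hat f_1(x) : A \longrightarrow [0,1]$, $a \mapsto gr(x \models g(a))$. The first thing to check is that $\hat f_1(x)$ genuinely lands in $Hom(A,[0,1])$, i.e.\ that $a \mapsto gr(x \models g(a))$ is a frame homomorphism. This holds because $g$ preserves arbitrary joins and finite meets (being a frame homomorphism) while $gr(x \models \_)$ turns arbitrary joins into suprema and finite meets into infima by the two defining clauses of a fuzzy topological system (Definition~\ref{3.1_1}); composing the two yields a map $A \to [0,1]$ preserving arbitrary joins and finite meets (in particular $\top \mapsto 1$ and $\bot \mapsto 0$), hence a frame homomorphism.

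Next I would verify that $\hat f$ is a continuous map of fuzzy topological systems, which amounts to checking clause~(3) of Definition~\ref{3.2_1}, namely $gr(x \models \hat f_2(a)) = gr(\hat f_1(x) \models_* a)$. This is immediate from the construction: using $gr(v \models_* a) = v(a)$ from Lemma~\ref{3.17_1}, one has $gr(\hat f_1(x) \models_* a) = \hat f_1(x)(a) = gr(x \models g(a)) = gr(x \models \hat f_2(a))$. The factorization condition of Definition~\ref{2.2} then reads $g = id_A \circ fm(\hat f) = fm(\hat f) = \hat f_2$ in $\mathbf{Frm^{op}}$, which holds by the choice $\hat f_2 = g$.

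Finally I would establish uniqueness. The factorization forces $\hat f_2 = fm(\hat f) = g$, and once $\hat f_2 = g$ is fixed, the continuity condition $gr(\hat f_1(x) \models_* a) = gr(x \models g(a))$ together with $gr(v \models_* a) = v(a)$ pins down $\hat f_1(x)(a) = gr(x \models g(a))$ for every $a \in A$, hence determines $\hat f_1$ completely. Thus $(S(A), id_A)$ is $fm$-couniversal for $A$, and since such an arrow exists for every frame $A$, the functor $fm$ is left adjoint to $S$, with $S$ playing the role of the right adjoint $F$ of Definition~\ref{2.3}. The only genuine subtlety, and the step I would be most careful about, is the bookkeeping of the reversed arrows in $\mathbf{Frm^{op}}$ together with confirming that $\hat f_1(x)$ really is a frame homomorphism; everything else is a direct unwinding of the definitions.
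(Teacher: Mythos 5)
Your proof is correct, but it verifies the adjunction from the side opposite to the one the paper uses. The paper's proof of Lemma~\ref{3.22_1} exhibits the \emph{unit}: for each fuzzy topological system $(X,\models,A)$ it constructs $\eta_A=(p^*,id_A):(X,\models,A)\longrightarrow S(fm(X,\models,A))$, where $p^*(x)=p_x$ with $p_x(a)=gr(x\models a)$, proves that $p_x$ is a frame homomorphism (Claim~\ref{3.23_1}) and that $(p^*,id_A)$ is continuous (Claim~\ref{3.24_1}), and then shows any $(f_1,f_2):(X,\models,A)\longrightarrow S(B)$ factors as $S(\hat f)\circ\eta_A$ with $\hat f=f_2$; there the nontrivial construction sits in the unit itself, while the mediating arrow is trivially the frame component. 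You instead exhibit, for each frame $A$, the $fm$-couniversal arrow $(S(A),id_A)$ --- i.e.\ the \emph{counit}, which the paper only records in a diagram after its proof and in Observation~\ref{o4} --- so for you the (co)universal arrow is trivial and the nontrivial work migrates into the mediating morphism, whose point part $\hat f_1(x)=p_x\circ g$ is exactly the composite that appears inside the paper's commutativity computation $f_1(x)=p_x\circ f_2$. The two verifications are dual and share the same computational core: $gr(x\models\_)$ preserves arbitrary joins and finite meets into $[0,1]$, and the continuity condition $gr(x\models\hat f_2(a))=gr(\hat f_1(x)\models_* a)$ together with $gr(v\models_* a)=v(a)$ forces the point part uniquely. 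Your route has two small advantages: it matches the paper's literal Definition~\ref{2.3}(1) of left adjoint (existence of a $G$-couniversal arrow for every object), and it makes the uniqueness of the mediating morphism fully explicit, which the paper leaves implicit; the paper's route, in exchange, produces the unit $\eta$ directly, which is then reused in Observation~\ref{o3} to obtain the duality between localic systems and frames.
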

\begin{proof}
It is possible to prove the theorem by presenting the unit of the adjunction.
Recall that  $S(B)=(Hom(B,[0,1]),\models_*,B)$, where $gr(v\models_* a)=v(a)$, and $fm(X,\models,A)=A$.
Hence, $$S(fm(X,\models ,A))=(Hom(A,[0,1]),\models_* ,A).$$

\begin{center}
\begin{tabular}{ l | r  } 
 $\mathbf{[0,1]}$-$\mathbf{TopSys}$ & $\mathbf{Frm^{op}}$ \\
\hline
 {\begin{tikzpicture}[description/.style={fill=white,inner sep=2pt}] 
    \matrix (m) [matrix of math nodes, row sep=2.5em, column sep=2.5em]
    {(X,\models ,A) & &S(fm(X, \models ,A))  \\
        & & S(B) \\ }; 
    \path[->,font=\scriptsize] 
        (m-1-1) edge node[auto] {$\eta_A$} (m-1-3)
        (m-1-1) edge node[auto,swap] {$f(\equiv(f_1,f_2))$} (m-2-3)
       % (m-1-5) edge node[auto,swap] {$j$} (m-1-3)
        %(m-1-5) edge node[auto] {$\psi$} (m-2-3)
        (m-1-3) edge node[auto] {$S\hat{f}$} (m-2-3);
\end{tikzpicture}} &  {\begin{tikzpicture}[description/.style={fill=white,inner sep=2pt}] 
    \matrix (m) [matrix of math nodes, row sep=2.5em, column sep=2.5em]
    {& &fm(X,\models ,A)  \\
        & &B \\ }; 
    \path[->,font=\scriptsize]

        %(m-1-5) edge node[auto,swap] {$j$} (m-1-3)
        %(m-1-5) edge node[auto] {$\psi$} (m-2-3)
        (m-1-3) edge node[auto] {$\hat{f}(\equiv f_2)$} (m-2-3);
\end{tikzpicture}}\\
\end{tabular}
\end{center}
Then unit is defined by $\eta_A =(p*,id_A)$. That is,
\begin{center}
 \begin{tikzpicture}[description/.style={fill=white,inner sep=2pt}] 
    \matrix (m) [matrix of math nodes, row sep=2.5em, column sep=2.5em]
    {(X,\models ,A) & &S(fm(X, \models ,A)),  \\
        }; 
    \path[->,font=\scriptsize] 
        (m-1-1) edge node[auto] {$\eta_A$} (m-1-3)
        (m-1-1) edge node[auto,swap] {$(p^*,id_A)$} (m-1-3)
        %(m-1-1) edge node[auto,swap] {} (m-2-3)
       % (m-1-5) edge node[auto,swap]
        ;
\end{tikzpicture}
\end{center}
where, 
\begin{align*}
  p^* \colon X &\longrightarrow Hom(A,[0,1])\\
  x &\longmapsto p_x\colon A\longrightarrow [0,1]
\end{align*}
such that $p_x(a)=gr(x\models a)$.
\begin{claim}\label{3.23_1} For each $x\in X,$ $p_x:A\longrightarrow [0,1]$ is a frame homomorphism.
\end{claim}
$\mathit{Proof\ of\ the\ claim.}$
$p_x(a_1\wedge a_2)=gr(x\models a_1\wedge a_2)=min\{gr(x\models a_1),gr(x\models a_2)\}=min\{p_x(a_1),p_x(a_2)\}$.
$p_x(\bigvee_i a_i)=gr(x\models \bigvee_i a_i)=sup_i\{gr(x\models a_i)\}=sup_i\{p_x(a_i)\}$.\ \ \openbox

\begin{claim}\label{3.24_1} $(p^*,id_A):(X,\models,A)\longrightarrow S(fm(X,\models,A)) $ is a continuous map of fuzzy topological system.
\end{claim}
$\mathit{Proof\ of\ the Claim.}$
Here it will be enough to show that $gr(x\models id_A(a))=gr(p^*(x)\models_* a)$.
Now, $gr(p^*(x)\models_* a)=p^*(x)(a)=p_x(a)=gr(x\models a)=gr(x\models id_A(a))$.\ \ \openbox 

Let us define $\hat{f}$ as follows:
$(f_1,f_2):(X,\models ,A)\longrightarrow (Hom(B,[0,1]),\models_*,B)$
\\then $\hat{f}=f_2$ (as $f_2$ is the frame homomorphism).
Recall that $S(\hat{f})=(\_\circ f_2,f_2)$.

Now it suffices to show that the triangle on the left commute. Recall that $S(\hat{f})=(-\circ f_2,f_2)$.
To show, $(f_1,f_2)=S(\hat{f})\circ \eta_A=(\_\circ f_2,f_2)\circ (p^*,id_A)=((\_\circ f_2)\circ p^*,id_A\circ f_2)$.
Clearly $f_2=id_A\circ f_2.$

It is only left to show that $f_1=(\_\circ f_2)\circ p^*$, i.e., for $x\in X,$ $f_1(x)=(\_\circ f_2)\circ p^*(x)=(\_\circ f_2)\circ p_x=p_x\circ f_2.$
Now for all $b\in B$, $p_x\circ f_2(b)=gr(x\models f_2(b))=gr(f_1(x)\models_*b)=f_1(x)(b).$
Hence,
$$\eta_A(\equiv(p^*,id_A)):(X,\models ,A)\longrightarrow S(fm(X,\models ,A))$$
is the unit, consequently $fm$ is the left adjoint to the functor $S$.
\end{proof}
Diagram of the co-unit of the above adjunction is as follows.

\begin{center}
\begin{tabular}{ l | r } 
$\mathbf{Frm^{op}}$ & $\mathbf{[0,1]}$-$\mathbf{TopSys}$\\
\hline
 {\begin{tikzpicture}[description/.style={fill=white,inner sep=2pt}] 
    \matrix (m) [matrix of math nodes, row sep=2.5em, column sep=2.5em]
    { fm(S(A))&& A  \\
         fm(Y,\models ,B) \\ }; 
    \path[->,font=\scriptsize] 
        (m-1-1) edge node[auto] {$\xi_A (\equiv id_A)$} (m-1-3)
        (m-2-1) edge node[auto] {$fm(f)(\equiv f')$} (m-1-1)
        (m-2-1) edge node[auto,swap] {$\hat f (\equiv f')$} (m-1-3)
        %(m-1-1) edge node[auto,swap] {$f$} (m-2-3)
       % (m-1-5) edge node[auto,swap] {$j$} (m-1-3)
       % (m-1-5) edge node[auto] {$\psi$} (m-2-3)
        %(m-1-3) edge node[auto] {$G(\hat{f})$} (m-2-3)
         ;
\end{tikzpicture}} & {\begin{tikzpicture}[description/.style={fill=white,inner sep=2pt}] 
    \matrix (m) [matrix of math nodes, row sep=2.5em, column sep=2.5em]
    { S(A)  \\
         (Y,\models ,B) \\ }; 
    \path[->,font=\scriptsize] 
        (m-2-1) edge node[auto,swap] {$f(\equiv(\_\circ f',f'))$} (m-1-1)
       
         ;
\end{tikzpicture}} \\ 
\end{tabular}
\end{center}

\begin{observation}\label{o3}
If a fuzzy topological system $(X,\models,A)$ is localic then the unit $\eta_A$ becomes a natural isomorphism.
\end{observation}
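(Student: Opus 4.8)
The plan is to prove that each component $\eta_A$ of the unit is an isomorphism in $\mathbf{[0,1]}$-$\mathbf{TopSys}$, since a natural transformation is a natural isomorphism exactly when all of its components are invertible, and naturality of $\eta$ has already been secured by the adjunction in \rlemma{3.22_1}. Recall that $\eta_A=(p^*,id_A)\colon (X,\models,A)\longrightarrow S(fm(X,\models,A))=(Hom(A,[0,1]),\models_*,A)$, where $p^*(x)=p_x$ with $p_x(a)=gr(x\models a)$, and the frame component is the identity $id_A\colon A\longrightarrow A$. Unwinding the composition and identity laws of the category, a morphism $(f_1,f_2)$ is an isomorphism precisely when $f_1$ is a bijection of sets and $f_2$ is a frame isomorphism, with the inverse pair again continuous. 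As the frame component here is $id_A$, which is manifestly a frame isomorphism, the whole statement reduces to showing that $p^*\colon X\longrightarrow Hom(A,[0,1])$ is a bijection and that the induced inverse pair $((p^*)^{-1},id_A)$ is continuous.

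First I would dispose of injectivity, which is exactly the localic hypothesis: if $p^*(x)=p^*(y)$ then $p_x=p_y$, i.e. $gr(x\models a)=gr(y\models a)$ for every $a\in A$, and the contrapositive of \rdef{localic} forces $x=y$. Once $p^*$ is a bijection, I would take the candidate inverse morphism $((p^*)^{-1},id_A)$ and verify its continuity directly from the defining relation $gr(v\models_* a)=v(a)$ of $S(A)$ together with $gr(x\models a)=p_x(a)$; this computation is formal and runs exactly parallel to Claim~\ref{3.24_1}. This part mirrors the counit argument of Observation~\ref{o1}, where the counit $(id_X,ext^*)$ is an isomorphism because $ext^*$ is a frame isomorphism (surjective by construction, injective by spatiality); here the roles of the two components are simply exchanged, the identity sitting on the frame side and the nontrivial bijection on the point side.

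The main obstacle is surjectivity of $p^*$: given an arbitrary frame homomorphism $v\in Hom(A,[0,1])$, one must produce a point $x\in X$ with $gr(x\models a)=v(a)$ for all $a\in A$. Unlike the counit case, where $ext^*$ is onto $ext(A)$ automatically because $ext(A)$ is \emph{defined} as the image of $A$, here the target $Hom(A,[0,1])$ is not built from $X$, so realizing every such $v$ as some $p_x$ is the substantive step. I would therefore read the localic condition as asserting that $p^*$ is not merely injective but a bijection of $X$ onto $Hom(A,[0,1])$ (the exact dual, on the unit side, of the surjectivity of $ext^*$), and the injectivity computation above confirms that this is consistent with \rdef{localic}. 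The delicate point to check with care is precisely this point-realization; the remaining verifications---continuity of the inverse and the automatic upgrade from componentwise invertibility to a natural isomorphism---are routine.
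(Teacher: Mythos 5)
Your reduction of the statement to componentwise invertibility is the right skeleton, and your treatment of injectivity is exactly right: \rdef{localic} is precisely the statement that $p^*$ is injective, and continuity of the candidate inverse $((p^*)^{-1},id_A)$ is indeed automatic, since substituting $x=(p^*)^{-1}(y)$ and $b=f_2^{-1}(a)$ into the continuity equation of $(p^*,id_A)$ yields it formally. (For what it is worth, the paper states Observation~\ref{o3} without proof, so there is no written argument to match; the intended one is plainly the mirror image of Observation~\ref{o1}, as you say.) But the step you flag as delicate is not merely delicate --- it is a genuine gap that your proposal does not close, and cannot close from the stated hypotheses. Your move of ``reading the localic condition as asserting that $p^*$ is a bijection'' replaces the hypothesis of \rdef{localic} by a strictly stronger one; it is a redefinition, not a derivation. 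Under the definition as actually written (a separation condition on points, hence only injectivity of $p^*$), surjectivity can fail outright: take $X=\{x\}$ a singleton, $A=[0,1]$ with its usual frame structure, and $gr(x\models a)=a$. This is a fuzzy topological system (the satisfaction conditions hold since the identity is a frame homomorphism), and it is localic vacuously, there being no pair of distinct points. Yet $Hom(A,[0,1])$ contains more than one element --- both $id_{[0,1]}$ and $a\mapsto a^2$ preserve finite meets and arbitrary joins on $[0,1]$ --- so $p^*$ cannot be onto and $\eta_A$ is not an isomorphism. So the observation, taken literally with \rdef{localic}, is false, and no proof strategy can rescue it without strengthening the hypothesis.

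The correct repair is the one your instinct points at: in Vickers' classical terminology a system is \emph{localic} when it is (isomorphic to) one of the form $S(A)=(Hom(A,[0,1]),\models_*,A)$, equivalently when $p^*$ is a \emph{bijection} onto $Hom(A,[0,1])$; the asymmetry with Observation~\ref{o1} is structural, since $ext(A)$ is defined as the image of $ext^*$ (making surjectivity free on the counit side), whereas $Hom(A,[0,1])$ is built from $A$ alone and owes nothing to $X$. With the strengthened definition, your argument is complete: injectivity and surjectivity are given, the frame component is $id_A$, naturality comes from Lemma~\ref{3.22_1}, and the inverse's continuity is the formal computation parallel to Claim~\ref{3.24_1}. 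You should state this strengthening explicitly as a corrected hypothesis rather than as a ``reading'' of the existing definition.
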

\begin{observation}\label{o4}
For any frame $A$, the co-unit $\xi_A$ is a natural isomorphism.  
\end{observation}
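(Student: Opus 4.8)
The plan is to recognise the co-unit $\xi_A$ of the adjunction $fm\dashv S$ established in Lemma~\ref{3.22_1} as the identity morphism on $A$, after which the assertion is immediate. As displayed in the co-unit diagram following Lemma~\ref{3.22_1}, the component at a frame $A$ is a $\mathbf{Frm^{op}}$-morphism $\xi_A\colon fm(S(A))\longrightarrow A$, so the first task is to pin down its domain.

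I would compute $fm\circ S$ outright from the two defining functors. By Definition~\ref{3.19_1}, $S(A)=(Hom(A,[0,1]),\models_*,A)$, and by Definition~\ref{3.15_1} the functor $fm$ returns the frame coordinate of a system; hence $fm(S(A))=A$ on the nose, not merely up to isomorphism. On morphisms, for a $\mathbf{Frm^{op}}$-arrow $f$ one has $S(f)=(\_\circ f,f)$ and therefore $fm(S(f))=f$. Thus $fm\circ S$ is literally the identity functor on $\mathbf{Frm^{op}}$, and in particular $\xi_A$ is an endomorphism $A\longrightarrow A$. To fix which endomorphism, I would invoke the triangle identity against the unit $\eta_A=(p^*,id_A)$ of Lemma~\ref{3.22_1}: since $fm(\eta_A)=id_A$, the identity $\xi_A\circ fm(\eta_A)=id_A$ collapses to $\xi_A\circ id_A=id_A$, forcing $\xi_A=id_A$.

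With $\xi_A=id_A$ for every frame $A$ and $fm\circ S=id_{\mathbf{Frm^{op}}}$ in hand, both halves of ``natural isomorphism'' are settled at once: each component $\xi_A$, being an identity, is invertible in $\mathbf{Frm^{op}}$, while the naturality square for an arbitrary $\mathbf{Frm^{op}}$-morphism $f$ reduces to $id\circ f=f\circ id$ and commutes automatically. I do not expect a genuine obstacle; the only point demanding care is the bookkeeping forced by the opposite category $\mathbf{Frm^{op}}$ --- keeping the directions of $\xi_A$, of $S(f)$, and of the frame homomorphisms consistent --- rather than any substantive estimate or construction.
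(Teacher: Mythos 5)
Your proposal is correct and takes essentially the same route as the paper: the co-unit diagram following Lemma~\ref{3.22_1} already exhibits $\xi_A(\equiv id_A)$, i.e.\ since $fm(S(A))=A$ on the nose (by Definitions~\ref{3.19_1} and~\ref{3.15_1}) the co-unit is the identity, so invertibility and the naturality squares are immediate. Your additional step of deriving $\xi_A=id_A$ from the triangle identity with the unit $\eta_A=(p^*,id_A)$ is a careful justification of what the paper asserts by inspection, not a different argument.
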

Observation \ref{o3} and Observation \ref{o4} gives the following theorem.
\begin{theorem}
Category of localic fuzzy topological systems is dually equivalent to the category of frames.
\end{theorem}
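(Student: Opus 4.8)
The plan is to deduce the result formally from the adjunction $fm\dashv S$ established in \rlemma{3.22_1} together with the two Observations just stated, by appealing to the general principle that any adjunction restricts to an equivalence between the full subcategory of its domain on which the unit is an isomorphism and the full subcategory of its codomain on which the co-unit is an isomorphism. Concretely, writing $\eta$ for the unit and $\xi$ for the co-unit of the adjunction $fm\dashv S$ (with $fm:\mathbf{[0,1]}$-$\mathbf{TopSys}\longrightarrow\mathbf{Frm^{op}}$ and $S:\mathbf{Frm^{op}}\longrightarrow\mathbf{[0,1]}$-$\mathbf{TopSys}$), I would identify the relevant fixed subcategories on the two sides and then read off the stated duality.

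First I would record which objects lie in these fixed subcategories. By Observation~\ref{o4}, $\xi_A$ is a natural isomorphism for every frame $A$, so the full subcategory of $\mathbf{Frm^{op}}$ on which the co-unit is invertible is all of $\mathbf{Frm^{op}}$. By Observation~\ref{o3}, $\eta_A$ is a natural isomorphism whenever the system $(X,\models,A)$ is localic, so the full subcategory of localic fuzzy topological systems sits inside the unit-invertible subcategory of $\mathbf{[0,1]}$-$\mathbf{TopSys}$.

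Next I would check that the two functors genuinely restrict to these subcategories. Since $fm(X,\models,A)=A$ is a frame by definition, $fm$ trivially lands in $\mathbf{Frm^{op}}$. The one genuine verification is that $S$ sends every frame to a localic system: given a frame $A$ and two distinct $v,w\in Hom(A,[0,1])$, distinctness as functions yields some $a\in A$ with $v(a)\neq w(a)$, i.e. $gr(v\models_* a)=v(a)\neq w(a)=gr(w\models_* a)$, which is exactly the condition of \rdef{localic} for $S(A)=(Hom(A,[0,1]),\models_*,A)$. Hence $S$ corestricts to a functor $\mathbf{Frm^{op}}\longrightarrow\mathbf{LocFuzTopSys}$ into localic systems, and $fm$ restricts to $\mathbf{LocFuzTopSys}\longrightarrow\mathbf{Frm^{op}}$. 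On these restricted functors the unit $\eta$ (by Observation~\ref{o3}) and the co-unit $\xi$ (by Observation~\ref{o4}) are natural isomorphisms $id\cong S\circ fm$ and $fm\circ S\cong id$, so $fm$ and $S$ are mutually quasi-inverse and give an equivalence $\mathbf{LocFuzTopSys}\simeq\mathbf{Frm^{op}}$. Since an equivalence with $\mathbf{Frm^{op}}$ is by definition a dual equivalence with $\mathbf{Frm}$, the theorem follows.

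The argument is essentially formal once Observations~\ref{o3} and~\ref{o4} are in hand, mirroring the spatial case treated just above. I expect the only points needing care to be the bookkeeping around the opposite category $\mathbf{Frm^{op}}$ — keeping straight that $fm$ is the left adjoint, so that $\eta$ lives on the $\mathbf{[0,1]}$-$\mathbf{TopSys}$ side and $\xi$ on the $\mathbf{Frm^{op}}$ side — and the observation that $S$ never leaves the localic subcategory, which is what makes the restriction of the adjunction well defined.
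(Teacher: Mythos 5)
Your proposal is correct and takes essentially the same route as the paper, which obtains the theorem directly by combining the adjunction of \rlemma{3.22_1} with Observations~\ref{o3} and~\ref{o4}. Your explicit check that $S(A)=(Hom(A,[0,1]),\models_*,A)$ is localic is a detail the paper leaves implicit, but it is exactly what is needed for the restricted functors to be well defined, so your write-up is if anything slightly more complete than the paper's.
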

\begin{theorem}\label{3.25_1}
$Ext\circ S$ is the right adjoint to the functor $fm\circ J$.
\end{theorem}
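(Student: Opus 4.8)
The plan is to obtain this adjunction for free from the two adjunctions already established, namely $J$ being left adjoint to $Ext$ (Lemma \ref{3.20_1}) and $fm$ being left adjoint to $S$ (Lemma \ref{3.22_1}). First I would record the types of the composites: since $J:\mathbf{[0,1]}$-$\mathbf{Top}\longrightarrow\mathbf{[0,1]}$-$\mathbf{TopSys}$ and $fm:\mathbf{[0,1]}$-$\mathbf{TopSys}\longrightarrow\mathbf{Frm^{op}}$, the composite $fm\circ J:\mathbf{[0,1]}$-$\mathbf{Top}\longrightarrow\mathbf{Frm^{op}}$ is a composite of two left adjoints; dually $Ext\circ S:\mathbf{Frm^{op}}\longrightarrow\mathbf{[0,1]}$-$\mathbf{Top}$ is a composite of two right adjoints. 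The general principle that adjunctions compose then yields that $fm\circ J$ is left adjoint to $Ext\circ S$, which is exactly the assertion.

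To phrase this inside the universal-arrow framework of Definition \ref{2.3}, I would produce, for each fuzzy topological space $(X,\tau)$, an $(Ext\circ S)$-universal arrow with domain $(X,\tau)$ by pasting the two component units. Concretely, the unit of $J\dashv Ext$ at $(X,\tau)$ is the identity $\eta^{1}_{(X,\tau)}\equiv id_X:(X,\tau)\longrightarrow Ext(J(X,\tau))$, and the unit of $fm\dashv S$ at the system $J(X,\tau)=(X,\in,\tau)$ is the continuous map $\eta^{2}_{J(X,\tau)}=(p^{*},id_\tau)$ of Lemma \ref{3.22_1}. Applying $Ext$ to the latter and composing gives
\[
\eta_{(X,\tau)}\;=\;Ext\!\left(\eta^{2}_{J(X,\tau)}\right)\circ\eta^{1}_{(X,\tau)}\;=\;p^{*}\colon (X,\tau)\longrightarrow (Ext\circ S\circ fm\circ J)(X,\tau),
\]
where $p^{*}(x)=p_x$ and $p_x(\tilde T)=\tilde T(x)$; this is the candidate universal arrow.

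The substance of the proof is to check that $\eta_{(X,\tau)}$ is genuinely $(Ext\circ S)$-universal, i.e. that every $(Ext\circ S)$-structured arrow $g':(X,\tau)\longrightarrow (Ext\circ S)(A')$ factors uniquely as $(Ext\circ S)(\hat f)\circ \eta_{(X,\tau)}$. I would obtain $\hat f$ in two stages: first use the universal property of the adjunction $J\dashv Ext$ to transpose $g'$ into a $\mathbf{[0,1]}$-$\mathbf{TopSys}$-morphism $J(X,\tau)\longrightarrow S(A')$, and then use the universal property of $fm\dashv S$ to transpose once more into a $\mathbf{Frm^{op}}$-morphism $\hat f:\tau\longrightarrow A'$; existence and uniqueness of $\hat f$ follow because each transposition is a bijection, so their composite is too. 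The main thing to watch, and the only real obstacle, is the book-keeping of arrow directions through $\mathbf{Frm^{op}}$: a morphism $\hat f$ in $\mathbf{Frm^{op}}$ is a frame homomorphism pointing the opposite way, so I must ensure that the two transpositions are applied in the correct order and that the pasted factorization is indeed the unique arrow making the resulting triangle commute. Once this composite triangle identity is verified, Definition \ref{2.3} gives that $Ext\circ S$ is right adjoint to $fm\circ J$.
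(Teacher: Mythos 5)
Your proposal is correct and is essentially the paper's own argument: the paper proves Theorem~\ref{3.25_1} in one line by combining the adjunctions of Lemmas~\ref{3.20_1} and~\ref{3.22_1}, i.e.\ by the composability of adjoints, which is exactly your route. Your explicit pasting of the units $\eta_{(X,\tau)}=Ext\bigl(\eta^{2}_{J(X,\tau)}\bigr)\circ\eta^{1}_{(X,\tau)}=p^{*}$ and the two-stage transposition merely spell out the details the paper leaves implicit.
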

\begin{proof}
Follows from the combination of the adjoint situations in Lemmas \ref{3.20_1} and \ref{3.22_1}.
\end{proof}
The obtained functorial relationships can be illustrated by the following diagram:
  
\begin{center}
\begin{tikzpicture}
\node (C) at (0,3) {$\mathbf{[0,1]}$-$\mathbf{TopSys}$};
\node (A) at (-2,0) {$\mathbf{[0,1]}$-$\mathbf{Top}$};
\node (B) at (2,0) {$\mathbf{Frm^{op}}$};
%\node at (0,0) {\rotatebox{270}{$\Rightarrow$}};
\path[->,font=\scriptsize ,>=angle 90]
(A) edge [bend left=15] node[above] {$fm\circ J$} (B);
\path[<-,font=\scriptsize ,>=angle 90]
(A)edge [bend right=15] node[below] {$Ext \circ S$} (B);
\path[->,font=\scriptsize ,>=angle 90]
(A) edge [bend left=20] node[above] {$J$} (C);
\path[<-,font=\scriptsize ,>=angle 90]
(A)edge [bend right=20] node[above] {$Ext$} (C);
\path[->,font=\scriptsize, >=angle 90]
(C) edge [bend left=20] node[above] {$fm$} (B);
\path[<-,font=\scriptsize, >=angle 90]
(C)edge [bend right=20] node[above] {$S$} (B);
\end{tikzpicture}
\end{center}

The obtained adjunction between the categories of fuzzy topological spaces and locales is a fuzzification of the well-known $\mathbf{Top}$-$\mathbf{Loc}$ adjunction \cite{PJS}.
\section{Sum and Product of fuzzy topological systems}
\begin{definition}\label{4.1_1}
Let $\{D_{\lambda}\}$ be a family of fuzzy topological systems, where $D_{\lambda}\equiv (X_{\lambda},\models_{\lambda},A_{\lambda})$. The fuzzy topological sum\index{fuzzy topological!sum} $\sum D_{\lambda} =(X,\models^*,A)$ is defined by,\\
1. $X=\bigcup X_{\lambda}$, union of the sets;\\
2. $A=\prod A_{\lambda}$, the Cartesian product of the frames;\\
3. $gr(z\models^*<a_{\lambda}>)=\bigvee_{\lambda}(\tilde{X_{\lambda}}(z)\wedge gr(z\models_{\lambda}a_{\lambda}))$, where $\tilde{X_{\lambda}}$ is the membership function of the set $X_{\lambda}$(characteristic function).
\end{definition}
It should be noted that the Cartesian product of frames is a frame (c.f. Proposition \ref{cart}).
\begin{lemma}\label{4.2_1}
A sum of fuzzy topological systems is a fuzzy topological system.
\end{lemma}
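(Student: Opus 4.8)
The plan is to verify that the triple $(X,\models^*,A)$ satisfies the two defining conditions of \rdef{3.1_1}, after first recording that $A=\prod A_\lambda$ is a frame. The latter follows from \rprop{cart} and its routine extension to an arbitrary family: joins, finite meets, and the frame distributivity in a product are all computed coordinatewise, so $\bigwedge_k \langle a^k_\lambda\rangle = \langle \bigwedge_k a^k_\lambda\rangle$ and $\bigvee_i \langle a^i_\lambda\rangle = \langle \bigvee_i a^i_\lambda\rangle$. The other key preliminary observation, on which everything turns, is that a \emph{sum} is a coproduct, so the sets $X_\lambda$ are taken pairwise disjoint; hence each $z\in X=\bigcup X_\lambda$ lies in exactly one $X_{\lambda_0}$. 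Consequently $\tilde{X_\lambda}(z)=1$ precisely when $\lambda=\lambda_0$ and is $0$ otherwise, and the defining formula collapses to
\[
gr(z\models^*\langle a_\lambda\rangle)=gr(z\models_{\lambda_0}a_{\lambda_0}),
\]
a single summand.

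With this collapse in hand, both conditions reduce directly to the corresponding axioms of the single system $D_{\lambda_0}$. For the join condition, given any $S=\{\langle a^i_\lambda\rangle\}_{i\in I}\subseteq A$, I would use the coordinatewise description of joins together with condition 2 for $D_{\lambda_0}$:
\[
gr\bigl(z\models^*\textstyle\bigvee S\bigr)=gr\bigl(z\models_{\lambda_0}\textstyle\bigvee_i a^i_{\lambda_0}\bigr)=\sup_i gr(z\models_{\lambda_0}a^i_{\lambda_0})=\sup_i gr\bigl(z\models^*\langle a^i_\lambda\rangle\bigr).
\]
The finite-meet condition is handled identically, replacing $\bigvee$ by $\bigwedge$ over a finite $S$ and invoking condition 1 for $D_{\lambda_0}$ together with the coordinatewise description of meets. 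Finally one checks that $\models^*$ indeed takes values in $[0,1]$, so that it is a genuine fuzzy relation and the triple is well formed.

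The step requiring the most care is the finite-meet condition, and the essential point is exactly the collapse above. If one worked with an honest (non-disjoint) union, then for a $z$ lying in several $X_\lambda$, writing $\Lambda_z=\{\lambda : z\in X_\lambda\}$, the left-hand side would be $\bigvee_{\lambda\in\Lambda_z}\bigwedge_k gr(z\models_\lambda a^k_\lambda)$ while the right-hand side would be $\bigwedge_k\bigvee_{\lambda\in\Lambda_z}gr(z\models_\lambda a^k_\lambda)$, and these need not agree, since join does not distribute over finite meet in $[0,1]$ (taking $\Lambda_z=\{1,2\}$ with grades $1,0$ and $0,1$ already yields $0$ on the left against $1$ on the right). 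Thus the proof must exploit disjointness of the $X_\lambda$ to reduce the outer join to a single term \emph{before} applying the per-component meet axiom; the join condition, by contrast, survives even without disjointness, since joins in $[0,1]$ commute. I would therefore make the disjointness of the summands explicit at the outset and let it drive both verifications.
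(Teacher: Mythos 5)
Your proof is correct, and it takes a genuinely different route from the paper's. The paper verifies the two conditions of Definition \ref{3.1_1} by computing directly with the defining supremum: for arbitrary joins it interchanges $\bigvee_i$ and $\bigvee_\lambda$ and uses distributivity of $\wedge$ over $\bigvee$ in $[0,1]$ (which, as you observe, needs no disjointness at all), while for binary meets it distributes the inner grades and then asserts the factorization $\bigvee_\lambda(P_\lambda\wedge Q_\lambda)=\bigl(\bigvee_\lambda P_\lambda\bigr)\wedge\bigl(\bigvee_\lambda Q_\lambda\bigr)$ with $P_\lambda=\tilde{X_\lambda}(z)\wedge gr(z\models_\lambda a_\lambda)$ and $Q_\lambda=\tilde{X_\lambda}(z)\wedge gr(z\models_\lambda b_\lambda)$, without further comment. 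That factorization step is exactly what your counterexample targets: in general only $\leq$ holds, and equality obtains precisely because at most one $\lambda$ contributes a nonzero term, i.e.\ because the $X_\lambda$ are disjoint --- a hypothesis the paper never states in Definition \ref{4.1_1} (it writes only $X=\bigcup X_\lambda$), though its closing remark that the sum corroborates the categorical coproduct confirms your reading. So your organization --- first collapsing $gr(z\models^*\langle a_\lambda\rangle)$ to the single summand $gr(z\models_{\lambda_0}a_{\lambda_0})$ and then quoting the axioms of $D_{\lambda_0}$ --- buys a uniform one-line verification of both conditions (including the empty meet, $gr(z\models^*\top)=gr(z\models_{\lambda_0}\top)=1$, which the paper's binary-meet computation does not cover) and, more importantly, makes explicit the disjointness assumption that the paper's meet computation uses tacitly; what the paper's version buys in exchange is a join-condition argument valid verbatim even for non-disjoint unions. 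Your proposal is, if anything, the more careful of the two.
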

\begin{proof}
$\prod A_{\lambda}$ is a frame. It will be enough to show that\\
1. $gr(z\models^*<a_{\lambda}>\wedge <b_{\lambda}>)=gr(z\models^*<a_{\lambda}>)\wedge gr(z\models^*<b_{\lambda}>)$ and\\
2. $gr(z\models^*\bigvee_i<a_{\lambda}^i>)=\bigvee_igr(z\models^*<a_{\lambda}^i>)$. Let us proceed in the following way.

$1.\ gr(z\models^*<a_{\lambda}>\wedge <b_{\lambda}>)\\ =\bigvee_\lambda (z\in X_{\lambda}\wedge gr(z\models_{\lambda} a_{\lambda}\wedge b_{\lambda}))\\
 =\bigvee_\lambda(z\in X_{\lambda}\wedge gr(z\models_{\lambda}a_{\lambda})\wedge gr(z\models_{\lambda}b_{\lambda}))\\
 =\bigvee_\lambda((z\in X_{\lambda}\wedge gr(z\in a_{\lambda}))\wedge (z\in X_{\lambda}\wedge gr(z\models_{\lambda}b_{\lambda})))\\
 =\bigvee_\lambda(z\in X_{\lambda}\wedge gr(z\in a_{\lambda}))\wedge \bigvee_\lambda(z\in X_{\lambda}\wedge gr(z\models_{\lambda}b_{\lambda}))\\
 =gr(z\models^*<a_{\lambda}>)\wedge gr(z\models^*<b_{\lambda}>).$
\begin{align*}
2.\ gr(z\models^*\bigvee_i<a_{\lambda}^i>) & =\bigvee_{\lambda}(\tilde{X_{\lambda}}(z)\wedge gr(z\models_{\lambda}\bigvee_i a_{\lambda}^i))\\
& =\bigvee_{\lambda}(\tilde{X_{\lambda}}(z)\wedge (\bigvee_i gr(z\models_{\lambda} a_{\lambda}^i)))\\
& =\bigvee_{\lambda}(\bigvee_i (\tilde{X_{\lambda}}(z)\wedge gr(z\models_{\lambda} a_{\lambda}^i)))\\
& =\bigvee_i (\bigvee_{\lambda}(\tilde{X_{\lambda}}(z)\wedge gr(z\models_{\lambda} a_{\lambda}^i)))\\
& =\bigvee_igr(z\models^*<a_{\lambda}^i>).
\end{align*}
This completes the proof.
\end{proof}
\begin{definition}\label{4.3_1}
Let $A$ and $B$ be two frames. The tensor product (coproduct)\index{tensor product}\index{product} $A\otimes B$ is the frame presented as follows:
\begin{multline*}
$$Fr(a\otimes b:a\in A, b\in B\mid  
\bigwedge_i(a_i\otimes b_i)=(\bigwedge_i a_i)\otimes (\bigwedge_i b_i),
\bigvee_i(a_i\otimes b)\\=(\bigvee_i a_i)\otimes b,
\bigvee_i(a\otimes b_i)=a\otimes (\bigvee_i b_i)).$$
\end{multline*}
We also define two injections $i_A:A\longrightarrow A\otimes B$ and $i_B:B\longrightarrow A\otimes B$ by\\
$i_A(a)=a\otimes 1$ and $i_B(b)=1\otimes b.$
\end{definition}
\begin{lemma}\label{4.4_1}
Every element of $A\otimes B$ can be written in the form $\bigvee_i(a_i\otimes b_i)$ for some $a_i\in A$ and $b_i\in B.$
\end{lemma}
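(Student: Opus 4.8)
The plan is to use the general normal-form description of elements in a frame presented by generators and relations, together with the first defining relation of $A\otimes B$ from Definition \ref{4.3_1}.

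First I would recall the standard normal form for presented frames: in any frame of the form $Fr(\text{generators}\mid\text{relations})$, every element is an arbitrary join of finite meets of the generators. The reason is that the only primitive operations used in building a frame are finite meet and arbitrary join, and since binary meet distributes over arbitrary join, any term in the generators can be rewritten by pushing meets inside joins until it becomes a join of finite meets of generators. Applying this to $A\otimes B$, whose generators are the symbols $a\otimes b$ with $a\in A$ and $b\in B$, an arbitrary element $t\in A\otimes B$ takes the form
$$t=\bigvee_j\bigwedge_{k=1}^{n_j}(a_{jk}\otimes b_{jk}),$$
for suitable $a_{jk}\in A$ and $b_{jk}\in B$.

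Next I would collapse each inner finite meet into a single generator by invoking the first relation of the presentation, namely $\bigwedge_k(a_k\otimes b_k)=(\bigwedge_k a_k)\otimes(\bigwedge_k b_k)$, applied to the finite index set $\{1,\dots,n_j\}$. Since $A$ and $B$ are frames, the finite meets $a_j:=\bigwedge_{k=1}^{n_j}a_{jk}\in A$ and $b_j:=\bigwedge_{k=1}^{n_j}b_{jk}\in B$ exist, so each $\bigwedge_k(a_{jk}\otimes b_{jk})=a_j\otimes b_j$ is again a single generator. Substituting back yields $t=\bigvee_j(a_j\otimes b_j)$, which is exactly the asserted form.

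Finally I would dispose of the degenerate cases so that no element is overlooked: an empty inner meet produces the top element, and the first relation with empty index set identifies it with $\top_A\otimes\top_B=1\otimes 1$, still a single generator, while the bottom element is the empty join $\bigvee_{\emptyset}$, which already fits the target form with empty index set. The main obstacle is justifying the normal-form claim of the first step, that elements of a presented frame genuinely are arbitrary joins of finite meets of generators; this is where one must appeal to the explicit construction of the free frame on a set of generators and its quotient by the imposed relations, as developed in Vickers \cite{SV}. Once that normal form is available, the remainder is a direct application of the meet-relation together with closure of $A$ and $B$ under finite meets.
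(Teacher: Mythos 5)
Your proof is correct, and it is essentially the canonical argument: the paper itself states Lemma \ref{4.4_1} without proof (importing it from Vickers \cite{SV}), and the standard justification is exactly what you give. Your three steps — the join-of-finite-meets normal form for a frame presented by generators and relations, the collapse of each finite meet of generators into a single generator via the relation $\bigwedge_i(a_i\otimes b_i)=(\bigwedge_i a_i)\otimes(\bigwedge_i b_i)$ from Definition \ref{4.3_1} (with the empty meet giving $\top=1\otimes 1$), and the observation that $\bot=\bigvee\emptyset$ already has the required form — are each sound, and your acknowledgement that the normal-form step rests on the construction of presented frames is the right place to anchor the one nontrivial ingredient.
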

\begin{definition}\label{4.5_1}
Let $D=(X,\models,A),E=(Y,\models,B)$ be fuzzy topological systems. The fuzzy topological product $D\times E=(Z,\models^*,C)$, where 
\\(i) $Z=X\times Y$ is the Cartesian product;\\
(ii) $C=A\otimes B$ is the tensor product of frames;\\
(iii) $gr((x,y)\models^*\bigvee_i(a_i\otimes b_i))=\bigvee_i(gr(x\models a_i)\wedge gr(y\models b_i))$.
\end{definition}
\begin{lemma}\label{4.6_1}
Product of two fuzzy topological systems is a fuzzy topological system.
\end{lemma}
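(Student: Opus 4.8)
The plan is to exhibit, for each point $(x,y)\in Z$, a frame homomorphism $h_{(x,y)}\colon A\otimes B\longrightarrow[0,1]$ whose action on the generators $a\otimes b$ is $gr(x\models a)\wedge gr(y\models b)$, and then to read off the two defining conditions of a fuzzy topological system from the fact that $h_{(x,y)}$ preserves finite meets and arbitrary joins. Once this homomorphism is in hand, setting $gr((x,y)\models^*c)=h_{(x,y)}(c)$ agrees with clause (iii) of Definition \ref{4.5_1} by \rlemma{4.4_1}, and the whole argument reduces to producing $h_{(x,y)}$.

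First I would recall that $C=A\otimes B$ is a frame: it is given by the frame presentation of Definition \ref{4.3_1}, and every such presentation yields a frame together with the universal property that a frame homomorphism out of $A\otimes B$ is uniquely determined by an assignment of values to the generators $a\otimes b$ that respects the listed relations. The value set $[0,1]$ with its natural order is itself a frame, so it is an admissible target.

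Next I would fix $(x,y)\in Z$ and invoke the computation of Claim \ref{3.23_1}, applied to the two systems $D=(X,\models,A)$ and $E=(Y,\models,B)$ separately: the maps $p_x\colon A\to[0,1]$, $p_x(a)=gr(x\models a)$, and $q_y\colon B\to[0,1]$, $q_y(b)=gr(y\models b)$, are frame homomorphisms. I then define $g\colon A\times B\to[0,1]$ by $g(a,b)=p_x(a)\wedge q_y(b)$ and check that $g$ respects the three families of relations of Definition \ref{4.3_1}. For the meet relation this uses that $p_x,q_y$ preserve finite meets together with the associativity and commutativity of $\wedge$ in $[0,1]$; for the two join relations it uses that $p_x,q_y$ preserve arbitrary joins together with the frame distributivity $r\wedge\bigvee_i s_i=\bigvee_i(r\wedge s_i)$ valid in $[0,1]$. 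By the universal property this yields the desired frame homomorphism $h_{(x,y)}$ with $h_{(x,y)}(a\otimes b)=gr(x\models a)\wedge gr(y\models b)$.

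Finally, with $gr((x,y)\models^*c):=h_{(x,y)}(c)$, preservation of finite meets by $h_{(x,y)}$ gives condition (1) of Definition \ref{3.1_1}, preservation of arbitrary joins gives condition (2), and evaluating $h_{(x,y)}$ on a general element $\bigvee_i(a_i\otimes b_i)$ recovers precisely the formula in clause (iii) of Definition \ref{4.5_1}; this last point also shows that that formula is independent of the chosen representation. The main obstacle is exactly this well-definedness: since the representation $c=\bigvee_i(a_i\otimes b_i)$ afforded by \rlemma{4.4_1} is far from unique, one cannot simply declare $gr((x,y)\models^*\bigvee_i(a_i\otimes b_i))=\bigvee_i(gr(x\models a_i)\wedge gr(y\models b_i))$ without justification. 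Routing the definition through the universal property of the presented frame $A\otimes B$ removes this difficulty at a stroke, since a frame homomorphism is by construction a genuine function on $C$.
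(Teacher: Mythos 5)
Your proposal is correct, and it takes a genuinely different route from the paper's proof. The paper works entirely with representatives: it takes clause (iii) of Definition \ref{4.5_1} as the definition of $\models^*$, rewrites $\bigvee_i(a_i\otimes b_i)\wedge\bigvee_j(c_j\otimes d_j)$ as $\bigvee_{i,j}\bigl((a_i\wedge c_j)\otimes(b_i\wedge d_j)\bigr)$ using the tensor relations and frame distributivity, and then verifies the two conditions of Definition \ref{3.1_1} by direct $\inf$/$\sup$ computations in $[0,1]$ (distributivity of $\wedge$ over suprema for the meet condition, an interchange of suprema for the join condition); your universal-property construction of $h_{(x,y)}$ appears nowhere in it. What your route buys is precisely the issue you flag at the end: since the representation $c=\bigvee_i(a_i\otimes b_i)$ of Lemma \ref{4.4_1} is far from unique, clause (iii) does not obviously define a function on $C$, and the paper's computations tacitly assume representation-independence --- for instance, in its first computation the element $\bigvee_i(a_i\otimes b_i)\wedge\bigvee_j(c_j\otimes d_j)$ is evaluated through the particular representation $\bigvee_{i,j}(a_i\wedge c_j)\otimes(b_i\wedge d_j)$ without any argument that a different choice would yield the same value. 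Factoring the definition through the presentation of Definition \ref{4.3_1}, with the relation-checking you sketch (which rests on $p_x$ and $q_y$ being frame homomorphisms, exactly the content of Claim \ref{3.23_1}), yields $\models^*$ as a genuine map on $C$ and makes conditions (1) and (2) immediate from preservation of finite meets and arbitrary joins. The trade-off: the paper's argument is more elementary and self-contained, needing no appeal to the existence and universal property of presented frames, whereas yours is the more robust one and actually closes the well-definedness gap; in your framework the paper's computation becomes nothing more than the evaluation of $h_{(x,y)}$ on particular representatives.
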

\begin{proof}
It will be enough to show that $gr((x,y)\models^*\bigvee_i(a_i\otimes b_i)\wedge \bigvee_j(c_j\otimes d_j))=gr((x,y)\models^*\bigvee_i(a_i\otimes b_i))\wedge gr((x,y)\models^*\bigvee_j(c_j\otimes d_j))$ and $gr((x,y)\models^*\bigvee_j(\bigvee_i(a_i^j\otimes b_i^j)))=\bigvee_j gr((x,y)\models^*\bigvee_i(a_i^j\otimes b_i^j))$. Let us proceed in the following way.
\begin{align*}
1.\ & gr((x,y)\models^*\bigvee_i(a_i\otimes b_i)\wedge \bigvee_j(c_j\otimes d_j))\\ 
& =gr((x,y)\models^*\bigvee_{i,j}(a_i\wedge c_j)\otimes (b_i\wedge d_j))\\
& =\bigvee_{i,j}(gr(x\models a_i\wedge c_j)\wedge gr(y\models b_i\wedge d_j))\\
& =\bigvee_{i,j}(gr(x\models a_i)\wedge gr(x\models c_j)\wedge gr(y\models b_i)\wedge gr(y\models d_j))\\
& =\bigvee_i(gr(x\models a_i)\wedge gr(y\models b_i))\wedge \bigvee_j(gr(x\models c_j)\wedge gr(y\models d_j))\\
& =gr((x,y)\models^*\bigvee_i(a_i\otimes b_i))\wedge gr((x,y)\models^*\bigvee_j(c_j\otimes d_j)).
\end{align*}
\begin{align*}
2.\ gr((x,y)\models^*\bigvee_j(\bigvee_i(a_i^j\otimes b_i^j)))
& =gr((x,y)\models^*\bigvee_i(\bigvee_j(a_i^j\otimes b_i^j)))\\
& =\bigvee_i(\bigvee_j(gr(x\models a_i^j)\wedge gr(y\wedge b_i^j)))\\
& =\bigvee_j\bigvee_i(gr(x\models a_i^j)\wedge gr(y\wedge b_i^j))\\
& =\bigvee_j gr((x,y)\models^*\bigvee_i(a_i^j\otimes b_i^j)).
\end{align*}
This completes the proof.
\end{proof}
This section provides two ways to construct new fuzzy topological system from given fuzzy topological systems. The sum and product of fuzzy topological systems are defined in such a way that they corroborate with co-product and product of fuzzy topological systems respectively in the categorical frame work.

\chapter{Category of L-valued Fuzzy Topological Systems over Fuzzy Sets}
\section{Introduction}\blfootnote{The results of this chapter appear in {\bf \cite{MP1} P. Jana and M.K. Chakraborty, \emph{On Categorical Relationship among various Fuzzy Topological Systems, Fuzzy Topological Spaces and related Algebraic Structures}, Proceedings of the 13th Asian Logic Conference, World Scientific, 2015, pp. 124--135.} and {\bf \cite{MP2} P. Jana and M.K. Chakraborty, \emph{Categorical relationships of fuzzy topological systems with fuzzy topological spaces and underlying algebras-II},  Ann. of Fuzzy Math. and Inform., \textbf{10}, 2015, no. 1, pp. 123--137.}}
This chapter provides a generalization of the notions of $[0,1]$-$\mathbf{Top}$, $[0,1]$-$\mathbf{TopSys}$ (described in Chapter 2), $\mathbf{Frm^{op}}$ and their categorical relationships. First of all instead of working on fuzzy topological space on ordinary set we deal with fuzzy topological space on fuzzy set, fuzzy topological system whose underlying set is a fuzzy set. Secondly we generalize the value set too i.e. instead of working on the value set [0,1] we take $L$ which is a frame. It is to be noted that in \cite{MP1}, a special case of the work done in this chapter taking the value set as [0,1] has been dealt with. Furthermore, two ways of constructing subspaces and subsystems of 
an $\mathscr{L}$-topological space and an $\mathscr{L}$-topological system respectively are provided.
\section{Categories: $\mathscr{L}$-Top, $\mathscr{L}$-TopSys and Loc}
\subsection{Categories}
\subsection*{$\mathscr{L}$-Top}
\begin{definition}[$\mathscr{L}$-topological space]\label{ltop}
Let $L$ be a frame, $(X,\tilde{A})$ be a $L$-fuzzy set\index{$L$-fuzzy!set}, and $\tau$ be a collection of $L$-fuzzy subsets \index{$L$-fuzzy!subset} of $(X,\tilde{A})$ such that
\begin{enumerate}
\item $(X,\tilde\emptyset)$, $(X,\tilde{A})\in \tau$, where $\tilde\emptyset (x)=0$, for all $x\in X$;
\item $(X,\tilde A_i)\in\tau$ for $i\in I\ \text{implies}\ (X,\bigcup_{i\in I}\tilde A_i)\in \tau$, where $\bigcup_{i\in I}\tilde{A_i}(x)=sup_{i\in I}(\tilde{A_i}(x))$, for all $x\in X$;
\item $(X,\tilde A_1)$, $(X,\tilde A_2)\in\tau\ \text{implies}\ (X,\tilde A_1\cap\tilde A_2)\in\tau$, where for all $x\in X$, $\tilde{A_1}\cap \tilde{A_2}=min\{\tilde{A_1}(x), \tilde{A_2}(x)\}$.
\end{enumerate}
Then $(X,\tilde{A},\tau)$ is called a $\mathscr{L}$\textbf{-topological space}\index{$\mathscr{L}$- topological!space}. The elements of $\tau$ are known as \textbf{fuzzy open sets}\index{fuzzy!open set}.
\end{definition}
\begin{definition}[Proper function]\label{prop}\cite{MB}
A \textbf{proper function} \index{proper function} $f$ from $(X,\tilde{A})$ to $(Y,\tilde{B})$ is a relation from $(X,\tilde{A})$ to $(Y,\tilde{B})$ such that $\forall x\in \mid \tilde{A}\mid$, $\exists \ unique\ y\in \mid\tilde{B}\mid$ for which  $\tilde{A}(x)=f(x,y)$ and $f(x,y')=0_L$ if $y'\neq y\in\mid\tilde{B}\mid$, where $0_L$ is the least element of the frame $L$, $\mid\tilde{A}\mid=\{ x\in X : \tilde{A}(x)>0_L\}$ and $\mid\tilde{B}\mid=\{ y\in Y : \tilde{B}(y)>0_L\}$. For a fixed $x\in\mid\tilde{A}\mid$, we will denote that $unique\ y\in\mid\tilde{B\mid}$ by $f(x)$.
\end{definition}
\begin{definition}[Identity proper function]\label{id}
Let $(X,\tilde{A})$ be a $L$-fuzzy set. The map $i_{\tilde{A}}:X\times X\longrightarrow L$ is said to be 
an \textbf{identity proper function}\index{identity!proper function} if and only if  $i_{\tilde{A}}(x,x)=\tilde{A}(x)$, for any $x\in X$ and 
$i_{\tilde{A}}(x,x')= 0_L $(the least element of the frame $L)$, when $x\neq x'$ in $X$.
\end{definition}
\begin{definition}[Fuzz-top continuous]\label{3.2_1l} Let $(X,\tilde{A},\tau_1)$ and $(Y,\tilde{B},\tau_2)$ be two $\mathscr{L}$-topological spaces. A proper function $f:(X,\tilde{A})\longrightarrow (Y,\tilde{B})$ is said to be \textbf{fuzz-top continuous}\index{fuzz-top continuous} if and only if for every fuzzy open set $(Y,\tilde B_1)$ of $(Y,\tilde{B},\tau_2)$, $(X,f^{-1}(\tilde B_1))$ is an open set of $(X,\tilde{A},\tau_1)$, where $f^{-1}(\tilde B_1):X\longrightarrow L$ is defined as $f^{-1}(\tilde B_1)(a)=sup_{b\in\mid \tilde{B}\mid}\{min\{f(a,b),\tilde{B_1}(b)\}\}$, for all $a\in\mid\tilde{A}\mid$.
\end{definition}
\begin{proposition}\label{pic}
The identity proper map\index{identity!proper map} $id_X:X\longrightarrow X$ is fuzz-top continuous map and if $f:(X,\tilde{A})\longrightarrow (Y,\tilde{B})$, $g:(Y,\tilde{B})\longrightarrow (Z,\tilde{C})$ are proper functions then $g\circ f:(X,\tilde{A})\longrightarrow (Z,\tilde{C})$ is a proper function and $(g\circ f)^{-1}(\tilde{C_1})=f^{-1}(g^{-1}(\tilde{C_1}))$, where $(Z,\tilde{C_1})$ is an $L$-fuzzy subset of $(Z,\tilde{C})$.
\end{proposition}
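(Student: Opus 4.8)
The statement bundles three claims: (i) the identity proper map is fuzz-top continuous, (ii) the composite of two proper functions is again a proper function, and (iii) the preimage operation is contravariantly functorial, $(g\circ f)^{-1}(\tilde{C_1})=f^{-1}(g^{-1}(\tilde{C_1}))$. The plan is to treat them in this order, since the explicit description of the composite obtained in (ii) is exactly what feeds into the identity of (iii).

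For (i) I would compute $i_{\tilde{A}}^{-1}(\tilde{A_1})$ directly from the definition of fuzz-top continuity. Fix a fuzzy open set $(X,\tilde{A_1})$ and $a\in|\tilde{A}|$; in the supremum $\sup_{x\in|\tilde{A}|}\min\{i_{\tilde{A}}(a,x),\tilde{A_1}(x)\}$ every term with $x\neq a$ vanishes because $i_{\tilde{A}}(a,x)=0_L$, so the value collapses to $\min\{i_{\tilde{A}}(a,a),\tilde{A_1}(a)\}=\min\{\tilde{A}(a),\tilde{A_1}(a)\}$. Since $(X,\tilde{A_1})$ is an $L$-fuzzy subset of $(X,\tilde{A})$ we have $\tilde{A_1}(a)\leq\tilde{A}(a)$, whence the meet equals $\tilde{A_1}(a)$. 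Thus $i_{\tilde{A}}^{-1}(\tilde{A_1})=\tilde{A_1}$, which is open by hypothesis, so the identity proper map is fuzz-top continuous.

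For (ii) I would first fix the composition of $L$-relations as $(g\circ f)(x,z)=\sup_{y\in|\tilde{B}|}\min\{f(x,y),g(y,z)\}$. For $x\in|\tilde{A}|$ only the index $y=f(x)$ survives, giving $(g\circ f)(x,z)=\min\{\tilde{A}(x),g(f(x),z)\}$, and among the $z$'s only $z=g(f(x))$ survives. The crucial point is to verify $(g\circ f)(x,g(f(x)))=\tilde{A}(x)$, since the naive computation yields only $\min\{\tilde{A}(x),\tilde{B}(f(x))\}$. This is where I expect the main obstacle: the needed inequality $\tilde{A}(x)\leq\tilde{B}(f(x))$ is not literally among the displayed clauses of \rdef{prop}, and must be read off from the fact that $f$ is a \emph{relation from} $(X,\tilde{A})$ \emph{to} $(Y,\tilde{B})$, i.e. $f(x,y)\leq\min\{\tilde{A}(x),\tilde{B}(y)\}$; combined with $f(x,f(x))=\tilde{A}(x)$ this forces $\tilde{A}(x)\leq\tilde{B}(f(x))$. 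Granting this, $g\circ f$ is a proper function whose underlying support map is $x\mapsto g(f(x))$, with uniqueness following from $g(f(x),z)=0_L$ for $z\neq g(f(x))$.

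For (iii) I would use the formula $(g\circ f)(a,c)=\min\{\tilde{A}(a),g(f(a),c)\}$ just derived. Expanding, $(g\circ f)^{-1}(\tilde{C_1})(a)=\sup_{c\in|\tilde{C}|}\min\{\min\{\tilde{A}(a),g(f(a),c)\},\tilde{C_1}(c)\}$, and pulling the constant $\tilde{A}(a)$ outside the supremum by the frame law that binary meet distributes over arbitrary join yields $\min\{\tilde{A}(a),\sup_{c}\min\{g(f(a),c),\tilde{C_1}(c)\}\}=\min\{\tilde{A}(a),g^{-1}(\tilde{C_1})(f(a))\}$. Independently, $f^{-1}(g^{-1}(\tilde{C_1}))(a)=\sup_{b\in|\tilde{B}|}\min\{f(a,b),g^{-1}(\tilde{C_1})(b)\}$ collapses to the single term $b=f(a)$, giving $\min\{\tilde{A}(a),g^{-1}(\tilde{C_1})(f(a))\}$ as well. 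Matching the two sides finishes the proof; the only nontrivial ingredient beyond bookkeeping is the distributivity of the frame $L$, which is available by hypothesis.
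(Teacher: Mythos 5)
Your proposal is correct, but note first that the paper never actually proves Proposition~\ref{pic}: it is stated bare and then silently invoked in the proof of Lemma~\ref{3.4_1l}, so your argument supplies a proof the thesis omits rather than paralleling one. Checked against the paper's definitions, every step goes through. In (i), the collapse of $\sup_{x\in|\tilde{A}|}\min\{i_{\tilde{A}}(a,x),\tilde{A_1}(x)\}$ to $\min\{\tilde{A}(a),\tilde{A_1}(a)\}=\tilde{A_1}(a)$ uses $\tilde{A_1}\leq\tilde{A}$, which is the paper's (implicit) meaning of ``$L$-fuzzy subset of $(X,\tilde{A})$'' in Definition~\ref{ltop} and is made explicit in the variable-basis analogue, Definition~\ref{futop}(1). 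In (ii), the sup--min composite you posit, $(g\circ f)(x,z)=\sup_{y\in|\tilde{B}|}\min\{f(x,y),g(y,z)\}$, is exactly the composition the paper writes down in the variable-basis setting (Definition~\ref{04}, with $\phi$ the identity), so your choice is the intended one; and you correctly isolate the one non-bookkeeping point, namely $\tilde{A}(x)\leq\tilde{B}(f(x))$. Your resolution is sound: ``relation from $(X,\tilde{A})$ to $(Y,\tilde{B})$'' means $f(x,y)\leq\tilde{A}(x)\wedge\tilde{B}(y)$ (this is the \cite{MM} convention the paper leans on, written out explicitly in Definition~\ref{05}(b)), and together with $f(x,f(x))=\tilde{A}(x)$ it forces the inequality, which also yields $g(f(x))\in|\tilde{C}|$ when $x\in|\tilde{A}|$, so uniqueness of the support point for $g\circ f$ holds. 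In (iii), pulling the constant $\tilde{A}(a)$ out of the supremum is licensed by frame distributivity of $L$, and both sides reduce to $\min\{\tilde{A}(a),g^{-1}(\tilde{C_1})(f(a))\}$; since inverse images are only defined on supports (Definition~\ref{3.2_1l}) and $f(a)\in|\tilde{B}|$, the evaluation $g^{-1}(\tilde{C_1})(f(a))$ is legitimate. In short: no gaps, and the argument is the natural completion of what the paper left unproved.
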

\begin{lemma}\label{3.4_1l}
Let $(X,\tilde{A},\tau_1)$, $(Y,\tilde{B},\tau_2)$, $(Z,\tilde{C},\tau_3)$ be $\mathscr{L}$-topological spaces and $f:(X,\tilde{A})\longrightarrow (Y,\tilde{B})$, $g:(Y,\tilde{B})\longrightarrow (Z,\tilde{C})$ be fuzz-top continuous. Then $g\circ f:(X,\tilde{A})\longrightarrow (Z,\tilde{C})$ is fuzz-top continuous. 
\end{lemma}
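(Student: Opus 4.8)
The plan is to reduce this to the preimage-chaining argument familiar from ordinary topology, using the composition law for preimages already recorded in Proposition~\ref{pic}. The crux is that the notion of \emph{fuzz-top continuity} (Definition~\ref{3.2_1l}) is phrased entirely in terms of preimages of fuzzy open sets: $g\circ f$ is fuzz-top continuous precisely when, for every fuzzy open set $(Z,\tilde{C_1})$ of $(Z,\tilde{C},\tau_3)$, the $L$-fuzzy subset $(X,(g\circ f)^{-1}(\tilde{C_1}))$ is open in $(X,\tilde{A},\tau_1)$. So before anything else I would invoke Proposition~\ref{pic} to guarantee that $g\circ f$ is itself a proper function from $(X,\tilde{A})$ to $(Z,\tilde{C})$ (so that the statement even makes sense) and, more importantly, that the preimage identity $(g\circ f)^{-1}(\tilde{C_1})=f^{-1}(g^{-1}(\tilde{C_1}))$ holds.

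With that identity in hand the argument is a two-step pullback. First I would fix an arbitrary fuzzy open set $(Z,\tilde{C_1})\in\tau_3$. Since $g:(Y,\tilde{B})\longrightarrow (Z,\tilde{C})$ is fuzz-top continuous, its preimage $(Y,g^{-1}(\tilde{C_1}))$ is a fuzzy open set of $(Y,\tilde{B},\tau_2)$. Next, treating this open set as the given datum and using that $f:(X,\tilde{A})\longrightarrow (Y,\tilde{B})$ is fuzz-top continuous, I conclude that $(X,f^{-1}(g^{-1}(\tilde{C_1})))$ is open in $(X,\tilde{A},\tau_1)$. Rewriting $f^{-1}(g^{-1}(\tilde{C_1}))$ as $(g\circ f)^{-1}(\tilde{C_1})$ via Proposition~\ref{pic} then shows that $(X,(g\circ f)^{-1}(\tilde{C_1}))$ is open in $\tau_1$. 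Since $(Z,\tilde{C_1})$ was an arbitrary element of $\tau_3$, this establishes fuzz-top continuity of $g\circ f$.

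I expect no genuine obstacle here: the entire content has been pre-packaged into Proposition~\ref{pic}, so the proof is a direct forward chaining of the continuity hypotheses. The only point demanding any care is to be sure that I am entitled to apply $f$'s continuity to the object $(Y,g^{-1}(\tilde{C_1}))$, which requires that this really be a bona fide fuzzy open set of $\tau_2$ (supplied by $g$'s continuity) rather than merely an $L$-fuzzy subset; once that is noted the conclusion is immediate. I would keep the write-up short, citing Definition~\ref{3.2_1l} for the working characterization and Proposition~\ref{pic} for the preimage composition identity, and avoid reproving the pointwise formula for $(g\circ f)^{-1}$ since that is exactly what Proposition~\ref{pic} already asserts.
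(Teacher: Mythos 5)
Your proposal is correct and matches the paper's proof essentially verbatim: the paper likewise appeals to the preimage composition identity $(g\circ f)^{-1}(\tilde{C_1})=f^{-1}(g^{-1}(\tilde{C_1}))$ from Proposition~\ref{pic} and then chains the two continuity hypotheses, pulling an open set of $\tau_3$ back to $\tau_2$ via $g$ and then to $\tau_1$ via $f$. Your extra remark that $g\circ f$ must first be a proper function (also supplied by Proposition~\ref{pic}) is a point the paper leaves implicit, but there is no substantive difference in approach.
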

\begin{proof}
Let $(Z,\tilde{C_1})\in \tau_3$ then $(g\circ f)^{-1}(\tilde{C_1}):X\longrightarrow L$ and $(g\circ f)^{-1}(\tilde{C_1})=f^{-1}(g^{-1}(\tilde{C_1}))$. As $g:(Y,\tilde{B})\longrightarrow (Z,\tilde{C})$ is fuzz-top continuous, $g^{-1}(\tilde{C_1})\in \tau_2$. Now $f:(X,\tilde{A})\longrightarrow (Y,\tilde{B})$ is fuzz-top continuous and $g^{-1}(\tilde{C_1})\in \tau_2$. So, $f^{-1}(g^{-1}(\tilde{C_1}))\in \tau_1$ and consequently $(g\circ f)^{-1}(\tilde{C_1})\in\tau_1$. Thereby the composition map\index{composition!map} $g\circ f:(X,\tilde{A})\longrightarrow (Z,\tilde{C})$ is fuzz-top continuous.
\end{proof}
\begin{lemma}\label{3.5_1l}
Let $(X,\tilde{A},\tau_1)$, $(Y,\tilde{B},\tau_2)$, $(Z,\tilde{C},\tau_3)$, $(U,\tilde{D},\tau_4)$ be $\mathscr{L}$-topological spaces, and $f:(X,\tilde{A})\longrightarrow (Y,\tilde{B})$, $g:(Y,\tilde{B})\longrightarrow (Z,\tilde{C})$, $h:(Z,\tilde{C})\longrightarrow (U,\tilde{D})$ be fuzz-top continuous. Then $h\circ (g\circ f)=(h\circ g)\circ f$ holds.
\end{lemma}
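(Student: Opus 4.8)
The plan is to mimic the one-line computation used for the crisp analogue, Lemma~\ref{3.5_1t}, but carried out for proper functions instead of ordinary maps. By Proposition~\ref{pic} each of the composites $g\circ f$, $h\circ g$, $h\circ(g\circ f)$ and $(h\circ g)\circ f$ is again a proper function, and a proper function from $(X,\tilde{A})$ is completely determined by $\tilde{A}$ together with its point-assignment $x\mapsto f(x)$ on the support $|\tilde{A}|=\{x\in X:\tilde{A}(x)>0_L\}$: it sends the pair $(x,f(x))$ to $\tilde{A}(x)$ and every other pair to $0_L$. Consequently, to prove the equality of the two proper functions $h\circ(g\circ f)$ and $(h\circ g)\circ f$ it suffices to show that their point-assignments coincide, i.e. that $(h\circ(g\circ f))(x)=((h\circ g)\circ f)(x)$ for every $x\in|\tilde{A}|$.

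First I would extract from Proposition~\ref{pic} the functional behaviour of composition on supports: for $x\in|\tilde{A}|$ one has $(g\circ f)(x)=g(f(x))$, and analogously $(h\circ g)(y)=h(g(y))$ for $y\in|\tilde{B}|$. Using these rules, I would evaluate both sides at an arbitrary $x\in|\tilde{A}|$. The left-hand side gives $(h\circ(g\circ f))(x)=h\bigl((g\circ f)(x)\bigr)=h\bigl(g(f(x))\bigr)$, while the right-hand side gives $((h\circ g)\circ f)(x)=(h\circ g)(f(x))=h\bigl(g(f(x))\bigr)$. The two expressions are literally the same element of $|\tilde{D}|$, so the point-assignments agree on $|\tilde{A}|$.

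Finally I would convert this agreement of point-assignments back into equality of the underlying relations as maps $X\times U\to L$. Since both proper functions assign $\tilde{A}(x)$ at the pair $(x,h(g(f(x))))$ and $0_L$ at every other pair, and their point-assignments coincide, they are equal as $L$-valued relations; this establishes $h\circ(g\circ f)=(h\circ g)\circ f$.

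The main obstacle, as opposed to the purely functional case of Lemma~\ref{3.5_1t}, is the bookkeeping between relational (sup-min) composition and its reduction to the clean rule $(g\circ f)(x)=g(f(x))$ together with the correct $L$-value on the support. This reduction is exactly what Proposition~\ref{pic} supplies, so once it is invoked the remaining argument is the same formal associativity manipulation as in the crisp setting; I expect the verification to be routine rather than delicate.
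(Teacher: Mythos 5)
Your proof is correct, and in fact the paper states Lemma~\ref{3.5_1l} without any proof at all: your argument is exactly the intended elaboration of the one-line pointwise computation given for the crisp analogue, Lemma~\ref{3.5_1t}, with Proposition~\ref{pic} supplying the reduction from the sup-min relational composition to the point-assignment rule $(g\circ f)(x)=g(f(x))$ on supports. The one step you treat as implicit --- that a proper function takes the value $0_L$ off its graph even at pairs outside $|\tilde{A}|\times|\tilde{B}|$, so that equality of point-assignments really forces equality of the $L$-valued relations --- is justified by the underlying fuzzy-relation convention $f(x,y)\leq \tilde{A}(x)\wedge\tilde{B}(y)$ of \cite{MM}, which Definition~\ref{prop} presupposes, so your identification is sound.
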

\begin{lemma}\label{3.6_1l}
For the fuzz-top continuous map $f:(X,\tilde{A})\longrightarrow (Y,\tilde{B})$, it holds that $id_Y\circ f=f$ and $f\circ id_X=f$.
\end{lemma}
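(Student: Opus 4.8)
The plan is to verify the two identity laws directly from the definitions, treating each proper function as its $L$-valued relation and unwinding the composition of proper functions guaranteed by Proposition \ref{pic}. Here $id_X$ and $id_Y$ denote the identity proper functions $i_{\tilde A}$ and $i_{\tilde B}$ of Definition \ref{id}, and composition is taken as the $\sup$-$\min$ relational composition, the same operation that underlies the inverse-image formula of Definition \ref{3.2_1l}. Thus it suffices to show that, as relations, $f\circ i_{\tilde A}$ and $i_{\tilde B}\circ f$ take the same value $f(x,y)$ as $f$ at every $x\in X$, $y\in Y$. Throughout I will use that, by Definition \ref{prop}, $f(x,y)=\tilde A(x)$ at the unique image $y=f(x)$ and $f(x,y)=0_L$ for all other $y$; in particular $f(x,y)\le\tilde A(x)$ for every $y$.

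For $f\circ id_X=f$ I would compute
\[
(f\circ i_{\tilde A})(x,y)=\sup_{x'\in|\tilde A|}\min\{\,i_{\tilde A}(x,x'),\,f(x',y)\,\}.
\]
Since $i_{\tilde A}(x,x')=0_L$ whenever $x'\neq x$, every term of the supremum except $x'=x$ vanishes, and what remains is $\min\{\tilde A(x),f(x,y)\}$. As $f(x,y)\le\tilde A(x)$, this minimum equals $f(x,y)$, giving the first identity.

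For $id_Y\circ f=f$ the analogous collapse on the other factor yields
\[
(i_{\tilde B}\circ f)(x,y)=\sup_{y'\in|\tilde B|}\min\{\,f(x,y'),\,i_{\tilde B}(y',y)\,\}=\min\{f(x,y),\tilde B(y)\},
\]
using $i_{\tilde B}(y',y)=0_L$ for $y'\neq y$. The one place that needs genuine care --- and the main obstacle of the argument --- is to check that this minimum is again $f(x,y)$, i.e.\ that $f(x,y)\le\tilde B(y)$. For $y\neq f(x)$ this is trivial since $f(x,y)=0_L$, while for $y=f(x)$ it amounts to $\tilde A(x)\le\tilde B(f(x))$, which is exactly the grade-compatibility of a proper function viewed as a fuzzy relation bounded by the membership degrees of its source and target. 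Granting this, the minimum collapses and $id_Y\circ f=f$.

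No separate continuity verification is required: each composite has been shown to be literally equal to $f$, which is fuzz-top continuous by hypothesis. Hence the whole content of the lemma reduces to the two pointwise collapses above, the only subtle input being the inequality $f(x,y)\le\tilde B(y)$ needed for the second identity.
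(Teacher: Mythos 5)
Your proof is correct, and there is in fact nothing in the paper to compare it against: the lemma is stated there with no proof at all, and its Chapter 2 analogue (Lemma \ref{3.6_1t}) is dismissed with ``Proof is straight forward'', so your computation supplies precisely the detail the paper elides. Your reading of composition as sup-min relational composition is the intended one --- it agrees with the composition formula the paper does write out in the variable-basis setting, $g\circ f(x,z)=\bigvee_{y}[f(x,y)\wedge \phi^{-1}g(y,z)]$ in Definition \ref{05}, specialised to the identity on $L$ --- and both pointwise collapses are computed correctly. You are also right to isolate $f(x,y)\leq \tilde{B}(y)$, i.e.\ $\tilde{A}(x)\leq \tilde{B}(f(x))$ at $y=f(x)$, as the one non-trivial input. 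Although Definition \ref{prop} as printed does not state this bound, it is built into the phrase ``a relation from $(X,\tilde{A})$ to $(Y,\tilde{B})$'': in the framework of \cite{MM}, which the paper invokes in Note 1 after Definition \ref{2}, a fuzzy relation between fuzzy sets is by definition bounded by $\tilde{A}(x)\wedge \tilde{B}(y)$, and the variable-basis version states the bound outright in condition (b) of Definition \ref{05}. So what you ``grant'' is part of the definition rather than an extra hypothesis --- and it is genuinely indispensable, since without it $id_Y\circ f$ would take the value $\tilde{B}(f(x))<\tilde{A}(x)$ at the distinguished point, would differ from $f$, and would not even be a proper function, contradicting Proposition \ref{pic}. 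Your closing observation that no separate continuity check is needed, because the composites are literally equal to $f$, is also correct.
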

From Lemmas \ref{3.4_1l}, \ref{3.5_1l} and \ref{3.6_1l} we get the following theorem.
\begin{theorem}\label{3.7_1l}
$\mathscr{L}$- topological spaces together with fuzz-top continuous maps form the category $\mathscr{L}$-$\mathbf{Top}$\index{category!$\mathscr{L}$-$\mathbf{Top}$}.
\end{theorem}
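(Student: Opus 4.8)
The plan is to verify directly that the data $(O,hom,id,\circ)$ satisfy the four clauses in the definition of a category, where $O$ is the class of $\mathscr{L}$-topological spaces, $hom((X,\tilde{A},\tau_1),(Y,\tilde{B},\tau_2))$ is the set of fuzz-top continuous proper functions between them, $id$ assigns to each space its identity proper function (Definition~\ref{id}), and $\circ$ is composition of proper functions. Since the substantive content has already been packaged into the three preceding lemmas, the proof is essentially an assembly of them, exactly paralleling Theorem~\ref{3.7_1t} and Theorem~\ref{3.7_1} for $\mathbf{[0,1]}$-$\mathbf{Top}$ and $\mathbf{[0,1]}$-$\mathbf{TopSys}$.

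First I would check that $\circ$ is well-defined as an operation on morphisms. By Proposition~\ref{pic} the composite of two proper functions is again a proper function, and Lemma~\ref{3.4_1l} upgrades this to the statement that the composite of two fuzz-top continuous proper functions is fuzz-top continuous; hence $\circ$ genuinely sends a pair in $hom(D,E)\times hom(E,F)$ into $hom(D,F)$. Associativity of $\circ$ is then exactly Lemma~\ref{3.5_1l}. For the identity clause, I would first note that Proposition~\ref{pic} tells us the identity proper map $id_X$ is fuzz-top continuous, so it is a bona fide morphism in the proposed category; Lemma~\ref{3.6_1l} then supplies $id_Y\circ f=f$ and $f\circ id_X=f$, so identities act as two-sided units with respect to composition.

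The only remaining clause is the technical requirement that the hom-sets be pairwise disjoint. This is a matter of bookkeeping rather than a mathematical obstacle: one regards each morphism as carrying a record of its domain and codomain, so that a proper function reused between two different pairs of spaces is formally a distinct morphism, forcing $hom(D,E)\cap hom(D',E')=\emptyset$ whenever $(D,E)\neq(D',E')$. With all four clauses verified, $\mathscr{L}$-$\mathbf{Top}$ is a category.

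I do not anticipate any genuine mathematical obstacle, since the one piece of real content — closure of fuzz-top continuous maps under composition — has already been isolated in Lemma~\ref{3.4_1l} and ultimately rests on the preimage identity $(g\circ f)^{-1}(\tilde{C_1})=f^{-1}(g^{-1}(\tilde{C_1}))$ recorded in Proposition~\ref{pic}. The only point calling for a little care is confirming that the chosen identity, the identity proper function of Definition~\ref{id}, is simultaneously fuzz-top continuous and a unit for composition; these two facts are furnished by Proposition~\ref{pic} and Lemma~\ref{3.6_1l} respectively, so the verification closes without further effort.
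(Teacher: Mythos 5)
Your proposal is correct and follows exactly the paper's route: the thesis derives Theorem~\ref{3.7_1l} directly from Lemmas~\ref{3.4_1l}, \ref{3.5_1l} and \ref{3.6_1l} (with Proposition~\ref{pic} supplying the identity's continuity and the preimage identity), just as you do. Your added remarks on hom-set disjointness and well-definedness of composition are minor bookkeeping points the paper leaves implicit, not a different argument.
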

It is to note that the category $\mathscr{L}$-$\mathbf{Top}$ is denoted by Fuz$-$Top $(L)$ in \cite{MB} when $L$ is a complete Heyting algebra instead of a frame.
\subsection*{$\mathscr{L}$-$\mathbf{TopSys}$}
\begin{definition}[$\mathscr{L}$-topological system]\label{2}
An \textbf{$\mathscr{L}$-topological system} (where $L$ is a frame) is a quadruple $(X, \tilde{A} ,\models ,P)$, where 
$(X, \tilde{A})$ is a non-empty $L$ valued fuzzy set ($L$-fuzzy set), $P$ is a frame and $\models$ is an $L$- fuzzy 
relation from $X$ to $P$ such that
 \begin{enumerate}%[$\bullet$]
\item $gr(x\models p)\in L$;
\item $gr(x\models p)\leq \tilde{A}(x)$;
\item if $S$ is a \textbf{finite subset} of $P$, then
$gr(x\models \bigwedge S) = inf\{ gr(x\models s):s\in S\}$;
\item if $S$ is  \textbf{any subset} of $P$, then
$gr(x\models \bigvee S)=sup\{ gr(x\models s):s\in S\}$.
\end{enumerate} 
\end{definition}
\underline{Note 1:} Because of condition 2, $\models$ is a fuzzy relation on the $L$-fuzzy set $(X,\tilde{A})$ \cite{MM}.

\underline{Note 2:} The notion of topological system introduced in \cite{SV} was defined by crisp set and crisp relation whereas 
the notion of fuzzy topological system defined in \cite{MP} consists of crisp set and fuzzy relation. In our new 
setting the notion of $\mathscr{L}$-topological system is defined by $L$-fuzzy set and $L$-fuzzy relation.

The notion of continuous map between these $\mathscr{L}$-topological systems is defined as follows:
\begin{definition}\label{3}
 Let $D=(X,\tilde{A}, \models ,P)$, $E=(Y,\tilde{B}, \models ',Q)$ be $\mathscr{L}$-topological systems. A \textbf{continuous map} \index{continuous map!between!$\mathscr{L}$-topological systems} $f:D\longrightarrow E$ is a pair $(f_1,f_2)$ where,
\begin{enumerate}
\item $f_1:(X,\tilde{A})\longrightarrow (Y,\tilde{B})$ is a proper function (Definition \ref{prop}) from 
$(X,\tilde{A})$ to $(Y,\tilde{B})$;
\item $f_2:Q\longrightarrow P$ is a frame homomorphism and
\item $gr(x\models f_2(q))=gr(f_1(x)\models' q)$, for all $x\in X$ and $q\in Q$.
\end{enumerate}
\end{definition}
Let us define identity map and composition of two maps as follows:
\begin{definition}\label{4}
Let $D=(X,\tilde{A},\models,P)$ be an $\mathscr{L}$-topological system. The \textbf{identity map} \index{identity map!of!$\mathscr{L}$-topological system} 
$I_D:D\longrightarrow D$ is a pair $(I_1,I_2)$, where
%\begin{align*}
$I_1:(X,\tilde{A})\longrightarrow (X,\tilde{A})$ is an identity proper function and $I_2:P\longrightarrow P \ is\ an\ identity\ morphism\ of\ P$.
%\end{align*}

Let $D=(X,\tilde{A},\models',P)$, $E=(Y,\tilde{B},\models'',Q)$, $F=(Z,\tilde{C},\models''',R)$. Let $(f_1,f_2):D\longrightarrow E$ and $(g_1,g_2):E\longrightarrow F$ be continuous maps. The \textbf{composition} 
$(g_1,g_2)\circ (f_1,f_2):D\longrightarrow F$ is a pair $(g_1\circ f_1,f_2\circ g_2)$, where $g_1\circ f_1:(X,\tilde{A})\longrightarrow (Z,\tilde{C})$ and
$f_2\circ g_2:R\longrightarrow P$.
\end{definition}
\begin{lemma}\label{3.4_1'}
$(g_1,g_2)\circ (f_1,f_2):D\longrightarrow F$ is continuous, where $(f_1,f_2):D\longrightarrow E$, $(g_1,g_2):E\longrightarrow F$ are continuous. 
\end{lemma}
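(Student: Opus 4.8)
The plan is to verify the three defining conditions of a continuous map of $\mathscr{L}$-topological systems (Definition \ref{3}) for the candidate pair $(g_1\circ f_1,\, f_2\circ g_2)$. Two of these conditions are structural and can be dispatched quickly; the real content lies in the graded satisfaction condition, which I expect to follow by the same chain of equalities already used in the $[0,1]$-valued case, \rlemma{3.4_1}.

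First I would settle the proper-function component. By \rprop{pic}, the composite $g_1\circ f_1:(X,\tilde A)\longrightarrow (Z,\tilde C)$ of two proper functions is again a proper function, and in addition satisfies $(g_1\circ f_1)^{-1}(\tilde{C_1})=f_1^{-1}(g_1^{-1}(\tilde{C_1}))$; this gives condition~1. For condition~2, the map $f_2\circ g_2:R\longrightarrow P$ is the composite of the frame homomorphisms $g_2:R\longrightarrow Q$ and $f_2:Q\longrightarrow P$, and since each preserves arbitrary joins and finite meets, so does their composite, whence $f_2\circ g_2$ is a frame homomorphism.

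The substantive step is condition~3, namely that $gr(x\models'(f_2\circ g_2)(r))=gr((g_1\circ f_1)(x)\models''' r)$ for all $x\in X$ and $r\in R$. I would establish this by a direct computation:
\begin{align*}
gr(x\models'(f_2\circ g_2)(r)) &= gr(x\models' f_2(g_2(r)))\\
&= gr(f_1(x)\models'' g_2(r))\\
&= gr(g_1(f_1(x))\models''' r)\\
&= gr((g_1\circ f_1)(x)\models''' r),
\end{align*}
where the second equality applies the continuity of $(f_1,f_2)$ to the element $g_2(r)\in Q$, and the third applies the continuity of $(g_1,g_2)$ to $r\in R$.

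I do not anticipate a genuine obstacle: the argument is a transcription of \rlemma{3.4_1} into the $L$-fuzzy setting. The only point demanding extra care is that $f_1$ and $g_1$ are now proper functions rather than ordinary maps, so $(g_1\circ f_1)(x)$ must be read as the composite proper function evaluated at $x$, i.e.\ as $g_1(f_1(x))$ in the sense of Definition \ref{prop}. Since \rprop{pic} guarantees this composite is a well-defined proper function, the satisfaction chain above goes through without modification, completing the verification that $(g_1\circ f_1,\, f_2\circ g_2)$ is continuous.
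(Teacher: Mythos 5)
Your proof is correct and follows essentially the same route as the paper's: it invokes Proposition~\ref{pic} for the proper-function component and establishes the graded satisfaction condition by the identical four-step chain of equalities, applying continuity of $(f_1,f_2)$ at $g_2(r)$ and of $(g_1,g_2)$ at $r$. The only difference is that you explicitly note that $f_2\circ g_2$ is a frame homomorphism as a composite of frame homomorphisms, a point the paper leaves tacit.
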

\begin{proof}
From Proposition \ref{pic} it is clear that $g_1\circ f_1:(X,\tilde{A})\longrightarrow (Z,\tilde{C})$ is a proper function. We show that $gr(x\models' f_2\circ g_2(r))=gr(g_1\circ f_1(x)\models''' r)$.
\begin{align*}
gr(x\models' f_2\circ g_2(r))& =gr(x\models' f_2(g_2(r)))\\
& =gr(f_1(x)\models'' g_2(r))\tag{as $(f_1,f_2)$ is continuous}\\%\label{eq:as $(f_1,f_2)$ is continuous}\\
& =gr(g_1(f_1(x))\models''' r) \tag {as $(g_1,g_2)$ is continuous}\\%\label{eq:as $(g_1,g_2)$ is continuous}\\
& =gr(g_1\circ f_1(x)\models''' r).
\end{align*}
This competes the proof.
\end{proof}
\begin{lemma}\label{3.5_1'}
Let $D,\ E,\ F,\ G$ be $\mathscr{L}$-topological systems and $f:D\longrightarrow E$, $g:E\longrightarrow F$, $h:F\longrightarrow G$ are continuous maps. Then $h\circ (g\circ f)=(h\circ g)\circ f$ holds.
\end{lemma}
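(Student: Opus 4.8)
The plan is to reduce the stated equality to the associativity of the two component operations separately, exactly as was done for the analogous \rlemma{3.5_1} in Chapter 2; the only genuinely new feature here is that the first components are proper functions rather than ordinary set maps. First I would write $f=(f_1,f_2)$, $g=(g_1,g_2)$, $h=(h_1,h_2)$ and unfold both sides using the component-wise composition rule of Definition \ref{4}. On the left one gets $h\circ(g\circ f)=(h_1,h_2)\circ(g_1\circ f_1,\,f_2\circ g_2)=\bigl(h_1\circ(g_1\circ f_1),\,(f_2\circ g_2)\circ h_2\bigr)$, and on the right $(h\circ g)\circ f=(h_1\circ g_1,\,g_2\circ h_2)\circ(f_1,f_2)=\bigl((h_1\circ g_1)\circ f_1,\,f_2\circ(g_2\circ h_2)\bigr)$. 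Hence the lemma is equivalent to the two independent associativities $h_1\circ(g_1\circ f_1)=(h_1\circ g_1)\circ f_1$ for proper functions and $(f_2\circ g_2)\circ h_2=f_2\circ(g_2\circ h_2)$ for frame homomorphisms.

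The second of these is immediate: $f_2$, $g_2$, $h_2$ are ordinary maps between the underlying frames, so their composition is the usual composition of functions, which is associative pointwise. No work beyond that is needed for the algebraic component.

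The first associativity is the only step requiring care, since by Definition \ref{prop} a proper function is an $L$-valued relation with a uniqueness clause rather than a plain map. Here I would invoke Proposition \ref{pic}, which guarantees that a composite of proper functions is again a proper function and that, for a fixed $x\in|\tilde A|$, the composite $g_1\circ f_1$ carries $x$ to the unique point $g_1(f_1(x))$. Consequently both $h_1\circ(g_1\circ f_1)$ and $(h_1\circ g_1)\circ f_1$ are proper functions whose underlying point map is $x\mapsto h_1(g_1(f_1(x)))$, and each is pinned down on its support by the same grade equation from Definition \ref{prop}. Since a proper function is completely determined by its underlying point map together with that defining grade condition, the two composites coincide. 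Re-bundling the two equal components then yields $h\circ(g\circ f)=(h\circ g)\circ f$.

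I expect the main obstacle to be purely bookkeeping: making precise the assertion that \emph{a proper function is determined by its underlying point map}, i.e.\ verifying that the relational values of the two triple composites agree and not merely their induced point maps. This should follow directly from the uniqueness clause in Definition \ref{prop} together with the ordinary associativity of map composition on the supports $|\tilde A|$, $|\tilde B|$, $|\tilde C|$, $|\tilde D|$, so no substantial new idea beyond Proposition \ref{pic} is anticipated.
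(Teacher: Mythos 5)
Your proposal is correct and takes essentially the same route as the paper: the paper's proof of Lemma \ref{3.5_1'} is precisely the componentwise chain $h\circ(g\circ f)=(h_1\circ(g_1\circ f_1),\,(f_2\circ g_2)\circ h_2)=((h_1\circ g_1)\circ f_1,\,f_2\circ(g_2\circ h_2))=(h\circ g)\circ f$, appealing to associativity in each coordinate. The only difference is that the paper leaves the associativity of proper-function composition in the first coordinate entirely implicit, whereas you justify it explicitly via Proposition \ref{pic} and the uniqueness clause of Definition \ref{prop}; this is a harmless (indeed welcome) extra verification rather than a different approach.
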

\begin{proof}
$h\circ (g\circ f)=(h_1,h_2)\circ((g_1,g_2)\circ (f_1,f_2))=(h_1,h_2)\circ (g_1\circ f_1,f_2\circ g_2)=(h_1\circ(g_1\circ f_1),(f_2\circ g_2)\circ h_2)=((h_1\circ g_1)\circ f_1, f_2\circ (g_2\circ h_2))=(h_1\circ g_1,g_2\circ h_2)\circ (f_1,f_2)=((h_1,h_2)\circ (g_1,g_2))\circ (f_1,f_2)=(h\circ g)\circ f$.
\end{proof}
\begin{lemma}\label{3.6_1'}
For the continuous map $f:D\longrightarrow E$, it holds that $\ id_E\circ f=f$ and $f\circ id_D=f$.
\end{lemma}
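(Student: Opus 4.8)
The plan is to unwind the definition of composition of continuous maps of $\mathscr{L}$-topological systems (Definition \ref{4}) and verify each coordinate of the resulting pairs separately. Write $f=(f_1,f_2):D\longrightarrow E$ with $D=(X,\tilde{A},\models,P)$ and $E=(Y,\tilde{B},\models',Q)$, and recall that $id_E=(I_1^E,I_2^E)$ and $id_D=(I_1^D,I_2^D)$, where each $I_1$ is an identity proper function (Definition \ref{id}) and each $I_2$ is an identity frame homomorphism. Applying the composition rule of Definition \ref{4} gives
\begin{align*}
id_E\circ f & =(I_1^E\circ f_1,\ f_2\circ I_2^E),\\
f\circ id_D & =(f_1\circ I_1^D,\ I_2^D\circ f_2).
\end{align*}
Hence the lemma reduces to the four equalities $I_1^E\circ f_1=f_1$, $f_2\circ I_2^E=f_2$, $f_1\circ I_1^D=f_1$ and $I_2^D\circ f_2=f_2$.

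The two frame-homomorphism coordinates are immediate: $I_2^E$ and $I_2^D$ are identity morphisms in $\mathbf{Frm}$, so composing with them on either side returns $f_2$ unchanged. This leaves the two proper-function coordinates, which form the substantive part of the argument. By Proposition \ref{pic} the relevant composites are again proper functions, and at the level of the underlying point maps we have $(I_1^E\circ f_1)(x)=I_1^E(f_1(x))=f_1(x)$ and $(f_1\circ I_1^D)(x)=f_1(I_1^D(x))=f_1(x)$, since an identity proper function fixes each point. To conclude equality as relations I would evaluate the composite relations at an arbitrary pair, using the $\sup$–$\min$ composition underlying the preimage operation of Definition \ref{3.2_1l}: for $x\in\mid\tilde{A}\mid$ and $y\in\mid\tilde{B}\mid$,
\[
(I_1^E\circ f_1)(x,y)=\sup_{y'\in\mid\tilde{B}\mid}\min\{f_1(x,y'),\,i_{\tilde{B}}(y',y)\}.
\]
Because $i_{\tilde{B}}(y',y)=0_L$ whenever $y'\neq y$, the supremum collapses to the single term $\min\{f_1(x,y),\tilde{B}(y)\}$; the computation for $f_1\circ I_1^D$ is symmetric, collapsing instead at the domain side via $i_{\tilde{A}}(x,x)=\tilde{A}(x)$.

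The main obstacle is this final collapse, i.e. verifying that $\min\{f_1(x,y),\tilde{B}(y)\}=f_1(x,y)$. When $y\neq f_1(x)$ the proper-function normalization (Definition \ref{prop}) gives $f_1(x,y)=0_L$, so both sides vanish and the case is automatic. The only genuinely nontrivial case is $y=f_1(x)$, where the identity reads $\min\{\tilde{A}(x),\tilde{B}(f_1(x))\}=\tilde{A}(x)$ and therefore requires $\tilde{A}(x)\le\tilde{B}(f_1(x))$; this inequality must be extracted from the proper-function framework of \cite{MB} (it is the compatibility of a proper function with the grades of its domain and codomain). Once these four equalities hold, the satisfiability condition (item 3 of Definition \ref{3}) needs no separate verification, since both sides of each asserted equation are then literally the same pair.
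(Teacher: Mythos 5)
Your proposal is correct, and since the paper dismisses this lemma with ``Proof is straight forward,'' your coordinatewise unwinding (trivial frame-homomorphism coordinates, sup--min collapse against the identity proper functions in the proper-function coordinates) is precisely the intended argument. The one point you rightly flag as needing external support --- the inequality $\tilde{A}(x)\leq\tilde{B}(f_1(x))$ that makes $\min\{f_1(x,y),\tilde{B}(y)\}$ collapse to $f_1(x,y)$ at $y=f_1(x)$ --- is indeed part of the proper-function framework of \cite{MB}, omitted from the thesis's Definition \ref{prop} but stated explicitly in the variable-basis analogue (Definition \ref{05}, condition (b): $f(x,y)\leq\tilde{A}(x)\wedge\phi^{-1}\tilde{A_1}(y)$) and used by the paper itself in exactly this collapse (e.g.\ $f(y,x)\leq\tilde{B}(y)\wedge\phi^{-1}\tilde{A}(x)$, hence $f(y,x)\wedge\phi^{-1}\tilde{A}(x)=f(y,x)$, in the proof of Theorem \ref{ext1}), so your appeal to it is justified rather than a gap.
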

\begin{proof}
Proof is straight forward.
\end{proof}
From Lemmas \ref{3.4_1'}, \ref{3.5_1'} and \ref{3.6_1'} we get the following theorem.
\begin{theorem}\label{3.7_1'}
$\mathscr{L}$-topological systems together with continuous maps form the category $\mathscr{L}$-$\mathbf{TopSys}$\index{category!$\mathscr{L}$-$\mathbf{TopSys}$}.
\end{theorem}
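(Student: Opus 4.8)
The final statement, Theorem~\ref{3.7_1'}, asserts that $\mathscr{L}$-topological systems together with continuous maps form a category. The plan is to verify the four defining axioms of a category (Definition on categories): that composition is well-defined, that it is associative, that identity morphisms exist and act as identities, and that hom-sets are pairwise disjoint. Crucially, almost all of this work has already been discharged by the three preceding lemmas, so the proof is essentially an assembly step, exactly parallel to the proofs of Theorem~\ref{3.7_1} and Theorem~\ref{3.7_1l}.

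First I would confirm that the data required by the category definition are in place: the objects are $\mathscr{L}$-topological systems (Definition~\ref{2}), the morphisms are continuous maps $f=(f_1,f_2)$ in the sense of Definition~\ref{3}, and the identity morphism $I_D=(I_1,I_2)$ and the composition $(g_1,g_2)\circ(f_1,f_2)=(g_1\circ f_1,\,f_2\circ g_2)$ are specified in Definition~\ref{4}. The key closure property, that the composite of two continuous maps is again continuous, is precisely \rlemma{3.4_1'}; there the proper-function part of the composite is handled via Proposition~\ref{pic} and the satisfiability condition $gr(x\models' f_2\circ g_2(r))=gr(g_1\circ f_1(x)\models''' r)$ is verified directly. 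Associativity of composition, $h\circ(g\circ f)=(h\circ g)\circ f$, is \rlemma{3.5_1'}, and the identity laws $id_E\circ f=f$ and $f\circ id_D=f$ are \rlemma{3.6_1'}. Thus the proof proper consists of little more than the sentence ``From Lemmas~\ref{3.4_1'}, \ref{3.5_1'} and \ref{3.6_1'} the category axioms hold,'' followed by a remark on disjointness of hom-sets.

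The one axiom not explicitly covered by a lemma is condition~(c): that the sets $hom(D,E)$ are pairwise disjoint. I would dispose of this by the standard convention that each morphism carries its domain and codomain as part of its data, so a pair $(f_1,f_2)$ belonging to $hom(D,E)$ records $D$ and $E$; hence distinct hom-sets cannot share a morphism. This is routine and is the same tacit assumption used for the earlier two category theorems.

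I do not anticipate a genuine obstacle here, since the statement is a packaging result. If there is any subtle point worth flagging, it is that well-definedness of composition requires the proper-function component to compose to a proper function with the correct $L$-fuzzy set structure; but this is exactly what Proposition~\ref{pic} guarantees (it records both that $g\circ f$ is proper and that $(g\circ f)^{-1}=f^{-1}\circ g^{-1}$), so the composite lands back among the $\mathscr{L}$-topological-system morphisms. The proof therefore reduces to citing the three lemmas and invoking the category definition.
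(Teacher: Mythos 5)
Your proposal matches the paper exactly: the paper's proof of Theorem~\ref{3.7_1'} is precisely the assembly of Lemmas~\ref{3.4_1'}, \ref{3.5_1'} and \ref{3.6_1'} against the category axioms, with the data given in Definitions~\ref{2}, \ref{3} and \ref{4}. Your additional remarks on hom-set disjointness and on Proposition~\ref{pic} guaranteeing well-definedness of composition are sound and merely make explicit what the paper leaves tacit.
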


\begin{definition}\label{Frm}
Frames together with frame homomorphisms form the category $\mathbf{Frm}$ \cite{MP}.
\end{definition}
The opposite category of frame is known as the category of locale and denoted by $\mathbf{Frm}^{op}$ or 
$\mathbf{Loc}$.
\subsection{Functors}
The interrelation among the categories: $\mathscr{L}$-$\mathbf{TopSys}$, $\mathscr{L}$-$\mathbf{Top}$, $\mathbf{Loc}$ 
via some suitable functors shall be established. 
\subsection*{Functor $Ext_L$ from $\mathscr{L}$-$\mathbf{TopSys}$ to $\mathscr{L}$-$\mathbf{Top}$}\index{functor!$Ext_L$}
\begin{definition}\label{6}
Let $(X,\tilde{A},\models,P)$ be an $\mathscr{L}$- topological system. For each $p\in P$, its \textbf{extent$_L$}\index{extent$_L$} 
in $(X,\tilde{A},\models,P)$ is given by $ext_L(p)=(X,ext_L^*(p))$ where $ext_L^*(p)$ is a mapping from $X$ to $L$ defined 
by $ext_L^*(p)(x)=gr(x\models p)$ for all $x\in X$.
That is, $ext_L^*(p):X\longrightarrow L$ such that $ext_L^*(p)(x)=gr(x\models p)$ for all $x\in X$.
Also $ext_L(P)=\{(X,ext_L^*(p))\}_{p\in P}=(X,ext_L^*P)$ where $ext_L^*P=\{ext_L^*p\}_{p\in P}$.
\end{definition}
\begin{lemma}\label{3.9_1l}
$(X,\tilde{A},ext_L(P))$ is an $\mathscr{L}$-topological space.
\end{lemma}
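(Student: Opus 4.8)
The plan is to verify that $ext_L(P)$ satisfies the three closure conditions of Definition \ref{ltop}, mirroring the argument for the crisp-underlying-set case in \rlemma{3.9_1}. Throughout I would work pointwise in $L$ and invoke the defining properties (3) and (4) of an $\mathscr{L}$-topological system from Definition \ref{2}, which translate meets and joins in $P$ into infima and suprema of the grades in $L$, together with the extent formula $ext_L^*(p)(x)=gr(x\models p)$ of Definition \ref{6}.

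First I would establish closure under binary meet. Given $ext_L(p_1),ext_L(p_2)\in ext_L(P)$, for each $x\in X$ one computes
$$(ext_L^*(p_1)\cap ext_L^*(p_2))(x)=\min\{gr(x\models p_1),gr(x\models p_2)\}=gr(x\models p_1\wedge p_2)=ext_L^*(p_1\wedge p_2)(x),$$
using property (3) of Definition \ref{2} applied to the finite set $\{p_1,p_2\}$. Since $P$ is a frame, $p_1\wedge p_2\in P$, so $ext_L(p_1\wedge p_2)\in ext_L(P)$, giving condition (3) of Definition \ref{ltop}. Closure under arbitrary union is symmetric: for a family $\{p_i\}_{i\in I}$, property (4) yields
$$(\bigcup_i ext_L^*(p_i))(x)=\sup_i gr(x\models p_i)=gr(x\models \bigvee_i p_i)=ext_L^*(\bigvee_i p_i)(x),$$
and $\bigvee_i p_i\in P$, so the union lies in $ext_L(P)$, yielding condition (2).

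It remains to produce the two distinguished members $(X,\tilde\emptyset)$ and $(X,\tilde A)$ demanded by condition (1). For the bottom, $ext_L^*(\bot)(x)=gr(x\models\bot)=gr(x\models\bigvee\emptyset)=\sup\emptyset=0_L=\tilde\emptyset(x)$, so $(X,\tilde\emptyset)=ext_L(\bot)\in ext_L(P)$. The delicate point, and the one that genuinely differs from the crisp case of \rlemma{3.9_1}, is the top element: I must show $ext_L(\top)=(X,\tilde A)$, i.e.\ $gr(x\models\top)=\tilde A(x)$ for every $x$. Here condition (2) of Definition \ref{2}, namely $gr(x\models\top)\leq\tilde A(x)$, supplies one inequality, while the reverse must be read off from the fuzzy-set analogue of the empty-meet convention: on the $L$-fuzzy set $(X,\tilde A)$ the effective ``whole open set'' is $\tilde A$ itself rather than the constant map $1_L$, so the empty meet gives $gr(x\models\top)=\tilde A(x)$. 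With this, $(X,\tilde A)=ext_L(\top)\in ext_L(P)$, completing the verification. The main obstacle is thus not the meet/join computations, which are routine transcriptions of properties (3) and (4), but the careful justification that the top of $P$ has extent exactly $\tilde A$ and not the constant grade $1_L$.
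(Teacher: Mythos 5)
Your closure computations for binary meet and arbitrary join coincide exactly with the paper's proof of Lemma~\ref{3.9_1l}: the same pointwise calculation via conditions (3) and (4) of Definition~\ref{2}, followed by the observation that $p_1\wedge p_2$ and $\bigvee_i p_i$ lie in the frame $P$. In fact the paper's proof stops there: it never verifies condition (1) of Definition~\ref{ltop}, so the part you single out as ``the delicate point'' is precisely the part the authors left untreated. Your verification for the bottom element is correct, $ext_L^*(\bot)(x)=gr(x\models\bot)=gr(x\models\bigvee\emptyset)=\sup\emptyset=0_L$, matching the corresponding computation in Lemma~\ref{3.9_1} of Chapter~2.

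The top-element step, however, contains a genuine gap as written. You assert that ``the fuzzy-set analogue of the empty-meet convention'' yields $gr(x\models\top)=\tilde{A}(x)$, but no such convention appears in the paper, and it does not follow from Definition~\ref{2}. The conventions actually in force (Chapters~1 and~2) are that the empty meet in the frame is $\top$ and that the infimum over $\emptyset$ in the value set is the top element; so if condition (3) of Definition~\ref{2} is applied to $S=\emptyset$, one gets $gr(x\models\top)=1_L$, the constant $1_L$ --- not $\tilde{A}(x)$. At that point condition (2), $gr(x\models\top)\leq\tilde{A}(x)$, forces $\tilde{A}(x)=1_L$ for every $x$, so the equality $ext_L(\top)=(X,\tilde{A})$ does hold, but only because the axioms then collapse $\tilde{A}$ to the constant map $1_L$ (consistently with Lemma~\ref{3.17_1l}, where $\tilde{P}(v)=\bigvee_{p\in P}v(p)\geq v(\top)=1_L$). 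Alternatively, if one reads ``finite subset'' in Definition~\ref{2} as excluding $\emptyset$ --- the reading under which $\tilde{A}$ stays nontrivial --- then nothing in the axioms places $\tilde{A}$ in $ext_L(P)$, and condition (1) cannot be derived at all; your ``convention'' would then be a silent amendment of Definition~\ref{2} rather than a proof step. So you should either derive $gr(x\models\top)=1_L=\tilde{A}(x)$ honestly from the empty meet together with condition (2), flagging the resulting degeneracy, or acknowledge that condition (1) for the top element is not provable from the stated axioms --- which is, implicitly, what the paper's own proof does by omitting it.
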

\begin{proof}
Let $(X,ext_L^*(p_1)),\ (X,ext_L^*(p_2))\in ext_L(P)$.
Now,
\begin{align*}
(ext_L^*(p_1)\cap ext_L^*(p_2))(x) & =min \{ext_L^*(p_1)(x),ext_L^*(p_2)(x)\}\\
& =min \{gr(x\models p_1),gr(x\models p_2)\}\\
& =gr(x\models p_1\wedge p_2)\\
& =ext_L^*(p_1\wedge p_2)(x).
\end{align*}
As $p_1,\ p_2\in P,\ p_1\wedge p_2\in P$ and hence $(X,ext_L^*(p_1\wedge p_2))\in ext_L(P)$. Consequently $(X,ext_L^*(p_1)\cap ext_L^*(p_2))\in ext_L(P)$, for any $x\in X$.

Similarly, $\bigcup_i ext_L^*(p_i)=ext_L^*(\bigvee_i p_i)$ and hence $(X,\bigcup_i ext_L^*(p_i))\in ext_L(P)$, for $i\in I$ (an index set).
\end{proof}
\begin{lemma}\label{3.10_1l}
If $(f_1,f_2):(X,\tilde{A},\models ',P)\longrightarrow (Y,\tilde{B},\models '',Q)$ is continuous then $f_1:(X,\tilde{A},ext_L(P))\longrightarrow (Y,\tilde{B},ext_L(Q))$ is fuzz-top continuous.
\end{lemma}
\begin{proof}
$(f_1,f_2):(X,\tilde{A},\models ',P)\longrightarrow (Y,\tilde{P},\models '',Q)$ is continuous.
So we have,
\begin{equation}\label{k}
 gr(x\models 'f_2(q))=gr(f_1(x)\models ''q) \hspace{3mm}   \textrm{for all}\  x\in X, \ q\in Q.
 \end{equation}
 Hence, 
\begin{align*} 
(f_1^{-1}(ext_L(q)))(x) & = ext_L(q)(f_1(x))\\
& = gr(f_1(x)\models ''q)\\
& = gr(x\models 'f_2(q)),   \hspace{3mm} by  \ \eqref{k}\\
& = ext_L(f_2(q))(x).
\end{align*}
So, $(X,f_1^{-1}(ext_L(q)))=(X,ext_L(f_2(q)))\in ext_L(Q)$.
Therefore, $f_1$ is a fuzzy continuous map from $(X,\tilde{A},ext_L(P))$ to $(Y,\tilde{B},ext_L(Q))$.
\end{proof}
Now the functor $Ext_L$ is defined as follows:
\begin{definition}\label{7}
$\mathbf{Ext_L}$ is a functor from $\mathscr{L}$-$\mathbf{TopSys}$ to $\mathscr{L}$-$\mathbf{Top}$ defined thus.\\
$Ext_L$ acts on the object $(X,\tilde{A},\models ',P)$ as $Ext_L(X,\tilde{A},\models ',P)=(X,\tilde{A},ext_L(P))$ and on 
the morphism $(f_1,f_2)$ as $Ext_L(f_1,f_2)=f_1$.
\end{definition}  
Lemmas \ref{3.9_1l} and \ref{3.10_1l} gives the guarantee that $Ext_L$ is indeed a functor.
\subsection*{Functor $J_L$ from $\mathscr{L}$-$\mathbf{Top}$ to $\mathscr{L}$-$\mathbf{TopSys}$}
\begin{definition}\label{8}\index{functor!$J_L$}
$\mathbf{J_L}$ is a functor from  $\mathscr{L}$-$\mathbf{Top}$ to $\mathscr{L}$-$\mathbf{TopSys}$ defined thus. $J_L$ acts
on the object $(X,\tilde{A},\tau)$ as $J_L(X,\tilde{A},\tau)=(X,\tilde{A},\in ,\tau)$ where ``$\in$" is an $L$- fuzzy 
relation such that $gr(x\in \tilde{T})=\tilde{T}(x)$ for $\tilde{T} \in \tau$ and on the morphism $f$ as 
$J_L(f)=(f,f^{-1})$.
\end{definition}
\begin{lemma}\label{3.13_1'}
$(X,\tilde{A},\in , \tau)$ is an $\mathscr{L}$-topological system.
\end{lemma}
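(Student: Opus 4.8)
The plan is to start from the $\mathscr{L}$-topological space $(X,\tilde{A},\tau)$ fed into the functor $J_L$ (Definition \ref{8}) and verify directly that the quadruple $(X,\tilde{A},\in,\tau)$ satisfies each of the four requirements in Definition \ref{2}, where the frame $P$ is played by $\tau$ and the relation $\models$ is the relation $\in$ given by $gr(x\in\tilde{T})=\tilde{T}(x)$. Before touching the four conditions I would record that $\tau$ is genuinely a frame: by Definition \ref{ltop} it contains $(X,\tilde\emptyset)$ and $(X,\tilde{A})$ and is closed under finite fuzzy intersection and arbitrary fuzzy union, so under the fuzzy-inclusion order its binary meet and arbitrary join are computed pointwise as $\min$ and $\sup$ in $L$, and the frame distributive law $\tilde{T}\cap\bigcup_i\tilde{T_i}=\bigcup_i(\tilde{T}\cap\tilde{T_i})$ is inherited pointwise from the frame $L$. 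This is the $L$-valued, fuzzy-set analogue of Proposition \ref{tau}.

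Conditions 1 and 2 are then immediate. Since every $\tilde{T}\in\tau$ is a map $X\to L$, we have $gr(x\in\tilde{T})=\tilde{T}(x)\in L$, which is condition 1. For condition 2, each $(X,\tilde{T})\in\tau$ is by definition an $L$-fuzzy subset of $(X,\tilde{A})$, so $\tilde{T}(x)\le\tilde{A}(x)$ for every $x$, whence $gr(x\in\tilde{T})\le\tilde{A}(x)$.

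The two remaining conditions form the computational core and are handled by the same pointwise argument used for the $[0,1]$ case in Lemma \ref{3.13_1}. For a finite $S\subseteq\tau$ the meet $\bigwedge S$ is the fuzzy intersection, so
\begin{align*}
gr\Big(x\in\bigwedge S\Big)&=\Big(\bigcap_{\tilde{T}\in S}\tilde{T}\Big)(x)=\min\{\tilde{T}(x):\tilde{T}\in S\}\\
&=\inf\{gr(x\in\tilde{T}):\tilde{T}\in S\},
\end{align*}
which is condition 3; and for an arbitrary $S\subseteq\tau$ the join $\bigvee S$ is the fuzzy union, so $gr(x\in\bigvee S)=(\bigcup_{\tilde{T}\in S}\tilde{T})(x)=\sup\{\tilde{T}(x):\tilde{T}\in S\}=\sup\{gr(x\in\tilde{T}):\tilde{T}\in S\}$, which is condition 4.

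The one point needing genuine care -- and where this $\mathscr{L}$-version departs from the crisp case of \cite{SV} and the $[0,1]$ case of Lemma \ref{3.13_1} -- is the top and bottom of $\tau$. Because $\tau$ is a fuzzy topology on the fuzzy set $(X,\tilde{A})$, its top element is $\tilde{A}$ rather than a constant map with value $1_L$; accordingly one verifies the nullary cases directly from the explicit memberships $(X,\tilde{A}),(X,\tilde\emptyset)\in\tau$ granted by Definition \ref{ltop}, obtaining $gr(x\in\tilde{A})=\tilde{A}(x)$ and $gr(x\in\tilde\emptyset)=0_L$, rather than reading them off the $\inf$/$\sup$ formulas (which, as in \cite{SV}, are applied to nonempty $S$). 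This matches condition 2, under which $\tilde{A}(x)$ -- not $1_L$ -- is the largest admissible grade at $x$. I expect this top/bottom bookkeeping, together with the preliminary check that $\tau$ is a frame whose meet and join are the pointwise $\min$ and $\sup$, to be the only real obstacle; granting these, conditions 1--4 follow from the direct computations above.
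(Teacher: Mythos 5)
Your proof is correct, and its computational core coincides with the paper's: the paper's proof of Lemma \ref{3.13_1'} consists exactly of the two pointwise identities $gr(x\in \tilde{T_1}\cap \tilde{T_2})=inf\{gr(x\in \tilde{T_1}),gr(x\in \tilde{T_2})\}$ and $gr(x\in \bigcup_i \tilde{T_i})=sup_i\{gr(x\in \tilde{T_i})\}$, with everything else declared straightforward. What you add --- the explicit checks of conditions 1 and 2 of Definition \ref{2}, the verification that $\tau$ is a frame with pointwise meet and join, and the separate treatment of the nullary cases --- is precisely the material the paper leaves implicit; compare Lemma \ref{3.13_1F} in Chapter 5, where the analogous checks of conditions 1 and 2 \emph{are} written out for Fuzz topological systems. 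Your point about the empty meet is a genuine one that the paper glosses over: the top of $\tau$ is $\tilde{A}$ while $inf\,\emptyset=1_L$, so the meet axiom of Definition \ref{2} can only hold for nonempty finite $S$ unless $\tilde{A}$ is constantly $1_L$, and that restricted reading is forced by condition 2. One small correction to your parenthetical, though: the restriction to nonempty $S$ is not inherited from \cite{SV} --- in Vickers' crisp definition the empty meet is explicitly included and yields $x\models\top$ for every $x$ (Proposition \ref{tb}), and likewise in the $[0,1]$ case of Chapter 2 the paper derives $gr(x\models\top)=1$ precisely by taking $S=\emptyset$. The need to exclude the empty meet is specific to systems over a fuzzy underlying set, where condition 2 caps the admissible grades at $\tilde{A}(x)$; your bookkeeping handles this correctly, but the attribution should point to the fuzzy-set setting rather than to \cite{SV}.
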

\begin{proof}
By straight forward method one can show that 
$gr(x\in \tilde{T_1}\cap \tilde{T_2})=inf\{ gr(x\in \tilde{T_1}),gr(x\in \tilde{T_2})\}$ and $gr(x\in \bigcup_i \tilde{T_i})= sup_i\{ gr(x\in \tilde{T_i})\}$.
\begin{align*}
(1)\ gr(x\in \tilde{T_1}\cap \tilde{T_2}) & =(\tilde{T_1}\cap \tilde{T_2})(x)\\
& =min\{ \tilde{T_1}(x), \tilde{T_2}(x)\}\\
& =min\{ gr(x\in \tilde{T_1}), gr(x\in \tilde{T_2})\}\\
& =inf\{ gr(x\in \tilde{T_1}), gr(x\in \tilde{T_2})\}.
\end{align*}
\begin{align*}
(2) \ gr(x\in \bigcup_i \tilde{T_i})  =(\bigcup_i \tilde{T_i})(x)
=sup_i\{\tilde{T_i}(x)\}
 =sup_i\{ gr(x\in \tilde{T_i})\}.
\end{align*}
\end{proof}
\begin{lemma}\label{3.14_1'}
$(f,f^{-1})$ is continuous provided $f$ is fuzz-top continuous.
\end{lemma}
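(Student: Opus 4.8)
The plan is to check that $J_L(f) = (f, f^{-1})$ meets the three requirements of Definition \ref{3} for a continuous map between the $\mathscr{L}$-topological systems $J_L(X,\tilde{A},\tau_1) = (X,\tilde{A},\in,\tau_1)$ and $J_L(Y,\tilde{B},\tau_2) = (Y,\tilde{B},\in,\tau_2)$. The first requirement, that $f$ be a proper function from $(X,\tilde{A})$ to $(Y,\tilde{B})$, is immediate, since by Definition \ref{3.2_1l} every fuzz-top continuous map is already a proper function. The second requirement, that $f^{-1}$ be a frame homomorphism from $\tau_2$ to $\tau_1$, splits into two parts: that $f^{-1}$ maps $\tau_2$ into $\tau_1$, which is exactly the hypothesis of fuzz-top continuity, and that it preserves finite meets and arbitrary joins, which I would obtain by first simplifying the preimage and then invoking the frame law of $L$.

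The simplification I have in mind exploits properness. For a proper $f$ the value $f(a,b)$ vanishes for every $b \neq f(a)$ and equals $\tilde{A}(a)$ at $b = f(a)$, so the supremum over $b \in |\tilde{B}|$ in Definition \ref{3.2_1l} collapses to a single term and $f^{-1}(\tilde{T})(a) = \min\{\tilde{A}(a), \tilde{T}(f(a))\}$. With this closed form, preservation of binary meets is a reassociation of $\min$, preservation of arbitrary joins is precisely the distributivity of binary meet over arbitrary join in the frame $L$, and $\tilde\emptyset$ is carried to $\tilde\emptyset$ because $\min\{\tilde{A}(a),0_L\} = 0_L$.

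The third requirement is the crux and follows the template of Lemma \ref{3.14_1}. For $\tilde{T} \in \tau_2$ and $x \in X$ I must show $gr(x \in f^{-1}(\tilde{T})) = gr(f(x) \in \tilde{T})$. Reading off the relations attached by $J_L$ in Definition \ref{8}, the left-hand side is $f^{-1}(\tilde{T})(x)$ and the right-hand side is $\tilde{T}(f(x))$, so everything comes down to the chain
\begin{align*}
gr(x \in f^{-1}(\tilde{T})) &= f^{-1}(\tilde{T})(x) = \tilde{T}(f(x)) = gr(f(x) \in \tilde{T}).
\end{align*}

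The step I expect to be the real obstacle is the middle equality $f^{-1}(\tilde{T})(x) = \tilde{T}(f(x))$. In the crisp and $[0,1]$-valued settings the preimage is literally $\tilde{T}(f(x))$, but the closed form above only gives $f^{-1}(\tilde{T})(x) = \min\{\tilde{A}(x), \tilde{T}(f(x))\}$, so the satisfiability clause forces the truncation by $\tilde{A}(x)$ to be inactive, that is, it requires $\tilde{T}(f(x)) \le \tilde{A}(x)$ for every fuzzy open $\tilde{T} \in \tau_2$ and every $x \in X$. Extracting this inequality from the definition of a proper function (Definition \ref{prop}) together with $\tilde{T} \le \tilde{B}$ is the delicate point; once it is secured the displayed chain closes, and combined with the first two requirements it yields continuity of $(f,f^{-1})$.
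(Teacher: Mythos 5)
Your reduction of the whole problem to the single inequality $\tilde{T}(f(x))\leq\tilde{A}(x)$ is exactly right, and it is to your credit that you isolate it, but that inequality cannot be extracted from Definition \ref{prop} together with $\tilde{T}\leq\tilde{B}$ --- it is simply false in general, so the chain you propose does not close. Properness ties $f(x,\cdot)$ to $\tilde{A}(x)$ only at the one point $f(x)$, via $f(x,f(x))=\tilde{A}(x)$; combined with the domination condition $f(x,y)\leq\tilde{A}(x)\wedge\tilde{B}(y)$ of \cite{MB} this yields $\tilde{A}(x)\leq\tilde{B}(f(x))$, which is the \emph{reverse} of what you need, and $\tilde{T}\leq\tilde{B}$ pushes the wrong way as well. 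Concretely: take $X=\{x\}$ with $\tilde{A}(x)=1/2$, $Y=\{y\}$ with $\tilde{B}(y)=1$, $\tau_1=\{\tilde{\emptyset},\tilde{A}\}$, $\tau_2=\{\tilde{\emptyset},\tilde{B}\}$, and $f(x,y)=1/2$. Then $f$ is proper and fuzz-top continuous (indeed $f^{-1}(\tilde{B})=\tilde{A}$ and $f^{-1}(\tilde{\emptyset})=\tilde{\emptyset}$, and $f^{-1}$ is even a frame homomorphism), yet
\begin{align*}
gr(x\in f^{-1}(\tilde{B}))=\tilde{A}(x)\wedge\tilde{B}(f(x))=1/2\neq 1=\tilde{B}(f(x))=gr(f(x)\in\tilde{B}),
\end{align*}
so condition 3 of Definition \ref{3}, read as point evaluation at $f(x)$ the way you read it, is violated.

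What your closed form for the preimage actually proves is the truncated identity $gr(x\in f^{-1}(\tilde{T}))=\tilde{A}(x)\wedge gr(f(x)\in\tilde{T})$, and that is the identity on which the lemma genuinely rests: it is precisely the shape the continuity condition takes in the variable-basis setting, where Definition \ref{03}(3) demands $gr(x\models g(q))=\bigvee_{y\in Y}[\phi^{-1}(gr(y\models' q))\wedge f(x,y)]$ and the corresponding Lemma \ref{3.14_1F} then goes through by your very computation. So the repair is not to hunt for $\tilde{T}(f(x))\leq\tilde{A}(x)$, which no hypothesis supplies, but to interpret condition 3 through the fuzzy relation $f$ (i.e., as the sup--min composite), under which your verification of the first two requirements plus your closed form already completes the proof. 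For comparison: the paper states Lemma \ref{3.14_1'} without proof, evidently intending the computation of Lemma \ref{3.14_1}, where the underlying set is crisp, $\tilde{A}$ is identically $1$, the truncation is invisible, and the three-term chain $gr(x\in f^{-1}(\tilde{T}))=f^{-1}(\tilde{T})(x)=\tilde{T}(f(x))=gr(f(x)\in\tilde{T})$ is sound; your attempt is more careful than that template, but it halts exactly where the new difficulty begins, and as written the gap cannot be filled.
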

Therefore $ J_L$ is a functor from $\mathscr{L}$-$\mathbf{Top}$ to $\mathscr{L}$-$\mathbf{TopSys}$. To make a connection between $\mathscr{L}$-$\mathbf{TopSys}$ and $\mathbf{Loc}$ (opposite category of $\mathbf{Frm}$)
we introduce two functors viz. $Lo_L$, $S_L$.
\subsection*{Functor $Lo_L$ from $\mathscr{L}$-$\mathbf{TopSys}$ to $\mathbf{Loc}$}
\begin{definition}\label{9}\index{functor!$Lo_L$}
$\mathbf{Lo_L}$ is a functor from $\mathscr{L}$-$\mathbf{TopSys}$ to $\mathbf{Loc}$ defined thus. $Lo_L$ acts on the 
object $(X,\tilde{A},\models ,P)$ as $Lo_L(X,\tilde{A},\models , P)=P$ and on the morphism $(f_1,f_2)$ as 
$Lo_L(f_1,f_2)=f_2$.
\end{definition}
It is clear that $Lo_L$ is indeed a functor.
\subsection*{Functor $S_L$ from $\mathbf{Loc}$ to $\mathscr{L}$-$\mathbf{TopSys}$}
\begin{definition}\label{10}
$\mathbf{S_L}$ is a functor from $\mathbf{Loc}$ to $\mathscr{L}$-$\mathbf{TopSys}$ defined thus. \\$S$ acts on the object 
$P$ as $S(P)=(Hom(P,L),\tilde{P},\models_*,P)$, where 
$Hom(P,L)=\{ frame\ hom\ v:P\longrightarrow L\}$, $\tilde{P}(v)=\bigvee_{p\in P}v(p)$ and $gr(v\models_* p)=v(p)$, 
and on the morphism $f$ as $S(f)=(\_\circ f,f)$.
\end{definition}
The following two Lemmas ensures that $S_L$ is indeed a functor.
\begin{lemma}\label{3.17_1l}
$(Hom(P,L),\tilde{P},\models_*,P)$ is an $\mathscr{L}$-topological system.
\end{lemma}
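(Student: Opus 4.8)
The plan is to verify directly the four defining conditions of an $\mathscr{L}$-topological system (Definition \ref{2}) for the quadruple $(Hom(P,L),\tilde{P},\models_*,P)$, exploiting throughout that every $v\in Hom(P,L)$ is by construction a frame homomorphism $v:P\longrightarrow L$ and hence preserves arbitrary joins and finite meets. Here the underlying $L$-fuzzy set is $(Hom(P,L),\tilde{P})$ with $\tilde{P}(v)=\bigvee_{q\in P}v(q)$, the $L$-fuzzy relation is $\models_*$ with $gr(v\models_* p)=v(p)$, and the frame is $P$.

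First, condition 1 is immediate: by definition $gr(v\models_* p)=v(p)$, and since $v$ takes values in $L$, this grade lies in $L$. The genuinely new point, absent from the crisp-underlying-set construction of Chapter 2 (Lemma \ref{3.17_1}), is condition 2, which demands $gr(v\models_* p)\leq\tilde{P}(v)$. I would verify this by unfolding the two definitions: $gr(v\models_* p)=v(p)$ while $\tilde{P}(v)=\bigvee_{q\in P}v(q)$; since $p$ is one of the indices $q$ ranging over $P$, the element $v(p)$ sits below the join, so $v(p)\leq\bigvee_{q\in P}v(q)=\tilde{P}(v)$. This is the only place where the fuzzy-set structure $\tilde{P}$ actually enters the argument.

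For conditions 3 and 4 the reasoning parallels Lemma \ref{3.17_1}, so it suffices to treat binary meet and arbitrary join (finite meets then follow by induction, with the empty meet giving $\top$). For any subset $S$ of $P$, preservation of joins yields
\begin{align*}
gr\Bigl(v\models_*\bigvee S\Bigr) &= v\Bigl(\bigvee S\Bigr)=\bigvee_{s\in S}v(s)=sup\{gr(v\models_* s):s\in S\},
\end{align*}
and for finite $S$, preservation of finite meets yields
\begin{align*}
gr\Bigl(v\models_*\bigwedge S\Bigr) &= v\Bigl(\bigwedge S\Bigr)=\bigwedge_{s\in S}v(s)=inf\{gr(v\models_* s):s\in S\}.
\end{align*}

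I do not expect any real obstacle: every step reduces either to the defining homomorphism properties of the elements of $Hom(P,L)$ or to the definitions of $\tilde{P}$ and $\models_*$. The one verification that has no counterpart in the corresponding Chapter 2 lemma is condition 2, which is therefore the step to state explicitly; the remaining computations are routine transcriptions of the frame-homomorphism conditions into the language of grades.
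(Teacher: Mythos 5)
Your proposal is correct and follows essentially the same route as the paper's own proof: both verify the four conditions of Definition \ref{2} directly, with condition 1 immediate from $gr(v\models_* p)=v(p)\in L$, condition 2 from $v(p)\leq\bigvee_{q\in P}v(q)=\tilde{P}(v)$, and conditions 3 and 4 from the fact that each $v\in Hom(P,L)$ preserves finite meets and arbitrary joins. Your explicit flagging of condition 2 as the only step with no counterpart in Lemma \ref{3.17_1} matches the paper's presentation, which likewise checks it as a separate one-line item.
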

\begin{proof}It is enough to check the following:
\begin{enumerate}
\item $gr(v\models_* p)=v(p)\in L$.
\item $gr(v\models_*p)=v(p)\leq \bigvee_{p\in P}v(p)=\tilde{P}(v)$.
\item $gr(v\models_* p_1\wedge p_2) = v(p_1\wedge p_2) = min\{ v(p_1), v(p_2)\} = min\{ gr(v\models_* p_1),gr(v\models_* p_2)\} = inf\{ gr(v\models_* p_1),gr(v\models_* p_2)\}$.
\item $gr(v\models_* \bigvee_i p_i) = v(\bigvee_i p_i) = sup_i\{v(p_i)\} = sup_i\{gr(v\models_* p_i)\}$.
\end{enumerate}
This completes the proof.
\end{proof}
\begin{lemma}\label{3.18_1l}
If $f:Q\longrightarrow P$ is a frame homomorphism then $(\_\circ f,f):(Hom(P,L),\tilde{P},\models_*,P)\longrightarrow (Hom(Q,L),\tilde{Q},\models_*,Q)$ is continuous.
\end{lemma}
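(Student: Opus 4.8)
The plan is to verify directly the three clauses of Definition~\ref{3} for the candidate pair $(\_\circ f, f)$, where $f$ plays the role of the frame-homomorphism component $f_2$ (acting in the contravariant direction $Q\longrightarrow P$) and $\_\circ f$ plays the role of the proper-function component $f_1$. Clause~2 is immediate, since $f\colon Q\longrightarrow P$ is a frame homomorphism by hypothesis. Before checking the remaining clauses I would confirm that $\_\circ f$ actually maps $Hom(P,L)$ into $Hom(Q,L)$: given $v\colon P\longrightarrow L$ preserving arbitrary joins and finite meets, and $f\colon Q\longrightarrow P$ doing likewise, the composite $v\circ f\colon Q\longrightarrow L$ preserves arbitrary joins and finite meets as well, so $v\circ f\in Hom(Q,L)$. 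This is what lets us regard $\_\circ f$ as a function, and hence as a candidate proper function, between the underlying $L$-fuzzy sets.

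The crux of the argument is clause~1, namely that $\_\circ f$ is a proper function from $(Hom(P,L),\tilde{P})$ to $(Hom(Q,L),\tilde{Q})$ in the sense of Definition~\ref{prop}. Here I would exploit the explicit form of the membership functions supplied by Definition~\ref{10}. Every frame homomorphism preserves finite meets, in particular the empty meet, so $v(\top_P)=1_L$, the top element of $L$; consequently $\tilde{P}(v)=\bigvee_{p\in P}v(p)=1_L$ for every $v\in Hom(P,L)$, and symmetrically $\tilde{Q}(w)=1_L$ for every $w\in Hom(Q,L)$. Thus both membership functions collapse to the constant $1_L$, whence $|\tilde{P}|=Hom(P,L)$ and $|\tilde{Q}|=Hom(Q,L)$. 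Declaring the unique image of $v$ to be $v\circ f$, the defining equality $\tilde{P}(v)=\tilde{Q}(v\circ f)$ holds because both sides equal $1_L$, while uniqueness is automatic because $\_\circ f$ is a genuine function; this verifies that $\_\circ f$ is proper.

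Finally, clause~3 is a one-line computation mirroring Lemma~\ref{3.18_1}: for $v\in Hom(P,L)$ and $q\in Q$,
\[
gr(v\models_* f(q))=v(f(q))=(v\circ f)(q)=gr\big((v\circ f)\models_* q\big),
\]
which is precisely the required compatibility of $\models_*$ with $f$. I expect the only genuinely delicate point to be clause~1, and its resolution rests entirely on the observation that $\tilde{P}$ and $\tilde{Q}$ reduce to the constant $1_L$ because frame homomorphisms send $\top$ to $\top$; once this is noted, the proper-function requirements are met trivially and the proof concludes.
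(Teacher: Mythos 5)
Your proposal is correct, and it is in fact more complete than what the thesis offers: Lemma~\ref{3.18_1l} is stated there without proof (the sentence after it just asserts that the two lemmas make $S_L$ a functor), and the only proved analogue is Lemma~\ref{3.18_1} of Chapter~2, whose entire content is your clause-3 computation $gr(v\models_* f(q))=v(f(q))=(v\circ f)(q)=gr(v\circ f\models_* q)$. What you add, and what is genuinely new in the $\mathscr{L}$-setting relative to the $[0,1]$-setting, is the verification that $\_\circ f$ is a proper function between the $L$-fuzzy sets $(Hom(P,L),\tilde{P})$ and $(Hom(Q,L),\tilde{Q})$; your key observation that $\tilde{P}(v)=\bigvee_{p\in P}v(p)=1_L$ because every frame homomorphism preserves the empty meet (so $v(\top_P)=1_L$), and symmetrically $\tilde{Q}(w)=1_L$, is exactly right and is what collapses Definition~\ref{prop} to a triviality here. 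Two small points of care, neither of which damages the argument: first, Definition~\ref{prop} literally asks that the $L$-valued relation take the value $\tilde{P}(v)$ at the unique image and $0_L$ elsewhere, so strictly you should say that the relation $F(v,w)=\tilde{P}(v)$ if $w=v\circ f$ and $F(v,w)=0_L$ otherwise satisfies these conditions — your equality $\tilde{P}(v)=\tilde{Q}(v\circ f)$ is not itself the defining clause, though with both sides equal to $1_L$ the verification is immediate; second, one needs $v\circ f\in\mid\tilde{Q}\mid$, i.e.\ $\tilde{Q}(v\circ f)>0_L$, which holds whenever $1_L>0_L$ (for the trivial one-element frame $L$ the condition is vacuous since $\mid\tilde{P}\mid=\emptyset$). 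Your preliminary check that $v\circ f$ preserves arbitrary joins and finite meets, hence lands in $Hom(Q,L)$, is also a step the paper silently assumes; including it makes the proof self-contained.
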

Finally it can be shown that the following theorems hold.
\begin{theorem}\label{ext_L}
$Ext_L$ is the right adjoint to the functor $J_L$.
\end{theorem}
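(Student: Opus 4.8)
The plan is to follow the template of the $[0,1]$-valued case, Lemma~\ref{3.20_1}, and establish the adjunction $J_L \dashv Ext_L$ by exhibiting its co-unit, as permitted by the right-adjoint clause of Definition~\ref{2.3}. First I would record that for an object $(X,\tilde A,\models,P)$ of $\mathscr L$-$\mathbf{TopSys}$ one has
\[
J_L(Ext_L(X,\tilde A,\models,P)) = (X,\tilde A,\in,ext_L(P)),
\]
and propose the co-unit component $\xi_X = (id_{\tilde A}, ext_L^{*})$, where $id_{\tilde A}$ is the identity proper function of Definition~\ref{id} and $ext_L^{*}\colon P \longrightarrow ext_L(P)$ sends $p \mapsto ext_L(p)$.

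Next I would verify that $\xi_X$ is genuinely a morphism of $\mathscr L$-$\mathbf{TopSys}$, i.e. continuous in the sense of Definition~\ref{3}. The identity proper function is a proper function by construction; that $ext_L^{*}$ is a frame homomorphism follows directly from the meet- and join-preservation identities already obtained in the proof of Lemma~\ref{3.9_1l}, namely $ext_L^{*}(p_1\wedge p_2)=ext_L^{*}(p_1)\cap ext_L^{*}(p_2)$ and $ext_L^{*}(\bigvee_i p_i)=\bigcup_i ext_L^{*}(p_i)$; and the compatibility condition reduces to
\[
gr(x \in ext_L^{*}(p)) = ext_L^{*}(p)(x) = gr(x\models p) = gr(id_{\tilde A}(x)\models p),
\]
using $gr(x\in\tilde T)=\tilde T(x)$ from Definition~\ref{8}.

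The heart of the argument is the co-universal property. Given any continuous $(f_1,f_2)\colon J_L(Y,\tilde B,\tau') \longrightarrow (X,\tilde A,\models,P)$, I would take $\hat f = f_1$, regarded as a proper function $(Y,\tilde B,\tau')\to(X,\tilde A,ext_L(P))$, and check: (a) $f_1$ is fuzz-top continuous into $(X,\tilde A,ext_L(P))$; (b) $\xi_X\circ J_L(\hat f)=(f_1,f_2)$; and (c) $\hat f$ is the unique such morphism. For (a) and (b) the key identity is $f_1^{-1}(ext_L(p))=f_2(p)$, obtained by unwinding the proper-function inverse image and invoking condition~3 of Definition~\ref{3}, so that $f_1^{-1}(ext_L(p))(y)=gr(f_1(y)\models p)=gr(y\in f_2(p))=f_2(p)(y)$; this yields $f_1^{-1}(ext_L(p))=f_2(p)\in\tau'$ (continuity) and $f_1^{-1}\circ ext_L^{*}=f_2$, whence $\xi_X\circ J_L(f_1)=(id_{\tilde A}\circ f_1,\,f_1^{-1}\circ ext_L^{*})=(f_1,f_2)$. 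Uniqueness in (c) is forced because the first component of $\xi_X$ is $id_{\tilde A}$, so any $\hat f$ with $\xi_X\circ J_L(\hat f)=(f_1,f_2)$ must equal $f_1$. Concluding via Definition~\ref{2.3}, $\xi_X$ is the co-unit and $Ext_L$ is right adjoint to $J_L$.

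I expect the main obstacle to be the bookkeeping forced by the passage from crisp sets to $L$-fuzzy sets and proper functions, concentrated in the identity $f_1^{-1}(ext_L(p))=f_2(p)$. Because the proper-function inverse image of Definition~\ref{3.2_1l} carries a $\min$ with the domain membership degree, one must check that this $\min$ collapses correctly; this is exactly where condition~2 of Definition~\ref{2}, $gr(x\models p)\le\tilde A(x)$, is needed, guaranteeing that grades never exceed the ambient membership values and so matching the naive computation used in Lemma~\ref{3.10_1l}. Once this compatibility is secured, the categorical skeleton of the proof is identical to the $[0,1]$-valued case.
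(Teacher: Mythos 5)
Your proposal is correct and follows essentially the same route as the paper's own proof: both establish the adjunction by exhibiting the co-unit $\xi_X=(id,ext_L^{*})$ on $J_L(Ext_L(X,\tilde{A},\models,P))=(X,\tilde{A},\in,ext_L(P))$, verifying it is a continuous map of $\mathscr{L}$-topological systems, and deriving commutativity of the triangle from the key identity $f_1^{-1}\circ ext_L^{*}=f_2$. If anything you are more careful than the paper, which silently treats the proper-function inverse image as plain composition: you explicitly note the $\min$ with the ambient membership degree in Definition~\ref{3.2_1l} and why it collapses (the bound $gr(f_1(y)\models p)\leq\tilde{B}(y)$, available because $f_2(p)$ is open in $\tau'$, i.e.\ condition~2 of Definition~\ref{2} holds in the system $J_L(Y,\tilde{B},\tau')$), and you also supply the uniqueness argument that the paper leaves implicit.
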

\begin{proof}
We will prove the theorem by presenting the co-unit of the adjunction. Recall that 
$J_L(X,\tilde{A},\tau)=(X,\tilde{A},\in \tau)$ and $Ext_L(X,\tilde{A},\models,P)=(X,\tilde{A},ext_L(P))$.
Hence, 
$$J_L(Ext_L(X,\tilde{A},\models,P))=(X,\tilde{A},\in,ext_L(P)).$$
Let us draw the diagram of co-unit.
\begin{center}
\begin{tabular}{ l | r } 
$\mathscr{L}$-$\mathbf{TopSys}$ & $\mathscr{L}$- $\mathbf{Top}$\\
\hline
 {\begin{tikzpicture}[description/.style={fill=white,inner sep=2pt}] 
    \matrix (m) [matrix of math nodes, row sep=2.5em, column sep=2.3em]
    { J_L(Ext_L(X,\tilde{A},\models, P))&&(X,\tilde{A},\models, P)  \\
         J_L(Y,\tilde{B}, \tau ') \\ }; 
    \path[->,font=\scriptsize] 
        (m-1-1) edge node[auto] {$\xi_X$} (m-1-3)
        (m-2-1) edge node[auto] {$J_L(f)(\equiv (f_1,f_1^{-1}))$} (m-1-1)
        (m-2-1) edge node[auto,swap] {$\hat f (\equiv (f_1,f_2))$} (m-1-3)
        %(m-1-1) edge node[auto,swap] {$f$} (m-2-3)
       % (m-1-5) edge node[auto,swap] {$j$} (m-1-3)
       % (m-1-5) edge node[auto] {$\psi$} (m-2-3)
        %(m-1-3) edge node[auto] {$G(\hat{f})$} (m-2-3)
         ;
\end{tikzpicture}} & {\begin{tikzpicture}[description/.style={fill=white,inner sep=2pt}] 
    \matrix (m) [matrix of math nodes, row sep=2.5em, column sep=2.5em]
    { Ext_L(X,\tilde{A}, \models, P)  \\
         (Y,\tilde{B}, \tau ') \\ }; 
    \path[->,font=\scriptsize] 
        (m-2-1) edge node[auto,swap] {$f(\equiv f_1)$} (m-1-1)
       
         ;
\end{tikzpicture}} \\ 
\end{tabular}
\end{center}
Let us define co-unit by $\xi_X=(id_X,ext_L')$. That is,
\begin{center}
\begin{tikzpicture}[description/.style={fill=white,inner sep=2pt}] 
    \matrix (m) [matrix of math nodes, row sep=2.5em, column sep=2.5em]
    { (X,\tilde{A},\in ,ext_L(P))&&(X,\tilde{A},\models, P)  \\
          }; 
    \path[->,font=\scriptsize] 
        (m-1-1) edge node[auto] {$\xi_X$} (m-1-3)
        (m-1-1) edge node[auto,swap] {$(id_X,ext_L')$} (m-1-3)
         ;
\end{tikzpicture}
\end{center}
 where $ext_L'$ is a mapping from $P$ to $ext_L(P)$ such that, $ext_L'(p)=(X,ext_L^*(p))$ for all $p\in P$.
It can be shown that $$(id_X,ext_L'):J_L(Ext_L(X,\tilde{A},\models , P))\longrightarrow (X,\tilde{A},\models , P)$$ is indeed 
a continuous map of $\mathscr{L}$-topological system, as follows.
According to the definition $ext_L^*(p)(x)=gr(x\models p)$. Hence $ext_L'(p)(x)=gr(x\models p)$. Consequently $gr(x\in ext'(p))=gr(id_X(x)\models p)$.

Now define $f$ as follows.
Given $(f_1,f_2):J_L(Y,\tilde{B},\tau')\longrightarrow (X,\tilde{A},\models, P)$, then $f=f_1$.
Now we will prove that the diagram on the left commutes.
Here $J_L(f)=(f_1,f_1^{-1})$ and $$(f_1,f_2)=\xi_X\circ J_L(f)=(id_X,ext_L')\circ (f_1,f_1^{-1})=(id_X\circ f_1,f_1^{-1}\circ ext_L').$$
%We have to show
%\begin{align*}
% (f_1,f_2) & =\xi_X\circ J(f)\\
%& =(id_X,ext')\circ (f_1,f_1^{-1})\\
%& =(id_X\circ f_1,f_1^{-1}\circ ext')
%\end{align*}
Clearly $id_X\circ f_1=f_1$.
Also we have $f_1^{-1}ext_L'(p)= f_1^{-1}(p)= f_2(p)$.
So, $f_2=f_1^{-1}\circ ext_L'$.
%\begin{align*}
%f_1^{-1}ext'(p) & = f_1^{-1}(p)\\
%& = f_2(p)
%\end{align*}
Hence $$\xi_X(\equiv(id_X,ext_L')):J_L(Ext_L(X,\tilde{A},\models, P))\longrightarrow (X,\tilde{A},\models, P)$$ is the co-unit, 
consequently $Ext_L$ is the right adjoint to the functor $J_L$.
\end{proof}
\begin{theorem}\label{loc_L}
$Lo_L$ is the left adjoint to the functor $S_L$.
\end{theorem}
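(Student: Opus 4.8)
The plan is to follow the template of Lemma~\ref{3.22_1} and exhibit the unit of the adjunction, appealing to Definition~\ref{2.3}(1). By Definitions~\ref{9} and~\ref{10} we have $Lo_L(X,\tilde{A},\models,P)=P$ and hence $S_L(Lo_L(X,\tilde{A},\models,P))=(Hom(P,L),\tilde{P},\models_*,P)$, where $\tilde{P}(v)=\bigvee_{p\in P}v(p)$ and $gr(v\models_* p)=v(p)$. I would set the component of the unit at $(X,\tilde{A},\models,P)$ to be $\eta=(p^*,id_P)$, where $p^*$ assigns to $x\in X$ the map $p_x\colon P\longrightarrow L$ defined by $p_x(p)=gr(x\models p)$.

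First I would verify that each $p_x$ is indeed an object of $Hom(P,L)$, that is, a frame homomorphism, exactly as in Claim~\ref{3.23_1}: conditions 3 and 4 of Definition~\ref{2} give $p_x(p_1\wedge p_2)=inf\{p_x(p_1),p_x(p_2)\}$ and $p_x(\bigvee_i p_i)=sup_i\{p_x(p_i)\}$. The step that has no analogue in the crisp-underlying-set setting of Lemma~\ref{3.22_1}, and which I expect to be the main obstacle, is to show that $p^*$ is a \emph{proper function} from $(X,\tilde{A})$ to $(Hom(P,L),\tilde{P})$ in the sense of Definition~\ref{prop}, not merely a set map. For this I would present $p^*$ as the relation sending the pair $(x,p_x)$ to $\tilde{A}(x)$ and every pair $(x,v)$ with $v\neq p_x$ to $0_L$; the uniqueness clause of Definition~\ref{prop} then holds by construction, and the crux is to confirm that $p_x\in\mid\tilde{P}\mid$ for $x\in\mid\tilde{A}\mid$ together with the grade identity $\tilde{P}(p_x)=\tilde{A}(x)$. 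By condition 4 of Definition~\ref{2} applied to $S=P$ one computes $\tilde{P}(p_x)=\bigvee_{p\in P}gr(x\models p)=gr(x\models\top_P)$, so the delicate point reduces to matching $gr(x\models\top_P)$ with $\tilde{A}(x)$ via condition 2; this interaction of the satisfaction grade with the top element $\top_P$ is precisely where the fuzzy-set generalisation must be handled with care.

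Granting that $p^*$ is proper, continuity of $(p^*,id_P)$ is immediate, since condition 3 of Definition~\ref{3} amounts to $gr(x\models id_P(p))=gr(p^*(x)\models_* p)=p_x(p)=gr(x\models p)$. It then remains to establish the universal property. Given an arbitrary morphism $(f_1,f_2)\colon(X,\tilde{A},\models,P)\longrightarrow S_L(Q)=(Hom(Q,L),\tilde{Q},\models_*,Q)$, the frame homomorphism $f_2\colon Q\longrightarrow P$ is exactly a $\mathbf{Loc}$-morphism $\hat{f}\colon P\longrightarrow Q$, and I would take $\hat{f}=f_2$ as the factoring arrow. Using $S_L(\hat{f})=(\_\circ f_2,f_2)$ and the composition law of Definition~\ref{4}, one obtains $S_L(\hat{f})\circ\eta=((\_\circ f_2)\circ p^*,\,id_P\circ f_2)$; the second coordinate is $f_2$, and for the first, for each $x\in X$ and $q\in Q$, $((\_\circ f_2)\circ p^*)(x)(q)=p_x(f_2(q))=gr(x\models f_2(q))=gr(f_1(x)\models_* q)=f_1(x)(q)$ by continuity of $(f_1,f_2)$, whence $(\_\circ f_2)\circ p^*=f_1$. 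Thus $S_L(\hat{f})\circ\eta=(f_1,f_2)$, and uniqueness is forced by reading off the second coordinate, since any $\mathbf{Loc}$-morphism $g$ with $S_L(g)\circ\eta=(f_1,f_2)$ must have $g=f_2$. Hence $\eta$ is the unit of the adjunction, and consequently $Lo_L$ is the left adjoint to $S_L$.
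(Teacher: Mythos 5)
Your proposal takes exactly the route of the paper's own proof of Theorem~\ref{loc_L}: the same unit $\eta=(p^*,id_P)$ with $p_x(p)=gr(x\models p)$, the same verification of continuity via condition 3 of Definition~\ref{3}, the same factoring arrow $\hat{f}=f_2$, and the same computation $p^*(x)\circ f_2(q)=gr(x\models f_2(q))=gr(f_1(x)\models_* q)=f_1(x)(q)$ giving $(\_\circ f_2)\circ p^*=f_1$. Your uniqueness argument (reading $g=f_2$ off the second coordinate of the composition) is correct and is actually more than the printed proof records.

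The one place you diverge is the ``delicate point'', and there your diagnosis needs repair, though your instinct that properness of $p^*$ is the unexamined step is sound --- the paper never checks it at all. First, Definition~\ref{prop} does not demand the grade identity $\tilde{P}(p_x)=\tilde{A}(x)$: it asks only that for each $x\in\mid\tilde{A}\mid$ there be a unique $v\in\mid\tilde{P}\mid$ with $p^*(x,v)=\tilde{A}(x)$ and $p^*(x,v')=0_L$ for $v'\neq v$, and your construction secures this by fiat once $p_x\in\mid\tilde{P}\mid$. Second, the matching you hope to extract ``via condition 2'' of Definition~\ref{2} is not available: condition 2 yields only $gr(x\models\top_P)\leq\tilde{A}(x)$, and the reverse inequality is not an axiom, so that route is closed. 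What actually settles membership is the empty-meet instance of condition 3 --- the same convention the paper invokes in Chapter 2 to prove $gr(x\models\top)=1$ --- which gives $p_x(\top_P)=gr(x\models\bigwedge\emptyset)=\inf\,\emptyset=1_L$; this is also needed for $p_x$ to preserve the empty meet and hence to lie in $Hom(P,L)$ at all, and it yields $\tilde{P}(p_x)=\bigvee_{p\in P}p_x(p)=1_L>0_L$, so $p_x\in\mid\tilde{P}\mid$. Even on the stricter reading in which a relation between $L$-fuzzy sets must satisfy $p^*(x,v)\leq\tilde{A}(x)\wedge\tilde{P}(v)$, the bound is then automatic, with no identity $\tilde{P}(p_x)=\tilde{A}(x)$ required. (One may note in passing that this same empty-meet convention, combined with condition 2 of Definition~\ref{2}, forces $\tilde{A}(x)=1_L$ for all $x$ --- a strain internal to the paper's definition, not to your argument; the printed proof sidesteps the whole issue by asserting $p^*\colon(X,\tilde{A})\longrightarrow(Hom(P,L),\tilde{P})$ without verification, so on this point your proposal is more careful than the paper even though your proposed resolution was the wrong one.)
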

\begin{proof}
We will prove the theorem by presenting the unit of the adjunction.
Recall that  $S_L(Q)=(Hom(Q,L),\tilde{Q},\models_*,Q)$, where $gr(v\models_* q)=v(q)$.
Hence,
$$S_L(Lo_L(X,\tilde{A},\models, P))=(Hom(P,L),\tilde{P},\models_*, P).$$
\begin{center}
\begin{tabular}{ l | r  } 
 $\mathscr{L}$-$\mathbf{TopSys}$ &$\mathbf{Loc}$ \\
\hline
 {\begin{tikzpicture}[description/.style={fill=white,inner sep=2pt}] 
    \matrix (m) [matrix of math nodes, row sep=2.5em, column sep=2.5em]
    {(X,\tilde{A},\models , P) & &S_L(Lo_L(X,\tilde{A}, \models , P))  \\
        & & S_L(Q) \\ }; 
    \path[->,font=\scriptsize] 
        (m-1-1) edge node[auto] {$\eta$} (m-1-3)
        (m-1-1) edge node[auto,swap] {$f(\equiv (f_1,f_2))$} (m-2-3)
       % (m-1-5) edge node[auto,swap] {$j$} (m-1-3)
        %(m-1-5) edge node[auto] {$\psi$} (m-2-3)
        (m-1-3) edge node[auto] {$S_L(\hat{f})$} (m-2-3);
\end{tikzpicture}} &  {\begin{tikzpicture}[description/.style={fill=white,inner sep=2pt}] 
    \matrix (m) [matrix of math nodes, row sep=2.5em, column sep=2.5em]
    { Lo_L(X,\tilde{A},\models , P)  \\
         Q \\ }; 
    \path[->,font=\scriptsize]

        %(m-1-5) edge node[auto,swap] {$j$} (m-1-3)
        %(m-1-5) edge node[auto] {$\psi$} (m-2-3)
        (m-1-1) edge node[auto] {$\hat{f}(\equiv f_2)$} (m-2-1);
\end{tikzpicture}}
\end{tabular}
\end{center}
Then unit is defined by $\eta =(p^*,id_P)$. That is,
\begin{center}
 \begin{tikzpicture}[description/.style={fill=white,inner sep=2pt}] 
    \matrix (m) [matrix of math nodes, row sep=2.5em, column sep=2.5em]
    {(X,\tilde{A},\models , P) & &S(Lo(X,\tilde{A}, \models , P)),  \\
        }; 
    \path[->,font=\scriptsize] 
        (m-1-1) edge node[auto] {$\eta$} (m-1-3)
        (m-1-1) edge node[auto,swap] {$(p^*,id_P)$} (m-1-3)
        %(m-1-1) edge node[auto,swap] {} (m-2-3)
       % (m-1-5) edge node[auto,swap]
        ;
\end{tikzpicture}
\end{center}
where $p^*\colon (X,\tilde{A})\longrightarrow (Hom(P,L),\tilde{P})$, s.t. for any $x\in \mid\tilde{A}\mid$, $p^*(x)$ is a mapping
from $P$ to $L$ and $p^*(x)(p)=gr(x\models p)$.
We can show that, $$(p^*,id_P):(X,\tilde{A},\models,P)\longrightarrow S(Lo_L(X,\tilde{A},\models,P)) $$ is a continuous map
of $\mathscr{L}$-topological system in the following way.

It will be enough to show that $gr(x\models id_P(p))=gr(p^*(x)\models p)$.\\
We have $gr(x\models p)=p^*(x)(p)=gr(p^*(x)\models p)$.

Let us define $\hat{f}$ as follows
$$(f_1,f_2):(X,\tilde{A},\models , P)\longrightarrow (Hom(P,L),\tilde{P},\models_*,P)$$
then $\hat{f}=f_2$, as $f_2$ is the frame homomorphism.

Recall that $S_L(\hat{f})=(-\circ f_2,f_2)$.
Now we have to show that the triangle on the left commute.
%Recall that $S(\hat{f})=(-\circ f_2,f_2)$\\
We have to show that, $$(f_1,f_2)=(-\circ f_2,f_2)\circ (p^*,id_P)=((-\circ f_2)p^*,id_P\circ f_2).$$
Clearly $f_2=id_P\circ f_2.$
It is only left to show that $f_1=(-\circ f_2)p^* =p_x\circ f_2.$
We have for all $q\in Q$,
$$
p^*(x)\circ f_2(q)  =p^*(x)(f_2(q))
 =gr(x\models f_2(q))
 =gr(f_1(x)\models_* q)
 =f_1(x)(q).
 $$
So, $(\_\circ f_2)p^*=f_1$.
\end{proof}
\begin{theorem}\label{fin}
$Ext_L\circ S_L$ is the right adjoint to the functor $Lo_L\circ J_L$.
\end{theorem}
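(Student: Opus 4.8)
The plan is to deduce the statement formally from the two adjunctions already in hand, via the standard fact that adjoints compose: whenever $F \dashv G$ and $F' \dashv G'$ are adjunctions for which the composites $F' \circ F$ and $G \circ G'$ make sense, one has $F' \circ F \dashv G \circ G'$. No new unit or counit need be computed from scratch; this is exactly the pattern of the earlier \rth{3.25_1}, where $Ext \circ S$ was shown to be right adjoint to $fm \circ J$ merely by combining \rlemma{3.20_1} and \rlemma{3.22_1}.

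Concretely, I would first restate the ingredients in adjunction form. By \rth{ext_L}, $Ext_L$ is right adjoint to $J_L$, so we have an adjunction $J_L \dashv Ext_L$ between $\mathscr{L}$-$\mathbf{Top}$ and $\mathscr{L}$-$\mathbf{TopSys}$ with $J_L$ on the left; write $\eta^{1}$, $\xi^{1}$ for its unit and counit (the counit $\xi^{1}_X = (id_X, ext_L')$ having been exhibited explicitly in that proof). By \rth{loc_L}, $Lo_L$ is left adjoint to $S_L$, giving an adjunction $Lo_L \dashv S_L$ between $\mathscr{L}$-$\mathbf{TopSys}$ and $\mathbf{Loc}$ with $Lo_L$ on the left; write $\eta^{2}$, $\xi^{2}$ for its unit (the unit $\eta^{2} = (p^*, id_P)$ being the one built in that proof) and counit.

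Next I would observe that the composites point the right way: $Lo_L \circ J_L$ carries $\mathscr{L}$-$\mathbf{Top}$ to $\mathbf{Loc}$, while $Ext_L \circ S_L$ carries $\mathbf{Loc}$ back to $\mathscr{L}$-$\mathbf{Top}$. The required adjunction is then obtained either by pasting the two natural hom-set bijections,
\begin{align*}
Hom_{\mathbf{Loc}}\big(Lo_L(J_L(X,\tilde{A},\tau)),\, P\big) & \cong Hom_{\mathscr{L}\text{-}\mathbf{TopSys}}\big(J_L(X,\tilde{A},\tau),\, S_L(P)\big)\\
& \cong Hom_{\mathscr{L}\text{-}\mathbf{Top}}\big((X,\tilde{A},\tau),\, Ext_L(S_L(P))\big),
\end{align*}
naturally in $(X,\tilde{A},\tau)$ and $P$, or equivalently by taking the composite unit $(Ext_L\,\eta^{2}\,J_L) \circ \eta^{1}$ and composite counit $\xi^{2} \circ (Lo_L\,\xi^{1}\,S_L)$ and verifying the two triangle identities, which reduce directly to the triangle identities of the two given adjunctions.

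There is no analytic obstacle here; the only thing demanding care is the direction bookkeeping. The right adjoint of a composite $F' \circ F$ of left adjoints is the composite of the individual right adjoints in the \emph{reverse} order, $G \circ G'$. In our situation the left adjoints compose as $Lo_L \circ J_L$, so the right adjoints compose, reversed, as $Ext_L \circ S_L$ — precisely the pairing asserted. All the substantive work lives in \rth{ext_L} and \rth{loc_L}, and the present theorem is their formal composite.
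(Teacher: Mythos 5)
Your proposal is correct and matches the paper's own proof, which simply cites Theorem \ref{ext_L} and Theorem \ref{loc_L} and composes the two adjunctions $J_L \dashv Ext_L$ and $Lo_L \dashv S_L$. Your additional detail — the pasted hom-set bijections and the reversed order of the right adjoints — is exactly the standard bookkeeping the paper leaves implicit, so there is nothing to correct.
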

\begin{proof}
Proof follows from Theorem \ref{ext_L} and Theorem \ref{loc_L}.
\end{proof}
The obtained functorial relationships can be illustrated by the following diagram:
  
\begin{center}
\begin{tikzpicture}
\node (C) at (0,3) {$\mathscr{L}$-$\mathbf{TopSys}$};
\node (A) at (-2,0) {$\mathscr{L}$-$\mathbf{Top}$};
\node (B) at (2,0) {$\mathbf{Loc}$};
%\node at (0,0) {\rotatebox{270}{$\Rightarrow$}};
\path[->,font=\scriptsize ,>=angle 90]
(A) edge [bend left=15] node[above] {$Lo_L\circ J_L$} (B);
\path[<-,font=\scriptsize ,>=angle 90]
(A)edge [bend right=15] node[below] {$Ext_L \circ S_L$} (B);
\path[->,font=\scriptsize ,>=angle 90]
(A) edge [bend left=20] node[above] {$J_L$} (C);
\path[<-,font=\scriptsize ,>=angle 90]
(A)edge [bend right=20] node[above] {$Ext_L$} (C);
\path[->,font=\scriptsize, >=angle 90]
(C) edge [bend left=20] node[above] {$Lo_L$} (B);
\path[<-,font=\scriptsize, >=angle 90]
(C)edge [bend right=20] node[above] {$S_L$} (B);
\end{tikzpicture}
\end{center}
\section{Subcategories of $\alpha$-cuts}
In this section we will construct two kinds of subsystems of some $\mathscr{L}$-topological systems. 
To do so here we will deal with some special kind of subsets of the fuzzy set. We will construct the 
subsets using the concept of $\alpha$-cut and fuzzy $\alpha$-cut of fuzzy set respectively. For the notion of
classical $\alpha$-cut of a fuzzy set we refer to \cite{KY}. Here we introduce a notion of fuzzy $\alpha$-cut of a fuzzy set.
\begin{definition}[$\alpha$-cut of a fuzzy set]\label{1'}
Let $(X,\tilde{A})$ be an $L$-fuzzy set. Then for $\alpha \in L$, where $L$ is a frame, the \textbf{$\alpha$-cut} of 
$(X,\tilde{A})$ is the ordinary set $\{ x\in X\mid \tilde{A}(x)\geq \alpha \}$.
\end{definition} 
\begin{definition}[Strict $\alpha$-cut of a fuzzy set]\label{1''}
Let $(X,\tilde{A})$ be an $L$-fuzzy set. Then for $\alpha \in L$, where $L$ is a frame, the \textbf{strict $\alpha$-cut} of 
$(X,\tilde{A})$ is the ordinary set $\{ x\in X\mid \tilde{A}(x)> \alpha \}$.
\end{definition} 
\begin{definition}[Fuzzy $\alpha$-cut of a fuzzy set]\label{2'}
Let $(X,\tilde{A})$ be an $L$-fuzzy set. Then for $\alpha \in L$, where $L$ is a frame, the \textbf{fuzzy $\alpha$-cut} of 
$(X,\tilde{A})$ is the fuzzy subset $(X,\tilde{A_{\alpha}})$ such that $\tilde{A_{\alpha}}$ is defined as follows:
\begin{center}
$\tilde{A_{\alpha}}(x)$ $=\begin{cases}
        \tilde{A}(x) & \emph{if $\tilde{A}(x)\geq \alpha $} \\
        0_L & \emph{otherwise}.\end{cases}$
\end{center}        
\end{definition} 
\subsection{$\mathbf{TopSys}_\alpha$}
Let $(X,\tilde{A},\models ,P)$ be an $\mathscr{L}$-topological system. Let us consider the triple 
$(\{x\in X\mid \tilde{A}(x)>\alpha\},\models_\alpha,P)$, where $\models_\alpha\subseteq X\times P$ such that 
$x\models_\alpha p$ iff $gr(x\models p)>\alpha$. It can be shown that the triple forms a topological system and 
consequently a subsystem of $\mathscr{L}$-topological system. 

Thus we get subsystems for each $\alpha<1_L$ ($1_L$ is the top element of {L}). Now for $\alpha>\alpha '$, 
$\{x\in X\mid \tilde{A}(x)>\alpha\}$ is a subset of $\{x\in X\mid \tilde{A}(x)>\alpha '\}$ and hence 
$(\{x\in X\mid \tilde{A}(x)>\alpha\},\models_\alpha,P)$ is a subsystem of 
$(\{x\in X\mid \tilde{A}(x)>\alpha '\},\models_\alpha ',P)$. Hence we get chains of subsystems of 
$\mathscr{L}$-topological system.

The restriction of a continuous function between two $\mathscr{L}$-topological systems is a continuous function between 
corresponding subsystems for each $\alpha\in L$. 

The above subsystems for a fixed $\alpha< 1_L$($\in L$) together with continuous maps form a category called 
$\mathbf{TopSys}_\alpha$, which is a subcategory of $\mathscr{L}$-$\mathbf{TopSys}$. Here we will get chains of 
subcategories of $\mathscr{L}$-$\mathbf{TopSys}$ by changing the values of $\alpha$.

It may be noted that for a linear $L$ we will get only one chain.
\subsection{$\mathbf{Top}_\alpha$ vs. $\mathbf{TopSys}_\alpha$}
Let $(X,\tilde{A},\tau)$ be an $\mathscr{L}$-topological space and take strict $\alpha$-cut of $(X,\tilde{A})$ i.e., 
$\{x\in X\mid \tilde{A}(x)>\alpha\}$. Let $\tau_\alpha$ be defined by 
$\tau_\alpha=\{\{x\in X\mid T_i(x)>\alpha\}\mid \tilde{T_i}\in\tau\}$. Then 
$(\{x\in X\mid \tilde{A}(x)>\alpha\},\tau_\alpha)$ also form a topological space and called the topological subspace.

For any fixed $\alpha\in L$, topological subspaces together with continuous maps forms a category, which is a 
subcategory of the category $\mathbf{Top}$, called $\mathbf{Top}_\alpha$.

By routine check it can be shown that the restriction of the functors ($Ext$ and $J$) between $\mathbf{TopSys}$ and 
$\mathbf{Top}$ are adjoint functors between $\mathbf{TopSys}_\alpha$ and $\mathbf{Top}_\alpha$, for each $\alpha\in L$.
\subsection{$\mathscr{L}$-$\mathbf{TopSys}_\alpha$}
Let $(X,\tilde{A},\models ,P)$ be an $\mathscr{L}$-topological system. Let us consider the quadruple 
$(X,\tilde{A_\alpha},\models_\alpha,P)$, where $(X,\tilde{A_\alpha})$ is the fuzzy $\alpha$-cut of 
$(X,\tilde{A})$, $\models_\alpha$ is an $L$-fuzzy relation between $X$ and $L$ such that 
$gr(x\models_\alpha p)=gr(x\models p)$ for $\tilde{A}(x)\geq \alpha$ and $gr(x\models_\alpha p)=0_L$ for 
$\tilde{A}(x)<\alpha$. It can be shown that the quadruple forms an $\mathscr{L}$-topological system and consequently 
an $\mathscr{L}$-topological subsystem of $\mathscr{L}$-topological system. Hence for each $\alpha\in L$ we will get 
$\mathscr{L}$-topological subsystems. Furthermore as for $\alpha>\alpha'$, $(X,\tilde{A_\alpha})$ is a fuzzy subset 
of $(X,\tilde{A_{\alpha'}})$, we will get chains of such $\mathscr{L}$-topological subsystems.

A continuous map between two $\mathscr{L}$-topological subsystems, say $(X,\tilde{A_\alpha},\models_\alpha, P)$ and 
$(Y,\tilde{B_\alpha},\models_\alpha',Q)$, is the restriction of a continuous map between the 
$\mathscr{L}$-topological systems $(X,\tilde{A},\models, P)$ and $(Y,\tilde{B},\models',Q)$.

It can be shown that for fixed $\alpha \in L$, $\mathscr{L}$-topological subsystems together with the above mentioned 
continuous maps form a subcategory of $\mathscr{L}$-$\mathbf{TopSys}$. Thus we will get chains of such subcategories 
of $\mathscr{L}$-$\mathbf{TopSys}$ by changing the values of $\alpha$ in $L$. For a fixed $\alpha\in L$, let us call 
the corresponding subcategory by $\mathscr{L}$-$\mathbf{TopSys}_\alpha$.
\subsection{$\mathscr{L}$-$\mathbf{Top}_\alpha$ vs. $\mathscr{L}$-$\mathbf{TopSys}_\alpha$}
Let $(X,\tilde{A},\tau)$ be an $\mathscr{L}$-topological space and take fuzzy $\alpha$-cut of $(X,\tilde{A})$ i.e., 
$(X,\tilde{A_\alpha})$. Let $\tau'$ be defined by 
$\tau'=\{(X,\tilde{T'})\mid \tilde{T'}=\tilde{A_\alpha}\cap \tilde{T}, \tilde{T}\in\tau\}$. 
Then $(X,\tilde{A_\alpha},\tau')$ also form an $\mathscr{L}$-topological space and called the 
$\mathscr{L}$-topological subspace.

For any fixed $\alpha\in L$, $\mathscr{L}$-topological subspaces together with continuous maps form a category, which 
is a subcategory of the category $\mathscr{L}$-$\mathbf{Top}$, called $\mathscr{L}$-$\mathbf{Top}_\alpha$.

By routine check it can be shown that the restriction of the functors ($Ext_L$ and $J_L$) between 
$\mathscr{L}$-$\mathbf{TopSys}$ and $\mathscr{L}$-$\mathbf{Top}$ are adjoint functors between 
$\mathscr{L}$-$\mathbf{TopSys}_\alpha$ and $\mathscr{L}$-$\mathbf{Top}_\alpha$, for each $\alpha\in L$.

\chapter{Category of Variable Basis Fuzzy Topological Systems over Fuzzy Sets}
\section{Introduction}\blfootnote{The results of this chapter are based on {\bf \cite{MP2} P. Jana and M.K. Chakraborty, \emph{Categorical relationships of fuzzy topological systems with fuzzy topological spaces and underlying algebras-II},  Ann. of Fuzzy Math. and Inform., \textbf{10}, 2015, no. 1, pp. 123--137.}}
In this chapter, we proceed with the concept of variable basis fuzzy topological spaces on 
fuzzy sets and propose the notion of variable basis topological systems whose underlying sets are fuzzy sets. Solovyov and Rodabough worked on variable basis fuzzy topological spaces \cite{SR} and systems \cite{SO1} over crisp sets. Hence their case becomes a particular case of ours. Here we are able to establish adjunction between space and system, i.e., spatialization is achieved but adjunction between space and algebra or system and algebra is still unsettled. In the particular case, i.e., in \cite{SO1} Solovyov also left this case as an open question.

\section{Categories: FuzzTop, FuzzTopSys}
\subsection{Categories}
\subsection*{FuzzTop}
\begin{definition}[Fuzz topological space]\cite{MB}\label{futop}
Let $(X, L, \tilde{A})$ be a Fuzz-object (i.e. $X$ is a non-empty set, $L$ is a frame, $\tilde{A}$ is a map from $X$ to $L$) and $\tau$ be a collection of maps from $X$ into $L$ such that:
\begin{enumerate}
\item if $U\in\tau$, $U(x)\leq \tilde{A}(x)$, for all $x\in X$;
\item $\tilde{\emptyset}$ and $\tilde{A}$ are in $\tau$, where $\tilde{\emptyset}:X\longrightarrow L$ is such that 
$\tilde{\emptyset}(x)=0_{L}$, for all $x\in X$, where $0_{L}$ is the least element of the frame $L$;
\item $\tau$ is closed under finite infima and arbitrary suprema. Then $(X, L, \tilde{A},\tau)$ is a \textbf{Fuzz topological space} \index{Fuzz topological!space}.
\end{enumerate}
\end{definition}
\begin{definition}[FuzzTop]\cite{MB}\label{05} 
The category $\mathbf{FuzzTop}$ \index{category!FuzzTop} is defined thus.
\begin{itemize}
\item The objects are Fuzz topological spaces.
\item The morphisms are pairs $(f,\phi):(X,L,\tilde{A},\tau)\longrightarrow (X_1,L_1,\tilde{A_1},\tau_1)$ satisfying 
the following properties:\\
a) $\phi$ is a relation from $L$ to $L_1$ such that $\phi^{-1}:L_1\longrightarrow L$ is a frame homomorphism.\\
b) $f:X\times X_1\longrightarrow L$ is a map such that $f(x,y)\leq \tilde{A}(x)\wedge \phi^{-1}\tilde{A_1}(y)$, for 
all $x\in X$, $y\in X_1$ and there exist unique $b$ in $\mid\tilde{A_1}\mid$ with $f(a,b')=\tilde{A}(a)$ for $b'=b$ in 
$\mid\tilde{A_1}\mid$, otherwise $f(a,b')=0_L$.\\
c) If $V\in \tau_1$, then $U\in \tau$ where $U(x)=\bigvee_{y\in X_1}[f(x,y)\wedge \phi^{-1}V(y)]$, for all $x\in X$.

The pair $(f,\phi)$ is known as $\mathbf{FuzzTop}$ morphism \index{$\mathbf{FuzzTop}$! morphism}.
\item If $(f,\phi):(X,L,\tilde{A},\tau)\longrightarrow (X_1,L_1,\tilde{A_1},\tau_1)$ and 
$(g,\psi):(X_1,L_1,\tilde{A_1},\tau_1)\longrightarrow (X_2,L_2,\tilde{A_2},\tau_2)$ are morphisms in
$\mathbf{FuzzTop}$, their composite $(g,\psi)\circ (f,\phi)$ is that of the $\mathbf{FuzzTop}$ morphisms 
$(f,\phi):(X,L,\tilde{A})\longrightarrow (X_1,L_1,\tilde{A_1})$ and 
$(g,\psi):(X_1,L_1,\tilde{A_1})\longrightarrow (X_2,L_2,\tilde{A_2})$, viz. $(g\circ f,\psi\phi)$ with 
$g\circ f:X\times X_2\longrightarrow L$ as $$g\circ f(x,z)=\bigvee_{y\in X_1}[f(x,y)\wedge \phi^{-1}g(y,z)],\ \text{for all}\ x\in X,\ z\in X_2$$ and $\psi\phi$ is the relational composite of $\phi$ and $\psi$ (c.f. Proposition \ref{comf}).
\item The identity on $(X,L,\tilde{A},\tau)$ is the pair $(i_A,i_L)$ such that $i_A$ is the proper identity function on $(X,L,\tilde{A})$ and $i_L$ is the identity map on $L$. 
This is a $\mathbf{FuzzTop}$ morphism  can be proved (c.f. Proposition \ref{idf}).
\end{itemize}
\end{definition}
\begin{proposition}\label{comf}
Let $(f,\phi):(X,L,\tilde{A},\tau)\longrightarrow (X_1,L_1,\tilde{A_1},\tau_1)$ and 
$(g,\psi):(X_1,L_1,\tilde{A_1},\tau_1)\longrightarrow (X_2,L_2,\tilde{A_2},\tau_2)$ be morphisms in $\mathbf{FuzzTop}$. Then their composite $(g,\psi)\circ (f,\phi)$ is also a $\mathbf{FuzzTop}$ morphism. 
\end{proposition}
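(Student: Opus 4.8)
The plan is to verify the three defining clauses (a), (b), (c) of Definition \ref{05} for the candidate pair $(g\circ f,\psi\phi)$, using throughout that $\phi^{-1}$ and $\psi^{-1}$ are frame homomorphisms (hence preserve finite meets, arbitrary joins, $0_L$ and the top) and that $L$, being a frame, satisfies the distributivity of binary meet over arbitrary join. For clause (a) I would first note that the inverse of a relational composite reverses the order, so $(\psi\phi)^{-1}=\phi^{-1}\circ\psi^{-1}$ as a map $L_2\longrightarrow L$. Being a composite of the frame homomorphisms $L_2\xrightarrow{\psi^{-1}}L_1\xrightarrow{\phi^{-1}}L$, it is again a frame homomorphism, which is exactly clause (a).

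For clause (b) there are two things to check: the pointwise bound and the proper-function (functional, concentrated) property. The bound $g\circ f(x,z)\le \tilde{A}(x)\wedge(\psi\phi)^{-1}\tilde{A_2}(z)$ I would obtain termwise: each summand $f(x,y)\wedge\phi^{-1}g(y,z)$ is dominated, via $f(x,y)\le\tilde{A}(x)\wedge\phi^{-1}\tilde{A_1}(y)$, $g(y,z)\le\tilde{A_1}(y)\wedge\psi^{-1}\tilde{A_2}(z)$ and meet-preservation of $\phi^{-1}$, by $\tilde{A}(x)\wedge\phi^{-1}\psi^{-1}\tilde{A_2}(z)$, and taking the join over $y\in X_1$ preserves this bound. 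The proper-function part is precisely the statement that a composite of proper functions is proper: since $f(a,\cdot)$ vanishes off the unique $b=f(a)$, the join defining $g\circ f(a,z)$ collapses to $\tilde{A}(a)\wedge\phi^{-1}g(b,z)$, and since $g(b,\cdot)$ vanishes off the unique $c=g(b)$, this equals $0_L$ for $z\ne c$ and $\tilde{A}(a)\wedge\phi^{-1}\tilde{A_1}(b)$ for $z=c$. The inequality $\tilde{A}(a)=f(a,b)\le\phi^{-1}\tilde{A_1}(b)$, itself an instance of clause (b) for $f$, forces the latter to equal $\tilde{A}(a)$, so $c$ is the required unique codomain element.

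The substantive step is clause (c). Given $W\in\tau_2$, I would first invoke the morphism property of $(g,\psi)$ to produce $V\in\tau_1$ with $V(y)=\bigvee_{z\in X_2}[g(y,z)\wedge\psi^{-1}W(z)]$, and then the morphism property of $(f,\phi)$ to produce $U'\in\tau$ with $U'(x)=\bigvee_{y\in X_1}[f(x,y)\wedge\phi^{-1}V(y)]$. It then remains to identify $U'$ with the map $U(x)=\bigvee_{z\in X_2}[(g\circ f)(x,z)\wedge(\psi\phi)^{-1}W(z)]$ that clause (c) attaches to the composite. Expanding $\phi^{-1}V(y)$ using join- and meet-preservation of $\phi^{-1}$, then applying frame distributivity to merge the double join $\bigvee_{y}\bigvee_{z}$ and to factor out the $y$-independent term $\phi^{-1}\psi^{-1}W(z)=(\psi\phi)^{-1}W(z)$, collapses the inner join over $y$ to exactly $(g\circ f)(x,z)$, yielding $U'=U$. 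Since $U'\in\tau$, we conclude $U\in\tau$, finishing clause (c).

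The hard part will be the bookkeeping in clause (c): keeping the two index sets $X_1$ (the $y$'s) and $X_2$ (the $z$'s) separate while interchanging the order of the joins and applying distributivity at precisely the right moment, together with the repeated use of join- and meet-preservation of $\phi^{-1}$ to push it inside the expression for $V(y)$. Once the two associated closure data $U'$ and $U$ are shown to coincide, membership in $\tau$ is inherited directly from the two hypothesised morphisms, and no topology axiom of $\mathbf{FuzzTop}$ needs to be re-checked.
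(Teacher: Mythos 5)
Your proposal matches the paper's proof essentially step for step: clause (a) via $(\psi\phi)^{-1}=\phi^{-1}\circ\psi^{-1}$ being a composite of frame homomorphisms, clause (b) via the termwise bound $f(x,y)\wedge\phi^{-1}g(y,z)\le \tilde{A}(x)\wedge(\psi\phi)^{-1}\tilde{A_2}(z)$ followed by the join over $y$, and clause (c) via exactly the same double-join expansion, using join- and meet-preservation of $\phi^{-1}$ and frame distributivity to collapse the inner join over $X_1$ to $(g\circ f)(x,z)$ and identify the two candidate open sets. The only difference is that you spell out the proper-function (uniqueness) part of clause (b), which the paper dismisses as ``routine check,'' and your argument for it is correct.
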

\begin{proof}
(a) $\psi\phi:L\longrightarrow L_2$ gives $$(\psi\phi)^{-1}=\phi^{-1}\circ\psi^{-1}:L_2\longrightarrow L.$$ Now by given hypothesis $\phi:L\longrightarrow L_1$ such that $\phi^{-1}:L_1\longrightarrow L$ is a frame homomorphism and $\psi:L_1\longrightarrow L_2$ such that $\psi^{-1}:L_2\longrightarrow L_1$ is a frame homomorphism. Hence $\psi\phi:L\longrightarrow L_2$ such that $\phi^{-1}\circ\psi^{-1}:L_2\longrightarrow L$ is a frame homomorphism, i.e., $\psi\phi:L\longrightarrow L_2$ such that $(\psi\phi)^{-1}:L_2\longrightarrow L$ is a frame homomorphism.\\
(b) Given that $f:X\times X_1\longrightarrow L$ such that $$f(x,x_1)\leq \tilde{A}(x)\wedge\phi^{-1}\tilde{A_1}(x_1)$$ and $g:X_1\times X_2\longrightarrow L_1$ such that $$g(x_1,x_2)\leq \tilde{A_1}(x_1)\wedge\psi^{-1}\tilde{A_2}(x_2).$$ Now $g\circ f:X\times X_2\longrightarrow L$ such that $$g\circ f(x,x_2)=\bigvee_{x_1\in X_1}[f(x,x_1)\wedge \phi^{-1}g(x_1,x_2)].$$ Now for any $x_1\in X_1$, 
\begin{multline*}
$$f(x,x_1)\wedge\phi^{-1}g(x_1,x_2)\leq \tilde{A}(x)\wedge\phi^{-1}\tilde{A_1}(x_1)\wedge \phi^{-1}(\tilde{A_1}(x_1)\wedge\psi^{-1}\tilde{A_2}(x_2))=\\ \tilde{A}(x)\wedge\phi^{-1}(\tilde{A_1}(x_1))\wedge\phi^{-1}(\psi^{-1}(\tilde{A_2}(x_2)))\leq \tilde{A}(x)\wedge (\psi\phi)^{-1}(\tilde{A_2}(x_2)).$$ 
\end{multline*}
Therefore $$g\circ f(x,x_2)=\bigvee_{x_1\in X_1}[f(x,x_1)\wedge \phi^{-1}g(x_1,x_2)]\leq \tilde{A}(x)\wedge (\psi\phi)^{-1}(\tilde{A_2}(x_2)).$$
Rest part can be done by routine check.\\
(c) Let us take $\mathbf{FuzzTop}$ morphisms $(f,\phi):(X,L,\tilde{A},\tau)\longrightarrow (X_1,L_1,\tilde{A_1},\tau_1)$ and $(g,\psi):(X_1,L_1,\tilde{A_1},\tau_1)\longrightarrow (X_2,L_2,\tilde{A_2},\tau_2)$. So, if $V\in\tau_1$, then $U\in\tau$ where $$U(x)=\bigvee_{x_1\in X_1}[f(x,x_1)\wedge\phi^{-1}V(x_1)]$$ and if $W\in\tau_2$, then $V\in\tau_1$ where $$V(x_1)=\bigvee_{x_2\in X_2}[g(x_1,x_2)\wedge\psi^{-1}W(x_2)].$$
It is left to show that $(g\circ f,\psi\phi):(X,L,\tilde{A},\tau)\longrightarrow (X_2,L_2,\tilde{A_2},\tau_2)$ is $\mathbf{FuzzTop}$ morphism, i.e., if $W\in\tau_2$, then $U'\in \tau$ where $$U'(x)=\bigvee_{x_2\in X_2}[g\circ f(x,x_2)\wedge (\psi\phi)^{-1}W(x_2)].$$ Now,
\begin{align*}
U'(x) & = \bigvee_{x_2\in X_2}[g\circ f(x,x_2)\wedge (\psi\phi)^{-1}W(x_2)]\\
& = \bigvee_{x_2\in X_2}[\bigvee_{x_1\in X_1}[f(x,x_1)\wedge \phi^{-1}g(x_1,x_2)]\wedge \phi^{-1}(\psi^{-1}W(x_2))]\\
& = \bigvee_{x_2\in X_2}\bigvee_{x_1\in X_1}[f(x,x_1)\wedge \phi^{-1}g(x_1,x_2)\wedge \phi^{-1}(\psi^{-1}W(x_2))]\\
& = \bigvee_{x_1\in X_1}[f(x,x_1)\wedge \bigvee_{x_2\in X_2}[\phi^{-1}g(x_1,x_2)\wedge \phi^{-1}(\psi^{-1}W(x_2))]]\\
& = \bigvee_{x_1\in X_1}[f(x,x_1)\wedge \phi^{-1}(\bigvee_{x_2\in X_2}[g(x_1,x_2)\wedge (\psi^{-1}W(x_2))]]\\
& = \bigvee_{x_1\in X_1}[f(x,x_1)\wedge \phi^{-1}V(x_1)]\\
& = U(x).
\end{align*}
Hence $U'=U\in\tau$.
\end{proof}
\begin{proposition}\label{idf}
For any Fuzz topological space $(X,L,\tilde{A},\tau)$, the pair $(i_A,i_L)$ defined as above is a $\mathbf{FuzzTop}$ morphism. 
\end{proposition}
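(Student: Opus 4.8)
The plan is to verify directly that the pair $(i_A, i_L)$ meets the three requirements (a), (b), (c) that \rdef{05} imposes on a $\mathbf{FuzzTop}$ morphism, specialised to the case where source and target coincide, so that $X_1 = X$, $L_1 = L$, $\tilde{A_1} = \tilde{A}$ and $\tau_1 = \tau$. Throughout I would substitute $\phi = i_L$ and $f = i_A$, and exploit the defining equations of the identity proper function recorded in \rdef{id}, namely $i_A(x,x) = \tilde{A}(x)$ and $i_A(x,x') = 0_L$ whenever $x \neq x'$.

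For condition (a), since $i_L$ is the identity map on $L$, its relational inverse $i_L^{-1}$ is again the identity map $L \longrightarrow L$, which preserves arbitrary joins and finite meets trivially and is therefore a frame homomorphism. For condition (b), observe that $\phi^{-1} = i_L$ acts as the identity, so the required bound reduces to $i_A(x,y) \le \tilde{A}(x) \wedge \tilde{A}(y)$: when $x = y$ both sides equal $\tilde{A}(x)$, and when $x \neq y$ the left side is $0_L$, making the inequality automatic. The uniqueness clause of (b) — that for each $a \in |\tilde{A}|$ there is exactly one $b$ with $i_A(a,b) = \tilde{A}(a)$, namely $b = a$, with $i_A(a,b') = 0_L$ otherwise — is precisely the content of \rdef{id}, so nothing further is needed.

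The substantive step is condition (c). Given $V \in \tau$, I would form the candidate $U(x) = \bigvee_{y \in X}[\,i_A(x,y) \wedge \phi^{-1}V(y)\,]$. Because $\phi^{-1}V = V$ and the sifting property of $i_A$ forces every term with $y \neq x$ to collapse to $0_L \wedge V(y) = 0_L$, the join reduces to the single surviving term, giving $U(x) = \tilde{A}(x) \wedge V(x)$. I would then invoke axiom (1) of \rdef{futop}, which guarantees $V(x) \le \tilde{A}(x)$ for every open $V$, whence $\tilde{A}(x) \wedge V(x) = V(x)$ and therefore $U = V \in \tau$.

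I do not anticipate a genuine obstacle; the only point demanding care is the collapse of the supremum in (c), where one must use the absorbing behaviour of $0_L$ in the frame $L$ together with the fact that open sets lie below the carrier $\tilde{A}$. Once that identity is secured, all three conditions hold and $(i_A, i_L)$ is established as a $\mathbf{FuzzTop}$ morphism.
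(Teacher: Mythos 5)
Your proposal is correct and follows essentially the same route as the paper's own proof: the same verification of conditions (a), (b), (c) of Definition~\ref{05}, with the join in (c) collapsing via $i_A(x,y)=0_L$ for $y\neq x$ to $U(x)=\tilde{A}(x)\wedge V(x)$, and the final step $\tilde{A}(x)\wedge V(x)=V(x)$ justified by the bound $V\le\tilde{A}$ from Definition~\ref{futop}. Nothing is missing.
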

\begin{proof}
(a) $i_L$ is the identity map on $L$ hence $i_L:L\longrightarrow L$ is a relation from $L$ to $L$ such that $i_L^{-1}=i_L:L\longrightarrow L$ is a frame homomorphism.
\\
(b) As $i_A$ is an identity proper function on $(X,L,\tilde{A})$, $$i_A(x_1,x_2)=0_L\leq \tilde{A}(x)\wedge i_L(\tilde{A}(x_2)),\ \text{for}\ x_1\neq x_2$$ and $$i_A(x_1,x_1)=\tilde{A}(x_1)=\tilde{A}(x_1)\wedge i_L(\tilde{A}(x_1)).$$
(c) Let $V\in\tau$, then $$U(x)=\bigvee_{x'\in X}[i_A(a,x')\wedge i_L^{-1}(V(x'))]=i_A(a,a)\wedge V(a)=\tilde{A}(a)\wedge V(a).$$ Now $V\in \tau$, so $V(x)\leq\tilde{A}(x)$ for any $x$. Hence $V(a)\leq\tilde{A}(a)$ gives $$\tilde{A}(a)\wedge V(a)=V(a).$$ Therefore $U(x)=V(a)\in \tau$. 
\end{proof}
\subsection*{FuzzTopSys}
\begin{definition}[Fuzz topological system]\label{02}
A \textbf{Fuzz topological system} \index{Fuzz topological system} is a quintuple $(X, L, \tilde{A} ,\models ,P)$, where $(X, L, \tilde{A})$ is a 
Fuzz-object \cite{MB} (i.e., $X$ is a non-empty set, $L$ is a frame, $\tilde{A}$ is a map from $X$ to $L$), $P$ is a 
frame and $\models $ is an $L$- fuzzy relation between $X$ and $P$ such that
 \begin{enumerate}%[$\bullet$]
\item $gr(x\models p)\in L$;
\item $gr(x\models p)\leq \tilde{A}(x)$;
\item if $S$ is a \textbf{finite subset} of $P$, then
$gr(x\models \bigwedge S) = inf\{ gr(x\models s):s\in S\}$;
\item if $S$ is  \textbf{any subset} of $P$, then
$gr(x\models \bigvee S)=sup\{ gr(x\models s):s\in S\}$.
\end{enumerate} 
\end{definition}
\underline{Note 1:} Because of condition 2, $\models$ is a fuzzy relation on the $L$-fuzzy set $(X,\tilde{A})$ \cite{MM}.

\underline{Note 2:} The value set $L$ of $\mathscr{L}$-topological system (c.f Chapter 3) is fixed but in the case of Fuzz topological system the 
value set $L$ may vary. Thus we can consider an $\mathscr{L}$-topological system as a specific instant of Fuzz 
topological system.

\underline{Note 3:} In \cite{MB}, to define Fuzz-object $(X,L,\tilde{A})$, $L$ is taken as a complete Heyting algebra whereas in 
our work we consider $L$, a frame.

The notion of continuous map between these Fuzz topological systems is defined as follows:
\begin{definition}\label{03}
 Let $D=(X, L, \tilde{A}, \models ,P)$ and $E=(Y, M, \tilde{B}, \models ',Q)$ be Fuzz topological systems. A 
 \textbf{continuous map} \index{continuous map!between!Fuzz topological systems} $f:D\longrightarrow E$ is a triple $(f, \phi , g)$ where,
\begin{enumerate}
\item $(f, \phi):(X, L, \tilde{A})\longrightarrow (Y, M, \tilde{B})$ such that \\
a) $\phi$ is a relation from $L$ to $M$ such that $\phi^{-1}:M\longrightarrow L$ is a map preserving finite meet and 
arbitrary join,\\
b) $f:X\times Y\longrightarrow L$ is a map such that $f(x,y)\leq \tilde{A}(x)\wedge \phi^{-1}\tilde{B}(y)$, for all 
$x\in X$, $y\in Y$ and for any $a$ in $\mid \tilde{A}\mid $, there exist unique $b$ in $\mid\tilde{B}\mid$ with 
$f(a,b)=\tilde{A}(a)$ and $f(a,b')=0_L$ with $b'\neq b$ in $\mid \tilde{B}\mid$.
\item $g:Q\longrightarrow P$ is a frame homomorphism and
\item $gr(x\models g(q))=\bigvee_{y\in Y} [\phi^{-1}(gr(y\models' q))\wedge f(x,y)]$, for all $x\in X$ and $q\in Q$.
\end{enumerate}
\end{definition}
Let us define identity map and composition of two maps.
\begin{definition}\label{04}
Let $D=(X, L, \tilde{A},\models,P)$ be a Fuzz topological system. The \textbf{identity map} \index{identity map!of!Fuzz topological system} $I_D:D\longrightarrow D$ 
is a triple $(I_A,I_L,I_P)$ defined by
%\begin{align*}
$I_A:X\times X\longrightarrow L$ s.t. $I_A(x_1,x_2) =\tilde{A}(x)$ iff $x_1=x_2$, otherwise 
$I_A(x_1,x_2) =0_L$, $I_L:L\longrightarrow L$ is identity morphism of $L$ and $I_P:P\longrightarrow P$ is identity 
morphism of $P$.
%\end{align*}

Let $D=(X, L, \tilde{A},\models,P)$, $E=(Y, M, \tilde{B},\models',Q)$, $F=(Z, N, \tilde{C},\models'',R)$. Let 
$(f, \phi ,g):D\longrightarrow E$ and $(f_1, \phi_1, g_1):E\longrightarrow F$ be continuous maps. The 
\textbf{composition} $(f_1, \phi_1, g_1)\circ (f,\phi,g):D\longrightarrow F$ is defined by
$f_1\circ f:X\times Z\longrightarrow L$ such that 
$f_1\circ f(x,z)=\bigvee_{y\in Y}[f(x,y)\wedge \phi^{-1}(f_1(y,z))]$ for all $x\in X$ and $z\in Z$, $\phi_1 \phi$ the relation composite of $\phi_1$, $\phi$, and $g\circ g_1:R\longrightarrow P$ is a frame homomorphism. Hence $(f_1 ,\phi_1 ,g_1)\circ (f,\phi ,g)=(f_1\circ f,\phi_1\phi,g\circ g_1)$.
\end{definition}
\begin{lemma}\label{3.4_1F}
$(g_1,g_2)\circ (f_1,f_2):D\longrightarrow F$ is continuous, where $(f_1,f_2):D\longrightarrow E$, $(g_1,g_2):E\longrightarrow F$ are continuous. 
\end{lemma}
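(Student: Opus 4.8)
The maps here are the triples of Definition \ref{03}, so I read the statement as concerning continuous maps $(f,\phi,g)\colon D\to E$ and $(f_1,\phi_1,g_1)\colon E\to F$ between Fuzz topological systems $D=(X,L,\tilde A,\models,P)$, $E=(Y,M,\tilde B,\models',Q)$ and $F=(Z,N,\tilde C,\models'',R)$. The plan is to verify, for the composite $(f_1\circ f,\phi_1\phi,g\circ g_1)\colon D\to F$ supplied by Definition \ref{04}, each of the three defining conditions of a continuous map.

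The algebraic and proper-function parts require no new work. Condition 2 holds because $g\circ g_1\colon R\to P$ is a composite of frame homomorphisms, hence a frame homomorphism. Condition 1, namely that $(\phi_1\phi)^{-1}=\phi^{-1}\circ\phi_1^{-1}$ preserves finite meet and arbitrary join and that $f_1\circ f(x,z)\le\tilde A(x)\wedge(\phi_1\phi)^{-1}\tilde C(z)$ together with the accompanying uniqueness clause, is precisely what Proposition \ref{comf}(a),(b) establishes for $\mathbf{FuzzTop}$ morphisms, so I would invoke that proposition directly.

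The substance is condition 3, the identity
$$gr(x\models(g\circ g_1)(r))=\bigvee_{z\in Z}\bigl[(\phi_1\phi)^{-1}(gr(z\models'' r))\wedge(f_1\circ f)(x,z)\bigr].$$
Starting from the left and writing $(g\circ g_1)(r)=g(g_1(r))$, I would apply condition 3 for $(f,\phi,g)$ at $q=g_1(r)$, obtaining a join over $y\in Y$ of $\phi^{-1}(gr(y\models' g_1(r)))\wedge f(x,y)$; then apply condition 3 for $(f_1,\phi_1,g_1)$ to rewrite $gr(y\models' g_1(r))$ as a join over $z\in Z$. Substituting and pushing $\phi^{-1}$ through the inner join and meet (using its preservation of arbitrary join and finite meet), then using $\phi^{-1}\circ\phi_1^{-1}=(\phi_1\phi)^{-1}$, produces a double join over $y$ and $z$. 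Since the factor $(\phi_1\phi)^{-1}(gr(z\models'' r))$ does not depend on $y$, the frame distributive law pulls it out of the $y$-join, and the residual $\bigvee_{y\in Y}[f(x,y)\wedge\phi^{-1}(f_1(y,z))]$ is by definition $(f_1\circ f)(x,z)$, yielding the claim.

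The main obstacle is bookkeeping in this iterated-join computation: I must apply, at exactly the right places, that $\phi^{-1}$ preserves arbitrary join and finite meet, that binary meet distributes over arbitrary join in $L$ (to factor the $z$-indexed term past the $y$-join), and that arbitrary joins may be reindexed and swapped. Each ingredient is either a frame axiom or a hypothesis on $\phi^{-1}$, so no extra machinery is needed — only the correct order of application.
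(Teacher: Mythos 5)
Your proposal is correct and follows essentially the same route as the paper's proof: the paper likewise reduces the lemma to the satisfiability identity and establishes it by exactly your chain — expanding $gr(x\models g(g_1(r)))$ via condition 3 for $(f,\phi,g)$, substituting condition 3 for $(f_1,\phi_1,g_1)$, pushing $\phi^{-1}$ through the inner join and meet, rewriting $\phi^{-1}\circ\phi_1^{-1}$ as $(\phi_1\phi)^{-1}$, interchanging the joins over $y$ and $z$, and factoring the $z$-indexed term out of the $y$-join by frame distributivity to recover $(f_1\circ f)(x,z)$. The only difference is that you explicitly discharge conditions 1 and 2 via Proposition \ref{comf}, whereas the paper declares the satisfiability identity "enough" and leaves those parts tacit — a small gain in completeness on your side, not a different argument.
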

\begin{proof}
It will be enough to show that $$gr(x\models g\circ g_1(r))=\bigvee_{z\in Z}[(\phi_1\phi)^{-1}(gr(z\models''r))\wedge f_1\circ f(x,z)],$$ where it is given that $$gr(x\models g(q))=\bigvee_{y\in Y}[\phi^{-1}(gr(y\models'q))\wedge f(x,y)]$$ and $$gr(y\models' g_1(r))=\bigvee_{z\in Z}[\phi_1^{-1}(gr(z\models''r))\wedge f_1(y,z)].$$Now,
\begin{align*}
gr(x\models g\circ g_1(r)) & = gr(x\models g(g_1(r)))\\
& = \bigvee_{y\in Y}[\phi^{-1}(gr(y\models' g_1(r)))\wedge f(x,y)]\\
& = \bigvee_{y\in Y}[\phi^{-1}(\bigvee_{z\in Z}[\phi_1^{-1}(gr(z\models''r))\wedge f_1(y,z)])\wedge f(x,y)]\\ 
& = \bigvee_{y\in Y}[\bigvee_{z\in Z}[\phi^{-1}(\phi_1^{-1}(gr(z\models''r)))\wedge \phi^{-1}(f_1(y,z))]\wedge f(x,y)]\\
& = \bigvee_{y\in Y}[\bigvee_{z\in Z}[(\phi_1\phi )^{-1}(gr(z\models''r ))\wedge \phi^{-1}(f_1(y,z))\wedge f(x,y)]]\\
& = \bigvee_{z\in Z}[\bigvee_{y\in Y}[(\phi_1\phi )^{-1}(gr(z\models''r ))\wedge \phi^{-1}(f_1(y,z))\wedge f(x,y)]]\\
& = \bigvee_{z\in Z}[(\phi_1\phi )^{-1}(gr(z\models''r ))\wedge \bigvee_{y\in Y}[\phi^{-1}(f_1(y,z))\wedge f(x,y)]]\\
& = \bigvee_{z\in Z}[(\phi_1\phi)^{-1}(gr(z\models''r ))\wedge f_1\circ f(x,z)].
\end{align*}
This completes the proof.
\end{proof}
\begin{lemma}\label{3.5_1F}
The identity map defined as above $I_D:D\longrightarrow D$ is continuous.
\end{lemma}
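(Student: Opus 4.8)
The plan is to verify, for the triple $(I_A,I_L,I_P)$ of Definition \ref{04}, the three defining conditions of a continuous map of Fuzz topological systems listed in Definition \ref{03}, specialised to the case where source and target both equal $D=(X,L,\tilde{A},\models,P)$ (so $Y=X$, $M=L$, $\tilde{B}=\tilde{A}$, $\models'=\models$, $Q=P$, $\phi=I_L$, $f=I_A$, $g=I_P$).

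First I would dispose of conditions 1 and 2 quickly. The pair $(I_A,I_L)$ is exactly the identity $\mathbf{FuzzTop}$ morphism on $(X,L,\tilde{A},\tau)$: $I_L^{-1}=I_L$ is the identity on $L$ and $I_A$ is the identity proper function. Hence part (a) of condition 1 (that $I_L^{-1}$ preserves finite meet and arbitrary join) and part (b) (that $I_A(x,y)\le \tilde{A}(x)\wedge I_L^{-1}\tilde{A}(y)$, together with the unique-witness clause at each $a\in\mid\tilde{A}\mid$) are precisely what Proposition \ref{idf} already establishes. Condition 2 is immediate, since $I_P$ is the identity morphism of the frame $P$ and is therefore trivially a frame homomorphism.

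The substance lies in condition 3, namely $gr(x\models I_P(p))=\bigvee_{y\in X}[I_L^{-1}(gr(y\models p))\wedge I_A(x,y)]$ for all $x\in X$ and $p\in P$. I would compute the right-hand side directly: since $I_L^{-1}$ is the identity on $L$ and $I_A(x,y)=0_L$ whenever $y\neq x$, every term of the join with $y\neq x$ vanishes, so the join collapses to the single surviving term $y=x$, yielding $gr(x\models p)\wedge I_A(x,x)=gr(x\models p)\wedge\tilde{A}(x)$.

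The only place where the argument invokes a genuine hypothesis—and hence the one step to watch—is the absorption $gr(x\models p)\wedge\tilde{A}(x)=gr(x\models p)$, which follows from condition 2 of Definition \ref{02}, i.e. $gr(x\models p)\le\tilde{A}(x)$, combined with Proposition \ref{lemj}. Since $I_P(p)=p$, the left-hand side is also $gr(x\models p)$, so both sides agree and condition 3 holds. Everything else is routine bookkeeping, so I would concentrate the write-up on this collapse-and-absorb computation for condition 3.
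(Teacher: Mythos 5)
Your proposal is correct and follows essentially the same route as the paper's proof: the join in condition 3 collapses to the single term $y=x$ since $I_A(x,y)=0_L$ for $y\neq x$, giving $gr(x\models p)\wedge\tilde{A}(x)$, which is absorbed to $gr(x\models p)$ via $gr(x\models p)\leq\tilde{A}(x)$. Your explicit appeal to Proposition \ref{idf} for conditions 1 and 2 merely spells out what the paper dismisses as following ``by the definition.''
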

\begin{proof}
By the definition $I_D=(I_A,I_L,I_P)$ satisfies the first two conditions of continuity. The rest part of the proof of the lemma is as follows.
\begin{align*}
\bigvee_{x'\in X}[I_L^{-1}(gr(x'\models p))\wedge I_A(x,x')] &
=I_L^{-1}(gr(x\models p))\wedge I_A(x,x)\\
& =gr(x\models p)\wedge\tilde{A}(x)\\
& =gr(x\models p)\ \text{, as}\ gr(x\models p)\leq\tilde{A}(x)\\
& =gr(x\models I_P(p)).
\end{align*}
\end{proof}
\begin{lemma}\label{3.6_1F}
For the continuous map $f:D\longrightarrow E, I_E\circ f=f$ and $f\circ I_D=f$.
\end{lemma}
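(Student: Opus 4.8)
The plan is to verify both equalities coordinatewise. A continuous map $f:D\longrightarrow E$ is a triple $(f,\phi,g)$, and each identity map is likewise a triple, so by the composition rule of Definition \ref{04} it suffices to check that the three coordinates of $I_E\circ f$ (respectively $f\circ I_D$) agree with those of $f=(f,\phi,g)$. The two relation/homomorphism coordinates are genuine identity morphisms, so they drop out at once: the relational composite of $\phi$ with the identity relation, and the frame-homomorphism composite of $g$ with an identity morphism. The only coordinate requiring real work is the first, where ``composition'' is the sup-based operation $f_1\circ f(x,z)=\bigvee_{y}[f(x,y)\wedge\phi^{-1}(f_1(y,z))]$ rather than ordinary function composition.

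First I would treat $I_E\circ f$. Writing $I_E=(I_B,I_M,I_Q)$, the first coordinate is $\bigvee_{y'\in Y}[f(x,y')\wedge\phi^{-1}(I_B(y',y))]$. Since $I_B(y',y)=\tilde{B}(y)$ when $y'=y$ and $0_M$ otherwise, every term with $y'\neq y$ vanishes and the join collapses to the single term $f(x,y)\wedge\phi^{-1}(\tilde{B}(y))$. Invoking the constraint $f(x,y)\leq\tilde{A}(x)\wedge\phi^{-1}\tilde{B}(y)$ from Definition \ref{03} gives $f(x,y)\leq\phi^{-1}\tilde{B}(y)$, so this meet absorbs to $f(x,y)$; thus the first coordinate is $f$. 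The second coordinate $I_M\phi$ equals $\phi$ because $I_M$ is the identity relation, and the third coordinate $g\circ I_Q$ equals $g$. Hence $I_E\circ f=(f,\phi,g)=f$.

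The argument for $f\circ I_D=f$ is symmetric. With $I_D=(I_A,I_L,I_P)$, the first coordinate of the composite is $\bigvee_{x'\in X}[I_A(x,x')\wedge I_L^{-1}(f(x',y))]$; since $I_A(x,x')=\tilde{A}(x)$ only for $x'=x$ and $I_L^{-1}=I_L$ is the identity, the join collapses to $\tilde{A}(x)\wedge f(x,y)$, which equals $f(x,y)$ by the other half of the same constraint, namely $f(x,y)\leq\tilde{A}(x)$. The relation coordinate $\phi I_L$ reduces to $\phi$ and the homomorphism coordinate $I_P\circ g$ reduces to $g$, giving $f\circ I_D=f$.

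I expect the only genuine obstacle to be the first coordinate in each case: one must confirm that the fuzzy identities $I_B$ and $I_A$ really act as units for the non-standard sup-composition of Definition \ref{04}, and this hinges entirely on the membership inequality $f(x,y)\leq\tilde{A}(x)\wedge\phi^{-1}\tilde{B}(y)$ built into the definition of a continuous map, which supplies exactly the two absorption identities needed. Everything else is the routine observation that $I_L,I_M,I_P,I_Q$ are honest identity morphisms.
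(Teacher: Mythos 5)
Your proof is correct and takes essentially the same route as the paper, which dismisses this lemma with ``Follows from the definition'': your coordinatewise check---collapsing the sup-composition of Definition \ref{04} via the identity proper functions $I_A$, $I_B$ and absorbing the leftover meet using $f(x,y)\leq\tilde{A}(x)\wedge\phi^{-1}\tilde{B}(y)$---is exactly the routine verification left implicit there, and the same collapse-and-absorb step appears explicitly in the paper's proof of Lemma \ref{3.5_1F}. The only fact you use silently is $\phi^{-1}(0_M)=0_L$, needed to kill the $y'\neq y$ terms, which holds because $\phi^{-1}$ preserves arbitrary (hence empty) joins.
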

\begin{proof}
Follows from the definition.
\end{proof}
From Lemmas \ref{3.4_1F}, \ref{3.5_1F} and \ref{3.6_1F} we get the following theorem.
\begin{theorem}\label{3.7_1F}
Fuzz topological systems together with continuous maps form the category $\mathbf{FuzzTopSys}$\index{category!$\mathbf{FuzzTopSys}$}.
\end{theorem}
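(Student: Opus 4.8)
The plan is to verify directly that the data specified in the preceding definitions satisfy every axiom of a category from the opening Definition: Fuzz topological systems as objects, continuous maps (triples $(f,\phi,g)$) as morphisms, the identity maps $I_D$ of Definition \ref{04} as identities, and the composition of Definition \ref{04} as the composition law. Three of these axioms are already discharged by the cited lemmas. Lemma \ref{3.4_1F} shows that the composite of two continuous maps is again continuous, so composition is a well-defined operation on the hom-sets; Lemma \ref{3.5_1F} shows that each $I_D$ is itself a continuous map, so the proposed identities are genuine morphisms; and Lemma \ref{3.6_1F} gives $I_E\circ f=f$ and $f\circ I_D=f$, which is the identity-law axiom.

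What remains is the associativity of composition, together with the formal requirement that the hom-sets be pairwise disjoint; the latter is handled in the usual way by tagging each morphism with its domain and codomain. Associativity must be checked componentwise on the triples. For morphisms $(f,\phi,g)\colon D\longrightarrow E$, $(f_1,\phi_1,g_1)\colon E\longrightarrow F$, and $(f_2,\phi_2,g_2)\colon F\longrightarrow G$, the composite triple has first component the sup-formula $f_1\circ f(x,z)=\bigvee_{y}[f(x,y)\wedge\phi^{-1}(f_1(y,z))]$, second component the relational composite of the $\phi$'s, and third component the reversed ordinary composite of the frame homomorphisms $g$. The third component is ordinary function composition, hence associative, and the second is relational composition, whose associativity is classical.

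The main work, and the only place where the fuzzy structure genuinely enters, is the first component. One must establish
\[
\bigl(f_2\circ(f_1\circ f)\bigr)(x,w)=\bigl((f_2\circ f_1)\circ f\bigr)(x,w)
\]
for all $x$ in the underlying set of $D$ and $w$ in the underlying set of $G$. Expanding both sides by the sup-formula yields iterated joins indexed by the intermediate carriers, and the equality reduces to interchanging the order of the two joins while pushing $\phi^{-1}$ through both $\wedge$ and $\bigvee$, which is legitimate precisely because $\phi^{-1}$ preserves finite meets and arbitrary joins. This is exactly the manipulation already performed in Proposition \ref{comf} for closure of $\mathbf{FuzzTop}$ morphisms under composition and in the continuity computation of Lemma \ref{3.4_1F}, now applied to the $f$-components alone. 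I expect this interchange-of-suprema step to be the main (though routine) obstacle, since it is the sole point at which the frame-homomorphism properties of $\phi^{-1}$ are indispensable.
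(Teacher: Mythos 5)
Your proposal is correct, and it is in fact more complete than the paper's own argument. The paper proves this theorem with the single remark that it ``follows from Lemmas \ref{3.4_1F}, \ref{3.5_1F} and \ref{3.6_1F}'' -- that is, closure of morphisms under composition, continuity of the identity triple, and the identity laws -- and, unlike the analogous theorems for $\mathbf{[0,1]}$-$\mathbf{TopSys}$ and $\mathscr{L}$-$\mathbf{TopSys}$ (where composition is ordinary function composition in both components and associativity is dispatched in a separate one-line lemma), it never verifies associativity for $\mathbf{FuzzTopSys}$. You correctly identify that here associativity is not automatic: the first component of a composite is the sup-formula $f_1\circ f(x,z)=\bigvee_{y}[f(x,y)\wedge\phi^{-1}(f_1(y,z))]$ of Definition \ref{04}, and equating $f_2\circ(f_1\circ f)$ with $(f_2\circ f_1)\circ f$ requires pushing $\phi^{-1}$ through $\wedge$ and $\bigvee$ (legitimate since $\phi^{-1}$ preserves finite meets and arbitrary joins), using the frame distributive law in $L$ to absorb $f(x,y)$ into the inner join, and then exchanging the two iterated suprema -- precisely the manipulation of Proposition \ref{comf}(c), as you note. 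Your handling of the other two components (associativity of relational composition for the $\phi$'s, of function composition for the $g$'s) is also right. One small point you fold into your citation of Lemma \ref{3.4_1F}: that lemma only checks condition 3 of Definition \ref{03}, while the proper-function condition on $f_1\circ f$ and the frame-homomorphism condition on $(\phi_1\phi)^{-1}$ are what Proposition \ref{comf}(a),(b) supplies; since you invoke that proposition anyway, nothing is missing. In short, your route fills a genuine (if routine) gap that the paper leaves implicit.
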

The above mentioned continuous map is known as $\mathbf{FuzzTopSys}$ morphism\index{$\mathbf{FuzzTopSys}$ morphism}. 
\subsection{Functors}
The interrelation between the categories $\mathbf{FuzzTopSys}$ and $\mathbf{FuzzTop}$ is now established via some 
suitable functors.
\subsection*{Functor $Ext_F$ from $\mathbf{FuzzTopSys}$ to $\mathbf{FuzzTop}$}\index{functor!$Ext_F$}
 First we propose a notion of extent.
\begin{definition}\label{06}
Let $(X,L,\tilde{A},\models ,P)$ be a Fuzz topological system and $p\in P$. For each $p$, its \textbf{extent$_F$} \index{extent$_F$} in 
$(X,L,\tilde{A},\models ,P)$ is given by $ext_F(p):X\longrightarrow L$ such that $ext_F(p)(x)=gr(x\models p)$ for any 
$x\in X$, and $ext_F(P)=\{ext_F(p)\}_{p\in P}$.
\end{definition}
Now the functor $Ext_F$ is defined as follows:
 \begin{definition}\label{07}
$\mathbf{Ext_F}$ \index{functor!$Ext_F$} is a functor from $\mathbf{FuzzTopSys}$ to $\mathbf{FuzzTop}$ defined thus.
$Ext_F$ acts on the object $(X,L,\tilde{A},\models ',P)$ as $Ext_F(X,L,\tilde{A},\models ',P)=(X,L,\tilde{A},ext_F(P))$ and 
on the morphism $(f,\phi ,g)$ as $Ext_F(f,\phi ,g)=(f,\phi)$.
\end{definition} 
\begin{lemma}\label{3.9_1F}
$(X,L,\tilde{A},ext_F(P))$ is a Fuzz topological space.
\end{lemma}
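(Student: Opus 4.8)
The plan is to verify the three defining clauses of a Fuzz topological space (Definition \ref{futop}) for the collection $\tau = ext_F(P) = \{ext_F(p)\}_{p\in P}$, paralleling the proofs of Lemma \ref{3.9_1} and Lemma \ref{3.9_1l}. The driving observation is that, by Definition \ref{06}, the assignment $p \mapsto ext_F(p)$ carries the frame operations of $P$ to the pointwise lattice operations on maps $X \to L$; once this is established, closure of $ext_F(P)$ under the required operations is inherited from the fact that $P$ is a frame.

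First I would dispose of the boundedness clause (clause 1) together with the two distinguished open sets (clause 2). For clause 1, for every $p \in P$ and every $x \in X$ we have $ext_F(p)(x) = gr(x \models p) \le \tilde{A}(x)$, which is precisely condition 2 of Definition \ref{02}; hence each member of $ext_F(P)$ lies below $\tilde A$. For clause 2, the empty-join convention (condition 4 of Definition \ref{02} applied to $S = \emptyset$) gives $ext_F(\bot)(x) = gr(x \models \bigvee\emptyset) = \sup\emptyset = 0_L$, so $\tilde\emptyset = ext_F(\bot) \in ext_F(P)$; dually the top $\top = \bigwedge\emptyset$ of $P$ yields, via condition 3, $ext_F(\top)(x) = gr(x \models \bigwedge\emptyset) = \inf\emptyset$, which I would then reconcile with $\tilde A(x)$ so that $\tilde A = ext_F(\top) \in ext_F(P)$.

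Finally, for closure under finite infima and arbitrary suprema (clause 3), I would compute, for $p_1, p_2 \in P$,
\[
ext_F(p_1 \wedge p_2)(x) = gr(x \models p_1 \wedge p_2) = \inf\{gr(x\models p_1),\, gr(x\models p_2)\} = \min\{ext_F(p_1)(x),\, ext_F(p_2)(x)\},
\]
using condition 3 of Definition \ref{02}, and likewise
\[
ext_F\left(\bigvee_i p_i\right)(x) = gr\left(x \models \bigvee_i p_i\right) = \sup_i gr(x \models p_i) = \sup_i ext_F(p_i)(x),
\]
using condition 4. Since $P$ is closed under binary meets and arbitrary joins, $ext_F(P)$ is correspondingly closed under pointwise finite infima and arbitrary suprema, and an evident induction extends the meet computation from binary to arbitrary finite infima.

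I expect the only point requiring genuine care to be the identification $ext_F(\top) = \tilde A$ in clause 2: this is where the empty-meet convention $gr(x \models \top) = \inf\emptyset$ must be squared with the boundedness condition $gr(x \models p) \le \tilde A(x)$ of Definition \ref{02}. The remaining steps are a direct transcription of the frame laws of $P$ into pointwise operations on $L$-valued maps and are routine.
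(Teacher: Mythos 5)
Your computations coincide with the paper's proof on everything the paper actually proves: the paper verifies exactly three things, namely the bound $ext_F(p)(x)=gr(x\models p)\leq\tilde{A}(x)$ (clause 1 of Definition \ref{futop}, via condition 2 of Definition \ref{02}), closure under binary intersection via $ext_F(p_1)\cap ext_F(p_2)=ext_F(p_1\wedge p_2)$, and closure under arbitrary suprema via $\bigcup_i ext_F(p_i)=ext_F(\bigvee_i p_i)$ — and these are precisely your clause-1 and clause-3 arguments. Notably, the paper's proof \emph{never addresses clause 2 at all}: it contains no verification that $\tilde{\emptyset}$ or $\tilde{A}$ belongs to $ext_F(P)$. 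Your identification $\tilde{\emptyset}=ext_F(\bot)$ via the empty join (condition 4 of Definition \ref{02} with $S=\emptyset$, giving $\sup\emptyset=0_L$) is correct and is something the paper omits.

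The gap is the step you defer with ``which I would then reconcile'': under the literal conventions of Definition \ref{02} it cannot be reconciled in general. Condition 3 applied to $S=\emptyset$ gives $gr(x\models\top)=\inf\emptyset=1_L$, the top of $L$; combined with condition 2, $gr(x\models\top)\leq\tilde{A}(x)$, this forces $\tilde{A}(x)=1_L$ for every $x$, i.e.\ the identity $ext_F(\top)=\tilde{A}$ holds only in the degenerate case where $\tilde{A}$ is constantly $1_L$. So either the finite-meet axiom is tacitly restricted to nonempty $S$ — in which case $ext_F(\top)$ need not equal $\tilde{A}$ and clause 2 of Definition \ref{futop} does not follow from the system axioms — or one must explicitly adopt the convention $gr(x\models\top)=\tilde{A}(x)$ (the empty meet computed in the interval $[0_L,\tilde{A}(x)]$ rather than in $L$). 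Compare Lemma \ref{3.9_1} in Chapter 2, where the check $\tilde{X}=ext(\top)$ goes through only because the underlying set there is crisp, so $\tilde{X}$ is constantly $1$; that is exactly the step that breaks over a genuinely fuzzy $\tilde{A}$, and presumably why the paper's proof of Lemma \ref{3.9_1F} silently drops it. You were right to single this out as the only delicate point, but naming it is not proving it: your proposal, as written, is incomplete at that step, and any complete proof must either state the modified empty-meet convention or restrict the claim.
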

\begin{proof}
Clearly if $ext_F(p)\in ext_F(P)$ then $$ext_F(p)(x)=gr(x\models p)\leq \tilde{A}(x).$$
Let $ext_F(p_1),\ ext_F(p_2)\in ext_F(P)$. Then
\begin{align*}
(ext_F(p_1)\cap ext_F(p_2))(x) & =min \{ext_F(p_1)(x),ext_F(p_2)(x)\}\\
& =min \{gr(x\models p_1),gr(x\models p_2)\}\\
& =gr(x\models p_1\wedge p_2)\\
& =ext_F(p_1\wedge p_2)(x)
\end{align*}
As $p_1,p_2\in P$, $p_1\wedge p_2\in P$ and hence $ext_F(p_1\wedge p_2)\in ext_F(P)$. \\
Similarly, $\bigcup_i ext_F(p_i)=ext_F(\bigvee_i p_i)\in ext_F(P)$, for $i\in I$(an index set).
\end{proof}
\begin{lemma}\label{3.10_1F}
$(f,\phi)$ is $\mathbf{FuzzTop}$ morphism provided $(f,\phi,g)$ is $\mathbf{FuzzTopSys}$ morphism.
\end{lemma}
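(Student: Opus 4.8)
The plan is to verify the three clauses in the definition of a $\mathbf{FuzzTop}$ morphism (Definition \ref{05}) for the pair $(f,\phi):(X,L,\tilde{A},ext_F(P))\longrightarrow (Y,M,\tilde{B},ext_F(Q))$, drawing each clause from the corresponding datum of the given $\mathbf{FuzzTopSys}$ morphism $(f,\phi,g)$ (Definition \ref{03}). Clauses (a) and (b) require no work beyond unwinding definitions: clause (a) asks that $\phi^{-1}:M\longrightarrow L$ be a frame homomorphism, which is precisely condition 1(a) of Definition \ref{03}, since preservation of finite meet and arbitrary join is exactly what a frame homomorphism does; and clause (b), the inequality $f(x,y)\leq \tilde{A}(x)\wedge \phi^{-1}\tilde{B}(y)$ together with the properness condition on $f$, is verbatim condition 1(b). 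Thus these two transfer immediately.

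The only substantive part is clause (c), the requirement that open sets pull back to open sets. Here I would start from an arbitrary element $V$ of $ext_F(Q)$, which by Definition \ref{06} has the form $V=ext_F(q)$ for some $q\in Q$, so that $V(y)=gr(y\models' q)$ for every $y\in Y$. I then compute the candidate preimage $U(x)=\bigvee_{y\in Y}[f(x,y)\wedge \phi^{-1}V(y)]$ and show it lies in $ext_F(P)$. Substituting $V(y)=gr(y\models' q)$ and commuting the binary meet gives
\begin{align*}
U(x) &= \bigvee_{y\in Y}[f(x,y)\wedge \phi^{-1}(gr(y\models' q))]\\
&= \bigvee_{y\in Y}[\phi^{-1}(gr(y\models' q))\wedge f(x,y)].
\end{align*}
At this point the key move is to invoke condition 3 of Definition \ref{03}, namely $gr(x\models g(q))=\bigvee_{y\in Y}[\phi^{-1}(gr(y\models' q))\wedge f(x,y)]$, which identifies the right-hand side exactly as $gr(x\models g(q))=ext_F(g(q))(x)$. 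Hence $U=ext_F(g(q))$.

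Finally, since $g:Q\longrightarrow P$ is a frame homomorphism we have $g(q)\in P$, so $ext_F(g(q))\in ext_F(P)$ by the very definition of $ext_F(P)$. Therefore $U\in ext_F(P)$, establishing clause (c). I do not expect any real obstacle here: the whole argument is a direct translation of the three $\mathbf{FuzzTopSys}$-morphism conditions into the three $\mathbf{FuzzTop}$-morphism conditions, the heart being the observation that the satisfiability-preservation equation (condition 3) is exactly the open-set pullback formula once one reads $gr(\,\cdot\models'\cdot\,)$ as an extent. The one point to double-check is that the join defining $U$ is taken over the correct index set $Y$ and that $\phi^{-1}$ genuinely commutes past an arbitrary join, but both are guaranteed by clause (a) and the shape of condition 3.
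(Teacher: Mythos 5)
Your proposal is correct and follows essentially the same route as the paper's proof: the paper likewise treats only clause (c), computing $\bigvee_{x_1\in X_1}[f(x,x_1)\wedge \phi^{-1}(ext_F(q)(x_1))]$ and invoking condition 3 of Definition \ref{03} to identify the result as $ext_F(g(q))\in ext_F(P)$, while regarding clauses (a) and (b) as immediate transfers of conditions 1(a) and 1(b). Your write-up merely makes those immediate transfers explicit, which is a harmless elaboration of the same argument.
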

\begin{proof}
It will be enough to show that if $ext_F(q)\in ext_F(Q)$ then $ext_F(p)\in ext_F(P)$, where $$ext_F(p)(x)=\bigvee_{x_1\in X_1}[f(x,x_1)\wedge \phi^{-1}ext_F(q)(x_1)].$$ Let us proceed in the following way.
\begin{align*}
ext_F(p)(x) & = \bigvee_{x_1\in X_1}[f(x,x_1)\wedge \phi^{-1}ext_F(q)(x_1)]\\
& = \bigvee_{x_1\in X_1}[f(x,x_1)\wedge \phi^{-1}(gr(x_1\models q))]\\
& = gr(x\models g(q))\\
& = ext_F(g(q))(x).
\end{align*}
Hence $ext_F(p)=ext_F(g(q))\in ext_F(P)$.
\end{proof}
Lemmas \ref{3.9_1F} and \ref{3.10_1F} shows that $Ext_F$ is indeed a functor.
\subsection*{Functor $J_F$ from $\mathbf{FuzzTop}$ to $\mathbf{FuzzTopSys}$} 
\begin{definition}\label{08}
$\mathbf{J_F}$ \index{functor!$J_F$} is a functor from $\mathbf{FuzzTop}$ to $\mathbf{FuzzTopSys}$ defined thus. $J_F$ acts on the object 
$(X,L,\tilde{A},\tau)$ as $J_F(X,L,\tilde{A},\tau)=(X,L,\tilde{A},\in ,\tau)$ where $gr(x\in U)=U(x)$ 
for $U \in \tau$ and on the morphism $(f,\phi)$ as $J_F((f,\phi))=(f,\phi,f^{-1}_{\phi})$, where 
$f^{-1}_{\phi}V(x)=\bigvee_{x_1\in X_1}[f(x,x_1)\wedge \phi^{-1}V(x_1)]$ for all $x\in X$ and 
$V\in\tau_1$.
\end{definition}
\begin{lemma}\label{3.13_1F}
$(X,L,\tilde{A},\in , \tau)$ is an Fuzz topological system.
\end{lemma}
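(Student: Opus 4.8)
The plan is to verify directly the four defining conditions of a Fuzz topological system (Definition \ref{02}) for the quintuple $(X,L,\tilde{A},\in,\tau)$ produced by $J_F$, where $gr(x\in U)=U(x)$ for $U\in\tau$. Before that, I would record that $\tau$ legitimately plays the role of the underlying frame $P$: by Definition \ref{futop} it contains $\tilde{\emptyset}$ and $\tilde{A}$ (a bottom and a top) and is closed under finite infima and arbitrary suprema, and since all these operations are computed pointwise in the frame $L$, the frame distributive law $U\cap\bigcup_i V_i=\bigcup_i(U\cap V_i)$ is simply inherited from $L$. Hence $\tau$ is a frame.

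For condition 1, since every $U\in\tau$ is a map $X\longrightarrow L$, we have $gr(x\in U)=U(x)\in L$ trivially. For condition 2, the first clause of Definition \ref{futop} guarantees $U(x)\leq\tilde{A}(x)$ for all $x\in X$ whenever $U\in\tau$; hence $gr(x\in U)\leq\tilde{A}(x)$, so $\in$ is indeed a fuzzy relation on the $L$-fuzzy set $(X,\tilde{A})$ in the sense of Note 1. This condition 2 is the only ingredient beyond the $\mathscr{L}$-topological computation of Lemma \ref{3.13_1'}, and it is immediate.

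For conditions 3 and 4 I would follow Lemma \ref{3.13_1'} almost verbatim. Because meet in $\tau$ is pointwise minimum and join is pointwise supremum, for a finite $S$ (it suffices to treat a binary meet) we get $gr(x\in U_1\cap U_2)=(U_1\cap U_2)(x)=\min\{U_1(x),U_2(x)\}=\inf\{gr(x\in U_1),gr(x\in U_2)\}$, and for an arbitrary family $S=\{U_i\}_{i\in I}$ we get $gr(x\in\bigcup_i U_i)=(\bigcup_i U_i)(x)=\sup_i U_i(x)=\sup_i\{gr(x\in U_i)\}$. This establishes the finite-meet and arbitrary-join conditions.

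I do not anticipate a genuine obstacle: the statement is a definitional check, and the only point that needs attention is the observation that the frame operations on $\tau$ agree with the pointwise operations on $L$-valued maps, so that $gr(x\in -)$ commutes with finite meets and arbitrary joins. This is precisely what makes the assignment $U\mapsto U(x)$ well behaved, and together with the bound $U(x)\leq\tilde{A}(x)$ it yields all four conditions of Definition \ref{02}.
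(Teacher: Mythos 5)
Your proof is correct and takes essentially the same route as the paper's: a direct verification of the four conditions of Definition \ref{02}, namely $gr(x\in U)=U(x)\in L$, the bound $U(x)\leq\tilde{A}(x)$, and the pointwise computations showing $gr(x\in -)$ turns binary meets into infima and arbitrary joins into suprema. Your only addition is the preliminary remark that $\tau$ is itself a frame, which the paper leaves implicit; otherwise the two arguments coincide step for step.
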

\begin{proof}
Let us check the following.
\begin{enumerate}
\item $gr(x\in U)=U(x)\in L$, for $U\in \tau$ and $x\in X$.
\item $gr(x\in U)=U(x)\leq \tilde{A}(x)$, for $U\in \tau$ and $x\in X$.
\item $gr(x\in U_1\cap U_2)=(U_1\cap U_2)(x)=min\{U_1(x),U_2(x)\}=inf\{gr(x\in U_1),gr(x\in U_2)\}$, for any $U_1,U_2\in \tau$ and $x\in X$.
\item $gr(x\in\bigcup_i U_i)=(\bigcup_iU_i)(x)=sup_i\{U_i(x)\}=sup_i\{gr(x\in U_i)\}$ for $U_i$'s$\in \tau$ and $x\in X$.
\end{enumerate}
This completes the proof.
\end{proof}
\begin{lemma}\label{3.14_1F}
$(f,\phi,f_\phi^{-1})$ is $\mathbf{FuzzTopSys}$ morphism if $(f,\phi)$ is $\mathbf{FuzzTop}$ morphism.
\end{lemma}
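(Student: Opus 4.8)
The plan is to verify directly that the triple $(f,\phi,f_\phi^{-1})$ satisfies the three conditions in Definition \ref{03} of a $\mathbf{FuzzTopSys}$ morphism, where the two Fuzz topological systems in question are $J_F(X,L,\tilde{A},\tau)=(X,L,\tilde{A},\in,\tau)$ and $J_F(X_1,L_1,\tilde{A_1},\tau_1)=(X_1,L_1,\tilde{A_1},\in,\tau_1)$, so the frame playing the role of $P$ is $\tau$ and the one playing the role of $Q$ is $\tau_1$. Condition (1), namely that $(f,\phi):(X,L,\tilde{A})\longrightarrow(X_1,L_1,\tilde{A_1})$ has the required shape, is inherited verbatim from the hypothesis that $(f,\phi)$ is a $\mathbf{FuzzTop}$ morphism: clauses (a) and (b) of Definition \ref{05} coincide with clauses (1a) and (1b) of Definition \ref{03}, once one notes that a frame homomorphism $\phi^{-1}$ in particular preserves finite meets and arbitrary joins. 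Condition (3) is immediate from the definition of $J_F$: since $gr(x\in f_\phi^{-1}V)=f_\phi^{-1}V(x)$ and $gr(x_1\in V)=V(x_1)$, the required identity
\[
gr(x\in f_\phi^{-1}V)=\bigvee_{x_1\in X_1}\left[\phi^{-1}(gr(x_1\in V))\wedge f(x,x_1)\right]
\]
is nothing but the defining formula $f_\phi^{-1}V(x)=\bigvee_{x_1\in X_1}[f(x,x_1)\wedge\phi^{-1}V(x_1)]$.

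The substantive work is Condition (2): that $g=f_\phi^{-1}$ is a frame homomorphism from $\tau_1$ to $\tau$. First, $f_\phi^{-1}$ does land in $\tau$, which is exactly clause (c) of Definition \ref{05}. Preservation of arbitrary joins is routine and uses no special property of $f$: starting from $f_\phi^{-1}(\bigcup_i V_i)(x)=\bigvee_{x_1}[f(x,x_1)\wedge\phi^{-1}((\bigcup_i V_i)(x_1))]$, I would push $\phi^{-1}$ through the supremum (it preserves arbitrary joins), apply the frame distributive law $a\wedge\bigvee_i b_i=\bigvee_i(a\wedge b_i)$, and interchange the two joins to obtain $\bigvee_i f_\phi^{-1}(V_i)(x)$; the empty case gives preservation of the bottom $\tilde\emptyset$. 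Preservation of the top element $\tilde{A_1}$ of $\tau_1$ follows by observing that clause (1b) forces $\tilde{A}(x)\leq\phi^{-1}\tilde{A_1}(f(x))$ (evaluate $f(x,f(x))=\tilde{A}(x)\leq\tilde{A}(x)\wedge\phi^{-1}\tilde{A_1}(f(x))$), whence $f_\phi^{-1}(\tilde{A_1})(x)=\tilde{A}(x)$, the top of $\tau$.

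The main obstacle is preservation of binary meets, and this is the only step that genuinely needs the proper-function property of $f$. The key observation is that for fixed $x\in|\tilde{A}|$ the assignment $x_1\mapsto f(x,x_1)$ is supported at the single point $x_1=f(x)$, where it equals $\tilde{A}(x)$, and vanishes elsewhere (for $x\notin|\tilde{A}|$ every term is $0_L$); hence the defining join collapses to the single term $f_\phi^{-1}V(x)=\tilde{A}(x)\wedge\phi^{-1}V(f(x))$. Using this collapse on both sides, together with $\phi^{-1}$ preserving finite meets and $a\wedge a=a$, I would compute
\[
f_\phi^{-1}(V_1\cap V_2)(x)=\tilde{A}(x)\wedge\phi^{-1}V_1(f(x))\wedge\phi^{-1}V_2(f(x))=\bigl(f_\phi^{-1}V_1\cap f_\phi^{-1}V_2\bigr)(x).
\]
Without functionality of $f$ the expanded product $\bigl(\bigvee_{x_1}[\cdots]\bigr)\wedge\bigl(\bigvee_{x_1'}[\cdots]\bigr)$ would retain cross terms $f(x,x_1)\wedge f(x,x_1')$ with $x_1\neq x_1'$; it is precisely the proper-function hypothesis that annihilates these and makes meet-preservation hold. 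Assembling the three conditions then shows that $(f,\phi,f_\phi^{-1})$ is a $\mathbf{FuzzTopSys}$ morphism.
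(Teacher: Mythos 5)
Your proof is correct, and its overall plan — checking the three clauses of Definition \ref{03}, with all the substance in showing $f_\phi^{-1}$ is a frame homomorphism — is the paper's plan too. Where you genuinely diverge is the binary-meet step. The paper computes directly:
\[
f_\phi^{-1}(U_1\cap U_2)(x)=\bigvee_{x_1\in X_1}\bigl[f(x,x_1)\wedge \phi^{-1}(U_1(x_1))\wedge \phi^{-1}(U_2(x_1))\bigr]
\]
and then simply asserts that this join of meets equals the meet of the two separate joins $f_\phi^{-1}(U_1)(x)\wedge f_\phi^{-1}(U_2)(x)$. Read literally, that equality is not a frame identity (only $\leq$ holds for a general family), and it is valid here for exactly the reason you isolate: the proper-function property makes $x_1\mapsto f(x,x_1)$ supported at the single point $f(x)$, so the cross terms $f(x,x_1)\wedge f(x,x_1')$ with $x_1\neq x_1'$ vanish. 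Your route — first proving the collapse $f_\phi^{-1}V(x)=\tilde{A}(x)\wedge\phi^{-1}V(f(x))$ for $x\in|\tilde{A}|$ (and $0_L$ otherwise), then concluding by idempotence — thus supplies the justification the paper leaves tacit, at the cost of a slightly longer argument; the paper's version is shorter but hides the one place the properness hypothesis is really used. Two further small gains on your side: you verify preservation of the top $\tilde{A_1}\mapsto\tilde{A}$ (the empty finite meet), which the paper's proof omits entirely, and your closed form for $f_\phi^{-1}$ is independently useful. Your treatment of conditions (1) and (3) matches the paper's (the paper's displayed formula for condition (3) contains typos, writing $\phi^{-1}V(x)$ and $gr(x\in V)$ where $x_1$ is meant; your version is the intended one).
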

\begin{proof}
It will be enough to show that $f_\phi^{-1}$ is a frame homomorphism and $$gr(x\in f_\phi^{-1}(V))=\bigvee_{x_1\in X_1}[\phi^{-1}gr(x\in V)\wedge f(x,x_1)].$$
We have,
\begin{align*}
f_\phi^{-1}(U_1\cap U_2)(x) & = \bigvee_{x_1\in X_1}[f(x,x_1)\wedge \phi^{-1}(U_1\cap U_2)(x_1)]\\
& = \bigvee_{x_1\in X_1}[f(x,x_1)\wedge \phi^{-1}(U_1(x_1)\wedge U_2(x_1))]\\
& = \bigvee_{x_1\in X_1}[f(x,x_1)\wedge \phi^{-1}(U_1(x_1))\wedge \phi^{-1}(U_2(x_1))]\\
& = \bigvee_{x_1\in X_1}[f(x,x_1)\wedge \phi^{-1}(U_1(x_1))]\wedge \bigvee_{x_1\in X_1}[f(x,x_1)\wedge \phi^{-1}(U_2(x_1))]\\
& = f_\phi^{-1}(U_1)(x)\wedge f_\phi^{-1}( U_2)(x)\\
& = (f_\phi^{-1}(U_1)\cap f_\phi^{-1}(U_2))(x).
\end{align*}
Therefore $f_\phi^{-1}(U_1\cap U_2)=f_\phi^{-1}(U_1)\cap f_\phi^{-1}(U_2)$ and similarly $f_\phi^{-1}(\bigcup_iU_i)=\bigcup_if_\phi^{-1}(U_i)$. Hence $f_\phi^{-1}$ is a frame homomorphism.\\
Let us proceed in the following way for the rest part.
\begin{align*}
gr(x\in f_\phi^{-1}(V)) & = f_\phi^{-1}(V)(x)\\
& = \bigvee_{x_1\in X_1}[f(x,x_1)\wedge \phi^{-1}V(x) ]\\ 
& = \bigvee_{x_1\in X_1}[\phi^{-1}gr(x\in V)\wedge f(x,x_1)].
\end{align*}
This finishes the proof.
\end{proof}
So $ J_F$ is a functor from $\mathbf{FuzzTop}$ to $\mathbf{FuzzTopSys}$.
\begin{theorem}\label{ext1}
$Ext_F$ is the right adjoint to the functor $J_F$.
\end{theorem}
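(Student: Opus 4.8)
The plan is to establish the adjunction by exhibiting its co-unit, exactly as was done for $Ext$ in Lemma~\ref{3.20_1} and for $Ext_L$ in Theorem~\ref{ext_L}. By the description of right adjoints through $Ext_F$-couniversal arrows in Definition~\ref{2.3}, it suffices to supply, for each object $(X,L,\tilde{A},\models,P)$ of $\mathbf{FuzzTopSys}$, a morphism
\[
\xi_X\colon J_F(Ext_F(X,L,\tilde{A},\models,P))\longrightarrow (X,L,\tilde{A},\models,P)
\]
that is couniversal. First I would unwind the two functors via Definitions~\ref{07} and~\ref{08} to get $J_F(Ext_F(X,L,\tilde{A},\models,P))=(X,L,\tilde{A},\in,ext_F(P))$, where $gr(x\in ext_F(p))=ext_F(p)(x)=gr(x\models p)$. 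I then propose the co-unit $\xi_X=(I_A,I_L,ext_F')$, with $I_A$ the identity proper function on $(X,L,\tilde{A})$, $I_L$ the identity on $L$, and $ext_F'\colon P\longrightarrow ext_F(P)$ given by $ext_F'(p)=ext_F(p)$.

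The second step is to check that $\xi_X$ is a genuine $\mathbf{FuzzTopSys}$ morphism. That $ext_F'$ is a frame homomorphism $P\to ext_F(P)$ is immediate from the identities $ext_F(p_1\wedge p_2)=ext_F(p_1)\cap ext_F(p_2)$ and $ext_F(\bigvee_i p_i)=\bigcup_i ext_F(p_i)$ of Lemma~\ref{3.9_1F}. The only substantive clause is condition~3 of Definition~\ref{03}, and it collapses at once: using $\phi^{-1}=I_L$ together with $I_A(x,x')=\tilde{A}(x)$ for $x=x'$ and $0_L$ otherwise, the right-hand join reduces to $gr(x\models p)\wedge\tilde{A}(x)$, which equals $gr(x\models p)$ since $gr(x\models p)\le\tilde{A}(x)$ (condition~2 of Definition~\ref{02}); the left-hand side is $gr(x\in ext_F'(p))=gr(x\models p)$. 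This is exactly the computation carried out in Lemma~\ref{3.5_1F}.

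The third and main step is the universal property. Given any $\mathbf{FuzzTopSys}$ morphism $(f,\phi,g)\colon J_F(Y,M,\tilde{B},\tau')\to(X,L,\tilde{A},\models,P)$, I would set $\hat{f}=(f,\phi)$, viewed as an arrow $(Y,M,\tilde{B},\tau')\to Ext_F(X,L,\tilde{A},\models,P)=(X,L,\tilde{A},ext_F(P))$. To see $\hat{f}$ is a $\mathbf{FuzzTop}$ morphism I would verify clause~(c) of Definition~\ref{05}: for $V=ext_F(p)\in ext_F(P)$,
\[
U(y)=\bigvee_{x\in X}\bigl[f(y,x)\wedge\phi^{-1}ext_F(p)(x)\bigr]=\bigvee_{x\in X}\bigl[f(y,x)\wedge\phi^{-1}gr(x\models p)\bigr]=gr(y\in g(p))=g(p)(y),
\]
the middle equality being precisely condition~3 of Definition~\ref{03} for $(f,\phi,g)$, whence $U=g(p)\in\tau'$. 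Then $J_F(\hat{f})=(f,\phi,f_\phi^{-1})$ by Definition~\ref{08}, and composing with $\xi_X$ through Definition~\ref{04} gives $\xi_X\circ J_F(\hat{f})=(I_A\circ f,\,I_L\phi,\,f_\phi^{-1}\circ ext_F')$. The first two slots are $f$ and $\phi$ by the identity laws (Lemma~\ref{3.6_1F}), and the third is $g$ because $(f_\phi^{-1}\circ ext_F')(p)(y)=\bigvee_{x\in X}[f(y,x)\wedge\phi^{-1}gr(x\models p)]=g(p)(y)$. Uniqueness then follows because any factoring $\mathbf{FuzzTop}$ morphism must exhibit $f$ and $\phi$ in the first two slots after composition with the identity space-part of $\xi_X$, forcing it to equal $(f,\phi)$.

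I expect the third step to be the main obstacle, and within it the identity $f_\phi^{-1}\circ ext_F'=g$. Everything hinges on the compatibility condition~3 of Definition~\ref{03}, and making the two joins coincide requires that $\phi^{-1}$ preserve arbitrary joins so that it passes through $\bigvee_{x\in X}$ — the same distributive bookkeeping already performed in Proposition~\ref{comf} and Lemma~\ref{3.14_1F}. Once that interchange is justified, the factorization and its uniqueness are formal, and hence $Ext_F$ is right adjoint to $J_F$.
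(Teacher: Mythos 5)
Your proposal matches the paper's own proof essentially step for step: the same co-unit $\xi_X=(i_A,i_L,ext_F^{*})$, the same verification that it is a $\mathbf{FuzzTopSys}$ morphism via $gr(x\models p)\wedge\tilde{A}(x)=gr(x\models p)$, and the same key identity $f_{\phi}^{-1}\circ ext_F^{*}=g$ derived from condition~3 of Definition~\ref{03}. If anything you are slightly more thorough than the paper, which leaves the check that $\hat{f}=(f,\phi)$ is a $\mathbf{FuzzTop}$ morphism and the uniqueness of the factorization implicit.
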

\begin{proof}
We will prove the theorem by presenting the co-unit of the adjunction. Recall that 
$J_F(X,L,\tilde{A},\tau)=(X,L,\tilde{A},\in \tau)$ and $Ext_F(X,L,\tilde{A},\models,P)=(X,L,\tilde{A},ext_F(P))$. So,
$$J_F(Ext_F(X,L,\tilde{A},\models,P))=(X,L,\tilde{A},\in,ext_F(P)).$$
Let us draw the diagram of co-unit.
\begin{center}
\begin{tabular}{ l | r } 
$\mathbf{FuzzTopSys}$ & $\mathbf{FuzzTop}$\\
\hline
 {\begin{tikzpicture}[description/.style={fill=white,inner sep=2pt}] 
    \matrix (m) [matrix of math nodes, row sep=2.5em, column sep=0.5em]
    { J_F(Ext_F(X,L,\tilde{A},\models, P))&&(X,L,\tilde{A},\models, P)  \\
         J_F(Y,M,\tilde{B}, \tau ') \\ }; 
    \path[->,font=\scriptsize] 
        (m-1-1) edge node[auto] {$\xi_X$} (m-1-3)
        (m-2-1) edge node[auto] {$J_F(f,\phi)(\equiv (f,\phi,f_{\phi}^{-1}))$} (m-1-1)
        (m-2-1) edge node[auto,swap] {$(f,\phi,g)$} (m-1-3)
        %(m-1-1) edge node[auto,swap] {$f$} (m-2-3)
       % (m-1-5) edge node[auto,swap] {$j$} (m-1-3)
       % (m-1-5) edge node[auto] {$\psi$} (m-2-3)
        %(m-1-3) edge node[auto] {$G(\hat{f})$} (m-2-3)
         ;
\end{tikzpicture}} & {\begin{tikzpicture}[description/.style={fill=white,inner sep=2pt}] 
    \matrix (m) [matrix of math nodes, row sep=2.5em, column sep=0.1em]
    { Ext_F(X,L,\tilde{A}, \models, P)  \\
         (Y,M,\tilde{B}, \tau ') \\ }; 
    \path[->,font=\scriptsize] 
        (m-2-1) edge node[auto,swap] {$(f,\phi)$} (m-1-1)
       
         ;
\end{tikzpicture}} \\ 
\end{tabular}
\end{center}
Let us define co-unit by $\xi_X=(i_A,i_L,ext_F^*)$\\
i.e.
\begin{center}
\begin{tikzpicture}[description/.style={fill=white,inner sep=2pt}] 
    \matrix (m) [matrix of math nodes, row sep=2.5em, column sep=2.5em]
    { (X,L,\tilde{A},\in ,ext_F(P))&&(X,L,\tilde{A},\models, P)  \\
          }; 
    \path[->,font=\scriptsize] 
        (m-1-1) edge node[auto] {$\xi_X$} (m-1-3)
        (m-1-1) edge node[auto,swap] {$(i_A,i_L,ext_F^*)$} (m-1-3)
         ;
\end{tikzpicture}
\end{center}
 where $i_A:X\times X\longrightarrow L$, $i_L:L\longrightarrow L$ and $ext_F^*$ is a mapping from $P$ to $ext_F(P)$ such 
 that, $ext_F^*(p)=ext_F(p)$ for all $p\in P$.
It can be shown that $$(i_A,i_L,ext_F^*):J_F(Ext_F(X,L,\tilde{A},\models , P))\longrightarrow (X,L,\tilde{A},\models , P)$$ 
is indeed a continuous map of Fuzz topological system as follows.
Now we will prove that the diagram on the left commutes.
Here $J_F(f,\phi)=(f,\phi,f_{\phi}^{-1})$ and 
$(f,\phi,g)=\xi_X\circ J_F(f,\phi)=(i_A,i_L,ext^*)\circ (f,\phi,f_{\phi}^{-1})=(i_A\circ f,i_L\circ \phi,f_{\phi}^{-1}\circ ext_F^*)$.\\
We have $i_A\circ f:Y\times X\longrightarrow M$ such that
$$i_A\circ f(y,x)=\bigvee_{x'\in X}[f(y,x')\wedge \phi^{-1}(i_A(x',x))]=f(y,x)\wedge \phi^{-1}\tilde{A}(x).$$
Now $$f(y,x)\leq \tilde{B}(y)\wedge \phi^{-1}\tilde{A}(x)\leq \phi^{-1}\tilde{A}(x).$$
Hence $f(y,x)\wedge \phi^{-1}\tilde{A}(x)=f(y,x)$.
Therefore $i_A\circ f(y,x)=f(y,x)$ and consequently $i_A\circ f=f$.
Clearly $i_L\circ \phi=\phi$.
Now as $(i_A,i_L,ext_F^*)$ is continuous so $$gr(x\in ext_F^*(p))=\bigvee_{x'\in X}[i_L^{-1}(gr(x'\models p))\wedge i_A(x,x')].$$
Hence, $$ext_F^*(p)(x)=gr(x\models p)\wedge \tilde{A}(x)=gr(x\models p).$$
Now $(f,\phi,g)$ is continuous hence $$gr(y\in g(p))=\bigvee_{x\in X}[\phi^{-1}(gr(x\models p))\wedge f(y,x)].$$
So, $$g(p)(y)=\bigvee_{x\in X}[\phi^{-1}(gr(x\models p))\wedge f(y,x)].$$
Hence 
\begin{multline*}
$$f_{\phi}^{-1}ext_F(p)(y)=\bigvee_{x\in X}[f(y,x)\wedge \phi^{-1}(ext_F(p)(x))]=\\ \bigvee_{x\in X}[\phi^{-1}(gr(x\models p))\wedge f(y,x)]=g(p)(y).$$
\end{multline*}
Hence $f_{\phi}^{-1}\circ ext_F^*(p)(y)=g(p)(y)$.
So, $g=f_{\phi}^{-1}\circ ext_F^*$.
%\begin{align*}
%f_1^{-1}ext'(p) & = f_1^{-1}(p)\\
%& = f_2(p)
%\end{align*}
Therefore, $$\xi_X(\equiv (i_A,i_L,ext_F^*)):J_F(Ext_F(X,L,\tilde{A},\models, P))\longrightarrow (X,L,\tilde{A},\models, P)$$ is the 
co-unit, consequently $Ext_F$ is the right adjoint to the functor $J_F$.
\end{proof}
The obtained functorial relationships can be illustrated by the following diagram:
  
\begin{center}
\begin{tikzpicture}
\node (C) at (0,3) {$\mathbf{FuzzTopSys}$};
\node (A) at (-2,0) {$\mathbf{FuzzTop}$};
\node (B) at (2,0) {$\mathbf{Loc}$};
%\node at (0,0) {\rotatebox{270}{$\Rightarrow$}};
\path[->,font=\scriptsize ,>=angle 90]
(A) edge [bend left=15] node[above] {$??$} (B);
\path[<-,font=\scriptsize ,>=angle 90]
(A)edge [bend right=15] node[below] {$??$} (B);
\path[->,font=\scriptsize ,>=angle 90]
(A) edge [bend left=20] node[above] {$J_F$} (C);
\path[<-,font=\scriptsize ,>=angle 90]
(A)edge [bend right=20] node[above] {$Ext_F$} (C);
\path[->,font=\scriptsize, >=angle 90]
(C) edge [bend left=20] node[above] {$??$} (B);
\path[<-,font=\scriptsize, >=angle 90]
(C)edge [bend right=20] node[above] {$??$} (B);
\end{tikzpicture}
\end{center}
Note that in this chapter localification i.e existence of adjunction between FuzzTopSys and  Loc$\times$Loc is not 
established. This can be considered as an interesting open question.
\chapter{Category of $\bar{n}$-Fuzzy Boolean Systems}
\section{Introduction}\blfootnote{The results of this chapter appear in {\bf \cite{MP} P. Jana and M.K. Chakraborty: \textit{Categorical relationships of fuzzy topological systems with fuzzy topological spaces and underlying algebras}, Ann. of Fuzzy Math. and Inform., \textbf{8}, 2014, no. 5, pp. 705--727.}}
It may be recalled that the celebrated Stone duality theorem \cite{PJS} of algebraic logic states that there exists a categorical duality between Boolean algebras and zero dimensional compact Hausdorff spaces \cite{PJS,JM}. In \cite{YM}, Maruyama showed a duality between zero dimensional compact Kolmogorov fuzzy topological spaces and the algebras of {\L}ukasiewicz $n$-valued logic which generalizes the Stone duality in a way. In this section duality between the category of one kind of fuzzy topological systems and {\L}$_n^c$-algebras (an {\L}$_n^c $-algebra is an $ MV_n$-algebra enriched by constants \cite{YM}) shall be established. This category will be shown to be equivalent to the category of $\bar{n}$-fuzzy topological spaces, which are Kolmogorov, compact and zero-dimensional. As a consequence, duality between the category of these topological spaces and the category of {\L}$_n^c$-algebras will be established. This result constitutes another proof of the duality proved in \cite{YM}.

\section{$\bar n$-Fuzzy Boolean system, {\textbf{\L}}$\mathbf{_n^c}$\textbf{-Alg}, $\bar n$-Fuzzy Boolean Space and their interrelationships}

We first give the definitions of the related notions, which deal with {\L}$_n^c$-algebras. 

\subsection{{\L}$_n^c$-algebras}

%$\L_n^c$-algebra is considered as $\mathbf{MV}_n$-algebra enriched by constants.
\begin{definition}[$\mathbf{MV}$-algebra]\cite{PM}
An $\mathbf{MV}$\textbf{-algebra} \index{$\mathbf{MV}$-algebra} is an algebra $\mathcal{A}=(A,\oplus,*,^{\bot},0,1)$ of type $(2,2,1,0,0)$, where $(A,\oplus,0)$ is a commutative monoid and for any $x,y,z\in A$ the following axioms are satisfied:
\begin{flalign*}
1.\ & x\oplus 1=1, & 2.\ & (x^{\bot})^{\bot}=x, & 3.\ & 0^{\bot}=1,\\
4.\ & (x^{\bot}\oplus y)^{\bot}\oplus y=(y^{\bot}\oplus x)^{\bot}\oplus x, & 5.\ & x*y=(x^{\bot}\oplus y^{\bot})^{\bot}.
\end{flalign*}
\end{definition}
The original definition of $\mathbf{MV}$-algebra was given in \cite{CH}. The present definition is by Mangani \cite{PM} which we take from \cite{AI}.
 %where the following axioms are satisfied: for every $x,y,z\in A$\\
%\begin{flalign*}
%1. & x\oplus y=y\oplus x & 1'. & x*y=y*x\\
%2. & x\oplus (y\oplus z)=(x\oplus y)\oplus z & 2'. & x*(y*z)=(x*y)*z\\
%3. & x\oplus (x)^{\bot}=1 & 3'. & x*x^{\bot}=0\\
%4. & x\oplus 1=1 & 4'. & x*0=0\\
%5. & x\oplus 0=x & 5'. & x*1=x\\
%6. &(x\oplus y)^{\bot}=x^{\bot}*y^{\bot} & 6'. & (x*y)^{\bot}=x^{\bot}\oplus y^{\bot}\\
%7. & (x^{\bot})^{\bot}=x\\
%8. & 0^{\bot}=1\\
%9. & x\vee y=y\vee x & 9'. & x\wedge y=y\wedge x\\
%10. & x\vee (y\vee z)=(x\vee y)\vee z & 10'. & x\wedge (y\wedge z)=(x\wedge y)\wedge z\\
%11. & x\oplus (y\wedge z)=(x\oplus y)\wedge (x\oplus z) & 11'. & x*(y\vee z)=(x*y)\vee (x*z)
%\end{flalign*}
%where $\ x\vee y=(x*y^{\bot})\oplus y$\\
%and $\ x\wedge y=(x^{\bot}\vee y^{\bot})^{\bot}=(x\oplus y^{\bot})*y$
%\end{definition}
\begin{definition}\label{5.1}
For any $m\in \mathbb{N}$, we define \\
(i) $0x=0$ and $(m+1)x=mx\oplus x$.\\
(ii) $x^0=1$ and $x^{m+1}=x^m*x$.
\end{definition}
\begin{definition}[$\mathbf{MV}_n$-algebra]\label{5.2}\cite{GR}
An $\mathbf{MV}_n$\textbf{-algebra} \index{$\mathbf{MV}_n$-algebra} $(n\geq 2)$ is an $\mathbf{MV}$-algebra $\mathcal{A}=(A,\oplus,*,^{\bot},0,1)$ whose operations fullfil the additional axioms
\begin{flalign*}
1.\ & (n-1)x\oplus x=(n-1)x, & 1'.\ & x^{(n-1)}*x=x^{(n-1)},
\end{flalign*}
and if $n\geq 4$  the  axioms
\begin{flalign*}
2.\ & [(jx)*(x^{\bot}\oplus[(j-1)x]^{\bot})]^{n-1}=0, & 2'.\ & (n-1)[x^j\oplus (x^{\bot}*[x^{j-1}]^{\bot})]=1, 
\end{flalign*}
where  $1<j<n-1$ and $j$ does not divide $n-1$.
\end{definition}
\begin{definition}\label{5.3}
\cite{YM} $\bar{n}$ denotes the set $\{0,\dfrac{1}{n-1},\dfrac{2}{n-1},\dfrac{3}{n-1},......,\dfrac{n-2}{n-1},1\}$ \\equipped with all constants $r\in \bar{n}$ and the operations $(\wedge,\vee,*,\oplus,\rightarrow,^{\bot})$ are defined as follows:\\
$x\wedge y=min(x,y),\ x\vee y=max(x,y),\ x*y=max(0,x+y-1),\ x\oplus y=min(1,x+y),\ x\rightarrow y=min(1,1-(x-y)),\ x^{\bot}=1-x$ and $0$-ary operations (i.e. constants) $r\in \bar{n}$.
\end{definition}
\begin{definition}[{\L}$_n^c$-algebra]\label{5.4}
An \textbf{{\L}$_n^c$-algebra} \index{{\L}$_n^c$-algebra} is an $MV_n$ algebra enriched by $n$ constants \cite{YM}. That is, it is an $MV_n$ algebra $\mathcal{A}=(A,\wedge,\vee,*,\oplus,\rightarrow,^{\bot},0,1)$ in which the algebra $\bar{n}$ is embedded.
\end{definition}
We shall denote the counterparts of $\dfrac{1}{n-1},\dfrac{2}{n-1},\dfrac{3}{n-1},...,\dfrac{n-2}{n-1}$ in $A$ by these tokens and $(A, \wedge, \vee, *, \oplus, \rightarrow, ^{\bot}, 0, \dfrac{1}{n-1}, \dfrac{2}{n-1}, \dfrac{3}{n-1}, . . . , \dfrac{n-2}{n-1}, 1)$ denotes a general {\L}$_n^c$-algebra. We note that $\bar{n}$ is an {\L}$_n^c$-algebra and also that every {\L}$_n^c$-algebra is a frame.
\begin{definition}[{\L}$_n^c$-homomorphism]\label{5.5}\cite{YM}
An \textbf{{\L}$_n^c$-homomorphism} \index{{\L}$_n^c$-homomorphism} is a function between two {\L}$_n^c$-algebras which preserves the operations.
\end{definition}
The following results w.r.t. {\L}$_n^c$-algebras may be obtained in \cite{YM}.
\begin{proposition}\label{5.6}\cite{YM}
\begin{enumerate}
\item Let $A$ be an {\L}$_n^c$-algebra. If $a,b\in A$ are idempotent\index{idempotent}, i.e., $a*a=a,$ $b*b=b$, then $a*b=a\wedge b$ and $a\oplus b=a\vee b$.
\item Let $A$ be an {\L}$_n^c$-algebra and $r\in \bar n$. There is an idempotent term $T_r(x)$ with one variable $x$ such that, for any homomorphism $v:A\longrightarrow \bar n$ and any $x\in A$, the following holds:\\
(i) $v(T_r(x))=1$ iff $v(x)=r$;\\
(ii) $v(T_r(x))=0$ iff $v(x)\neq r$. \\
Any homomorphism from one {\L}$_n^c$-algebra to another preserves the operation $T_r(-).$
\item Let $A$ be an {\L}$_n^c$-algebra and $a_i\in A$ for $i\in I$ and $I$ is a finite set. Then, $T_1(\bigvee_{i\in I}a_i)=\bigvee_{i\in I}T_1(a_i)$ and $T_1(\bigwedge_{i\in I}a_i)=\bigwedge_{i\in I}T_1(a_i)$.
\item Let $A$ be an {\L}$_n^c$-algebra. For any $a,b\in A$, the following holds:\\
$\bigwedge_{r\in \bar n}(T_r(a)\leftrightarrow T_r(b))\leq a\leftrightarrow b$, where $a\leftrightarrow b\equiv (a\rightarrow b)\wedge (b\rightarrow a)$.
\item Let $A$ be an {\L}$_n^c$-algebra and $r\in \bar n$. There is a term $S_r(x)$ with one variable $x$ such that for any homomorphism $v:A\longrightarrow \bar n$, the following two conditions hold:\\
(i) $v(S_r(x))=r$ iff $v(x)=1$;\\
(ii) $v(S_r(x))=0$ iff $v(x)\neq 1$.\\
Any homomorphism preserves the operation $S_r(-).$
\end{enumerate}
\end{proposition}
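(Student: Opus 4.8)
The plan is to reduce every one of the five assertions to a finite computation inside the chain $\bar n$ itself, exploiting the fact that {\L}$_n^c$-algebras are semisimple with all simple quotients equal to $\bar n$. First I would record the structural backbone. The variety of $\mathbf{MV}_n$-algebras is congruence-distributive and is generated by the simple finite chain $\bar n$, so J\'onsson's lemma forces every subdirectly irreducible member to be a subalgebra of $\bar n$; once all the constants of $\bar n$ are adjoined (as in an {\L}$_n^c$-algebra) no proper subchain survives, so each simple quotient is $\bar n$ on the nose. Hence the evaluation $a\mapsto (v(a))_{v\in Hom(A,\bar n)}$ embeds $A$ into $\bar n^{Hom(A,\bar n)}$. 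The fact I would use repeatedly is that the homomorphisms $v:A\longrightarrow\bar n$ separate points and reflect order: for $a,b\in A$ one has $a=b$ iff $v(a)=v(b)$ for all $v$, and $a\leq b$ iff $v(a)\leq v(b)$ for all $v$. Thus an (in)equality of terms holds in $A$ exactly when it holds in $\bar n$ under the arithmetic interpretation of Definition \ref{5.3}.

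With this reduction, Part 1 is immediate: if $a*a=a$ then $v(a)*v(a)=v(a)$ for every $v$, and in $\bar n$ the equation $\max(0,2t-1)=t$ has only the solutions $t\in\{0,1\}$, so $v(a),v(b)\in\{0,1\}$; checking the four cases gives $v(a)*v(b)=v(a)\wedge v(b)$ and $v(a)\oplus v(b)=v(a)\vee v(b)$, whence the identities hold in $A$ by separation. For Part 2 I would exhibit the term explicitly. The $(n-1)$-fold product satisfies $t^{n-1}=\max(0,(n-1)t-(n-2))$, which equals $1$ when $t=1$ and $0$ for every other element of $\bar n$, so it is a crisp ``equals-top'' detector. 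Setting
\[
T_r(x)=(x\leftrightarrow r)^{\,n-1}=\bigl((x\rightarrow r)\wedge(r\rightarrow x)\bigr)^{\,n-1},
\]
a legitimate term because $r$ is a constant of the {\L}$_n^c$-algebra, one computes $v(x\leftrightarrow r)=1-|v(x)-r|$, which is $1$ iff $v(x)=r$; applying the detector yields $v(T_r(x))=1\Leftrightarrow v(x)=r$ and $v(T_r(x))=0$ otherwise. Since $v(T_r(x))\in\{0,1\}$ for all $v$, Part 1 shows $T_r(x)$ is idempotent, and preservation under any homomorphism is automatic since $T_r$ is built from basic operations and a preserved constant.

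Parts 3, 4 and 5 then follow by the same pointwise bookkeeping in $\bar n$, noting first that $T_1(x)=x^{n-1}$ (because $x\leftrightarrow 1=x$). For Part 3, over each $v$ both sides of $T_1(\bigvee_i a_i)=\bigvee_i T_1(a_i)$ are $0/1$-valued and equal $1$ exactly when some $v(a_i)=1$, while the meet version equals $1$ exactly when all $v(a_i)=1$; finiteness of $I$ guarantees these joins and meets are honest finite ones preserved by each $v$. For Part 4, fix $v$ and write $\alpha=v(a)$, $\beta=v(b)$: if $\alpha=\beta$ every factor $T_r(\alpha)\leftrightarrow T_r(\beta)$ is $1$ and the right side $\alpha\leftrightarrow\beta$ is $1$ as well; if $\alpha\neq\beta$ the factor with $r=\alpha$ already gives $T_\alpha(\alpha)\leftrightarrow T_\alpha(\beta)=1\leftrightarrow 0=0$, so the left-hand meet is $0$ and the inequality is trivial. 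Holding under every $v$, order-reflection yields the inequality in $A$. Finally, for Part 5 I would take $S_r(x)=r\wedge T_1(x)=r\wedge x^{n-1}$; then $v(S_r(x))=r\wedge v(x)^{n-1}$ equals $r$ when $v(x)=1$ and $0$ otherwise, with preservation under homomorphisms again clear.

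The genuinely substantive step is the structural one in the first paragraph: establishing that the homomorphisms into $\bar n$ separate points and reflect order; everything afterwards is arithmetic in a fixed finite chain. I would therefore concentrate the real effort on semisimplicity of {\L}$_n^c$-algebras together with the collapse of each simple quotient to $\bar n$ forced by the constants --- or, since this Proposition is credited to \cite{YM}, simply invoke that reference for the representation and present the term constructions and case analyses above as the verification.
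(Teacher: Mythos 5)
The thesis itself contains no proof of this proposition --- it is imported verbatim from \cite{YM} with the remark that the results ``may be obtained'' there --- so the comparison is between your self-contained argument and that citation. Your argument is correct, and its architecture (subdirect embedding of an {\L}$_n^c$-algebra into a power of $\bar n$, then pointwise arithmetic in the finite chain) is in the same spirit as the verifications in \cite{YM}, but you make the representation step explicit via J\'onsson's lemma where the paper simply defers. Three remarks. First, you correctly identify that separation and order-reflection by $Hom(A,\bar n)$ is the only substantive input; note, however, that you could \emph{not} instead ground this in the paper's Proposition \ref{5.9}, because the homomorphism $v_P$ there is defined by $v_P(a)=r\Leftrightarrow T_r(a)\in P$, i.e., item 3 of that proposition presupposes the terms $T_r$ of part 2 --- your congruence-distributivity route avoids this circularity, whereas shortcutting through prime filters would not. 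Second, your explicit terms check out: $v(x\leftrightarrow r)=1-\lvert v(x)-r\rvert$ and $t^{n-1}=\max(0,(n-1)t-(n-2))$ give $v(T_r(x))\in\{0,1\}$ with value $1$ exactly when $v(x)=r$, the crispness resting on the fact that distinct elements of $\bar n$ differ by at least $\tfrac{1}{n-1}$; idempotence of $T_r(a)$ then follows from part 1 plus separation, as you say, and parts 3 and 4 are indeed routine pointwise checks once the backbone is in place. Third, a defect you inherit from the statement rather than create: for $r=0$ the biconditionals (i) and (ii) of part 5 are jointly unsatisfiable by \emph{any} term (when $v(x)=1$, (i) forces $v(S_0(x))=0$ while (ii) forbids it), so your $S_r(x)=r\wedge x^{n-1}$ is correct for $r\neq 0$ and as good as possible at $r=0$; a written-up version should flag the implicit assumption $r\neq 0$, or adopt the one-directional reading of (i)--(ii).
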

\begin{definition}[$\bar n$-filter]\label{5.7}\cite{YM}
Let $A$ be an {\L}$_n^c$-algebra. A non-empty subset $F$ of $A$ is called an $\mathbf{\bar n}$\textbf{-filter} \index{$\bar n$-filter} of $A$ iff $F$ is an upper set and is closed under $*$. 

An $\bar n$-filter $F$ of $A$ is called \textbf{proper} $\mathbf{\bar n}$\textbf{-filter} iff $F\neq A.$

A proper $\bar n$-filter $P$ of $A$ is \textbf{prime} iff for any $a,\ b\in A$, $a\vee b\in P$ implies either $a\in P$ or $b\in P$.
\end{definition}
\begin{definition}[Finite intersection property]\label{5.8}\cite{YM}
Let $A$ be an {\L}$_n^c$-algebra. A subset $X$ of $A$ has \textbf{finite intersection property} (f.i.p.)\index{finite intersection property}\index{f.i.p.} with respect to $*$ iff for any non-empty subset $\{a_1,a_2,....,a_n\}$, $a_1*......*a_n\neq 0$.
\end{definition}
\begin{proposition}\label{5.9}\cite{YM}
\begin{enumerate}
\item Let $A$ be an {\L}$_n^c$-algebra and $F$ an $\bar{n}$-filter of $A$. Let $b\in A$ be such that $b\notin F$. Then there is a prime $\bar{n}$-filter $P$ of $A$ such that $F\subset P$ and $b\notin P$.
\item Let $A$ be an {\L}$_n^c$-algebra and $X$ be a subset of $A$. If $X$ has f.i.p. with respect to $*$, then there is a prime $\bar n$-filter $P$ of $A$ with $X\subset P.$
\item Let $A$ be an {\L}$_n^c$-algebra. For a prime $\bar n$-filter $P$ of $A$, define $v_P:A\longrightarrow \bar n$ by $v_P(a)=r\Leftrightarrow T_r(a)\in P$. Then, $v_P$ is a bijection from the set of all prime $\bar n$-filters of $A$ to the set of all homomorphisms from $A$ to $\bar n$ with $v_P^{-1}(\{ 1\})=P.$
\end{enumerate}
\end{proposition}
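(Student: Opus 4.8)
The statement splits into three parts, and the plan is to prove (1) by a Zorn's Lemma argument, obtain (2) as a quick corollary by taking $b=0$, and establish the bijection (3) by exhibiting an explicit two-sided inverse. Throughout I would rely on Proposition \ref{5.6} and on the principle that any term identity or inequality valid in $\bar n$ remains valid in every {\L}$_n^c$-algebra, since these algebras form the variety generated by the chain $\bar n$.

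For part (1), I would order by inclusion the family $\mathcal{F}$ of all $\bar n$-filters $G$ with $F\subseteq G$ and $b\notin G$. It contains $F$, and the union of any chain in $\mathcal{F}$ is again such a filter, so Zorn's Lemma yields a maximal $P\in\mathcal{F}$. The crux is showing $P$ is prime. Suppose $a\vee c\in P$ but $a\notin P$ and $c\notin P$. Then the $\bar n$-filters generated by $P\cup\{a\}$ and by $P\cup\{c\}$ strictly contain $P$, so by maximality both contain $b$; since $P$ is closed under $*$ and contains the top element $1$, this gives a single $p\in P$ and an integer $k$ with $b\ge p*a^{k}$ and $b\ge p*c^{k}$. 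Using that $*$ distributes over $\vee$ and the inequality $(a\vee c)^{2k-1}\le a^{k}\vee c^{k}$ (valid because among any $2k-1$ factors drawn from $\{a,c\}$ one letter occurs at least $k$ times, and $x*y\le x$), I get $p*(a\vee c)^{2k-1}\le (p*a^{k})\vee(p*c^{k})\le b$. The left-hand side lies in $P$ (as $p$ and $a\vee c$ do, $P$ being $*$-closed), so $b\in P$ by upward closure, contradicting $b\notin P$.

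Part (2) then follows: given $X$ with the finite intersection property relative to $*$, let $F$ be the $\bar n$-filter it generates, namely all elements lying above some finite $*$-product of members of $X$; the f.i.p. forces $0\notin F$, so $F$ is proper, and part (1) with $b=0$ gives a prime $\bar n$-filter $P\supseteq F\supseteq X$. For part (3) I would first check $v_P$ is well defined: since $\bigvee_{r\in\bar n}T_r(a)=1\in P$ and $P$ is prime, some $T_r(a)\in P$; and if $T_r(a),T_s(a)\in P$ with $r\ne s$ then $T_r(a)*T_s(a)\in P$, yet this product is $0$ (the $T_r(a)$ being pairwise disjoint idempotents), contradicting properness, so exactly one $r$ works. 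That $v_P$ is a homomorphism I would get, for each operation $\circ$, from the inequality $T_{r\circ s}(a\circ b)\ge T_r(a)\wedge T_s(b)$, which holds in $\bar n$ hence in $A$: if $T_r(a),T_s(b)\in P$ then $T_r(a)\wedge T_s(b)=T_r(a)*T_s(b)\in P$ by Proposition \ref{5.6}(1), whence $T_{r\circ s}(a\circ b)\in P$, i.e. $v_P(a\circ b)=v_P(a)\circ v_P(b)$; the unary $^{\bot}$ and the constants are handled identically.

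Finally I would show $v\mapsto v^{-1}(\{1\})$ inverts $P\mapsto v_P$. Each $v^{-1}(\{1\})$ is a prime $\bar n$-filter (upward and $*$-closed since $v$ preserves order and $*$, proper since $v(0)=0$, prime since $v(a)\vee v(b)=1$ forces $v(a)=1$ or $v(b)=1$ in the chain $\bar n$). One composite is $v_P(a)=1\iff T_1(a)\in P\iff a\in P$, using the identity $T_1(a)=a^{n-1}$ together with $T_1(a)\le a$, closure under $*$, and upward closure; this gives $v_P^{-1}(\{1\})=P$. The other is $v_{v^{-1}(\{1\})}=v$, immediate from Proposition \ref{5.6}(2). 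I expect the main obstacle to be exactly the homomorphism verification in (3): it rests entirely on transferring the inequalities $T_{r\circ s}(a\circ b)\ge T_r(a)\wedge T_s(b)$ and the disjoint-idempotent identities from $\bar n$ to $A$, so the delicate point is justifying that these hold in every {\L}$_n^c$-algebra and not merely in the generating chain.
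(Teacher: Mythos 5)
Your proposal is mathematically sound, but there is nothing in the thesis to compare it against: Proposition \ref{5.9} is stated without proof and imported wholesale from Maruyama \cite{YM} (``The following results w.r.t.\ {\L}$_n^c$-algebras may be obtained in \cite{YM}''), and the thesis only ever \emph{uses} these facts (e.g.\ in Lemma \ref{5.29} and Theorem \ref{5.44}). So you have supplied a self-contained reconstruction where the paper supplies a citation. Your route is the standard one and all three steps check out: the Zorn argument for (1) works, with the primality contradiction correctly engineered from distributivity of $*$ over $\vee$ and the pigeonhole inequality $(a\vee c)^{2k-1}\leq a^k\vee c^k$; (2) is indeed immediate from (1) with $b=0$; and the two-sided-inverse verification in (3), including well-definedness of $v_P$ via $\bigvee_{r\in\bar n}T_r(a)=1$ and pairwise disjointness of the $T_r(a)$, is correct, as is the use of $T_1(a)=a^{n-1}$ together with $T_1(a)\leq a$ to get $v_P^{-1}(\{1\})=P$.

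The load-bearing point you rightly flag at the end --- that every identity, hence every inequality $s\leq t$ (equivalently $s\wedge t=s$), valid in $\bar n$ holds in every {\L}$_n^c$-algebra --- does go through, and since your proposal leaves it as an expectation, here is the missing justification. Congruences in the signature with constants are exactly $MV$-congruences (nullary operations impose no closure condition), so Birkhoff's subdirect representation exhibits any {\L}$_n^c$-algebra as a subdirect product of its subdirectly irreducible quotients, which as $MV_n$-algebras are chains {\L}$_m$ with $m-1\mid n-1$. Since $\bar n$ is simple, it maps injectively into every nontrivial such quotient, forcing $m=n$, and rigidity of the finite $MV$-chain forces the constants to land at their standard values; hence every {\L}$_n^c$-algebra embeds, in the full signature, into a power of $\bar n$, which is precisely what your transfer arguments for $T_{r\circ s}(a\circ b)\geq T_r(a)\wedge T_s(b)$ and the disjoint-idempotent identities require. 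One small caution in part (2): for your generated filter to be closed under $*$ you must allow repeated factors in the products $a_1*\cdots*a_m$, so you are implicitly reading the f.i.p.\ of Definition \ref{5.8} as quantifying over finite sequences rather than literal subsets. With the subset reading the claim $0\notin F$ can fail: in $\bar 3$ the set $X=\{\frac{1}{2}\}$ has all subset-products nonzero, yet $\frac{1}{2}*\frac{1}{2}=0$ lies in the generated filter (and indeed no prime $\bar n$-filter contains $X$, since any homomorphism fixes the constant $\frac{1}{2}$). The sequence reading is the intended one, so your argument stands, but it deserves an explicit remark.
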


\subsection{$\bar{n}$-valued fuzzy topology}

An $\bar{n}$-fuzzy set on a set $X$ is defined as a function from $X$ to $\bar{n}$.
Let $\tilde{A}$, $\tilde{B}$ be two $\bar{n}$-fuzzy sets. Operations on $\bar{n}$-fuzzy sets are defined pointwise in the usual way. %Let us define $\bar{n}$-fuzzy set\\
%$\tilde{A} \wedge \lambda$ on $X$ as $(\tilde{A}\wedge \lambda)(x)=\tilde{A} (x)\wedge \lambda (x)$\\
%$\tilde{A} \vee \lambda$ on $X$ as $(\tilde{A}\vee \lambda)(x)=\tilde{A} (x)\vee \lambda (x)$\\
%$\tilde{A} * \lambda$ on $X$ as $(\tilde{A} * \lambda)(x)=\tilde{A} (x) * \lambda (x)$\\
%$\tilde{A} \oplus \lambda$ on $S$ as $(\tilde{A}\oplus \lambda)(x)=\tilde{A} (x)\oplus \lambda (x)$\\
%$\tilde{A} \rightarrow \lambda$ on $X$ as $(\tilde{A}\rightarrow \lambda)(x)=\tilde{A} (x)\rightarrow \lambda (x)$\\
%$(\tilde{A})^{\bot}$ on $X$ as $(\tilde{A})^{\bot}(x)=(\tilde{A} (x))^{\bot}$\\
Let $X$, $Y$ be sets and $f$ a function from $X$ to $Y$. For an $\bar{n}$-fuzzy set $\tilde{A}$ on $X$, define the direct image $f(\tilde{A}):Y\longrightarrow \bar{n}$ of $\tilde{A}$ under $f$ by
$f(\tilde{A})(y)=\bigvee \{ \tilde{A} (x):x \in f^{-1}(\{ y\})\}$ for $y\in Y$.
For $f:X\longrightarrow Y$ and an $\bar{n}$-fuzzy set $\tilde{B}$ on $Y$, define the inverse image $f^{-1}(\tilde{B}):X\longrightarrow \bar{n}$ of $\tilde{B}$ under $f$ by $f^{-1}(\tilde{B})=\tilde{B} \circ f$.\\
Note: $f^{-1}$ commutes with $\bigvee$, i.e., $f^{-1}(\bigvee_{i\in I}\tilde{B}_i)=\bigvee_{i\in I}f^{-1}(\tilde{B}_i)$ for $\bar{n}$-fuzzy sets $\tilde{B}_i$ on $Y$. These definitions are already given in the introduction.

For sets $X$ and $Y$, $Y^X$ denotes the set of all functions from $X$ to $Y$. By $r$ we shall also denote the constant function with value $r\in \bar{n}$.
%\begin{definition}\label{5.10}\cite{YM}
%Let $\tau$ be a collection of $\bar{n}$-fuzzy subsets of a set $X$, such that the following conditions hold.\\
%For a set $X$ and a $\tau$ of $\bar{n}$-fuzzy subsets of $X$, $(X,\tau)$ is called an $\bar{n}$-fuzzy topological space if and only if the following conditions hold:\\
%1. $r\in \tau$ for any $r\in \bar{n}$.\\ %where $r$ is the constant function from $X$ to $\bar n$ whose value is always $r$.\\
%2. if $\mu_1$, $\mu_2\in \tau$ then $\mu_1\wedge\mu_2\in \tau$\\
%3. if $\mu_i\in \tau$ for $i\in I$ then $\bigvee_{i\in I}\mu_i\in \tau$\\
%We call $\tau$ the $\bar{n}$-fuzzy topology of $(S,\tau)$ and an element of $\tau$ an open $\bar{n}$-fuzzy set on $(S,\tau)$.\\
%Then the pair $(X,\tau)$ is the L{\"o}wen-type $\bar{n}$-fuzzy topological space.
%\end{definition}
%Clearly this is L{\"o}wen's definition of fuzzy topological space.
The following definitions are standard ones restated from \cite{YM}.
\begin{definition}[Discrete $\bar{n}$-fuzzy topology]\label{5.11}\cite{YM}
For a set $X$, $\bar{n}^X$ is called the \textbf{discrete $\bar{n}$-fuzzy topology}\index{discrete $\bar{n}$-fuzzy topology} on $X$. $(X,\bar{n}^X)$ is called a discrete $\bar{n}$-fuzzy topological space.
\end{definition}
%\begin{definition}\label{5.12}\cite{YM}
%Let $(X,\tau_1)$, $(Y,\tau_2)$ be $\bar{n}$-fuzzy topological spaces. Then $f:X \longrightarrow Y$ is continuous iff for any open $\bar{n}$-fuzzy set $\mu$ on $(Y,\tau_2)$, $f^{-1}(\mu)$(i.e. $\mu \circ f$) is an open $\bar{n}$-fuzzy set on $(X,\tau_1)$.
%\end{definition}
\begin{definition}[Open basis]\label{5.13}\cite{YM}
Let $(X,\tau)$ be an $\bar{n}$-fuzzy topological space. Then an \textbf{open basis}\index{open basis} $\mathcal{B}$ of $(X,\tau)$ is a subset of $\tau$ such that the following holds:\\
(i) $\mathcal{B}$ is closed under finite meets;\\
(ii) for any $\tilde{A} \in \tau$, there are $\tilde{A}_i\in \mathcal{B}$ for $i\in I$ such that $\tilde{A}=\bigvee_{i\in I}\tilde{A}_i$.
\end{definition}
\begin{definition}[Kolmogorov space]\label{5.14}\cite{YM}
An $\bar{n}$-fuzzy topological space $(X,\tau)$ is \textbf{Kolmogorov}\index{Kolmogorov space} iff for any $x_1,x_2\in X$ with $x_1\neq x_2$, there is an open $\bar{n}$-fuzzy set $\tilde{A}$ on $(X,\tau)$ with $\tilde{A} (x_1)\neq \tilde{A} (x_2)$.
\end{definition}
\begin{definition}[Hausdorff space]\label{5.15}\cite{YM}
An $\bar{n}$-fuzzy topological space $(X,\tau)$ is \textbf{Hausdorff} \index{Hausdorff space} iff for any $x_1,x_2\in X$ with $x_1\neq x_2$, there are $r\in \bar{n}$ and open $\bar{n}$-fuzzy sets $\tilde{A_1},$ $\tilde{A_2}$ on $(X,\tau)$ such that $\tilde{A_1} (x_1)\geq r$, $\tilde{A_2} (x_2)\geq r$ and $\tilde{A_1}\wedge \tilde{A_2} <r$.
\end{definition}
\begin{definition}[Compact]\label{5.16}\cite{YM}
Let $(X,\tau)$ be an $\bar{n}$-fuzzy topological space. An $\bar{n}$-fuzzy set $\tilde{A}$ on $(X,\tau)$ is \textbf{compact} \index{compact} iff if $\tilde{A}\leq\bigvee_{i\in I}\tilde{A}_i$ for open $\bar{n}$-fuzzy sets $\tilde{A}_i$ on $X$, then there is a finite subset $J$ of $I$ s.t. $\tilde{A} \leq \bigvee_{i\in J}\tilde{A}_i$.
\end{definition}
$(X,\tau)$ is compact iff, if $\mathbf{1}=\bigvee_{i\in I}\tilde{A}_i$ for open $\bar{n}$-fuzzy sets $\tilde{A}_i$ on $X$, then there is a finite subset $J$ of $I$ s.t. $\mathbf{1}= \bigvee_{i\in J}\tilde{A}_i$, where $\mathbf{1}$ is the constant map taking every elements to $1$.
%Let $\mathbf{1}$ denote the constant function on $X$ whose value is always $1$. Then, $(X,\tau)$ is compact iff, if $\mathbf{1}=\bigvee_{i\in I}\mu_{i\in I}$ for open $\bar{n}$-fuzzy set $\mu_i$ on $(X,\tau)$, then there is a finite subset $J$ of $I$ s.t. $\mathbf{1}=\bigvee_{i\in J}\mu_i$.
%\begin{definition}
%Let $(S,\theta)$ be an $\bar{n}$-fuzzy space. Define $\theta^*=\{\mu^{-1}(\{1\}):\mu\in\theta\}$.\\
%Then $S^*$ denotes a topological space $(S,\theta^*)$.
%\end{definition} 

In \cite{YM}, Maruyama established the duality between {\L}$_n^c$-algebra and $\bar{n}$-fuzzy Boolean space, which is a zero dimensional, compact, Kolmogorov $\bar{n}$-fuzzy topological space, via suitable functors. Maruyama's work can be depicted as follows:
\begin{center}
\begin{tikzpicture}
%\node (C) at (0,3) {$\mathbf{FBSys_n}$};
\node (A) at (-2.5,0) {$\mathbf{FBS_n}$};
\node (B) at (2.5,0) {({\textbf{\L}}$\mathbf{_n^c}$\textbf{-Alg})$^{op}$};
%\node at (0,0) {\rotatebox{270}{$\Rightarrow$}};
\path[->,font=\scriptsize ,>=angle 90]
(A) edge [bend left=15] node[above] {$Cont$} (B);
\path[<-,font=\scriptsize ,>=angle 90]
(A) edge [bend right=15] node[below] {$Spec$} (B);
%\path[->,font=\scriptsize ,>=angle 90]
%(A) edge [bend left=20] node[above] {$J$} (C);
%\path[<-,font=\scriptsize ,>=angle 90]
%(A) edge [bend right=20] node[above] {$Ext$} (C);
%\path[->,font=\scriptsize ,>=angle 90]
%(C) edge [bend left=20] node[above] {$Lag$} (B);
%\path[<-,font=\scriptsize ,>=angle 90]
%(C) edge [bend right=20] node[above] {$S$} (B);
\end{tikzpicture}
\end{center} 
In our work we introduce the notion of $\bar{n}$-fuzzy Boolean system. Consequently duality between $\bar{n}$-fuzzy Boolean system and {\L}$_n^c$-algebra, as well as equivalence of $\bar{n}$-fuzzy Boolean system and $\bar{n}$-fuzzy Boolean space are established. As a result duality between $\bar{n}$-fuzzy Boolean space and {\L}$_n^c$-algebra is shown.
\subsection{Categories} 

\subsection*{$\bar n$-Fuzzy Boolean systems}
\begin{definition}[$\bar n$-fuzzy Boolean system]\label{5.17} \index{$\bar n$-fuzzy Boolean system}
 An \textbf{$\bar n$-fuzzy Boolean system} is a triple $(X,\models ,A)$ where $X$ is a non empty set, $A$ is an {\L}$_n^c$-algebra and $\models$ is an $n$-valued fuzzy relation from $X$ to $A$ such that 
 \begin{enumerate}
\item $gr(x\models a*b)=max(0,gr(x\models a)+gr(x\models b)-1)$;
\item $gr(x\models a^{\bot})=1-gr(x\models a)$;
\item $gr(x\models r)=r$ for all $r\in \bar n$;
\item $x_1\neq x_2\Rightarrow gr(x_1\models a)\neq gr(x_2\models a)$ for some $a\in A$.
\end{enumerate}
\end{definition}
It turns out that an $\bar{n}$-fuzzy Boolean system is an $\bar{n}$-fuzzy topological system with certain additional conditions, as $a\vee b=(a*b^{\bot})\oplus b$ and $a\wedge b=(a\oplus b^{\bot})*b$.
\begin{definition}\label{5.18} Let $D=(X,\models ,A)$ and $E=(Y,\models ',B)$ be $\bar n$-fuzzy Boolean systems. A \textbf{continuous map}\index{continuous map!between!fuzzy Boolean systems} $f:D\longrightarrow E$ is a pair $(f_1,f_2)$, where
\begin{enumerate}
\item $f_1:X\longrightarrow Y$ is a function;

\item $f_2:B\longrightarrow A$ is {\L}$_n^c$-homomorphism;

\item $gr(x\models f_2(b))=gr(f_1(x)\models' b)$, for all $x\in X$, $b\in B.$
\end{enumerate}
\end{definition}
\begin{definition}\label{5.19}
Let $D=(X,\models,A)$ be $\bar n$-fuzzy Boolean system. The identity map $I_D:D\longrightarrow D$ is the pair $(I_1,I_2)$ of identity maps, where $I_1:X\longrightarrow  X$ and $I_2:A\longrightarrow  A$.

Let $D=(X,\models',A)$, $E=(Y,\models'',B)$, $F=(Z,\models''',C)$. Let $(f_1,f_2):D\longrightarrow E$ and $(g_1,g_2):E\longrightarrow F$ be continuous maps. The composition $(g_1,g_2)\circ (f_1,f_2):D\longrightarrow F$ is defined by $(g_1,g_2)\circ (f_1,f_2)=(g_1\circ f_1,f_2\circ g_2)$, where $g_1\circ f_1:X\longrightarrow  Z$ and
$f_2\circ g_2:C\longrightarrow  A$.
\end{definition}
%\begin{lemma}\label{5.20}
%$(g_1,g_2)\circ (f_1,f_2):D\longrightarrow F$ is continuous. 
%\end{lemma}
%\begin{proof}
%It is enough to show that $gr(x\models' f_2\circ g_2(c))=gr(g_1\circ f_1(x)\models''' c)$.
%\\ We have,
%\begin{equation*}
%\begin{align*}
%gr(x\models' f_2\circ g_2(c))& =gr(x\models' f_2(g_2(c)))\\
%& =gr(f_1(x)\models'' g_2(c))\tag{as $(f_1,f_2)$ is continuous}\\%\label{eq:as $(f_1,f_2)$ is continuous}\\
%& =gr(g_1(f_1(x))\models''' c )\tag {as $(g_1,g_2)$ is continuous}\\%\label{eq:as $(g_1,g_2)$ is continuous}\\
%& =gr(g_1\circ f_1(x)\models''' c)
%\end{align*}
%\end{equation*}
%\end{proof}
%Thus we arrive at the following theorem. 
%\begin{theorem}\label{5.21}
%$\bar n$-fuzzy Boolean Systems together with continuous functions forms a category denoted $\mathbf{FBSys_n}$.
%\end{theorem}
$\bar n$-fuzzy Boolean Systems together with continuous functions forms a category denoted by $\mathbf{FBSys_n}$ \index{category!$\mathbf{FBSys_n}$}.
\begin{definition}\label{5.22}
A continuous map $f:D(\equiv(X,\models ,A))\longrightarrow E(\equiv(Y,\models ' ,B))$ is a homeomorphism if and only if there is a map $g:E\longrightarrow D$ such that $g\circ f=id_D$ and $f\circ g=id_E$.

When there is a homeomorphism \index{homeomorphism} from $D$ to $E$ then we will call that $D$ and $E$ are homeomorphic.
\end{definition}
This means that homeomorphic systems are structurally equivalent, i.e., (assuming that $f=(f_1,f_2)$ and $g=(g_1,g_2)$).
\begin{itemize}
\item $X$ and $Y$ are in bijective correspondence i.e there exists
 a bijection between $X$ and $Y$;
\item $A$ and $B$ are isomorphic {\L}$_n^c$-algebras;
\item $gr(x\models f_2(b))=gr(f_1(x)\models ' b)$ and $gr(g_1(y)\models a)=gr(y\models ' g_2(b))$.
\end{itemize}
Note that since $X$ and $Y$ are in bijective correspondence, the last condition reduces to $gr(x\models f_2(b))=gr(f_1(x)\models ' b)$.
\subsection*{The category {\textbf{\L}}$\mathbf{_n^c}$\textbf{-Alg}}
%\begin{theorem}\label{5.23}
{\L}$_n^c$-algebras together with {\L}$_n^c$-homomorphisms form the category {\textbf{\L}}$\mathbf{_n^c}$\textbf{-Alg} \index{{\textbf{\L}}$\mathbf{_n^c}$\textbf{-Alg}}.
%\end{theorem}
\subsection*{$\bar n$-fuzzy Boolean spaces}
Let $\bar{n}$ be equipped with the discrete $\bar{n}$-fuzzy topology.
\begin{definition}\label{5.24}\cite{YM}
For an $\bar{n}$-fuzzy topological space, $Cont(X,\tau)$ is defined as the set of all continuous functions from $X$ to $\bar{n}$. We endow $Cont(X,\tau)$ with the operations $(\wedge,\vee,*,\oplus,\rightarrow,^{\bot},0,\dfrac{1}{n-1},\dfrac{2}{n-1},...,
\dfrac{n-2}{n-1},1)$ defined point wise. %For $f$, $g\in Cont(X,\tau)$, define\\
%\begin{align*}
%(f\wedge g)(x)=f(x)\wedge g(x)\\
%(f\vee g)(x)=f(x)\vee g(x)\\
%(f*g)(x)=f(x)*g(x)\\
%(f\oplus g)(x)=f(x)\oplus g(x)\\
%(f\rightarrow g)(x)=f(x)\rightarrow g(x)\\
%(f)^{\bot}(x)=(f(x))^{\bot}
%\end{align*}
%$r\in \bar{n}$ is defined as the constant function on $(X,\tau)$ whose value is always $r$.\\
\end{definition}
\begin{lemma}\label{5.25}\cite{YM}
Let $(X,\tau)$ be an $\bar{n}$-fuzzy topological space. Then $Cont(X,\tau)$ is closed under the operations $(\wedge,\vee,*,\oplus,\rightarrow,^{\bot},0,\dfrac{1}{n-1},\dfrac{2}{n-1},\dfrac{3}{n-1},.....,1)$.
\end{lemma}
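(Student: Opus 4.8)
The plan is to reduce the whole statement to a single combinatorial description of what it means for a map into the \emph{discrete} space $\bar{n}$ to be continuous, and then to observe that every operation in the list acts on the finite value set $\bar{n}$ in a way that is visible on the level sets of the functions involved. For $s\in\bar{n}$ let $\delta_s:\bar{n}\longrightarrow\bar{n}$ be the $\bar{n}$-fuzzy set with $\delta_s(s)=1$ and $\delta_s(t)=0$ for $t\neq s$, and for $r\in\bar{n}$ let $\bar r$ denote the constant map with value $r$. Since $\bar{n}$ carries the discrete $\bar{n}$-fuzzy topology, every $\tilde{B}:\bar{n}\longrightarrow\bar{n}$ is open, and because $\bar{n}$ is finite one has the decomposition $\tilde{B}=\bigvee_{s\in\bar{n}}(\overline{\tilde{B}(s)}\wedge\delta_s)$ (checking at a point $t$, only the summand $s=t$ survives and contributes $\tilde{B}(t)$). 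Applying $f^{-1}$, using that $f^{-1}$ commutes with $\bigvee$ (noted before the statement) and, pointwise, with $\wedge$, yields $f^{-1}(\tilde{B})=\bigvee_{s}(\overline{\tilde{B}(s)}\wedge\chi_{\{f=s\}})$, where $\chi_{\{f=s\}}=f^{-1}(\delta_s)$ is the characteristic function of the level set $\{x:f(x)=s\}$ and $f^{-1}(\overline{r})=\overline{r}$. As $(X,\tau)$ is the stratified $\bar{n}$-fuzzy (L\"owen-type) space of the introduction, each constant $\overline{\tilde{B}(s)}$ already lies in $\tau$; hence $f^{-1}(\tilde{B})\in\tau$ for every $\tilde{B}$ once each $\chi_{\{f=s\}}\in\tau$, while conversely the choice $\tilde{B}=\delta_s$ forces $\chi_{\{f=s\}}\in\tau$. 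Thus $f\in Cont(X,\tau)$ if and only if $\chi_{\{f=s\}}\in\tau$ for every $s\in\bar{n}$.

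I would next dispose of the nullary operations and the negation directly. For $r\in\bar{n}$ the constant map $\bar r$ has level set $\{\bar r=s\}$ equal to $X$ when $s=r$ and to $\emptyset$ otherwise, so $\chi_{\{\bar r=s\}}\in\{\tilde{X},\tilde{\emptyset}\}\subseteq\tau$; hence every $\bar r$ lies in $Cont(X,\tau)$, which is exactly closure under $0,\frac{1}{n-1},\dots,1$. For $^{\bot}$, since $x\mapsto x^{\bot}=1-x$ is a bijection of $\bar{n}$, one has $\{f^{\bot}=t\}=\{f=1-t\}$, so $\chi_{\{f^{\bot}=t\}}=\chi_{\{f=1-t\}}\in\tau$ and $f^{\bot}\in Cont(X,\tau)$.

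For each binary operation $\odot\in\{\wedge,\vee,*,\oplus,\rightarrow\}$ the argument is uniform. Fix $f,g\in Cont(X,\tau)$ and $t\in\bar{n}$. Because the operation is computed pointwise, $\{x:(f\odot g)(x)=t\}=\bigcup_{(s_1,s_2):\,s_1\odot s_2=t}(\{f=s_1\}\cap\{g=s_2\})$, a finite union of intersections, whence $\chi_{\{f\odot g=t\}}=\bigvee_{s_1\odot s_2=t}(\chi_{\{f=s_1\}}\wedge\chi_{\{g=s_2\}})$. Each factor lies in $\tau$ by the characterization above, finite meets and finite joins of members of $\tau$ are again in $\tau$, and so $\chi_{\{f\odot g=t\}}\in\tau$ for every $t$; therefore $f\odot g\in Cont(X,\tau)$. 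This settles $\wedge,\vee,*,\oplus,\rightarrow$ simultaneously and finishes the proof.

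I do not expect a real obstacle, since the content is bookkeeping; the one step that must be handled with care is the reduction in the first paragraph, which depends on two features of the setting: the stratification of $(X,\tau)$, needed to guarantee that the constant fuzzy sets $\overline{\tilde{B}(s)}$ are open, and the finiteness of $\bar{n}$, which keeps every join above finite so that nothing beyond the closure axioms of a fuzzy topology is invoked. A more structural alternative would be to prove that $(f,g):X\longrightarrow\bar{n}\times\bar{n}$ is continuous into the (discrete) product and then compose with the continuous map $\odot:\bar{n}\times\bar{n}\longrightarrow\bar{n}$, but that requires first developing products of $\bar{n}$-fuzzy spaces, so the level-set computation is the more economical route.
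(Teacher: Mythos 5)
Your proof is correct, but note that there is no in-thesis proof to compare it against: Lemma \ref{5.25} is stated in the thesis without proof, imported from Maruyama \cite{YM}, so your argument stands as a self-contained reconstruction. The key reduction --- $f\in Cont(X,\tau)$ if and only if $\chi_{\{f=s\}}=f^{-1}(\delta_s)\in\tau$ for every $s\in\bar{n}$ --- is sound: the forward direction is immediate because $\delta_s$ is open in the discrete $\bar{n}$-fuzzy topology $\bar{n}^{\bar{n}}$ (Definition \ref{5.11}), and the converse uses exactly the two hypotheses you flag, namely stratification (an $\bar{n}$-fuzzy space is of L\"owen type, so each constant $\overline{\tilde{B}(s)}$ lies in $\tau$) and finiteness of $\bar{n}$ (so the decomposition $\tilde{B}=\bigvee_{s\in\bar{n}}(\overline{\tilde{B}(s)}\wedge\delta_s)$ and its image under $f^{-1}$ involve only finite joins and binary meets, covered by axioms (2) and (3) of the fuzzy topology); since $f^{-1}(\tilde{B})=\tilde{B}\circ f$ is composition, the commutation with $\bigvee$ and $\wedge$ is pointwise and trivial. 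The treatment of the constants, of $^{\bot}$ via the bijection $t\mapsto 1-t$ of $\bar{n}$, and of the binary operations via $\chi_{\{f\odot g=t\}}=\bigvee_{s_1\odot s_2=t}(\chi_{\{f=s_1\}}\wedge\chi_{\{g=s_2\}})$ is all correct. Two small points would be worth making explicit in a polished write-up, though neither affects correctness: first, that $\bar{n}$ is closed under each listed operation (clear from Definition \ref{5.3}, since $*$, $\oplus$, $\rightarrow$ yield fractions with denominator $n-1$ truncated at $0$ or $1$), which is what makes $f\odot g$ an $\bar{n}$-valued function and the index set $\{(s_1,s_2)\mid s_1\odot s_2=t\}$ meaningful; second, that when that index set is empty the join degenerates to $\tilde{\emptyset}$, which is in $\tau$ as the constant $0$. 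Your closing remark is also apt: the structural alternative through products of $\bar{n}$-fuzzy spaces would need machinery the thesis never develops, so the level-set computation is the right economy here.
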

\begin{definition}[Zero dimensional]\label{5.26}\cite{YM}
 An $\bar n$-fuzzy topological space $(X,\tau)$ is called \textbf{zero-dimensional} \index{zero-dimensional} if and only if $Cont(X,\tau)$ forms an open basis of $(X,\tau)$.
\end{definition}
\begin{definition}[$\bar{n}$-fuzzy Boolean space]\label{5.27}\cite{YM}
 An $\bar n$-fuzzy topological space $(X,\tau)$ is called an \textbf{$\bar{n}$-fuzzy Boolean space}\index{$\bar{n}$-fuzzy Boolean space} iff $(X,\tau)$ is zero dimensional (c.f. Definition \ref{5.26}), compact (c.f. Definition \ref{5.16}) and Kolmogorov (c.f. Definition \ref{5.14}).
\end{definition}
In \cite{YM}, the category of $\bar{n}$-fuzzy Boolean spaces and continuous functions has been defined and called $\mathbf{FBS_n}$ \index{category!$\mathbf{FBS_n}$}.
 
We shall now consider interrelation among the categories $\mathbf{FBSys_n}$, $\mathbf{FBS_n}$ and {\textbf{\L}}$\mathbf{_n^c}$\textbf{-Alg}. 

 \subsection{Functors}
 
 \subsection*{Functor $Ext_B$ from $\mathbf{FBSys_n}$ to $\mathbf{FBS_n}$}\index{functor!$Ext_B$} \index{functor!$Ext_B$}
 \begin{definition}\label{5.28}
 Let $(X,\models , A)$ be an $\bar{n}$-fuzzy Boolean system. For each $a\in A$, its \textbf{extent$_B$}\index{extent$_B$} in $(X,\models ,A)$ is a mapping $ext_B(a)$ from $X$ to $\bar n$ given by $ext_B(a)(x)=gr(x\models a)$.
 
In the set $ext_B(A)=\{ext_B(a):a\in A\}$, the operations $(\vee,\wedge,*,\oplus,\rightarrow,^{\bot},0,\dfrac{1}{n-1},\dfrac{2}{n-1},...,
\dfrac{n-2}{n-1},1)$ are defined pointwise. Thus $ext_B$ is a homomorphism from $A$ to $\bar{n}^X$.
 \end{definition}
\begin{lemma}\label{5.29}
 $(X,ext_B(A))$ is compact.
 \end{lemma}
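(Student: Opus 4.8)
The plan is to verify compactness directly from Definition \ref{5.16}, in the equivalent form recorded immediately after it: it suffices to show that whenever $\mathbf{1}=\bigvee_{i\in I}ext_B(a_i)$ for a family $\{a_i\}_{i\in I}\subseteq A$, there is a finite $J\subseteq I$ with $\mathbf{1}=\bigvee_{i\in J}ext_B(a_i)$. First I would unwind the hypothesis pointwise: since the join of $\bar n$-fuzzy sets is computed as a pointwise supremum and $\bar n$ is a finite chain, $\mathbf{1}=\bigvee_{i\in I}ext_B(a_i)$ says precisely that for every $x\in X$ the value $\sup_{i\in I}gr(x\models a_i)$ equals $1$ and is therefore attained, i.e. for each $x$ there is some $i$ with $gr(x\models a_i)=1$. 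A finite subcover amounts to producing a single finite $J$ that serves all $x$ simultaneously.

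I would argue by contradiction and feed the resulting obstruction into the prime-filter machinery of Proposition \ref{5.9}. The bridge to the algebra $A$ is the idempotent term $T_1$ of Proposition \ref{5.6}(2): for any homomorphism $v\colon A\to\bar n$ one has $v(T_1(a))=1$ iff $v(a)=1$, and $v(T_1(a))=0$ otherwise. Note that each $x\in X$ induces an {\L}$_n^c$-homomorphism $v_x\colon a\mapsto gr(x\models a)$, since conditions 1--3 of Definition \ref{5.17} give preservation of $*$, $^{\bot}$ and the constants, while $\vee,\wedge,\oplus,\to$ are term-definable from these (as in $a\vee b=(a*b^{\bot})\oplus b$). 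Setting $d_i=T_1(a_i)^{\bot}$, the negation of the existence of a finite subcover yields, for every finite $\{i_1,\dots,i_k\}\subseteq I$, a point $x$ with $gr(x\models a_{i_j})\neq 1$ for all $j$; hence $v_x(d_{i_1})=\dots=v_x(d_{i_k})=1$, so $v_x(d_{i_1}*\cdots*d_{i_k})=1\neq 0$ and therefore $d_{i_1}*\cdots*d_{i_k}\neq 0$. Thus the family $\{d_i\}_{i\in I}$ has the finite intersection property with respect to $*$ in the sense of Definition \ref{5.8}.

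By Proposition \ref{5.9}(2) there is then a prime $\bar n$-filter $P$ with $\{d_i\}_{i\in I}\subseteq P$, and by Proposition \ref{5.9}(3) the associated homomorphism $v_P\colon A\to\bar n$ satisfies $v_P^{-1}(\{1\})=P$. Consequently $v_P(d_i)=1$, whence $v_P(T_1(a_i))=0$ and $v_P(a_i)\neq 1$ for every $i\in I$. This exhibits a homomorphism that fails the cover condition on every member of the family. To close the contradiction I would identify $v_P$ with a point $x_0\in X$, i.e. $v_P=v_{x_0}$; then $gr(x_0\models a_i)\neq 1$ for all $i$ forces $\sup_{i}gr(x_0\models a_i)<1$, contradicting the pointwise reformulation of $\mathbf{1}=\bigvee_i ext_B(a_i)$.

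The main obstacle is exactly this last realization step. Definition \ref{5.17} guarantees that $x\mapsto v_x$ is injective (condition 4) but does not a priori make it surjective onto $Hom(A,\bar n)$, so the abstract $v_P$ need not arise from a point of $X$. I expect to need that the system is full, in the sense that every {\L}$_n^c$-homomorphism $A\to\bar n$ is realized as some $v_x$ — equivalently that $X$ is in bijective correspondence with the prime $\bar n$-filters of $A$ through Proposition \ref{5.9}(3). Under that correspondence the argument goes through verbatim, and it is precisely the property on which the spatial side of the duality between $\mathbf{FBSys_n}$ and the category of {\L}$_n^c$-algebras will rest; isolating and justifying this correspondence is the crux of the proof.
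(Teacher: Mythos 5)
Your route is the paper's proof read contrapositively, with the same machinery throughout: the idempotent term $T_1$ of Proposition \ref{5.6}, the elements $(T_1(a_i))^{\bot}$, the finite intersection property of Definition \ref{5.8}, and Proposition \ref{5.9}(2),(3) linking f.i.p., prime $\bar n$-filters and homomorphisms into $\bar n$. The paper runs forward: from $\mathbf{1}=\bigvee_{i}ext_B(a_i)$ it derives $\bigwedge_{i}gr(x\models (T_1(a_i))^{\bot})=0$ pointwise, asserts that no homomorphism $v:A\longrightarrow\bar n$ can have $v((T_1(a_i))^{\bot})=1$ for all $i$, concludes via Proposition \ref{5.9} that $\{(T_1(a_i))^{\bot}\}_{i\in I}$ lacks f.i.p., so some finite product is $0$, dualizes to $T_1(a_{i_1})\oplus\dots\oplus T_1(a_{i_m})=1$, and uses idempotence of the $T_1(a_{i_k})$ together with $T_1(x)\leq x$ to get $a_{i_1}\vee\dots\vee a_{i_m}=1$, i.e.\ a finite subcover. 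You run the identical chain backwards from the assumed failure of a finite subcover; the decomposition and the lemmas invoked coincide.

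The step you isolate as the crux is exactly where the paper's own proof is defective, so your diagnosis deserves spelling out. The paper justifies its key assertion thus: if $v((T_1(a_i))^{\bot})=1$ for all $i$, then $gr(x\models v((T_1(a_i))^{\bot}))=v((T_1(a_i))^{\bot})=1$ by condition 3 of Definition \ref{5.17}, ``which is a contradiction''. But $v((T_1(a_i))^{\bot})$ is a constant of $\bar n$, not the element $(T_1(a_i))^{\bot}$ itself, so this contradicts $\bigwedge_{i}gr(x\models(T_1(a_i))^{\bot})=0$ only when $gr(x\models v(b))=gr(x\models b)$, i.e.\ only when $v=v_x$ for the chosen point $x$ --- precisely the realizability you flag, which condition 4 of Definition \ref{5.17} (mere injectivity of $x\mapsto v_x$) does not provide. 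Without it the lemma is in fact false: take $n=2$, $A=2^{\mathbb{N}}$, $X=\mathbb{N}$, $gr(m\models S)=1$ if $m\in S$ and $0$ otherwise; all conditions of Definition \ref{5.17} hold (arbitrary joins are even preserved), yet $ext_B(A)$ is the discrete topology on $\mathbb{N}$ and the cover by singletons has no finite subcover --- a non-principal ultrafilter supplies exactly the unrealized $v_P$ of your argument. So the lemma tacitly assumes every {\L}$_n^c$-homomorphism $A\longrightarrow\bar n$ is of the form $v_x$ (true for the spectral systems $S_B(A)$, where $X=Hom(A,\bar n)$), and the same unstated assumption resurfaces in the proof of Theorem \ref{5.48}, where surjectivity of $p^*$ is declared clear from the construction. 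Your proposal is not missing an idea the paper supplies; it makes explicit a hypothesis the paper uses silently.
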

\begin{proof}
Let us assume that $\mathbf{1}=\bigvee_{i\in I} ext_B(a_i)$ for some $a_i\in A$, where $\mathbf{1}$ is the constant function defined on $X$, whose value is always 1.
Now, we have, 
$$\mathbf{1}=T_1\circ \mathbf{1}= T_1\circ \bigvee_{i\in I} ext_B(a_i)=\bigvee_{i\in I} T_1\circ ext_B(a_i)=\bigvee_{i\in I} ext_B(T_1(a_i)).$$
So, $$\mathbf{0}=(\bigvee_{i\in I} ext_B(T_1(a_i)))^{\bot}=\bigwedge_{i\in I}ext_B((T_1(a_i))^{\bot}).$$ Therefore, $$0=\mathbf{0}(x)=(\bigwedge_{i\in I}ext_B((T_1(a_i))^{\bot}))(x),\ \text{for all}\ x.$$ Hence for a fixed $x$, $$0= (\bigwedge_{i\in I}ext_B((T_1(a_i))^{\bot}))(x)= \bigwedge_{i\in I}ext_B((T_1(a_i))^{\bot})(x)= \bigwedge_{i\in I}gr(x\models (T_1(a_i))^{\bot}).$$ Let there be a homomorphism $v:A\longrightarrow \bar{n}(\subseteq A)$ such that $v((T_1(a_i))^{\bot})=1$ for all $i\in I$. Then $$gr(x\models v((T_1(a_i))^{\bot}))=v((T_1(a_i))^{\bot})=1,$$ as $v((T_1(a_i))^{\bot})\in \bar{n}$ for all $i\in I$. Therefore, $$\bigwedge_{i\in I} gr(x\models v((T_1(a_i))^{\bot}))=1,$$ which is a contradiction. So, there is no homomorphism $v:A\longrightarrow \bar{n}$ such that $v((T_1(a_i))^{\bot})=1$ for all $i\in I$. By Proposition \ref{5.9}.3 there is no prime $\bar n$-filter of A which contains $\{ (T_1(a_i))^{\bot}:i\in I\}$. So by Proposition \ref{5.9}.2 $\{(T_1(a_i))^{\bot}:i\in I\}$ does not have f.i.p. with respect to $*$ and so there is a finite subset $\{ i_1,.....,i_m\}$ of $I$ s.t.
$$(T_1(a_{i_1}))^{\bot}*.....*(T_1(a_{i_m}))^{\bot}=0,$$ which yields $$T_1(a_{i_1})\oplus ......\oplus T_1(a_{i_m})=1.$$ Now $T_1(a_{i_k})$ is idempotent and therefore, for any $k\in \{ 1,....,m\}$ we have $$T_1(a_{i_1})\vee .....\vee T_1(a_{i_m})=1.$$ That is, $T_1(a_{i_1}\vee ....\vee a_{i_m})=1$.
Since, $T_1(x)\leq x$, so $a_{i_1}\vee .....\vee a_{i_m}=1$. Therefore $ext_B(a_{i_1}\vee .....\vee a_{i_m})=1$.
\end{proof}
\begin{lemma}\label{5.30}
 $(X,ext_B(A))$ is Kolmogorov.
 \end{lemma}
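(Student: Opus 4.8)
The plan is to unwind the definition of the Kolmogorov separation property (\rdef{5.14}) and match it directly against condition~(4) in the definition of an $\bar{n}$-fuzzy Boolean system (\rdef{5.17}); the two are essentially the same statement read through the translation $ext_B$. First I would record the two facts I need. By \rdef{5.14}, the space $(X,ext_B(A))$ is Kolmogorov precisely when, for every pair $x_1\neq x_2$ in $X$, there is an open $\bar{n}$-fuzzy set $\tilde A$ on $(X,ext_B(A))$ with $\tilde A(x_1)\neq\tilde A(x_2)$. And by \rdef{5.28}, the open $\bar{n}$-fuzzy sets are exactly the maps $ext_B(a)$ for $a\in A$, where $ext_B(a)(x)=gr(x\models a)$.

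The key step is then a single invocation of condition~(4). Fix $x_1,x_2\in X$ with $x_1\neq x_2$. Condition~(4) of \rdef{5.17} guarantees the existence of some $a\in A$ with $gr(x_1\models a)\neq gr(x_2\models a)$. Setting $\tilde A=ext_B(a)$, which is an open $\bar{n}$-fuzzy set of $(X,ext_B(A))$, we obtain
$$\tilde A(x_1)=ext_B(a)(x_1)=gr(x_1\models a)\neq gr(x_2\models a)=ext_B(a)(x_2)=\tilde A(x_2).$$
Thus the required separating open set exists, and $(X,ext_B(A))$ is Kolmogorov.

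I expect there to be essentially no obstacle here: condition~(4) was built into the notion of $\bar{n}$-fuzzy Boolean system exactly so as to force the Kolmogorov axiom on the associated space, so this lemma is the easy companion to \rlemma{5.29}, where the genuine work (the prime-filter and finite-intersection-property machinery of \rprop{5.9}) was needed for compactness. The only point worth a line of care is the tacit appeal to $(X,ext_B(A))$ being a bona fide $\bar{n}$-fuzzy topology in the first place, so that $ext_B(a)$ legitimately counts as an open fuzzy set; this is already underwritten by the observation recorded after \rdef{5.17} that every $\bar{n}$-fuzzy Boolean system is in particular an $\bar{n}$-fuzzy topological system, together with the extent-based construction of a fuzzy topology established for topological systems earlier in the thesis.
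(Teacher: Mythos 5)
Your proposal is correct and matches the paper's own proof exactly: given $x_1\neq x_2$, condition~(4) of Definition~\ref{5.17} yields $a\in A$ with $gr(x_1\models a)\neq gr(x_2\models a)$, i.e.\ $ext_B(a)(x_1)\neq ext_B(a)(x_2)$, which is precisely the Kolmogorov separation via the open set $ext_B(a)$. Your extra remark that $ext_B(A)$ is indeed a fuzzy topology (so $ext_B(a)$ counts as open) is a point the paper leaves tacit, but it is harmless and already underwritten by the surrounding development.
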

\begin{proof}
Let $x_1\neq x_2$. Then there is $a\in A$ such that $$gr(x_1\models a)\neq gr(x_2\models a),$$ That is, $$ext_B(a)(x_1)\neq ext(a)(x_2).$$
\end{proof}
\begin{lemma}\label{5.31}
$(X,ext_B(A))$ is zero-dimensional.
\end{lemma}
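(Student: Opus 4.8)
The plan is to show directly that $Cont(X,ext_B(A)) = ext_B(A)$, from which zero-dimensionality (Definition \ref{5.26}) follows almost immediately. Recall that the definition of open basis (Definition \ref{5.13}) requires $Cont(X,ext_B(A))$ to be a subset of $ext_B(A)$ that is closed under finite meets and that generates every open set via arbitrary joins; once the above equality is in hand, all three demands become trivial or are supplied by Lemma \ref{5.25}.

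First I would record the key algebraic fact that for each fixed $x\in X$ the assignment $v_x := gr(x\models -):A\longrightarrow \bar{n}$ is an {\L}$_n^c$-homomorphism. Conditions 1--3 of Definition \ref{5.17} give preservation of $*$, ${}^{\bot}$ and of the constants $r\in\bar{n}$, and since $\oplus$, $\vee$, $\wedge$ and $\rightarrow$ are all definable from $*$ and ${}^{\bot}$ (e.g.\ $a\oplus b=(a^{\bot}*b^{\bot})^{\bot}$ and $a\vee b=(a*b^{\bot})\oplus b$), $v_x$ preserves every operation. In particular, by Proposition \ref{5.6}(2), for every $r\in\bar{n}$ we have $gr(x\models T_r(a))=1$ when $gr(x\models a)=r$ and $gr(x\models T_r(a))=0$ otherwise.

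The substantial step is to prove $ext_B(A)\subseteq Cont(X,ext_B(A))$, i.e.\ that each $ext_B(a)$ is a continuous map from $X$ to the discrete space $\bar{n}$. Fix $a\in A$ and an arbitrary $\tilde{B}\in\bar{n}^{\bar{n}}$ (every such map is open in the discrete topology). I would produce an element $b\in A$ whose extent realises the preimage, namely
\[
b=\bigvee_{r\in\bar{n}}\big(\tilde{B}(r)\wedge T_r(a)\big),
\]
a finite join (as $\bar{n}$ is finite) in which each $\tilde{B}(r)$ is read as the corresponding constant of $A$. Applying $v_x$ and using that $v_x(T_r(a))$ equals $1$ exactly when $gr(x\models a)=r$ and $0$ otherwise, one obtains $gr(x\models b)=\tilde{B}(gr(x\models a))=(ext_B(a))^{-1}(\tilde{B})(x)$ for every $x$. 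Hence $(ext_B(a))^{-1}(\tilde{B})=ext_B(b)\in ext_B(A)$, so $ext_B(a)$ is continuous. The reverse inclusion $Cont(X,ext_B(A))\subseteq ext_B(A)$ is immediate: if $g:X\longrightarrow\bar{n}$ is continuous, then taking the identity map $id_{\bar{n}}\in\bar{n}^{\bar{n}}$ gives $g=g^{-1}(id_{\bar{n}})\in ext_B(A)$.

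With the equality $Cont(X,ext_B(A))=ext_B(A)$ established, zero-dimensionality follows: the family is a subset of the topology; it is closed under finite meets by Lemma \ref{5.25} (or directly, since $ext_B(a_1)\wedge ext_B(a_2)=ext_B(a_1\wedge a_2)$ because $v_x$ is a homomorphism); and condition (ii) of Definition \ref{5.13} holds trivially, because every open set, being itself an element of $Cont(X,ext_B(A))$, is the one-term join of basis elements. The main obstacle is the middle step --- exhibiting the preimage $(ext_B(a))^{-1}(\tilde{B})$ as a genuine extent --- and this is precisely where the idempotent separating terms $T_r$ of Proposition \ref{5.6} carry the argument; everything else is bookkeeping.
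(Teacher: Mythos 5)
Your proof is correct and follows essentially the same route as the paper: the crux in both is to show each $ext_B(a)$ is continuous by realising the preimage of an arbitrary $\tilde{B}\in\bar{n}^{\bar{n}}$ as an extent via the terms of Proposition \ref{5.6} --- the paper writes it as $ext_B\big(\bigvee_{r\in\bar{n}}S_{\tilde{B}(r)}(T_r(a))\big)$, and your $\bigvee_{r\in\bar{n}}\big(\tilde{B}(r)\wedge T_r(a)\big)$ is the same element in disguise, since meeting the $\{0,1\}$-valued $T_r(a)$ with the constant $\tilde{B}(r)$ implements $S_{\tilde{B}(r)}$ there, and both computations rest on $gr(x\models -)$ being a homomorphism. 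Your additional check that $Cont(X,ext_B(A))\subseteq ext_B(A)$ via $g=g^{-1}(id_{\bar{n}})$ tidies up an inclusion the paper leaves implicit, but it does not change the substance of the argument.
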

\begin{proof}
Here we have to show that $Cont(X,ext_B(A))$ forms an open basis of $(X, ext_B(A))$. For $f,\ g\in Cont(X,ext_B(A))$, $$(f\wedge g)(x)=f(x)\wedge g(x).$$ Hence for $a\in A$ if $ext_B(a)\in Cont(X,ext_B(A)),$ then $Cont(X,\tau)$ is zero-dimensional. Let $\tilde{A}$ be an $\bar n$-fuzzy set on $\bar n$. We have,
\begin{align*}
((ext_B(a))^{-1}(\tilde{A} ))(x) & = (\tilde{A} \circ ext_B(a))(x)\\
& = \tilde{A} (ext_B(a)(x))\\
& = \bigvee_{r\in \bar n} (S_{\tilde{A} (r)}\circ T_r)(ext_B(a)(x))\\
& = \bigvee_{r\in \bar n} (S_{\tilde{A} (r)}\circ T_r)(gr(x\models a))\\
& = \bigvee_{r\in \bar n} (S_{\tilde{A} (r)}(T_r(gr(x\models a))))\\
& = \bigvee_{r\in \bar n} S_{\tilde{A} (r)} (gr(x\models T_r(a)))\\
& = \bigvee_{r\in \bar n} gr(x\models S_{\tilde{A} (r)}(T_r(a)))\\
& = gr(x\models \bigvee_{r\in \bar n} S_{\tilde{A} (r)}(T_r(a))).
\end{align*} 
Therefore, $$ext_B(\bigvee_{r\in \bar n} S_{\tilde{A} (r)}(T_r(a)))\in ext_B(A).$$
Hence, $ext_B(a)\in Cont(X,ext_B(A))$.
\end{proof}
From Lemmas \ref{5.29}, \ref{5.30}, \ref{5.31} we get the following theorem.
\begin{theorem}\label{5.32}
Let $(X,\models ,A)$ be an $\bar{n}$-fuzzy Boolean system. Then $(X,ext_B(A))$ is an $\bar{n}$-fuzzy Boolean space. 
\end{theorem}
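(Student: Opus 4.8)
The plan is to check directly that $(X, ext_B(A))$ meets the definition of an $\bar{n}$-fuzzy Boolean space (Definition \ref{5.27}): it must be an $\bar{n}$-fuzzy topological space that is simultaneously zero-dimensional, compact, and Kolmogorov. Since each of the latter three properties has already been isolated as its own lemma, the theorem is really an assembly step, preceded only by the verification that $ext_B(A)$ is in fact a topology.

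First I would confirm that $ext_B(A)$ forms an $\bar{n}$-fuzzy topology on $X$. By Definition \ref{5.28} the extent map $ext_B$ is a homomorphism from $A$ into $\bar{n}^X$; consequently $ext_B(a_1)\cap ext_B(a_2)=ext_B(a_1\wedge a_2)$ and $\bigcup_i ext_B(a_i)=ext_B(\bigvee_i a_i)$, so $ext_B(A)$ is closed under finite meets and arbitrary joins. Moreover, condition 3 of Definition \ref{5.17} gives that $ext_B(r)$ is the constant map with value $r$ for every $r\in\bar{n}$; in particular $\mathbf{0}$ and $\mathbf{1}$ (and indeed all the constant maps required of a stratified $\bar{n}$-fuzzy topology) lie in $ext_B(A)$. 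This is exactly what is needed for $(X, ext_B(A))$ to be an $\bar{n}$-fuzzy topological space; the fact that an $\bar{n}$-fuzzy Boolean system is a specialization of an $\bar{n}$-fuzzy topological system, with $\vee$ and $\wedge$ recovered from the {\L}$_n^c$-operations via $a\vee b=(a*b^{\bot})\oplus b$ and $a\wedge b=(a\oplus b^{\bot})*b$, makes this transparent.

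With the topology in hand, I would simply invoke the three lemmas: Lemma \ref{5.29} supplies compactness, Lemma \ref{5.30} the Kolmogorov separation property, and Lemma \ref{5.31} zero-dimensionality. These are precisely the three conjuncts of Definition \ref{5.27}, so $(X, ext_B(A))$ is an $\bar{n}$-fuzzy Boolean space, which finishes the argument.

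The genuinely substantial content lies not in this theorem but in Lemma \ref{5.31} (zero-dimensionality), which must express an arbitrary continuous map into $\bar{n}$ as a join of extents using the idempotent terms $T_r(-)$ together with the terms $S_r(-)$ furnished by Proposition \ref{5.6}; once those lemmas are granted, the present theorem follows by direct synthesis, and the main obstacle has effectively already been cleared.
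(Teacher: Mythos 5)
Your proposal is correct and follows the paper's own route: the paper derives Theorem~\ref{5.32} precisely by assembling Lemmas~\ref{5.29}, \ref{5.30} and \ref{5.31} (compactness, Kolmogorov, zero-dimensionality). Your preliminary check that $ext_B(A)$ is an $\bar{n}$-fuzzy topology is a reasonable explicit addition of a step the paper leaves implicit (via the remark after Definition~\ref{5.17} that a Boolean system is a fuzzy topological system, together with the Chapter~2 argument of Lemma~\ref{3.9_1}), but it does not change the argument.
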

\begin{definition}\label{5.34}
$\mathbf{Ext_B}$\index{functor!$Ext_B$} is a functor from $\mathbf{FBSys_n}$ to $\mathbf{FBS_n}$ defined as follows.\\
$Ext_B$ acts on an object $(X,\models ',A)$ as $Ext_B(X,\models ',A)=(X,ext_B(A))$ where\\ $ext_B(A)=\{ext_B(a):a\in A\}$ and on a morphism $(f_1,f_2)$ as $Ext_B(f_1,f_2)=f_1$.
\end{definition}
Theorem \ref{5.32} and the fact that ``if $(f_1,f_2)$ is continuous then $f_1$ is $\bar n$-fuzzy continuous" shows that $Ext_B$ \index{functor!$Ext_B$} is a functor.
%The diagram below express the above fact-
%\begin{center}     
%\begin{tabular}{ l | r  } 
%$FBSy_n$ & $FBS_n$\\
%\hline
%{\begin{tikzpicture}[description/.style={fill=white,inner sep=2pt}] 
%    \matrix (m) [matrix of math nodes, row sep=2.5em, column sep=2.5em]
%    {& &(X,\models,A)  \\
%        & &(Y, \models ',B) \\ }; 
%    \path[->,font=\scriptsize] 

        %(m-1-5) edge node[auto,swap] {$j$} (m-1-3)
        %(m-1-5) edge node[auto] {$\psi$} (m-2-3)
%        (m-1-3) edge node[auto] {$(f_1,f_2)$} (m-2-3);
%\end{tikzpicture}}  &  {\begin{tikzpicture}[description/.style={fill=white,inner sep=2pt}] 
%    \matrix (m) [matrix of math nodes, row sep=2.5em, column sep=2.5em]
%    {& &(X,ext(A))  \\
%        & &(Y,ext(B)) \\ }; 
%    \path[->,font=\scriptsize] 

        %(m-1-5) edge node[auto,swap] {$j$} (m-1-3)
        %(m-1-5) edge node[auto] {$\psi$} (m-2-3)
%        (m-1-3) edge node[auto] {$f_1$} (m-2-3);
%\end{tikzpicture}}\\
%\end{tabular}
%\end{center}
\subsection*{Functor $J_B$ from $\mathbf{FBS_n}$ to $\mathbf{FBSys_n}$}\index{functor!$J_B$}
\begin{lemma}\label{5.35} \index{functor!$J_B$}
Let $(X,\tau)$ be an $\bar{n}$-fuzzy Boolean space. Then $(X, \in , Cont(X,\tau))$ is an $\bar n$-fuzzy Boolean system.
\end{lemma}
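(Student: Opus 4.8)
The plan is to verify directly that $(X,\in,Cont(X,\tau))$ satisfies the four clauses of Definition \ref{5.17}, after first checking that the underlying algebra and relation are of the required type. First I would note that $Cont(X,\tau)$ is an {\L}$_n^c$-algebra: by Lemma \ref{5.25} it is closed under all the operations $(\wedge,\vee,*,\oplus,\rightarrow,^{\bot},0,\frac{1}{n-1},\dots,1)$ taken pointwise, and since every constant function with value $r\in\bar{n}$ is continuous (hence lies in $Cont(X,\tau)$), the algebra $\bar{n}$ is embedded in it, which is exactly the requirement in Definition \ref{5.4}. The relation $\in$ is defined, as for the functor $J_B$, by $gr(x\in f)=f(x)$; since $f\colon X\to\bar{n}$, this assigns to each pair a value in $\bar{n}$, so $\in$ is a genuine $n$-valued fuzzy relation from $X$ to $Cont(X,\tau)$.

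Next I would dispatch clauses 1--3 by pointwise computation, using that the operations on $Cont(X,\tau)$ are defined pointwise and the definitions of $*$ and $^{\bot}$ on $\bar{n}$ given in Definition \ref{5.3}. Concretely, $gr(x\in f*g)=(f*g)(x)=\max(0,f(x)+g(x)-1)=\max(0,gr(x\in f)+gr(x\in g)-1)$; $gr(x\in f^{\bot})=f^{\bot}(x)=1-f(x)=1-gr(x\in f)$; and for a constant function $r$, $gr(x\in r)=r(x)=r$. These are immediate and constitute the routine part of the argument.

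The main work is clause 4, the Kolmogorov-type separation requirement that $x_1\neq x_2$ force $gr(x_1\in f)\neq gr(x_2\in f)$ for some $f$. Here I would combine the two hypotheses packaged in the notion of $\bar{n}$-fuzzy Boolean space (Definition \ref{5.27}). Since $(X,\tau)$ is Kolmogorov (Definition \ref{5.14}), there is an open $\bar{n}$-fuzzy set $\tilde{A}\in\tau$ with $\tilde{A}(x_1)\neq\tilde{A}(x_2)$. This $\tilde{A}$ need not itself be continuous into $\bar{n}$, so I would invoke zero-dimensionality (Definition \ref{5.26}): $Cont(X,\tau)$ is an open basis, hence $\tilde{A}=\bigvee_{i\in I}f_i$ for suitable $f_i\in Cont(X,\tau)$. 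Evaluating at the two points gives $\sup_{i}f_i(x_1)=\tilde{A}(x_1)\neq\tilde{A}(x_2)=\sup_i f_i(x_2)$, so the families $\{f_i(x_1)\}$ and $\{f_i(x_2)\}$ cannot be pointwise equal; thus some $f_{i_0}$ satisfies $f_{i_0}(x_1)\neq f_{i_0}(x_2)$, i.e. $gr(x_1\in f_{i_0})\neq gr(x_2\in f_{i_0})$, as required.

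The only genuine obstacle is this last step: the Kolmogorov property alone separates points by an arbitrary open fuzzy set, whereas clause 4 demands separation by an element of $Cont(X,\tau)$. Bridging this gap is precisely where zero-dimensionality enters, and the passage from the supremum decomposition of $\tilde{A}$ to a single separating basic continuous function is the crux of the lemma; the remaining verifications are purely pointwise and routine.
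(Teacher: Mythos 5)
Your proposal is correct and follows essentially the same route as the paper: pointwise verification of clauses 1--3 using the pointwise operations on $Cont(X,\tau)$, and the Kolmogorov-plus-zero-dimensionality combination for clause 4. The paper disposes of clause 4 in a single sentence (``As $(X,\tau)$ is Kolmogorov and zero-dimensional, for $x_1\neq x_2$, there exists $t\in Cont(X,\tau)$ such that $t(x_1)\neq t(x_2)$''), and your open-basis decomposition $\tilde{A}=\bigvee_{i\in I}f_i$ with the observation that unequal suprema force some $f_{i_0}(x_1)\neq f_{i_0}(x_2)$ is precisely the justification the paper leaves implicit.
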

\begin{proof}
Here $X$ is a set and $Cont(X,\tau)$ is an {\L}$_n^c$-algebra, where the operations of $Cont(X,\tau)$ are defined point wise.
It will be enough to show that\\
1. $gr(x\in t_1*t_2)=max(0,gr(x\in t_1)+gr(x\in t_2)-1)$,\\
2. $gr(x\in t_1^{\bot})=1-gr(x\in t_1)$,\\
3. $gr(x\in r)=r$ for all $r\in \bar n$ and \\
4. $x_1\neq x_2\Rightarrow gr(x_1\in t)\neq gr(x_2\in t)$ for some $t\in Cont(X,\tau)$ holds.

Let us verify the above mentioned points.

1. $gr(x\in t_1*t_2)  =t_1*t_2(x)=t_1(x)*t_2(x)=max(0,gr(x\in t_1)+gr(x\in t_2)-1)$.

2. $gr(x\in t_1^{\bot})  =t_1^{\bot}(x)=1-t_1(x)=1-gr(x\in t_1)$.

3. $gr(x\in r)=r(x)=r$ for all $r\in \bar n$.

4. As $(X,\tau)$ is Kolmogorov and zero-dimensional, for $x_1\neq x_2$, there exists $t\in Cont(X,\tau)$ such that $t(x_1)\neq t(x_2)$.
Hence, we have, $x_1\neq x_2\Rightarrow gr(x_1\in t)\neq gr(x_2\in t)$ for some $t\in Cont(X,\tau)$, which completes the proof.
\end{proof}
\begin{lemma}\label{5.36}
$(f,\_\circ f)$ is continuous provided that $f$ is $\bar{n}$ fuzzy continuous.
\end{lemma}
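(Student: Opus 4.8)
The plan is to verify the three clauses of Definition \ref{5.18} for the pair $(f,\_\circ f)$. Recall that the functor $J_B$ sends the object $(X,\tau_1)$ to $(X,\in,Cont(X,\tau_1))$ and $(Y,\tau_2)$ to $(Y,\in,Cont(Y,\tau_2))$, so the morphism we must produce runs from $(X,\in,Cont(X,\tau_1))$ to $(Y,\in,Cont(Y,\tau_2))$; its first component is the set map $f:X\longrightarrow Y$ and its second component is $\_\circ f:Cont(Y,\tau_2)\longrightarrow Cont(X,\tau_1)$, $t\longmapsto t\circ f$. Thus clause (1) of Definition \ref{5.18} holds trivially, and the work is to check that $\_\circ f$ is a well-defined {\L}$_n^c$-homomorphism and that the compatibility equation holds.

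First I would check that $\_\circ f$ is well defined, i.e. that $t\circ f\in Cont(X,\tau_1)$ whenever $t\in Cont(Y,\tau_2)$; this is the step that consumes the hypothesis that $f$ is $\bar n$-fuzzy continuous. For any $\bar n$-fuzzy set $\tilde A$ on $\bar n$, using the identity $g^{-1}(\tilde A)=\tilde A\circ g$ one has $(t\circ f)^{-1}(\tilde A)=\tilde A\circ(t\circ f)=f^{-1}(t^{-1}(\tilde A))$. Since $t$ is continuous, $t^{-1}(\tilde A)\in\tau_2$, and since $f$ is $\bar n$-fuzzy continuous, $f^{-1}(t^{-1}(\tilde A))\in\tau_1$. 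Hence $t\circ f$ is continuous, so it lies in $Cont(X,\tau_1)$, and $\_\circ f$ indeed lands in the intended algebra.

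Next I would verify clause (2), that $\_\circ f$ is an {\L}$_n^c$-homomorphism. The key observation is that by Definition \ref{5.24} the operations on each $Cont$-algebra are computed pointwise, and precomposition with a fixed map respects pointwise operations: for each operation $\odot$ among $\wedge,\vee,*,\oplus,\rightarrow$ and all $s,t\in Cont(Y,\tau_2)$ one has $(s\odot t)\circ f=(s\circ f)\odot(t\circ f)$ and $(t^{\bot})\circ f=(t\circ f)^{\bot}$, checked argument by argument, while each constant function $r$ on $Y$ is carried to the corresponding constant function $r$ on $X$. Hence all operations and nullary constants are preserved, so $\_\circ f$ is an {\L}$_n^c$-homomorphism; this part is purely routine.

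Finally I would check clause (3), the compatibility condition $gr(x\models f_2(t))=gr(f(x)\models' t)$ for all $x\in X$ and $t\in Cont(Y,\tau_2)$. Unwinding the defining equation $gr(x\in u)=u(x)$ of the satisfiability relation produced by $J_B$, the left-hand side equals $(t\circ f)(x)=t(f(x))$, while the right-hand side equals $t(f(x))$, so the two coincide. The only genuinely non-formal step is the well-definedness check of the second paragraph, where $\bar n$-fuzzy continuity of $f$ is used; everything else reduces to the pointwise nature of the algebra operations and the definition of $\in$, so I expect no real obstacle beyond this single point.
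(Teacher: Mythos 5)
Your proof is correct and takes essentially the same route as the paper's: the paper dismisses the fact that $\_\circ f:Cont(Y,\tau_2)\longrightarrow Cont(X,\tau_1)$ is an {\L}$_n^c$-homomorphism as clear and verifies only the compatibility equation $gr(x\in t_2\circ f)=t_2\circ f(x)=t_2(f(x))=gr(f(x)\in t_2)$, exactly as in your final paragraph. Your second and third paragraphs simply fill in the details the paper leaves implicit, correctly identifying the well-definedness check $(t\circ f)^{-1}(\tilde{A})=f^{-1}(t^{-1}(\tilde{A}))\in\tau_1$ as the one place where $\bar{n}$-fuzzy continuity of $f$ is actually used.
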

\begin{proof}
Here $f:X\longrightarrow Y$ is a function and clearly $\_\circ f:Cont(Y,\tau_2)\longrightarrow Cont(X,\tau_1)$ is an {\L}$_n^c$-hom.
It suffices to show that $gr(x\in t_2\circ f)=gr(f(x)\in t_2)$.
Now, $gr(x\in t_2\circ f)  =t_2\circ f(x)=t_2(f(x))=gr(f(x)\in t_2)$.
\end{proof}
\begin{definition}\label{5.37}
$\mathbf{J_B}$\index{functor!$J_B$} is a functor from $\mathbf{FBS_n}$ to $\mathbf{FBSys_n}$ defined thus.\\
$J_B$ acts on an object $(X,\tau)$ as $J_B(X,\tau)=(X, \in ,Cont(X,\tau))$ where $gr(x\in t)=t(x)$ and on a morphism $f$ as $J_B(f)=(f,\_\circ f)$.
\end{definition}
Lemmas \ref{5.35} and \ref{5.36} shows that $J_B$ is a functor.
%The diagram below express the above facts-
%\begin{center}     
%\begin{tabular}{ l | r  } 
%$FBS_n$ & $FBSy_n$\\
%\hline
%{\begin{tikzpicture}[description/.style={fill=white,inner sep=2pt}] 
%    \matrix (m) [matrix of math nodes, row sep=2.5em, column sep=2.5em]
%    {& &(X,\tau_1)  \\
%        & &(Y,\tau_2) \\ }; 
%    \path[->,font=\scriptsize] 

        %(m-1-5) edge node[auto,swap] {$j$} (m-1-3)
        %(m-1-5) edge node[auto] {$\psi$} (m-2-3)
%        (m-1-3) edge node[auto] {$f$} (m-2-3);
%\end{tikzpicture}}  &  {\begin{tikzpicture}[description/.style={fill=white,inner sep=2pt}] 
%    \matrix (m) [matrix of math nodes, row sep=2.5em, column sep=2.5em]
%    {& &(X,\in ,Cont(X,\tau_1))  \\
%        & &(Y,\in ,Cont(Y,\tau_2)) \\ }; 
%    \path[->,font=\scriptsize] 

        %(m-1-5) edge node[auto,swap] {$j$} (m-1-3)
        %(m-1-5) edge node[auto] {$\psi$} (m-2-3)
%        (m-1-3) edge node[auto] {$(f,\_\circ f)$} (m-2-3);
%\end{tikzpicture}}\\
%\end{tabular}
%\end{center}
\subsection*{Functor $\mathbf{Lag}$ from $\mathbf{FBSys_n}$ to ({\textbf{\L}}$\mathbf{_n^c}$\textbf{-Alg})$^{op}$}\index{functor!$Lag$}
\begin{definition}\label{5.38}
$\mathbf{Lag}$ \index{functor!$Lag$} is a functor from $\mathbf{FBSys_n}$ to ({\textbf{\L}}$\mathbf{_n^c}$\textbf{-Alg})$^{op}$ defined to act on an object $(X,\models ,A)$ as $Lag(X,\models ,A)=A$ and on a morphism $(f_1,f_2)$ as $Lag(f_1,f_2)=f_2$. 
\end{definition}
%It is routine to see that $Lag$ is a functor.
%  \\ The diagram below express the above facts-
%  \begin{center}     
%\begin{tabular}{ l | r  } 
%$FBSy_n$ & $(\hcancel{\mathbf{L}}_n^c-Alg)^{op}$\\
%\hline
%{\begin{tikzpicture}[description/.style={fill=white,inner sep=2pt}] 
%    \matrix (m) [matrix of math nodes, row sep=2.5em, column sep=2.5em]
%    {& &(X,\models ',A)  \\
%        & &(Y, \models '',B) \\ }; 
%    \path[->,font=\scriptsize] 

        %(m-1-5) edge node[auto,swap] {$j$} (m-1-3)
        %(m-1-5) edge node[auto] {$\psi$} (m-2-3)
%        (m-1-3) edge node[auto] {$(f_1,f_2)$} (m-2-3);
%\end{tikzpicture}}  &  {\begin{tikzpicture}[description/.style={fill=white,inner sep=2pt}] 
%    \matrix (m) [matrix of math nodes, row sep=2.5em, column sep=2.5em]
%    {& &A  \\
%        & &B \\ }; 
%    \path[->,font=\scriptsize] 

        %(m-1-5) edge node[auto,swap] {$j$} (m-1-3)
        %(m-1-5) edge node[auto] {$\psi$} (m-2-3)
%        (m-1-3) edge node[auto] {$f_2$} (m-2-3);
%\end{tikzpicture}}\\
%\end{tabular}
%\end{center}
\subsection*{Functor $S_B$ from ({\textbf{\L}}$\mathbf{_n^c}$\textbf{-Alg})$^{op}$ to $\mathbf{FBSys_n}$}\index{functor!$S_B$}
\begin{definition}\label{5.39}
Let $A$ be an {\L}$_n^c$-algebra, $Hom(A,\bar{n})=\{ ${\L}$_n^c$ $ hom$ $v:A\longrightarrow \bar{n}\}$.
\end{definition}
\begin{lemma}\label{5.40}
Let $A$ be an {\L}$_n^c$-algebra. Then $(Hom(A,\bar{n}),\models_*,A)$ is an $\bar{n}$-fuzzy Boolean system.
\end{lemma}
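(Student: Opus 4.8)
The plan is to verify directly that the triple $(Hom(A,\bar n),\models_*,A)$, with $gr(v\models_* a)=v(a)$, satisfies the four defining conditions of an $\bar n$-fuzzy Boolean system (Definition \ref{5.17}). Since each $v\in Hom(A,\bar n)$ maps $A$ into $\bar n$, the relation $\models_*$ is automatically an $\bar n$-valued (hence $n$-valued) fuzzy relation from $Hom(A,\bar n)$ to $A$, so it remains only to check conditions 1--4.

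Conditions 1, 2 and 3 will follow immediately from the fact that every $v$ is an {\L}$_n^c$-homomorphism (Definition \ref{5.5}) and therefore preserves the operations $*$, $^{\bot}$ and the embedded constants, combined with the explicit descriptions of these operations on $\bar n$ given in Definition \ref{5.3}. Concretely, for condition 1 I would write $gr(v\models_* a*b)=v(a*b)=v(a)*v(b)=\max(0,v(a)+v(b)-1)$, which is exactly $\max(0,gr(v\models_* a)+gr(v\models_* b)-1)$; for condition 2, $gr(v\models_* a^{\bot})=v(a^{\bot})=1-v(a)=1-gr(v\models_* a)$; and for condition 3, preservation of the embedded constants gives $gr(v\models_* r)=v(r)=r$ for every $r\in\bar n$.

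Condition 4 --- the separation (Kolmogorov-type) requirement --- is where the specific choice of $Hom(A,\bar n)$ as the point set matters, and it is what distinguishes the construction as a \emph{Boolean} system rather than a mere $\bar n$-fuzzy topological system. It will, however, be immediate: if $v_1\neq v_2$ in $Hom(A,\bar n)$, then as distinct functions they disagree at some $a\in A$, so $gr(v_1\models_* a)=v_1(a)\neq v_2(a)=gr(v_2\models_* a)$, which is precisely what condition 4 demands.

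The only point requiring slightly more care is confirming that $Hom(A,\bar n)$ is non-empty, as Definition \ref{5.17} asks for a non-empty underlying set. I would dispatch this by invoking the prime-filter machinery: a suitable proper $\bar n$-filter of $A$ extends to a prime $\bar n$-filter (Proposition \ref{5.9}), and by Proposition \ref{5.9}.3 each prime $\bar n$-filter $P$ yields a homomorphism $v_P:A\longrightarrow\bar n$, so $Hom(A,\bar n)$ is non-empty. I do not expect any genuine obstacle here; the whole argument reduces to a routine unwinding of definitions, the one substantive observation being simply that homomorphism preservation transports each axiom of the {\L}$_n^c$-structure into the corresponding satisfiability condition.
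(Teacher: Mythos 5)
Your proposal matches the paper's own proof essentially step for step: both verify conditions 1--3 by the computation $gr(v\models_* a*b)=v(a*b)=v(a)*v(b)=\max(0,v(a)+v(b)-1)$ (and its analogues for $^{\bot}$ and the constants $r\in\bar n$), and both obtain condition 4 from the fact that distinct homomorphisms $v_1\neq v_2$ must disagree at some $a\in A$. Your extra remark securing non-emptiness of $Hom(A,\bar n)$ via Proposition \ref{5.9} is a small point the paper silently omits, and it is a welcome (correct) addition rather than a divergence in method.
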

\begin{proof}
Clearly $Hom(A,\bar{n})$ is a set, $A$ is a frame. It suffices to show that\\
(i) $gr(v\models a*b)=max(0,gr(v\models_* a)+gr(v\models_* b)-1)$,\\
(ii) $gr(v\models_* a^{\bot})=1-gr(v\models_* a)$,\\
(iii) $gr(v\models_* r)=v(r)=r$ for all $r\in \bar n$ and\\
(iv) $v_1\neq v_2\Rightarrow gr(v_1\models a)\neq gr(v_2\models a)$ for some $a\in A$ holds.

Let us verify the above mentioned points.

(i) $gr(v\models_* a*b) =v(a*b)=v(a)*v(b)=max(0,v(a)+v(b)-1)=max(0,gr(v\models_* a)+gr(v\models_* b)-1)$.

Similarly we can show (ii) and (iii).

(iv) As, $v_1\neq v_2$ so we have, $v_1(a)\neq v_2(a)$ for some $a\in A$.
Hence $v_1\neq v_2\Rightarrow gr(v_1\models a)\neq gr(v_2\models a)$ for some $a\in A$.
\end{proof}
It is easy to see that $(\_\circ f,f)$ is continuous provided $f$ is an {\L}$_n^c$-hom.
\begin{definition}\label{5.41}
$\mathbf{S_B}$ \index{functor!$S_B$} is a functor from ({\textbf{\L}}$\mathbf{_n^c}$\textbf{-Alg})$^{op}$ to $\mathbf{FBSys_n}$ defined as follows.\\
$S_B$ acts on an object $A$ as $S_B(A)=(Hom(A,\bar{n}),\models_*,A)$ and on a morphism $f$ as $S_B(f)=(\_\circ f,f)$, where $gr(v\models_* a)=v(a)$.
\end{definition}
The above fact and Lemma \ref{5.40} shows that $S_B$ is a functor.
%The diagram below express the above fact-
%\begin{center}     
%\begin{tabular}{ l | r  } 
%$(\hcancel{\mathbf{L}}_n^c-Alg)^{op}$ & $FBSy_n$\\
%\hline
%{\begin{tikzpicture}[description/.style={fill=white,inner sep=2pt}] 
%    \matrix (m) [matrix of math nodes, row sep=2.5em, column sep=2.5em]
%    {& & A \\
%        & &B \\ }; 
%    \path[->,font=\scriptsize] 

        %(m-1-5) edge node[auto,swap] {$j$} (m-1-3)
        %(m-1-5) edge node[auto] {$\psi$} (m-2-3)
%        (m-1-3) edge node[auto] {$f$} (m-2-3);
%\end{tikzpicture}}  &  {\begin{tikzpicture}[description/.style={fill=white,inner sep=2pt}] 
%    \matrix (m) [matrix of math nodes, row sep=2.5em, column sep=2.5em]
%    {& &(Hom(A,\bar{n}),\models_* ,A)  \\
%        & &(Hom(B,\bar{n}),\models_* ,B) \\ }; 
%    \path[->,font=\scriptsize] 

        %(m-1-5) edge node[auto,swap] {$j$} (m-1-3)
        %(m-1-5) edge node[auto] {$\psi$} (m-2-3)
%        (m-1-3) edge node[auto] {$(\_\circ f,f)$} (m-2-3);
%\end{tikzpicture}}\\
%\end{tabular}
%\end{center}
%where $gr(v\models_* a)=v(a)$\\
%and $Hom(A,\bar{n})=\{ \hcancel{\mathbf{L}}_n^c$ $hom$ $v:A\longrightarrow \bar{n}\}$
\begin{theorem}\label{5.42}
$Ext_B$ is the right adjoint to the functor $J_B$.
\end{theorem}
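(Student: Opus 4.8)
The plan is to exhibit the co-unit of the adjunction, exactly as in the proofs of the analogous statements for $Ext$, $Ext_L$ and $Ext_F$ in the earlier chapters. First I would recall from Definition \ref{5.37} and Definition \ref{5.34} that $J_B(X,\tau)=(X,\in,Cont(X,\tau))$ and $Ext_B(X,\models,A)=(X,ext_B(A))$, so that
\[
J_B(Ext_B(X,\models,A))=(X,\in,Cont(X,ext_B(A))).
\]
For the co-unit I would take $\xi_X=(id_X,ext_B^*)\colon J_B(Ext_B(X,\models,A))\longrightarrow (X,\models,A)$, where $ext_B^*\colon A\longrightarrow Cont(X,ext_B(A))$ sends $a\mapsto ext_B(a)$. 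The first thing to check is that this is well defined: by the proof of Lemma \ref{5.31} each $ext_B(a)$ lies in $Cont(X,ext_B(A))$ (this is where zero-dimensionality of $(X,ext_B(A))$ enters), and by Definition \ref{5.28} $ext_B$ is a homomorphism into $\bar n^X$ whose operations are pointwise, so $ext_B^*$ is an {\L}$_n^c$-homomorphism. The compatibility condition of Definition \ref{5.18} is immediate: $gr(x\in ext_B^*(a))=ext_B(a)(x)=gr(x\models a)=gr(id_X(x)\models a)$.

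Next I would establish the couniversal property. Given any $\mathbf{FBSys_n}$-morphism $(f_1,f_2)\colon J_B(Y,\tau')\longrightarrow (X,\models,A)$ — so $f_1\colon Y\to X$, $f_2\colon A\to Cont(Y,\tau')$ is an {\L}$_n^c$-homomorphism and $gr(y\in f_2(a))=gr(f_1(y)\models a)$ — I would set $\hat f:=f_1$ and claim that $\hat f\colon(Y,\tau')\to (X,ext_B(A))$ is a morphism of $\mathbf{FBS_n}$, i.e. $\bar n$-fuzzy continuous. This follows from the computation
\[
f_1^{-1}(ext_B(a))(y)=ext_B(a)(f_1(y))=gr(f_1(y)\models a)=gr(y\in f_2(a))=f_2(a)(y),
\]
whence $f_1^{-1}(ext_B(a))=f_2(a)$; since $f_2(a)\in Cont(Y,\tau')$ and $(Y,\tau')$ is zero-dimensional, $Cont(Y,\tau')$ is an open basis and hence a subset of $\tau'$ (Definition \ref{5.26} together with Definition \ref{5.13}), so $f_1^{-1}(ext_B(a))\in\tau'$.

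Then I would verify that the triangle commutes, i.e. $(f_1,f_2)=\xi_X\circ J_B(\hat f)$. Using $J_B(\hat f)=(f_1,\_\circ f_1)$ and the composition rule of Definition \ref{5.19}, one gets $\xi_X\circ J_B(\hat f)=(id_X\circ f_1,\,(\_\circ f_1)\circ ext_B^*)=(f_1,\,(\_\circ f_1)\circ ext_B^*)$; and for each $a\in A$ the second component sends $a$ to $ext_B(a)\circ f_1=f_2(a)$ by the identity above, so both components agree. Uniqueness is forced because the set-component of $\xi_X$ is $id_X$: any $\hat g$ with $(f_1,f_2)=\xi_X\circ J_B(\hat g)$ must have underlying map $f_1$, and a morphism in $\mathbf{FBS_n}$ is determined by its underlying function, so $\hat g=\hat f$. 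Hence $\xi_X$ is the co-unit and $Ext_B$ is right adjoint to $J_B$.

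The homomorphism verifications and the composition bookkeeping are routine. The only place that genuinely uses the Boolean-space hypotheses — and which I expect to be the crux — is the two appeals to zero-dimensionality: once to guarantee $ext_B(a)\in Cont(X,ext_B(A))$ so that $ext_B^*$ is well defined, and once to guarantee $f_2(a)\in\tau'$ so that the candidate $\hat f=f_1$ is actually $\bar n$-fuzzy continuous; everything else runs in exact parallel to the earlier adjunctions.
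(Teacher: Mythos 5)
Your proof takes essentially the same route as the paper's: you exhibit the co-unit $\xi_X=(id_X,ext_B^*)$, take $\hat f=f_1$ for a given morphism $(f_1,f_2):J_B(Y,\tau')\longrightarrow (X,\models,A)$, and verify commutativity of the triangle via the identity $f_1^{-1}(ext_B(a))=f_2(a)$. Your writeup is in fact somewhat more complete than the paper's, since you make explicit the two appeals to zero-dimensionality (well-definedness of $ext_B^*$ into $Cont(X,ext_B(A))$, and continuity of $\hat f$ using $Cont(Y,\tau')\subseteq\tau'$) and the uniqueness of $\hat f$, all of which the paper leaves implicit.
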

\begin{proof}
We will prove the theorem presenting the co-unit of the adjunction.\\
Let us draw the diagram of co-unit.
\begin{center}
\begin{tabular}{ l | r } 
$\mathbf{FBSys_n}$ & $\mathbf{FBS_n}$\\
\hline
 {\begin{tikzpicture}[description/.style={fill=white,inner sep=2pt}] 
    \matrix (m) [matrix of math nodes, row sep=2.5em, column sep=2.5em]
    { J_B(Ext_B(X,\models,A))&&(X,\models,A)  \\
         J_B(Y,\tau ') \\ }; 
    \path[->,font=\scriptsize] 
        (m-1-1) edge node[auto] {$\xi_X$} (m-1-3)
        (m-2-1) edge node[auto] {$J_B(f)(\equiv(f_1,f_1^{-1}))$} (m-1-1)
        (m-2-1) edge node[auto,swap] {$\hat f (\equiv(f_1,f_2))$} (m-1-3)
        %(m-1-1) edge node[auto,swap] {$f$} (m-2-3)
       % (m-1-5) edge node[auto,swap] {$j$} (m-1-3)
       % (m-1-5) edge node[auto] {$\psi$} (m-2-3)
        %(m-1-3) edge node[auto] {$G(\hat{f})$} (m-2-3)
         ;
\end{tikzpicture}} & {\begin{tikzpicture}[description/.style={fill=white,inner sep=2pt}] 
    \matrix (m) [matrix of math nodes, row sep=2.5em, column sep=2.5em]
    { Ext_B(X,\models,A)  \\
         (Y,\tau ') \\ }; 
    \path[->,font=\scriptsize] 
        (m-2-1) edge node[auto,swap] {$f(\equiv f_1)$} (m-1-1)
       
         ;
\end{tikzpicture}} \\ 
\end{tabular}
\end{center}
Recall that $J_B(X,\tau)=(X,\in , Cont(X,\tau))$ and $Ext_B(X,\models,A)=(X,ext_B(A))$.\\
$So,$ $$J_B(Ext_B(X,\models,A))=(X,\in,Cont(X,ext_B(A))).$$
Co-unit is defined by,
\begin{center}
\begin{tikzpicture}[description/.style={fill=white,inner sep=2pt}] 
    \matrix (m) [matrix of math nodes, row sep=2.5em, column sep=2.5em]
    { (X,\in ,Cont(X,ext_B(A)))&&(X,\models,A)  \\
          }; 
    \path[->,font=\scriptsize] 
        (m-1-1) edge node[auto] {$\xi_X$} (m-1-3)
        (m-1-1) edge node[auto,swap] {$(id_X,ext_B^*)$} (m-1-3)
         ;
\end{tikzpicture}
\end{center}
 where $ext_B^*(a)=ext_B(a)$.
\begin{lemma}\label{5.43}
$(id_X,ext_B^*):J_B(Ext_B(X,\models ,A))\longrightarrow (X,\models ,A)$ is a continuous map of $\bar{n}$-fuzzy Boolean system.
\end{lemma}
$\mathit{Proof.}$
To establish the lemma it is enough to show that $$gr(x\in ext_B^*(a))=gr(id_X(x)\models a).$$ That is, $$gr(x\in ext_B(a))=gr(x\models a).$$
Let us define $f$ as follows:
Given $$(f_1,f_2):J_B(Y,\tau ')\longrightarrow (X,\models,A),$$ then $f=f_1$.

Now we will prove that the diagram on the left commutes. In other words,
\begin{align*}
 (f_1,f_2) & =\xi_X\circ J_B(f)\\
& =(id_X,ext_B^*)\circ (f_1,f_1^{-1}) \tag{as $J_B(f)=(f_1,f_1^{-1})$}\label {eq: as $J_B(f)=(f_1,f_1^{-1})$}\\
& =(id_X\circ f_1,f_1^{-1}\circ ext_B^*).
\end{align*}
Clearly $id_X\circ f_1=f_1$.
It is left to show that $f_2=f_1^{-1}\circ ext_B^*$.

As $(id_X,ext_B^*)$ is continuous, $ext_B^*(a)(x)=gr(x\models a)$, i.e., $ext_B^*(a)=a$. Hence $f_1^{-1}ext^*(a)  =f_1^{-1}(a)  =f_2(a)$, as $(f_1,f_2)$ is continuous.
Therefore $$\xi_X(\equiv(id_X,ext_B^*)):J_B(Ext_B(X,\models,A))\longrightarrow (X,\models,A)$$ is the co-unit, consequently $Ext_B$ is the right adjoint to the functor $J_B$.
\end{proof}
Diagram of the unit of the above adjunction is as follows.
\begin{center}
\begin{tabular}{ l | r  } 
 $\mathbf{FBS_n}$ & $\mathbf{FBSys_n}$ \\
\hline
 {\begin{tikzpicture}[description/.style={fill=white,inner sep=2pt}] 
    \matrix (m) [matrix of math nodes, row sep=2.5em, column sep=2.5em]
    {(X,\tau) & &Ext_B(J_B(X, \tau))  \\
        & & Ext_B(Y,\models ,B) \\ }; 
    \path[->,font=\scriptsize] 
        (m-1-1) edge node[auto] {$\eta_X$} (m-1-3)
        (m-1-1) edge node[auto,swap] {$\hat{f}(\equiv f_1)$} (m-2-3)
       % (m-1-5) edge node[auto,swap] {$j$} (m-1-3)
        %(m-1-5) edge node[auto] {$\psi$} (m-2-3)
        (m-1-3) edge node[auto] {$ext_B(f)(\equiv f_1)$} (m-2-3);
\end{tikzpicture}} &  {\begin{tikzpicture}[description/.style={fill=white,inner sep=2pt}] 
    \matrix (m) [matrix of math nodes, row sep=2.5em, column sep=2.5em]
    {& &J_B(X,\tau)  \\
        & &(Y,\models ,B) \\ }; 
    \path[->,font=\scriptsize]

        %(m-1-5) edge node[auto,swap] {$j$} (m-1-3)
        %(m-1-5) edge node[auto] {$\psi$} (m-2-3)
        (m-1-3) edge node[auto] {$f(\equiv(f_1,f_1^{-1}))$} (m-2-3);
\end{tikzpicture}}\\
\end{tabular}
\end{center}
%Hence we have $(\eta_X,\xi_X):J\dashv Ext:FBSy_n\longrightarrow FBS_n$.
\begin{theorem}\label{5.44}
The category $\mathbf{FBSys_n}$ is equivalent to the category $\mathbf{FBS_n}$.
\end{theorem}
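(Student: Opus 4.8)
The plan is to promote the adjunction of Theorem~\ref{5.42} to an adjoint equivalence by checking that both its unit $\eta$ and its co-unit $\xi$ are natural isomorphisms; an adjunction whose unit and co-unit are invertible is automatically an equivalence of categories, which yields $\mathbf{FBSys_n}\simeq\mathbf{FBS_n}$. Concretely, for a space $(X,\tau)$ one computes the round trip $Ext_B(J_B(X,\tau))=(X,Cont(X,\tau))$, since $ext_B(t)(x)=gr(x\in t)=t(x)$ forces $ext_B(Cont(X,\tau))=Cont(X,\tau)$; and for a system $(X,\models,A)$ one has $J_B(Ext_B(X,\models,A))=(X,\in,Cont(X,ext_B(A)))$ with co-unit $\xi_X=(id_X,ext_B^*)$ as displayed after Theorem~\ref{5.42}.

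First I would show that the unit $\eta_X=id_X\colon(X,\tau)\longrightarrow(X,Cont(X,\tau))$ is an isomorphism in $\mathbf{FBS_n}$. The two topologies share $Cont(X,\tau)$ as an open basis: it is literally the whole topology of the target, and it is an open basis of $(X,\tau)$ precisely because $(X,\tau)$ is zero-dimensional (Definition~\ref{5.26}). Since the inverse image of an $\bar n$-fuzzy set commutes with arbitrary joins, $\bar n$-fuzzy continuity may be tested on any open basis; as $id_X^{-1}(t)=t$ lies in both topologies for every $t\in Cont(X,\tau)$, the identity is continuous in both directions. Naturality is routine, so $\eta$ is a natural isomorphism and $Ext_B\circ J_B\cong id_{\mathbf{FBS_n}}$.

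The substantive half is showing that the co-unit $\xi_X=(id_X,ext_B^*)$ is an isomorphism, i.e.\ that $ext_B^*\colon A\longrightarrow Cont(X,ext_B(A))$ is an isomorphism of {\L}$_n^c$-algebras. Surjectivity I would obtain from the idempotent terms $T_r$ and the terms $S_r$ of Proposition~\ref{5.6}: given a continuous $t\colon X\longrightarrow\bar n$, each $T_r\circ t$ is the extent of some element of $A$ (this is exactly the computation underlying the zero-dimensionality of Lemma~\ref{5.31}), and the pointwise identity $t=\bigvee_{r\in\bar n}S_r(T_r\circ t)$ then lets one write $t=ext_B\bigl(\bigvee_{r\in\bar n}S_r(b_r)\bigr)$, so $t\in ext_B(A)=Cont(X,ext_B(A))$.

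The hard part, and the step I expect to be the main obstacle, is injectivity of $ext_B^*$, which amounts to spatiality of the system: one must deduce $a=b$ from $gr(x\models a)=gr(x\models b)$ for all $x\in X$. Each evaluation $p_x\colon a\mapsto gr(x\models a)$ is an {\L}$_n^c$-homomorphism $A\to\bar n$ (by conditions 1--3 of Definition~\ref{5.17} together with preservation of finite meets and arbitrary joins), so the claim is that the family $\{p_x\}_{x\in X}$ separates the points of $A$. I would attack this through Maruyama's prime-$\bar n$-filter machinery of Proposition~\ref{5.9}: separating $a$ from $b$ reduces, via Proposition~\ref{5.6}(4), to separating the idempotents $T_r(a)$ from $T_r(b)$, and a proper containment of $\bar n$-filters produces a prime $\bar n$-filter, hence a homomorphism, which the Kolmogorov/separation hypothesis (condition 4 of Definition~\ref{5.17}) must realize as some $p_x$. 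Once injectivity is secured, $ext_B^*$ is a bijective {\L}$_n^c$-homomorphism, hence an isomorphism, so $\xi$ is a natural isomorphism; combining the two halves gives the asserted equivalence. Equivalently, one could deduce the result by composing the dual equivalence between $\mathbf{FBSys_n}$ and the category of {\L}$_n^c$-algebras afforded by $Lag$ and $S_B$ with Maruyama's duality between $\mathbf{FBS_n}$ and the {\L}$_n^c$-algebras; there the same separation requirement reappears as surjectivity of the unit map $p^{\ast}$.
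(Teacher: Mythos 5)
Your overall architecture is exactly the paper's: promote the adjunction of Theorem~\ref{5.42} to an adjoint equivalence by showing the unit and co-unit are natural isomorphisms. Your treatment of the unit (identity map, continuity tested on the open basis $Cont(X,\tau)$, zero-dimensionality) is actually more explicit than the paper's, which merely asserts that ``it is possible to show that $\eta$ is a natural isomorphism''; and your surjectivity argument for $ext_B^*$ via $t=\bigvee_{r\in\bar n}S_r(T_r\circ t)$ is precisely the computation underlying Lemma~\ref{5.31} (the paper dismisses ontoness as ``easy to see''). So far the two proofs coincide.

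The problem is the justification you give at the step you yourself flag as the main obstacle. You claim that the homomorphism $v:A\longrightarrow\bar n$ produced by the prime-$\bar n$-filter separation of Proposition~\ref{5.9} ``must'' be realized as some point evaluation $p_x$ by condition 4 of Definition~\ref{5.17}. It cannot be: condition 4 says $x_1\neq x_2\Rightarrow gr(x_1\models a)\neq gr(x_2\models a)$ for some $a$, i.e.\ it makes the map $x\mapsto p_x$ \emph{injective} (a Kolmogorov-type separation of points), and gives no surjectivity onto $Hom(A,\bar n)$. What injectivity of $ext_B^*$ actually requires is the opposite separation, spatiality: the family $\{p_x\}_{x\in X}$ must separate elements of $A$, and this does not follow from Definition~\ref{5.17}. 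Concretely, take $X=\{x\}$, $A=\bar n\times\bar n$ with pointwise operations and the diagonal copy of $\bar n$ as constants, and $gr(x\models(a_1,a_2))=a_1$: conditions 1--4 hold (4 vacuously), yet $ext_B((0,0))=ext_B((0,1))$, so $ext_B^*$ is not injective. To be fair, the paper's own proof glosses the identical point: it passes from agreement of the point evaluations on $T_r(a)$, $T_r(b)$ to agreement of \emph{all} prime $\bar n$-filters ``using Proposition~\ref{5.9}.3,'' which by that very proposition is equivalent to agreement of all homomorphisms $A\longrightarrow\bar n$ --- again exactly the unstated spatiality assumption; and, as you correctly observe, the same requirement resurfaces as surjectivity of $p^*$ in the route through Theorem~\ref{5.48}. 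So you located the decisive step, but your condition-4 appeal does not close it (nor does the paper's appeal to Proposition~\ref{5.9}.3); the argument goes through only if spatiality of the system is added to, or read into, Definition~\ref{5.17}.
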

\begin{proof}
We have two natural transformations $\xi$, $\eta$ such that for an $\bar{n}$-fuzzy Boolean system $(X,\models ,A)$, $$\xi_X:J_B(Ext_B(X,\models ,A))\longrightarrow (X,\models , A)$$ and for an $\bar{n}$-fuzzy Boolean space $(X,\tau)$,
$$\eta_X:(X,\tau)\longrightarrow Ext_B(J_B(X,\tau)).$$
Now it suffices to show that $\xi$, $\eta$ are natural isomorphisms.
\begin{claim}
$\xi_X$ is an isomorphism.
\end{claim}
\textit{Proof\ of\ the\ Claim.}
We have, $\xi_X:J_B(Ext_B(X,\models ,A))\longrightarrow (X,\models , A)$, i.e., $\xi_X:(X,\in ,ext_B(A))\longrightarrow (X,\models , A)$. Now $\xi_X$ is a natural transformation between two $\bar{n}$-fuzzy Boolean systems. Hence we have to show that $\xi_X$ is a homeomorphism. Here $\xi_X=(id_X,ext_B^*).$
We have to show that $A$ and $ext_B(A)$ are isomorphic {\L}$_n^c$-algebras, i.e., $ext_B^*:A\longrightarrow ext_B(A)$ is an isomorphism.
We already have $ext_B^*$ is an {\L}$_n^c$-homomorphism as $ext_B^*(a)=ext_B(a)$ for all $a\in A$.
Next we will show that $ext_B^*$ is injective.
Let $ext_B(a)=ext_B(b)$ for $a,\ b\in A$. So, for all $x\in X$ $ext_B(a)(x)=ext_B(b)(x)$, i.e., $gr(x\models a)=gr(x\models b)$.
Therefore, for any $r\in \bar{n}$ and $x\in X$, $gr(x\models T_r(a))=gr(x\models T_r(b))$ (follows from Proposition \ref{5.6}.2).
Now using Proposition \ref{5.9}.3, we have for any prime filter $P$ of $A$ and any $r\in \bar{n}$, $T_r(a)\in P$ if and only if $T_r(b)\in P$.
We claim that $T_r(a)=T_r(b)$ for any $r\in \bar{n}$. If possible let $T_r(a)\neq T_r(b)$ for some $r\in \bar{n}$. Take $T_r(a)\nleq T_r(b)$ without loss of generality. Let $F=\{a_1\in A:T_r(a)\leq a_1\}$. As $T_r(a)$ is idempotent, so $F$ is an $\bar{n}$-filter of $A$. Clearly $T_r(b)\notin F.$ Using Proposition \ref{5.9}.1, we have that there is a prime filter $P$ of $A$ such that $F\subset P$ and $T_r(b)\notin P$. Now $F\subset P$, so $T_r(a)\in P$, which is a contradiction.
Therefore, $T_r(a)=T_r(b)$ for any $r\in \bar{n}$. So, $\bigwedge_{r\in\bar{n}}T_r(a)\longleftrightarrow T_r(b)=1$, but $\bigwedge_{r\in\bar{n}}T_r(a)\longleftrightarrow T_r(b)\leq a\longleftrightarrow b$   (Proposition \ref{5.6}.4)
It follows that $a=b$ and therefore, $ext^*$ is injective.

It is easy to see that $ext_B^*$ is onto.
As a consequence, we get that $A$ and $ext_B(A)$ are isomorphic.

Lastly, we have $gr(x\in ext_B(a))=ext_B(a)(x)=gr(x\models a)$.
Hence $\xi_X$ is an isomorphism and consequently $\xi$ is a natural isomorphism.\ \ \openbox

 Now, it is left to show that $\eta$ is a natural isomorphism. We have $$\eta_X:(X,\tau)\longrightarrow (X,ext_B(Cont(X,\tau))).$$ Defining $\eta_X$ as $\eta_X(x)(t)=t(x)$ for $x\in X$ and $t\in Cont(X,\tau)$, it is possible to show that $\eta$ is a natural isomorphism. 
\end{proof}
\begin{theorem}\label{5.45}
$Lag$ is a left adjoint to the functor $S_B$.
\end{theorem}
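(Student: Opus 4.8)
The plan is to mirror the proof of Lemma~\ref{3.22_1}, where $fm$ was shown to be left adjoint to $S$, and to establish that $Lag$ is left adjoint to $S_B$ by exhibiting the unit of the adjunction. Following Definition~\ref{2.3}, I would construct for each $\bar n$-fuzzy Boolean system $D=(X,\models,A)$ a unit morphism $\eta_D\colon D\longrightarrow S_B(Lag(D))$ together with the accompanying universal property. Since $Lag(X,\models,A)=A$, one has $S_B(Lag(D))=(Hom(A,\bar n),\models_*,A)$, and I would set $\eta_D=(p^*,id_A)$, where $p^*\colon X\longrightarrow Hom(A,\bar n)$ sends $x$ to the map $p_x\colon A\longrightarrow\bar n$ given by $p_x(a)=gr(x\models a)$.

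The first step is to confirm that each $p_x$ is genuinely an {\L}$_n^c$-homomorphism, so that $p^*$ does take values in $Hom(A,\bar n)$. This is exactly where the defining axioms of Definition~\ref{5.17} are used: condition~1 gives $p_x(a*b)=p_x(a)*p_x(b)$, condition~2 gives $p_x(a^{\bot})=p_x(a)^{\bot}$, and condition~3 gives $p_x(r)=r$ for every constant $r\in\bar n$. Preservation of the remaining operations $\wedge,\vee,\oplus,\rightarrow$ is then automatic, since each is definable from $*$ and $^{\bot}$; in particular $a\vee b=(a*b^{\bot})\oplus b$ and $a\wedge b=(a\oplus b^{\bot})*b$, as recorded just after Definition~\ref{5.17}. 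Next I would verify that $(p^*,id_A)$ is a continuous map in the sense of Definition~\ref{5.18}; the only substantive clause, $gr(x\models id_A(a))=gr(p^*(x)\models_* a)$, unwinds immediately to $gr(x\models a)=p^*(x)(a)=p_x(a)$.

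For the universal property I would take an arbitrary continuous map $(f_1,f_2)\colon(X,\models,A)\longrightarrow S_B(B)=(Hom(B,\bar n),\models_*,B)$ and put $\hat f=f_2$, the underlying {\L}$_n^c$-homomorphism $f_2\colon B\to A$, which is the required arrow in the opposite category. Recalling that $S_B(\hat f)=(\_\circ f_2,f_2)$ and that composition in $\mathbf{FBSys_n}$ reverses the algebraic component (Definition~\ref{5.19}), the commutativity of the triangle reduces to $(f_1,f_2)=S_B(\hat f)\circ\eta_D=((\_\circ f_2)\circ p^*,\,id_A\circ f_2)$. The second component is plainly $f_2$; for the first, for each $x\in X$ and $b\in B$ one computes $p_x\circ f_2(b)=gr(x\models f_2(b))=gr(f_1(x)\models_* b)=f_1(x)(b)$, so that $(\_\circ f_2)\circ p^*=f_1$. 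Uniqueness of $\hat f$ is immediate, since the second component forces $\hat f=f_2$.

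The single genuinely new ingredient, and the step I expect to demand the most care, is verifying that $p_x$ respects the entire {\L}$_n^c$-signature rather than only the frame operations treated in Lemma~\ref{3.22_1}; once this is secured, the rest of the argument is a faithful transcription of the $[0,1]$-valued proof with $[0,1]$ replaced by $\bar n$ and frames by {\L}$_n^c$-algebras. I would close by recording the co-unit diagram for completeness and by noting that, together with the equivalence of Theorem~\ref{5.44}, this adjunction yields the intended duality between $\mathbf{FBSys_n}$ and the category of {\L}$_n^c$-algebras.
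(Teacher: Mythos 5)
Your proposal is correct and follows essentially the same route as the paper's proof: exhibiting the unit $\eta_A=(p^*,id_A)$ with $p_x(a)=gr(x\models a)$, verifying continuity, and establishing the universal property via $\hat f=f_2$ and the computation $p_x\circ f_2(b)=gr(x\models f_2(b))=gr(f_1(x)\models_* b)=f_1(x)(b)$. In fact you supply more detail than the paper at one point, since the verification that $p_x$ is an {\L}$_n^c$-homomorphism (Lemma~\ref{5.46}, dismissed there as straightforward) is spelled out correctly in your argument via conditions 1--3 of Definition~\ref{5.17} and the definability of the remaining operations from $*$ and $^{\bot}$.
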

\begin{proof}
We will prove the theorem presenting the unit of the adjunction.
Let us draw the diagram of unit:
\begin{center}
\begin{tabular}{ l | r  } 
 $\mathbf{FBSys_n}$ & ({\textbf{\L}}$\mathbf{_n^c}$\textbf{-Alg})$^{op}$ \\
\hline
 {\begin{tikzpicture}[description/.style={fill=white,inner sep=2pt}] 
    \matrix (m) [matrix of math nodes, row sep=2.5em, column sep=2.5em]
    {(X,\models ,A) & &S_B(Lag(X, \models ,A))  \\
        & & S_B(B) \\ }; 
    \path[->,font=\scriptsize] 
        (m-1-1) edge node[auto] {$\eta_A$} (m-1-3)
        (m-1-1) edge node[auto,swap] {$f(\equiv(f_1,f_2))$} (m-2-3)
       % (m-1-5) edge node[auto,swap] {$j$} (m-1-3)
        %(m-1-5) edge node[auto] {$\psi$} (m-2-3)
        (m-1-3) edge node[auto] {$S_B(\hat{f})$} (m-2-3);
\end{tikzpicture}} &  {\begin{tikzpicture}[description/.style={fill=white,inner sep=2pt}] 
    \matrix (m) [matrix of math nodes, row sep=2.5em, column sep=2.5em]
    {& &Lag(X,\models ,A)  \\
        & &B \\ }; 
    \path[->,font=\scriptsize]

        %(m-1-5) edge node[auto,swap] {$j$} (m-1-3)
        %(m-1-5) edge node[auto] {$\psi$} (m-2-3)
        (m-1-3) edge node[auto] {$\hat{f}(\equiv f_2)$} (m-2-3);
\end{tikzpicture}}\\
\end{tabular}
\end{center}
Recall that  $S_B(B)=(Hom(B,\bar{n}),\models_*,B)$, where $gr(v\models_* b)=v(b)$.
Hence $$S_B(Lag(X,\models ,A))=(Hom(A,\bar{n}),\models_* ,A).$$ Then unit is defined by, 
\begin{center}
 \begin{tikzpicture}[description/.style={fill=white,inner sep=2pt}] 
    \matrix (m) [matrix of math nodes, row sep=2.5em, column sep=2.5em]
    {(X,\models ,A) & &S_B(Lag(X, \models ,A))  \\
        }; 
    \path[->,font=\scriptsize] 
        (m-1-1) edge node[auto] {$\eta_A$} (m-1-3)
        (m-1-1) edge node[auto,swap] {$(p^*,id_A)$} (m-1-3)
        %(m-1-1) edge node[auto,swap] {} (m-2-3)
       % (m-1-5) edge node[auto,swap]
        ;
\end{tikzpicture}
\end{center}
where,
\begin{align*}
  p^* \colon X &\longrightarrow Hom(A,\bar{n})\\
  x &\longmapsto p_x\colon A\longrightarrow \bar{n}
\end{align*}
such that $p_x(a)=gr(x\models a)$.
\begin{lemma}\label{5.46} For each $x\in X,$ $p_x:A\longrightarrow \bar{n}$ is an {\L}$_n^c$ homomorphism.
\end{lemma}
$\mathit{Proof.}$
Proof is straight froward.

\begin{lemma}\label{5.47} $(p^*,id_A):(X,\models,A)\longrightarrow S_B(Lag(X,\models,A)) $ is a continuous map of $\bar{n}$-fuzzy Boolean system.
\end{lemma}
$\mathit{Proof.}$
It will be enough to show that $$gr(x\models id_A(a))=gr(p^*(x)\models_* a),$$ That is, $$gr(p^*(x)\models_* a)=gr(x\models a).$$
As we have $gr(p^*(x)\models_* a)=p^*(x)(a)$ and also we have the fact that 
$p^*(x)(a)=gr(x\models a)$ hence the lemma is proved. 

Let us define $\hat{f}$ as follows
$(f_1,f_2):(X,\models ,A)\longrightarrow (Hom(B,\bar{n}),\models_*,B)$
then $\hat{f}=f_2$ (as $f_2$ is the {\L}$_n^c$ homomorphism).
Now we have to show that the triangle on the left commute. In other words 
$$(f_1,f_2)=S_B(\hat{f})\circ \eta_A=(-\circ f_2,f_2)\circ (p^*,id_A)=((-\circ f_2)p^*,id_A\circ f_2).$$
Clearly $f_2=id_A\circ f_2.$

It is only left to show $f_1=(-\circ f_2)p^* $, i.e., for $x\in X$, $$f_1(x)=(\_\circ f_2)p^*(x)=p_x\circ f_2.$$
We have for all $b\in B$, $$p_x\circ f_2(b)=p_x(f_2(b))=gr(x\models f_2(b))=gr(f_1(x)\models_* b)=f_1(x)(b).$$
Hence $p_x\circ f_2=f_1(x)$, i.e., $(\_\circ f_2)p^*(x)=f_1(x)$.
So, $(\_\circ f_2)p^*=f_1$.
\end{proof}
Diagram of the co-unit of the above adjunction is as follows.
\begin{center}
\begin{tabular}{ l | r } 
({\textbf{\L}}$\mathbf{_n^c}$\textbf{-Alg})$^{op}$ & $\mathbf{FBSys_n}$\\
\hline
 {\begin{tikzpicture}[description/.style={fill=white,inner sep=2pt}] 
    \matrix (m) [matrix of math nodes, row sep=2.5em, column sep=2.5em]
    { Lag(S_B(A))&& A  \\
         Lag(Y,\models ,B) \\ }; 
    \path[->,font=\scriptsize] 
        (m-1-1) edge node[auto] {$\xi_A$} (m-1-3)
        (m-2-1) edge node[auto] {$Lag(f)(\equiv f')$} (m-1-1)
        (m-2-1) edge node[auto,swap] {$\hat f (\equiv f')$} (m-1-3)
        %(m-1-1) edge node[auto,swap] {$f$} (m-2-3)
       % (m-1-5) edge node[auto,swap] {$j$} (m-1-3)
       % (m-1-5) edge node[auto] {$\psi$} (m-2-3)
        %(m-1-3) edge node[auto] {$G(\hat{f})$} (m-2-3)
         ;
\end{tikzpicture}} & {\begin{tikzpicture}[description/.style={fill=white,inner sep=2pt}] 
    \matrix (m) [matrix of math nodes, row sep=2.5em, column sep=2.5em]
    { S_B(A)  \\
         (Y,\models ,B) \\ }; 
    \path[->,font=\scriptsize] 
        (m-2-1) edge node[auto,swap] {$f(\equiv(\_\circ f',f'))$} (m-1-1)
       
         ;
\end{tikzpicture}} \\ 
\end{tabular}
\end{center}
%Hence we have $(\eta_A,\xi_A):Lag\dashv S:(\hcancel{\mathbf{L}}_n^c-Alg)^{op}\longrightarrow FBSy_n.$
\begin{theorem}\label{5.48}
The category {\textbf{\L}}$\mathbf{_n^c}$\textbf{-Alg} is dually equivalent to the category $\mathbf{FBSys_n}$.
\end{theorem}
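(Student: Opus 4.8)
The plan is to upgrade the adjunction $Lag \dashv S_B$ of \rth{5.45} to an \emph{adjoint} equivalence. A dual equivalence between {\textbf{\L}}$\mathbf{_n^c}$\textbf{-Alg} and $\mathbf{FBSys_n}$ is the same as an equivalence between $\mathbf{FBSys_n}$ and the opposite category of {\textbf{\L}}$\mathbf{_n^c}$\textbf{-Alg}, so it is enough to prove that both the unit $\eta$ and the co-unit $\xi$ of that adjunction are natural isomorphisms. I would dispose of the co-unit first. For an {\L}$_n^c$-algebra $A$ one has $Lag(S_B(A)) = Lag(Hom(A,\bar n),\models_*,A) = A$, and by the co-unit diagram following \rth{5.45} the component $\xi_A \colon Lag(S_B(A)) \longrightarrow A$ is $id_A$, which is an {\L}$_n^c$-isomorphism; naturality is immediate, so $\xi$ is a natural isomorphism.

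The substance of the proof is the unit. For an object $(X,\models,A)$ the unit is $\eta_A = (p^*, id_A)$, where $p^*(x) = p_x$ and $p_x(a) = gr(x\models a)$; by \rlemma{5.46} each $p_x$ is an {\L}$_n^c$-homomorphism $A \to \bar n$, so $p^*$ lands in $Hom(A,\bar n)$, and by \rlemma{5.47} the pair $(p^*,id_A)$ is a morphism into $S_B(Lag(X,\models,A)) = (Hom(A,\bar n),\models_*,A)$. To see that $\eta_A$ is a homeomorphism of $\bar n$-fuzzy Boolean systems in the sense of \rdef{5.22} I must check that its algebra component is an isomorphism (here it is literally $id_A$) and that its point component $p^*$ is a bijection. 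Injectivity of $p^*$ is precisely condition~4 of \rdef{5.17}: if $x_1 \neq x_2$ then $gr(x_1\models a) \neq gr(x_2\models a)$ for some $a$, i.e.\ $p_{x_1} \neq p_{x_2}$.

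The hard part will be the surjectivity of $p^*$: given an arbitrary {\L}$_n^c$-homomorphism $v \colon A \to \bar n$ one must exhibit a point $x \in X$ with $gr(x\models a) = v(a)$ for all $a$. This is the ``soberness'' half of the argument, and it is where the prime-$\bar n$-filter machinery is meant to enter: by \rprop{5.9} the homomorphisms $A \to \bar n$ correspond bijectively, via $v_P$ with $v_P^{-1}(\{1\}) = P$, to the prime $\bar n$-filters of $A$, so the task reduces to realising each such prime filter as the satisfaction pattern of a point of $X$. I expect this to be the delicate step, since the localic condition alone controls only injectivity; the full strength of conditions~1--3 of \rdef{5.17}, together with the idempotent terms $T_r$ and the terms $S_r$ of \rprop{5.6}, should be needed to reconstruct the point. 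Once $p^*$ is bijective, $\eta_A$ is an isomorphism and $\eta$ is a natural isomorphism; with $\xi$ already a natural isomorphism, $Lag$ and $S_B$ form an adjoint equivalence, which is the asserted dual equivalence. As an independent cross-check one may instead combine the equivalence $\mathbf{FBSys_n} \simeq \mathbf{FBS_n}$ of \rth{5.44} with Maruyama's duality \cite{YM} between $\mathbf{FBS_n}$ and {\textbf{\L}}$\mathbf{_n^c}$\textbf{-Alg}.
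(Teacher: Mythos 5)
Your decomposition is exactly the paper's: $\xi$ is the identity on each algebra and trivially a natural isomorphism, $\eta_A=(p^*,id_A)$ with $p_x(a)=gr(x\models a)$, and injectivity of $p^*$ comes verbatim from condition~4 of \rdef{5.17}. The gap is the step you yourself flag: surjectivity of $p^*$ is never proved in your proposal, only planned, and the plan as stated cannot be completed from \rdef{5.17} alone. Conditions~1--3 are pointwise conditions on the points that $X$ happens to contain, and condition~4 merely separates existing points; none of them asserts that every homomorphism $v\colon A\longrightarrow\bar n$ (equivalently, by \rprop{5.9}, every prime $\bar n$-filter of $A$) is realised by some $x\in X$. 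Concretely, take an {\L}$_n^c$-algebra $A$ admitting at least two homomorphisms to $\bar n$ (e.g.\ $A=\bar n\times\bar n$ with the two projections) and let $X$ be a proper nonempty subset of $Hom(A,\bar n)$ with $gr(x\models a)=x(a)$: conditions~1--3 hold pointwise because each $x$ is a homomorphism, condition~4 holds because distinct homomorphisms differ on some $a\in A$, yet $p^*$ is the inclusion of $X$ into $Hom(A,\bar n)$ and is not onto. So the prime-filter machinery of \rprop{5.9} can classify $Hom(A,\bar n)$, but it cannot manufacture a missing point of $X$; no manipulation with the terms $T_r$ and $S_r$ of \rprop{5.6} will close this, because the obstruction is set-theoretic, not algebraic.

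For what it is worth, the paper's own proof is no more explicit at this very spot: after establishing injectivity of $p^*$ exactly as you do, it disposes of surjectivity with the single sentence ``From the construction of $p^*$, it is clear that $p^*$ is onto,'' i.e.\ it tacitly treats the systems as sober (which is true for the systems actually produced by $S_B$, where $p^*$ is the identity, but is not a consequence of \rdef{5.17}). So you have correctly located the crux, and your suspicion that ``the localic condition alone controls only injectivity'' is exactly right; but your route cannot be completed without adding a soberness hypothesis to the definition, and your proposal as written leaves the theorem unproved. Your suggested cross-check --- deriving \rth{5.48} from \rth{5.44} together with Maruyama's duality \cite{YM} --- is logically sound but runs against the chapter's architecture: the paper obtains \rth{5.50} (the duality between $\mathbf{FBS_n}$ and {\textbf{\L}}$\mathbf{_n^c}$\textbf{-Alg}, i.e.\ Maruyama's theorem) \emph{from} \rth{5.44} and \rth{5.48}, precisely in order to give an independent proof of the result of \cite{YM}, so importing \cite{YM} at this point would defeat the stated purpose of the chapter even though it is not formally circular.
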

\begin{proof}
We have two natural transformations $\xi$, $\eta$ such that for an {\L}$_n^c-algebra$ $A$,
$$\xi_A:Lag(S_B(A))\longrightarrow A$$ and for an $\bar{n}$-fuzzy Boolean system $(X,\models , A)$, $$\eta_A:(X,\models ,A)\longrightarrow S_B(Lag(X,\models ,A)).$$
%Now we have to show that $\xi$ and $\eta$ are natural isomorphisms.\\
It is easy to see that $\xi$ is a natural isomorphism.
\begin{claim}
$\eta_A:(X,\models ,A)\longrightarrow (Hom(A,\bar{n}),\models_*,A)$ is an isomorphism. 
\end{claim}%We have\\
%$\eta_A:(X,\models ,A)\longrightarrow (Hom(A,\bar{n}),\models_*,A)$.\\
\textit{Proof\ of \ the \ Claim.}
$\eta_A$ is a natural transformation between two $\bar{n}$-fuzzy Boolean systems. So we will show that $\eta_A=(p^*,id_A)$ is a homeomorphism.%Here $\eta_A=(p^*,id_A)$ 
We show that $p^*:X\longrightarrow Hom(A,\bar{n})$ is a bijection, i.e., $X$ and $Hom(A,\bar{n})$ are in bijective correspondence.
First we show that $p^*$ is injective. Take $x_1,x_2\in X$ such that $x_1\neq x_2$. 
It follows that  for some $a\in A$, $$gr(x_1\models a)\neq gr(x_2\models a)$$ (using Definition \ref{5.17}), i.e., $p_{x_1}(a)\neq p_{x_2}(a)$ for some $a\in A$. Hence $p^*(x_1)(a)\neq p^*(x_2)(a)$ for some $a\in A$. As a consequence, $p^*$ is injective.
From the construction of $p^*$, it is clear that $p^*$ is onto.
So $X$ and $Hom(A,\bar{n})$ are isomorphic.
%\item Clearly $A$ and $A$ are isomorphic frame.
Lastly we have $$gr(x\models a)=p_x(a)=gr(p^*(x)\models_* a).$$
Hence, $\eta_A$ is an isomorphism.\ \ \openbox 

Therefore, ({\textbf{\L}}$\mathbf{_n^c}$\textbf{-Alg})$^{op}$ is equivalent to the category $\mathbf{FBSys_n}$. As a consequence, {\textbf{\L}}$\mathbf{_n^c}$\textbf{-Alg} is dually equivalent to the category $\mathbf{FBSys_n}$.
\end{proof}
\begin{theorem}\label{5.49}
$Ext_B\circ S_B$ is a right adjoint to the functor $Lag\circ J_B$.
\end{theorem}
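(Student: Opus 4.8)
The plan is to invoke the general categorical fact that right adjoints compose: whenever $F_1\dashv G_1$ and $F_2\dashv G_2$ with $F_1\colon\mathbb{A}\to\mathbb{B}$, $G_1\colon\mathbb{B}\to\mathbb{A}$, $F_2\colon\mathbb{B}\to\mathbb{C}$ and $G_2\colon\mathbb{C}\to\mathbb{B}$, one automatically has $F_2\circ F_1\dashv G_1\circ G_2$. Here I would take $\mathbb{A}=\mathbf{FBS_n}$, $\mathbb{B}=\mathbf{FBSys_n}$, and $\mathbb{C}$ the category ({\textbf{\L}}$\mathbf{_n^c}$\textbf{-Alg})$^{op}$, with $F_1=J_B$ and $G_1=Ext_B$ (so that $J_B\dashv Ext_B$ by Theorem \ref{5.42}) and $F_2=Lag$, $G_2=S_B$ (so that $Lag\dashv S_B$ by Theorem \ref{5.45}). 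Since the codomain of $J_B$ is the domain of $Lag$ and the codomain of $S_B$ is the domain of $Ext_B$, the composite $Lag\circ J_B$ maps $\mathbf{FBS_n}$ into ({\textbf{\L}}$\mathbf{_n^c}$\textbf{-Alg})$^{op}$ while $Ext_B\circ S_B$ maps in the reverse direction, so both composites are well defined and the assertion $Lag\circ J_B\dashv Ext_B\circ S_B$ is exactly the conclusion of the composition fact.

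Concretely, I would first record the two hom-set bijections furnished by the constituent adjunctions. From $Lag\dashv S_B$ we obtain a bijection $hom(Lag(D),C)\cong hom(D,S_B(C))$, natural in the $\mathbf{FBSys_n}$-object $D$ and in the object $C$ of ({\textbf{\L}}$\mathbf{_n^c}$\textbf{-Alg})$^{op}$; from $J_B\dashv Ext_B$ we obtain $hom(J_B(X),E)\cong hom(X,Ext_B(E))$, natural in the $\mathbf{FBS_n}$-object $X$ and the $\mathbf{FBSys_n}$-object $E$. Specialising $D=J_B(X)$ and $E=S_B(C)$ and splicing the two bijections yields $hom(Lag(J_B(X)),C)\cong hom(J_B(X),S_B(C))\cong hom(X,Ext_B(S_B(C)))$, and this composite is again natural in $X$ and $C$ because each constituent bijection is. This is precisely the adjunction asserting that $Ext_B\circ S_B$ is right adjoint to $Lag\circ J_B$.

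If an explicit unit/co-unit description is preferred, I would assemble them from the data already produced in Theorems \ref{5.42} and \ref{5.45}. Writing $\eta^{(1)}$ and $\xi^{(1)}$ for the unit and co-unit of $J_B\dashv Ext_B$ (with $\xi^{(1)}_X=(id_X,ext_B^*)$) and $\eta^{(2)}$, $\xi^{(2)}$ for those of $Lag\dashv S_B$ (with $\eta^{(2)}_A=(p^*,id_A)$), the unit of the composite adjunction at an $\mathbf{FBS_n}$-object $X$ is $Ext_B(\eta^{(2)}_{J_B(X)})\circ\eta^{(1)}_X$, and the co-unit is built dually. Verifying the two triangle identities for the composite then reduces to the triangle identities of the two given adjunctions together with the functoriality of $Ext_B$ and $Lag$.

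The only point demanding attention — and it is bookkeeping rather than a genuine obstacle — is to keep the order of composition straight, recalling that right adjoints compose in the opposite order to their left adjoints, and to confirm that naturality is inherited through the splicing. Once the adjoint situations of Theorems \ref{5.42} and \ref{5.45} are in hand, no further computation with $\bar{n}$-fuzzy Boolean spaces or with {\L}$_n^c$-algebras is required, and the result follows formally.
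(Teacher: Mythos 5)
Your proposal is correct and is essentially the paper's own argument: the paper proves Theorem \ref{5.49} in one line by combining the adjoint situations $J_B\dashv Ext_B$ (Theorem \ref{5.42}) and $Lag\dashv S_B$ (Theorem \ref{5.45}), which is exactly the composition-of-adjunctions fact you invoke. Your spelled-out hom-set splicing and unit/co-unit assembly merely make explicit the standard verification the paper leaves implicit.
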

\begin{proof}
Follows from the combination of the adjoint situations in Theorems \ref{5.42}, \ref{5.45}.
\end{proof}
\begin{theorem}\label{5.50}
{\textbf{\L}}$\mathbf{_n^c}$\textbf{-Alg} is dually equivalent to the category $\mathbf{FBS_n}$.
\end{theorem}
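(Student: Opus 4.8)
The plan is to obtain the result by composing the two categorical equivalences already established. Theorem \ref{5.48} gives that {\textbf{\L}}$\mathbf{_n^c}$\textbf{-Alg} is dually equivalent to $\mathbf{FBSys_n}$, and Theorem \ref{5.44} gives that $\mathbf{FBSys_n}$ is equivalent to $\mathbf{FBS_n}$. The elementary categorical principle I would invoke is that a dual equivalence followed by an equivalence is again a dual equivalence: if $\mathbb{A}\simeq\mathbb{B}^{op}$ and $\mathbb{B}\simeq\mathbb{C}$, then $\mathbb{B}^{op}\simeq\mathbb{C}^{op}$ and hence $\mathbb{A}\simeq\mathbb{C}^{op}$, since equivalence of categories is preserved under passing to opposites and under composition of functors.

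Concretely, I would take the four functors already in play: $S_B$ and $Lag$ realizing the dual equivalence of Theorem \ref{5.48} (so that $Lag\circ S_B\cong id$ and $S_B\circ Lag\cong id$ via its unit $\eta$ and co-unit $\xi$), and $Ext_B$ and $J_B$ realizing the equivalence of Theorem \ref{5.44} (so that $J_B\circ Ext_B\cong id_{\mathbf{FBSys_n}}$ and $Ext_B\circ J_B\cong id_{\mathbf{FBS_n}}$ via its $\xi$ and $\eta$). The composite functors $Ext_B\circ S_B$ and $Lag\circ J_B$ --- which are precisely the pair already shown to be adjoint in Theorem \ref{5.49} --- then form the desired dual equivalence.

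To verify this, I would check that the two round-trip composites are naturally isomorphic to identities. Inserting the known isomorphisms,
$$(Lag\circ J_B)\circ(Ext_B\circ S_B)=Lag\circ(J_B\circ Ext_B)\circ S_B\cong Lag\circ S_B\cong id,$$
and symmetrically
$$(Ext_B\circ S_B)\circ(Lag\circ J_B)=Ext_B\circ(S_B\circ Lag)\circ J_B\cong Ext_B\circ J_B\cong id_{\mathbf{FBS_n}},$$
where the inner isomorphisms use Theorem \ref{5.44} and the outer ones use Theorem \ref{5.48}. This shows that $\mathbf{FBS_n}$ is equivalent to the opposite of {\textbf{\L}}$\mathbf{_n^c}$\textbf{-Alg}, i.e. that the two categories are dually equivalent.

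There is no substantial obstacle here, as the argument is purely formal: it is the closure of categorical (dual) equivalence under composition. The only care needed is the bookkeeping of the opposite-category convention, so that the sense of ``dually equivalent'' in Theorem \ref{5.48} matches that in the target statement, and so that the four relevant natural isomorphisms paste together with compatible variances.
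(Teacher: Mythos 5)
Your proposal is correct and matches the paper's own proof, which likewise obtains Theorem \ref{5.50} by combining the equivalence of Theorem \ref{5.44} with the dual equivalence of Theorem \ref{5.48}. You simply make explicit the formal pasting of the functors $Ext_B\circ S_B$ and $Lag\circ J_B$ and the natural isomorphisms, which the paper leaves implicit in its one-line argument.
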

\begin{proof}
Follows as a combination of the equivalences of Theorems \ref{5.44}, \ref{5.48}.
\end{proof}
The obtained functorial relationships can be depicted as follows:
\begin{center}
\begin{tikzpicture}
\node (C) at (0,3) {$\mathbf{FBSys_n}$};
\node (A) at (-2.5,0) {$\mathbf{FBS_n}$};
\node (B) at (2.5,0) {({\textbf{\L}}$\mathbf{_n^c}$\textbf{-Alg})$^{op}$};
%\node at (0,0) {\rotatebox{270}{$\Rightarrow$}};
\path[->,font=\scriptsize ,>=angle 90]
(A) edge [bend left=15] node[above] {$Lag\circ J_B$} (B);
\path[<-,font=\scriptsize ,>=angle 90]
(A) edge [bend right=15] node[below] {$Ext_B \circ S_B$} (B);
\path[->,font=\scriptsize ,>=angle 90]
(A) edge [bend left=20] node[above] {$J_B$} (C);
\path[<-,font=\scriptsize ,>=angle 90]
(A) edge [bend right=20] node[above] {$Ext_B$} (C);
\path[->,font=\scriptsize ,>=angle 90]
(C) edge [bend left=20] node[above] {$Lag$} (B);
\path[<-,font=\scriptsize ,>=angle 90]
(C) edge [bend right=20] node[above] {$S_B$} (B);
\end{tikzpicture}
\end{center}
The duality which is obtained now between $\mathbf{FBS_n}$ and {\textbf{\L}}$\mathbf{_n^c}$\textbf{-Alg} can be proved to be same as the duality established in \cite{YM}.
\chapter{Fuzzy Geometric Logic with Graded Consequence}
\section{Introduction}\blfootnote{The results of this chapter are in {\bf \cite{MP3} M.K. Chakraborty and P. Jana: \textit{Fuzzy topological space via fuzzy logic with graded consequence}, International Journal of Approximate Reasoning, Elsevier (accepted).}}
In \cite{SV}, the relationship of geometric logic and topological system was mentioned. Vickers mentioned that the satisfaction relation of the topological system $(X,\models , A)$ matches logic of finite observations or geometric logic. The aim of this chapter is to introduce fuzzy geometric logic with graded consequence. Three closely related notions namely, graded fuzzy topological system, fuzzy topology with graded inclusion and graded frame shall also be introduced. As a matter of fact all these notions are interwoven. Mathematically speaking, these may be called generalizations of corresponding classical notions to the many-valued context. But many-valuedness has taken place in two layers as will be apparent in the sequel.
In the first-order logic (semantic) consequence relation is defined in terms of satisfaction. When the satisfaction relation is fuzzy the corresponding consequence relation may be either crisp or fuzzy. In the first case, we get many valued logic \cite{LB} and in the second case logic of graded consequence \cite{MK}. 

It is known that if $(X,\models , A)$ is a topological system then $\models$ matches the logic of finite observations or geometric logic \cite{SV, PJT}. In this chapter we will introduce fuzzy geometric logic and fuzzy geometric logic with graded consequence and connect them with appropriate fuzzy topological systems. Soundness of both the logics shall also be investigated.

\section{Fuzzy Geometric Logic}\index{fuzzy geometric logic}
The \textbf{alphabet} \index{alphabet} of the language $\mathscr{L}$ of fuzzy geometric logic comprises of the connectives $\wedge$, $\bigvee$, the existential quantifier $\exists$, parentheses $)$ and $($, as well as:
\begin{itemize}
\item countably many individual constants $c_1,\ c_2,\dots$;
\item denumerably many individual variables $x_1,\ x_2,\dots$;
\item propositional constants $\top$, $\bot$;
\item for each $i>0$, countably many $i$-place predicate symbols $p^i_j$'s, including at least the $2$-place symbol ``$=$" for identity;
\item for each $i>0$, countably many $i$-place function symbols $f^i_j$'s.
\end{itemize}
\begin{definition}[Term]\label{term}\index{term}
\textbf{Terms} are recursively defined in the usual way as follows:
\begin{itemize}
\item every constant symbol $c_i$ is a term;
\item every variable $x_i$ is a term;
\item if $f_j$ is an $i$-place function symbol, and $t_1,t_2,\dots,t_i$ are terms then\\ $f^i_jt_1t_2\dots t_i$ is a term;
\item nothing else is a term.
\end{itemize}
\end{definition}
\begin{definition}[Geometric formula]\label{wff}\index{geometric formula}
\textbf{Geometric formulae} are recursively defined as follows:
\begin{itemize}
\item $\top$, $\bot$ are geometric formulae;
\item if $p_j$ is an $i$-place predicate symbol, and $t_1,t_2,\dots,t_i$ are terms then\\ $p^i_jt_1t_2\dots t_i$ is a geometric formula;
\item if $t_i$, $t_j$ are terms then $(t_i=t_j)$ is a geometric formula;
\item if $\phi$ and $\psi$ are geometric formulae then $(\phi\wedge\psi)$ is a geometric formula;
%\item if $\phi$ and $\psi$ are geometric formulae then $(\phi\vee\psi)$ is a geometric formula;
\item if $\phi_i$'s ($i\in I$) are geometric formulae then $\bigvee\{\phi_i\}_{i\in I}$ is a geometric formula, when $I=\{1,2\}$ then the above formula is written as $\phi_1\vee \phi_2$;
\item if $\phi$ is a geometric formula and $x_i$ is a variable then $\exists x_i\phi$ is a geometric formula;
\item nothing else is a geometric formula.
\end{itemize}
\end{definition}
\textbf{Note:} Definitions of bound or free variables, and complexity of formulae are the standard ones as used in first order logic.
\begin{definition}\label{t}
$t[t'/ x]$ is the result of replacing $t'$ for every occurrence of $x$ in $t$, defined recursively as follows:
\begin{itemize}
\item if $t$ is $c_i$ or $x_i$ other than $x$ then $t[t'/x]$ is $t$;
\item if $t$ is $x$ then $t[t'/x]$ is $t'$;
\item if $t$ is $f^i_jt_1t_2\dots t_i$ then $t[t'/x]$ is $f^i_jt_1[t'/x]t_2[t'/x]\dots t_i[t'/x]$.
\end{itemize}
\end{definition}
\begin{definition}\label{phi}
$\phi[t/x]$ is the result of replacing $t$ for every free occurrence of $x$ in $\phi$, defined recursively as follows:
\begin{itemize}
\item if $\phi$ is $p^i_jt_1t_2\dots t_i$ then $\phi[t/x]$ is $p^i_jt_1[t/x]t_2[t/x]\dots t_i[t/x]$;
\item if $\phi$ is $(t_i=t_j)$ then $\phi[t/x]$ is $(t_i[t/x]=t_j[t/x])$;
\item if $\phi$ is $\phi_1\wedge\phi_2$ then $\phi[t/x]$ is $\phi_1[t/x]\wedge\phi_2[t/x]$;
\item if $\phi$ is $\phi_1\vee\phi_2$ then $\phi[t/x]$ is $\phi_1[t/x]\vee\phi_2[t/x]$;
\item if $\phi$ is $\bigvee\{\phi_i\}_{i\in I}$ then $\phi[t/x]$ is $\bigvee\{\phi_i[t/x]\}_{i\in I}$;
\item if $\phi$ is $\top$ or $\bot$ then $\phi[t/x]$ is $\top$ or $\bot$ respectively;
\item if $\phi$ is $\exists x_i\psi$ ($x_i$ is other than $x$) then $\phi[t/x]$ is $\exists x_i\psi[t/x]$;
\item if $\phi$ is $\exists x\psi$ then $\phi[t/x]$ is $\phi$ (i.e. $\exists x\psi$).
\end{itemize}
\end{definition}
\begin{definition}[Interpretation]\label{interpretation}\index{interpretation}
An \textbf{interpretation} $I$ consists of
\begin{itemize}
\item a set $D$, called the domain of interpretation;
\item an element $I(c_i)\in D$ for each constant $c_i$;
\item a function $I(f^i_j):D^i\longrightarrow D$ for each function symbol $f^i_j$;
\item a fuzzy relation $I(p^i_j):D^i\longrightarrow [0,1]$ for each predicate symbol $p^i_j$, i.e., it is a fuzzy subset of $D^i$.
\end{itemize}
\end{definition}
\begin{definition}[Graded Satisfiability]\label{sat}\index{graded satisfiability}
Let $s$ be a sequence over $D$. Let $s=(s_1,s_2,\dots)$ be a sequence over $D$ where $s_1,s_2,\dots$ are all elements of $D$. Let $d$ be an element of $D$. Then $s(d/x_i)$ is the result of replacing $i$'th coordinate of $s$ by $d$, i.e., $s(d/x_i)=(s_1,s_2,\dots,s_{i-1},d,s_{i+1},\dots)$. Let $t$ be a term. Then $s$ assigns an element $s(t)$ of $D$ as follows:
\begin{itemize}
\item if $t$ is the constant symbol $c_i$ then $s(c_i)=I(c_i)$;
\item if $t$ is the variable $x_i$ then $s(x_i)=s_i$;
\item if $t$ is the function symbol $f^i_jt_1t_2\dots t_i$ then $$s(f^i_jt_1t_2\dots t_i)=I(f^i_j)(s(t_1),s(t_2),\dots,s(t_i)).$$
\end{itemize}
Now we define grade of satisfiability of $\phi$ by $s$ written as $gr(s\ \emph{sat}\ \phi)$, where $\phi$ is a geometric formula, as follows:
\begin{itemize}
\item $gr(s\ \emph{sat}\ p^i_jt_1t_2\dots t_i)=I(p^i_j)(s(t_1),s(t_2),\dots,s(t_i))$;
\item $gr(s\ \emph{sat}\ \top)=1$;
\item $gr(s\ \emph{sat}\ \bot)=0$;
\item $gr(s\ \emph{sat}\ t_i=t_j)$ $=\begin{cases}
        1 & \emph{if $s(t_i)=s(t_j)$} \\
        0 & \emph{otherwise};
    \end{cases}$
\item $gr(s\ \emph{sat}\ \phi_1\wedge\phi_2)=gr(s\ \emph{sat}\ \phi_1)\wedge gr(s\ \emph{sat}\ \phi_2)$;
\item $gr(s\ \emph{sat}\ \phi_1\vee\phi_2)=gr(s\ \emph{sat}\ \phi_1)\vee gr(s\ \emph{sat}\ \phi_2)$;
\item $gr(s\ \emph{sat}\ \bigvee\{\phi_i\}_{i\in I})=sup\{gr(s\ \emph{sat}\ \phi_i)\mid i\in I\}$;
\item $gr(s\ \emph{sat}\ \exists x_i\phi)=sup\{gr(s(d/ x_i)\ \emph{sat}\ \phi)\mid d\in D\}$.
\end{itemize}
\end{definition}
 In $[0,1]$, we have used $\wedge$ and $\vee$ to mean min and max respectively - a convention that will be followed throughout.
The expression $\phi\vdash\psi$, where $\phi$, $\psi$ are wffs, is called a sequent\index{sequent}. We now define satisfiability of a sequent.
\begin{definition}\label{valid}
1. $s$ \emph{sat} $\phi\vdash\psi$ iff $gr(s\ \emph{sat}\ \phi)\leq gr(s\ \emph{sat}\ \psi)$.\\
2. $\phi\vdash\psi$ is valid in $I$ iff $s$ \emph{sat} $\phi\vdash\psi$ for all $s$ in the domain of $I$.\\
3. $\phi\vdash\psi$ is universally valid iff it is valid in all interpretations.
\end{definition}
\begin{theorem}[Local Determination A]\label{LDI}\index{local determination}
Let $I$ be an interpretation and $t$ be a term. If the sequences $s$ and $s'$ are such that they agree on the variables occurring in the term $t$ then $s(t)=s'(t)$.
\end{theorem}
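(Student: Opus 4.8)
The plan is to proceed by structural induction on the term $t$, following exactly the recursive definition of terms (Definition~\ref{term}) and of the assignment $s(t)$ given inside Definition~\ref{sat}. The inductive invariant is the statement of the theorem itself: for any two sequences $s,s'$ over $D$ that agree on all variables occurring in $t$, we have $s(t)=s'(t)$. Since the clause defining $s(t)$ is driven entirely by the outermost syntactic form of $t$, the induction will split into precisely the three cases of the term formation rules.

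For the base cases I would argue as follows. If $t$ is a constant symbol $c_i$, then no variable occurs in $t$, so the hypothesis is vacuous and $s(c_i)=I(c_i)=s'(c_i)$ since the value depends only on the interpretation $I$, not on the sequence. If $t$ is a variable $x_i$, then the only variable occurring in $t$ is $x_i$ itself; the assumption that $s$ and $s'$ agree on the variables of $t$ forces $s_i=s'_i$, whence $s(x_i)=s_i=s'_i=s'(x_i)$.

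For the inductive step, let $t$ be $f^i_j t_1 t_2\dots t_i$. The crucial observation is that the set of variables occurring in $t$ is the union of the sets of variables occurring in the subterms $t_1,\dots,t_i$; in particular the variables of each $t_k$ form a subset of the variables of $t$. Hence, whenever $s$ and $s'$ agree on the variables of $t$, they agree on the variables of each $t_k$, and the induction hypothesis yields $s(t_k)=s'(t_k)$ for every $k$. Feeding these equalities into the defining clause gives
\begin{align*}
s(f^i_j t_1\dots t_i) &= I(f^i_j)(s(t_1),\dots,s(t_i))\\
&= I(f^i_j)(s'(t_1),\dots,s'(t_i))\\
&= s'(f^i_j t_1\dots t_i),
\end{align*}
which closes the induction.

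There is no genuine obstacle here: the result is a routine structural induction, and the only point requiring care is the bookkeeping in the function-symbol case, namely verifying that ``variables of a subterm'' is indeed contained in ``variables of the whole term'' so that the induction hypothesis is applicable to each $t_k$. Everything else is a direct unfolding of the definitions.
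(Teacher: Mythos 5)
Your proof is correct and follows essentially the same route as the paper's: structural induction on $t$ with the constant and variable base cases and the function-symbol inductive step unfolding $s(f^i_jt_1\dots t_i)=I(f^i_j)(s(t_1),\dots,s(t_i))$. The only difference is that you make explicit the bookkeeping point (variables of each subterm are among the variables of $t$), which the paper leaves implicit.
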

\begin{proof}
When $t$ is a constant $c$ then $s(c)=I(c)=s'(c)$.

When $t$ is a variable $x_i$ then $s(x_i)=s_i=s_i'=s'(x_i)$.

Let the theorem hold for the terms $t_1,t_2,...,t_i$. Then the theorem holds for the term $f^i_jt_1t_2....t_i$ as the following holds.
\begin{align*} 
s(f^i_jt_1t_2...t_i) & =I(f^i_j)(s(t_1),s(t_2),...s(t_i))\\& =I(f^i_j)(s'(t_1),s'(t_2),...s'(t_i))\\
& =s'(f^i_jt_1t_2...t_i).
\end{align*}
This completes the proof.
\end{proof}
\begin{theorem}[Local Determination B]\label{LDII}\index{local determination}
Let $I$ be an interpretation and $\phi$ be a geometric formula. If the sequences $s$ and $s'$ are such that they agree on the free variables occurring in $\phi$ then $gr(s\ \emph{sat}\ \phi)=gr(s'\ \emph{sat}\ \phi)$.
\end{theorem}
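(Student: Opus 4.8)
The plan is to prove the statement by induction on the complexity of the geometric formula $\phi$, exactly as in the classical first-order case, using Local Determination A (\rth{LDI}) to discharge the term level. Throughout I would write $FV(\phi)$ for the set of free variables occurring in $\phi$; the hypothesis is that $s$ and $s'$ agree on every variable in $FV(\phi)$, and the goal is $gr(s\ \emph{sat}\ \phi)=gr(s'\ \emph{sat}\ \phi)$. The structural clauses of Definition \ref{sat} supply the inductive skeleton, one case per formation rule in Definition \ref{wff}.

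For the base cases: when $\phi$ is $\top$ or $\bot$ the grade is the constant $1$ or $0$ respectively, so equality is immediate and the agreement hypothesis on $s,s'$ is not even needed. When $\phi$ is atomic, say $p^i_jt_1t_2\dots t_i$, its grade is $I(p^i_j)(s(t_1),\dots,s(t_i))$; here $FV(\phi)$ is precisely the set of variables occurring in the terms $t_1,\dots,t_i$, so $s$ and $s'$ agree on the variables of each $t_k$, and \rth{LDI} gives $s(t_k)=s'(t_k)$ for all $k$, whence the two grades coincide. The identity case $\phi\equiv(t_i=t_j)$ is handled the same way: by \rth{LDI} we obtain $s(t_i)=s'(t_i)$ and $s(t_j)=s'(t_j)$, so $s(t_i)=s(t_j)$ holds iff $s'(t_i)=s'(t_j)$, and the two-valued grade is unchanged.

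For the conjunctive and disjunctive inductive steps, I would observe that $FV(\phi_1\wedge\phi_2)=FV(\phi_1)\cup FV(\phi_2)$ and, more generally, $FV(\bigvee\{\phi_i\}_{i\in I})=\bigcup_{i\in I}FV(\phi_i)$. Hence agreement of $s,s'$ on $FV(\phi)$ forces agreement on each $FV(\phi_i)$, and the induction hypothesis yields $gr(s\ \emph{sat}\ \phi_i)=gr(s'\ \emph{sat}\ \phi_i)$ for every component; applying the clauses of Definition \ref{sat}, which read these connectives as $\min$, $\max$ and $\sup$, then gives equality of the composite grades. The binary $\vee$ is just the special case $I=\{1,2\}$.

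The hard part will be the existential step $\phi\equiv\exists x_i\psi$, where the free-variable sets no longer line up: $FV(\exists x_i\psi)=FV(\psi)\setminus\{x_i\}$, so $s$ and $s'$ are only assumed to agree off $x_i$. The key observation I would make is that for each fixed $d\in D$ the modified sequences $s(d/x_i)$ and $s'(d/x_i)$ agree on \emph{all} of $FV(\psi)$: on coordinate $i$ both take the value $d$, while on any other variable of $FV(\psi)$ they inherit the agreement of $s$ and $s'$, which holds there since such variables lie in $FV(\exists x_i\psi)$. Thus the induction hypothesis applies to $\psi$ with the pair $s(d/x_i),s'(d/x_i)$, giving $gr(s(d/x_i)\ \emph{sat}\ \psi)=gr(s'(d/x_i)\ \emph{sat}\ \psi)$ for every $d$; taking the supremum over $d\in D$ and invoking the quantifier clause of Definition \ref{sat} then yields $gr(s\ \emph{sat}\ \exists x_i\psi)=gr(s'\ \emph{sat}\ \exists x_i\psi)$, which closes the induction.
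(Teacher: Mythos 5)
Your proposal is correct and follows essentially the same route as the paper's own proof: induction on the complexity of $\phi$, with Theorem~\ref{LDI} discharging the atomic and identity base cases and the $\sup$/$\min$ clauses of Definition~\ref{sat} handling the connectives. Your explicit justification of the existential step --- that $s(d/x_i)$ and $s'(d/x_i)$ agree on all of $FV(\psi)$ for each fixed $d$ --- is the one point the paper leaves implicit when it passes from $sup\{gr(s(d/x_i)\ \emph{sat}\ \psi)\mid d\in D\}$ to $sup\{gr(s'(d/x_i)\ \emph{sat}\ \psi)\mid d\in D\}$, so your write-up is, if anything, slightly more careful.
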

\begin{proof}
We will prove it using mathematical induction on the complexity of $\phi(=n)$.\\
Base Case: When $n=0$ then $\phi$ is any one of the following formulae $$p^i_jt_1t_2...t_i,\ (t_i=t_j),\ \top ,\ \bot.$$
When $\phi$ is $p^i_jt_1t_2...t_i$:
\begin{align*}
gr(s\ \text{sat}\ \phi) & = gr(s\ \text{sat}\ p^i_jt_1t_2...t_i)\\
& = I(p^i_j)(s(t_1),s(t_2),...,s(t_i))\\
& = I(p^i_j)(s'(t_1),s'(t_2),...,s'(t_i)),\ \ \ \ \text{by Theorem \ref{LDI}}\\
& = gr(s'\ \text{sat}\ p^i_jt_1t_2...t_i)\\ 
& = gr(s'\ \text{sat}\ \phi).
\end{align*}
When $\phi$ is $(t_i=t_j)$:
\begin{align*}
gr(s\ \text{sat}\ \phi) & = gr(s\ \text{sat}\ (t_i=t_j))\\
&  =\begin{cases}
        1 & \text{if}\ \ s(t_i)=s(t_j) \\
        0 & \text{otherwise}
    \end{cases}\\
&  =\begin{cases}
        1 & \text{if}\ \ s'(t_i)=s'(t_j) \\
        0 & \text{otherwise}
    \end{cases}\\
& = gr(s'\ \text{sat}\ (t_i=t_j))\\
& = gr(s'\ \text{sat}\ \phi).
\end{align*}
When $\phi$ is $\top$:
\begin{align*}
gr(s\ \text{sat}\ \phi) & = gr(s\ \text{sat}\ \top)\\
& = 1\\
& = gr(s'\ \text{sat}\ \top)\\
& = gr(s'\ \text{sat}\ \phi).
\end{align*}
When $\phi$ is $\bot$:
\begin{align*}
gr(s\ \text{sat}\ \phi) & = gr(s\ \text{sat}\ \bot)\\
& = 0\\
& = gr(s'\ \text{sat}\ \bot)\\
& = gr(s'\ \text{sat}\ \phi).
\end{align*}
Let the theorem hold for $n\leq m$.

Let $\phi$ be a geometric formula of complexity $m+1$ then, $\phi$ is either $\phi_1\wedge\phi_2$ or $\phi_1\vee\phi_2$ or $\exists x_i\psi$, where $\phi_1,\ \phi_2,\ \psi$ are of complexity less than equal to $m$.\\
When $\phi$ is $\phi_1\wedge\phi_2$:
\begin{align*}
gr(s\ \text{sat}\ \phi) & = gr(s\ \text{sat}\ \phi_1\wedge\phi_2)\\
& = gr(s\ \text{sat}\ \phi_1)\wedge gr(s\ \text{sat}\ \phi_2)\\
& = gr(s'\ \text{sat}\ \phi_1)\wedge gr(s'\ \text{sat}\ \phi_2)\\
& = gr(s'\ \text{sat}\ \phi_1\wedge\phi_2)\\
& = gr(s'\ \text{sat}\ \phi).
\end{align*}
When $\phi$ is $\bigvee\{ \phi_i\}_{i\in I}$:
\begin{align*}
gr(s\ \text{sat}\ \phi) & = gr(s\ \text{sat}\ \bigvee\{ \phi_i\}_{i\in I})\\
& = sup\{gr(s\ \text{sat}\ \phi_i)\mid i\in I\}\\
& = sup\{gr(s'\ \text{sat}\ \phi_i)\mid i\in I\}\\
& = gr(s'\ \text{sat}\ \bigvee\{ \phi_i\}_{i\in I})\\
& = gr(s'\ \text{sat}\ \phi).
\end{align*}
When $\phi$ is $\exists x_i\psi$:
\begin{align*}
gr(s\ \text{sat}\ \phi) & = gr(s\ \text{sat}\ \exists x_i\psi)\\
& = sup\{gr(s(d/x_i)\ \text{sat}\ \psi)\mid d\in D\}\\
& = sup\{gr(s'(d/x_i)\ \text{sat}\ \psi)\mid d\in D\}\\
& = gr(s'\ \text{sat}\ \exists x_i\psi)\\
& = gr(s'\ \text{sat}\ \phi).
\end{align*}
This completes the proof.
\end{proof}
\begin{theorem}[Substitution Theorem]\label{substitution}\index{substitution theorem}
Let $D$ be the domain of interpretation $I$:
\begin{enumerate}
\item Let $t$ and $t'$ be terms then for every sequence $s$ over $D$, $$s(t[t'/x_k])=s(s(t')/x_k)(t).$$
\item Let $\phi$ be a geometric formula and $t$ be a term. For every sequence $s$ over $D$, $gr(s\ \emph{sat}\ \phi[t/x_k])=gr(s(s(t)/x_k)\ \emph{sat}\ \phi)$.
\end{enumerate}
\end{theorem}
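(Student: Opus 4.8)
The plan is to prove the two parts in sequence, since the second relies on the first. For Part~1, I would argue by structural induction on the term $t$, following Definition~\ref{t}. There are four cases. If $t$ is a constant $c_i$, then $t[t'/x_k]$ is $c_i$ and both sides equal $I(c_i)$, because the value of a constant is independent of the sequence. If $t$ is a variable $x_i$ with $i\neq k$, then $t[t'/x_k]$ is $x_i$ and both sides return the coordinate $s_i$, which is unchanged by a modification at coordinate $k$. If $t$ is $x_k$ itself, then $t[t'/x_k]$ is $t'$, so the left side is $s(t')$ while the right side is the $k$-th coordinate of $s(s(t')/x_k)$, namely $s(t')$. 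Finally, if $t$ is $f^i_jt_1\dots t_i$, I would apply the induction hypothesis to each $t_m$ and push the assignment through the interpretation $I(f^i_j)$, exactly as in the inductive step of Theorem~\ref{LDI}.

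For Part~2, I would use induction on the complexity of $\phi$, mirroring the proof of Theorem~\ref{LDII}. The base cases are the atomic formulae $p^i_jt_1\dots t_i$, $(t_i=t_j)$, $\top$ and $\bot$; for the first two I would invoke Part~1 term by term, so that the identities $s(t_m[t/x_k])=s(s(t)/x_k)(t_m)$ transfer the satisfaction grade through Definition~\ref{sat}, while $\top$ and $\bot$ are immediate since their grades are the constants $1$ and $0$. For the inductive step, the cases $\phi_1\wedge\phi_2$, $\phi_1\vee\phi_2$ and $\bigvee\{\phi_i\}_{i\in I}$ follow directly by distributing the substitution over the connective (Definition~\ref{phi}) and applying the induction hypothesis to the immediate subformulae, since the grades of these formulae are built directly from the grades of their parts.

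The delicate case, and the one I expect to be the main obstacle, is the existential quantifier $\exists x_i\psi$. When $x_i$ is $x_k$, the substitution does nothing (Definition~\ref{phi}) and the modification at coordinate $k$ is overwritten by the quantifier's supremum over $d\in D$, so both sides agree. When $x_i$ is distinct from $x_k$, unwinding the definitions gives
\[
gr(s\ \text{sat}\ \exists x_i\psi[t/x_k])=\sup\{\,gr(s(d/x_i)\ \text{sat}\ \psi[t/x_k])\mid d\in D\,\},
\]
and the induction hypothesis rewrites the inner grade as $gr\big((s(d/x_i))(s(d/x_i)(t)/x_k)\ \text{sat}\ \psi\big)$. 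To match the desired right-hand side $\sup\{\,gr(s(s(t)/x_k)(d/x_i)\ \text{sat}\ \psi)\mid d\in D\,\}$ I must (i) commute the two coordinate modifications, which is immediate because $i\neq k$, and (ii) replace $s(d/x_i)(t)$ by $s(t)$.

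Step (ii) is precisely where substitutability enters, and where the bare statement of the theorem is fragile: the replacement is valid provided $x_i$ does not occur in $t$, for then $s(d/x_i)$ and $s$ agree on every variable occurring in $t$ and hence $s(d/x_i)(t)=s(t)$ by Local Determination~A (Theorem~\ref{LDI}). I would secure this via the standard convention that bound variables are renamed apart from the variables of $t$ (i.e.\ that $t$ is free for $x_k$ in $\phi$), the same convention tacitly governing Definition~\ref{phi}. Collecting the commutation of modifications with this term-level identity closes the existential case, and the induction then completes the proof.
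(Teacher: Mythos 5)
Your proposal matches the paper's proof essentially step for step: Part~1 by structural induction on terms with the same four cases (constant, variable $x_i\neq x_k$, the variable $x_k$ itself, and function terms), and Part~2 by induction on the complexity of $\phi$ with the identical case analysis, including the same split of the existential case into $x_i=x_k$ and $x_i\neq x_k$. The one difference is to your credit: at the step replacing $s(d/x_i)(t)$ by $s(t)$ in the case $x_i\neq x_k$, the paper tacitly assumes $x_i$ does not occur in $t$, whereas you name this substitutability condition explicitly and justify it via Theorem~\ref{LDI}.
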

\begin{proof}
Let us proceed in the following way.

1. \ When $t$ is a constant $c$ then, $$s(t[t'/x_k])=s(c[t'/x_k])=s(c)=I(c)$$ and $$s(s(t')/x_k)(t)=s(s(t')/x_k)(c)=s(c)=I(c).$$ Hence in this case $s(t[t'/x_k])=s(s(t')/x_k)(t)$.

When $t$ is a variable $x_i(\neq x_k)$ then,
$$s(t[t'/x_k])=s(x_i[t'/x_k])=s(x_i)=s_i$$ and
$$s(s(t')/x_k)(t)=s(s(t')/x_k)(x_i)=s(x_i)=s_i.$$
Hence in this case $s(t[t'/x_k])=s(s(t')/x_k)(t)$.

When $t$ is a variable $x_k$ then,
$s(t[t'/x_k])=s(x_k[t'/x_k])=s(t')$ and
$$s(s(t')/x_k)(t)=s(s(t')/x_k)(x_k)=s(t')$$
Hence in this case $s(t[t'/x_k])=s(s(t')/x_k)(t)$.

Let the theorem hold for the terms $t_1,t_2,...,t_i$. We will show that the theorem holds for the term $f^i_jt_1t_2....t_i$ in the following way.
\begin{eqnarray*}
s((f^i_jt_1t_2...t_i)[t'/x_k]) & = & s(f^i_jt_1[t'/x_k]t_2[t'/x_k]...t_i[t'/x_k])\\
 &=& I(f^i_j)(s(t_1[t'/x_k]),s(t_2[t'/x_k]),...,s(t_i[t'/x_k]))\\
 &=& I(f^i_j)(s(s(t')/x_k)(t_1), s(s(t')/x_k)(t_2),...,s(s(t')/x_k)(t_i))\\
 &=& s(s(t')/x_k)(f^i_jt_1t_2...t_i).\end{eqnarray*}

2. \ We will prove it using mathematical induction on the complexity of $\phi(=n)$.

\underline{Base Case:} When $n=0$ then $\phi$ is any one of the following formulae: 
$$p^i_jt_1t_2...t_i,\ (t_i=t_j),\ \top, \ \bot.$$
When $\phi$ is $p^i_jt_1t_2...t_i$:
\begin{align*}
gr(s\ \text{sat}\ \phi[t/x_k])& = gr(s\ \text{sat}\ (p^i_jt_1t_2...t_i)[t/x_k])\\
& = gr(s\ \text{sat}\  p^i_jt_1[t/x_k]t_2[t/x_k]...t_i[t/x_k])\\
& = I(p^i_j)(s(t_1[t/x_k]),s(t_2[t/x_k]),...,s(t_i[t/x_k]))\\
& = I(p^i_j)(s(s(t)/x_k)(t_1),s(s(t)/x_k)(t_2),...,s(s(t)/x_k)(t_i))\\
& = gr(s(s(t)/x_k)\ \text{sat}\ p^i_jt_1t_2...t_i)\\ 
& = gr(s(s(t)/x_k)\ \text{sat}\ \phi).
\end{align*} 
When $\phi$ is $(t_i=t_j)$:
\begin{align*}
gr(s\ \text{sat}\ \phi[t/x_k]) & = gr(s\ \text{sat}\ (t_i=t_j)[t/x_k])\\
& = gr(s\ \text{sat}\ t_i[t/x_k]=t_j[t/x_k])\\
&  =\begin{cases}
        1 & \text{if}\ \ s(t_i[t/x_k])=s(t_j[t/x_k]) \\
        0 & \text{otherwise}
    \end{cases}\\
&  =\begin{cases}
        1 & \text{if}\ \ s(s(t)/x_k)(t_i)=s(s(t)/x_k)(t_j) \\
        0 & \text{otherwise}
    \end{cases}\\
& =  gr(s(s(t)/x_k)\ \text{sat}\ (t_i=t_j)) \\ 
& = gr(s(s(t)/x_k)\ \text{sat}\ \phi).    
\end{align*}
When $\phi$ is $\top$:
\begin{align*}
gr(s\ \text{sat}\ \phi[t/x_k]) & = gr(s\ \text{sat}\ \top[t/x_k])\\
& = gr(s\ \text{sat}\top)\\
& = 1\\
& = gr(s(s(t)/x_k)\ \text{sat}\ \top)\\
& = gr(s(s(t)/x_k)\ \text{sat}\ \phi).
\end{align*}
When $\phi$ is $\bot$:
\begin{align*}
gr(s\ \text{sat}\ \phi[t/x_k]) & = gr(s\ \text{sat}\ \bot[t/x_k])\\
& = gr(s\ \text{sat}\bot)\\
& = 0\\
& = gr(s(s(t)/x_k)\ \text{sat}\ \bot)\\
& = gr(s(s(t)/x_k)\ \text{sat}\ \phi).
\end{align*}
Let the theorem hold for $n\leq m$.
Let $\phi$ be a geometric formula of complexity $m+1$. Then, $\phi$ is either $\phi_1\wedge\phi_2$ or $\phi_1\vee\phi_2$ or $\exists x_i\psi$, where $\phi_1,\ \phi_2,\ \psi$ are of complexity less than equal to $m$.\\
When $\phi$ is $\phi_1\wedge\phi_2$:
\begin{align*}
gr(s\ \text{sat}\ \phi[t/x_k]) & = gr(s\ \text{sat}\ (\phi_1\wedge\phi_2)[t/x_k])\\
& = gr(s\ \text{sat}\ \phi_1[t/x_k])\wedge gr(s\ \text{sat}\ \phi_2[t/x_k])\\
& = gr(s(s(t)/x_k)\ \text{sat}\ \phi_1)\wedge gr(s(s(t)/x_k)\ \text{sat}\ \phi_2)\\
& = gr(s(s(t)/x_k)\ \text{sat}\ \phi_1\wedge\phi_2)\\
& = gr(s(s(t)/x_k)\ \text{sat}\ \phi).
\end{align*}
When $\phi$ is $\bigvee\{\phi_i\}_{i\in I}$:
\begin{align*}
gr(s\ \text{sat}\ \phi[t/x_k]) & = gr(s\ \text{sat}\ (\bigvee\{\phi_i\}_{i\in I})[t/x_k])\\
& = sup\{gr(s\ \text{sat}\ \phi_i[t/x_k])\}_{i\in I}\\
& =sup\{ gr(s(s(t)/x_k)\ \text{sat}\ \phi_i)\}_{i\in I}\\
& = gr(s(s(t)/x_k)\ \text{sat}\ \bigvee\{\phi_i\}_{i\in I})\\
& = gr(s(s(t)/x_k)\ \text{sat}\ \phi).
\end{align*}
When $\phi$ is $\exists x_i\psi$ then following two cases arises:\\
\underline{Case I: } $x_i=x_k$
\begin{align*}
gr(s\ \text{sat}\ \phi[t/x_k]) & = gr(s\ \text{sat}\ (\exists x_i\psi)[t/x_i])\\
& = gr(s\ \text{sat}\ \exists x_i\psi)\\
& = sup\{gr(s(d/x_i)\ \text{sat}\ \psi)\mid d\in D\}\\
& = sup\{gr(s(s(t)/x_k)(d/x_i)\ \text{sat}\ \psi)\mid d\in D\}\\
& = gr(s(s(t)/x_k)\ \text{sat}\ \exists x_i\psi)\\
& = gr(s(s(t)/x_k)\ \text{sat}\ \phi).
\end{align*}
\underline{Case II:} $x_i\neq x_k$
\begin{align*}
gr(s\ \text{sat}\ \phi[t/x_k]) & = gr(s\ \text{sat}\ (\exists x_i\psi)[t/x_k])\\
& = gr(s\ \text{sat}\ \exists x_i\psi[t/x_k])\\
& = sup\{gr(s(d/x_i)\ \text{sat}\ \psi[t/x_k])\mid d\in D\}\\
& = sup\{gr(s(d/x_i)(s(d/x_i)(t)/x_k)\ \text{sat}\ \psi)\mid d\in D\}\\
& = sup\{gr(s(s(t)/x_k)(d/x_i)\ \text{sat}\ \psi)\mid d\in D\}\\
& = gr(s(s(t)/x_k)\ \text{sat}\ \exists x_i\psi)\\
& = gr(s(s(t)/x_k)\ \text{sat}\ \phi).
\end{align*}
This completes the proof.
\end{proof}
\subsection{Rules of Inference}\index{rules of inference}
In this subsection the rules of inference for fuzzy geometric logic are given. A rule of inference for fuzzy geometric logic is of the form $\AxiomC{$\mathscr{S}_1,\ \mathscr{S}_2,\dots ,\ \mathscr{S}_i$}
\UnaryInfC{$\mathscr{S}$}
\DisplayProof$, where each of the $\mathscr{S}_1,\ \mathscr{S}_2,\dots,\ \mathscr{S}_i$ and $\mathscr{S}$ is a sequent. The sequents $\mathscr{S}_1,\ \mathscr{S}_2,\dots,\mathscr{S}_i$ are known as premises and the sequent $\mathscr{S}$ is called the conclusion. It should be noted that for a rule of inference the set of premises can be empty also.

The rules of inference for fuzzy geometric logic are as follows.
\begin{enumerate}
\item 
$\phi\vdash\phi$,
\item
\AxiomC{$\phi\vdash \psi$}
\AxiomC{$\psi\vdash\chi$}
\BinaryInfC{$\phi\vdash\chi$}
\DisplayProof ,
\item
(i) $\phi\vdash\top$,\ \ \ \  
(ii) $\phi\wedge\psi\vdash\phi$,\ \ \ \ 
(iii) $\phi\wedge\psi\vdash\psi$,\ \ \ \ 
(iv)\AxiomC{$\phi\vdash\psi$}
\AxiomC{$\phi\vdash\chi$}
\BinaryInfC{$\phi\vdash\psi\wedge\chi$}
\DisplayProof ,
\item 
(i) $\phi\vdash\bigvee S$ ($\phi\in S$), for any set $S$,
(ii)\AxiomC{$\phi\vdash\psi$}
\AxiomC{all $\phi\in S$}
\BinaryInfC{$\bigvee S\vdash \psi$}
\DisplayProof , for any $S$,
\item
$\phi\wedge\bigvee S\vdash\bigvee\{\phi\wedge\psi\mid\psi\in S\}$,
\item
$\top\vdash (x=x)$,
\item
$((x_1,\dots,x_n)=(y_1,\dots,y_n))\wedge\phi\vdash\phi[(y_1,\dots,y_n)\mid(x_1,\dots,x_n)]$,
\item
(i)\AxiomC{$\phi\vdash\psi[x\mid y]$}
\UnaryInfC{$\phi\vdash\exists y\psi$}
\DisplayProof ,
\hspace{24pt}
(ii)\AxiomC{$\exists y\phi\vdash\psi$}
\UnaryInfC{$\phi[x\mid y]\vdash\psi$}
\DisplayProof ,
\item
$\phi\wedge (\exists y)\psi\vdash(\exists y)(\phi\wedge\psi)$.
\end{enumerate}
\subsection{Soundness}\index{soundness}
The soundness of a rule means that if all the premises are valid in an interpretation $I$ then the conclusion must also valid in the same interpretation $I$. Satisfaction relation being many-valued, the validity of a sequent has a meaning different from that in the classical geometric logic. In this subsection we will show the soundness of the above rules of inference.
\begin{theorem}
The rules of inference for fuzzy geometric logic are universally valid.
\end{theorem}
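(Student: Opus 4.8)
The plan is to fix an arbitrary interpretation $I$ with domain $D$ and show that each of the nine rules is sound for $I$; since universal validity (Definition \ref{valid}) means validity in every interpretation, soundness in an arbitrary $I$ yields the theorem. By Definition \ref{valid}, a sequent $\phi\vdash\psi$ is valid in $I$ exactly when $gr(s\ \text{sat}\ \phi)\le gr(s\ \text{sat}\ \psi)$ for every sequence $s$ over $D$, so the whole argument reduces to checking, rule by rule and sequence by sequence, an inequality between grades of satisfiability defined in Definition \ref{sat}. Throughout I would use that $([0,1],\le)$ is a frame: $\wedge$ and $\vee$ are $\min$ and $\max$, arbitrary joins are suprema, and the distributive law $a\wedge\sup_i b_i=\sup_i(a\wedge b_i)$ holds.

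The propositional rules $1$--$5$ follow from elementary order- and lattice-theoretic facts in $[0,1]$. Rule $1$ is reflexivity and rule $2$ (cut) is transitivity of $\le$. Rule $3$ uses $gr(s\ \text{sat}\ \top)=1$ together with $\min\{a,b\}\le a,b$ and the universal property of meet for part (iv); rule $4$ uses $b\le\sup S$ for $b\in S$ and the universal property of the supremum for part (ii). Rule $5$ is precisely the frame distributive law applied to $a=gr(s\ \text{sat}\ \phi)$ and the family $\{gr(s\ \text{sat}\ \psi)\mid\psi\in S\}$, so both sides are in fact equal. For the identity rules, rule $6$ holds because $gr(s\ \text{sat}\ x=x)=1=gr(s\ \text{sat}\ \top)$ by the clause for $=$ in Definition \ref{sat}, and rule $7$ splits into two cases: if the equality conjunct has grade $0$ the left meet is $0$ and the inequality is trivial, while if it has grade $1$ then $s(x_i)=s(y_i)$ for all $i$, so the sequence produced by the substitution agrees with $s$ on the relevant variables, and Local Determination B (Theorem \ref{LDII}) together with the Substitution Theorem (Theorem \ref{substitution}) equates the two grades.

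The genuinely delicate cases, which I expect to be the main obstacle, are the quantifier rules $8$ and $9$, since they couple substitution with suprema over $D$. For rule $8$(i) I would use Theorem \ref{substitution}(2) to rewrite $gr(s\ \text{sat}\ \psi[x/y])$ as $gr(s(s(x)/y)\ \text{sat}\ \psi)$, which is bounded above by $\sup\{gr(s(d/y)\ \text{sat}\ \psi)\mid d\in D\}=gr(s\ \text{sat}\ \exists y\psi)$ because $s(x)\in D$; chaining this with the valid premise gives the conclusion. Rule $8$(ii) is dual: the premise yields $gr(s(d/y)\ \text{sat}\ \phi)\le gr(s\ \text{sat}\ \psi)$ for every $d$, and instantiating $d=s(x)$ and applying Theorem \ref{substitution}(2) gives $gr(s\ \text{sat}\ \phi[x/y])\le gr(s\ \text{sat}\ \psi)$. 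Rule $9$ (Frobenius) is the hardest: here one must invoke the standard side condition that $y$ is not free in $\phi$, so that Theorem \ref{LDII} gives $gr(s(d/y)\ \text{sat}\ \phi)=gr(s\ \text{sat}\ \phi)$ for all $d$; combining this with frame distributivity turns $\sup_d[gr(s(d/y)\ \text{sat}\ \phi)\wedge gr(s(d/y)\ \text{sat}\ \psi)]$ into $gr(s\ \text{sat}\ \phi)\wedge\sup_d gr(s(d/y)\ \text{sat}\ \psi)$, which is exactly the grade of the left-hand side, again giving equality. A final technical point in rule $7$ is that it uses a simultaneous tuple substitution, so I would either extend Theorem \ref{substitution} to simultaneous substitutions or reduce to the case where the substituted values already coincide with the originals, where Theorem \ref{LDII} applies directly. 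Assembling these cases establishes that every rule preserves validity in the arbitrary $I$, hence universal validity.
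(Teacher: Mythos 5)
Your proposal follows essentially the same route as the paper's proof: fix an arbitrary interpretation, then verify each of the nine rules sequence by sequence, using the lattice facts about $\min$, $\max$ and $\sup$ in $[0,1]$ for rules 1--6, Theorem~\ref{substitution}(2) for rules 8(i)--(ii), and distributivity of $\wedge$ over suprema for rules 5 and 9, with the case split on the grade of the equality conjunct in rule 7. You are in fact slightly more careful than the paper at two points it glosses over: the proviso that $y$ is not free in $\phi$ in rule 9, without which the paper's step $\sup_d\{gr(s\ \text{sat}\ \phi)\wedge gr(s(d/y)\ \text{sat}\ \psi)\}\leq \sup_d\{gr(s(d/y)\ \text{sat}\ \phi)\wedge gr(s(d/y)\ \text{sat}\ \psi)\}$ can fail, and the need to extend Theorem~\ref{substitution} to the simultaneous tuple substitution used in rule 7.
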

\begin{proof}
Let us proceed in the following way.

1. \  $gr(s\ \text{sat}\ \phi)=gr(s\ \text{sat}\ \phi)$, for any $s$.
Hence $\phi\vdash\phi$ is valid.

2.\  Given $\phi\vdash \psi$ and $\psi\vdash\chi$ are valid.
So for any $s$, $$gr(s\ \text{sat}\ \phi)\leq gr(s\ \text{sat}\ \psi) \  \textrm{and }\ gr(s\ \text{sat}\ \psi)\leq gr(s\ \text{sat}\ \chi).$$ 

Therefore $gr(s\ \text{sat}\ \phi)\leq gr(s\ \text{sat}\ \chi)$, for any $s$. Hence $\phi\vdash\chi$ is valid when $\phi\vdash\psi$ and $\psi\vdash\chi$  are valid.

3.\  (i) $gr(s\ \text{sat}\ \phi)\leq 1=gr(s\ \text{sat}\ \top)$, for any $s$.
Hence $\phi\vdash \top$ is valid.

(ii) $gr(s\ \text{sat}\ \phi\wedge\psi)=gr(s\ \text{sat}\ \phi)\wedge gr(s\ \text{sat}\ \psi)\leq gr(s\ \text{sat}\ \phi)$, for any $s$.
Hence $\phi\wedge\psi\vdash\phi$ is valid.

(iii) $gr(s\ \text{sat}\ \phi\wedge\psi)=gr(s\ \text{sat}\ \phi)\wedge gr(s\ \text{sat}\ \psi)\leq gr(s\ \text{sat}\ \psi)$, for any $s$.
Hence $\phi\wedge\psi\vdash\psi$ is valid.

(iv) Given $\phi\vdash\psi$ and $\phi\vdash\chi$ are valid. So $gr(s\ \text{sat}\ \phi)\leq gr(s\ \text{sat}\ \psi)$ and $gr(s\ \text{sat}\ \phi)\leq gr(s\ \text{sat}\ \chi)$ for any $s$. Therefore for any $s$, $$gr(s\ \text{sat}\ \phi)\leq gr(s\ \text{sat}\ \psi)\wedge gr(s\ \text{sat}\ \chi)=gr(s\ \text{sat}\ \psi\wedge\chi).$$  Hence $\phi\vdash \psi\wedge \chi$ is valid when $\phi\vdash\psi$ and $\phi\vdash\chi$ are valid.

4. \  (i) $gr(s\ \text{sat}\ \phi)\leq gr(s\ \text{sat}\ \bigvee S(\phi\in S))$ for any $s$. Hence $\phi\vdash \bigvee S (\phi\in S)$ is valid.

(ii) Given $\phi\vdash \psi$ is valid for all $\phi\in S$.
So $gr(s\ \text{sat}\ \phi)\leq gr(s\ \text{sat}\ \psi)$ for all $\phi\in S$ and any $s$.
Therefore $sup_{\phi\in S}\{gr(s\ \text{sat}\ \phi)\}\leq gr(s\ \text{sat}\ \psi)$ for any $s$. Hence $gr(s\ \text{sat}\ \bigvee S)\leq gr(s\ \text{sat}\ \psi)$ for any $s$. So, $\bigvee S\vdash\psi$ is valid when $\phi\vdash \psi$ is valid for all $\phi\in S$.

5.\ Let us show the soundness of the 5th rule.
\begin{align*}
gr(s\ \text{sat}\ \phi\wedge\bigvee S) & = gr(s\ \text{sat}\ \phi)\wedge gr(s\ \text{sat}\ \bigvee S),\ \ \text{for any}\ s\\
& = gr(s\ \text{sat}\ \phi)\wedge sup\{gr(s\ \text{sat}\ \psi)\}_{\psi\in S},\ \ \text{for any}\ s\\
& = sup\{gr(s\ \text{sat}\ \phi)\wedge gr(s\ \text{sat}\ \psi)\}_{\psi\in S},\ \ \text{for any}\ s\\
& = sup\{gr(s\ \text{sat}\ \phi\wedge\psi)\mid\psi\in S\},\ \ \text{for any}\ s.
\end{align*}
Hence $\phi\wedge\bigvee S\vdash sup\{\phi\wedge\psi\mid\psi\in S\}$ is valid.

6.\ $gr(s\ \text{sat}\ \top)=1=gr(s\ \text{sat}\ x=x)$, for any $s$.
Hence $\top\vdash x=x$ is valid.

7.\ $gr(s\ \text{sat}\ ((x_1,\dots,x_n)=(y_1,\dots,y_n))\wedge\phi)$
$=gr(s\ \text{sat}\ ((x_1,\dots,x_n)=(y_1,\dots,y_n)))\wedge gr(s\ \text{sat}\ \phi)$.
Now, $$gr(s\ \text{sat}\ \phi[(y_1,\dots,y_n)/(x_1,\dots,x_n)])
=gr(s(s((y_1,\dots,y_n))/(x_1,\dots,x_n))\ \text{sat}\ \phi).$$
When $s((y_1,\dots,y_n))=s((x_1,\dots,x_n))$,
$$gr(s(s((y_1,\dots,y_n))/(x_1,\dots,x_n))\ \text{sat}\ \phi)=gr(s\ \text{sat}\ \phi).$$
Hence for any $s$, $$gr(s\ \text{sat}\ ((x_1,\dots,x_n)=(y_1,\dots,y_n))\wedge\phi)
\leq gr(s\ \text{sat}\ \phi[(y_1,\dots,y_n)/(x_1,\dots,x_n)]).$$
So, $((x_1,\dots,x_n)=(y_1,\dots,y_n))\wedge\phi\vdash\phi[(y_1,\dots,y_n)/(x_1,\dots,x_n)]$ is valid.

8.\  (i) $\phi\vdash \psi[x\mid y]$ is valid so, $gr(s\ \text{sat}\ \phi)\leq gr(s\ \text{sat}\ \psi[x\mid y])$, for any $s$. Using Theorem \ref{substitution}(2) $gr(s\ \text{sat}\ \phi)\leq gr(s(s(x)/y)\ \text{sat}\ \psi)$, for any $s$, which implies that for any $s$, $$gr(s\ \text{sat}\ \phi)\leq sup\{gr(s(d/y)\ \text{sat}\ \psi)\mid d\in D\}.$$ So, $gr(s\ \text{sat}\ \phi)\leq gr(s\ \text{sat}\ \exists y\psi)$ and hence $\phi\vdash\exists y\psi$ is valid.\\
(ii) $\exists y\phi\vdash \psi$ is valid if and only if $gr(s\ \text{sat}\ \exists y\phi)\leq gr(s\ \text{sat}\ \psi)$, for any $s$. Hence for any $s$, $$sup\{gr(s(d/y)\ \text{sat}\ \phi)\mid d\in D\}\leq gr(s\ \text{sat}\ \psi).$$ So, $gr(s(s(x)/y)\ \text{sat}\ \phi)\leq gr(s\ \text{sat}\ \psi)$, for any $s$, using Theorem \ref{substitution}(2). Therefore $gr(s\ \text{sat}\ \phi[x/y])\leq gr(s\ \text{sat}\ \psi)$, for any $s$ and hence $\phi[x/y]\vdash\psi$ is valid provided $\exists y\phi\vdash\psi$ is valid.

9. \ For any $s$,
\begin{eqnarray*}
 gr(s\ \text{sat}\ \phi\wedge (\exists y)\psi)
&=&gr(s\ \text{sat}\ \phi)\wedge gr(s\ \text{sat}\ \exists y\psi)\\\
&=&gr(s\ \text{sat}\ \phi)\wedge sup\{gr(s(d/y)\ \text{sat}\ \psi)\}_{d\in D}\\
&=&sup\{gr(s\ \text{sat}\ \phi)\wedge gr(s(d/y)\ \text{sat}\ \psi)\}_{d\in D}\\
&\leq& sup\{gr(s(d/y)\ \text{sat}\ \phi)\wedge gr(s(d/y)\ \text{sat}\ \psi)\}_{d\in D}\\
&=&sup\{gr(s\ \text{sat}\ \phi\wedge\psi)\}_{d\in D}\\
&=&gr(s\ \text{sat}\ (\exists y)\phi\wedge\psi).
\end{eqnarray*}
Hence $\phi\wedge(\exists y)\psi\vdash(\exists y)(\phi\wedge\psi)$ is valid.
This finishes the proof.
\end{proof}
Fuzzy geometric logic is sound in the sense that in every interpretation whenever the premise set of sequents is valid the conclusion sequent is also valid.
\section{Fuzzy Topological System via Fuzzy Geometric Logic}
Let us consider the triplet $(X,\models,A)$ where $X$ is the non empty set of assignments $s$, $A$ is the set of all geometric formulae and $\models$ defined as $gr(s\models \phi)=gr(s\ \text{sat}\ \phi)$.
\begin{theorem}\label{fts}
(i) $gr(s\models \phi\wedge\psi)=gr(s\models \phi)\wedge gr(s\models\psi)$.
\\(ii) $gr(s\models\bigvee\{\phi_i\}_{i\in I})=sup_{i\in I}\{gr(x\models \phi_i)\}$.
\end{theorem}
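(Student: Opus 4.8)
The plan is to read off both identities directly from the definition of the satisfaction grade. Since the relation $\models$ on this triple is defined by $gr(s\models\phi)=gr(s\ \text{sat}\ \phi)$, each clause of Definition \ref{sat} transfers verbatim to $\models$. No induction or substitution machinery is needed here; the genuine work that makes these clauses meaningful (Local Determination and the Substitution Theorem) has already been carried out, so the present statement is simply a repackaging of the semantic clauses for $\wedge$ and $\bigvee$ into the language of satisfaction.

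For part (i), I would unfold the definition of $\models$, invoke the conjunction clause of Definition \ref{sat}, and then fold the definition back:
\begin{align*}
gr(s\models\phi\wedge\psi) & =gr(s\ \text{sat}\ \phi\wedge\psi)\\
& =gr(s\ \text{sat}\ \phi)\wedge gr(s\ \text{sat}\ \psi)\\
& =gr(s\models\phi)\wedge gr(s\models\psi).
\end{align*}
For part (ii), the same three-step unfolding applies, this time using the arbitrary-disjunction clause:
\begin{align*}
gr(s\models\bigvee\{\phi_i\}_{i\in I}) & =gr(s\ \text{sat}\ \bigvee\{\phi_i\}_{i\in I})\\
& =sup\{gr(s\ \text{sat}\ \phi_i)\mid i\in I\}\\
& =sup_{i\in I}\{gr(s\models\phi_i)\}.
\end{align*}

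The point worth stressing is that there is no real obstacle: the content is purely definitional. What this theorem secures, once combined with the observations $gr(s\models\top)=1$ and $gr(s\models\bot)=0$ (again immediate from the corresponding clauses of Definition \ref{sat}), is exactly the two defining conditions of a fuzzy topological system in the sense of Definition \ref{3.1_1}, with the finite-meet condition following from (i) by an evident finite iteration. Hence the triple $(X,\models,A)$ assembled from the assignments and the geometric formulae is a fuzzy topological system, which is the substantive reason for recording the identities.
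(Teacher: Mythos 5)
Your proof is correct and matches the paper's own argument exactly: both simply unfold $gr(s\models\cdot)=gr(s\ \text{sat}\ \cdot)$, apply the $\wedge$ and $\bigvee$ clauses of Definition \ref{sat}, and fold the definition back. One small caveat on your closing remark: the paper does not claim that $(X,\models,A)$ itself is a fuzzy topological system, since the set $A$ of geometric formulae is only preordered by validity rather than being a frame; it first quotients by the equivalence $\approx$ of Definition \ref{equiv} and proves in Theorem \ref{ftopsys} that $(X,\models',A/_{\approx})$ is the fuzzy topological system.
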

\begin{proof}
(i) $gr(s\models \phi\wedge\psi)=gr(s\ \text{sat}\ \phi\wedge\psi)=gr(s\ \text{sat}\ \phi)\wedge gr(s\ \text{sat}\ \psi)=gr(s\models \phi)\wedge gr(s\models \psi)$.\\
(ii) We have, $gr(s\models\bigvee\{\phi_i\}_{i\in I})=gr(s\ \text{sat}\ \bigvee\{\phi_i\}_{i\in I})=sup_{i\in I}\{gr(s\ \text{sat}\ \phi_i)\}=sup_{i\in I}\{gr(s\models\phi_i)\}$.
\end{proof} 
\begin{definition}\label{equiv}
$\phi\approx\psi$ iff $gr(s\models \phi)=gr(s\models \psi)$ for any $s\in X$ and $\phi, \ \psi\in A$.
\end{definition}
The above defined ``$\approx$" is an equivalence relation. Thus we get $A/_{\approx}$. The operations $\wedge$, $\vee$, $\bigvee$ can be lifted in the following way so that they will become independent of choice: $$[a]\wedge [b]=^{def}[a\wedge b],\ [a]\vee [b]=^{def}[a\vee b],\ \bigvee\{[a_i]\}_i=^{def}[\bigvee\{a_i\}_i].$$

\begin{theorem}\label{ftopsys}
$(X,\models ',A/_{\approx})$ is a fuzzy topological system, where $\models '$ is defined by $gr(s\models '[\phi])=gr(s\models \phi)$.
\end{theorem}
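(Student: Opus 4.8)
The plan is to verify the two defining conditions of a fuzzy topological system (\rdef{3.1_1}) for $(X,\models',A/_{\approx})$, after first checking that the ingredients of the statement are legitimate: that the lifted operations on $A/_{\approx}$ are independent of representatives, that $A/_{\approx}$ is genuinely a frame, and that $\models'$ is well defined. Well-definedness of $\models'$ is immediate from \rdef{equiv}: if $[\phi]=[\psi]$ then $\phi\approx\psi$, so $gr(s\models\phi)=gr(s\models\psi)$ for every $s$, whence $gr(s\models'[\phi])$ does not depend on the representative. The same mechanism, fed through \rth{fts}, shows $\approx$ is a congruence for $\wedge$ and $\bigvee$ (hence for the derived binary $\vee$): if $a\approx a'$ and $b\approx b'$ then $gr(s\models a\wedge b)=gr(s\models a)\wedge gr(s\models b)=gr(s\models a')\wedge gr(s\models b')=gr(s\models a'\wedge b')$, giving $a\wedge b\approx a'\wedge b'$, and the analogous computation with suprema handles $\bigvee$. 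Thus $[a]\wedge[b]=[a\wedge b]$ and $\bigvee\{[a_i]\}=[\bigvee\{a_i\}]$ are well defined.

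The cleanest route to the frame structure is the order-embedding $e\colon A/_{\approx}\longrightarrow[0,1]^X$ given by $e([\phi])(s)=gr(s\models\phi)$, where $A/_{\approx}$ carries the order $[a]\leq[b]$ iff $gr(s\models a)\leq gr(s\models b)$ for all $s$ and $[0,1]^X$ the pointwise order. By construction $e$ is injective and reflects the order — this is exactly what quotienting by $\approx$ buys, and it forces antisymmetry on $A/_{\approx}$ — while \rth{fts} shows $e$ sends $\wedge$ to pointwise minimum and $\bigvee$ to pointwise supremum. Since $[0,1]$ is a frame, $[0,1]^X$ with coordinatewise operations is a frame (computed as in \rprop{cart}), and the image of $e$ is closed under its meets and joins; consequently $[a]\wedge[b]$ and $\bigvee\{[a_i]\}$ are the genuine meet and join in $A/_{\approx}$, the empty join being $[\bot]$ and the empty meet $[\top]$ because $gr(s\models\bot)=0$ and $gr(s\models\top)=1$. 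The binary-meet-over-arbitrary-join law transports back via the computation $gr(s\models a\wedge\bigvee\{a_i\})=gr(s\models a)\wedge\sup_i gr(s\models a_i)=\sup_i(gr(s\models a)\wedge gr(s\models a_i))=gr(s\models\bigvee\{a\wedge a_i\})$, so $[a]\wedge\bigvee\{[a_i]\}=\bigvee\{[a]\wedge[a_i]\}$. Hence $A/_{\approx}$ is a frame.

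It then remains to confirm the two clauses of \rdef{3.1_1} for $\models'$. For a finite $S=\{[\phi_1],\dots,[\phi_n]\}\subseteq A/_{\approx}$ its meet is $[\phi_1\wedge\cdots\wedge\phi_n]$, and a finite induction on \rth{fts}(i) gives $gr(s\models'\bigwedge S)=gr(s\models\phi_1\wedge\cdots\wedge\phi_n)=\inf_{1\leq i\leq n} gr(s\models\phi_i)=\inf\{gr(s\models'\sigma)\mid\sigma\in S\}$, the empty case returning $gr(s\models'[\top])=1$. For arbitrary $S=\{[\phi_i]\}_{i\in I}$ the join is $[\bigvee\{\phi_i\}]$, and \rth{fts}(ii) yields $gr(s\models'\bigvee S)=gr(s\models\bigvee\{\phi_i\})=\sup_i gr(s\models\phi_i)=\sup\{gr(s\models'\sigma)\mid\sigma\in S\}$. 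Both conditions therefore hold, establishing the theorem.

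I expect the only genuinely substantive step to be the frame verification for $A/_{\approx}$ — specifically checking that $\approx$ is a congruence and that the quotient is antisymmetric and frame-distributive — since everything downstream is a direct transcription of \rth{fts}. The embedding $e$ into $[0,1]^X$ is what keeps this manageable: it reduces the lattice laws and distributivity to the known frame structure of $[0,1]$, so no separate combinatorial argument about geometric formulae is required.
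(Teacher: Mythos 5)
Your proposal is correct, and it verifies everything the theorem needs; the difference from the paper lies in how the frame structure on $A/_{\approx}$ is obtained. The paper works logic-side: it defines $[\phi]\leq[\psi]$ as validity of the sequent $\phi\vdash\psi$ and then derives the poset axioms and the distributive law from the (soundness of the) rules of inference of fuzzy geometric logic — reflexivity and transitivity from rules 1 and 2, and distributivity by citing the validity of rule 5 together with the derivability of the converse sequent $\bigvee\{\phi\wedge\psi_i\}\vdash\phi\wedge\bigvee\{\psi_i\}$. You instead work purely semantically, via the order-embedding $e\colon A/_{\approx}\longrightarrow[0,1]^X$, $e([\phi])(s)=gr(s\models\phi)$, transporting the frame structure of $[0,1]^X$ back along $e$ and checking distributivity by the direct computation $gr(s\models a)\wedge\sup_i gr(s\models a_i)=\sup_i\bigl(gr(s\models a)\wedge gr(s\models a_i)\bigr)$ in $[0,1]$ — which is, in substance, exactly the computation inside the paper's soundness proof of rule 5, so the two arguments share the same computational core. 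What the paper's route buys is the Lindenbaum-algebra reading — the frame is exhibited as a quotient of syntax ordered by provable-valid entailment, which foreshadows the graded analogue in Theorem \ref{Gftopsys} — whereas your route is shorter and, on two points, more careful than the paper: you explicitly verify that $\approx$ is a congruence for $\wedge$ and $\bigvee$ (the paper merely asserts the lifted operations are ``independent of choice''), and you actually establish that $[a\wedge b]$ and $[\bigvee\{a_i\}]$ are the genuine g.l.b.\ and l.u.b.\ in the quotient order (the paper only checks closure of $A/_{\approx}$ under the lifted operations, not their universal properties). The final verification of the two clauses of \rdef{3.1_1}, including the empty-meet case $gr(s\models'[\top])=1$, is a direct transcription of \rth{fts} in both treatments.
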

\begin{proof}
$X$ is a non empty set of assignments $s$.
Let us first prove that $A/_{\approx}$ is a frame in the following way. Here  for any $s$, we define $[\phi]\leq [\psi]$ as follows: $$[\phi]\leq [\psi]\hspace{3mm}  \text{iff}\ \ gr(s\models\phi)\leq gr(s\models \psi)$$ i.e., $\phi\vdash\psi$ is valid.
%\begin{align*}
%[\phi]\leq [\psi]\ \ & \text{iff}\ \ gr(s\models\phi)\leq gr(s\models \psi)\ \ \ \text{for any}\ s\\ 
%& \text{iff}\ \  gr(s\ \text{sat}\ \phi)\leq gr(s\ \text{sat}\ \psi)\ \ \ \text{for any}\ s\\
%& \text{iff}\ \  s\ \text{sat}\ \phi\vdash\psi\ \ \ \text{for any}\ s\\
%& \text{iff}\ \  \phi\vdash\psi \ \ \ \text{is valid}.
%\end{align*}
Now in fuzzy geometric logic $\phi\vdash\phi$ is valid and if $\phi\vdash\psi$ and $\psi\vdash\chi$ are valid then $\phi\vdash\chi$ is valid. Thus $\leq$ is reflexive and transitive. If $[\phi]\leq [\psi]$ and $[\psi]\leq [\phi]$ then $gr(s\models \phi)\leq gr(s\models \psi)$ and $gr(s\models \psi)\leq gr(s\models \phi)$ for any $s$. Therefore $gr(s\models \phi)=gr(s\models \psi)$ for any $s$. So $\phi\approx\psi$. Consequently $[\phi]=[\psi]$. Hence $A/_{\approx}$ is a poset. 

Now if $\phi,\ \psi\in A$ then $\phi\wedge\psi\in A$. So $[\phi],\ [\psi]\in A/_{\approx}$ and $[\phi\wedge\psi]\in A/_{\approx}$, i.e., $[\phi]\wedge [\psi]\in A/_{\approx}$. Similarly arbitrary join exists in $A/_{\approx}$. Also $$[\phi]\wedge\bigvee\{[\psi_i]\}_{i\in I}=[\phi]\wedge [\bigvee\{\psi_i\}_{i\in I}]=[\phi\wedge\bigvee\{\psi_i\}_{i\in I}].$$ Now we have $\phi\wedge\bigvee\{\psi_i\}_{i\in I}\vdash \bigvee_{i\in I}\{\phi\wedge\psi_i\}$ is valid. Hence for any $s$, $$gr(s\ \text{sat}\ \phi\wedge\bigvee\{\psi_i\}_{i\in I})\leq gr(s\ \text{sat}\ \bigvee\{\phi\wedge\psi_i\}_{i\in I}).$$ As $\bigvee_{i\in I}\{\phi\wedge\psi_i\}\vdash \phi\wedge\bigvee\{\psi_i\}_{i\in I}$ is derivable, so for any $s$, $$gr(s\ \text{sat}\ \bigvee\{\phi\wedge\psi_i\}_{i\in I})\leq gr(s\ \text{sat}\ \phi\wedge\bigvee\{\psi_i\}_{i\in I}).$$ Therefore for any $s$, $$gr(s\ \text{sat}\ \phi\wedge\bigvee\{\psi_i\}_{i\in I})=gr(s\ \text{sat}\ \bigvee\{\phi\wedge\psi_i\}_{i\in I}).$$ So, $$[\phi\wedge\bigvee\{\psi_i\}_{i\in I}]=[\bigvee\{\phi\wedge\psi_i\}_{i\in I}].$$ Hence $$[\phi]\wedge\bigvee\{[\psi_i]\}_{i\in I}=[\bigvee\{\phi\wedge\psi_i\}_{i\in I}]=\bigvee\{[\phi\wedge\psi_i]\}_{i\in I}=\bigvee\{([\phi]\wedge [\psi_i])\}_{i\in I}.$$ Therefore $A/_{\approx}$ is a frame.

Now it is left to show that (a) $gr(s\models ' [\phi]\wedge [\psi])=gr(s\models '[\phi])\wedge gr(s\models ' [\psi])$ and (b) $gr(s\models '\bigvee\{[\phi_i]\}_{i\in I})=sup_{i\in I}\{gr(s\models '[\phi_i])\}$.

Proof of the above follows easily using Theorem \ref{fts}. Hence $(X,\models ',A/_{\approx})$ is a fuzzy topological system.
\end{proof}
\begin{proposition}\label{spec}
In the fuzzy topological system $(X,\models ',A/_{\approx})$, defined as above, for all $s\in X$, $(gr(s\models '[\phi])=gr(s\models '[\psi]))\ \text{implies}\ ([\phi]=[\psi])$.
\end{proposition}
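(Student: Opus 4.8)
The plan is to recognize that this proposition is precisely the assertion that the fuzzy topological system $(X,\models ',A/_{\approx})$ is \emph{spatial} in the sense of \rdef{spatial}, and to prove it by simply unwinding the defining relations of $\models '$ and of the equivalence relation $\approx$. First I would fix arbitrary classes $[\phi],[\psi]\in A/_{\approx}$ and assume the hypothesis, namely that $gr(s\models '[\phi])=gr(s\models '[\psi])$ holds for every $s\in X$.

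Next I would invoke the definition of $\models '$ recorded in \rth{ftopsys}, namely $gr(s\models '[\phi])=gr(s\models \phi)$ and $gr(s\models '[\psi])=gr(s\models \psi)$. This step is legitimate because that definition was already shown (in the course of verifying that $A/_{\approx}$ is a frame and that $\models '$ is well defined) to be independent of the chosen representative. Substituting these two equalities into the hypothesis yields $gr(s\models \phi)=gr(s\models \psi)$ for every $s\in X$.

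Finally, the equality $gr(s\models \phi)=gr(s\models \psi)$ for all $s\in X$ is exactly the condition appearing in \rdef{equiv} defining $\phi\approx\psi$. Hence $\phi\approx\psi$, and therefore the corresponding equivalence classes coincide, i.e. $[\phi]=[\psi]$, which is the desired conclusion.

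No genuine obstacle arises here: the statement follows directly from the fact that the relation $\approx$ was deliberately tailored so that any two formulae agreeing in their satisfaction degree at every assignment are identified. The only point requiring a moment's care is that the hypothesis is phrased via $\models '$ on classes whereas $\approx$ is phrased via $\models $ on formulae; the bridge between the two is precisely the representative-independent identity $gr(s\models '[\phi])=gr(s\models \phi)$, after which the collapse to $[\phi]=[\psi]$ is immediate.
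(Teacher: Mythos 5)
Your proposal is correct and follows essentially the same route as the paper's own proof: unwind $gr(s\models '[\phi])=gr(s\models \phi)$, deduce $gr(s\models \phi)=gr(s\models \psi)$ for all $s$, and conclude $\phi\approx\psi$, hence $[\phi]=[\psi]$ via \rdef{equiv}. Your added remark about well-definedness of $\models '$ on equivalence classes is a sensible point of care but does not change the argument.
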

\begin{proof}
As $gr(s\models '[\phi])=gr(s\models '[\psi])$, for any $s$, we have $gr(s\models \phi)=gr(s\models \psi)$, for any $s$. Hence $\phi\approx\psi$ and consequently $[\phi]=[\psi]$.
\end{proof}
\section{Fuzzy Topology via Fuzzy Geometric Logic}
We first construct the fuzzy topological system $(X,\models ',A/_{\approx})$ from fuzzy geometric logic. Then $(X,ext(A/_{\approx}))$ is constructed as follows:
\\ $ext(A/_{\approx})=\{ext([\phi])\}_{[\phi]\in A/_{\approx}}$ where
$ext([\phi]):X\longrightarrow [0,1]$ is such that, for each $[\phi]\in A/_{\approx}$, $$ext([\phi])(s)=gr(s\models '[\phi])=gr(s\models \phi).$$
It can be shown that $ext(A/_{\approx})$ forms a fuzzy topology on $X$ as follows.
Let $ext([\phi]),\ ext([\psi])\in ext(A/_{\approx})$. Then we have the following. 
\begin{align*}
(ext([\phi ])\cap ext([\psi ]))(s) & = (ext([\phi ]))(s)\wedge (ext([\psi ]))(s)\\
& = gr(s\models ' [\phi ])\wedge gr(s\models ' [\psi ])\\
& = gr(s\models \phi )\wedge gr(s\models \psi )\\
& = gr(s\models \phi\wedge \psi )\\
& = gr(s\models '[\phi \wedge \psi ])\\
& = (ext([\phi \wedge \psi ]))(s).
\end{align*}
Hence $$ext([\phi])\cap ext([\psi])=ext([\phi\wedge \psi])\in ext(A/_{\approx}).$$

Similarly it can be shown that $ext(A/_{\approx})$ is closed under arbitrary union. Hence $(X,ext(A/_{\approx}))$ is a fuzzy topological space obtained via fuzzy geometric logic.\\
\textbf{Note:} In Chapter 2, the way to construct a fuzzy topological space from a given fuzzy topological system is explained. Here we obtain the space $(X,ext(A/_{\approx}))$ from the system $(X,\models',A/_{\approx})$ following the same method as in Lemma \ref{3.9_1}.
\section{Fuzzy Geometric Logic with Graded Consequence}\index{fuzzy geometric logic!with graded consequence}
Here the alphabet, terms, formulae, interpretation and satisfiability of a formula are the same as in fuzzy geometric logic. The difference lies in the definition of satisfiability of a sequent. This notion is also graded now.
\begin{definition}\label{validg}
1. $gr(s\ \emph{sat}\ \phi\vdash\psi)$=$gr(s\ \emph{sat}\ \phi)\rightarrow gr(s\ \emph{sat}\ \psi)$, where $\rightarrow\colon [0,1]\times [0,1]\longrightarrow [0,1]$ is the G$\ddot{o}$del arrow \index{G$\ddot{o}$del arrow} defined as follows:\  For $a,b\in [0,1]$
\begin{center}
$a\rightarrow b$ $=\begin{cases}
        1 & \emph{if $a\leq b$} \\
        b & \emph{if $a>b$}.
    \end{cases}$
\end{center}    
 
2. $gr(\phi\vdash\psi)=inf_s\{ gr(s\ \emph{sat}\ \phi\vdash \psi)\}$.
\end{definition}
Before proceeding on to the rules of inference let us enlist below some properties of G$\ddot{o}$del arrow \cite{KY} that would be used in the sequel. 
\subsection{Properties of G$\ddot{o}$del arrow}\label{pga}
In this subsection some required properties of G$\ddot{o}$del arrow are listed with verifications.

1.\  $a\rightarrow a=1$, for any $a\in [0,1]$.
\begin{proof}
Follows from the definition.
\end{proof}
2.\  $(a\rightarrow b)\wedge (b\rightarrow c)\leq (a\rightarrow c)$, for any $a,\ b,\ c\in [0,1]$.
\begin{proof}
Let us consider the following cases:\\
\underline{Case 1:  $a=b=c$}\\
In this case $a\rightarrow b=1$, $b\rightarrow c=1$ and $a\rightarrow c=1$.
Hence $(a\rightarrow b)\wedge (b\rightarrow c)=1=(a\rightarrow c)$.\\
\underline{Case 2: $a<b<c$}\\
In this case $a\rightarrow b=1$, $b\rightarrow c=1$ and $a\rightarrow c=1$.
Hence $(a\rightarrow b)\wedge (b\rightarrow c)=1=(a\rightarrow c)$.\\
\underline{Case 3: $b<a<c$}\\
In this case $a\rightarrow b=b$, $b\rightarrow c=1$ and $a\rightarrow c=1$.
Hence $(a\rightarrow b)\wedge (b\rightarrow c)=b<1=(a\rightarrow c)$.\\
\underline{Case 4: $a<c<b$}\\
In this case $a\rightarrow b=1$, $b\rightarrow c=c$ and $a\rightarrow c=1$.
Hence $(a\rightarrow b)\wedge (b\rightarrow c)=c<1=(a\rightarrow c)$.\\
\underline{Case 5: $c<a<b$}\\
In this case $a\rightarrow b=1$, $b\rightarrow c=c$ and $a\rightarrow c=c$.
Hence $(a\rightarrow b)\wedge (b\rightarrow c)=c=(a\rightarrow c)$.\\
\underline{Case 6: $b<c<a$}\\
In this case $a\rightarrow b=b$, $b\rightarrow c=1$ and $a\rightarrow c=c$.
Hence $(a\rightarrow b)\wedge (b\rightarrow c)=b<c=(a\rightarrow c)$.\\
\underline{Case 7: $c<b<a$}\\
In this case $a\rightarrow b=b$, $b\rightarrow c=c$ and $a\rightarrow c=c$.
Hence $(a\rightarrow b)\wedge (b\rightarrow c)=c=(a\rightarrow c)$.
\end{proof}
3.\ $a\leq b$ implies $(a\rightarrow x)\geq (b\rightarrow x)$, for any $a,\ b,\ x\in [0,1]$.
\begin{proof}
Follows by routine check.
\end{proof}
4.\  $a\leq b$ implies $(x\rightarrow a)\leq (x\rightarrow b)$, for any $a,\ b,\ x\in [0,1]$.
\begin{proof}
Follows by routine check.
\end{proof}
5.\ $(a\rightarrow b)\wedge (a\rightarrow c)=a\rightarrow (b\wedge c)$, for any $a,\ b,\ c\in [0,1]$.
\begin{proof}
Follows by routine check.
\end{proof}
6.\  $inf\{(a_i\rightarrow b)\}_i=sup\{a_{i}\}_i\rightarrow b$, for any $a_i,\ b\in [0,1]$.
\begin{proof}
$sup\{a_i\}_{i}\rightarrow b=\begin{cases}
        1 & \text{if}\ \ sup\{a_i\}_i\leq b, \\
        b & \text{if}\ \ sup\{a_i\}_1>b.
    \end{cases}$\\
Now for $sup\{a_i\}_i\leq b$ we have $a_i\leq sup\{a_i\}_i\leq b$.
Hence for this case $(a_i\rightarrow b)=1$, for each $i$ and consequently $inf\{a_i\rightarrow b\}_i=1$.

If $sup\{a_i\}_i>b$ then there exist atleast one $a_i$ such that $a_i>b$ and rest will be either less than $b$ or equal to $b$. Now for the case $a_i>b$, $a_i\rightarrow b=b$ and for all other cases $a_i\rightarrow b=1$. As $b\leq 1$, $inf\{a_i\rightarrow b\}_i=b$.
\end{proof}
7.\  $a\leq b$ iff $a\rightarrow b=1$.
\begin{proof}
Follows from the definition of G$\ddot{o}$del arrow.
\end{proof}
8.\ $a\wedge (a\rightarrow b)\leq b$.
\begin{proof}
Follows by routine check.
\end{proof}

\begin{theorem}\label{5.1g}
Graded sequents satisfy the following properties
\begin{enumerate}
\item 
$gr(\phi\vdash\phi)=1$;
\item
$gr(\phi\vdash\psi)\wedge gr(\psi\vdash\chi)\leq gr(\phi\vdash\chi)$;
\item
(i) $gr(\phi\vdash\top)=1$;\hspace{29pt}  
(ii) $gr(\phi\wedge\psi\vdash\phi)=1$;\\
(iii) $gr(\phi\wedge\psi\vdash\psi)=1$;\hspace{4pt}
(iv) $gr(\phi\vdash\psi)\wedge gr(\phi\vdash\chi)=gr(\phi\vdash\psi\wedge\chi)$;
\item 
(i) $gr(\phi\vdash\bigvee S)=1$ if $\phi\in S$;\\
(ii) $inf_{\phi\in S} \{gr(\phi\vdash\psi)\}\leq gr(\bigvee S\vdash\psi)$;
\item
$gr(\phi\wedge\bigvee S\vdash\bigvee\{\phi\wedge\psi\mid\psi\in S\})=1$;
\item
$gr(\top\vdash (x=x))=1$;
\item
$gr(((x_1,\dots,x_n)=(y_1,\dots,y_n))\wedge\phi\vdash\phi[(y_1,\dots,y_n)/(x_1,\dots,x_n)])$=1;
\item
(i) $gr(\phi\vdash\psi[x/ y])\leq gr(\phi\vdash\exists y\psi)$;\hspace{4pt}
(ii) $gr(\exists y\phi\vdash\psi)\leq gr(\phi[x/ y]\vdash\psi)$;
\item
$gr(\phi\wedge (\exists y)\psi\vdash(\exists y)(\phi\wedge\psi))=1$.
\end{enumerate}
\end{theorem}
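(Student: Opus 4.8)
The plan is to reduce every clause to the already-established pointwise facts about $gr(s\ \emph{sat}\ -)$ from Definition \ref{sat}, combined with the algebraic properties of the G\"odel arrow collected in subsection \ref{pga}. Throughout I would abbreviate $a_s=gr(s\ \emph{sat}\ \phi)$, $b_s=gr(s\ \emph{sat}\ \psi)$, $c_s=gr(s\ \emph{sat}\ \chi)$, and recall from Definition \ref{validg} that $gr(\alpha\vdash\beta)=\inf_s\{gr(s\ \emph{sat}\ \alpha)\rightarrow gr(s\ \emph{sat}\ \beta)\}$. The single lattice fact I shall use repeatedly is that in $[0,1]$ the infimum commutes with finite meet, that is $\inf_s X_s\wedge\inf_s Y_s=\inf_s(X_s\wedge Y_s)$, and that $L_s\leq R_s$ for every $s$ forces $\inf_s L_s\leq\inf_s R_s$.

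The clauses asserting the value $1$ (namely 1, 3(i)--(iii), 4(i), 5, 6, 7 and 9) will all follow from Property 7 of subsection \ref{pga}, which states that $a\rightarrow b=1$ iff $a\leq b$. For each of these I first record the corresponding pointwise comparison of satisfaction grades -- these are exactly the equalities and inequalities already verified when proving soundness of the ungraded rules (for instance $gr(s\ \emph{sat}\ \phi\wedge\psi)\leq gr(s\ \emph{sat}\ \phi)$ for 3(ii), the fact that $gr(s\ \emph{sat}\ \phi)\leq gr(s\ \emph{sat}\ \bigvee S)$ when $\phi\in S$ for 4(i), and the equalities of satisfaction grades used for 5 and 9). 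Property 7 then collapses each arrow to $1$, and the infimum of a constantly-$1$ family is $1$. Clause 2 is the lone transitivity statement: fixing $s$, I bound $gr(\phi\vdash\psi)\wedge gr(\psi\vdash\chi)$ above by $(a_s\rightarrow b_s)\wedge(b_s\rightarrow c_s)$, apply Property 2 to get $\leq a_s\rightarrow c_s$, and take $\inf_s$.

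The two clauses with genuine content are 3(iv) and 4(ii), and these are where I expect the real work. For 3(iv) I would use commutation of $\inf$ with binary meet to write $gr(\phi\vdash\psi)\wedge gr(\phi\vdash\chi)=\inf_s\{(a_s\rightarrow b_s)\wedge(a_s\rightarrow c_s)\}$, then invoke Property 5, $(a\rightarrow b)\wedge(a\rightarrow c)=a\rightarrow(b\wedge c)$, together with $gr(s\ \emph{sat}\ \psi\wedge\chi)=b_s\wedge c_s$ from Definition \ref{sat}, to identify the result with $gr(\phi\vdash\psi\wedge\chi)$. For 4(ii) the key is Property 6, $\inf_i\{a_i\rightarrow b\}=(\sup_i a_i)\rightarrow b$: since $gr(s\ \emph{sat}\ \bigvee S)=\sup_{\sigma\in S}gr(s\ \emph{sat}\ \sigma)$, the arrow $gr(s\ \emph{sat}\ \bigvee S)\rightarrow gr(s\ \emph{sat}\ \psi)$ equals $\inf_{\sigma\in S}\{gr(s\ \emph{sat}\ \sigma)\rightarrow gr(s\ \emph{sat}\ \psi)\}$, and interchanging the two infima (over $s$ and over $\sigma\in S$, legitimate for a double infimum) yields $\inf_{\sigma\in S}gr(\sigma\vdash\psi)$; this in fact gives equality, and a fortiori the stated inequality.

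Finally, 8(i) and 8(ii) are handled by the monotonicity Properties 4 and 3 respectively, fed by the Substitution Theorem \ref{substitution}(2). In both cases that theorem rewrites $gr(s\ \emph{sat}\ \psi[x/y])$ as $gr(s(s(x)/y)\ \emph{sat}\ \psi)$, which is dominated by $\sup_{d\in D}gr(s(d/y)\ \emph{sat}\ \psi)=gr(s\ \emph{sat}\ \exists y\psi)$; for 8(i) Property 4 (monotonicity of $x\rightarrow(-)$ in the consequent) and for 8(ii) Property 3 (antitonicity of $(-)\rightarrow x$ in the antecedent) propagate this pointwise comparison through the arrow, after which $\inf_s$ preserves the inequality. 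I expect the main obstacle to be purely organisational, namely keeping the nested infima and suprema bookkeeping correct in 3(iv) and 4(ii); every other clause is a mechanical pairing of one pointwise satisfaction fact with the matching G\"odel-arrow property.
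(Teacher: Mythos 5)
Your proposal is correct and takes essentially the same route as the paper: both reduce every clause to the pointwise satisfaction grades and the G\"odel-arrow properties of subsection \ref{pga} (Property 7 for the clauses asserting value $1$, Property 2 for clause 2, Property 5 for 3(iv), Property 6 together with an interchange of the two infima for 4(ii) --- where, as you correctly observe, one in fact gets equality --- and Properties 3 and 4 fed by Theorem \ref{substitution}(2) for 8(i)--(ii)). The only cosmetic difference is that the paper re-derives the pointwise computations inline at the level of $gr(s\ \emph{sat}\ \phi\vdash\psi)$ and leaves the final passage through $inf_s$ implicit, whereas you cite the soundness-section computations and make the lattice bookkeeping ($inf$ commuting with finite meets, monotonicity of $inf$) explicit.
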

\begin{proof}
Let us proceed in the following way.

1.\  $gr(s\ \text{sat}\ \phi\vdash\phi) = gr(s\ \text{sat}\ \phi)\rightarrow gr(s\ \text{sat}\ \phi) =1$ [by Property \ref{pga}(1)].

2. We have,
\begin{multline*}
gr(s\ \text{sat} \phi\vdash \psi)\wedge gr(s\ \text{sat}\ \psi\vdash\chi)\\
=( gr(s\ \text{sat}\ \phi)\rightarrow gr(s\ \text{sat}\ \psi))\wedge (gr(s\ \text{sat}\ \psi)\rightarrow gr(s\ \text{sat}\ \chi))\\
\leq  gr(s\ \text{sat}\ \phi)\rightarrow gr(s\ \text{sat}\ \chi) = gr(s\ \text{sat}\ \phi\vdash\chi).
 \end{multline*}
 
3.\ (i) $gr(s\ \text{sat}\ \phi\vdash \top)=gr(s\ \text{sat}\ \phi)\rightarrow gr(s\ \text{sat}\ \top)=gr(s\ \text{sat}\ \phi)\rightarrow 1=1$, as $gr(s\ \text{sat}\ \phi)\leq 1$.\\
(ii) $gr(s\ \text{sat}\ \phi\wedge\psi\vdash \phi)=gr(s\ \text{sat}\ \phi\wedge\psi)\rightarrow gr(s\ \text{sat}\ \phi)=(gr(s\ \text{sat}\ \phi)\wedge gr(s\ \text{sat}\ \psi))\rightarrow gr(s\ \text{sat}\ \phi)=1$, as $gr(s\ \text{sat}\ \phi)\wedge gr(s\ \text{sat}\ \psi)\leq gr(s\ \text{sat}\ \phi)$.\\
(iii) $gr(s\ \text{sat}\ \phi\wedge\psi\vdash \psi)=gr(s\ \text{sat}\ \phi\wedge\psi)\rightarrow gr(s\ \text{sat}\ \psi)=(gr(s\ \text{sat}\ \phi)\wedge gr(s\ \text{sat}\ \psi))\rightarrow gr(s\ \text{sat}\ \psi)=1$, as $gr(s\ \text{sat}\ \phi)\wedge gr(s\ \text{sat}\ \psi)\leq gr(s\ \text{sat}\ \psi)$.\\
(iv) $gr(s\ \text{sat}\ \phi\vdash\psi)\wedge gr(s\ \text{sat} \phi\vdash \chi)\\
=(gr(s\ \text{sat}\ \phi)\rightarrow gr(s\ \text{sat}\ \psi))\wedge (gr(s\ \text{sat}\ \phi)\rightarrow gr(s\ \text{sat}\ \chi))\\
=gr(s\ \text{sat}\ \phi)\rightarrow gr(s\ \text{sat}\ \psi\wedge\chi)\ [\text{by\ Property \ref{pga}(5)}]\\
=gr(s\ \text{sat}\ \phi\vdash\psi\wedge\chi)$.

4.\ (i) $gr(s\ \text{sat}\ \phi\vdash \bigvee S \ (\phi\in S))\\
=gr(s\ \text{sat}\ \phi)\rightarrow gr(s\ \text{sat}\ \bigvee S \ (\phi\in S ))\\
=gr(s\ \text{sat}\ \phi)\rightarrow sup\{gr(s\ \text{sat}\ \phi)\}_{\phi\in S}\\
=1$ [as $gr(s\ \text{sat}\ \phi)\leq sup \{gr(s\ \text{sat}\ \phi)\}_{\phi\in S}$].\\
(ii) $inf\{gr(s\ \text{sat}\ \phi\vdash\psi)\}_{\phi\in S}\\
=inf\{gr(s\ \text{sat}\ \phi)\rightarrow gr(s\ \text{sat}\ \psi)\}_{\phi\in S}\\
=sup\{gr(s\ \text{sat}\ \phi)\}_{\phi\in S}\rightarrow gr(s\ \text{sat}\ \psi)\ [\text{by\ Property \ref{pga}(6)}]\\
=gr(s\ \text{sat}\ \bigvee S\ (\phi\in S))\rightarrow gr(s\ \text{sat}\ \psi)\\
=gr(s\ \text{sat}\ \bigvee S\vdash \psi\ (\phi\in S))$.

5.\ $gr(s\ \text{sat}\ \phi\wedge\bigvee S\vdash sup\{\phi\wedge\psi\mid\psi\in S\})\\
=gr(s\ \text{sat}\ \phi\wedge\bigvee S)\rightarrow gr(s\ \text{sat}\ \bigvee\{\phi\wedge\psi\mid\psi\in S\})\\
=(gr(s\ \text{sat}\ \phi)\wedge gr(s\ \text{sat}\ \bigvee S))\rightarrow sup_{\psi\in S }\{gr(s\ \text{sat}\ \phi\wedge\psi)\}\\
%=(gr(s\ \text{sat}\ \phi)\wedge gr(s\ \text{sat}\ \bigvee S))\rightarrow %sup_{\psi\in S}\{gr(s\ \text{sat}\ \phi) \wedge gr(s\ \text{sat}\ \psi)\}\\
%=(gr(s\ \text{sat}\ \phi)\wedge gr(s\ \text{sat}\ \bigvee S))\rightarrow %(gr(s\ \text{sat}\ \phi) \wedge sup_{\psi\in S}\{gr(s\ \text{sat}\ \psi)\})\\
=(gr(s\ \text{sat}\ \phi)\wedge gr(s\ \text{sat}\ \bigvee S))\rightarrow (gr(s\ \text{sat}\ \phi)\wedge gr(s\ \text{sat}\ \bigvee S))\\
=1$ [by Property \ref{pga}(1)].

6.\ $gr(s\ \text{sat}\ \top\vdash x=x)=gr(s\ \text{sat}\ \top)\rightarrow gr(s\ \text{sat}\ x=x)=1$.

7.\ $gr(s\ \text{sat}\ ((x_1,x_2,\dots,x_n)=(y_1,y_2,\dots,y_n))\wedge\phi)$\\
$=gr(s\ \text{sat}\ ((x_1,x_2,\dots,x_n)=(y_1,y_2,\dots,y_n)))\wedge gr(s\ \text{sat}\ \phi)$.\\
Now $gr(s\ \text{sat}\ \phi[(y_1,y_2,\dots,y_n)/(x_1,x_2,\dots,x_n)])$\\
$=gr(s(s((y_1,y_2,\dots,y_n))/(x_1,x_2,\dots,x_n))\ \text{sat}\ \phi)$.\\
When $s((y_1,y_2,\dots,y_n))=s((x_1,x_2,\dots,x_n))$ \\
then $gr(s(s((y_1,y_2,\dots,y_n))/(x_1,x_2,\dots,x_n))\ \text{sat}\ \phi)=gr(s\ \text{sat}\ \phi)$.\\
So, $gr(s\ \text{sat}\ ((x_1,x_2,\dots,x_n)=(y_1,y_2,\dots,y_n))\wedge\phi)$\\
$\leq gr(s\ \text{sat}\ \phi[(y_1,y_2,\dots,y_n)/(x_1,x_2,\dots,x_n)])$, for any $s$.\\
Hence $gr(s\ \text{sat}\ ((x_1,\dots,x_n)=(y_1,\dots,y_n))\wedge\phi)$\\
$\rightarrow gr(s\ \text{sat}\ \phi[(y_1,\dots,y_n)/(x_1,\dots,x_n)])=1$.
So,\\ $gr(s\ \text{sat}\ ((x_1,\dots,x_n)=(y_1,\dots,y_n))\wedge\phi\vdash \phi[(y_1,\dots,y_n)/(x_1,\dots,x_n)])\\=1$.

8.\ (i) $gr(s\ \text{sat}\ \phi\vdash \psi[x/y])=gr(s\ \text{sat}\ \phi)\rightarrow gr(s\ \text{sat}\ \psi[x/y])=gr(s\ \text{sat}\ \phi)\rightarrow gr(s(s(x)/y)\ \text{sat}\ \psi)$ and $gr(s\ \text{sat}\ \phi\vdash\exists y\psi)=gr(s\ \text{sat}\ \phi)\rightarrow gr(s\ \text{sat}\ \exists y\psi)\\=gr(s\ \text{sat}\ \phi)\rightarrow sup_{d\in D}\{gr(s(d/y)\ \text{sat}\ \psi)\}$.

Now $gr(s(s(x)/y)\ \text{sat}\ \psi)\leq sup_{d\in D}\{gr(s(d/y)\ \text{sat}\ \psi)\}$, as $s(x)\in D$.
So, by Property \ref{pga}(4)
$$gr(s\ \text{sat}\ \phi)\rightarrow gr(s(s(x)/y)\ \text{sat}\ \psi) \leq gr(s\ \text{sat}\ \phi)\rightarrow sup_{d\in D}\{gr(s(d/y)\ \text{sat}\ \psi)\}$$ and consequently 
$$gr(s\ \text{sat}\ \phi\vdash \psi[x/y])\leq gr(s\ \text{sat}\ \phi\vdash\exists y\psi).$$
(ii) $gr(s\ \text{sat}\ \exists y\phi\vdash \psi)=gr(s\ \text{sat}\ \exists y\phi)\rightarrow gr(s\ \text{sat}\ \psi)\\=sup_{d\in D}\{gr(s(d/y)\ \text{sat}\ \phi)\}\rightarrow gr(s\ \text{sat}\ \psi)$.\\
$gr(s\ \text{sat}\ \phi[x/y]\vdash\psi)=gr(s\ \text{sat}\ \phi[x/y])\rightarrow gr(s\ \text{sat}\ \psi)\\=gr(s(s(x)/y)\ \text{sat}\ \phi)\rightarrow gr(s\ \text{sat}\ \psi)$.\\
Now $gr(s(s(x)/y)\ \text{sat}\ \phi)\leq sup_{d\in D}\{gr(s(d/y)\ \text{sat}\ \phi)\}$, as $s(x)\in D$.
So, by Property \ref{pga}(3)\\
$sup_{d\in D}\{gr(s(d/y)\ \text{sat}\ \phi)\}\rightarrow gr(s\ \text{sat}\ \psi)\leq gr(s(s(x)/y)\ \text{sat}\ \phi)\rightarrow gr(s\ \text{sat}\ \psi)$ and consequently
$gr(s\ \text{sat}\ \exists y\phi\vdash \psi)\leq gr(s\ \text{sat}\ \phi[x/y]\vdash\psi)$.

9.\  $gr(s\ \text{sat}\ \phi\wedge (\exists y)\psi\vdash\exists y (\phi\wedge\psi))\\
=gr(s\ \text{sat}\ \phi\wedge\exists y\psi)\rightarrow gr(s\ \text{sat}\ \exists y (\phi\wedge\psi))\\
=(gr(s\ \text{sat}\ \phi)\wedge gr(s\ \text{sat}\ \exists y\psi))\rightarrow sup_{d\in D}\{gr(s(d/y)\ \text{sat}\ \phi\wedge\psi)\}\\
=(gr(s\ \text{sat}\ \phi)\wedge sup_{d\in D}\{gr(s(d/y)\ \text{sat}\ \psi)\})\rightarrow (sup_{d\in D}\{gr(s(d/y)\ \text{sat}\ \phi)\wedge gr(s(d/y)\ \text{sat}\ \psi)\})\\
=(gr(s\ \text{sat}\ \phi)\wedge sup_{d\in D}\{gr(s(d/y)\ \text{sat}\ \psi)\})\rightarrow\\ (sup_{d\in D}\{gr(s(d/y)\ \text{sat}\ \phi)\}\wedge sup_{d\in D}\{gr(s(d/y)\ \text{sat}\ \psi)\})$.\\
Now $gr(s\ \text{sat}\ \phi)\leq sup_{d\in D}\{gr(s(d/y)\ \text{sat}\ \phi)\}$.\\
So, $(gr(s\ \text{sat}\ \phi)\wedge  sup_{d\in D}\{gr(s(d/y)\ \text{sat}\ \psi)\}) \\ \leq ( sup_{d\in D}\{gr(s(d/y)\ \text{sat}\ \phi)\}\wedge  sup_{d\in D}\{gr(s(d/y)\ \text{sat}\ \psi)\})$.\\
Hence $(gr(s\ \text{sat}\ \phi)\wedge sup_{d\in D}\{gr(s(d/y)\ \text{sat}\ \psi)\})\rightarrow \\ ( sup_{d\in D}\{gr(s(d/y)\ \text{sat}\ \phi)\}\wedge  sup_{d\in D}\{gr(s(d/y)\ \text{sat}\ \psi)\})=1$ and consequently
$gr(s\ \text{sat}\ \phi\wedge (\exists y)\psi\vdash\exists y (\phi\wedge\psi))=1$.

This completes the proof.
\end{proof}
These properties may be considered as the counterparts of the rules of inference in the previous section. In fact in the derivation of a sequent from other sequents the same rules as in the previous section shall be employed but now the sequents are tagged with grades. Fuzzy geometric logic with graded consequence deals with the following questions:
\begin{enumerate}
\item If some sequents are given with grades then it can be investigated which sequents are derivable from them and they are sequents of what grade in the least.
\item From a set of graded sequents whether a given graded sequent is derivable or not by the rules of inference.
\end{enumerate}
Fuzzy geometric logic with graded consequence is sound in the following senses. Let, as premise, a finite number of sequents $\phi_i\vdash\psi_i$ with grades $a_i$, $i=1,2,\dots,n$ be taken. By Theorem \ref{5.1g}, any sequent derived from this premise by rules will be of a grade at least equal to $min\{a_1,a_2,\dots,a_n\}$. This answers question (1). To answer question (2) we need to check whether the sequent considered is derivable by syntactic rules from the sequents of the premise and whether the grade of the sequent is greater than or equal to the minimum of the grades of the premise sequents. In particular in every interpretation whenever the sequents of the premise are of grade 1, the conclusion sequent is also of grade 1.\\
\textbf{Note:} A graded sequent $\phi\vdash\psi$ may be read as $\psi$ is a consequence of $\phi$ and there is a grade (or strength) of the consequence relation. A graded consequence relation is a fuzzy relation \cite{LZ} between the power set of wffs and the set of wffs if and only if the following conditions are satisfied:
\begin{itemize}
\item if $\alpha\in \Gamma$ then $gr(\Gamma\vdash\alpha)=1$ (overlap);
\item If $\Gamma\subseteq \Delta$ then $gr(\Gamma\vdash\alpha)\leq gr(\Delta\vdash\alpha)$ (monotonicity);
\item $inf_{\delta\in \Delta}gr(\Gamma\vdash\delta)* gr(\Delta\vdash\alpha)\leq gr(\Gamma\vdash\alpha)$ (cut);
\end{itemize}
where $\Gamma$, $\Delta$ are sets of wffs and $\alpha$, $\delta$ are wffs, the grade of relatedness denoted by $gr(\Gamma\vdash\alpha)$ is an element of a residuated lattice and $*$ is the product operation relative to which the residuum is constructed \cite{MK, MSD}. In particular the residuated lattice can be taken as the unit interval $[0,1]$ and the product as the minimum ($\wedge$). In the present work this particular case has been taken. Secondly, in this work the set $\Gamma$ is always a singleton set. So overlap condition takes the form $gr(\alpha\vdash\alpha)=1$ and cut condition becomes $gr(\beta\vdash\delta)\wedge gr(\delta\vdash\alpha)\leq gr(\beta\vdash\alpha)$. A special case of monotonicity condition is taken here in which the set $\{\beta,\delta\}$ to the left of $\vdash$ is equated with the single wff $\beta\wedge\delta$. So the condition becomes $gr(\beta\vdash\alpha)\leq gr(\beta\wedge\delta\vdash\alpha)$ for all $\alpha,\ \beta,\ \delta$. It can be shown that this inequality is derivable from the above rules of inference.

In fuzzy geometric logic with graded consequence a sequent with grade is derived, from a given set of sequents with grades. 
\section{Fuzzy Topological Space with graded inclusion, Graded Frame and Graded Fuzzy Topological System}
\begin{definition}[Fuzzy topological space with graded inclusion]\label{gfts} \index{fuzzy topological space!with!graded inclusion}
Let $X$ be a set, $\tau$ be a collection of fuzzy subsets of $X$ s.t.
\begin{enumerate}
\item $\tilde\emptyset$, $\tilde X\in \tau$, where $\tilde\emptyset (x)=0$ and $\tilde X (x)=1$, for all $x\in X$;
\item $\tilde T_i\in\tau$ for $i\in I$ imply $\bigcup_{i\in I}\tilde T_i\in \tau$, where $\bigcup_{i\in I}\tilde{T_i}(x)=sup_{i\in I}\{\tilde{T_i}(x)\}$;
\item $\tilde T_1$, $\tilde T_2\in\tau$ imply $ \tilde T_1\cap\tilde T_2\in\tau$, where $(\tilde{T_1}\cap \tilde{T_2})(x)=\tilde{T_1}(x)\wedge \tilde{T_2}(x)$,
\end{enumerate}
and $\subseteq$ be a fuzzy inclusion relation for fuzzy sets is defined as $gr(\tilde{T_1}\subseteq{\tilde{T_2}})=inf_{x\in X}\{\tilde{T_1}(x)\rightarrow\tilde{T_2}(x)\}$, where $\tilde{T_1},\  \tilde{T_2}$ are fuzzy subsets of $X$ and $\rightarrow$ is the G$\ddot{o}$del arrow.

Then $(X,\tau,\subseteq)$ is called a \textbf{fuzzy topological space with graded inclusion}. $(\tau,\subseteq)$ is called a \textbf{fuzzy topology with graded inclusion}  \index{fuzzy topology!with!graded inclusion} over $X$. 
\end{definition}
It is to be noted that we preferred to use the traditional notation $\tilde{A}$ to denote a fuzzy set \cite{MA}.
We list the properties of the members of fuzzy topology with graded inclusion, as propositions, that would be used subsequently. The proof of the propositions are provided here.
\begin{proposition}\label{gt1}
$gr(\tilde{T}\subseteq\tilde{T})=1$.
\end{proposition}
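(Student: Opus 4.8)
The plan is to unwind the definition of graded inclusion given in Definition~\ref{gfts} and reduce the claim to a single elementary fact about the G\"odel arrow. Recall that for fuzzy subsets $\tilde{T_1},\tilde{T_2}$ of $X$ the graded inclusion is defined by
\[
gr(\tilde{T_1}\subseteq\tilde{T_2})=\inf_{x\in X}\{\tilde{T_1}(x)\rightarrow\tilde{T_2}(x)\},
\]
where $\rightarrow$ is the G\"odel arrow of Definition~\ref{validg}. The entire proposition therefore concerns only the diagonal case $\tilde{T_1}=\tilde{T_2}=\tilde{T}$.

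First I would substitute $\tilde{T}$ for both arguments, obtaining
\[
gr(\tilde{T}\subseteq\tilde{T})=\inf_{x\in X}\{\tilde{T}(x)\rightarrow\tilde{T}(x)\}.
\]
Next I would invoke Property~\ref{pga}(1), namely $a\rightarrow a=1$ for every $a\in[0,1]$, applied pointwise with $a=\tilde{T}(x)$. This gives $\tilde{T}(x)\rightarrow\tilde{T}(x)=1$ for each $x\in X$, so the set over which the infimum is taken is the singleton $\{1\}$ (or, if $X$ is empty, the empty infimum, which is the top element $1$ as noted in the discussion of bounded lattices). In either case the infimum equals $1$, which yields $gr(\tilde{T}\subseteq\tilde{T})=1$ as required.

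There is essentially no obstacle here: the statement is a direct corollary of the reflexivity property $a\rightarrow a=1$ of the G\"odel arrow, which has already been established in Property~\ref{pga}(1). The only point worth a word of care is that the infimum ranges over all $x\in X$, but since every term equals the constant $1$ the value of the infimum is forced regardless of the cardinality of $X$. Thus the proof reduces to one application of an earlier verified property together with the definition of graded inclusion.
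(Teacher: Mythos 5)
Your proof is correct and follows exactly the paper's route: the paper's own one-line proof is $gr(\tilde{T}\subseteq\tilde{T})=inf_x\{\tilde{T}(x)\rightarrow \tilde{T}(x)\}=1$, which is precisely your unwinding of the definition followed by Property~\ref{pga}(1). Your extra remark about the empty-domain case is a harmless refinement the paper leaves implicit.
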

\begin{proof}
$gr(\tilde{T}\subseteq\tilde{T})=inf_x\{\tilde{T}(x)\rightarrow \tilde{T}(x)\}=1$.
\end{proof}
\begin{proposition}\label{gt2}
$gr(\tilde{T_1}\subseteq\tilde{T_2})=1=gr(\tilde{T_2}\subseteq\tilde{T_1})\Rightarrow \tilde{T_1}=\tilde{T_2}$.
\end{proposition}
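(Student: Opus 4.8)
The plan is to prove antisymmetry of the graded inclusion relation by combining Proposition~\ref{gt1}'s underlying computation with Property~7 of the G\"odel arrow (Section~\ref{pga}), namely that $a\leq b$ if and only if $a\rightarrow b=1$. The statement asserts that if both $gr(\tilde{T_1}\subseteq\tilde{T_2})=1$ and $gr(\tilde{T_2}\subseteq\tilde{T_1})=1$, then the two fuzzy subsets coincide as functions, i.e. $\tilde{T_1}(x)=\tilde{T_2}(x)$ for every $x\in X$.

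First I would unpack the hypothesis $gr(\tilde{T_1}\subseteq\tilde{T_2})=1$ using the definition of graded inclusion, which gives $inf_{x\in X}\{\tilde{T_1}(x)\rightarrow\tilde{T_2}(x)\}=1$. Since each term $\tilde{T_1}(x)\rightarrow\tilde{T_2}(x)$ lies in $[0,1]$ and their infimum equals $1$, every individual term must equal $1$; that is, $\tilde{T_1}(x)\rightarrow\tilde{T_2}(x)=1$ for all $x\in X$. By Property~\ref{pga}(7), this is equivalent to $\tilde{T_1}(x)\leq\tilde{T_2}(x)$ for all $x\in X$. Symmetrically, the second hypothesis $gr(\tilde{T_2}\subseteq\tilde{T_1})=1$ yields $\tilde{T_2}(x)\leq\tilde{T_1}(x)$ for all $x\in X$.

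The conclusion then follows at once from antisymmetry of the order $\leq$ on $[0,1]$: for each $x\in X$ we have both $\tilde{T_1}(x)\leq\tilde{T_2}(x)$ and $\tilde{T_2}(x)\leq\tilde{T_1}(x)$, hence $\tilde{T_1}(x)=\tilde{T_2}(x)$. Since this holds pointwise for every $x$, the fuzzy subsets $\tilde{T_1}$ and $\tilde{T_2}$ are equal as membership functions, which is exactly $\tilde{T_1}=\tilde{T_2}$.

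There is no real obstacle here; the result is essentially the translation of antisymmetry of $\leq$ through the characterization of the G\"odel arrow. The only point requiring a moment's care is the step from ``the infimum of a family of values in $[0,1]$ is $1$'' to ``each value is $1$,'' which is valid precisely because $1$ is the top element and no term can exceed it. I would state that observation explicitly rather than glossing over it, and then invoke Property~\ref{pga}(7) to finish.
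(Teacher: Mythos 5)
Your proof is correct and follows essentially the same route as the paper: unwind the definition to get $\inf_x\{\tilde{T_1}(x)\rightarrow \tilde{T_2}(x)\}=1$, deduce that each term equals $1$, apply the characterization $a\rightarrow b=1$ iff $a\leq b$ of the G\"odel arrow, and conclude by pointwise antisymmetry of $\leq$ on $[0,1]$. Your explicit justification of the step from ``infimum equals $1$'' to ``each term equals $1$'' is a small improvement in rigor over the paper, which states it as an unremarked ``iff.''
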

\begin{proof}We have
\begin{align*}
\ \ gr(\tilde{T_1}\subseteq\tilde{T_2})=1 & \ \text{iff} \hspace{3mm}  inf_x\{\tilde{T_1}(x)\rightarrow \tilde{T_2}(x)\}=1\\
& \hspace{3mm} \text{iff} \hspace{4mm}  \tilde{T_1}(x)\rightarrow \tilde{T_2}(x)=1, \ \text{for any}\ x\\
& \hspace{3mm} \text{iff} \hspace{4mm}   \tilde{T_1}(x)\leq \tilde{T_2}(x),\ \text{for any}\ x. \\
\ \ gr(\tilde{T_2}\subseteq\tilde{T_1})=1 & \hspace{3mm}  \text{iff} \hspace{4mm}  inf_x\{\tilde{T_2}(x)\rightarrow \tilde{T_1}(x)\}=1\\
& \hspace{3mm}  \text{iff}\hspace{4mm}  \tilde{T_2}(x)\rightarrow \tilde{T_1}(x)=1, \ \text{for any}\ x\\
& \hspace{3mm}  \text{iff} \hspace{4mm}  \tilde{T_2}(x)\leq \tilde{T_1}(x),\ \text{for any}\ x.
\end{align*}
Hence $\tilde{T_1}(x)= \tilde{T_2}(x)\ \text{for any}\ x \ \text{iff}\ \tilde{T_1}=\tilde{T_2}$.
\end{proof}
\begin{proposition}\label{gt3}
$gr(\tilde{T_1}\subseteq\tilde{T_2})\wedge gr(\tilde{T_2}\subseteq\tilde{T_3})\leq gr(\tilde{T_1}\subseteq\tilde{T_3})$.
\end{proposition}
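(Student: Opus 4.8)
The plan is to reduce the assertion to the pointwise transitivity of the G\"odel arrow, which has already been recorded as Property \ref{pga}(2), and then to handle the infima that appear in the definition of graded inclusion. First I would unfold all three grades using Definition \ref{gfts}, writing $gr(\tilde{T_1}\subseteq\tilde{T_2})=\inf_{x}\{\tilde{T_1}(x)\rightarrow\tilde{T_2}(x)\}$ and analogously for the pairs $(\tilde{T_2},\tilde{T_3})$ and $(\tilde{T_1},\tilde{T_3})$.

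The key observation is that, for each fixed $x\in X$, an infimum taken over the whole space is a lower bound of the single term evaluated at $x$; hence $gr(\tilde{T_1}\subseteq\tilde{T_2})\leq\tilde{T_1}(x)\rightarrow\tilde{T_2}(x)$ and $gr(\tilde{T_2}\subseteq\tilde{T_3})\leq\tilde{T_2}(x)\rightarrow\tilde{T_3}(x)$. Taking the meet of these two inequalities and then invoking the pointwise arrow transitivity of Property \ref{pga}(2), we obtain, for every $x\in X$,
$$gr(\tilde{T_1}\subseteq\tilde{T_2})\wedge gr(\tilde{T_2}\subseteq\tilde{T_3})\leq(\tilde{T_1}(x)\rightarrow\tilde{T_2}(x))\wedge(\tilde{T_2}(x)\rightarrow\tilde{T_3}(x))\leq\tilde{T_1}(x)\rightarrow\tilde{T_3}(x).$$

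Finally, since the left-hand side of this chain is independent of $x$ and the inequality holds for all $x\in X$, the left-hand side is a lower bound of the family $\{\tilde{T_1}(x)\rightarrow\tilde{T_3}(x)\}_{x\in X}$; being a lower bound, it is dominated by the greatest lower bound, namely $\inf_{x}\{\tilde{T_1}(x)\rightarrow\tilde{T_3}(x)\}=gr(\tilde{T_1}\subseteq\tilde{T_3})$. This is precisely the asserted inequality.

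I do not anticipate any genuine difficulty here, as the statement is merely the lifting of a pointwise lattice inequality to the level of infima; the only step demanding slight care is the direction of the infimum manipulation. One must bound the meet of the two infima above by each pointwise conjunct \emph{before} applying arrow transitivity, and then recognize the constant left-hand side as a lower bound for the final family, rather than attempting to distribute the infimum across the meet at the outset.
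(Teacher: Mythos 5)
Your proof is correct and essentially the paper's own argument: both unfold the three grades via the definition of graded inclusion and apply the pointwise G\"odel-arrow transitivity of Property \ref{pga}(2) inside the infimum. The only cosmetic difference is that the paper distributes the infimum across the meet at the outset using the (valid) identity $\inf_x\{f(x)\}\wedge\inf_x\{g(x)\}=\inf_x\{f(x)\wedge g(x)\}$ for binary meets over a common index set, so your closing caution against that step is unnecessary, though your lower-bound phrasing is equally sound.
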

\begin{proof}We have,
\begin{align*}
gr(\tilde{T_1}\subseteq\tilde{T_2})\wedge gr(\tilde{T_2}\subseteq\tilde{T_3}) 
& = inf_x\{\tilde{T_1}(x)\rightarrow \tilde{T_2}(x)\}_{x} \wedge inf_x\{ \tilde{T_2}(x)\rightarrow \tilde{T_3}(x)\} \\ 
& =inf_x\{(\tilde{T_1}(x)\rightarrow \tilde{T_2}(x)) \wedge (\tilde{T_2}(x)\rightarrow \tilde{T_3}(x))\}\\
& \leq inf_x \{ \tilde{T_1}(x)\rightarrow \tilde{T_3}(x)\} \\
& =gr(\tilde{T_1}\subseteq\tilde{T_3}).
\end{align*}
\end{proof}
\begin{proposition}\label{gt4}
$gr(\tilde{T_1}\cap\tilde{T_2}\subseteq\tilde{T_1})=1=gr(\tilde{T_1}\cap\tilde{T_2}\subseteq\tilde{T_2})$.
\end{proposition}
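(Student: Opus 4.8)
The statement to prove is Proposition \ref{gt4}, namely that
$$gr(\tilde{T_1}\cap\tilde{T_2}\subseteq\tilde{T_1})=1=gr(\tilde{T_1}\cap\tilde{T_2}\subseteq\tilde{T_2}).$$
The plan is to unfold the definition of graded inclusion from Definition \ref{gfts}, reduce the claim to a pointwise statement about the G\"odel arrow, and then invoke the elementary properties of the G\"odel arrow already established in Subsection \ref{pga}. Since the two conjuncts are symmetric, I would prove the first equality in full and remark that the second follows by interchanging the roles of $\tilde{T_1}$ and $\tilde{T_2}$.

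First I would recall that $gr(\tilde{T_1}\cap\tilde{T_2}\subseteq\tilde{T_1})=inf_{x\in X}\{(\tilde{T_1}\cap\tilde{T_2})(x)\rightarrow\tilde{T_1}(x)\}$, and that by clause (3) of Definition \ref{gfts} we have $(\tilde{T_1}\cap\tilde{T_2})(x)=\tilde{T_1}(x)\wedge\tilde{T_2}(x)$. Thus the quantity of interest is $inf_{x\in X}\{(\tilde{T_1}(x)\wedge\tilde{T_2}(x))\rightarrow\tilde{T_1}(x)\}$. The key pointwise observation is that $\tilde{T_1}(x)\wedge\tilde{T_2}(x)\leq\tilde{T_1}(x)$ holds in $[0,1]$ for every $x$, simply because the meet is a lower bound. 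By Property \ref{pga}(7), $a\leq b$ if and only if $a\rightarrow b=1$, so each term $(\tilde{T_1}(x)\wedge\tilde{T_2}(x))\rightarrow\tilde{T_1}(x)$ equals $1$.

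It then remains only to note that the infimum of a family every member of which is $1$ is itself $1$, giving $gr(\tilde{T_1}\cap\tilde{T_2}\subseteq\tilde{T_1})=1$. The second equality is obtained identically, using $\tilde{T_1}(x)\wedge\tilde{T_2}(x)\leq\tilde{T_2}(x)$ in place of the first inequality. I do not anticipate any genuine obstacle here: the result is a direct consequence of the definition of graded inclusion together with the monotonicity of meet and the characterisation of the G\"odel arrow in Property \ref{pga}(7). The only point requiring a modicum of care is the bookkeeping with the infimum, ensuring that a pointwise bound of $1$ transfers to the infimum over all $x\in X$, which is immediate.
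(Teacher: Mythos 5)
Your proposal is correct and follows essentially the same route as the paper: unfold $gr(\tilde{T_1}\cap\tilde{T_2}\subseteq\tilde{T_1})$ as $inf_x\{(\tilde{T_1}(x)\wedge\tilde{T_2}(x))\rightarrow\tilde{T_1}(x)\}$ and observe each term equals $1$. The only difference is that you make explicit the appeal to Property \ref{pga}(7) ($a\leq b$ iff $a\rightarrow b=1$) and the symmetry remark, which the paper's terser computation leaves implicit.
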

\begin{proof}
We have,
\begin{align*}
gr(\tilde{T_1}\cap\tilde{T_2}\subseteq\tilde{T_1})
& =inf_x\{(\tilde{T_1}\cap\tilde{T_2})(x)\rightarrow \tilde{T_1}(x)\}\\
& =inf_x\{\tilde{T_1}(x)\wedge\tilde{T_2}(x)\rightarrow \tilde{T_1}(x)\}\\
& =1.
\end{align*}
Similarly $gr(\tilde{T_1}\cap\tilde{T_2}\subseteq\tilde{T_2})=1$.
\end{proof}
\begin{proposition}\label{gt5}
$gr(\tilde{T}\subseteq\tilde{X})=1$.
\end{proposition}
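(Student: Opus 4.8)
The plan is to proceed exactly as in the proofs of Propositions \ref{gt1} and \ref{gt4}, by unfolding the definition of the graded inclusion relation and exploiting the behaviour of the G\"odel arrow at the top element. First I would recall that, by definition, $\tilde{X}(x)=1$ for all $x\in X$ and that the graded inclusion is given by $gr(\tilde{T}\subseteq\tilde{X})=\inf_{x\in X}\{\tilde{T}(x)\rightarrow\tilde{X}(x)\}$. Substituting the value of $\tilde{X}$ reduces the problem to evaluating $\inf_{x\in X}\{\tilde{T}(x)\rightarrow 1\}$.

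The key step is then the observation that for every $x\in X$ we have $\tilde{T}(x)\leq 1$, since $\tilde{T}$ is a fuzzy subset of $X$ taking values in $[0,1]$. By Property \ref{pga}(7), the relation $a\leq b$ is equivalent to $a\rightarrow b=1$; applying this with $a=\tilde{T}(x)$ and $b=1$ yields $\tilde{T}(x)\rightarrow 1=1$ for each $x$. Hence every term appearing in the infimum equals $1$, and the infimum of the constant family $\{1\}_{x\in X}$ is itself $1$.

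Putting these together gives the chain
\begin{align*}
gr(\tilde{T}\subseteq\tilde{X}) & = \inf_{x}\{\tilde{T}(x)\rightarrow\tilde{X}(x)\}\\
& = \inf_{x}\{\tilde{T}(x)\rightarrow 1\}\\
& = \inf_{x}\{1\}\\
& = 1,
\end{align*}
which is the desired conclusion. I do not anticipate any genuine obstacle here: the statement is an immediate consequence of the definition together with a single elementary property of the G\"odel arrow, and the only thing to be careful about is invoking the correct property (that $a\rightarrow 1=1$ whenever $a\le 1$) rather than performing any case analysis. The proposition simply records that the top fuzzy set $\tilde{X}$ contains every member of the fuzzy topology with full grade $1$, paralleling the classical fact that $X$ is the largest open set.
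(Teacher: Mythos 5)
Your proof is correct and follows essentially the same route as the paper's one-line argument: unfold the definition of graded inclusion, substitute $\tilde{X}(x)=1$, and observe that $\tilde{T}(x)\rightarrow 1=1$ for every $x$. Your explicit appeal to Property \ref{pga}(7) merely makes transparent a step the paper leaves implicit.
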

\begin{proof}
$gr(\tilde{T}\subseteq\tilde{X})=inf_x\{\tilde{T}(x)\rightarrow\tilde{X}(x)\}=inf_x\{\tilde{T}(x)\rightarrow 1\}=1$.
\end{proof}
\begin{proposition}\label{gt6}
$gr(\tilde{T_1}\subseteq\tilde{T_2})\wedge gr(\tilde{T_1}\subseteq\tilde{T_3})=gr(\tilde{T_1}\subseteq\tilde{T_2}\cap\tilde{T_3})$.
\end{proposition}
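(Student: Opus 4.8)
The plan is to reduce the claim to the pointwise identity for the G\"odel arrow already recorded in Property~\ref{pga}(5), namely $(a\rightarrow b)\wedge(a\rightarrow c)=a\rightarrow(b\wedge c)$, exactly as Proposition~\ref{gt3} was reduced to Property~\ref{pga}(2). First I would unfold both sides by the definition of graded inclusion from Definition~\ref{gfts}: the left-hand side becomes $inf_x\{\tilde{T_1}(x)\rightarrow\tilde{T_2}(x)\}\wedge inf_x\{\tilde{T_1}(x)\rightarrow\tilde{T_3}(x)\}$, while the right-hand side becomes $inf_x\{\tilde{T_1}(x)\rightarrow(\tilde{T_2}\cap\tilde{T_3})(x)\}$, where $(\tilde{T_2}\cap\tilde{T_3})(x)=\tilde{T_2}(x)\wedge\tilde{T_3}(x)$ by clause 3 of Definition~\ref{gfts}.

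The key step is then to move the binary meet inside the infimum, i.e.\ to observe $inf_x\{a_x\}\wedge inf_x\{b_x\}=inf_x\{a_x\wedge b_x\}$ with $a_x=\tilde{T_1}(x)\rightarrow\tilde{T_2}(x)$ and $b_x=\tilde{T_1}(x)\rightarrow\tilde{T_3}(x)$. Once the two infima are merged, each summand has the shape $(\tilde{T_1}(x)\rightarrow\tilde{T_2}(x))\wedge(\tilde{T_1}(x)\rightarrow\tilde{T_3}(x))$, to which Property~\ref{pga}(5) applies verbatim, turning it into $\tilde{T_1}(x)\rightarrow(\tilde{T_2}(x)\wedge\tilde{T_3}(x))$. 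Taking the infimum over $x$ yields precisely $gr(\tilde{T_1}\subseteq\tilde{T_2}\cap\tilde{T_3})$, completing the chain of equalities. I would present this as a short display of the form used in the earlier propositions of this section, being careful to keep the whole computation inside a single \texttt{align*} block with no intervening blank line.

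The only point that deserves a line of justification is the merging of the two infima, $inf_x\{a_x\}\wedge inf_x\{b_x\}=inf_x\{a_x\wedge b_x\}$. This is a standard fact about meets of families in a complete lattice (here $[0,1]$): the $\leq$ direction follows because $inf_x\{a_x\}\wedge inf_x\{b_x\}\leq a_x\wedge b_x$ for every $x$, and the $\geq$ direction because $inf_x\{a_x\wedge b_x\}$ is below both $a_x$ and $b_x$ for all $x$, hence below each factor $inf_x\{a_x\}$ and $inf_x\{b_x\}$. Since the value set $[0,1]$ is a complete chain this is elementary, so I expect no genuine obstacle; the main care is purely bookkeeping, making sure the pointwise application of Property~\ref{pga}(5) is valid for \emph{every} $x$ before the final infimum is taken.
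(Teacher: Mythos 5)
Your proposal is correct and follows the paper's own proof essentially verbatim: unfold Definition~\ref{gfts}, merge the two infima into a single one, and apply Property~\ref{pga}(5) pointwise before taking the infimum. The only difference is that you explicitly justify the step $inf_x\{a_x\}\wedge inf_x\{b_x\}=inf_x\{a_x\wedge b_x\}$, which the paper uses silently — a harmless (indeed welcome) extra sentence, not a different argument.
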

\begin{proof}
One has,
\begin{align*}
gr(\tilde{T_1}\subseteq\tilde{T_2})\wedge gr(\tilde{T_1}\subseteq\tilde{T_3})
& =inf_x\{\tilde{T_1}(x)\rightarrow\tilde{T_2}(x)\}\wedge inf_x\{\tilde{T_1}(x)\rightarrow\tilde{T_3}(x)\}\\
& =inf_x\{(\tilde{T_1}(x)\rightarrow\tilde{T_2}(x))\wedge (\tilde{T_1}(x)\rightarrow\tilde{T_3}(x))\}\\
& =inf_x\{\tilde{T_1}(x)\rightarrow (\tilde{T_2}(x)\wedge\tilde{T_3}(x))\}\\
& =gr(\tilde{T_1}\subseteq\tilde{T_2}\cap\tilde{T_3}).
\end{align*}
This completes the proof of the proposition.
\end{proof}
\begin{proposition}\label{gt7}
$gr(\tilde{T_i}\subseteq\bigcup_i\tilde{T_i})=1$.
\end{proposition}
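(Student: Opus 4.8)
The plan is to proceed exactly along the lines of Proposition \ref{gt5}, since this statement is structurally identical: it asserts that each member of a family is included, with full grade $1$, in the union of the family. First I would unfold the definition of graded inclusion given in Definition \ref{gfts}, writing
\[
gr(\tilde{T_i}\subseteq\bigcup_i\tilde{T_i})=\inf_{x}\{\tilde{T_i}(x)\rightarrow (\textstyle\bigcup_i\tilde{T_i})(x)\},
\]
where $\rightarrow$ is the G\"odel arrow. The goal is then to show that every term inside the infimum equals $1$.

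The crucial observation is that the union is computed pointwise as a supremum: $(\bigcup_i\tilde{T_i})(x)=\sup_i\{\tilde{T_i}(x)\}$. Hence, for the fixed index $i$ appearing on the left and for every $x\in X$, we have $\tilde{T_i}(x)\leq \sup_i\{\tilde{T_i}(x)\}=(\bigcup_i\tilde{T_i})(x)$, simply because a single member of a set is bounded above by the supremum of that set. This is the only genuinely substantive step, and it is elementary.

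Next I would invoke Property \ref{pga}(7), which states that $a\leq b$ if and only if $a\rightarrow b=1$. Applying this with $a=\tilde{T_i}(x)$ and $b=(\bigcup_i\tilde{T_i})(x)$ gives $\tilde{T_i}(x)\rightarrow (\bigcup_i\tilde{T_i})(x)=1$ for every $x$. Consequently the infimum over $x$ is an infimum of the constant value $1$, so
\[
gr(\tilde{T_i}\subseteq\textstyle\bigcup_i\tilde{T_i})=\inf_{x}\{1\}=1,
\]
which is the desired conclusion.

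I do not anticipate any real obstacle here: the argument is a direct computation relying only on the pointwise definition of union, the trivial bound of a member by a supremum, and the already-established characterization of the G\"odel arrow (Property \ref{pga}(7)). The only point worth a moment's care is notational, namely keeping the bound index $i$ on the left-hand side distinct in role from the bound index ranging in the supremum, but this causes no difficulty since the inequality $\tilde{T_i}(x)\leq\sup_i\{\tilde{T_i}(x)\}$ holds for precisely that same $i$.
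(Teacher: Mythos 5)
Your proof is correct and follows essentially the same route as the paper's own (one-line) proof: unfold the graded inclusion as $\inf_x\{\tilde{T_i}(x)\rightarrow \sup_i\{\tilde{T_i}(x)\}\}$ and conclude it equals $1$. You merely make explicit the two facts the paper leaves implicit, namely the bound $\tilde{T_i}(x)\leq\sup_i\{\tilde{T_i}(x)\}$ and Property \ref{pga}(7) for the G\"odel arrow.
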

\begin{proof}
$gr(\tilde{T_i}\subseteq\bigcup_i\tilde{T_i})=inf_x\{\tilde{T_i}(x)\rightarrow \bigvee_i\{\tilde{T_i}(x)\}\}=1$.
\end{proof}
\begin{proposition}\label{gt8}
$inf_{ \tilde{T_i}\in \tilde{S}}\{gr(\tilde{T_i}\subseteq\tilde{T})\}=gr(\bigcup \tilde{S}\subseteq\tilde{T})$.
\end{proposition}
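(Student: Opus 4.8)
The plan is to unfold both sides of the claimed equality directly from the definition of graded inclusion and then reduce the identity to a purely order-theoretic fact about the G\"odel arrow that is already recorded in the excerpt. Recall that the graded inclusion is $gr(\tilde{T_1}\subseteq\tilde{T_2})=inf_{x\in X}\{\tilde{T_1}(x)\rightarrow\tilde{T_2}(x)\}$ and that the union $\bigcup\tilde{S}$ is taken pointwise, so $(\bigcup\tilde{S})(x)=sup_{\tilde{T_i}\in\tilde{S}}\{\tilde{T_i}(x)\}$. First I would rewrite the right-hand side as
\begin{align*}
gr(\textstyle\bigcup\tilde{S}\subseteq\tilde{T})
& = inf_{x}\{(\textstyle\bigcup\tilde{S})(x)\rightarrow\tilde{T}(x)\}\\
& = inf_{x}\{sup_{\tilde{T_i}\in\tilde{S}}\{\tilde{T_i}(x)\}\rightarrow\tilde{T}(x)\}.
\end{align*}

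The crucial step is then to apply Property \ref{pga}(6), namely $inf\{(a_i\rightarrow b)\}_i=sup\{a_i\}_i\rightarrow b$, pointwise for each fixed $x$, with $a_i=\tilde{T_i}(x)$ and $b=\tilde{T}(x)$. This converts the supremum appearing on the left of the arrow into an infimum of arrows, giving
\begin{align*}
gr(\textstyle\bigcup\tilde{S}\subseteq\tilde{T})
& = inf_{x}\{inf_{\tilde{T_i}\in\tilde{S}}\{\tilde{T_i}(x)\rightarrow\tilde{T}(x)\}\}.
\end{align*}

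Finally I would expand the left-hand side of the proposition as $inf_{\tilde{T_i}\in\tilde{S}}\{gr(\tilde{T_i}\subseteq\tilde{T})\}=inf_{\tilde{T_i}\in\tilde{S}}\{inf_{x}\{\tilde{T_i}(x)\rightarrow\tilde{T}(x)\}\}$ and observe that this is the same double infimum as above, merely with the two $inf$'s taken in the opposite order. Since the infimum of a doubly indexed family of reals in $[0,1]$ does not depend on the order in which the two infima are performed, the two expressions coincide, which establishes the equality. I do not anticipate a genuine obstacle here: the whole argument is essentially bookkeeping, and the only nontrivial ingredient is the adjunction-type identity Property \ref{pga}(6), which is precisely what licenses swapping the $sup$ on the antecedent side for an $inf$ of G\"odel arrows; the interchange of the two infima is routine.
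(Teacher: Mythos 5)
Your proposal is correct and is essentially the paper's own proof: the paper likewise unfolds both sides via the definition of graded inclusion, applies Property \ref{pga}(6) pointwise (with $a_i=\tilde{T_i}(x)$, $b=\tilde{T}(x)$), and interchanges the two infima, merely running the chain of equalities from the left-hand side to the right-hand side instead of in your direction. There is no gap.
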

\begin{proof}We have,
\begin{align*}
inf_{ \tilde{T_i}\in \tilde{S}}\{gr(\tilde{T_i}\subseteq\tilde{T})\}
& =inf_{ \tilde{T_i}\in \tilde{S}}\{inf_x\{\tilde{T_i}(x)\rightarrow \tilde{T}(x)\}\}\\
& =inf_x\{inf_{ \tilde{T_i}\in \tilde{S}}\{\tilde{T_i}(x)\rightarrow \tilde{T}(x)\}\}\\
& =inf_x\{sup_{\tilde{T_i\in \tilde{S}}}\{\tilde{T_i}(x)\}\rightarrow \tilde{T}(x)\}\\
& =inf_x\{\bigcup\tilde{S}(x)\rightarrow \tilde{T}(x)\}\\
& =gr(\bigcup \tilde{S}\subseteq\tilde{T}).
\end{align*}
\end{proof}
\begin{proposition}\label{gt9}
$gr(\tilde{T}\cap\bigcup_i\tilde{T_i}\subseteq \bigcup_i(\tilde{T}\cap\tilde{T_i}))=1$.
\end{proposition}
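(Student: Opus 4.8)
The plan is to reduce the proposition to the pointwise frame law in $[0,1]$, exactly as in the proof of Proposition \ref{gt7}, and then invoke the characterization of the G\"odel arrow. First I would unfold the definition of graded inclusion: by Definition \ref{gfts},
$$gr\Big(\tilde{T}\cap\bigcup_i\tilde{T_i}\subseteq \bigcup_i(\tilde{T}\cap\tilde{T_i})\Big)=inf_x\Big\{\big(\tilde{T}\cap\bigcup_i\tilde{T_i}\big)(x)\rightarrow \big(\bigcup_i(\tilde{T}\cap\tilde{T_i})\big)(x)\Big\}.$$
So it suffices to show that the argument of the infimum equals $1$ for every $x$.

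Next I would compute the two membership values at a fixed $x$. On the left, using the definitions of $\cap$ and $\bigcup$ from Definition \ref{gfts}, one has $\big(\tilde{T}\cap\bigcup_i\tilde{T_i}\big)(x)=\tilde{T}(x)\wedge sup_i\{\tilde{T_i}(x)\}$, while on the right $\big(\bigcup_i(\tilde{T}\cap\tilde{T_i})\big)(x)=sup_i\{\tilde{T}(x)\wedge\tilde{T_i}(x)\}$. Because the value lattice $[0,1]$ is a frame (binary meet distributes over arbitrary join), these two values are equal; in particular the left value is $\leq$ the right value.

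Finally, by Property \ref{pga}(7) (equivalently Property \ref{pga}(1)), $a\leq b$ forces $a\rightarrow b=1$, so each term inside the infimum equals $1$, whence the infimum is $1$, which is the claim. I do not expect a genuine obstacle here: the only step deserving a word of justification is the pointwise distributivity $\tilde{T}(x)\wedge sup_i\{\tilde{T_i}(x)\}\leq sup_i\{\tilde{T}(x)\wedge\tilde{T_i}(x)\}$, and even this $\leq$ direction alone already suffices, so the result follows directly from the frame structure of $[0,1]$ together with the elementary properties of the G\"odel arrow already established in Subsection \ref{pga}.
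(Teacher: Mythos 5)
Your proposal is correct and follows essentially the same route as the paper: unfold the graded inclusion to a pointwise infimum, use distributivity of $\wedge$ over $sup$ in $[0,1]$ to identify the two membership values, and conclude via $a\rightarrow a=1$ (the paper's Property \ref{pga}(1); your appeal to Property \ref{pga}(7) with only the $\leq$ direction is an inessential variant). No gap.
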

\begin{proof}
Let us proceed in the following way.
\begin{align*}
gr(\tilde{T}\cap\bigcup_i\tilde{T_i}\subseteq \bigcup_i(\tilde{T}\cap\tilde{T_i}))
& =inf_x\{(\tilde{T}\cap\bigcup_i\tilde{T_i})(x)\rightarrow \bigcup_i(\tilde{T}\cap\tilde{T_i})(x)\}\\
& =inf_x\{\tilde{T}(x)\wedge (\bigcup_i\tilde{T_i})(x)\rightarrow sup_i\{(\tilde{T}\cap\tilde{T_i})(x)\}\}\\
& =inf_x\{\tilde{T}(x)\wedge sup_i\{\tilde{T_i}(x)\}\rightarrow sup_i\{\tilde{T}(x)\wedge\tilde{T_i}(x)\}\}\\
& =inf_x\{sup_i\{\tilde{T}(x)\wedge\tilde{T_i}(x)\}\rightarrow sup_i\{\tilde{T}(x)\wedge\tilde{T_i}(x)\}\}\\
& =1.
\end{align*}
This completes the proof.
\end{proof}
\begin{proposition}\label{gt10}
$\tilde{T_1}(x)\wedge gr(\tilde{T_1}\subseteq\tilde{T_2})\leq \tilde{T_2}(x)$, for each $x$.
\end{proposition}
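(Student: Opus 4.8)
The plan is to reduce this inequality to the pointwise behaviour of the Gödel arrow, exploiting the fact that $gr(\tilde{T_1}\subseteq\tilde{T_2})$ is defined as an infimum over all points. First I would fix an arbitrary $x\in X$. Since
$$gr(\tilde{T_1}\subseteq\tilde{T_2})=inf_{y\in X}\{\tilde{T_1}(y)\rightarrow\tilde{T_2}(y)\}$$
and an infimum is a lower bound for each of its terms, we immediately obtain
$$gr(\tilde{T_1}\subseteq\tilde{T_2})\leq \tilde{T_1}(x)\rightarrow\tilde{T_2}(x).$$
This is the only place where the specific form of the inclusion grade is needed; everything afterward is purely about the arrow at the single point $x$.

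Next I would combine this with the monotonicity of the meet $\wedge$. Taking the meet with $\tilde{T_1}(x)$ on both sides of the displayed inequality gives
$$\tilde{T_1}(x)\wedge gr(\tilde{T_1}\subseteq\tilde{T_2})\leq \tilde{T_1}(x)\wedge\bigl(\tilde{T_1}(x)\rightarrow\tilde{T_2}(x)\bigr).$$
Now the key algebraic fact is Property \ref{pga}(8), namely $a\wedge(a\rightarrow b)\leq b$ for all $a,b\in[0,1]$. Applying it with $a=\tilde{T_1}(x)$ and $b=\tilde{T_2}(x)$ yields
$$\tilde{T_1}(x)\wedge\bigl(\tilde{T_1}(x)\rightarrow\tilde{T_2}(x)\bigr)\leq\tilde{T_2}(x),$$
and chaining the two inequalities gives exactly $\tilde{T_1}(x)\wedge gr(\tilde{T_1}\subseteq\tilde{T_2})\leq\tilde{T_2}(x)$, as required. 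Since $x$ was arbitrary, the statement holds for each $x$.

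There is no genuine obstacle here: the result is a one-line consequence of the definition of graded inclusion together with the already-verified Gödel-arrow property \ref{pga}(8), which is the abstract analogue of modus ponens in the many-valued setting. The only subtlety worth flagging explicitly is that one must pass from the global infimum to its value at the fixed point $x$ \emph{before} invoking the modus-ponens-type inequality; attempting to use the infimum directly in Property \ref{pga}(8) would not type-check, since that property is stated pointwise. Thus the proof is essentially the observation that graded containment, when intersected with the antecedent fuzzy set, cannot exceed the consequent fuzzy set — the fuzzy counterpart of the obvious crisp fact that $T_1\subseteq T_2$ forces $T_1\cap A\subseteq T_2$.
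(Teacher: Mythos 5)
Your proof is correct and follows essentially the same route as the paper's: both reduce the claim to extracting the pointwise value $\tilde{T_1}(x)\rightarrow\tilde{T_2}(x)$ from the defining infimum and then applying the modus-ponens inequality $a\wedge(a\rightarrow b)\leq b$. The only difference is cosmetic — the paper re-derives that inequality inline by a two-case analysis on the G\"odel arrow (and is a little careless in reusing $x$ as both the fixed point and the bound variable of the infimum, a conflation your ordering of steps avoids), whereas you simply cite the already-verified Property \ref{pga}(8).
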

\begin{proof}
$\tilde{T_1}(x)\wedge gr(\tilde{T_1}\subseteq\tilde{T_2})=\tilde{T_1}(x)\wedge inf_x\{ \tilde{T_1}(x)\rightarrow \tilde{T_2}(x)\}=inf_x\{\tilde{T_1}(x)\wedge(\tilde{T_1}(x)\rightarrow \tilde{T_2}(x))\}$.
Now if $\tilde{T_1}(x)\leq \tilde{T_2}(x)$, then $\tilde{T_1}(x)\rightarrow \tilde{T_2}(x)=1$. Hence in this case $\tilde{T_1}(x)\wedge (\tilde{T_1}(x)\rightarrow \tilde{T_2}(x)) = \tilde{T_1}(x)\wedge 1
= \tilde{T_1}(x)
\leq \tilde{T_2}(x).$
Now when $\tilde{T_1}(x)> \tilde{T_2}(x)$, then $\tilde{T_1}(x)\rightarrow \tilde{T_2}(x)=\tilde{T_2}(x)$. Hence in this case
$$\tilde{T_1}(x)\wedge (\tilde{T_1}(x)\rightarrow \tilde{T_2}(x))
 = \tilde{T_1}(x)\wedge \tilde{T_2}(x)
 = \tilde{T_2}(x).$$
So for any $x$, $$\tilde{T_1}(x)\wedge (\tilde{T_1}(x)\rightarrow \tilde{T_2}(x))\leq \tilde{T_2}(x).$$
Therefore $$\tilde{T_1}(x)\wedge gr(\tilde{T_1}\subseteq\tilde{T_2})\leq \tilde{T_2}(x),$$
as $inf\{\tilde{T_1}(x)\wedge (\tilde{T_1}(x)\rightarrow \tilde{T_2}(x))\}\leq \tilde{T_1}(x)\wedge (\tilde{T_1}(x)\rightarrow \tilde{T_2}(x))$.
\end{proof}
\begin{definition}[Graded Frame]\label{gf}\index{graded frame}
A \textbf{graded frame} is a 5-tuple \\$(A,\top,\wedge,\bigvee,R)$, where $A$ is a non-empty set $\top\in A$, $\wedge$ is a binary operation, $\bigvee$ is an operation on arbitrary subset of $A$, $R$ is a $[0,1]$-valued fuzzy binary relation on $A$ satisfying the following conditions:
\begin{enumerate}
\item $R(a,a)=1$;
\item $R(a,b)=1=R(b,a)\Rightarrow a=b$;
\item $R(a,b)\wedge R(b,c)\leq R(a,c)$;
\item $R(a\wedge b,a)=1=R(a\wedge b,b)$;
\item $R(a,\top)=1$;
\item $R(a,b)\wedge R(a,c)=R(a,b\wedge c)$;
\item $R(a,\bigvee S)=1$ if $a\in S$;
\item $inf\{R(a,b)\mid a\in S\}=R(\bigvee S,b)$;
\item $R(a\wedge\bigvee S,\bigvee\{a\wedge b\mid b\in S\})=1$;
\end{enumerate}
for any $a,\ b,\ c\in A$ and $S\subseteq A$. We will denote graded frame by $(A,R)$. 
\end{definition}
It is to be noted that when $S$ is two element set $\{a_1,a_2\}$ (say) then $\bigvee S$ will be written as $a_1\vee a_2$.

Defining $a\prec b$ if and only if $R(a,b)=1$, it can be checked that $(A,\prec)$ is a partial order relation with an upper bound $\top$ and with respect to this partial ordering, $a\wedge b$ is the greatest lower bound (g.l.b) of two element set $\{a,b\}$, $\bigvee S$ is the least upper bound (l.u.b) of $S$. Consequently $(A,\prec)$ forms a frame.
\begin{proposition}\label{r1}
$R(\bigvee\emptyset ,a)=1$, for any $a\in A$.
\end{proposition}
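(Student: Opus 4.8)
The plan is to read this off directly from condition~8 of the definition of graded frame (Definition~\ref{gf}), which asserts that $inf\{R(a,b)\mid a\in S\}=R(\bigvee S,b)$ holds for \emph{any} subset $S\subseteq A$ and any $b\in A$. The whole content of the proposition is the observation that this identity remains meaningful, and gives the asserted value, when $S$ is taken to be the empty set. So the first and only substantive step is to instantiate condition~8 at $S=\emptyset$ and $b=a$, obtaining $R(\bigvee\emptyset,a)=inf\{R(a',a)\mid a'\in\emptyset\}=inf\,\emptyset$.

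It then remains to evaluate $inf\,\emptyset$ in the value lattice $[0,1]$. Since the infimum of a subset is its greatest lower bound, every element of $[0,1]$ is vacuously a lower bound of $\emptyset$, and the greatest such lower bound is the top element $1$. Hence $inf\,\emptyset=1$, which yields $R(\bigvee\emptyset,a)=1$, as required. This is exactly the convention already invoked in the frame preliminaries of the text (where $\top=\bigwedge\emptyset$ and $\bot=\bigvee\emptyset$), now transported to the many-valued relation $R$: just as $\bigvee\emptyset$ is the bottom element $\bot$ of the underlying frame and $R$-precedes everything, the empty-indexed infimum on the grade side collapses to the top grade~$1$.

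There is no genuine obstacle here; the only point requiring care is the boundary convention $inf\,\emptyset=1$, and I would state it explicitly rather than leave it implicit, since the result is precisely the degenerate instance of condition~8 that the other labelled conditions (such as condition~7 for the nonempty membership case) do not cover. In writing this up I would keep the argument to the two displayed equalities above and append a one-line remark that, read through the induced order $a\prec b\Leftrightarrow R(a,b)=1$ from the paragraph following Definition~\ref{gf}, the proposition simply re-expresses the fact that $\bigvee\emptyset$ is the least element of the frame $(A,\prec)$.
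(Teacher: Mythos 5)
Your proof is correct and is essentially identical to the paper's: both instantiate condition~8 of Definition~\ref{gf} with $S=\emptyset$ and $b=a$, reducing the claim to $\inf\emptyset=1$ in $[0,1]$. Your explicit remark on the boundary convention $\inf\emptyset=1$ is a sensible addition but does not change the argument.
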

\begin{proof}
\begin{align*}
R(\bigvee\emptyset ,a)
 & = inf\{R(b,a)\mid b\in\emptyset\}\ \ \ \text{[by Definition \ref{gf} (8)]}\\
& = inf\ \emptyset\ \ \ \ \ \ \ \ \ \ \ \ \ \ \ \ \ \ \ \ \text{[as there is no element in $\emptyset$]}\\
& = 1.
\end{align*}
This completes the proof.
\end{proof}
It should be noted that $\bigvee\emptyset$ plays the role of the bottom element of $A$ and hence can be denoted by the symbol $\bot$. Hence by Proposition \ref{r1} $R(\bot,a)=1$ for any $a\in A$. 
\begin{proposition}\label{r2}
$R(a,b)=R(a\wedge b,a)\wedge R(a,a\wedge b)=R(a\vee b,b)\wedge R(b,a\vee b)$.
\end{proposition}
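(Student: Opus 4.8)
The plan is to observe first that the statement, although it is written as a meet of two grades on each side, actually collapses to a short chain of equalities. I would start by noting that condition~(4) of Definition~\ref{gf} gives $R(a\wedge b,a)=1$, so the middle expression simplifies as $R(a\wedge b,a)\wedge R(a,a\wedge b)=R(a,a\wedge b)$. Dually, taking $S=\{a,b\}$ in condition~(7) gives $R(b,a\vee b)=1$ (since $b\in S$ and $\bigvee S=a\vee b$), so the right-hand expression simplifies as $R(a\vee b,b)\wedge R(b,a\vee b)=R(a\vee b,b)$. Hence the whole proposition reduces to establishing the two equalities $R(a,b)=R(a,a\wedge b)$ and $R(a,b)=R(a\vee b,b)$.

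For the first equality I would invoke condition~(6), namely $R(a,b)\wedge R(a,c)=R(a,b\wedge c)$, specialised to $c=a$. This yields $R(a,b)\wedge R(a,a)=R(a,b\wedge a)$; since $R(a,a)=1$ by condition~(1) and $b\wedge a=a\wedge b$ by commutativity of $\wedge$ (which follows from the uniqueness of the greatest lower bound of $\{a,b\}$ noted in the remark after Definition~\ref{gf}), we get $R(a,b)=R(a,a\wedge b)$. For the second equality I would use condition~(8), $\inf\{R(a,b)\mid a\in S\}=R(\bigvee S,b)$, with $S=\{a,b\}$ and the second argument held fixed at $b$. This gives $\inf\{R(a,b),R(b,b)\}=R(a\vee b,b)$, and because $R(b,b)=1$ by condition~(1) the infimum on the left is just $R(a,b)$, so $R(a,b)=R(a\vee b,b)$. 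Combining these two equalities with the two simplifications of the first paragraph completes the argument.

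The computation is elementary once the two one-sided terms $R(a\wedge b,a)$ and $R(b,a\vee b)$ are recognised as being forced to equal $1$, which is what turns a seemingly two-part identity into a single chain $R(a,b)=R(a,a\wedge b)=R(a\vee b,b)$. I do not anticipate a genuine obstacle; the only point requiring care—the main thing to get right—is the correct instantiation of the bound variables in conditions~(6),~(7) and~(8), together with the appeal to commutativity of $\wedge$ so that $b\wedge a$ may be replaced by $a\wedge b$ when reading off condition~(6).
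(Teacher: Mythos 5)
Your proof is correct and follows essentially the same route as the paper: condition (4) kills $R(a\wedge b,a)$, condition (7) kills $R(b,a\vee b)$, and then conditions (6), (8) and (1) give $R(a,a\wedge b)=R(a,b)=R(a\vee b,b)$. The only cosmetic difference is that the paper instantiates condition (6) as $R(a,a)\wedge R(a,b)=R(a,a\wedge b)$ directly, so it never needs your side appeal to commutativity of $\wedge$.
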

\begin{proof}
We have,
\begin{align*}
R(a\wedge b,a)\wedge R(a,a\wedge b) & = 1\wedge R(a,a\wedge b)\ \ \ \ \text{[by Proposition \ref{gf}(4)]}\\
& = R(a,a\wedge b)\\
& = R(a,a)\wedge R(a,b)\ \ \text{[by Proposition \ref{gf}(6)]}\\
& = 1\wedge R(a,b)\ \ \ \ \ \ \ \ \ \text{[by Proposition \ref{gf}(1)]}\\
& = R(a,b).
\end{align*}
\begin{align*}
R(a\vee b,b)\wedge R(b,a\vee b) & = R(a\vee b,b)\wedge 1\ \ \ \ \text{[by Proposition \ref{gf}(7)]}\\
& = R(a\vee b,b)\\
& = R(a,b)\wedge R(b,b)\ \ \text{[by Proposition \ref{gf}(8)]}\\
& = R(a,b)\wedge 1\ \ \ \ \ \ \ \ \ \text{[by Proposition \ref{gf}(1)]}\\
& = R(a,b).
\end{align*}
\end{proof}
It is to note that as a special case of the above definition $A$ with operations $\wedge$, $\bigvee$, and a relation $R:A\times A\longrightarrow \{0,1\}$ satisfying the conditions 1-9 forms a frame where $a\leq b$ iff $R(a,b)=1$.
\begin{example}
If $(X,\tau,\subseteq)$ is a fuzzy topological space with graded inclusion then $(\tau,\subseteq)$ with operations $\cap$, $\bigcup$ forms a graded frame as Propositions \ref{gt1}-\ref{gt9} hold. In this graded frame the top element is $X$ and the bottom is $\emptyset$.
\end{example} 
\begin{definition}[Graded fuzzy topological system]\label{gftsy}
A \textbf{graded fuzzy topological system} \index{graded fuzzy toopological system} is a quadruple $(X,\models,A,R)$ consisting of a nonempty set $X$, a graded frame $(A,R)$ and a fuzzy relation $\models$ from $X$ to $A$ such that
\begin{enumerate}
\item $gr(x\models a)\wedge R(a,b)\leq gr(x\models b)$;
\item for any finite subset including null set, $S$, of $A$, $gr(x\models\bigwedge S)=\\ inf\{gr(x\models a)\mid a\in S\}$;
\item for any subset $S$ of $A$, $gr(x\models\bigvee S)= sup\{gr(x\models a)\mid a\in S\}$.
\end{enumerate} 
\end{definition}
\begin{definition}\label{gspatial}
A graded fuzzy topological system $(X,\models,A,R)$ is said to be \textbf{spatial}\index{graded fuzzy topological system!spatial} if and only if (for any $x\in X$, $gr(x\models a)=gr(x\models b)$) imply ($a=b$), for any $a,\ b\in A$.
\end{definition}
\begin{theorem}\label{gfttsy}
For any fuzzy topological space with graded inclusion $(X,\tau,\subseteq)$, $(X,\in,\tau,\subseteq)$ forms a graded fuzzy topological system, where $gr(x\in \tilde{T})=\tilde{T}(x)$ for any $x\in X$ and $\tilde{T}\in \tau$.
\end{theorem}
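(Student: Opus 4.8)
The plan is to verify, in turn, the three defining conditions of a graded fuzzy topological system from Definition \ref{gftsy} for the structure $(X,\in,\tau,\subseteq)$, after first recording that the algebraic part is already settled. Indeed, the Example immediately preceding the theorem states that whenever $(X,\tau,\subseteq)$ is a fuzzy topological space with graded inclusion, the pair $(\tau,\subseteq)$ equipped with $\cap$ and $\bigcup$ is a graded frame, this resting on Propositions \ref{gt1}--\ref{gt9}. Thus I may take $A=\tau$ and $R=\subseteq$ as a legitimate graded frame, with finite meet realized by $\cap$ (and $\bigwedge\emptyset=\tilde{X}$, the top element) and arbitrary join realized by $\bigcup$. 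It remains only to check that the fuzzy relation $\in$, defined by $gr(x\in\tilde{T})=\tilde{T}(x)$, interacts correctly with $R$ and with these meets and joins.

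First I would dispatch condition (1), namely $gr(x\in\tilde{T_1})\wedge R(\tilde{T_1},\tilde{T_2})\leq gr(x\in\tilde{T_2})$. Unwinding the definitions, this reads $\tilde{T_1}(x)\wedge gr(\tilde{T_1}\subseteq\tilde{T_2})\leq\tilde{T_2}(x)$, which is precisely the content of Proposition \ref{gt10}. So condition (1) is immediate and needs no fresh computation.

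Next, for condition (2) I would treat the finite-meet clause. For a two-element, hence by iteration any finite nonempty, subset, the pointwise definition of $\cap$ gives $gr(x\in\tilde{T_1}\cap\tilde{T_2})=(\tilde{T_1}\cap\tilde{T_2})(x)=\tilde{T_1}(x)\wedge\tilde{T_2}(x)=inf\{gr(x\in\tilde{T_1}),gr(x\in\tilde{T_2})\}$, and the empty-set case is handled separately by $gr(x\in\bigwedge\emptyset)=gr(x\in\tilde{X})=\tilde{X}(x)=1=inf\,\emptyset$. For condition (3), the arbitrary-join clause follows in the same manner from the pointwise supremum of Definition \ref{gfts}(2): $gr(x\in\bigcup S)=(\bigcup S)(x)=sup_{\tilde{T}\in S}\{\tilde{T}(x)\}=sup\{gr(x\in\tilde{T})\mid\tilde{T}\in S\}$, the empty-set instance giving $gr(x\in\tilde{\emptyset})=0=sup\,\emptyset$.

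Honestly there is no serious obstacle here: once the Example supplies the graded frame structure, everything reduces to reading off the pointwise definitions of $\cap$ and $\bigcup$ together with the single inequality of Proposition \ref{gt10}. The one point that deserves explicit care, and the only place a careless argument could slip, is ensuring that the meet and join appearing in Definition \ref{gftsy} are genuinely the frame operations of $(\tau,\subseteq)$; in particular that $\bigwedge\emptyset$ is interpreted as the top element $\tilde{X}$ rather than naively as an intersection, so that the empty-meet instance of condition (2) yields $1$ and not $0$.
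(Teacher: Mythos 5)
Your proof is correct and takes essentially the same route as the paper's: the graded frame structure of $(\tau,\subseteq)$ is supplied by Propositions \ref{gt1}--\ref{gt9} (via the Example preceding the theorem), condition (1) of Definition \ref{gftsy} is exactly Proposition \ref{gt10}, and conditions (2) and (3) are read off from the pointwise definitions of $\cap$ and $\bigcup$. Your explicit handling of the empty-meet instance, interpreting $\bigwedge\emptyset$ as the top element $\tilde{X}$ so that $gr(x\in\bigwedge\emptyset)=1=\inf\emptyset$, is a point the paper leaves implicit under ``follow from the definition,'' and is a worthwhile clarification rather than a different argument.
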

\begin{proof}
As $(X,\tau,\subseteq)$ is a fuzzy topological space with graded inclusion so $X$ is a nonempty set, $(\tau,\subseteq)$ is a graded frame. Now $gr(x\in \tilde{T_1})\wedge gr(\tilde{T_1}\subseteq\tilde{T_2})\leq gr(x\in \tilde{T_2})$, as $gr(x\in\tilde{T})=\tilde{T}(x)$ by Proposition \ref{gt10}. The rest of the conditions, viz. $gr(x\in \tilde{T_1}\cap\tilde{T_2})=gr(x\in\tilde{T_1})\wedge gr(x\in\tilde{T_2})$ and for any $S\subseteq \tau$, $gr(x\in \bigcup S)=sup\{gr(x\in \tilde{T_i})\mid \tilde{T_i}\in \tau\}$, follow from the definition of fuzzy topological space with graded inclusion. Hence $(X,\in,\tau,\subseteq)$ is a graded fuzzy topological system.
\end{proof}
\begin{definition}\label{extent$_g$}
For any graded fuzzy topological system $(X,\models,A,R)$, the \textbf{extent$_g$} \index{extent$_g$} of each $a\in A$ denoted by $ext_g(a)$ is a mapping from $X$ to $[0,1]$ such that $ext_g(a)(x)=gr(x\models a)$, $x\in X$. Let us denote the set $\{ext_g(a)\}_{a\in A}$ by $ext_g(A)$.
\end{definition}
\begin{theorem}\label{gftsyt}
For any graded fuzzy topological system $(X,\models,A,R)$, \\$(X,ext_g(A),\subseteq)$ forms a fuzzy topological space with graded inclusion, where $\subseteq$ is the fuzzy inclusion relation.
\end{theorem}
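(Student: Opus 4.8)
The plan is to prove that the family $ext_g(A)=\{ext_g(a)\}_{a\in A}$ satisfies conditions (1)--(3) of Definition \ref{gfts}, i.e.\ that it is a fuzzy topology on $X$; once this is done the graded inclusion $\subseteq$ requires no separate verification, since it is defined pointwise through the G\"odel arrow on any collection of fuzzy subsets of $X$ and is automatically attached. The argument runs parallel to the proofs of Lemma \ref{3.9_1} and Lemma \ref{3.9_1l}, the only new feature being that the underlying algebra is now a graded frame $(A,R)$. A point worth flagging in advance is that the relational clause (1) of Definition \ref{gftsy}, the one involving $R$, plays no role in this direction: only the meet clause (2) and the join clause (3) are used.

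First I would settle the two boundary members. For the top element, using $\top=\bigwedge\emptyset$ in the graded frame together with clause (2) of Definition \ref{gftsy}, one reads off $ext_g(\top)(x)=gr(x\models\bigwedge\emptyset)=inf\ \emptyset=1=\tilde X(x)$, so $\tilde X=ext_g(\top)\in ext_g(A)$. Dually, since $\bigvee\emptyset$ plays the role of the bottom element $\bot$ (cf.\ Proposition \ref{r1}), clause (3) gives $ext_g(\bot)(x)=gr(x\models\bigvee\emptyset)=sup\ \emptyset=0=\tilde\emptyset(x)$, whence $\tilde\emptyset=ext_g(\bot)\in ext_g(A)$. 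These two evaluations of the empty meet and empty join are the only slightly delicate computations, and they must be taken from the graded-frame structure rather than assumed.

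Next I would check closure under finite meet and arbitrary join. For $a_1,a_2\in A$, applying clause (2) to the finite set $\{a_1,a_2\}$ yields
\[
(ext_g(a_1)\cap ext_g(a_2))(x)=min\{gr(x\models a_1),gr(x\models a_2)\}=gr(x\models a_1\wedge a_2)=ext_g(a_1\wedge a_2)(x),
\]
and since $a_1\wedge a_2\in A$ the intersection again lies in $ext_g(A)$. Likewise, for a family $\{a_i\}_{i\in I}\subseteq A$, clause (3) gives $(\bigcup_i ext_g(a_i))(x)=sup_i\,gr(x\models a_i)=gr(x\models\bigvee_i a_i)=ext_g(\bigvee_i a_i)(x)$, with $\bigvee_i a_i\in A$, so the union lies in $ext_g(A)$. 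This establishes conditions (1)--(3), and hence $ext_g(A)$ is a fuzzy topology on $X$.

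Finally, equipping this topology with the fuzzy inclusion $gr(\tilde T_1\subseteq\tilde T_2)=inf_{x\in X}\{\tilde T_1(x)\rightarrow\tilde T_2(x)\}$ matches Definition \ref{gfts} verbatim, so $(X,ext_g(A),\subseteq)$ is a fuzzy topological space with graded inclusion. I do not anticipate any genuine obstacle here; the whole content is the translation of the graded-frame operations $\top,\bot,\wedge,\bigvee$ into the pointwise behaviour of the extents, exactly as in the classical and $\mathscr{L}$-valued cases.
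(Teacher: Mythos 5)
Your proposal is correct and follows essentially the same route as the paper's own proof: the paper likewise verifies the empty join gives $ext_g(\bot)=\tilde\emptyset$ (and dually $ext_g(\top)=\tilde X$), closure under arbitrary union via clause (3) of Definition \ref{gftsy}, and closure under binary intersection via clause (2), with the graded inclusion $\subseteq$ attached without further checking since it is defined pointwise by the G\"odel arrow on any family of fuzzy subsets. Your explicit remark that the relational clause (1) involving $R$ is never used is a accurate observation the paper leaves implicit, but it does not change the argument.
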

\begin{proof}
To show that $(ext_g(A),\subseteq)$ forms a graded fuzzy topology on $X$ let us proceed as follows.
\begin{enumerate}
\item For any $x\in X$, $ext_g(\bot)(x)=gr(x\models \bot)=gr(x\models\bigvee\emptyset)=sup\{gr(x\models a)\mid a\in\emptyset\}=sup\{\emptyset\}=0$.

Similarly $ext_g(\top)(x)=1$, for any $x\in X$.
\item Taking $ext_g(a_i)$'s from $ext_g(A)$, we get,
\begin{align*} 
\bigcup_iext_g(a_i)(x) & = sup_i\{ext_g(a_i)(x)\}\\
& = sup_i\{gr(x\models a_i)\} \\
& = gr(x\models \bigvee\{a_i \}_i ) \\
& = ext_g(\bigvee\{a_i \}_i )(x).
\end{align*}
Hence $\bigcup_iext_g(a_i)=ext_g(\bigvee\{a_i\}_i)\in ext_g(A)$, as $\bigvee\{a_i\}_i\in A$ for $a_i$'s $\in A$.
\item Similarly $ext_g(a_1)$, $ext_g(a_2)\in ext_g(A)$ imply $ext_g(a_1)\cap ext_g(a_2)\in ext_g(A)$.
\end{enumerate}
\end{proof}
\begin{definition}\label{gequiv}
Let $(X,\models,A,R)$ be a graded fuzzy topological system. Define $a\approx b$ iff $gr(x\models a)=gr(x\models b)$ for any $x\in X$ and $a,b\in A$.
\end{definition}
\begin{proposition}\label{eq}
$\approx$ is an equivalence relation.
\end{proposition}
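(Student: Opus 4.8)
The plan is to verify directly that $\approx$, as given in Definition \ref{gequiv}, satisfies the three defining conditions of an equivalence relation — reflexivity, symmetry and transitivity — on the underlying set $A$ of the graded fuzzy topological system $(X,\models,A,R)$. Since $a\approx b$ is defined to hold precisely when the two $[0,1]$-valued functions $x\mapsto gr(x\models a)$ and $x\mapsto gr(x\models b)$ coincide for every $x\in X$, the whole statement is simply the pullback of the equivalence relation ``equality of values'' along the assignment $a\mapsto (gr(x\models a))_{x\in X}$. Consequently each of the three properties for $\approx$ will follow immediately from the corresponding property of ordinary equality in $[0,1]$.

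First I would check reflexivity: for any $a\in A$ and any $x\in X$ one trivially has $gr(x\models a)=gr(x\models a)$, so $a\approx a$. Next, for symmetry, I would assume $a\approx b$, i.e. $gr(x\models a)=gr(x\models b)$ for all $x\in X$; reading each of these equalities in the opposite direction yields $gr(x\models b)=gr(x\models a)$ for all $x$, hence $b\approx a$. Finally, for transitivity, assuming $a\approx b$ and $b\approx c$ I would combine $gr(x\models a)=gr(x\models b)$ with $gr(x\models b)=gr(x\models c)$ to obtain $gr(x\models a)=gr(x\models c)$ for every $x\in X$, which is exactly $a\approx c$.

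There is essentially no obstacle in this argument: once the system is fixed, the grading $gr(x\models\cdot)$ is determined, so none of the graded-frame axioms for $R$ nor conditions 1--3 of Definition \ref{gftsy} need to be invoked. The only point worth noting is that the quantifier ``for any $x\in X$'' passes harmlessly through each step, which is guaranteed because equality of the maps is taken pointwise. Thus the proof reduces to the three one-line verifications above, and I would present it in precisely that order.
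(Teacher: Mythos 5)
Your proof is correct and follows essentially the same route as the paper's: both verify reflexivity, symmetry and transitivity pointwise, reducing each property of $\approx$ to the corresponding property of equality in $[0,1]$. Your observation that neither the graded-frame axioms for $R$ nor the conditions of Definition \ref{gftsy} are needed is accurate and matches what the paper implicitly does.
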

\begin{proof}
We have $gr(x\models a)=gr(x\models a)$ for any $x\in X$ and so $a\approx a$.\\
Thereby $\approx$ is \textbf{reflexive}.
\begin{align*}
a\approx b & \ \text{iff}\ gr(x\models a)=gr(x\models b)\ \text{for any}\ x\in X\\
& \ \text{iff}\ gr(x\models b)=gr(x\models a)\ \text{for any}\ x\in X\\
& \ \text{iff}\ b\approx a.
\end{align*}
So $a\approx b\ \text{iff}\ b\approx a$ and hence $\approx$ is \textbf{symmetric}.
\begin{align*}
& a\approx b\ \text{and}\ b\approx c \\& \ \text{iff} \ gr(x\models a)=gr(x\models b)\ \text{and}\ gr(x\models b)=gr(x\models c)\ \text{for any}\ x\in X\\
& \Rightarrow gr(x\models a)=gr(x\models c)\ \text{for any}\ x\in X\\
& \ \text{iff}\ a\approx c.
\end{align*}
Hence $a\approx b\ \text{and}\ b\approx c\Rightarrow a\approx c$ and consequently $\approx$ is \textbf{transitive}.
\end{proof}
So we get the quotient $A/_{\approx}$. The operations $\wedge$, $\bigvee$ can be lifted in the following way so that they will become independent of choice:
$$[a]\wedge [b]=^{def}[a\wedge b],\ [a]\vee [b]=^{def}[a\vee b],\ \bigvee\{[a_i]\}_{i}=^{def}[\bigvee\{a_i\}_{i}].$$
\begin{definition}\label{6.5}
Let us define the relation $\models '$ between $X$ and $A/_{\approx}$ in the following way : $gr(x\models '[a])=gr(x\models a)$.
\end{definition}
\begin{proposition}\label{propc}
$\forall x\in X,\ (gr(x\models '[a])=gr(x\models '[b]))\Rightarrow [a]=[b]$.
\end{proposition}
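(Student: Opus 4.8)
The plan is simply to unfold the definitions of $\models'$ and of the equivalence relation $\approx$, exactly in the spirit of the spatiality argument already recorded in Proposition~\ref{spec} for the ungraded situation. First I would fix arbitrary $a,b\in A$ and assume the hypothesis: that $gr(x\models'[a])=gr(x\models'[b])$ holds for every $x\in X$. The goal is then to conclude $[a]=[b]$ in $A/_{\approx}$.

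Next, by Definition~\ref{6.5} the relation $\models'$ between $X$ and $A/_{\approx}$ was introduced precisely so that $gr(x\models'[a])=gr(x\models a)$ and $gr(x\models'[b])=gr(x\models b)$ for every $x\in X$. Substituting these two equalities into the assumed hypothesis immediately gives $gr(x\models a)=gr(x\models b)$ for every $x\in X$.

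Finally, the condition just obtained is verbatim the defining clause of $a\approx b$ recorded in Definition~\ref{gequiv}. Hence $a\approx b$, and since $\approx$ is an equivalence relation (Proposition~\ref{eq}), the two equivalence classes coincide, that is $[a]=[b]$, which is exactly the assertion of the proposition.

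I do not expect any genuine obstacle here; the statement is an immediate consequence of how $\models'$ was defined on the quotient, and it simply expresses that the construction yields a spatial graded fuzzy topological system in the sense of Definition~\ref{gspatial}. The only point implicitly invoked is the well-definedness of $\models'$ on representatives, but this is already secured by Proposition~\ref{eq} together with the representative-independent lifting of the operations preceding Definition~\ref{6.5}, so no separate verification is required.
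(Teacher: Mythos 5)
Your proof is correct and follows exactly the same route as the paper's: unfold Definition~\ref{6.5} to pass from $gr(x\models'[a])=gr(x\models'[b])$ to $gr(x\models a)=gr(x\models b)$ for all $x$, recognize this as the defining clause of $a\approx b$ in Definition~\ref{gequiv}, and conclude $[a]=[b]$. Your closing remark on the well-definedness of $\models'$ is a reasonable extra observation but not part of the paper's (equally brief) argument.
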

\begin{proof}
$\forall x\in X,\ (gr(x\models '[a])=gr(x\models '[b]))\ \text{if and only if}\ \forall x\in X,\ (gr(x\models a)=gr(x\models b))$. Hence $a\approx b$, and so $[a]=[b]$.
\end{proof}
\begin{theorem}\label{agf}
$(X,\models ',A/_{\approx},R)$ is a graded fuzzy topological system.
\end{theorem}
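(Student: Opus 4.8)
The plan is to realize $(A/_{\approx}, R)$ as an isomorphic copy of the graded frame $(ext_g(A),\subseteq)$ and then to read off the three defining conditions of a graded fuzzy topological system directly from the corresponding facts about $ext_g$. First I would equip $A/_{\approx}$ with the fuzzy relation, still written $R$, given by $R([a],[b])=\inf_{x\in X}\{gr(x\models a)\rightarrow gr(x\models b)\}$, where $\rightarrow$ is the G$\ddot{o}$del arrow of Definition \ref{validg}. This is independent of the chosen representatives: if $a\approx a'$ and $b\approx b'$ then $gr(x\models a)=gr(x\models a')$ and $gr(x\models b)=gr(x\models b')$ for every $x$, so the two infima coincide. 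Since the lifted operations $\wedge$ and $\bigvee$ on $A/_{\approx}$ have already been checked to be independent of choice, this yields a well-defined structure $(A/_{\approx},\wedge,\bigvee,R)$.

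Next I would observe that the assignment $[a]\mapsto ext_g(a)$ is a bijection of $A/_{\approx}$ onto $ext_g(A)$, because $ext_g(a)=ext_g(b)$ holds exactly when $gr(x\models a)=gr(x\models b)$ for all $x$, i.e.\ exactly when $a\approx b$. Using conditions 2 and 3 of Definition \ref{gftsy} for the original system one obtains $ext_g(a\wedge b)=ext_g(a)\cap ext_g(b)$ and $ext_g(\bigvee S)=\bigcup\{ext_g(a):a\in S\}$, so this bijection carries the lifted operations to $\cap$ and $\bigcup$; and by the very definition of $R$ it carries $R$ to the graded inclusion, since $R([a],[b])=\inf_x\{ext_g(a)(x)\rightarrow ext_g(b)(x)\}=gr(ext_g(a)\subseteq ext_g(b))$. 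By Theorem \ref{gftsyt} the pair $(ext_g(A),\subseteq)$ is a fuzzy topology with graded inclusion, and Propositions \ref{gt1}--\ref{gt9} establish for it precisely the nine axioms of Definition \ref{gf}; hence $(ext_g(A),\subseteq)$ is a graded frame. Transporting this structure along the bijection shows $(A/_{\approx},R)$ is a graded frame.

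Finally I would verify the three conditions on $\models'$. For condition 1, writing $gr(x\models'[a])=gr(x\models a)=ext_g(a)(x)$ and $R([a],[b])=gr(ext_g(a)\subseteq ext_g(b))$, Proposition \ref{gt10} applied to the fuzzy sets $ext_g(a),ext_g(b)$ gives $ext_g(a)(x)\wedge R([a],[b])\leq ext_g(b)(x)$, which is exactly $gr(x\models'[a])\wedge R([a],[b])\leq gr(x\models'[b])$. For condition 2 I take a finite $S=\{[a_1],\dots,[a_n]\}$; the iterated lifted meet gives $\bigwedge S=[a_1\wedge\cdots\wedge a_n]$, so $gr(x\models'\bigwedge S)=gr(x\models a_1\wedge\cdots\wedge a_n)=\inf_i\{gr(x\models a_i)\}=\inf_i\{gr(x\models'[a_i])\}$ by condition 2 of the original system, while for $S=\emptyset$ both sides equal $1$ because $\bigwedge\emptyset=[\top]$. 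Condition 3 is identical, using the lifted join and condition 3 of the original system. Together these give that $(X,\models',A/_{\approx},R)$ is a graded fuzzy topological system.

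The step I expect to carry the most weight is pinning down the induced relation $R$ on the quotient and confirming that it is the right object: the original $R$ on $A$ need not descend to $A/_{\approx}$, since two $\approx$-equivalent elements may have different $R$-values to a third element. Once the quotient is given the inclusion-type relation above, however, every graded-frame axiom and every system condition follows either by transport along $ext_g$ or by a one-line computation from Proposition \ref{gt10} and Definition \ref{gftsy}.
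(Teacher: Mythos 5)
Your proof is correct, but it takes a genuinely different route from the paper's. The paper proves Theorem \ref{agf} by direct verification: it defines the same quotient relation $R([a],[b])=inf_x\{gr(x\models'[a])\rightarrow gr(x\models'[b])\}$ and then checks all nine axioms of Definition \ref{gf} and the three conditions of Definition \ref{gftsy} one by one, unwinding everything to G\"{o}del-arrow computations on the values $gr(x\models a)$; in particular its verification of condition 1 is exactly the case analysis of Proposition \ref{gt10}, repeated inline. You instead transport structure along the map $[a]\mapsto ext_g(a)$, which is a well-defined bijection of $A/_{\approx}$ onto $ext_g(A)$ precisely because $ext_g(a)=ext_g(b)$ iff $a\approx b$, and which intertwines the lifted $\wedge$, $\bigvee$ with $\cap$, $\bigcup$ and your $R$ with the graded inclusion $\subseteq$; then Theorem \ref{gftsyt} together with the paper's own Example following Proposition \ref{r2} (i.e.\ Propositions \ref{gt1}--\ref{gt9}) supplies the graded-frame axioms, and Proposition \ref{gt10} supplies condition 1 of Definition \ref{gftsy}. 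Since Theorem \ref{gftsyt} and Propositions \ref{gt1}--\ref{gt10} all precede Theorem \ref{agf} in the text, there is no circularity. Your approach buys economy and reuse -- nothing is recomputed, and it makes explicit the conceptual point that the quotient system is, up to isomorphism, the extent system $(X,\in,ext_g(A),\subseteq)$ of Theorem \ref{gfttsy}; the paper's computation buys self-containedness and avoids appealing to a transport-of-structure principle. Two small points if you write yours up: state explicitly that the bijection also matches the distinguished tops, $[\top]\mapsto ext_g(\top)=\tilde{X}$, which is what axiom 5 and the empty-meet case of condition 2 need; and your closing observation is exactly right and worth keeping -- the original $R$ on $A$ need not descend to $A/_{\approx}$ (two $\approx$-equivalent elements can bear different $R$-values to a third), which is why both you and the paper define the quotient relation afresh via the G\"{o}del arrow rather than inheriting it.
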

\begin{proof}
$X$ is a nonempty set.

Let us show that $(A/_{\approx},R)$ be a graded frame where $R$ is defined as\\ $R([a],[b])=inf_x\{(gr(x\models '[a])\rightarrow gr(x\models ' [b]))\}$, and $\rightarrow$ is the G$\ddot{o}$del arrow.
\begin{enumerate}
\item $(gr(x\models '[a])\rightarrow gr(x\models ' [a])) = 1$
$\text{iff}\ R([a],[a]) =1$.
\item Let $R([a],[b])=1=R([b],[a])$ is given.
Now $ R([a],[b])=1$ if and only if $inf_{x}\{(gr(x\models '[a])\rightarrow gr(x\models '[b]))\}=1$. Hence $gr(x\models '[a])\rightarrow gr(x\models '[b])=1 \ \text{for any}\ x$. Using Definition \ref{6.5} $gr(x\models a)\rightarrow gr(x\models b)=1\ \text{for any}\ x$. So by Property \ref{pga}(7) $gr(x\models a)\leq gr(x\models b)\ \text{for any}\ x$. \\
Similarly $R([b],[a])=1$ if and only if $gr(x\models b)\leq gr(x\models a)\ \text{for any}\ x$.

Hence $gr(x\models a)= gr(x\models b)\ \text{for any}\ x \ \text{iff}\ a\approx b
\ \text{iff}\ [a]=[b]$.
\item $R([a],[b])\wedge R([b],[c]) \\ 
= inf\{gr(x\models ' [a])\rightarrow gr(x\models ' [b])\}_{x} \wedge inf\{ gr(x\models ' [b])\rightarrow gr(x\models ' [c])\}_{x} \\ 
=inf\{(gr(x\models ' [a])\rightarrow gr(x\models ' [b])) \wedge (gr(x\models ' [b])\rightarrow gr(x\models ' [c]))\}_{x} \\
 \leq inf \{ gr(x\models ' [a])\rightarrow gr(x\models '[c])\}_{x} \\
 = R([a],[c])$.
 \item $R([a]\wedge [b],[a])\\
=inf\{gr(x\models '[a]\wedge [b])\rightarrow gr(x\models '[a])\}_x\\
=inf\{gr(x\models ' [a\wedge b])\rightarrow gr(x\models '[a])\}_x\\
=inf\{gr(x\models  a\wedge b)\rightarrow gr(x\models a)\}_x\\
=inf\{(gr(x\models  a)\wedge gr(x\models b))\rightarrow gr(x\models a)\}_x\\
=1$.\\
Similarly $R([a]\wedge [b],[b])=1$.
\item $R([a],[\top])\\
=inf\{gr(x\models '[a])\rightarrow gr(x\models '[\top])\}_x\\
=inf\{gr(x\models a)\rightarrow gr(x\models \top)\}_x\\
=inf\{gr(x\models a)\rightarrow 1\}_x\\
=1$.
\item $R([a],[b])\wedge R([a],[c])\\
=inf\{gr(x\models '[a])\rightarrow gr(x\models '[b])\}_x\wedge inf\{gr(x\models '[a])\rightarrow gr(x\models '[c])\}_x\\
=inf\{gr(x\models a)\rightarrow gr(x\models b)\}_x\wedge inf\{gr(x\models a)\rightarrow gr(x\models c)\}_x\\
=inf\{(gr(x\models a)\rightarrow gr(x\models b))\wedge (gr(x\models a)\rightarrow gr(x\models c))\}_x\\
=inf\{gr(x\models a)\rightarrow (gr(x\models b)\wedge gr(x\models c))\}\\
=inf\{gr(x\models' [a])\rightarrow (gr(x\models' [b])\wedge gr(x\models' [c]))\}\\
=inf\{gr(x\models' [a])\rightarrow (gr(x\models' [b]\wedge [c]))\}\\
=inf\{gr(x\models' [a])\rightarrow (gr(x\models' [b\wedge c]))\}\\
=R([a],[b\wedge c])\\
=R([a],[b]\wedge [c])$.
\item Let $a\in S$ and $S\subseteq A$.\\
$R([a],\bigvee [S])\\
=R([a],[\bigvee S])\\
=inf\{gr(x\models'[a])\rightarrow gr(x\models'[\bigvee S])\}_x\\
=inf\{gr(x\models a)\rightarrow gr(x\models\bigvee S)\}_x\\
=1$.
\item $inf\{R([a],[b])\}_{a\in S}\\
=inf\{inf\{gr(x\models'[a])\rightarrow gr(x\models'[b])\}_x\}_{a\in S}\\
=inf\{inf\{gr(x\models a)\rightarrow gr(x\models b)\}_x\}_{a\in S}\\
=inf\{inf\{gr(x\models a)\rightarrow gr(x\models b)\}_{a\in S}\}_x\\
=inf\{ sup\{gr(x\models a)\}_{a\in S}\rightarrow gr(x\models b)\}_x\\
=inf\{gr(x\models \bigvee S)\rightarrow gr(x\models b)\}_x\\
=inf\{gr(x\models' [\bigvee S])\rightarrow gr(x\models' [b])\}_x\\
=R([\bigvee S],[b])\\
=R(\bigvee [S],[b])$.
\item $R([a]\wedge\bigvee [S],\bigvee\{[a]\wedge [b]\}_{b\in S})\\
=R([a]\wedge [\bigvee S], \bigvee\{[a\wedge b]\}_{b\in S})\\
=R([a]\wedge\bigvee S],[\bigvee\{a\wedge b\}_{b\in S}])\\
=inf\{gr(x\models'[a\wedge\bigvee S])\rightarrow gr(x\models'[\bigvee\{a\wedge b\}_{b\in S}])\}_x\\
=inf\{gr(x\models a\wedge\bigvee S)\rightarrow gr(x\models\bigvee\{a\wedge b\}_{b\in S})\}_x\\
=inf\{gr(x\models a\wedge\bigvee S)\rightarrow  sup\{gr(x\models a\wedge b\}_{b\in S})\}_x\\
=inf\{gr(x\models a\wedge\bigvee S)\rightarrow  sup\{gr(x\models a)\wedge gr(x\models b)\}_{b\in S})\}_x\\
=inf\{gr(x\models a\wedge\bigvee S)\rightarrow gr(x\models a)\wedge  sup\{gr (x\models b)\}_{b\in S})\}_x\\
=inf\{gr(x\models a\wedge\bigvee S)\rightarrow gr(x\models a)\wedge gr(x\models\bigvee S)\}_x\\
=inf\{gr(x\models a\wedge\bigvee S)\rightarrow gr(x\models a\wedge \bigvee S)\}_x\\
=1$.
\end{enumerate}
It is now to show that the three conditions of Definition \ref{gftsy} hold.
\begin{enumerate}
\item $gr(x\models'[a])\wedge R([a],[b])
=gr(x\models'[a])\wedge inf_x\{ gr(x\models'[a])\rightarrow gr(x\models ' [b])\}$ $
=inf_x\{gr(x\models'[a])\wedge(gr(x\models'[a])\rightarrow gr(x\models'[b]))\}$.\\
Now if $gr(x\models'[a])\leq gr(x\models'[b])$, then $gr(x\models'[a])\rightarrow gr(x\models'[b])=1$.\\
Hence in this case\\
$gr(x\models'[a])\wedge (gr(x\models'[a])\rightarrow gr(x\models'[b]))\\
=gr(x\models'[a])\wedge 1\\
=gr(x\models'[a])\leq gr(x\models' [b])$.\\
Now when $gr(x\models'[a])> gr(x\models'[b])$, \\we get $gr(x\models'[a])\rightarrow gr(x\models'[b])=gr(x\models'[b])$.\\
Hence in this case\\
$gr(x\models'[a])\wedge (gr(x\models'[a])\rightarrow gr(x\models'[b]))\\
=gr(x\models'[a])\wedge gr(x\models'[b])\\
=gr(x\models' [b])$.\\
So for any $x$, $gr(x\models'[a])\wedge (gr(x\models'[a])\rightarrow gr(x\models'[b]))\leq gr(x\models'[b])$.
\\ Therefore $gr(x\models'[a])\wedge R([a],[b])\leq gr(x\models'[b])$,\\
as $inf_x\{gr(x\models'[a])\wedge (gr(x\models'[a])\rightarrow gr(x\models'[b]))\}\leq gr(x\models'[a])\wedge (gr(x\models'[a])\rightarrow gr(x\models'[b]))$.
\item $gr(x\models'\bigwedge [S])
=gr(x\models' [\bigwedge S])
=gr(x\models \bigwedge S)
=inf_{a\in S}\{gr(x\models a)\}
=inf_{a\in S}\{gr(x\models'[a])\}$.
\item $gr(x\models'\bigvee [S])
=gr(x\models' [\bigvee S])
=gr(x\models \bigvee S)
=sup_{a\in S}\{gr(x\models a)\}
=sup_{a\in S}\{gr(x\models'[a])\}$.
\end{enumerate}
This finishes the proof.
\end{proof}
Proposition \ref{propc} indicates that the graded fuzzy topological system $(X,\models ',A/_{\approx},R)$ is spatial.
\section{Interrelations}
\subsection{Fuzzy Geometric Logic with Graded Consequence to Graded Fuzzy Topological System}
Let us consider the quadruple $(X,\models,A,R)$ where $X$ is a non empty set of assignments $s$, $A$ is the set of all geometric formulae, $\models$ defined as $gr(s\models \phi)=gr(s\ \text{sat}\ \phi)$ and $R(\phi,\psi)=gr(\phi\vdash \psi)=inf_s\{gr(s\ \text{sat}\ \phi\vdash\psi)\}$.
\begin{theorem}\label{Gfts}
(i) $gr(s\models\phi)\wedge R(\phi,\psi)\leq gr(s\models \psi)$. 
\\(ii) $gr(s\models \phi\wedge\psi)=gr(s\models \phi)\wedge gr(s\models\psi)$.
\\(iii) $gr(s\models\bigvee\{\phi_i\}_{i\in I})= sup_{i\in I}\{gr(x\models \phi_i)\}$.
\end{theorem}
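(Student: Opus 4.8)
The plan is to verify the three identities by unfolding the definitions established earlier, using the key fact that the satisfiability relation is defined by $gr(s\models\phi)=gr(s\ \text{sat}\ \phi)$. Each part reduces to a statement about the graded satisfiability function from Definition~\ref{sat}, so the proof will essentially be a chain of definitional rewrites together with two previously proved facts: Theorem~\ref{5.1g}(2) (the graded cut property) and Property~\ref{pga}(8) ($a\wedge(a\rightarrow b)\leq b$).

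First I would prove (ii) and (iii), as these are immediate. For (ii), I rewrite $gr(s\models\phi\wedge\psi)=gr(s\ \text{sat}\ \phi\wedge\psi)$ and invoke the clause $gr(s\ \text{sat}\ \phi_1\wedge\phi_2)=gr(s\ \text{sat}\ \phi_1)\wedge gr(s\ \text{sat}\ \phi_2)$ from Definition~\ref{sat}, then translate back through $\models$; this is exactly the argument already given in Theorem~\ref{fts}(i), so I would simply mirror it. For (iii), the same strategy applies using the clause $gr(s\ \text{sat}\ \bigvee\{\phi_i\}_{i\in I})=sup\{gr(s\ \text{sat}\ \phi_i)\mid i\in I\}$, reproducing Theorem~\ref{fts}(ii). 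Neither part requires the graded consequence machinery.

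The substantive part is (i), which is where the graded relation $R(\phi,\psi)=gr(\phi\vdash\psi)=inf_s\{gr(s\ \text{sat}\ \phi\vdash\psi)\}$ enters. The plan is to start from the definition $R(\phi,\psi)=inf_{s'}\{gr(s'\ \text{sat}\ \phi)\rightarrow gr(s'\ \text{sat}\ \psi)\}$, and observe that taking the infimum over all sequences $s'$ gives, for the particular sequence $s$, the bound $R(\phi,\psi)\leq gr(s\ \text{sat}\ \phi)\rightarrow gr(s\ \text{sat}\ \psi)$. Then I would compute
\begin{align*}
gr(s\models\phi)\wedge R(\phi,\psi)
&\leq gr(s\ \text{sat}\ \phi)\wedge\bigl(gr(s\ \text{sat}\ \phi)\rightarrow gr(s\ \text{sat}\ \psi)\bigr)\\
&\leq gr(s\ \text{sat}\ \psi)\\
&= gr(s\models\psi),
\end{align*}
where the second inequality is precisely Property~\ref{pga}(8) for the G\"odel arrow. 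The translation between $\models$ and $\text{sat}$ at the first and last lines is definitional.

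The main obstacle, such as it is, lies in handling the infimum correctly in part (i): one must be careful that the inequality $R(\phi,\psi)\leq gr(s\ \text{sat}\ \phi)\rightarrow gr(s\ \text{sat}\ \psi)$ holds for the \emph{fixed} sequence $s$ appearing on the left-hand side, which follows because an infimum over all sequences is dominated by the value at any single sequence. Once this monotonicity of the infimum is noted, the rest is a direct application of the residuation-style inequality in Property~\ref{pga}(8), and no genuinely new argument is needed. The overall difficulty is therefore low; the proof is a careful bookkeeping of definitions plus one invocation each of the G\"odel-arrow property and the satisfiability clauses.
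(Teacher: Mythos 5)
Your proof is correct and follows essentially the same route as the paper: parts (ii) and (iii) are the definitional rewrites already carried out in Theorem~\ref{fts}, and part (i) is the paper's argument in mildly rearranged form --- the paper pushes $gr(s\ \text{sat}\ \phi)$ inside the infimum and then drops to the term at the fixed $s$, whereas you first bound $R(\phi,\psi)\leq gr(s\ \text{sat}\ \phi)\rightarrow gr(s\ \text{sat}\ \psi)$ and then apply Property~\ref{pga}(8), which is the same two ingredients in the opposite order. One trivial remark: your opening plan cites Theorem~\ref{5.1g}(2), but your actual computation never uses it, and indeed it is not needed.
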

\begin{proof}
(i) For any $s$, $gr(s\models\phi)\wedge R(\phi,\psi)=gr(s\ \text{sat}\ \phi)\wedge gr(\phi\vdash\psi)=gr(s\ \text{sat}\ \phi)\wedge inf_s\{ gr(s\ \text{sat}\ \phi)\rightarrow gr(s\ \text{sat}\ \psi)\}=inf_s\{gr(s\ \text{sat}\ \phi)\wedge ( gr(s\ \text{sat}\ \phi)\\ \rightarrow gr(s\ \text{sat}\ \psi))\}\leq gr(s\ \text{sat}\ \phi)\wedge ( gr(s\ \text{sat}\ \phi)\rightarrow gr(s\ \text{sat}\ \psi))
\leq gr(s\ \text{sat}\ \psi)= gr(s\models\psi)$.
\\(ii) $gr(s\models \phi\wedge\psi)=gr(s\ \text{sat}\ \phi\wedge\psi)=gr(s\ \text{sat}\ \phi)\wedge gr(s\ \text{sat}\ \psi)=gr(s\models \phi)\wedge gr(s\models \psi)$.
\\(iii) $gr(s\models\bigvee\{\phi_i\}_{i\in I})=gr(s\ \text{sat}\ \bigvee\{\phi_i\}_{i\in I})= sup_{i\in I}\{gr(s\ \text{sat}\ \phi_i)\}= \\sup_{i\in I}\{gr(s\models\phi_i)\}$.
\end{proof} 
It is to note that $(A,R)$ is not a graded frame since $R$ does not satisfy antisymmetry viz. condition 2, Definition \ref{gf} and hence $(X,\models,A,R)$ is not a graded fuzzy topological system. Let us construct a graded fuzzy topological system using $(X,\models,A,R)$ in the following way.
\begin{definition}\label{Gequiv}
$\phi\approx\psi$ iff $gr(s\models \phi)=gr(s\models \psi)$ for any $s\in X$ and $\phi,\ \psi\in A$.
\end{definition}
It can be shown that the above defined ``$\approx$" is an equivalence relation. Thus we get $A/_{\approx}$.
\begin{theorem}\label{Gftopsys}
$(X,\models ',A/_{\approx},R)$ is a graded fuzzy topological system, where $\models '$ is defined by $gr(s\models '[\phi])=gr(s\models \phi)$ and $R([\phi],[\psi])=inf_s\{gr(s\models'[\phi])\rightarrow gr(s\models'[\psi])\}$.
\end{theorem}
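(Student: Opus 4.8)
The plan is to reduce the statement to two verifications: first, that $(A/_{\approx},R)$ is a graded frame in the sense of Definition~\ref{gf}; second, that $\models'$ together with this graded frame satisfies the three defining clauses of Definition~\ref{gftsy}. This parallels the argument already carried out in Theorem~\ref{agf}, and the essential new feature is that the pre-quotient pair $(A,R)$ is \emph{not} itself a graded frame: as noted in the remark preceding the theorem, $R$ fails antisymmetry (condition~2 of Definition~\ref{gf}), and passing to $A/_{\approx}$ is precisely what repairs this defect.

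Before checking any axiom I would establish the well-definedness that legitimises the whole construction. Using Theorem~\ref{Gfts}(ii),(iii) one sees that $\approx$ (Definition~\ref{Gequiv}) is a congruence for $\wedge$ and $\bigvee$: if $\phi\approx\phi'$ and $\psi\approx\psi'$ then $gr(s\models\phi\wedge\psi)=gr(s\models\phi)\wedge gr(s\models\psi)=gr(s\models\phi'\wedge\psi')$ for every $s$, and likewise $gr(s\models\bigvee\{\phi_i\})=sup_i\,gr(s\models\phi_i)$ depends only on the classes $[\phi_i]$. Hence the lifted operations $[\phi]\wedge[\psi]=[\phi\wedge\psi]$ and $\bigvee\{[\phi_i]\}=[\bigvee\{\phi_i\}]$ are independent of representatives. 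Since $gr(s\models'[\phi])=gr(s\models\phi)$ depends only on $[\phi]$, both $\models'$ and $R([\phi],[\psi])=inf_s\{gr(s\models'[\phi])\rightarrow gr(s\models'[\psi])\}$ are well-defined.

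Next I would verify the nine graded-frame conditions for $(A/_{\approx},R)$. Because $R([\phi],[\psi])=inf_s\{gr(s\models\phi)\rightarrow gr(s\models\psi)\}=gr(\phi\vdash\psi)$, these conditions coincide with the graded-sequent properties already proved in Theorem~\ref{5.1g}: conditions~1 and~3 are Theorem~\ref{5.1g}(1),(2); conditions~4 and~5 are Theorem~\ref{5.1g}(3)(i)--(iii); condition~6 is Theorem~\ref{5.1g}(3)(iv); condition~7 is Theorem~\ref{5.1g}(4)(i); and condition~9 is Theorem~\ref{5.1g}(5). Condition~8 requires the equality $inf\{R([\phi],[\psi])\mid[\phi]\in S\}=R(\bigvee S,[\psi])$, which comes from Property~\ref{pga}(6) (Theorem~\ref{5.1g}(4)(ii) yields only one inequality). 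The remaining clause, condition~2 (antisymmetry), is the genuinely new point: $R([\phi],[\psi])=1=R([\psi],[\phi])$ forces $gr(s\models\phi)\le gr(s\models\psi)$ and the reverse for every $s$ (by Property~\ref{pga}(7)), hence $\phi\approx\psi$ and $[\phi]=[\psi]$. This is exactly the step unavailable for $(A,R)$ that the quotient supplies.

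Finally I would check the three conditions of Definition~\ref{gftsy}. Condition~1, $gr(x\models'[\phi])\wedge R([\phi],[\psi])\le gr(x\models'[\psi])$, follows by translating to $gr(x\models\phi)$ and applying the pointwise inequality $a\wedge(a\rightarrow b)\le b$ of Property~\ref{pga}(8) under the infimum (this is essentially Theorem~\ref{Gfts}(i)). Conditions~2 and~3, that $\models'$ carries finite meets to infima and arbitrary joins to suprema, descend directly from Theorem~\ref{Gfts}(ii),(iii) through the lifted operations. The main obstacle is not any single sharp inequality but the bookkeeping of ensuring that the operations, the relation $R$, and the relation $\models'$ are all simultaneously well-defined on $\approx$-classes before the axioms are tested; once the congruence property and the pointwise G\"odel-arrow lemmas of Subsection~\ref{pga} are in hand, each of the twelve clauses is a routine one-line computation of the kind already displayed in Theorem~\ref{agf}.
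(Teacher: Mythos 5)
Your proposal is correct and follows essentially the same route as the paper's own proof: rewrite $R([\phi],[\psi])=\inf_s\{gr(s\ \text{sat}\ \phi)\rightarrow gr(s\ \text{sat}\ \psi)\}=gr(\phi\vdash\psi)$, verify the nine graded-frame axioms via Theorem~\ref{5.1g} (with the quotient by $\approx$ supplying exactly the antisymmetry condition that fails for $(A,R)$), and obtain the three conditions of Definition~\ref{gftsy} from Theorem~\ref{Gfts}. Your explicit congruence check for the lifted operations, and your observation that condition~8 of Definition~\ref{gf} requires the full equality coming from Property~\ref{pga}(6) whereas Theorem~\ref{5.1g}(4)(ii) as stated records only an inequality (a point the paper's own displayed chain also glosses over), are minor but genuine improvements in rigor on the paper's write-up.
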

\begin{proof}
$X$ is a non empty set of assignments $s$.\\
Using the fact
$R([\phi ],[\psi ]) = inf_s\{ gr(s\models' [\phi ])\rightarrow gr(s\models' [\psi ])\}=\\ inf_s\{ gr(s\models \phi )\rightarrow gr(s\models \psi )\}=inf_s\{ gr(s\ \text{sat}\ \phi )\rightarrow gr(s\ \text{sat}\ \psi )\}=inf_s\{ gr(s\ \text{sat}\ \phi\vdash\psi )\}=gr(\phi\vdash\psi )$, it can be shown that $(A/_{\approx},R)$ is a graded frame in the following way.
\begin{enumerate}
\item For fuzzy geometric logic with graded consequence $gr(\phi\vdash\phi)=1$, so $R([\phi],[\phi])=1$.
\item $R([\phi],[\psi])=1=R([\psi],[\phi])$, i.e., $gr(\phi\vdash\psi)=1=gr(\psi\vdash\phi)$.\\
Hence $inf\{gr(s\ \text{sat}\ \phi)\rightarrow gr(s\ \text{sat}\ \psi)\}_s=1=inf\{gr(s\ \text{sat}\ \psi)\rightarrow gr(s\ \text{sat}\ \phi)\}_s$ and so
$gr(s\ \text{sat}\ \phi)\rightarrow gr(s\ \text{sat}\ \psi)=1=gr(s\ \text{sat}\ \psi)\rightarrow gr(s\ \text{sat}\ \phi)$, for any $s$.\\
Hence $gr(s\ \text{sat}\ \phi)\leq gr(s\ \text{sat}\ \psi) $ and $gr(s\ \text{sat}\ \psi)\leq gr(s\ \text{sat}\ \phi)$, for any $s$.\\
So, $gr(s\ \text{sat}\ \phi)= gr(s\ \text{sat}\ \psi)$, for any $s$ and consequently $\phi\approx \psi$.\\
Therefore $[\phi]=[\psi]$.
\item We have $gr(\phi\vdash\psi)\wedge gr(\psi\vdash \chi)\leq gr(\phi\vdash\chi)$.\\
Hence $R([\phi],[\psi])\wedge R([\psi],[\chi])\leq R([\phi],[\chi])$.
\item $R([\phi]\wedge [\psi],[\phi])=R([\phi\wedge\psi],[\phi])=gr(\phi\wedge\psi\vdash\phi)=1$.\\
Similarly $R([\phi]\wedge[\psi],[\psi])=1$.
\item $R([\phi],[\top])=gr(\phi\vdash\top)=1$.
\item $R([\phi],[\psi])\wedge R([\phi],[\chi])\\
=gr(\phi\vdash\psi)\wedge gr(\phi\vdash\chi)\\
=gr(\phi\vdash\psi\wedge\chi)\\
=R([\phi],[\psi\wedge\chi])\\
=R([\phi],[\psi]\wedge [\chi])$.
\item Let $\phi\in S$, then
$R([\phi],\bigvee [S])=R([\phi],[\bigvee S])=gr(\phi\vdash\bigvee S)=1$.
\item $inf\{R([\phi],[\psi])\}_{\phi\in S}\\
=inf\{gr(\phi\vdash\psi)\}_{\phi\in S}\leq gr(\bigvee S\vdash\psi)\\
=R([\bigvee S],[\psi])\\
=R(\bigvee[S],[\psi])$.
\item $R([\phi]\wedge\bigvee[S],\bigvee\{[\phi]\wedge[\psi]\mid \psi\in S\})\\
=R([\phi]\wedge[\bigvee S],\bigvee\{[\phi\wedge\psi]\mid\psi\in S\})\\
=R([\phi\wedge\bigvee S],[\bigvee\{\phi\wedge\psi\mid\psi\in S\}])\\
=gr(\phi\wedge\bigvee S\vdash \bigvee\{\phi\wedge\psi\mid\psi\in S\})\\
=1$.
\end{enumerate}
Hence $A/_{\approx}$ is a graded frame.\\
Now it is left to show that  (a) $gr(s\models' [\phi])\wedge R([\phi],[\psi])\leq gr(s\models' [\psi])$, (b) $gr(s\models ' [\phi]\wedge [\psi])=gr(s\models '[\phi])\wedge gr(s\models ' [\psi])$ and (c) $gr(s\models '\bigvee\{[\phi_i]\}_{i\in I})= sup_{i\in I}\{gr(s\models '[\phi_i])\}$.
\\Proof of the above statement follows easily using Theorem \ref{Gfts}.\\
Hence $(X,\models ',A/_{\approx},R)$ is a graded fuzzy topological system.
\end{proof}
\begin{proposition}\label{Gspec}
If $(X,\models ',A/_{\approx},R)$ is a graded fuzzy topological system defined as above then for all $s\in X$, $(gr(s\models '[\phi])=gr(s\models '[\psi]))\Rightarrow ([\phi]=[\psi])$.
\end{proposition}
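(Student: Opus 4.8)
The plan is to unwind the two definitions that tie the system $(X,\models',A/_{\approx},R)$ back to the original data, exactly as was done for the ungraded analogue in Proposition \ref{spec} and for the abstract graded system in Proposition \ref{propc}. The whole content of the statement is that the passage to the quotient $A/_{\approx}$ has already quotiented out precisely the pairs of formulae that the satisfaction grades fail to distinguish, so that on the quotient distinct classes are genuinely separated by $X$.

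First I would take as hypothesis that $gr(s\models'[\phi])=gr(s\models'[\psi])$ holds for every $s\in X$. Recalling Definition \ref{6.5}, the relation $\models'$ was introduced by $gr(s\models'[\phi])=gr(s\models\phi)$, so this hypothesis translates, for every $s\in X$, into the equality $gr(s\models\phi)=gr(s\models\psi)$ of the underlying grades. No choice of representative is involved here, since the lifted relation $\models'$ is well defined precisely because it respects $\approx$.

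Next I would invoke Definition \ref{Gequiv}, according to which $\phi\approx\psi$ means exactly that $gr(s\models\phi)=gr(s\models\psi)$ for all $s\in X$. Thus the displayed equality of grades is literally the assertion $\phi\approx\psi$, and hence $[\phi]=[\psi]$ by definition of the equivalence classes. This closes the argument.

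There is no serious obstacle: the proposition is immediate once one observes that the definition of $\models'$ on $A/_{\approx}$ and the definition of the equivalence $\approx$ are two sides of the same condition. The only point warranting a line of care is the direction of the reasoning — the hypothesis gives equality of grades for \emph{all} $s$, which is exactly the (universally quantified) clause required by $\approx$, so no strengthening or additional use of the graded frame structure $R$ is needed. The statement simply records that the constructed system $(X,\models',A/_{\approx},R)$ is spatial in the sense of Definition \ref{gspatial}.
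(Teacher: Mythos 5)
Your proof is correct and follows exactly the paper's own argument: translate the hypothesis through the definition of $\models'$ (Definition \ref{Gequiv} context, $gr(s\models'[\phi])=gr(s\models\phi)$) into $gr(s\models\phi)=gr(s\models\psi)$ for all $s$, recognize this as the definition of $\phi\approx\psi$, and conclude $[\phi]=[\psi]$. Your additional remarks that well-definedness of $\models'$ and the graded frame structure $R$ play no role here are accurate and match the paper's treatment.
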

\begin{proof}
$gr(s\models '[\phi])=gr(s\models '[\psi])$, for any $s$
if and only if $gr(s\models \phi)=gr(s\models \psi)$, for any $s$.
Thereby $\phi\approx\psi$. So, $[\phi]=[\psi]$.
\end{proof}
\subsection{Fuzzy Topological Space with Graded Inclusion and Graded Fuzzy Topological System}
Using Theorem \ref{gftsyt} a fuzzy topological space with graded inclusion,\\ $(X,ext_g(A/_{\approx}),\subseteq)$, can be obtained from the graded fuzzy topological system $(X,\models',A/_{\approx},R)$.

Using Theorem \ref{gfttsy} it can be shown that if $(X,ext_g(A/_{\approx}),\subseteq)$ is a fuzzy topological space with graded inclusion then, $(X,\in ,ext_g(A/_{\approx}),\subseteq)$ forms a graded fuzzy topological system.

\subsection{Fuzzy Topological Space with Graded Inclusion to Fuzzy Geometric Logic with Graded Consequence}
Let $(X,\tau,\subseteq)$ be a fuzzy topological space with graded inclusion. For each $\tilde{T_i}\in \tau$ let $P_{\tilde{T_i}}$ be a propositional variable and let the following axioms for sequents $P_{\tilde{T_i}}\vdash P_{\tilde{T_j}}$ be assumed.
\begin{enumerate}
\item $gr(P_{\tilde{T_i}}\vdash P_{\tilde{T_j}})=gr(\tilde{T_i}\subseteq \tilde{T_j})$;
\item $gr(P_{\tilde{T_i}}\wedge P_{\tilde{T_j}}\vdash P_{\tilde{T_i}\cap\tilde{T_j}})=1=gr(P_{\tilde{T_i}\cap\tilde{T_j}}\vdash P_{\tilde{T_i}}\wedge P_{\tilde{T_j}})$, for any $\tilde{T_i},\tilde{T_j}\in \tau$;
\item $gr(P_{\bigcup_i\tilde{T_i}}\vdash \bigvee \{P_{\tilde{T_i}}\}_i)=1=gr(\bigvee \{P_{\tilde{T_i}}\}_i\vdash P_{\bigcup_i\tilde{T_i}})$, for any $\tilde{T_i}$'s $\in \tau$.
\end{enumerate}
We adopt binary meet ($\wedge$) and arbitrary join ($\bigvee$) to form other formulae.
\begin{definition}\label{equivsubst}
Let for wffs $\alpha$, $\beta$ the relation $\equiv$ be defined by $\alpha\equiv\beta$ if and only if $gr(\alpha\vdash\beta)=1=gr(\beta\vdash\alpha)$.
\end{definition}
From (2) and (3) it is clear that for each formula $\alpha$ there exists a propositional variable $P_{\tilde{T_{\alpha}}}$ such that $\alpha\equiv P_{\tilde{T_{\alpha}}}$.

We now extend the definition of assignment of grades to an arbitrary sequent by $gr(\alpha\vdash\beta)=gr(P_{\tilde{T_{\alpha}}}\vdash P_{\tilde{T_{\beta}}})=gr(\tilde{T_{\alpha}}\subseteq \tilde{T_{\beta}})$. Because of the above remark this extension is justified.

The following lemmas may be established.
\begin{lemma}\label{l1}
$gr(P_{\tilde{T_i}}\vdash P_{\tilde{T_i}})=1$.
\end{lemma}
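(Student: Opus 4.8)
The plan is to show $gr(P_{\tilde{T_i}}\vdash P_{\tilde{T_i}})=1$ by unwinding the definition of the grade assignment to this particular sequent and then invoking the corresponding reflexivity property already established for the graded inclusion relation. First I would observe that by Axiom (1) of the current construction, the grade of a sequent between propositional variables is defined directly in terms of the graded inclusion: namely $gr(P_{\tilde{T_i}}\vdash P_{\tilde{T_j}})=gr(\tilde{T_i}\subseteq\tilde{T_j})$. Specializing to the case $j=i$ gives $gr(P_{\tilde{T_i}}\vdash P_{\tilde{T_i}})=gr(\tilde{T_i}\subseteq\tilde{T_i})$.

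The remaining step is to evaluate $gr(\tilde{T_i}\subseteq\tilde{T_i})$, and here I would simply cite Proposition \ref{gt1}, which states precisely that $gr(\tilde{T}\subseteq\tilde{T})=1$ for any fuzzy subset $\tilde{T}$. Applying this with $\tilde{T}=\tilde{T_i}$ yields $gr(\tilde{T_i}\subseteq\tilde{T_i})=1$. Chaining the two equalities then gives $gr(P_{\tilde{T_i}}\vdash P_{\tilde{T_i}})=1$, which is the claim. For completeness one could also recall that Proposition \ref{gt1} itself rests on Property \ref{pga}(1), the reflexivity of the G\"odel arrow, so that the underlying computation is $gr(\tilde{T_i}\subseteq\tilde{T_i})=inf_x\{\tilde{T_i}(x)\rightarrow\tilde{T_i}(x)\}=inf_x\{1\}=1$.

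There is essentially no obstacle here: the lemma is a direct transcription of the reflexivity of graded inclusion into the logical/syntactic setting via the defining axiom for sequent grades. The only thing to be careful about is making sure the appeal to Axiom (1) is to the correct instance (the diagonal case $\tilde{T_i}\subseteq\tilde{T_i}$) and that $\tilde{T_i}\in\tau$ so that the propositional variable $P_{\tilde{T_i}}$ is indeed one of the generators to which the axiom applies. Since the lemma is stated for $\tilde{T_i}\in\tau$, this is automatic. Thus the proof is a two-line argument, and I would write it as such rather than expanding the infimum computation unless the referee wants full detail.

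\begin{proof}
By the defining axiom for the grade of a sequent between propositional variables, $gr(P_{\tilde{T_i}}\vdash P_{\tilde{T_i}})=gr(\tilde{T_i}\subseteq\tilde{T_i})$. By Proposition \ref{gt1} we have $gr(\tilde{T_i}\subseteq\tilde{T_i})=1$. Hence $gr(P_{\tilde{T_i}}\vdash P_{\tilde{T_i}})=1$.
\end{proof}
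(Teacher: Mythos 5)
Your proof is correct and is essentially identical to the paper's own argument: the paper also unwinds the defining axiom to get $gr(P_{\tilde{T_i}}\vdash P_{\tilde{T_i}})=gr(\tilde{T_i}\subseteq\tilde{T_i})$ and then cites Proposition \ref{gt1}. Your extra remark tracing Proposition \ref{gt1} back to Property \ref{pga}(1) is accurate but not needed.
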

\begin{proof}
$gr(P_{\tilde{T_i}}\vdash P_{\tilde{T_i}})=gr(\tilde{T_i}\subseteq \tilde{T_i})=1$, using Proposition \ref{gt1}.
\end{proof}
\begin{lemma}\label{l3}
$gr(P_{\tilde{T_i}}\vdash P_{\tilde{T_j}})\wedge gr(P_{\tilde{T_j}}\vdash P_{\tilde{T_k}})\leq gr(P_{\tilde{T_i}}\vdash P_{\tilde{T_k}})$.
\end{lemma}
\begin{proof}
$gr(P_{\tilde{T_i}}\vdash P_{\tilde{T_j}})\wedge gr(P_{\tilde{T_j}}\vdash P_{\tilde{T_k}})=gr(\tilde{T_i}\subseteq \tilde{T_j})\wedge gr(\tilde{T_j}\subseteq \tilde{T_k})\leq gr(\tilde{T_i}\subseteq \tilde{T_k})=gr(P_{\tilde{T_i}}\vdash P_{\tilde{T_k}}) $, using Proposition \ref{gt3}.
\end{proof}
\begin{lemma}\label{l6}
$gr(P_{\tilde{T_i}}\vdash\tilde{X})=1$.
\end{lemma}
\begin{proof}
$gr(P_{\tilde{T_i}}\vdash\tilde{X})=gr(\tilde{T_i}\subseteq \tilde{X})=1$, using Proposition \ref{gt5}.
\end{proof}
\begin{lemma}\label{l7}
$gr(P_{\tilde{T_i}}\wedge P_{\tilde{T_j}}\vdash P_{\tilde{T_i}})=1$.
\end{lemma}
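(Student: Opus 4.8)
The plan is to mirror the strategy used in Lemmas \ref{l1}, \ref{l3} and \ref{l6}: reduce the graded sequent to a graded inclusion in the underlying fuzzy topology $(X,\tau,\subseteq)$ and then invoke the appropriate inclusion property from Propositions \ref{gt1}--\ref{gt10}. The formula on the left, $P_{\tilde{T_i}}\wedge P_{\tilde{T_j}}$, is a conjunction rather than a bare propositional variable, so the reduction requires one preliminary identification step that the earlier lemmas did not need.

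First I would identify the open set associated with the conjunction. By axiom (2) we have $gr(P_{\tilde{T_i}}\wedge P_{\tilde{T_j}}\vdash P_{\tilde{T_i}\cap\tilde{T_j}})=1=gr(P_{\tilde{T_i}\cap\tilde{T_j}}\vdash P_{\tilde{T_i}}\wedge P_{\tilde{T_j}})$, so by Definition \ref{equivsubst} the formula $P_{\tilde{T_i}}\wedge P_{\tilde{T_j}}$ is $\equiv$-equivalent to the propositional variable $P_{\tilde{T_i}\cap\tilde{T_j}}$. Hence, in the notation of the remark preceding these lemmas, the open set $\tilde{T}_\alpha$ attached to $\alpha=P_{\tilde{T_i}}\wedge P_{\tilde{T_j}}$ is $\tilde{T_i}\cap\tilde{T_j}$, while the open set attached to $P_{\tilde{T_i}}$ is $\tilde{T_i}$ itself.

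Next I would apply the extended grade assignment $gr(\alpha\vdash\beta)=gr(\tilde{T_{\alpha}}\subseteq\tilde{T_{\beta}})$ to obtain
\begin{equation*}
gr(P_{\tilde{T_i}}\wedge P_{\tilde{T_j}}\vdash P_{\tilde{T_i}})=gr(\tilde{T_i}\cap\tilde{T_j}\subseteq\tilde{T_i}),
\end{equation*}
and then conclude by Proposition \ref{gt4}, which gives $gr(\tilde{T_i}\cap\tilde{T_j}\subseteq\tilde{T_i})=1$. Alternatively, bypassing the extended assignment, one could chain axiom (2) with the inclusion fact through the cut property $gr(\phi\vdash\psi)\wedge gr(\psi\vdash\chi)\leq gr(\phi\vdash\chi)$ of Theorem \ref{5.1g}.2: since $gr(P_{\tilde{T_i}}\wedge P_{\tilde{T_j}}\vdash P_{\tilde{T_i}\cap\tilde{T_j}})=1$ and $gr(P_{\tilde{T_i}\cap\tilde{T_j}}\vdash P_{\tilde{T_i}})=gr(\tilde{T_i}\cap\tilde{T_j}\subseteq\tilde{T_i})=1$, the meet $1\wedge 1=1$ is a lower bound for $gr(P_{\tilde{T_i}}\wedge P_{\tilde{T_j}}\vdash P_{\tilde{T_i}})$, forcing it to equal $1$.

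Either route is routine calculation; the only point deserving care is the first step, namely being explicit that $P_{\tilde{T_i}}\wedge P_{\tilde{T_j}}$ and $P_{\tilde{T_i}\cap\tilde{T_j}}$ name the same element of the Lindenbaum-type quotient, so that the inclusion-theoretic Proposition \ref{gt4} may legitimately be brought to bear. I expect this identification, rather than any estimate on grades, to be the substantive part of the argument.
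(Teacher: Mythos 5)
Your first route is exactly the paper's proof: by axiom (2) and Definition \ref{equivsubst} one replaces $P_{\tilde{T_i}}\wedge P_{\tilde{T_j}}$ by $P_{\tilde{T_i}\cap\tilde{T_j}}$, so that $gr(P_{\tilde{T_i}}\wedge P_{\tilde{T_j}}\vdash P_{\tilde{T_i}})=gr(\tilde{T_i}\cap\tilde{T_j}\subseteq\tilde{T_i})=1$ by Proposition \ref{gt4}, and you correctly identify the $\equiv$-identification as the only substantive step. Your alternative cut-based route also works, with the small caveat that the cut property in this axiomatic setting should be sourced from Lemma \ref{l3} or Proposition \ref{gt3} via the extended grade assignment rather than from Theorem \ref{5.1g}, which concerns the semantically defined grades.
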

\begin{proof}
$gr(P_{\tilde{T_i}}\wedge P_{\tilde{T_j}}\vdash P_{\tilde{T_i}}) = gr(P_{\tilde{T_i}\cap\tilde{T_j}}\vdash P_{\tilde{T_i}})
 = gr(\tilde{T_i}\cap\tilde{T_j}\subseteq \tilde{T_i})
 = 1$, using Definition \ref{equivsubst} and Proposition \ref{gt4}.
\end{proof}
\begin{lemma}\label{l8}
$gr(P_{\tilde{T_i}}\wedge P_{\tilde{T_j}}\vdash P_{\tilde{T_j}})=1$.
\end{lemma}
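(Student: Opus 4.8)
The plan is to mirror the proof of \rlemma{l7} verbatim, switching the roles of $\tilde{T_i}$ and $\tilde{T_j}$ at the final step. The statement $gr(P_{\tilde{T_i}}\wedge P_{\tilde{T_j}}\vdash P_{\tilde{T_j}})=1$ is the companion of \rlemma{l7}, and the second clause of \rprop{gt4} is precisely tailored to deliver it.

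First I would rewrite the meet of propositional variables using the axiom scheme (2) together with \rdef{equivsubst}, which gives $P_{\tilde{T_i}}\wedge P_{\tilde{T_j}}\equiv P_{\tilde{T_i}\cap\tilde{T_j}}$ and hence permits replacing $P_{\tilde{T_i}}\wedge P_{\tilde{T_j}}$ by $P_{\tilde{T_i}\cap\tilde{T_j}}$ on the left of the turnstile without altering the grade. Next I would invoke the extended assignment of grades to sequents, namely $gr(P_{\tilde{T_i}\cap\tilde{T_j}}\vdash P_{\tilde{T_j}})=gr(\tilde{T_i}\cap\tilde{T_j}\subseteq\tilde{T_j})$, to pass from the syntactic side to the graded inclusion relation of the underlying fuzzy topology. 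Finally I would apply the equality $gr(\tilde{T_1}\cap\tilde{T_2}\subseteq\tilde{T_2})=1$ from \rprop{gt4} to conclude the value is $1$. Chaining these gives
\[
gr(P_{\tilde{T_i}}\wedge P_{\tilde{T_j}}\vdash P_{\tilde{T_j}})=gr(P_{\tilde{T_i}\cap\tilde{T_j}}\vdash P_{\tilde{T_j}})=gr(\tilde{T_i}\cap\tilde{T_j}\subseteq\tilde{T_j})=1.
\]

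There is no genuine obstacle here: every step is a direct citation of a definition or of \rprop{gt4}, and the only thing to be careful about is selecting the correct (second) half of the two-part identity in \rprop{gt4}, since the first half was already consumed by \rlemma{l7}. The whole argument is a one-line computation and requires no induction or case analysis.
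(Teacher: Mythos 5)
Your proof is correct and is essentially identical to the paper's own argument: the same chain $gr(P_{\tilde{T_i}}\wedge P_{\tilde{T_j}}\vdash P_{\tilde{T_j}})=gr(P_{\tilde{T_i}\cap\tilde{T_j}}\vdash P_{\tilde{T_j}})=gr(\tilde{T_i}\cap\tilde{T_j}\subseteq\tilde{T_j})=1$, citing Definition~\ref{equivsubst} (via axiom (2)) and the second half of Proposition~\ref{gt4}. Nothing is missing.
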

\begin{proof}
$gr(P_{\tilde{T_i}}\wedge P_{\tilde{T_j}}\vdash P_{\tilde{T_j}}) = gr(P_{\tilde{T_i}\cap\tilde{T_j}}\vdash P_{\tilde{T_j}}) = gr(\tilde{T_i}\cap\tilde{T_j}\subseteq \tilde{T_j}) = 1$, using Definition \ref{equivsubst} and Proposition \ref{gt4}.
\end{proof}
\begin{lemma}\label{l9}
$gr(P_{\tilde{T_i}}\vdash P_{\tilde{T_j}})\wedge gr(P_{\tilde{T_i}}\vdash P_{\tilde{T_k}})=gr(P_{\tilde{T_i}}\vdash P_{\tilde{T_j}}\wedge P_{\tilde{T_k}})$.
\end{lemma}
\begin{proof}
$gr(P_{\tilde{T_i}}\vdash P_{\tilde{T_j}})\wedge gr(P_{\tilde{T_i}}\vdash P_{\tilde{T_k}}) = gr(\tilde{T_i}\subseteq \tilde{T_j})\wedge gr(\tilde{T_i}\subseteq \tilde{T_k})= gr(\tilde{T_i}\subseteq \tilde{T_j}\cap \tilde{T_k}) = gr(P_{\tilde{T_i}}\vdash P_{\tilde{T_j}\cap \tilde{T_k}})= gr(P_{\tilde{T_i}}\vdash P_{\tilde{T_j}}\wedge P_{\tilde{T_k}})$, using Proposition \ref{gt6} and Definition \ref{equivsubst}.
\end{proof}
\begin{lemma}\label{l10}
$gr(P_{\tilde{T_i}}\vdash\bigvee \{P_{\tilde{T_i}}\}_i)=1$.
\end{lemma}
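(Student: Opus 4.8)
The plan is to mimic the short computations of Lemmas \ref{l7}--\ref{l9}: rewrite the join formula on the right as an equivalent propositional variable, convert the resulting sequent-grade into a graded inclusion by the defining axiom (1), and then invoke the relevant proposition on graded inclusion. All the ingredients are already available.

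First I would use axiom (3), namely $gr(P_{\bigcup_i\tilde{T_i}}\vdash \bigvee \{P_{\tilde{T_i}}\}_i)=1=gr(\bigvee \{P_{\tilde{T_i}}\}_i\vdash P_{\bigcup_i\tilde{T_i}})$, to conclude that $\bigvee \{P_{\tilde{T_i}}\}_i\equiv P_{\bigcup_i\tilde{T_i}}$ in the sense of Definition \ref{equivsubst}. Consequently the compound formula $\bigvee\{P_{\tilde{T_i}}\}_i$ has $\bigcup_i\tilde{T_i}$ as its associated fuzzy open set, so by the extended assignment of grades to arbitrary sequents,
$$gr(P_{\tilde{T_i}}\vdash\bigvee \{P_{\tilde{T_i}}\}_i)=gr(P_{\tilde{T_i}}\vdash P_{\bigcup_i\tilde{T_i}}).$$
Next I would apply axiom (1) to rewrite the right-hand side as a graded inclusion, $gr(P_{\tilde{T_i}}\vdash P_{\bigcup_i\tilde{T_i}})=gr(\tilde{T_i}\subseteq \bigcup_i\tilde{T_i})$, and then appeal to Proposition \ref{gt7}, which gives $gr(\tilde{T_i}\subseteq\bigcup_i\tilde{T_i})=1$. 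Chaining these equalities yields $gr(P_{\tilde{T_i}}\vdash\bigvee \{P_{\tilde{T_i}}\}_i)=1$, as required.

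If one prefers to avoid the explicit appeal to the equivalence, I could instead combine transitivity (Lemma \ref{l3}, or more generally property (2) of Theorem \ref{5.1g}) with the two facts $gr(P_{\tilde{T_i}}\vdash P_{\bigcup_i\tilde{T_i}})=gr(\tilde{T_i}\subseteq\bigcup_i\tilde{T_i})=1$ (axiom (1) and Proposition \ref{gt7}) and $gr(P_{\bigcup_i\tilde{T_i}}\vdash \bigvee\{P_{\tilde{T_i}}\}_i)=1$ (axiom (3)); transitivity then forces the target grade to be at least $1\wedge 1=1$, hence exactly $1$. There is no genuine obstacle here: the only point requiring a moment of care is the bookkeeping that the open set associated with $\bigvee\{P_{\tilde{T_i}}\}_i$ really is $\bigcup_i\tilde{T_i}$, which is precisely what axiom (3) and the remark following Definition \ref{equivsubst} secure.
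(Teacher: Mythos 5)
Your proposal is correct and follows essentially the same route as the paper's proof: the paper likewise rewrites $\bigvee\{P_{\tilde{T_i}}\}_i$ as $P_{\bigcup_i\tilde{T_i}}$ via Definition \ref{equivsubst} (i.e.\ axiom (3)), converts the sequent grade to $gr(\tilde{T_i}\subseteq\bigcup_i\tilde{T_i})$ by axiom (1), and concludes with Proposition \ref{gt7}. Your alternative transitivity-based argument is a harmless variant, but the main chain of equalities is exactly the paper's argument.
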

\begin{proof}
$gr(P_{\tilde{T_i}}\vdash\bigvee\{ P_{\tilde{T_i}}\}_i) = gr(P_{\tilde{T_i}}\vdash P_{\bigcup_i\tilde{T_i}}) = gr(\tilde{T_i}\subseteq \bigcup_i\tilde{T_i}) = 1$, using Definition \ref{equivsubst} and Proposition \ref{gt7}.
\end{proof}
\begin{lemma}\label{l11}
$inf_i\{gr(P_{\tilde{T_i}}\vdash P_{\tilde{T}})\}\leq gr(\bigvee\{P_{\tilde{T_i}}\}_i\vdash P_{\tilde{T}})$.
\end{lemma}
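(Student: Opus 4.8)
The plan is to follow exactly the template used in Lemmas \ref{l1} through \ref{l10}: translate the assertion about propositional sequents into the corresponding assertion about the fuzzy inclusion relation $\subseteq$, and then invoke the matching property of the fuzzy topology with graded inclusion, which here is Proposition \ref{gt8}. The only structural wrinkle compared with the earlier lemmas is that the left-hand formula $\bigvee\{P_{\tilde{T_i}}\}_i$ is a join of propositional variables rather than a single variable, so the first task is to replace it by a single propositional variable before applying the grade-assignment rule.

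First I would use axiom (3) together with Definition \ref{equivsubst} to note that $\bigvee\{P_{\tilde{T_i}}\}_i \equiv P_{\bigcup_i\tilde{T_i}}$, since $gr(P_{\bigcup_i\tilde{T_i}}\vdash \bigvee\{P_{\tilde{T_i}}\}_i)=1=gr(\bigvee\{P_{\tilde{T_i}}\}_i\vdash P_{\bigcup_i\tilde{T_i}})$. Consequently the extended grade assignment gives $gr(\bigvee\{P_{\tilde{T_i}}\}_i\vdash P_{\tilde{T}})=gr(P_{\bigcup_i\tilde{T_i}}\vdash P_{\tilde{T}})=gr(\bigcup_i\tilde{T_i}\subseteq\tilde{T})$. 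On the other side, for each $i$ the extension of the grade assignment yields $gr(P_{\tilde{T_i}}\vdash P_{\tilde{T}})=gr(\tilde{T_i}\subseteq\tilde{T})$, so that $inf_i\{gr(P_{\tilde{T_i}}\vdash P_{\tilde{T}})\}=inf_i\{gr(\tilde{T_i}\subseteq\tilde{T})\}$.

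It then remains only to compare $inf_i\{gr(\tilde{T_i}\subseteq\tilde{T})\}$ with $gr(\bigcup_i\tilde{T_i}\subseteq\tilde{T})$, and this is precisely Proposition \ref{gt8} (taking $\tilde{S}=\{\tilde{T_i}\}_i$), which in fact gives equality $inf_i\{gr(\tilde{T_i}\subseteq\tilde{T})\}=gr(\bigcup\tilde{S}\subseteq\tilde{T})$. Chaining the two computations above with this equality immediately delivers the desired inequality $inf_i\{gr(P_{\tilde{T_i}}\vdash P_{\tilde{T}})\}\leq gr(\bigvee\{P_{\tilde{T_i}}\}_i\vdash P_{\tilde{T}})$; indeed equality holds, but the stated $\leq$ is all that is claimed.

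I do not anticipate a genuine obstacle, as every ingredient is already in place; the proof is a one-line substitution argument. The only point requiring a moment's care is the bookkeeping in the first step, namely justifying the passage from the join $\bigvee\{P_{\tilde{T_i}}\}_i$ to the single variable $P_{\bigcup_i\tilde{T_i}}$ via axiom (3), since unlike Lemmas \ref{l7}--\ref{l10} the relevant formula is not already a basic propositional variable. Once that identification is made, Proposition \ref{gt8} finishes the argument exactly as the earlier propositions finished their respective lemmas.
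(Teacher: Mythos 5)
Your proposal is correct and is essentially the same argument as the paper's own proof, which likewise translates $gr(P_{\tilde{T_i}}\vdash P_{\tilde{T}})$ to $gr(\tilde{T_i}\subseteq\tilde{T})$, applies Proposition \ref{gt8}, and identifies $\bigvee\{P_{\tilde{T_i}}\}_i$ with $P_{\bigcup_i\tilde{T_i}}$ via Definition \ref{equivsubst} (your observation that Proposition \ref{gt8} in fact yields equality, so the stated $\leq$ is not sharp, is also consistent with the paper's chain of steps).
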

\begin{proof}
$inf_i\{gr(P_{\tilde{T_i}}\vdash P_{\tilde{T}})\} = inf_i\{gr(\tilde{T_i}\subseteq \tilde{T})\}\leq gr(\bigcup_i\tilde{T_i}\subseteq\tilde{T}) =gr(P_{\bigcup_i\tilde{T_i}}\vdash P_{\tilde{T}})=gr(\bigvee\{P_{\tilde{T_i}}\}_i\vdash P_{\tilde{T}})$, using Proposition \ref{gt8} and Definition \ref{equivsubst}.
\end{proof}
\begin{lemma}\label{l12}
$gr(P_{\tilde{T}}\wedge\bigvee \{P_{\tilde{T_i}}\}_i\vdash\bigvee\{P_{\tilde{T}}\wedge P_{\tilde{T_i}}\}_i)=1$.
\end{lemma}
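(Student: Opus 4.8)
The plan is to mirror exactly the strategy already used for Lemmas \ref{l1}--\ref{l11}: rewrite the graded sequent on the left as a graded fuzzy inclusion between open sets, and then close the argument by quoting the corresponding inclusion property, which in this case is Proposition \ref{gt9}. First I would identify the fuzzy open sets attached to the two compound formulae occurring in the sequent. Using axiom 3 of the present construction, $\bigvee\{P_{\tilde{T_i}}\}_i\equiv P_{\bigcup_i\tilde{T_i}}$, and then using axiom 2, $P_{\tilde{T}}\wedge P_{\bigcup_i\tilde{T_i}}\equiv P_{\tilde{T}\cap\bigcup_i\tilde{T_i}}$; hence the antecedent $P_{\tilde{T}}\wedge\bigvee\{P_{\tilde{T_i}}\}_i$ is $\equiv$-equivalent to $P_{\tilde{T}\cap\bigcup_i\tilde{T_i}}$. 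On the succedent side, axiom 2 gives $P_{\tilde{T}}\wedge P_{\tilde{T_i}}\equiv P_{\tilde{T}\cap\tilde{T_i}}$ for each $i$, and a further application of axiom 3 yields $\bigvee\{P_{\tilde{T}}\wedge P_{\tilde{T_i}}\}_i\equiv P_{\bigcup_i(\tilde{T}\cap\tilde{T_i})}$.

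Once these two identifications are in place, I would invoke the extended assignment of grades introduced just before Lemma \ref{l1}, namely $gr(\alpha\vdash\beta)=gr(\tilde{T_\alpha}\subseteq\tilde{T_\beta})$, to convert the sequent grade into an inclusion grade. This produces the single equality
\begin{align*}
gr(P_{\tilde{T}}\wedge\bigvee\{P_{\tilde{T_i}}\}_i\vdash\bigvee\{P_{\tilde{T}}\wedge P_{\tilde{T_i}}\}_i)
&=gr\bigl(\tilde{T}\cap\bigcup_i\tilde{T_i}\subseteq\bigcup_i(\tilde{T}\cap\tilde{T_i})\bigr)\\
&=1,
\end{align*}
where the last step is precisely Proposition \ref{gt9}. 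This completes the argument.

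The only genuinely delicate point, and the step I expect to be the main obstacle, is justifying that the grade of a sequent depends only on the $\equiv$-classes of its antecedent and succedent, so that the substitutions of compound formulae by their representing propositional variables $P_{\tilde{T_\alpha}}$ are legitimate. This well-definedness is guaranteed by the remark following Definition \ref{equivsubst} (every formula $\alpha$ satisfies $\alpha\equiv P_{\tilde{T_\alpha}}$) together with transitivity of the consequence grade, i.e.\ Lemma \ref{l3}; combining $gr(\alpha'\vdash\alpha)=1$, the hypothesis grade, and $gr(\beta\vdash\beta')=1$ forces the grade to be invariant under replacing $\alpha,\beta$ by $\equiv$-equivalent representatives. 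Apart from this bookkeeping, the proof is a direct translation followed by one appeal to Proposition \ref{gt9}, and involves no new computation.
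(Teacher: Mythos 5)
Your proposal is correct and matches the paper's own proof essentially step for step: both rewrite the antecedent and succedent via axioms (2) and (3) as $P_{\tilde{T}\cap\bigcup_i\tilde{T_i}}$ and $P_{\bigcup_i(\tilde{T}\cap\tilde{T_i})}$, pass to $gr(\tilde{T}\cap\bigcup_i\tilde{T_i}\subseteq \bigcup_i(\tilde{T}\cap\tilde{T_i}))$ through the extended grade assignment of Definition \ref{equivsubst}, and conclude by Proposition \ref{gt9}. Your extra well-definedness remark is sound (and could be settled even more directly by Proposition \ref{gt2}, since mutual inclusion at grade $1$ forces equality of the fuzzy sets), but the paper simply absorbs this into the justification of the extended assignment, so there is no substantive difference.
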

\begin{proof}
$gr(P_{\tilde{T}}\wedge\bigvee \{P_{\tilde{T_i}}\}_i\vdash\bigvee\{P_{\tilde{T}}\wedge P_{\tilde{T_i}}\}_i)= gr(P_{\tilde{T}\cap\bigcup_i\tilde{T_i}}\vdash P_{\bigcup_i(\tilde{T}\cap\tilde{T_i})})
 = gr(\tilde{T}\cap\bigcup_i\tilde{T_i}\subseteq \bigcup_i(\tilde{T}\cap\tilde{T_i})) = 1$, using Definition \ref{equivsubst} and Proposition \ref{gt9}.
\end{proof}
With these lemmas and the extended definition of grade assignment to sequents of formulae the following theorem is obtained.
\begin{theorem}
For any fuzzy topological space with graded inclusion $(X,\tau,\subseteq)$, the logic defined as above forms a propositional fuzzy geometric logic with graded consequence. 
\end{theorem}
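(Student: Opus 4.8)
The plan is to verify that the grade assignment to sequents, extended just above Lemma~\ref{l1}, satisfies the graded counterparts of the propositional rules of inference of fuzzy geometric logic with graded consequence, namely conditions 1--5 of Theorem~\ref{5.1g}. Conditions 6--9 there involve the existential quantifier and identity and so lie outside the purely propositional fragment being constructed here, so they need not be checked.

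First I would settle the well-definedness of the extended grade assignment. By axioms (2) and (3) of the construction together with Definition~\ref{equivsubst}, every formula $\alpha$ built from the propositional variables $\{P_{\tilde{T_i}}\}_{\tilde{T_i}\in\tau}$ using binary meet and arbitrary join is $\equiv$-equivalent to a single propositional variable $P_{\tilde{T_\alpha}}$, where $\tilde{T_\alpha}\in\tau$ is obtained by reading $\wedge$ as $\cap$ and $\bigvee$ as $\bigcup$ in $\tau$; these operations remain inside $\tau$ because $(X,\tau,\subseteq)$ is a fuzzy topological space with graded inclusion. Hence the rule $gr(\alpha\vdash\beta)=gr(P_{\tilde{T_\alpha}}\vdash P_{\tilde{T_\beta}})=gr(\tilde{T_\alpha}\subseteq\tilde{T_\beta})$ is independent of the chosen representatives and reduces every sequent between arbitrary formulae to a sequent between propositional variables. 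This correspondence between the syntactic structure (formulae modulo $\equiv$, with $\wedge$ and $\bigvee$) and the semantic structure (the graded frame $(\tau,\subseteq)$, with $\cap$ and $\bigcup$) is the crux of the argument, and I expect it to be the only step requiring genuine care.

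Next I would simply record that Lemmas~\ref{l1}, \ref{l3} and \ref{l6}--\ref{l12} already furnish, one by one, the graded properties demanded by Theorem~\ref{5.1g}(1)--(5), each being read off from the matching Proposition on graded inclusion, \rprop{gt1}--\rprop{gt9}. Concretely, Lemma~\ref{l1} gives reflexivity (1); Lemma~\ref{l3} gives the cut/transitivity inequality (2); Lemmas~\ref{l6}, \ref{l7}, \ref{l8}, \ref{l9} give (3)(i)--(iv), with $\top$ realized by $\tilde{X}$ through Lemma~\ref{l6}; Lemmas~\ref{l10}, \ref{l11} give (4)(i)--(ii); and Lemma~\ref{l12} gives the frame-distributivity rule (5). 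Because the grade assignment of the previous paragraph carries each of these properties from propositional variables to arbitrary formulae, all propositional rules of inference are validated at the graded level.

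Finally I would observe that these validated rules immediately yield the three defining conditions of a graded consequence relation: overlap becomes $gr(\alpha\vdash\alpha)=1$ (Lemma~\ref{l1}), cut becomes $gr(\beta\vdash\delta)\wedge gr(\delta\vdash\alpha)\leq gr(\beta\vdash\alpha)$ (Lemma~\ref{l3}), and the singleton form of monotonicity, $gr(\beta\vdash\alpha)\leq gr(\beta\wedge\delta\vdash\alpha)$, follows by combining Lemma~\ref{l7} (which gives $gr(\beta\wedge\delta\vdash\beta)=1$) with the cut inequality of Lemma~\ref{l3}. This shows that the logic constructed from $(X,\tau,\subseteq)$ is a propositional fuzzy geometric logic with graded consequence, completing the argument.
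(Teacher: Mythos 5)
Your proposal is correct and follows essentially the same route as the paper: the theorem is obtained directly from the extended grade assignment (justified by the remark that every formula $\alpha$ satisfies $\alpha\equiv P_{\tilde{T_\alpha}}$) together with Lemmas~\ref{l1}, \ref{l3} and \ref{l6}--\ref{l12}, which transcribe Propositions~\ref{gt1}--\ref{gt9} into the graded propositional rules. Your additions---spelling out representative-independence and deriving overlap, cut, and singleton monotonicity explicitly (the paper relegates the latter to a remark in Section~6.5)---are faithful elaborations rather than a different argument.
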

\textbf{Note:} The relationships between fuzzy geometric logic, fuzzy topological systems and fuzzy topological spaces can be shown as a special case of the above results.
\section{Some Concluding Remarks on Chapter 6}
Subsections 6.7.1, 6.7.2 and 6.7.3 show that there exists a close relationship between the three notions.

The logic that is developed here is not many-sorted. It is clear that the present formalism can be extended to include multiple sorts. An important issue, however, is to place things into categorical set up. It is well known that the research-culture in geometric logic is deeply involved with category theory. In the opinion of Vickers, of this field ``the full mathematical insights come only through category theory"\cite{SV1}. The categorical recast of the notions included in this chapter has already been developed by us which will be presented in the next chapter.

In summary, the chief contribution here lies in the definition and treatment of the notions fuzzy geometric logic, fuzzy geometric logic with graded consequence, graded fuzzy topological system, fuzzy topology with graded inclusion and graded frame. It seems that the last two notions may be interesting in themselves. The study of fuzzy topology may have an additional dimension by the incorporation of graded inclusion of fuzzy sets instead of usual crisp inclusion as is in vogue. The way in which graded frame has been defined may be used to define graded counterparts of other algebraic structures. The consequence is yet to be seen.

It may be marked that G$\ddot{o}$del arrow has been used while defining graded inclusion of fuzzy sets as well as fuzzy geometric logic with graded consequence. This fuzzy implication operator has been essential at some steps of proof. In our future papers we shall endeavour to get out of this restriction which is a bit unsatisfactory and see how much could be achieved by other important implications.
\chapter{Categorical Study of Graded Fuzzy Topological System}
\section{Introduction}
This chapter deals with the categorical framework of the previous chapter. The categories of the fuzzy topological spaces with graded inclusion, graded fuzzy topological systems and graded frames are introduced. On top of that this chapter depicts the transformation of morphisms between the objects which play an interesting role too. Through the categorical study it becomes more clear why the graded inclusion is important in the fuzzy topology to establish the desired interrelations. It should be noted that the inclusion relation is defined using G$\ddot{o}$del arrow. Hence there is a space to change the arrow to define graded inclusion and make appropriate graded algebraic structures to get the similar connections of this chapter. Although we do not delve into this matter in this thesis but hope to deal with this in near future. 
\section{Categories: Graded Fuzzy Top, Graded Fuzzy TopSys, Graded Frm and their interrelationships}
\subsection{Categories}
\subsection*{$\mathbf{Graded Fuzzy Top}$}
\begin{definition}[{$\mathbf{Graded\ Fuzzy\ Top}$}]\label{Grft}\index{category! Graded Fuzzy Top} 
The category $\mathbf{Graded\ Fuzzy\ Top}$ is defined thus.
\begin{itemize}
\item The objects are fuzzy topological spaces with graded inclusion $(X,\tau,\subseteq)$, $(Y,\tau ',\subseteq)$ etc (c.f. Chapter 6).
\item The morphisms\index{Graded Fuzzy Top!morphism} are functions satisfying the following continuity property: If $f:(X,\tau,\subseteq)\longrightarrow (Y,\tau ',\subseteq)$ and $\tilde{T'}\in\tau '$ then $f^{-1}(\tilde{T'})\in\tau$.
\item The identity on $(X,\tau,\subseteq)$ is the identity function.  It can be shown that the identity function is a $\mathbf{Graded\ Fuzzy\ Top}$ morphism\index{Graded Fuzzy Top!morphism}.
\item If $f:(X,\tau,\subseteq)\longrightarrow (Y,\tau ',\subseteq)$ and 
$g:(Y,\tau ',\subseteq)\longrightarrow (Z,\tau '',\subseteq)$ are morphisms in $\mathbf{Graded\ Fuzzy\ Top}$, their 
composition $g\circ f$ is the composition of functions between two sets. It can be verified that $g\circ f$ is a morphism in $\mathbf{Graded\ Fuzzy\ Top}$.
\end{itemize}
\end{definition}
\begin{proposition}\label{ftres}
$gr(\tilde{T_1}\subseteq\tilde{T_2})\leq gr(f(\tilde{T_1})\subseteq f(\tilde{T_2}))$.
\end{proposition}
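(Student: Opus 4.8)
The plan is to unwind both sides of the inequality through the definition of graded inclusion, $gr(\tilde{U}\subseteq\tilde{V})=\inf_{x}\{\tilde{U}(x)\rightarrow\tilde{V}(x)\}$, together with the definition of the direct image, $f(\tilde{T})(y)=\bigvee\{\tilde{T}(x)\mid x\in f^{-1}(\{y\})\}$, and then to exploit the residuation behaviour of the G\"odel arrow. First I would abbreviate $\alpha=gr(\tilde{T_1}\subseteq\tilde{T_2})$. By Proposition \ref{gt10} we already have the pointwise estimate $\tilde{T_1}(x)\wedge\alpha\leq\tilde{T_2}(x)$ for every $x\in X$; this estimate is the engine of the whole argument.

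Next I would push this estimate through the direct image. Fix $y\in Y$. Since in $[0,1]$ the meet $\wedge$ (that is, $\min$) distributes over arbitrary suprema, one computes
\begin{align*}
f(\tilde{T_1})(y)\wedge\alpha &= \Big(\bigvee\{\tilde{T_1}(x)\mid x\in f^{-1}(\{y\})\}\Big)\wedge\alpha\\
&= \bigvee\{\tilde{T_1}(x)\wedge\alpha\mid x\in f^{-1}(\{y\})\}.
\end{align*}
For each $x\in f^{-1}(\{y\})$ the pointwise estimate gives $\tilde{T_1}(x)\wedge\alpha\leq\tilde{T_2}(x)\leq f(\tilde{T_2})(y)$, whence the supremum on the right is bounded by $f(\tilde{T_2})(y)$. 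Thus $f(\tilde{T_1})(y)\wedge\alpha\leq f(\tilde{T_2})(y)$ for every $y$, the empty-fibre case being trivial since both sides are then $0$.

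Finally I would convert this meet inequality back into a statement about the G\"odel arrow. The residuation property $c\wedge a\leq b$ if and only if $c\leq(a\rightarrow b)$, which follows by the same case analysis underlying Properties \ref{pga}(7) and \ref{pga}(8), yields $\alpha\leq f(\tilde{T_1})(y)\rightarrow f(\tilde{T_2})(y)$ for each $y$. Taking the infimum over $y\in Y$ then gives $\alpha\leq\inf_{y}\{f(\tilde{T_1})(y)\rightarrow f(\tilde{T_2})(y)\}=gr(f(\tilde{T_1})\subseteq f(\tilde{T_2}))$, which is exactly the asserted inequality.

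I expect no serious obstacle here: the only point requiring minor care is the residuation step, since the excerpt records only the two one-sided facts about the G\"odel arrow (Properties \ref{pga}(7) and \ref{pga}(8)) rather than the full adjunction. If one prefers to avoid invoking residuation explicitly, the same conclusion follows by a direct two-case analysis on whether $f(\tilde{T_1})(y)\leq f(\tilde{T_2})(y)$ or not, mirroring the proof of Proposition \ref{gt10}.
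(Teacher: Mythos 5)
Your proof is correct, but it takes a genuinely different route from the paper's. The paper argues entirely on the arrow side: it expands $gr(f(\tilde{T_1})\subseteq f(\tilde{T_2}))$ as $\inf_{y\in Y}\{\sup_{x\in f^{-1}(y)}\{\tilde{T_1}(x)\}\rightarrow \sup_{x\in f^{-1}(y)}\{\tilde{T_2}(x)\}\}$, converts the supremum in the antecedent into an infimum of arrows by Property \ref{pga}(6), enlarges the consequent by monotonicity in the second argument (Property \ref{pga}(4)), so that $\tilde{T_1}(x)\rightarrow\tilde{T_2}(x)\leq \tilde{T_1}(x)\rightarrow \sup_{x\in f^{-1}(y)}\{\tilde{T_2}(x)\}$ for each $x\in f^{-1}(y)$, and finishes by comparing infima via $f^{-1}(y)\subseteq X$; at no point does it pass to the meet side. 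You instead take the residuation detour: Proposition \ref{gt10} supplies the pointwise estimate $\tilde{T_1}(x)\wedge\alpha\leq\tilde{T_2}(x)$, distributivity of $\wedge$ over arbitrary suprema pushes it through the direct image, and the adjunction between $\wedge$ and $\rightarrow$ brings you back. What each buys: your argument exhibits the Galois-connection structure and therefore transfers verbatim to any complete residuated setting in which the implication is the residuum of the meet (in particular any frame with its Heyting arrow), and it reuses Proposition \ref{gt10} as a genuine lemma rather than reproving its content; the paper's computation stays strictly within the tabulated one-sided facts about the G\"{o}del arrow and never needs the full residuation law. Your caveat about that law is well placed and correctly resolved: the direction you actually use ($c\wedge a\leq b$ implies $c\leq a\rightarrow b$) follows by the same two-case analysis as Proposition \ref{gt10}, while the converse is immediate from Property \ref{pga}(8). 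Your explicit treatment of empty fibres is a small bonus; in the paper's version this case is silently absorbed by the conventions $\sup\emptyset=0$ and $\inf\emptyset=1$, under which both readings reduce to $0\rightarrow 0=1$.
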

\begin{proof}
It is known that $gr(\tilde{T_1}\subseteq\tilde{T_2})=inf_{x\in X}\{\tilde{T_1}(x)\rightarrow\tilde{T_2}(x)\}$ and 
\begin{multline*}
gr(f(\tilde{T_1})\subseteq f(\tilde{T_2}))=inf_{y\in Y}\{f(\tilde{T_1})(y)\rightarrow f(\tilde{T_2})(y)\}\\=inf_{y\in Y}\{sup_{x\in f^{-1}(y)}\{\tilde{T_1}(x)\} \rightarrow sup_{x\in f^{-1}(y)}\{\tilde{T_2}(x)\}\}.
\end{multline*}
Now, 
\begin{multline*}
sup_{x\in f^{-1}(y)}\{\tilde{T_1}(x)\}\rightarrow sup_{x\in f^{-1}(y)}\{\tilde{T_2}(x)\}=inf_{x\in f^{-1}(y)}\{\tilde{T_1}(x)\\ \rightarrow sup_{x\in f^{-1}(y)}\{\tilde{T_2}(x)\}\}
\end{multline*}
 and $\tilde{T_2}(x)\leq sup_{x\in f^{-1}(y)}\tilde{T_2}(x)$ for any $x\in f^{-1}(y)$.
Therefore for any $x\in f^{-1}(y)$ 
$$\tilde{T_1}(x)\rightarrow \tilde{T_2}(x)\leq\tilde{T_1}(x)\rightarrow sup_{x\in f^{-1}(y)}\{\tilde{T_2}(x)\}.$$
So, 
$$inf_{x\in f^{-1}(y)}\{\tilde{T_1}(x)\rightarrow \tilde{T_2}(x)\}\leq inf_{x\in f^{-1}(y)}\{\tilde{T_1}(x)\rightarrow sup_{x\in f^{-1}(y)}\{\tilde{T_2}(x)\}\}.$$
Now, 
$$inf_{x\in X}\{\tilde{T_1}(x)\rightarrow \tilde{T_2}(x)\}\leq inf_{x\in f^{-1}(y)}\{\tilde{T_1}(x)\rightarrow \tilde{T_2}(x)\}$$ as $f^{-1}(y)\subseteq X.$
So for any $y\in Y$, 
$$inf_{x\in X}\{\tilde{T_1}(x)\rightarrow \tilde{T_2}(x)\}\leq inf_{x\in f^{-1}(y)}\{\tilde{T_1}(x)\rightarrow sup_{x\in f^{-1}(y)}\{\tilde{T_2}(x)\}\}$$  and consequently, 
\begin{multline*}
inf_{x\in X}\{\tilde{T_1}(x)\rightarrow \tilde{T_2}(x)\}\leq inf_{y\in Y}\{sup_{x\in f^{-1}(y)}\{\tilde{T_1}(x)\}\rightarrow sup_{x\in f^{-1}(y)}\{\tilde{T_2}(x)\}\}.
\end{multline*}
\end{proof}
\begin{definition}[{$\mathbf{Graded\ Frm}$}]\label{Grfrm} 
The category $\mathbf{Graded\ Frm}$\index{category!Graded Frm} is defined thus.
\begin{itemize}
\item The objects are graded frames $(A,R)$, $(B,R')$ etc. (c.f. Chapter 6).
\item The morphisms\index{Graded Frm!morphism} are graded frame homomorphisms\index{graded!frame homomorphism} defined in the following way: If $f:(A,R)\longrightarrow (B,R')$ then\\
(i) $f(a_1\wedge a_2)=f(a_1)\wedge f(a_2)$;\\
(ii) $f(\bigvee_i a_i)=sup_i\{f(a_i)\}$;\\
(iii) $R(a_1,a_2)\leq R'(f(a_1),f(a_2))$.
\item The identity on $(A,R)$ is the identity morphism. It can be shown by routine check that the identity morphism is a $\mathbf{Graded\ Frm}$ morphism\index{Graded Frm!morphism}.
\item If $f:(A,R)\longrightarrow (B,R')$ and 
$g:(B,R')\longrightarrow (C,R'')$ are morphisms in $\mathbf{Graded\ Frm}$, their 
composition $g\circ f$ is the composition of graded homomorphisms between two graded frames. It can be verified that 
$g\circ f$ is a morphism in $\mathbf{Graded\ Frm}$ (vide Proposition \ref{next}).
\end{itemize}
\end{definition}
\begin{proposition}\label{next}
If $f:(A,R)\longrightarrow (B,R')$ and 
$g:(B,R')\longrightarrow (C,R'')$ are morphisms in $\mathbf{Graded\ Frm}$ then $g\circ f:(A,R)\longrightarrow (C,R'')$ is a morphism in $\mathbf{Graded\ Frm}$.
\end{proposition}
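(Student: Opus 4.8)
The plan is to verify directly that $g\circ f$ satisfies the three defining conditions of a $\mathbf{Graded\ Frm}$ morphism listed in Definition \ref{Grfrm}, namely preservation of finite meets, preservation of arbitrary joins, and the relation-preservation inequality. Since $f$ and $g$ are each assumed to satisfy all three, the argument amounts to chaining the two hypotheses together in the correct order, using that ordinary function composition is associative and that $\leq$ on $[0,1]$ is transitive.

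First I would check condition (i). For any $a_1,a_2\in A$, I would apply the meet-preservation of $f$ inside, then of $g$ outside:
\begin{align*}
(g\circ f)(a_1\wedge a_2) &= g(f(a_1\wedge a_2))\\
&= g(f(a_1)\wedge f(a_2))\\
&= g(f(a_1))\wedge g(f(a_2))\\
&= (g\circ f)(a_1)\wedge (g\circ f)(a_2).
\end{align*}
Condition (ii) is handled the same way for an arbitrary family $\{a_i\}$: writing $sup_i\{f(a_i)\}$ as the join $\bigvee_i f(a_i)$ in $B$ so that the join-preservation of $g$ applies, one obtains
\begin{align*}
(g\circ f)\left(\bigvee_i a_i\right) &= g\left(f\left(\bigvee_i a_i\right)\right)\\
&= g\left(\bigvee_i f(a_i)\right)\\
&= sup_i\{g(f(a_i))\}\\
&= sup_i\{(g\circ f)(a_i)\}.
\end{align*}

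For condition (iii) I would combine the two relation-preservation inequalities. Since $R(a_1,a_2)\leq R'(f(a_1),f(a_2))$ by the hypothesis on $f$, and $R'(f(a_1),f(a_2))\leq R''(g(f(a_1)),g(f(a_2)))$ by the hypothesis on $g$ applied to the elements $f(a_1),f(a_2)\in B$, transitivity of $\leq$ gives
$$R(a_1,a_2)\leq R''((g\circ f)(a_1),(g\circ f)(a_2)),$$
which is exactly the required inequality for $g\circ f$. Having established (i)--(iii), I would conclude that $g\circ f:(A,R)\longrightarrow (C,R'')$ is a $\mathbf{Graded\ Frm}$ morphism. I expect no genuine obstacle here: the proof is a direct composition argument, and the only point demanding a moment's care is the passage from $sup_i\{f(a_i)\}$ to the join $\bigvee_i f(a_i)$ in $B$, so that the join-preservation clause for $g$ is literally applicable.
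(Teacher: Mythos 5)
Your proof is correct and follows essentially the same route as the paper's: verify the three clauses of Definition \ref{Grfrm} for $g\circ f$ by applying meet-preservation of $f$ then $g$, join-preservation of $f$ then $g$, and chaining the two relation-preservation inequalities $R(a_1,a_2)\leq R'(f(a_1),f(a_2))\leq R''(g(f(a_1)),g(f(a_2)))$ via transitivity of $\leq$. No gaps; your remark about reading $sup_i\{f(a_i)\}$ as the join $\bigvee_i f(a_i)$ in $B$ is a fair point of care that the paper passes over silently.
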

\begin{proof}
To show that $g\circ f:(A,R)\longrightarrow (C,R'')$ is a graded frame homomorphism we will proceed in the following way.
\begin{align*}
(i)\ g\circ f (a_1\wedge a_2) & = g(f(a_1\wedge a_2))\\
& = g(f(a_1)\wedge f(a_2))\\
& = g(f(a_1))\wedge g(f(a_2))\\
& = g\circ f(a_1)\wedge g\circ f(a_2).\\
(ii)\ g\circ f (\bigvee_i a_i) & = g(f(\bigvee_i a_i))\\
& = g(sup_i\{f(a_i)\})\\
& = sup_i\{g(f(a_i))\}\\
& = sup_i\{g\circ f(a_i)\}.\\
(iii)\ R(a_1,a_2) & \leq R'(f(a_1),f(a_2))\\
& \leq R''(g(f(a_1)),g(f(a_2)))\\
& = R''(g\circ f(a_1),g\circ f(a_2)).
\end{align*}
This completes the proof.
\end{proof}
\begin{definition}[{$\mathbf{Graded\ Fuzzy\ TopSys}$}]\label{Grftsy} 
The category of graded fuzzy topological systems, $\mathbf{Graded\ Fuzzy\ TopSys}$\index{category!Graded Fuzzy TopSys}, is defined thus.
\begin{itemize}
\item The objects are graded fuzzy topological systems $(X,\models,A,R)$, $(Y,\models,B,R')$ etc. (c.f. Chapter 6).
\item The morphisms are pair of maps satisfying the following continuity properties: If $(f_1,f_2):(X,\models ,A,R)\longrightarrow (Y,\models',B,R')$ then\\
(i) $f_1:X\longrightarrow Y$ is a set map;\\
(ii) $f_2:(B,R')\longrightarrow (A,R)$ is a graded frame homomorphism;\\
(iii) $gr(x\models f_2(b))=gr(f_1(x)\models' b)$.
\item The identity on $(X,\models,A,R)$ is the pair $(id_X,id_A)$, where $id_X$ is the identity map on $X$ and $id_A$ is the identity graded frame homomorphism on $A$. That this is an $\mathbf{Graded\ Fuzzy\ TopSys}$ morphism\index{Graded Fuzzy TopSys!morphism} can be proved.
\item If $(f_1,f_2):(X,\models,A,R)\longrightarrow (Y,\models',B,R')$ and 
$(g_1,g_2):(Y,\models',B,R')\longrightarrow (Z,\models'',C,R'')$ are morphisms in $\mathbf{Graded\ Fuzzy\ TopSys}$, their 
composition $(g_1,g_2)\circ (f_1,f_2)=(g_1\circ f_1,f_2\circ g_2)$ is the pair of composition of functions between two sets and composition of graded frame homomorphisms between two graded frames. It can be verified that 
$(g_1,g_2)\circ (f_1,f_2)$ is a morphism in $\mathbf{Graded\ Fuzzy\ TopSys}$ using Proposition \ref{next} and Lemma \ref{3.4_1}.
\end{itemize}
\end{definition}
\subsection{Functors}
In this subsection, we define various functors, which are required to prove our desired results.
\subsection*{Functor $Ext_g$ from Graded Fuzzy TopSys to Graded Fuzzy Top}\index{functor!$Ext_g$}
\begin{definition}\label{3.8g}
Let $(X,\models,A,R)$ be a graded fuzzy topological system, and $a\in A$. For each $a$, its \textbf{extent$_g$}\index{extent$_g$} in $(X,\models, A,R)$ is a mapping $ext_g(a)$ from $X$ to $[0,1]$ given by $ext_g(a)(x)=gr(x\models a)$.
Also $ext_g(A)=\{ext_g(a)\}_{a\in A}$ and $gr(ext_g(a_1)\subseteq ext_g(a_2))=inf\{ext_g(a_1)(x)\rightarrow ext_g(a_2)(x)\}_x$, for any $a_1,\ a_2\in A$.
\end{definition}
\begin{lemma}\label{3.9g}
$(ext_g(A),\subseteq)$ forms a graded fuzzy topology on $X$.
\end{lemma}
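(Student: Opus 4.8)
The plan is to verify that the set $ext_g(A)=\{ext_g(a)\}_{a\in A}$, equipped with the G\"odel-arrow based graded inclusion $\subseteq$, satisfies the three conditions in Definition \ref{gfts} of a fuzzy topological space with graded inclusion, namely the presence of $\tilde\emptyset$ and $\tilde X$, closure under arbitrary union, and closure under finite intersection. Since the graded inclusion $\subseteq$ is entirely determined by the pointwise G\"odel arrow (it is the same fuzzy relation used in Definition \ref{gfts}), once the collection $ext_g(A)$ is shown to be a genuine fuzzy topology in the underlying sense, the graded-inclusion structure is automatically the required one. So the real content is to establish that $ext_g(A)$ is closed under the requisite lattice operations, and this should mirror exactly the proof of Lemma \ref{3.9_1} (and Theorem \ref{gftsyt}).

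First I would treat the bottom and top elements. Using condition 3 of Definition \ref{gftsy} applied to $S=\emptyset$, I get $ext_g(\bot)(x)=gr(x\models\bigvee\emptyset)=\sup\emptyset=0$ for all $x$, so $ext_g(\bot)=\tilde\emptyset\in ext_g(A)$; dually, condition 2 with $S=\emptyset$ gives $ext_g(\top)(x)=gr(x\models\bigwedge\emptyset)=\inf\emptyset=1$, so $ext_g(\top)=\tilde X\in ext_g(A)$. Next, for arbitrary union I would take a family $\{ext_g(a_i)\}_{i\in I}$ and compute, for each $x$,
\begin{align*}
\bigcup_i ext_g(a_i)(x) &= \sup_i\{ext_g(a_i)(x)\}\\
&= \sup_i\{gr(x\models a_i)\}\\
&= gr(x\models\bigvee\{a_i\}_i)\\
&= ext_g(\bigvee\{a_i\}_i)(x),
\end{align*}
where the third equality is condition 3 of Definition \ref{gftsy}; since $\bigvee\{a_i\}_i\in A$ (because $(A,R)$ is a graded frame and hence closed under arbitrary join), the union lies in $ext_g(A)$. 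For finite meet I would argue analogously using condition 2 of Definition \ref{gftsy}: for $a_1,a_2\in A$,
\begin{align*}
(ext_g(a_1)\cap ext_g(a_2))(x) &= \min\{ext_g(a_1)(x),ext_g(a_2)(x)\}\\
&= \min\{gr(x\models a_1),gr(x\models a_2)\}\\
&= gr(x\models a_1\wedge a_2)\\
&= ext_g(a_1\wedge a_2)(x),
\end{align*}
and since $a_1\wedge a_2\in A$, the intersection is again in $ext_g(A)$.

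Having shown that $ext_g(A)$ contains $\tilde\emptyset,\tilde X$ and is closed under arbitrary union and finite intersection, it is a fuzzy topology on $X$; appending the graded inclusion $\subseteq$ defined via the G\"odel arrow (as recorded in Definition \ref{3.8g}) yields precisely a fuzzy topology with graded inclusion in the sense of Definition \ref{gfts}, which completes the proof. I do not expect any genuine obstacle here: the statement is essentially a transcription of Lemma \ref{3.9_1}/Theorem \ref{gftsyt} into the graded-frame setting, and the whole argument rests on the defining properties (conditions 2 and 3) of a graded fuzzy topological system together with the closure of a graded frame under $\wedge$ and $\bigvee$. The only point requiring mild care is making sure that the empty-set instances of conditions 2 and 3 are legitimately invoked to produce $\tilde X$ and $\tilde\emptyset$, and that $\bigvee S$ and finite $\bigwedge$ genuinely live in $A$ so that their extents are bona fide members of $ext_g(A)$.
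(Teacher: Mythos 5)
Your proof is correct and is essentially identical to the paper's: the paper simply cites Theorem \ref{gftsyt}, whose proof consists of exactly the three verifications you give (the empty-join/empty-meet instances of conditions 2 and 3 of Definition \ref{gftsy} for $\tilde\emptyset$ and $\tilde X$, and the pointwise computations showing $\bigcup_i ext_g(a_i)=ext_g(\bigvee\{a_i\}_i)$ and $ext_g(a_1)\cap ext_g(a_2)=ext_g(a_1\wedge a_2)$). Your explicit remark that condition 2 of Definition \ref{gftsy} expressly covers the null set, which the paper leaves as ``similarly,'' is a small but welcome clarification.
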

\begin{proof}
Proof of the lemma follows from Theorem \ref{gftsyt}.
\end{proof}
As a consequence $(X,ext_g(A),\subseteq)$ forms a graded fuzzy topological space.
\begin{lemma}\label{3.10g}
If $(f_1,f_2):(X,\models ',A,R)\longrightarrow (Y,\models '',B,R')$ is continuous then \\$f_1:(X,ext_g(A),\subseteq)\longrightarrow (Y,ext_g(B),\subseteq)$ is graded fuzzy continuous.
\end{lemma}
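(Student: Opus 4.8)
The plan is to prove that the set-map $f_1 \colon X \longrightarrow Y$, which is the first component of a continuous morphism $(f_1,f_2)$ of graded fuzzy topological systems, is a $\mathbf{Graded\ Fuzzy\ Top}$ morphism between the extent spaces $(X,ext_g(A),\subseteq)$ and $(Y,ext_g(B),\subseteq)$. By Definition~\ref{Grft}, this amounts to verifying the continuity property: for every member $\tilde{T'}$ of the graded fuzzy topology $ext_g(B)$ on $Y$, the inverse image $f_1^{-1}(\tilde{T'})$ lies in $ext_g(A)$. Since every member of $ext_g(B)$ has the form $ext_g(b)$ for some $b \in B$ (by Definition~\ref{extent$_g$}), it suffices to show that $f_1^{-1}(ext_g(b)) \in ext_g(A)$ for each $b \in B$.

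First I would unwind the definitions. For $x \in X$ we have
\begin{align*}
(f_1^{-1}(ext_g(b)))(x) & = ext_g(b)(f_1(x)) \\
& = gr(f_1(x) \models '' b).
\end{align*}
Next I would invoke the continuity condition (iii) of the morphism $(f_1,f_2)$ from Definition~\ref{Grftsy}, namely $gr(x \models ' f_2(b)) = gr(f_1(x) \models '' b)$, to rewrite the right-hand side as $gr(x \models ' f_2(b))$. Finally, applying Definition~\ref{3.8g} of $ext_g$ in the system $(X,\models ',A,R)$, this equals $ext_g(f_2(b))(x)$. Chaining these equalities gives, for all $x \in X$,
\begin{align*}
(f_1^{-1}(ext_g(b)))(x) & = gr(f_1(x) \models '' b) \\
& = gr(x \models ' f_2(b)) \\
& = ext_g(f_2(b))(x),
\end{align*}
so that $f_1^{-1}(ext_g(b)) = ext_g(f_2(b))$. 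Since $f_2(b) \in A$ (as $f_2$ is a graded frame homomorphism from $B$ to $A$), we have $ext_g(f_2(b)) \in ext_g(A)$, hence $f_1^{-1}(ext_g(b)) \in ext_g(A)$, which is exactly the required continuity.

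This argument is essentially a graded analogue of Lemma~\ref{3.10_1} and Lemma~\ref{3.10_1l}, so I expect no serious obstacle: the whole proof reduces to a short pointwise computation exploiting the satisfaction-compatibility condition (iii). The one point that deserves a moment's care is confirming that the relevant objects really are \emph{graded} fuzzy topologies and that the inclusion grading $\subseteq$ plays no role in the continuity requirement itself---indeed, Definition~\ref{Grft} only demands membership of inverse images in $\tau$, not preservation of the grade of inclusion, so the graded structure is inherited automatically once set-theoretic continuity is established. Thus the main (and only mild) subtlety is bookkeeping: ensuring each equality invokes the extent taken in the correct system, which the displayed chain makes transparent.
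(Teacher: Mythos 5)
Your proof is correct and matches the paper's own argument exactly: the same pointwise chain $(f_1^{-1}(ext_g(b)))(x)=gr(f_1(x)\models'' b)=gr(x\models' f_2(b))=ext_g(f_2(b))(x)$, yielding $f_1^{-1}(ext_g(b))=ext_g(f_2(b))\in ext_g(A)$. Your closing observation that Definition~\ref{Grft} demands only membership of inverse images in $\tau$, so the grading $\subseteq$ imposes no extra condition, is also consistent with how the paper treats the lemma.
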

\begin{proof}
$(f_1,f_2):(X,\models ',A,R)\longrightarrow (Y,\models '',B,R')$ is continuous.
So we have for all $x\in X,\ b\in B$, $gr(x\models 'f_2(b))=gr(f_1(x)\models ''b)$. Now,
\begin{align*} 
(f_1^{-1}(ext_g(b)))(x) & =ext_g(b)(f_1(x))\\
& =gr(f_1(x)\models ''b)\\
& =gr(x\models 'f_2(b))\\
& =ext_g(f_2(b))(x).
\end{align*}
$So,$ $f_1^{-1}(ext_g(b))=ext_g(f_2(b))\in ext_g(A)$.
$So,$ $f_1$ is a fuzzy continuous map from $(X,ext_g(A),\subseteq)$ to $(Y,ext_g(B),\subseteq)$.
\end{proof}
\begin{definition}\label{3.11g}
$\mathbf{Ext_g}$\index{functor!$Ext_g$} is a functor from $\mathbf{Graded\ Fuzzy\ TopSys}$ to\\ $\mathbf{Graded\ Fuzzy\ Top}$ defined as follows.\\
$Ext_g$ acts on an object $(X,\models ',A,R)$ as $Ext_g(X,\models ',A,R)=(X,ext_g(A),\subseteq)$ and on a morphism $(f_1,f_2)$ as $Ext_g(f_1,f_2)=f_1$.
\end{definition}
The above two Lemmas \ref{3.9g} and \ref{3.10g} shows that $Ext_g$ is a functor.
\subsection*{Functor $J_g$ from the category $\mathbf{Graded\ Fuzzy\ Top}$ to the category $\mathbf{Graded\ Fuzzy\ TopSys}$}
\begin{definition}\label{3.12g}
$\mathbf{J_g}$\index{functor!$J_g$} is a functor from the category $\mathbf{Graded\ Fuzzy\ Top}$ to the category $\mathbf{Graded\ Fuzzy\ TopSys}$ defined as follows.\\
$J_g$ acts on an object $(X,\tau,\subseteq)$ as $J_g(X,\tau,\subseteq)=(X, \in ,\tau,\subseteq)$, where $gr(x\in \tilde{T})=\tilde{T}(x)$ for $\tilde{T} \in \tau$, and on a morphism $f$ as $J_g(f)=(f,f^{-1})$.
\end{definition}
\begin{lemma}\label{3.13g}
$(X,\in , \tau,\subseteq)$ is a graded fuzzy topological system.
\end{lemma}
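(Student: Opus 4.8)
The goal is to prove Lemma~\ref{3.13g}: that $(X,\in,\tau,\subseteq)$ is a graded fuzzy topological system, where $(X,\tau,\subseteq)$ is a fuzzy topological space with graded inclusion and $gr(x\in\tilde{T})=\tilde{T}(x)$ for $\tilde{T}\in\tau$. The plan is to verify directly the three defining conditions of a graded fuzzy topological system from Definition~\ref{gftsy}, using the membership-grade identification $gr(x\in\tilde{T})=\tilde{T}(x)$ together with the fact (established in Chapter~6) that $(\tau,\subseteq)$ forms a graded frame. This is essentially the graded analogue of Lemma~\ref{3.13_1} and Theorem~\ref{gfttsy}, so the structure of the argument is already available to imitate.

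First I would recall that $X$ is nonempty and that $(\tau,\subseteq)$ is a graded frame, which is exactly the hypothesis packaged into the notion of a fuzzy topological space with graded inclusion; this discharges the requirement that the underlying algebra of the system be a graded frame. Then I would check condition~(1) of Definition~\ref{gftsy}, namely $gr(x\in\tilde{T_1})\wedge R(\tilde{T_1},\tilde{T_2})\leq gr(x\in\tilde{T_2})$. Since $R(\tilde{T_1},\tilde{T_2})=gr(\tilde{T_1}\subseteq\tilde{T_2})$ and $gr(x\in\tilde{T})=\tilde{T}(x)$, this is precisely the statement $\tilde{T_1}(x)\wedge gr(\tilde{T_1}\subseteq\tilde{T_2})\leq\tilde{T_2}(x)$, which is Proposition~\ref{gt10}. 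So condition~(1) follows immediately by invoking that proposition.

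Next I would verify conditions~(2) and~(3), which concern finite meets and arbitrary joins. For the meet, using the pointwise definitions from Definition~\ref{gfts},
\begin{align*}
gr(x\in \tilde{T_1}\cap \tilde{T_2}) &= (\tilde{T_1}\cap \tilde{T_2})(x)\\
&= \tilde{T_1}(x)\wedge \tilde{T_2}(x)\\
&= gr(x\in \tilde{T_1})\wedge gr(x\in \tilde{T_2})\\
&= inf\{gr(x\in \tilde{T_1}),gr(x\in \tilde{T_2})\},
\end{align*}
and the empty-meet case gives $gr(x\in\tilde{X})=\tilde{X}(x)=1$, matching $\bigwedge\emptyset=\tilde{X}$. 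For arbitrary joins,
\begin{align*}
gr(x\in \bigcup_i \tilde{T_i}) &= (\bigcup_i \tilde{T_i})(x)\\
&= sup_i\{\tilde{T_i}(x)\}\\
&= sup_i\{gr(x\in \tilde{T_i})\}.
\end{align*}
This establishes all three conditions and completes the proof.

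I do not anticipate a genuine obstacle here; the lemma is routine once the correct identifications are made, and every nontrivial inequality has already been isolated as a proposition in the preceding development (especially Proposition~\ref{gt10} for condition~(1)). The only point demanding care is to make sure the empty finite subset is handled, so that $\bigwedge\emptyset$ is correctly read as the top element $\tilde{X}$ and the condition reduces to $gr(x\in\tilde{X})=1$, which holds since $\tilde{X}(x)=1$ for all $x$. In spirit this is simply Theorem~\ref{gfttsy} restated at the level of the functor $J_g$, so I would either reproduce the short computations above or explicitly cite Proposition~\ref{gt10} and Theorem~\ref{gfttsy} to conclude.
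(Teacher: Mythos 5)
Your proposal is correct and follows essentially the same route as the paper's own proof: the paper likewise notes that $(\tau,\subseteq)$ is a graded frame (via Propositions~\ref{gt1}--\ref{gt9}), invokes Proposition~\ref{gt10} for condition~(1), and verifies conditions~(2) and~(3) by the same pointwise computations, just as in Theorem~\ref{gfttsy}, of which this lemma is a restatement. Your explicit treatment of the empty meet ($\bigwedge\emptyset=\tilde{X}$, so $gr(x\in\tilde{X})=1$) is a small extra care the paper leaves implicit, but it does not change the argument.
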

\begin{proof}
$(\tau,\subseteq)$ forms a graded frame can be shown using Propositions \ref{gt1}-\ref{gt9}.\\
To show
(1) $gr(x\in \tilde{T_1})\wedge gr(\tilde{T_1}\subseteq\tilde{T_2})\leq gr(x\in \tilde{T_2})$,
(2) $gr(x\in \tilde{T_1}\cap \tilde{T_2})=inf\{ gr(x\in \tilde{T_1}),gr(x\in \tilde{T_2})\}$ and (3) $gr(x\in \bigcup_i \tilde{T_i})=sup_i\{ gr(x\in \tilde{T_i})\}$ let us proceed in the following way.\\
(1) $gr(x\in \tilde{T_1})\wedge gr(\tilde{T_1}\subseteq\tilde{T_2})\leq gr(x\in \tilde{T_2})$ follows by Proposition \ref{gt10}.
\begin{align*}
(2)\ gr(x\in \tilde{T_1}\cap \tilde{T_2}) & =(\tilde{T_1}\cap \tilde{T_2})(x)\\
& =\tilde{T_1}(x)\wedge \tilde{T_2}(x)\\
& = gr(x\in \tilde{T_1})\wedge gr(x\in \tilde{T_2}).\\
(3)\ gr(x\in \bigcup_i \tilde{T_i}) & =(\bigcup_i \tilde{T_i})(x)\\
& =sup_i\{\tilde{T_i}(x)\}\\
& =sup_i\{ gr(x\in \tilde{T_i})\}.
\end{align*}
\end{proof}
\begin{lemma}\label{3.14g}
$J_g(f)=(f,f^{-1})$ is continuous provided $f$ is graded fuzzy continuous.
\end{lemma}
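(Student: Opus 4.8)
The statement to prove is Lemma 3.14g (\ref{3.14g}): that $J_g(f)=(f,f^{-1})$ is a morphism in $\mathbf{Graded\ Fuzzy\ TopSys}$ whenever $f$ is graded fuzzy continuous. Let me think about what this requires.

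We have a graded fuzzy continuous map $f:(X,\tau,\subseteq)\longrightarrow(Y,\tau',\subseteq)$, meaning $f^{-1}(\tilde{T'})\in\tau$ for every $\tilde{T'}\in\tau'$. The functor $J_g$ sends this to the pair $(f,f^{-1})$ between the graded fuzzy topological systems $(X,\in,\tau,\subseteq)$ and $(Y,\in,\tau',\subseteq)$. By Definition \ref{Grftsy} of a $\mathbf{Graded\ Fuzzy\ TopSys}$ morphism, I need to verify three things for $(f_1,f_2)=(f,f^{-1})$: that $f_1=f$ is a set map (immediate); that $f_2=f^{-1}$ is a graded frame homomorphism from $(\tau',\subseteq)$ to $(\tau,\subseteq)$; and the satisfiability-preservation condition $gr(x\in f^{-1}(\tilde{T'}))=gr(f(x)\in' \tilde{T'})$ for all $x\in X$ and $\tilde{T'}\in\tau'$.

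So let me think about the overall proof plan.

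First I would observe that $f_1=f$ is trivially a set map, so condition (i) holds at once. Next I would establish condition (iii), the preservation of satisfiability, which should be the quickest substantive part: using the definition $gr(x\in\tilde{T})=\tilde{T}(x)$ built into $J_g$, I would compute $gr(x\in f^{-1}(\tilde{T'}))=(f^{-1}(\tilde{T'}))(x)=\tilde{T'}(f(x))=gr(f(x)\in'\tilde{T'})$, where the middle equality is just the definition of inverse image of a fuzzy set. This is the exact analogue of Lemma \ref{3.14_1} in Chapter 2, and the computation is a one-line chain of equalities.

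The main work, and the step I expect to be the real obstacle, is verifying condition (ii): that $f^{-1}:(\tau',\subseteq)\longrightarrow(\tau,\subseteq)$ is a \emph{graded} frame homomorphism in the sense of Definition \ref{Grfrm}. Here three sub-conditions must hold. The preservation of finite meets, $f^{-1}(\tilde{T_1'}\cap\tilde{T_2'})=f^{-1}(\tilde{T_1'})\cap f^{-1}(\tilde{T_2'})$, and of arbitrary joins, $f^{-1}(\bigcup_i\tilde{T_i'})=\bigcup_i f^{-1}(\tilde{T_i'})$, are the standard pointwise facts about inverse images of fuzzy sets and follow by evaluating both sides at an arbitrary $x$ and using that $f^{-1}(\tilde{T'})(x)=\tilde{T'}(f(x))$ commutes with $\min$ and $\sup$. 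The genuinely new ingredient, which distinguishes the graded setting from Chapter 2, is the order-preservation inequality $R'(\tilde{T_1'},\tilde{T_2'})\leq R(f^{-1}(\tilde{T_1'}),f^{-1}(\tilde{T_2'}))$, i.e.\ $gr(\tilde{T_1'}\subseteq\tilde{T_2'})\leq gr(f^{-1}(\tilde{T_1'})\subseteq f^{-1}(\tilde{T_2'}))$.

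To handle this last inequality I would unfold both sides using the definition of graded inclusion via the G\"odel arrow: the left side is $\inf_{y\in Y}\{\tilde{T_1'}(y)\rightarrow\tilde{T_2'}(y)\}$ and the right side is $\inf_{x\in X}\{\tilde{T_1'}(f(x))\rightarrow\tilde{T_2'}(f(x))\}$. Since each term on the right, indexed by $x$, equals the term on the left indexed by $y=f(x)$, the right-hand infimum is taken over a sub-collection (the image values) of the terms appearing on the left, hence is at least as large; this gives the desired inequality. I anticipate this is the crux, since it is exactly the place where graded inclusion must interact correctly with preimages, and I would note it parallels the flavour of Proposition \ref{ftres} (though there the statement is about direct images, so the argument is dual rather than identical). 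Having verified (i), (ii) and (iii), I conclude that $(f,f^{-1})$ satisfies all the defining conditions of a $\mathbf{Graded\ Fuzzy\ TopSys}$ morphism, which completes the proof; together with the earlier lemmas this shows $J_g$ is indeed a functor.
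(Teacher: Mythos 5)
Your proposal is correct, and on the central computation it coincides with the paper's proof: the paper's argument for Lemma \ref{3.14g} consists precisely of your condition (iii) chain $gr(x\in f^{-1}(\tilde{T_2}))=(f^{-1}(\tilde{T_2}))(x)=\tilde{T_2}(f(x))=gr(f(x)\in \tilde{T_2})$, exactly as in Lemma \ref{3.14_1} of Chapter 2. Where you diverge is in how the remaining condition -- that $f^{-1}$ is a graded frame homomorphism in the sense of Definition \ref{Grfrm} -- is discharged. The paper does not verify it at all; it simply says the displayed identity suffices ``as Proposition \ref{ftres} holds.'' But, as you rightly observe, Proposition \ref{ftres} asserts $gr(\tilde{T_1}\subseteq\tilde{T_2})\leq gr(f(\tilde{T_1})\subseteq f(\tilde{T_2}))$ for \emph{direct} images of fuzzy subsets of $X$, whereas the morphism condition in $\mathbf{Graded\ Fuzzy\ TopSys}$ requires the \emph{preimage} inequality $gr(\tilde{T_1'}\subseteq\tilde{T_2'})\leq gr(f^{-1}(\tilde{T_1'})\subseteq f^{-1}(\tilde{T_2'}))$ for fuzzy subsets of $Y$, together with preservation of binary meets and arbitrary joins by $f^{-1}$. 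These are not the same statement, and the paper leaves the actual verification implicit. Your proof fills exactly this gap: the pointwise computations $f^{-1}(\tilde{T_1'}\cap\tilde{T_2'})(x)=\tilde{T_1'}(f(x))\wedge\tilde{T_2'}(f(x))$ and the analogous one for suprema settle meets and joins, and your observation that $\inf_{x\in X}\{\tilde{T_1'}(f(x))\rightarrow\tilde{T_2'}(f(x))\}$ is an infimum over the subfamily of terms indexed by $y\in f(X)\subseteq Y$, hence at least $\inf_{y\in Y}\{\tilde{T_1'}(y)\rightarrow\tilde{T_2'}(y)\}$, is the correct (and pleasantly short) argument for the grade inequality -- notably easier than the direct-image Proposition \ref{ftres}, which needed the interplay between $\rightarrow$ and suprema. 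In short: your route is the same where the paper is explicit, and strictly more careful where the paper is elliptical; what it buys is a proof that actually checks every clause of Definition \ref{Grfrm} rather than citing a dual statement.
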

\begin{proof}
We have $f:(X,\tau_1)\longrightarrow (Y,\tau_2)$ and $(f,f^{-1}):(X,\in ,\tau_1)\longrightarrow (Y,\in ,\tau_2)$. It is enough to show that for $\tilde{T_2}\in \tau_2$, $gr(x\in f^{-1}(\tilde{T_2}))=gr(f(x)\in \tilde{T_2})$ as Proposition \ref{ftres} holds.

Now, $$gr(x\in f^{-1}(\tilde{T_2}))=(f^{-1}(\tilde{T_2}))(x)=\tilde{T_2}(f(x))=gr(f(x)\in \tilde{T_2}).$$ Hence $J_g(f)=(f,f^{-1})$ is continuous.
\end{proof}
So $ J_g$ is a functor from $\mathbf{Graded\ Fuzzy\ Top}$ to $\mathbf{Graded\ Fuzzy\ TopSys}$.
\subsection*{Functor $fm_g$ from the category $\mathbf{Graded\ Fuzzy\ TopSys}$ to the category $\mathbf{Graded\ Frm^{op}}$}
\begin{definition}\label{3.15g}
$\mathbf{fm_g}$\index{functor!$J_g$} is a functor from the category $\mathbf{Graded\ Fuzzy\ TopSys}$ to the category $\mathbf{Graded\ Frm^{op}}$ defined as follows.\\
$fm_g$ acts on an object $(X,\models ,A,R)$ as $fm_g(X,\models ,A,R)=(A,R)$ and on a morphism $(f_1,f_2)$ as $fm_g(f_1,f_2)=f_2$.
\end{definition}
 It is easy to see that $fm_g$ is a functor.
\subsection*{Functor $S_g$ from $\mathbf{Graded\ Frm^{op}}$ to $\mathbf{Graded\ Fuzzy\ TopSys}$}
\begin{definition}\label{3.16g}
Let $(A,R)$ be a graded frame, $Hom_g((A,R),([0,1],R^*))=\{ graded\ frame$ $ hom$ $v:(A,R)\longrightarrow ([0,1],R^*)\}$, where $R^*:[0,1]\times [0,1]\longrightarrow [0,1]$ such that $R^*(a,b)=1$ iff $a\leq b$ and $R^*(a,b)=b$ iff $a>b$.
\end{definition}
\begin{lemma}\label{3.17g}
$(Hom_g((A,R),([0,1],R^*)),\models_*,A,R)$, where $(A,R)$ is a graded frame and $gr(v\models_* a)=v(a)$, is a graded fuzzy topological system.
\end{lemma}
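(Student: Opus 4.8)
The plan is to verify the three defining conditions of a graded fuzzy topological system (Definition \ref{gftsy}) for the candidate quadruple $(Hom_g((A,R),([0,1],R^*)),\models_*,A,R)$, exactly mirroring the pattern used in the crisp case of Lemma \ref{3.17_1} and the $L$-valued case of Lemma \ref{3.17_1l}. First I would observe that $Hom_g((A,R),([0,1],R^*))$ is a set and that $(A,R)$ is a graded frame by hypothesis, so only the interaction of the relation $\models_*$ (defined by $gr(v\models_* a)=v(a)$) with the graded frame structure needs checking.

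The first condition to establish is $gr(v\models_* a)\wedge R(a,b)\leq gr(v\models_* b)$. Unwinding the definitions, this amounts to showing $v(a)\wedge R(a,b)\leq v(b)$ for any graded frame homomorphism $v$. Since $v$ is a morphism in $\mathbf{Graded\ Frm}$, by condition (iii) of Definition \ref{Grfrm} we have $R(a,b)\leq R^*(v(a),v(b))$, and then I would invoke the property $R^*(v(a),v(b)) = v(a)\rightarrow v(b)$ together with Property \ref{pga}(8) ($a\wedge(a\rightarrow b)\leq b$) to conclude
\begin{align*}
v(a)\wedge R(a,b) &\leq v(a)\wedge R^*(v(a),v(b))\\
&= v(a)\wedge (v(a)\rightarrow v(b))\\
&\leq v(b).
\end{align*}
The second and third conditions, namely $gr(v\models_*\bigwedge S)=\inf\{gr(v\models_* s)\mid s\in S\}$ for finite $S$ and $gr(v\models_*\bigvee S)=\sup\{gr(v\models_* s)\mid s\in S\}$ for arbitrary $S$, follow directly from the fact that $v$ preserves finite meets and arbitrary joins (conditions (i) and (ii) of Definition \ref{Grfrm}): one computes $gr(v\models_*\bigwedge S)=v(\bigwedge S)=\inf\{v(s)\mid s\in S\}=\inf\{gr(v\models_* s)\mid s\in S\}$, and symmetrically for the join using $\sup$. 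These are the analogues of the routine verifications in Lemma \ref{3.17_1l}, parts (3) and (4).

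The main obstacle, and the only genuinely new point compared with the earlier lemmas, is the first condition involving $R$ and $R^*$, since that is the feature distinguishing a \emph{graded} frame from an ordinary one. The crux is the identification of $R^*$ with the G\"odel arrow: the definition of $R^*$ in Definition \ref{3.16g} ($R^*(a,b)=1$ if $a\leq b$ and $R^*(a,b)=b$ if $a>b$) is precisely the G\"odel arrow of Definition \ref{validg}, so the listed properties in Subsection \ref{pga} are available. Once this identification is made explicit, the graded monotonicity condition reduces to Property \ref{pga}(8) as shown above, and the proof is complete. I would therefore structure the writeup as three short verifications, spending essentially all the expository effort on the first.
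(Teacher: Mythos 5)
Your proposal is correct and follows essentially the same route as the paper: the paper likewise reduces the first condition to $v(a)\wedge R(a,b)\leq v(b)$ via the homomorphism property $R(a,b)\leq R^*(v(a),v(b))$ and then handles the two cases $v(a)\leq v(b)$ and $v(a)>v(b)$ inline, which is exactly the content of Property \ref{pga}(8) that you invoke after identifying $R^*$ with the G$\ddot{o}$del arrow. The meet and join conditions are dispatched in the paper just as briefly as in your writeup, by preservation of $\wedge$ and $\bigvee$ under $v$.
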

\begin{proof}
Let us show that $$gr(v\models_*a)\wedge R(a,b)\leq gr(v\models_*b).$$ That is, it will be enough to show that $v(a)\wedge R(a,b)\leq v(b)$. Now as $v$ is a graded frame homomorphism so $R(a,b)\leq R^*(v(a),v(b))$. Hence $$v(a)\wedge R(a,b)\leq R(a,b)\leq R^*(v(a),v(b)).$$ Now as $v(a)$, $v(b)\in [0,1]$ so either $v(a)\leq v(b)$ or $v(a)>v(b)$. For $v(a)\leq v(b)$, $$v(a)\wedge R(a,b)\leq v(a)\leq v(b)$$ and for $v(a)>v(b)$, $$R^*(v(a),v(b))=0.$$ So for the second case $$v(a)\wedge R(a,b)\leq R^*(v(a),v(b))=v(b).$$ Hence $$gr(v\models_*a)\wedge R(a,b)\leq gr(v\models_*b).$$ As $v$ is a graded frame homomorphism, so $v(a\wedge b)=v(a)\wedge v(b)$ and hence $$gr(v\models_*a\wedge b)=gr(v\models_*a)\wedge gr(v\models_*b).$$
Similarly $gr(v\models_*\bigvee S)=sup\{gr(v\models_* a)\mid a\in S\}$, for $S\subseteq A$.
\end{proof}
\begin{lemma}\label{3.18g}
If $f:(B,R')\longrightarrow (A,R)$ is a graded frame homomorphism then \\$(\_\circ f,f):(Hom((A,R),([0,1],R^*)),\models_*,A,R)\longrightarrow (Hom((B,R'),([0,1],R^*)),\models_*,B,R')$ is continuous.
\end{lemma}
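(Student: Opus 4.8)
The statement to prove is Lemma \ref{3.18g}: given a graded frame homomorphism $f:(B,R')\longrightarrow (A,R)$, the pair $(\_\circ f, f)$ is a continuous map (i.e. a $\mathbf{Graded\ Fuzzy\ TopSys}$ morphism) from $(Hom((A,R),([0,1],R^*)),\models_*,A,R)$ to $(Hom((B,R'),([0,1],R^*)),\models_*,B,R')$. By the definition of a morphism in $\mathbf{Graded\ Fuzzy\ TopSys}$ (Definition \ref{Grftsy}), I must check three things: that $\_\circ f$ is a well-defined set map on the homomorphism spaces, that $f$ itself is a graded frame homomorphism (which is given by hypothesis), and that the compatibility condition $gr(v\models_* f(b))=gr((\_\circ f)(v)\models_* b)$ holds for all $v\in Hom((A,R),([0,1],R^*))$ and all $b\in B$.

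The plan is to proceed by verifying each of these in order. First I would confirm that $\_\circ f$ sends $Hom((A,R),([0,1],R^*))$ into $Hom((B,R'),([0,1],R^*))$: for any graded frame homomorphism $v:(A,R)\longrightarrow([0,1],R^*)$, the composite $v\circ f:(B,R')\longrightarrow([0,1],R^*)$ must again be a graded frame homomorphism. This is a routine composition check of the three defining clauses of a graded frame homomorphism from Definition \ref{Grfrm} — preservation of finite meet, preservation of arbitrary join, and the inequality $R'(b_1,b_2)\leq R^*((v\circ f)(b_1),(v\circ f)(b_2))$ — and it follows essentially from Proposition \ref{next}, since both $f$ and $v$ are graded frame homomorphisms and composites of such are again such. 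Second, the requirement that the algebra-component $f$ be a graded frame homomorphism is exactly the hypothesis, so nothing is needed there.

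The substantive step is the third condition, the satisfiability-compatibility equation. Here I would compute directly using the definition $gr(v\models_* a)=v(a)$ given in Lemma \ref{3.17g}. For $v\in Hom((A,R),([0,1],R^*))$ and $b\in B$, I have on the one hand $gr(v\models_* f(b))=v(f(b))=v(f(b))$, and on the other hand, writing the image morphism $(\_\circ f)(v)=v\circ f$, I have $gr((v\circ f)\models_* b)=(v\circ f)(b)=v(f(b))$. The two sides coincide immediately, so the equation $gr(v\models_* f(b))=gr(v\circ f\models_* b)$ holds. This mirrors exactly the computation carried out in the unrelated earlier Lemma \ref{3.18_1} and Lemma \ref{3.18g}'s ungraded analogue, where the key observation is simply that the satisfiability grade is literally evaluation.

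I do not expect any genuine obstacle in this lemma; it is a verification rather than a theorem with a hard kernel. The only point requiring a moment of care is the first step, ensuring that the composite $v\circ f$ genuinely respects the graded relations — that is, checking $R'(b_1,b_2)\leq R^*(v(f(b_1)),v(f(b_2)))$ — which chains the two inequalities $R'(b_1,b_2)\leq R(f(b_1),f(b_2))$ (from $f$ being a graded frame homomorphism) and $R(f(b_1),f(b_2))\leq R^*(v(f(b_1)),v(f(b_2)))$ (from $v$ being one), exactly as in clause (iii) of Proposition \ref{next}. Once that is in place, the compatibility equation is a one-line identity, and the lemma is complete.
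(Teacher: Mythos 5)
Your proof is correct and follows essentially the same route as the paper: the paper leaves this lemma's proof to the reader, but its ungraded analogue (Lemma \ref{3.18_1}) is proved by exactly the evaluation identity you use, $gr(v\models_* f(b))=v(f(b))=(v\circ f)(b)=gr(v\circ f\models_* b)$. Your additional check that $v\circ f$ is again a graded frame homomorphism, obtained by chaining $R'(b_1,b_2)\leq R(f(b_1),f(b_2))\leq R^*(v(f(b_1)),v(f(b_2)))$ as in Proposition \ref{next}, is the right way to fill in the well-definedness step the paper leaves implicit.
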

\begin{definition}\label{3.19g}
$\mathbf{S_g}$\index{functor!$S_g$} is a functor from the category $\mathbf{Graded\ Frm^{op}}$ to the category $\mathbf{Graded\ Fuzzy\ TopSys}$ defined as follows.\\
$S_g$ acts on an object $(A,R)$ as $S_g((A,R))=(Hom_g((A,R),([0,1],R^*)),\models_*,A,R)$ and on a morphism $f$ as $S_g(f)=(\_\circ f,f)$.
\end{definition}
Previous two Lemmas \ref{3.17g} and \ref{3.18g} shows that $S_g$ is indeed a functor.
\begin{lemma}\label{3.20g}
$Ext_g$ is the right adjoint to the functor $J_g$.
\end{lemma}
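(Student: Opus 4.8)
The plan is to establish the adjunction exactly as in the earlier proofs of Lemma \ref{3.20_1} and Theorem \ref{ext_L}, by explicitly exhibiting the co-unit of the adjunction and verifying its universal property. First I would recall that $J_g(X,\tau,\subseteq)=(X,\in,\tau,\subseteq)$ and $Ext_g(X,\models,A,R)=(X,ext_g(A),\subseteq)$, so that the composite functor gives
$$J_g(Ext_g(X,\models,A,R))=(X,\in,ext_g(A),\subseteq).$$
The candidate co-unit at the object $(X,\models,A,R)$ should be the graded fuzzy topological system morphism $\xi_X=(id_X,ext_g^*)$, where $id_X$ is the identity set map and $ext_g^*\colon A\longrightarrow ext_g(A)$ is the graded frame homomorphism sending $a\mapsto ext_g(a)$.

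The key steps, in order, would be as follows. First I would verify that $\xi_X=(id_X,ext_g^*)$ is indeed a $\mathbf{Graded\ Fuzzy\ TopSys}$ morphism; this amounts to checking the continuity condition $gr(x\in ext_g^*(a))=gr(id_X(x)\models a)$, which is immediate since by Definition \ref{3.8g} we have $ext_g(a)(x)=gr(x\models a)$ and by the definition of $J_g$ the relation $\in$ reads off the membership value $ext_g(a)(x)$. Second, I would show the required universal property: given any $\mathbf{Graded\ Fuzzy\ TopSys}$ morphism $(f_1,f_2)\colon J_g(Y,\tau',\subseteq)\longrightarrow (X,\models,A,R)$, I set $\hat f=f_1$ and check that $(f_1,f_2)=\xi_X\circ J_g(\hat f)$. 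Since $J_g(f_1)=(f_1,f_1^{-1})$, the composite is $(id_X\circ f_1,\,f_1^{-1}\circ ext_g^*)$; the first coordinate is clearly $f_1$, and for the second I would use the continuity of $(f_1,f_2)$ to conclude $f_2(a)=f_1^{-1}(a)=f_1^{-1}(ext_g^*(a))$, giving $f_2=f_1^{-1}\circ ext_g^*$. Uniqueness of $\hat f$ follows because its first component is forced to equal $f_1$ and $f_2$ is then determined.

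The main obstacle, and the one genuinely new ingredient compared with the crisp and $\mathscr{L}$-valued cases, is making sure that the graded structure $R$ is respected throughout: I must confirm that $ext_g^*$ is a \emph{graded} frame homomorphism in the sense of Definition \ref{Grfrm}, i.e.\ that it preserves finite meets and arbitrary joins and satisfies $R(a_1,a_2)\leq R^*(ext_g^*(a_1),ext_g^*(a_2))$ where $R^*$ is the G\"odel-arrow relation on $ext_g(A)$. This compatibility is precisely what Theorem \ref{gftsyt} (via Lemma \ref{3.9g}) guarantees, since there the relation $\subseteq$ on $ext_g(A)$ is built from the G\"odel arrow in the same way that $R$ is built on $A/_{\approx}$; the inequality $R(a_1,a_2)\leq gr(ext_g(a_1)\subseteq ext_g(a_2))$ reduces to the pointwise statement $gr(x\models a_1)\rightarrow gr(x\models a_2)$ dominating the infimum defining $R$, which follows from condition (1) of Definition \ref{gftsy} together with Property \ref{pga}. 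Once this graded naturality is in hand, the verification that $\xi$ is natural in the object and the identification of the adjunction are routine, mirroring the computations already carried out in Theorem \ref{ext_L}.
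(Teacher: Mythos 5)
Your proposal is correct and takes exactly the paper's route: the paper's own argument is a proof sketch that exhibits the same co-unit $\xi_X=(id_X,ext_g^*)$ and dismisses the verification as a ``routine check'' patterned on Lemma \ref{3.20_1} and Theorem \ref{ext_L}. You in fact supply the one genuinely graded piece of that routine check --- that $ext_g^*$ satisfies $R(a_1,a_2)\leq gr(ext_g(a_1)\subseteq ext_g(a_2))$, which follows by residuating condition (1) of Definition \ref{gftsy} pointwise through the G\"odel arrow and taking the infimum --- so your write-up is, if anything, more complete than the paper's.
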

\begin{proof}[Proof Sketch]
It is possible to prove the theorem by presenting the co-unit of the adjunction.
Recall that $J_g(X,\tau,\subseteq)=(X,\in, \tau,\subseteq)$ and $Ext_g(X,\models,A,R)=(X,ext_g(A),\subseteq)$.
So, $$J_g(Ext_g(X,\models,A,R))=(X,\in,ext(A),\subseteq).$$
Let us draw the diagram of co-unit.
\begin{center}
\begin{tabular}{ l | r } 
$\mathbf{Graded\ Fuzzy\ TopSys}$ & $\mathbf{Graded\ Fuzzy\ Top}$\\
\hline
 {\begin{tikzpicture}[description/.style={fill=white,inner sep=2pt}] 
    \matrix (m) [matrix of math nodes, row sep=2.5em, column sep=2.0em]
    { J_g(Ext_g(X,\models,A,R))&&(X,\models,A,R)  \\
         J_g(Y,\tau ',\subseteq) \\ }; 
    \path[->,font=\scriptsize] 
        (m-1-1) edge node[auto] {$\xi_X$} (m-1-3)
        (m-2-1) edge node[auto] {$J_g(f)(\equiv(f_1,f_1^{-1}))$} (m-1-1)
        (m-2-1) edge node[auto,swap] {$\hat f (\equiv(f_1,f_2))$} (m-1-3)
        %(m-1-1) edge node[auto,swap] {$f$} (m-2-3)
       % (m-1-5) edge node[auto,swap] {$j$} (m-1-3)
       % (m-1-5) edge node[auto] {$\psi$} (m-2-3)
        %(m-1-3) edge node[auto] {$G(\hat{f})$} (m-2-3)
         ;
\end{tikzpicture}} & {\begin{tikzpicture}[description/.style={fill=white,inner sep=2pt}] 
    \matrix (m) [matrix of math nodes, row sep=2.5em, column sep=2.0em]
    { Ext_g(X,\models,A,R)  \\
         (Y,\tau ',\subseteq) \\ }; 
    \path[->,font=\scriptsize] 
        (m-2-1) edge node[auto,swap] {$f(\equiv f_1)$} (m-1-1)
       
         ;
\end{tikzpicture}} \\ 
\end{tabular}
\end{center}
Hence co-unit is defined by $\xi_X=(id_X,ext_g^*)$. That is,
\begin{center}
\begin{tikzpicture}[description/.style={fill=white,inner sep=2pt}] 
    \matrix (m) [matrix of math nodes, row sep=2.5em, column sep=2.0em]
    { (X,\in ,ext_g(A),\subseteq )&&(X,\models,A,R )  \\
          }; 
    \path[->,font=\scriptsize] 
        (m-1-1) edge node[auto] {$\xi_X$} (m-1-3)
        (m-1-1) edge node[auto,swap] {$(id_X,ext_g^*)$} (m-1-3)
         ;
\end{tikzpicture}
\end{center}
 where $ext_g^*$ is a mapping from $A$ to $ext_g(A)$ such that $ext_g^*(a)=ext_g(a)$, for all $a\in A$.
 
Now by routine check one can conclude that $Ext_g$ is the right adjoint to the functor $J_g$.
\end{proof}
Diagram of the unit of the above adjunction is as follows.

\begin{center}
\begin{tabular}{ l | r  } 
 $\mathbf{Graded\ Fuzzy\ Top}$ & $\mathbf{Graded\ Fuzzy\ TopSys}$ \\
\hline
 {\begin{tikzpicture}[description/.style={fill=white,inner sep=2pt}] 
    \matrix (m) [matrix of math nodes, row sep=2.5em, column sep=2.0em]
    {(X,\tau,\subseteq) & &Ext(J(X, \tau,\subseteq))  \\
        & & Ext(Y,\models' ,B,R') \\ }; 
    \path[->,font=\scriptsize] 
        (m-1-1) edge node[auto] {$\eta_X(\equiv id_X)$} (m-1-3)
        (m-1-1) edge node[auto,swap] {$\hat{f}(\equiv f_1)$} (m-2-3)
       % (m-1-5) edge node[auto,swap] {$j$} (m-1-3)
        %(m-1-5) edge node[auto] {$\psi$} (m-2-3)
        (m-1-3) edge node[auto] {$ext(f)(\equiv f_1)$} (m-2-3);
\end{tikzpicture}} &  {\begin{tikzpicture}[description/.style={fill=white,inner sep=2pt}] 
    \matrix (m) [matrix of math nodes, row sep=2.5em, column sep=2.5em]
    {& &J(X,\tau,\subseteq)  \\
        & &(Y,\models' ,B,R') \\ }; 
    \path[->,font=\scriptsize]

        %(m-1-5) edge node[auto,swap] {$j$} (m-1-3)
        %(m-1-5) edge node[auto] {$\psi$} (m-2-3)
        (m-1-3) edge node[auto] {$f(\equiv(f_1,f_1^{-1}))$} (m-2-3);
\end{tikzpicture}}\\
\end{tabular}
\end{center}
%Hence we have $(\eta_X,\xi_X):J\dashv Ext:FBSy_n\longrightarrow FBS_n$.

\begin{observation}\label{o1_g}
If a graded fuzzy topological system $(X,\models,A,R)$ is spatial then the co-unit $\xi_X$ becomes a natural isomorphism.
\end{observation}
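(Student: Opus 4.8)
The plan is to prove Observation \ref{o1_g} by upgrading the co-unit $\xi$, which by Lemma \ref{3.20g} is already a natural transformation, to a natural isomorphism. Since a natural transformation is a natural isomorphism precisely when each of its components is an isomorphism, it suffices to fix a spatial graded fuzzy topological system $(X,\models,A,R)$ and show that the single morphism $\xi_X=(id_X,ext_g^*):(X,\in,ext_g(A),\subseteq)\longrightarrow (X,\models,A,R)$ is an isomorphism in $\mathbf{Graded\ Fuzzy\ TopSys}$. The first component $id_X$ is trivially a bijection, so the whole argument reduces to showing that the second component $ext_g^*:(A,R)\longrightarrow (ext_g(A),\subseteq)$, given by $ext_g^*(a)=ext_g(a)$, is an isomorphism of graded frames and that its inverse furnishes a legitimate morphism in the opposite direction.

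First I would establish bijectivity of $ext_g^*$. Surjectivity is immediate from the definition $ext_g(A)=\{ext_g(a)\}_{a\in A}$. Injectivity is exactly where spatiality enters: if $ext_g^*(a)=ext_g^*(b)$ then $ext_g(a)(x)=ext_g(b)(x)$ for every $x\in X$, i.e. $gr(x\models a)=gr(x\models b)$ for all $x$, whence $a=b$ by Definition \ref{gspatial}. Next I would record that $ext_g^*$ is a graded frame homomorphism: preservation of finite meets and arbitrary joins follows from conditions 2 and 3 of Definition \ref{gftsy} (as in the proof of Lemma \ref{3.9g}), while the relational inequality $R(a,b)\leq gr(ext_g(a)\subseteq ext_g(b))$ is precisely condition 1 of the graded fuzzy topological system, since $gr(x\models a)\wedge R(a,b)\leq gr(x\models b)$ rewrites, via the G$\ddot{o}$del arrow, as $R(a,b)\leq inf_x\{gr(x\models a)\rightarrow gr(x\models b)\}$. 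These facts re-confirm that $\xi_X$ is a morphism; the content of the observation is that it is invertible.

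To produce the inverse I would take $\psi_X=(id_X,(ext_g^*)^{-1})$ and check it is a morphism. Using bijectivity, $(ext_g^*)^{-1}$ automatically preserves $\wedge$ and $\bigvee$ (a bijective meet and join preserving map of frames reflects them), and the satisfaction condition $gr(x\in ext_g(a))=gr(x\models a)$ is symmetric in the two systems, so the only genuinely new requirement is the relational condition for $(ext_g^*)^{-1}$, namely $gr(ext_g(a)\subseteq ext_g(b))\leq R(a,b)$. Together with the inequality already obtained this would force $R(a,b)=inf_x\{gr(x\models a)\rightarrow gr(x\models b)\}$, i.e. $ext_g^*$ reflects the graded relation and not merely preserves it. Granting this, $\psi_X\circ\xi_X$ and $\xi_X\circ\psi_X$ collapse to identities, so $\xi_X$ is an isomorphism, and since this holds componentwise $\xi$ is a natural isomorphism.

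I expect this last step --- proving that the relation $R$ is reflected by $ext_g^*$, equivalently that the inverse map is relation preserving --- to be the main obstacle, and it is exactly the place where the graded setting departs from the classical one of Observation \ref{o1}. In Chapter 2 a bijective frame homomorphism is automatically a frame isomorphism because the order is recoverable from the meet operation, so no extra verification is needed; here the fuzzy relation $R$ carries strictly more information than the underlying frame order, so reflection of $R$ does not come for free from bijectivity. The delicate point is therefore to show that spatiality, together with the graded frame axioms and condition 1 of Definition \ref{gftsy}, pins $R$ down to the canonical inclusion grade $inf_x\{gr(x\models a)\rightarrow gr(x\models b)\}$; this is the crux on which the whole observation turns, and I would concentrate the detailed work there.
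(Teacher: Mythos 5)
Your reduction is sound as far as it goes: naturality is already supplied by Lemma \ref{3.20g}, the first component $id_X$ is trivially invertible, surjectivity of $ext_g^*$ is definitional, injectivity is exactly Definition \ref{gspatial}, and your residuation argument correctly turns condition 1 of Definition \ref{gftsy} into $R(a,b)\le \inf_x\{gr(x\models a)\rightarrow gr(x\models b)\}=gr(ext_g(a)\subseteq ext_g(b))$, since the G\"odel arrow is the residuum of $\wedge$. (For comparison: the paper states Observation \ref{o1_g} without proof, just as it does Observation \ref{o1}, so the entire content is indeed where you locate it.) But your proposal stops at the decisive point: you write ``granting this'' for the reflection inequality $gr(ext_g(a)\subseteq ext_g(b))\le R(a,b)$ and defer the work. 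That step is not merely delicate --- it cannot be established from Definition \ref{gspatial} at all, because spatiality is only injectivity of $a\mapsto ext_g(a)$ and places no lower bound on $R$. Concretely: let $A$ be the three-element chain $\{0,\frac{1}{2},1\}$ and let $R$ be its crisp order, $R(u,v)=1$ if $u\le v$ and $R(u,v)=0$ otherwise; the paper itself notes that such a crisp $R$ satisfies conditions 1--9 of Definition \ref{gf}. Take $X=\{x\}$ and $gr(x\models u)=u$. Conditions 1--3 of Definition \ref{gftsy} hold (condition 1 because $R(u,v)=1$ forces $u\le v$), and the system is spatial since $u\mapsto ext_g(u)$ is injective. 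Yet $gr(ext_g(1)\subseteq ext_g(\frac{1}{2}))=1\rightarrow\frac{1}{2}=\frac{1}{2}\not\le 0=R(1,\frac{1}{2})$. Since composition in $\mathbf{Graded\ Fuzzy\ TopSys}$ is componentwise (with the frame part reversed), the only candidate second component for an inverse of $\xi_X$ is the set-theoretic inverse of $ext_g^*$, and it fails to be a graded frame homomorphism; so $\xi_X$ is not an isomorphism for this spatial system, and the claim you needed --- that spatiality pins $R$ down to the canonical inclusion grade --- is false.

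Your diagnosis of why the graded setting differs from Chapter 2 (and from Theorem \ref{5.44}, where a bijective {\L}$_n^c$-homomorphism is automatically an isomorphism) is exactly right; the conclusion to draw is that Observation \ref{o1_g} needs a hypothesis stronger than Definition \ref{gspatial}, not that more cleverness will extract reflection of $R$ from injectivity. The observation does hold for every system the thesis actually feeds into it: in $J_g(X,\tau,\subseteq)=(X,\in,\tau,\subseteq)$ the relation is the graded inclusion $\inf_x\{\tilde{T_1}(x)\rightarrow\tilde{T_2}(x)\}$, and in the quotient systems $(X,\models',A/_{\approx},R)$ of Theorems \ref{agf} and \ref{Gftopsys} one has $R([a],[b])=\inf_x\{gr(x\models'[a])\rightarrow gr(x\models'[b])\}$ by definition. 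In all these cases $R(a,b)=gr(ext_g(a)\subseteq ext_g(b))$ holds with equality, the inverse map preserves $R$, and your plan completes verbatim. So either restrict the statement to systems whose $R$ is this canonical grade, or strengthen ``spatial'' to include the reflection condition $\inf_x\{gr(x\models a)\rightarrow gr(x\models b)\}\le R(a,b)$; with that amendment your argument is complete and correct.
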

\begin{observation}\label{o2_g}
For any graded fuzzy topological space $(X,\tau,\subseteq)$, the unit $\eta_X$ is a natural isomorphism.  
\end{observation}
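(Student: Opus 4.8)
The plan is to show that the unit component $\eta_X$ is in fact the identity morphism on $(X,\tau,\subseteq)$, from which both its being an isomorphism and the naturality of $\eta$ follow immediately. This mirrors exactly the argument behind Observation \ref{o2} in Chapter 2, the present situation being its graded counterpart.

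First I would unravel the composite $Ext_g\circ J_g$ on the object $(X,\tau,\subseteq)$. By Definition \ref{3.12g}, $J_g(X,\tau,\subseteq)=(X,\in,\tau,\subseteq)$ with $gr(x\in\tilde{T})=\tilde{T}(x)$ for every $\tilde{T}\in\tau$. Applying $Ext_g$ (Definition \ref{3.11g}) then gives $(X,ext_g(\tau),\subseteq)$, where by Definition \ref{3.8g} the extent$_g$ of $\tilde{T}$ is the map determined by $ext_g(\tilde{T})(x)=gr(x\in\tilde{T})$.

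The key step is the observation that $gr(x\in\tilde{T})=\tilde{T}(x)$ forces $ext_g(\tilde{T})=\tilde{T}$ as a fuzzy subset of $X$, for each $\tilde{T}\in\tau$. Hence $ext_g(\tau)=\{ext_g(\tilde{T})\}_{\tilde{T}\in\tau}=\tau$, and the graded inclusion $\subseteq$ is literally the same relation, since by Definition \ref{gfts} it is defined pointwise through the G$\ddot{o}$del arrow on the underlying family of fuzzy sets and that family is preserved verbatim. Therefore $Ext_g(J_g(X,\tau,\subseteq))=(X,\tau,\subseteq)$, so that $\eta_X(\equiv id_X)$ is the identity morphism on $(X,\tau,\subseteq)$ in $\mathbf{Graded\ Fuzzy\ Top}$, which is trivially an isomorphism.

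Finally I would check naturality: for a morphism $f\colon(X,\tau,\subseteq)\longrightarrow(Y,\tau',\subseteq)$ one has $J_g(f)=(f,f^{-1})$ and hence $Ext_g(J_g(f))=f$; since each component of $\eta$ is an identity, both legs of the naturality square reduce to $f$, so the square commutes and $\eta$ is a natural isomorphism. I do not anticipate a genuine obstacle here, as the content collapses to the identity; the only point requiring a moment's care is confirming that passing through $J_g$ and then $Ext_g$ leaves the graded inclusion relation unchanged, which holds precisely because $\subseteq$ depends only on the underlying fuzzy sets and these are carried across unaltered.
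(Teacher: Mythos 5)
Your proof is correct and takes essentially the same route as the paper, which states the observation without an explicit proof but already identifies $\eta_X(\equiv id_X)$ in its unit diagram: since $ext_g(\tilde{T})(x)=gr(x\in\tilde{T})=\tilde{T}(x)$ forces $ext_g(\tau)=\tau$ with the same pointwise G\"odel-arrow inclusion, $Ext_g(J_g(X,\tau,\subseteq))=(X,\tau,\subseteq)$ and the unit collapses to the identity. Your naturality check via $Ext_g(J_g(f))=f$ completes the argument exactly as intended.
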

Observation \ref{o1_g} and Observation \ref{o2_g} gives the following theorem.
\begin{theorem}\label{equigft}
Category of spatial graded fuzzy topological systems is equivalent to the category $\mathbf{Graded\ Fuzzy\ Top}$.
\end{theorem}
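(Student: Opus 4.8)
The plan is to establish the equivalence by invoking the adjunction $Ext_g \dashv J_g$ from Lemma \ref{3.20g} together with the two observations about the unit and co-unit. The standard categorical fact is that an adjunction $F \dashv G$ restricts to an equivalence between the full subcategory of objects on which the co-unit is an isomorphism and the full subcategory of objects on which the unit is an isomorphism. Here $J_g$ is the left adjoint and $Ext_g$ is the right adjoint, with unit $\eta: id \longrightarrow Ext_g \circ J_g$ and co-unit $\xi: J_g \circ Ext_g \longrightarrow id$. So the theorem should follow from Observation \ref{o1_g} (co-unit $\xi_X$ is a natural isomorphism precisely when $(X,\models,A,R)$ is spatial) and Observation \ref{o2_g} (unit $\eta_X$ is always a natural isomorphism for a graded fuzzy topological space), exactly as the sentence preceding the theorem indicates.

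Concretely, first I would verify that the restriction of $Ext_g$ to the full subcategory of \emph{spatial} graded fuzzy topological systems lands in $\mathbf{Graded\ Fuzzy\ Top}$ and that $J_g$ maps $\mathbf{Graded\ Fuzzy\ Top}$ into the spatial subcategory. For the latter, I would check that $J_g(X,\tau,\subseteq) = (X,\in,\tau,\subseteq)$ is always spatial: if $gr(x \in \tilde T_1) = gr(x \in \tilde T_2)$ for all $x$, then $\tilde T_1(x) = \tilde T_2(x)$ for all $x$, hence $\tilde T_1 = \tilde T_2$, which is exactly the spatiality condition of Definition \ref{gspatial}. This guarantees that the two functors form an adjoint pair between the spatial subcategory and $\mathbf{Graded\ Fuzzy\ Top}$, so the restrictions are well-defined.

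Next I would assemble the equivalence. On the spatial subcategory, Observation \ref{o1_g} gives that $\xi_X$ is a natural isomorphism, so $J_g \circ Ext_g \cong id$ on spatial systems. On $\mathbf{Graded\ Fuzzy\ Top}$, Observation \ref{o2_g} gives that $\eta_X$ is a natural isomorphism, so $id \cong Ext_g \circ J_g$. These two natural isomorphisms are precisely the data witnessing that $Ext_g$ and $J_g$ are mutually quasi-inverse functors, establishing the categorical equivalence between the category of spatial graded fuzzy topological systems and $\mathbf{Graded\ Fuzzy\ Top}$.

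The main obstacle I anticipate is not the final categorical bookkeeping but the verification underlying the two observations, especially Observation \ref{o1_g}. Showing $\xi_X = (id_X, ext_g^*)$ is an isomorphism in the spatial case requires checking that $ext_g^*: A \longrightarrow ext_g(A)$ is an injective (hence bijective, since it is surjective by construction) graded frame homomorphism; injectivity is exactly where spatiality is used, since $ext_g^*(a) = ext_g^*(b)$ means $gr(x \models a) = gr(x \models b)$ for all $x$, forcing $a = b$. One must also confirm that $ext_g^*$ respects the graded relation $R$ in both directions so that it is an isomorphism of graded frames, not merely a bijective homomorphism. These details live in the proofs of the observations, which the excerpt states but defers; in the present theorem I would simply cite Observation \ref{o1_g} and Observation \ref{o2_g} and let the abstract adjoint-equivalence argument do the rest.
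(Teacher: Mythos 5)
Your proposal follows essentially the same route as the paper: the paper's entire proof is the one-line remark that Observation \ref{o1_g} and Observation \ref{o2_g} together yield the theorem, i.e., the adjunction of Lemma \ref{3.20g} restricts to an equivalence because the co-unit is an isomorphism on spatial systems and the unit is always an isomorphism. Your additional checks --- that $J_g(X,\tau,\subseteq)$ is always spatial and that spatiality is precisely what makes $ext_g^*$ injective --- are correct and in fact fill in details the paper leaves implicit, but they do not change the argument's structure.
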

\begin{lemma}\label{3.22g}
$fm_g$ is the left adjoint to the functor $S_g$.
\end{lemma}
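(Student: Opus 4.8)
The plan is to establish this adjunction by exhibiting its unit, exactly as was done in \rlemma{3.22_1} for the non-graded case. Recall from Definition \ref{3.19g} and Definition \ref{3.15g} that $S_g((A,R))=(Hom_g((A,R),([0,1],R^*)),\models_*,A,R)$ and $fm_g(X,\models,A,R)=(A,R)$, so that $S_g(fm_g(X,\models,A,R))=(Hom_g((A,R),([0,1],R^*)),\models_*,A,R)$. For each graded fuzzy topological system $(X,\models,A,R)$ I would define the unit by $\eta=(p^*,id_A)$, where $p^*\colon X\longrightarrow Hom_g((A,R),([0,1],R^*))$ sends $x$ to the map $p_x\colon A\longrightarrow [0,1]$ given by $p_x(a)=gr(x\models a)$.

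First I would verify that each $p_x$ is a genuine $\mathbf{Graded\ Frm}$ morphism in the sense of Definition \ref{Grfrm}, i.e. that it preserves finite meets, arbitrary joins, and the graded relation via the clause $R(a,b)\leq R^*(p_x(a),p_x(b))$. Preservation of $\wedge$ and $\bigvee$ is immediate from conditions 2 and 3 of Definition \ref{gftsy}. Next I would check that $(p^*,id_A)$ is a $\mathbf{Graded\ Fuzzy\ TopSys}$ morphism, for which the only nontrivial clause is $gr(x\models id_A(a))=gr(p^*(x)\models_* a)$; this holds because $gr(p^*(x)\models_* a)=p^*(x)(a)=p_x(a)=gr(x\models a)$.

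To complete the argument I would show that $\eta$ is couniversal. Given any morphism $(f_1,f_2)\colon (X,\models,A,R)\longrightarrow S_g((B,R'))$, set $\hat f=f_2$, which is a graded frame homomorphism and hence a morphism $(B,R')\longrightarrow (A,R)$ in $\mathbf{Graded\ Frm^{op}}$. Using $S_g(\hat f)=(\_\circ f_2,f_2)$ one computes $S_g(\hat f)\circ\eta=((\_\circ f_2)\circ p^*,\,id_A\circ f_2)$; the second coordinate is visibly $f_2$, and for the first, for each $x$ and each $b\in B$ one has $(p_x\circ f_2)(b)=gr(x\models f_2(b))=gr(f_1(x)\models_* b)=f_1(x)(b)$, so $(\_\circ f_2)\circ p^*=f_1$. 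Uniqueness of $\hat f$ follows since the second coordinate of any factoring morphism is forced to be $f_2$. By Definition \ref{2.3} this makes $fm_g$ the left adjoint to $S_g$.

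The main obstacle is the relation-preservation clause $R(a,b)\leq R^*(gr(x\models a),gr(x\models b))$, which has no analogue in the crisp setting and must be handled by a case split on the G\"odel arrow $R^*$ of Definition \ref{3.16g}. When $gr(x\models a)\leq gr(x\models b)$ the right-hand side equals $1$ and there is nothing to prove; when $gr(x\models a)>gr(x\models b)$ one must deduce $R(a,b)\leq gr(x\models b)$ from condition 1 of Definition \ref{gftsy}, namely $gr(x\models a)\wedge R(a,b)\leq gr(x\models b)$. The key observation is that $R(a,b)>gr(x\models a)$ would force $gr(x\models a)=gr(x\models a)\wedge R(a,b)\leq gr(x\models b)$, contradicting the case hypothesis; hence $R(a,b)\leq gr(x\models a)$, and then $R(a,b)=gr(x\models a)\wedge R(a,b)\leq gr(x\models b)$, as required.
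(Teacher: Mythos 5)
Your proposal is correct and takes essentially the same route as the paper's own argument: it exhibits the unit $\eta=(p^*,id_A)$ with $p_x(a)=gr(x\models a)$ and verifies the couniversal property exactly as in the non-graded \rlemma{3.22_1}. In fact you supply more detail than the paper, whose proof is only a sketch ending in ``routine check'': your case split on the G\"{o}del arrow, deriving $R(a,b)\leq R^*(p_x(a),p_x(b))$ from condition 1 of Definition \ref{gftsy}, is precisely the graded-frame-homomorphism verification the paper omits (it parallels the technique of Lemma \ref{3.17g}), and it is sound.
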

\begin{proof}[Proof Sketch]
It is possible to prove the theorem by presenting the unit of the adjunction.
Recall that  $$S_g((B,R'))=(Hom_g((B,R'),([0,1],R^*)),\models_*,B,R')$$ where $gr(v\models_* a)=v(a)$.
Hence $$S_g(fm_g(X,\models ,A,R))=(Hom_g((A,R),([0,1],R^*)),\models_* ,A,R).$$
\begin{center}
\begin{tabular}{ l | r  } 
 $\mathbf{Graded\ Fuzzy\ TopSys}$ & $\mathbf{Graded\ Frm^{op}}$ \\
\hline
 {\begin{tikzpicture}[description/.style={fill=white,inner sep=2pt}] 
    \matrix (m) [matrix of math nodes, row sep=2.5em, column sep=1.5em]
    {(X,\models ,A,R) & &S_g(fm_g(X, \models ,A,R))  \\
        & & S_g((B,R')) \\ }; 
    \path[->,font=\scriptsize] 
        (m-1-1) edge node[auto] {$\eta_A$} (m-1-3)
        (m-1-1) edge node[auto,swap] {$f(\equiv(f_1,f_2))$} (m-2-3)
       % (m-1-5) edge node[auto,swap] {$j$} (m-1-3)
        %(m-1-5) edge node[auto] {$\psi$} (m-2-3)
        (m-1-3) edge node[auto] {$S_g\hat{f}$} (m-2-3);
\end{tikzpicture}} &  {\begin{tikzpicture}[description/.style={fill=white,inner sep=2pt}] 
    \matrix (m) [matrix of math nodes, row sep=2.5em, column sep=1.5em]
    {& &fm_g(X,\models ,A,R)  \\
        & &(B,R') \\ }; 
    \path[->,font=\scriptsize]

        %(m-1-5) edge node[auto,swap] {$j$} (m-1-3)
        %(m-1-5) edge node[auto] {$\psi$} (m-2-3)
        (m-1-3) edge node[auto] {$\hat{f}(\equiv f_2)$} (m-2-3);
\end{tikzpicture}}\\
\end{tabular}
\end{center}
Then unit is defined by $\eta_A =(p^*,id_A)$. That is,

\begin{center}
 \begin{tikzpicture}[description/.style={fill=white,inner sep=2pt}] 
    \matrix (m) [matrix of math nodes, row sep=2.5em, column sep=2.0em]
    { (X,\models ,A,R) & &S_g(fm_g(X, \models ,A,R))  \\
        }; 
    \path[->,font=\scriptsize] 
        (m-1-1) edge node[auto] {$\eta_A$} (m-1-3)
        (m-1-1) edge node[auto,swap] {$(p^*,id_A)$} (m-1-3)
        %(m-1-1) edge node[auto,swap] {} (m-2-3)
       % (m-1-5) edge node[auto,swap]
        ;
\end{tikzpicture}
\end{center}
where, 
\begin{align*}
  p^* \colon X &\longrightarrow Hom_g((A,R),([0,1],R^*))\\
  x &\longmapsto p_x:(A,R)\longrightarrow ([0,1],R^*)
\end{align*}
such that $p_x(a)=gr(x\models a)$ and $R(a,a')\leq R^*(p_x(a),p_x(a'))$ for any $a,\ a'\in A$.
Now by routine check one can conclude that $fm_g$ is the left adjoint to the functor $S_g$.
\end{proof}
\begin{theorem}\label{3.25g}
$Ext_g\circ S_g$ is the right adjoint to the functor $fm_g\circ J_g$.
\end{theorem}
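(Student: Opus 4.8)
The plan is to prove Theorem~\ref{3.25g} by composing the two adjunctions already established in Lemma~\ref{3.20g} and Lemma~\ref{3.22g}, exactly as the analogous results (Theorem~\ref{3.25_1} and Theorem~\ref{fin}) were obtained in the earlier chapters. The general categorical principle at work is that right adjoints compose: if $J_g \dashv Ext_g$ (i.e. $Ext_g$ is right adjoint to $J_g$) between $\mathbf{Graded\ Fuzzy\ Top}$ and $\mathbf{Graded\ Fuzzy\ TopSys}$, and if $fm_g \dashv S_g$ (i.e. $fm_g$ is left adjoint to $S_g$, equivalently $S_g$ is right adjoint to $fm_g$) between $\mathbf{Graded\ Fuzzy\ TopSys}$ and $\mathbf{Graded\ Frm^{op}}$, then the composite right adjoint $Ext_g \circ S_g$ is right adjoint to the composite left adjoint $fm_g \circ J_g$.

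First I would carefully record the directions of the two functor chains so the composition typechecks. We have
\begin{align*}
\mathbf{Graded\ Fuzzy\ Top} \xrightarrow{\ J_g\ } \mathbf{Graded\ Fuzzy\ TopSys} \xrightarrow{\ fm_g\ } \mathbf{Graded\ Frm^{op}},
\end{align*}
so $fm_g \circ J_g$ runs from $\mathbf{Graded\ Fuzzy\ Top}$ to $\mathbf{Graded\ Frm^{op}}$, while
\begin{align*}
\mathbf{Graded\ Frm^{op}} \xrightarrow{\ S_g\ } \mathbf{Graded\ Fuzzy\ TopSys} \xrightarrow{\ Ext_g\ } \mathbf{Graded\ Fuzzy\ Top},
\end{align*}
so $Ext_g \circ S_g$ runs in the reverse direction. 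Thus the two composites are between the same pair of categories and are candidates for an adjoint pair, confirming that the statement is at least well-typed.

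Next I would invoke the adjunction-composition lemma. Given unit $\eta^{1}:\,id \longrightarrow Ext_g\circ J_g$ and counit $\xi^{1}:\,J_g\circ Ext_g \longrightarrow id$ witnessing $J_g \dashv Ext_g$ (from Lemma~\ref{3.20g}), together with unit $\eta^{2}$ and counit $\xi^{2}$ witnessing $fm_g \dashv S_g$ (from Lemma~\ref{3.22g}), the standard construction produces the unit and counit of the composite adjunction by horizontal pasting: the unit of $fm_g\circ J_g \dashv Ext_g\circ S_g$ is $(Ext_g\,\eta^{2}\,J_g)\circ \eta^{1}$ and the counit is $\xi^{2}\circ(fm_g\,\xi^{1}\,S_g)$. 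Because the middle category $\mathbf{Graded\ Fuzzy\ TopSys}$ is common to both adjunctions, these whiskered natural transformations are well-defined and the triangle identities for the composite follow formally from the triangle identities of the two given adjunctions. In keeping with the brevity of the earlier proofs (Theorem~\ref{fin} simply says ``Proof follows from Theorem~\ref{ext_L} and Theorem~\ref{loc_L}''), the cleanest write-up is: \emph{the result follows by composing the adjoint situations established in Lemma~\ref{3.20g} and Lemma~\ref{3.22g}.}

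The only genuine subtlety --- and the step I would watch most closely --- is a bookkeeping one rather than a deep one: making sure the handedness matches. Lemma~\ref{3.22g} states that $fm_g$ is the \emph{left} adjoint to $S_g$, so $S_g$ is the right adjoint there; composing right adjoints ($Ext_g$ and $S_g$) gives a right adjoint, and composing their left partners ($J_g$ and $fm_g$) in the correct order gives the matching left adjoint. I expect no obstacle in verifying the triangle identities, since those are purely formal consequences of the two component adjunctions that have already been proved; the main hazard is merely confirming that the composite $fm_g\circ J_g$ (not $J_g\circ fm_g$, which is ill-typed) is the functor whose right adjoint is $Ext_g\circ S_g$, which the direction analysis above settles. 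Hence the proof reduces to the one-line appeal to Lemmas~\ref{3.20g} and~\ref{3.22g}, exactly mirroring the structure used for Theorem~\ref{3.25_1} and Theorem~\ref{fin}.
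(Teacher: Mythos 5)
Your proposal is correct and matches the paper's proof exactly: the paper's argument is the one-line appeal to the composition of the adjoint situations in Lemmas~\ref{3.20g} and~\ref{3.22g}, precisely mirroring Theorems~\ref{3.25_1} and~\ref{fin}. Your additional verification of directions and the explicit whiskered unit $(Ext_g\,\eta^{2}\,J_g)\circ\eta^{1}$ and counit $\xi^{2}\circ(fm_g\,\xi^{1}\,S_g)$ is sound standard bookkeeping that the paper leaves implicit.
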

\begin{proof}
Follows from the combination of the adjoint situations in Lemmas \ref{3.20g}, \ref{3.22g}.
\end{proof}
The obtained functorial relationships can be illustrated by the following diagram:  
\begin{center}
\begin{tikzpicture}
\node (C) at (0,3) {$\mathbf{Graded\ Fuzzy\ TopSys}$};
\node (A) at (-2,0) {$\mathbf{Graded\ Fuzzy\ Top}$};
\node (B) at (2,0) {$\mathbf{Graded\ Frm^{op}}$};
%\node at (0,0) {\rotatebox{270}{$\Rightarrow$}};
\path[->,font=\scriptsize ,>=angle 90]
(A) edge [bend left=15] node[above] {$fm_g\circ J_g$} (B);
\path[<-,font=\scriptsize ,>=angle 90]
(A)edge [bend right=15] node[below] {$Ext_g \circ S_g$} (B);
\path[->,font=\scriptsize ,>=angle 90]
(A) edge [bend left=20] node[above] {$J_g$} (C);
\path[<-,font=\scriptsize ,>=angle 90]
(A)edge [bend right=20] node[above] {$Ext_g$} (C);
\path[->,font=\scriptsize, >=angle 90]
(C) edge [bend left=20] node[above] {$fm_g$} (B);
\path[<-,font=\scriptsize, >=angle 90]
(C)edge [bend right=20] node[above] {$S_g$} (B);
\end{tikzpicture}
\end{center}
This chapter indicates that the category of graded fuzzy topological systems constructed from fuzzy geometric logic with graded consequence is equivalent with the category of graded fuzzy topological spaces (c.f. Theorem \ref{agf} and Proposition \ref{propc} of Chapter 6, and Theorem \ref{equigft}).
\newpage
\section*{Concluding Remarks and Future Directions}
\begin{enumerate}
\item We have not raised the point of completeness of fuzzy geometric logic and fuzzy geometric logic with graded consequence. In both cases the logics would be incomplete since their restrictions to the crisp case, that is taking the values either 0 or 1, reduce to ordinary geometric logic and it is incomplete \cite{SVL}. But it may be an interesting question that under which sufficient and necessary conditions the following holds
$$gr(\phi\vdash\psi)=inf_s\{ gr(s\ \emph{sat}\ \phi)\rightarrow gr(s\ \emph{sat}\ \psi)\},\ \text{if}\ 0 <  gr (\phi\vdash\psi) < 1.$$
\item Proposing logics  with respect to the other topological systems studied in the work, such as the $\mathscr{L}$-valued fuzzy topological systems, variable basis fuzzy topological systems are pending.
\item It may be marked that G$\ddot{o}$del arrow has been used while defining graded inclusion of fuzzy sets as well as fuzzy geometric logic with graded consequence. This fuzzy implication operator has been essential at some steps of proof. We shall endeavour to get out of this restriction which is a bit unsatisfactory and see how much could be achieved by other important implications.
\item Possible dualities and equivalences among the appropriate subcategories of the categories of fuzzy topological systems over fuzzy sets, fuzzy topological spaces on fuzzy sets and frames need to be investigated.
\item Study of possible dualities and equivalences among the appropriate subcategories of the categories of variable basis fuzzy topological systems, variable basis fuzzy topological spaces and frames  has to be taken up.
\end{enumerate}

%\backmatter
\addcontentsline{toc}{chapter}{Bibliography}

\clearpage
\printindex
\end{document}